\numberwithin{equation}{section}
\numberwithin{figure}{section}
\newtheorem{Thm}{Theorem}[section]
\theoremstyle{plain}
\newtheorem{Lem}[Thm]{Lemma}
\theoremstyle{plain}
\newtheorem{Prop}[Thm]{Proposition}
\theoremstyle{plain}
\newtheorem{Cor}[Thm]{Corollary}
\theoremstyle{plain}
\newtheorem{Conj}[Thm]{Conjecture}
\theoremstyle{plain}
\newtheorem*{Thm*}{Theorem}
\theoremstyle{plain}
\newtheorem*{Conj*}{Conjecture}
\theoremstyle{plain}
\theoremstyle{plain}
\newtheorem*{Prob*}{Problem}
\theoremstyle{plain}
\newtheorem{Claim}[Thm]{Claim}
\theoremstyle{plain}
\theoremstyle{definition}
\newtheorem{Def}[Thm]{Definition}
\theoremstyle{definition}
\newtheorem*{Def*}{Definition}
\theoremstyle{definition}
\newtheorem{Eg}[Thm]{Example}
\theoremstyle{definition}
\theoremstyle{definition}
\newtheorem{Assumption}{Assumption}
\theoremstyle{definition}
\theoremstyle{definition}
\theoremstyle{definition}
\theoremstyle{remark}
\newtheorem{Rem}[Thm]{Remark}
\newtheoremstyle{custom}
  {}% Space above
  {}% Space below
  {\itshape}% Body font
  {}% Indent amount
  {\bfseries}% Theorem head font
  {}% Punctuation after theorem head
  { }% Space after theorem head
  {Condition \thmnote{#3}}% Theorem head spec
\theoremstyle{custom}
\newtheorem*{Cond*}{}
\newcommand{\kk}{\Bbbk}
\newcommand{\Z}{\mathbb{Z}}
\newcommand{\N}{\mathbb{N}}
\newcommand{\Q}{\mathbb{Q}}
\newcommand{\C}{\mathbb{C}}
\newcommand{\R}{\mathbb{R}}
\newcommand{\F}{\mathbb{F}}
\renewcommand{\hat}[1]{\widehat{#1}}
\renewcommand{\tilde}[1]{\widetilde{#1}}
\newcommand{\opname}[1]{\operatorname{\mathsf{#1}}}
\newcommand{\Spec}{\operatorname{\mathsf{Spec}}}
\newcommand{\pr}{\opname{pr}}
\newcommand{\ind}{\opname{ind}}
\newcommand{\inj}{\opname{inj}}
\newcommand{\add}{\opname{add}}
\newcommand{\op}{{^{\opname{op}}}}
\newcommand{\ra}{\rightarrow}
\newcommand{\Gr}{\opname{Gr}}
\newcommand{\sign}{\opname{sign}}
\newcommand{\End}{\opname{End}}
\newcommand{\Hom}{\opname{Hom}}
\newcommand{\ext}{\opname{ext}}
\newcommand{\supp}{\opname{supp}}
\renewcommand{\deg}{\opname{deg}}
\newcommand{\Hf}{{\frac{1}{2}}}
\newcommand{\Rm}[1]{{\longmapsto}}
\newcommand{\Lm}[1]{{\longmapsfrom}}
\newcommand{\cA}{{\mathcal A}}
\newcommand{\cC}{{\mathcal C}}
\newcommand{\cF}{{\mathcal F}}
\newcommand{\cS}{{\mathcal S}}
\newcommand{\cT}{{\mathcal T}}
\newcommand{\cZ}{{\mathcal Z}}
\newcommand{\bL}{{\mathbf L}}
\newcommand{\bi}{{\mathbf i}}
\newcommand{\bj}{{\mathbf j}}
\newcommand{\bq}{{\mathbf q}}
\newcommand{\uV}{{\underline{V}}}
\newcommand{\uX}{{\underline{X}}}
\newcommand{\uc}{{\underline{c}}}
\newcommand{\ui}{{\underline{i}}}
\newcommand{\uk}{{\underline{k}}}
\newcommand{\um}{{\underline{m}}}
\newcommand{\un}{{\underline{n}}}
\newcommand{\uz}{{\underline{z}}}
\newcommand{\tB}{{\widetilde{B}}}
\newcommand{\tC}{{\widetilde{C}}}
\newcommand{\tE}{{\widetilde{E}}}
\newcommand{\tJ}{{\widetilde{J}}}
\newcommand{\tN}{{\widetilde{N}}}
\newcommand{\tP}{{\widetilde{P}}}
\newcommand{\tW}{{\widetilde{W}}}
\newcommand{\hV}{{\widehat{V}}}
\newcommand{\hk}{{\widehat{k}}}
\newcommand{\stdMod}{\mbf{M}}%standard module, dual pbw basis
\newcommand{\can}{L}%simple module, dual canonical basis
\newcommand{\clAlg}{{\cA}}%cluster algebra
\newcommand{\qClAlg}{\cA_q}
\newcommand{\diag}{{\delta}}
\newcommand{\rev}{\opname{rev}}%reverse
\newcommand{\res}{{\mathrm{Res}}}
\newcommand{\tw}{{\tilde{w}}}
\newcommand{\mbf}[1]{{\mathbf{#1}}}
\tikzstyle{none}=[inner sep=0pt]
\tikzstyle{black box}=[draw=black, fill=black!25]
\tikzstyle{white box}=[draw=black, fill=white]
\tikzstyle{black circle}=[circle,draw=black!50, fill=black!25]
\tikzstyle{red circle}=[circle,draw=red!50, fill=red!25]
\tikzstyle{blue circle}=[circle,draw=blue!50, fill=blue!25]
\tikzstyle{green circle}=[circle,draw=green!50, fill=green!25]
\tikzstyle{yellow circle}=[circle,draw=yellow!50, fill=yellow!25]
\newcommand{\thistheoremname}{}
\newtheorem*{genericthm*}{\thistheoremname}
\newenvironment{namedthm*}[1]
  {\renewcommand{\thistheoremname}{#1}%
   \begin{genericthm*}}
  {\end{genericthm*}}
\renewcommand{\diag}{{d}}
\renewcommand{\inj}{{\bI}}
\renewcommand{\can}{{\bL}}
\newcommand{\var}{\opname{var}}
\newcommand{\fv}{\opname{f}}
\newcommand{\ufv}{\opname{uf}}
\newcommand{\codeg}{\opname{codeg}}
\newcommand{\suppDim}{\opname{suppDim}}
\begin{document}
%macro for this file

\newcommand{\W}{\mathbb{W}}
\newcommand{\V}{\mathbb{V}}
\newcommand{\txi}{\tilde{\xi}}

\newcommand{\tcZ}{\tilde{\cZ}}
\newcommand{\tz}{\tilde{z}}

\newcommand{\tTheta}{\widetilde{\Theta}}

\newcommand{\Kq}{\mathfrak{K}_{q}}
\newcommand{\hAq}{\hat{\mathcal{A}}_{q}(\mathfrak{n})}

\newcommand{\ee}{\mathbf e}
\newcommand{\ff}{\mathbf f}

\newcommand{\unitObj}{\mathbf 1}
\newcommand{\wtSp}{\opname Q}
\renewcommand{\op}{^{\opname{op}}}

\newcommand{\simpObj}{\opname{Simp}}

\newcommand{\Span}{\opname{Span}}

\renewcommand{\res}{\opname{res}}
\newcommand{\frR}{\mathfrak{R}}
\newcommand{\frI}{\mathfrak{I}}
\newcommand{\frz}{\mathfrak{f}}
\renewcommand{\diag}{\delta}
\newcommand{\symm}{\mathsf{D}}
\newcommand{\ubi}{\underline{\bi}}
\newcommand{\ubj}{\underline{\bj}}

\newcommand{\Br}{\mathsf{Br}}
\newcommand{\cB}{\mathcal{B}}
\newcommand{\xar}[1]{\xymatrix{\ar[r]^{#1}&}}
\newcommand{\xline}[1]{\xymatrix{\ar@{-}[r]^{#1}&}}
\newcommand{\ow}{\overrightarrow{w}}
\newcommand{\Conf}{\mathrm{Conf}}
\newcommand{\leftweave}{\overleftarrow{\mathfrak{m}}}

\newcommand{\ueta}{\underline{\eta}}
\newcommand{\uzeta}{\underline{\zeta}}
\newcommand{\uxi}{\underline{\xi}}

\newcommand{\qO}{\mathcal{O}_q}

\newcommand{\up}{\opname{up}}
\newcommand{\dCan}{\opname{B}^{*}}
\newcommand{\cdCan}{\opname{\mathring{B}}^{*}}
\newcommand{\hdCan}{\hat{\opname{B}^{*}}}

\renewcommand{\qClAlg}{{\clAlg_q}}

\newcommand{\cone}{{M^\circ}}
\newcommand{\uCone}{\underline{\cone}}
\newcommand{\yCone}{N_{\ufv}}
\newcommand{\tropSet}{{\mathcal{M}^\circ}}
\newcommand{\domTropSet}{{\overline{\mathcal{M}}^\circ}}

\newcommand{\sol}{\mathrm{TI}}
\newcommand{\intv}{{\mathrm{BI}}}

\newcommand{\Perm}{\mathrm{P}}

\newcommand{\tui}{\widetilde{\ui}}
\newcommand{\tuk}{\widetilde{\uk}}

\newcommand{\bideg}{\opname{bideg}}

\newcommand{\ubeta}{\underline{\beta}}
\newcommand{\udelta}{\underline{\delta}}
\newcommand{\ugamma}{\underline{\gamma}}
\newcommand{\ualpha}{\underline{\alpha}}

\newcommand{\LP}{{\mathcal{LP}}}
\newcommand{\hLP}{{\widehat{\mathcal{LP}}}}

\newcommand{\bLP}{{\overline{\mathcal{LP}}}}
\newcommand{\bClAlg}{{\overline{\clAlg}}}
\newcommand{\bUpClAlg}{{\overline{\upClAlg}}}
\newcommand{\base}{\mathbb{B}}
\newcommand{\alg}{\mathbf{A}}
\newcommand{\algfr}{\alg_{\mathrm{f}}}

\newcommand{\midClAlg}{{\clAlg^{\mathrm{mid}}}}
\newcommand{\upClAlg}{\mathcal{U}}

\newcommand{\bQClAlg}{{\bClAlg_q}}

\newcommand{\qUpClAlg}{{\upClAlg_q}}

\newcommand{\bQUpClAlg}{{\bUpClAlg_q}}

\newcommand{\canClAlg}{{\clAlg^{\mathrm{can}}}}

\newcommand{\AVar}{\mathscr{V}}
\newcommand{\XVar}{\mathbb{X}}

\newcommand{\Jac}{\hat{\mathop{J}}}

\newcommand{\wt}{\mathrm{wt}}
\newcommand{\cl}{\mathrm{cl}}

\newcommand{\domCone}{{\overline{M}^\circ}}

\newcommand{\tree}{{\mathbb{T}}}

\newcommand{\img}{{\mathrm{Im}}}

\newcommand{\Id}{{\mathrm{Id}}}
\newcommand{\prin}{{\mathrm{prin}}}

\newcommand{\mm}{{\mathbf{m}}}

\newcommand{\cPtSet}{{\mathcal{CPT}}}
\newcommand{\bPtSet}{{\mathcal{BPT}}}
\newcommand{\tCPtSet}{{\widetilde{\mathcal{CPT}}}}

\newcommand{\frn}{{\mathfrak{n}}}
\newcommand{\frsl}{{\mathfrak{sl}}}

\newcommand{\frg}{\mathfrak{g}}
\newcommand{\hfrg}{\hat{\mathfrak{g}}}
\newcommand{\hfrh}{\hat{\mathfrak{h}}}

\newcommand{\frh}{\mathfrak{h}}
\newcommand{\frp}{\mathfrak{p}}
\newcommand{\frd}{\mathfrak{d}}
\newcommand{\frj}{\mathfrak{j}}
\newcommand{\frD}{\mathfrak{D}}
\newcommand{\frS}{\mathfrak{S}}
\newcommand{\frC}{\mathfrak{C}}
\newcommand{\frM}{\mathfrak{M}}
%%%%%%%%%

\newcommand{\Int}{\mathrm{Int}}
\newcommand{\ess}{\mathrm{ess}}
\newcommand{\Mono}{\mathrm{Mono}}

\newcommand{\bfm}{{\mathbf{m}}}
\newcommand{\bfI}{{\mathbf{I}}}

\newcommand{\Pot}{\mathrm{Pot}}
\newcommand{\kGp}{\opname{K}}

\newcommand{\s}{\mathrm{s}}
\newcommand{\fd}{\mathrm{fd}}

\renewcommand{\sc}{\mathrm{sc}}%semi-classical
\newcommand{\Hall}{\mathrm{Hall}}
\newcommand{\income}{\mathrm{in}}

\newcommand{\tn}{\tilde{n}}

\newcommand{\sing}{\opname{sing}}

\renewcommand{\inj}{\opname{inj}}

\renewcommand{\ext}{\mathrm{ext}}

\newcommand{\cJac}{\Jac}%all jacobian algebras are completed

\newcommand{\stilt}{\mathrm{s}\tau\mathrm{-tilt}}
\newcommand{\Fac}{\mathrm{Fac}}
\newcommand{\Sub}{\mathrm{Sub}}

\newcommand{\rigid}{\mathrm{rigid}}
\newcommand{\tauRigid}{\tau\mathrm{-rigid}}
\newcommand{\spTilt}{\mathrm{s}\tau\mathrm{-tilt}}
\newcommand{\clTilt}{\mathrm{c-tilt}}
\newcommand{\maxRigid}{\mathrm{m-rigid}}

\newcommand{\Li}{\mathrm{Li}}

\newcommand{\Trop}{\opname{Trop}}

\newcommand{\seq}{\boldsymbol{\mu}}%{\overleftarrow{\mu}}%{\mu} \newcommand{\seqnu}{\boldsymbol{\nu}}%{\overleftarrow{\nu}}%{\nu}

\newcommand{\bseq}{\mu_{\bullet}}

\newcommand{\val}{\mathbf{v}}
\newcommand{\hookuparrow}{\mathrel{\rotatebox[origin=c]{90}{$\hookrightarrow$}}} 
\newcommand{\hookdownarrow}{\mathrel{\rotatebox[origin=c]{-90}{$\hookrightarrow$}}}
\newcommand{\twoheaddownarrow}{\mathrel{\rotatebox[origin=c]{-90}{$\twoheadrightarrow$}}}

\newcommand{\rd}{{\opname{red}}} 
\newcommand{\bCan}{\overline{\can}}

\newcommand{\im}{\opname{Im}} 

\newcommand{\lex}{\opname{lex}} 

\newcommand{\vu}{\mathbf{u}}

\newcommand{\sd}{{\bf t}}
\newcommand{\dsd}{\ddot{\sd}} 
\newcommand{\rsd}{\dot{\sd}}
\newcommand{\ssd}{{\bf s}}
\newcommand{\ddB}{\ddot{B}}
\newcommand{\ddI}{\ddot{I}}
\newcommand{\dB}{\dot{B}}
\newcommand{\ddH}{\ddot{H}}
\newcommand{\ddLambda}{\ddot{\Lambda}}

\newcommand{\nsd}{{\bf r}}
\newcommand{\usd}{{\bf u}}

\newcommand{\bti}{{\mathbf{\tilde{i}}}}
\newcommand{\ubti}{{\underline{\bti}}}

\newcommand{\col}{\opname{col}}

\newcommand{\frRing}{\mathcal{R}}
\newcommand{\bFrRing}{\overline{\frRing}}
\newcommand{\frGroup}{\mathcal{P}}
\newcommand{\frMonoid}{\overline{\frGroup}}

\newcommand{\uBase}{\underline{\base}}
\newcommand{\bBase}{\overline{\base}}

\newcommand{\bUBase}{\overline{\uBase}}

\newcommand{\ddBS}{\ddot{X}}
\newcommand{\dBS}{\dot{X}}

\newcommand{\udim}{\mathrm{dim}}

\newcommand{\ev}{\mathrm{ev}}
\newcommand{\CC}{\mathrm{CC}}

\newcommand{\envAlg}{\mathsf{U}_q}
\newcommand{\qAff}{\mathsf{U}_{\varepsilon}(\hat{\mathfrak{g}})}

\newcommand{\BZ}{\mathrm{BZ}}

\newcommand{\HL}{\mathrm{HL}}
\DeclarePairedDelimiter\floor{\lfloor}{\rfloor}
\newcommand{\simeqd}{\mathrel{\rotatebox[origin=c]{-90}{$\xrightarrow{\sim}$}}}
\newcommand{\simequ}{\mathrel{\rotatebox[origin=c]{90}{$\xrightarrow{\sim}$}}}

\newcommand{\ucN}{\underline{\mathcal{N}}}
\newcommand{\sqbinom}[2]{\genfrac{[}{]}{0pt}{}{#1}{#2}}
\title[]{Analogs of the dual canonical bases for cluster algebras from Lie
theory}
\author{Fan Qin}
\email{qin.fan.math@gmail.com}
\begin{abstract}
We construct common triangular bases for almost all the known (quantum)
cluster algebras from Lie theory. These bases provide analogs of the
dual canonical bases, long anticipated in cluster theory. In cases
where the generalized Cartan matrices are symmetric, we show that
these cluster algebras and their bases are quasi-categorified.

We base our approach on the combinatorial similarities among cluster
algebras from Lie theory. For this purpose, we introduce new cluster
operations to propagate structures across different cases, which allow
us to extend established results on quantum unipotent subgroups to
other such algebras.

We also obtain fruitful byproducts. First, we prove $A=U$ for these
quantum cluster algebras. Additionally, we discover rich structures
of the locally compactified quantum cluster algebras arising from
double Bott-Samelson cells, including $T$-systems, standard bases,
and Kazhdan-Lusztig type algorithms. Notably, in type $ADE$, we
obtain their monoidal categorifications via monoidal categories associated
with positive braids. As a special case, these categories provide
monoidal categorifications of the quantum function algebras in type
$ADE$.
\end{abstract}

\maketitle
\tableofcontents{}

\section{Introduction\label{sec:intro} }

\subsection{Background}

Cluster algebras were invented by Fomin and Zelevinsky \cite{fomin2002cluster}.
They are algebras with certain combinatorial structures. Particularly,
they have distinguished elements called cluster monomials, which are
recursively defined by combinatorial algorithms called mutations.
They also have natural quantization \cite{BerensteinZelevinsky05}.
Originally, Fomin and Zelevinsky introduced them with the following
ambitious expectations:
\begin{enumerate}
\item The (quantized) coordinate rings of many interesting varieties from
Lie theory are isomorphic to appropriate cluster algebras.
\item These (quantized) coordinate rings possess distinguished bases, which
are analogs of the dual canonical bases \cite{Lusztig90,Lusztig91}\cite{Kas:crystal}
and contain the cluster monomials.
\end{enumerate}
An ever-expanding list of varieties have been found to meet the first
expectation. But the progress on the second expectation has been slow.
Previously, it was shown that, for the quantized coordinate rings
$\qO[N(w)]$ of unipotent subgroups $N(w)$ viewed as cluster algebras,
the dual canonical bases $\dCan$ contain the cluster monomials up
to scalar multiples, see \cite{qin2017triangular,qin2020dual} or
\cite{Kang2018}\cite{mcnamara2021cluster}.

We aim to show that the second expectation is met by almost all the
known cluster algebras arising from Lie theory. Particularly, we will
greatly expand known results from unipotent subgroups to other interesting
varieties.

\subsection{Main results}

Let $\alg$ denote a classical or quantum cluster algebra of a chosen
kind: $\alg$ might denote a cluster algebra $\bClAlg$, an upper
cluster algebra $\bUpClAlg$, or their localization $\clAlg$, $\upClAlg$
at the frozen variables. Moreover, $\clAlg$ and $\upClAlg$ are $\frRing$-algebras
over the frozen torus algebra $\frRing$, see Section \ref{subsec:Basics-of-cluster}.
Recall that $\alg$ is constructed from a combinatorial data called
a seed $\sd$, denoted $\alg=\alg(\sd)$.

Assume that $\alg$ has a basis $\base$ subject to certain natural
conditions, such as containing all cluster monomials (Definition \ref{def:based-cluster-algebra}).
The pair $(\alg,\base)$ is called a based cluster algebra. 

Let $\cT$ denote a monoidal category and $K$ its (deformed) Grothendieck
ring. We say $(\alg,\base)$ is categorified by $\cT$ if there is
an isomorphism $\kappa:\alg\simeq K$, such that $\kappa(\base)$
consists of the isoclasses of the simple objects of $\cT$ up to scalar
multiples. We say $(\alg,\base)$ is quasi-categorified by $\cT$
if it is categorified up to quantization changes, base changes (changing
the base $\frRing$), and localization, see Definition \ref{def:quasi-categorification}.
Note that, assuming mild conditions, we can keep track of the structures
constants of $\base$ under these changes using the correction technique
(Theorem \ref{thm:correction}). Particularly, quasi-categorification
implies that the structure constants of $\base$ are positive.

\begin{Thm}\label{thm:intro-quasi-categorification}

For almost all the quantum upper cluster algebras $\upClAlg$ arising
from Lie theory, $\upClAlg$ possess the common triangular bases $\can$
in the sense of \cite{qin2017triangular}. Moreover, $(\upClAlg,\can)$
are quasi-categorified by non-semisimple categories when the generalized
Cartan matrices $C$ are symmetric.

\end{Thm}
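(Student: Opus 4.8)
The plan is to establish the theorem in two stages: first producing the common triangular bases $\can$ for (almost all) the quantum upper cluster algebras $\upClAlg$ arising from Lie theory, and then upgrading to quasi-categorification in the symmetric case. For the first stage, my strategy is to exploit the combinatorial similarities among the relevant cluster structures, as announced in the introduction. The known anchor is the case of quantum unipotent subgroups $\qO[N(w)]$, where by \cite{qin2017triangular,qin2020dual} the common triangular basis exists and is (up to scalars) the dual canonical basis $\dCan$. I would set up, for each family of cluster algebras from Lie theory (double Bott--Samelson cells, positroid varieties, double Bruhat cells, etc.), an explicit comparison of seeds with those of the unipotent case, using the "new cluster operations'' advertised in the abstract --- freezing, localization, amalgamation, and whatever propagation operations are introduced earlier --- to transport the common triangular basis from one cluster algebra to another. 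The key point is that a common triangular basis, being characterized by a triangularity property with respect to a dominance order together with bar-invariance, is preserved under such operations provided they are compatible with the grading and the bar involution; so the existence statement reduces to a finite chain of such reductions terminating at the unipotent case.

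The technical heart of the first stage is to check that these propagation operations genuinely preserve the defining axioms of a common triangular basis: compatibility with the common gradings coming from all clusters in the mutation class, the triangularity of the transition matrices relative to the dominance order on weights/degrees, and the behaviour of cluster monomials (they must remain cluster monomials, or at least basis elements, after the operation). I would proceed seed-by-seed: fix a "reddening'' or adapted sequence realizing the mutation class, verify that the candidate basis obtained by pulling back $\can$ along the cluster operation satisfies the triangular unitriangularity with the PBW-type / standard basis on each cluster, and invoke the uniqueness of the common triangular basis (when it exists) to conclude it is independent of the chosen reduction path. The $A=U$ statement for these quantum cluster algebras, which the paper obtains as a byproduct, would be used here to ensure that the basis we produce lies in (and spans) the right algebra, i.e.\ that there is no gap between $\bQClAlg$ and $\qUpClAlg$ that the basis could fall into.

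For the second stage, assume the generalized Cartan matrix $C$ is symmetric. Here the anchor is the monoidal categorification of $\qO[N(w)]$ by Kang--Kashiwara--Kim--Oh \cite{Kang2018} (or the quiver-Hecke / KLR picture of \cite{mcnamara2021cluster}), under which simple modules correspond to the dual canonical basis. I would then transport this categorification along the same chain of cluster operations, but now keeping track of what each operation does on the categorical side: localization at a frozen variable corresponds to inverting a central simple object (localizing the monoidal category), base change $\frRing$ corresponds to a change of Grothendieck ring, and quantization changes are absorbed by the "correction technique'' of Theorem \ref{thm:correction}, which lets us adjust structure constants without destroying positivity. Since $(\alg,\can)$ is only required to be \emph{quasi}-categorified --- categorified up to precisely these changes (Definition \ref{def:quasi-categorification}) --- each operation in the chain is admissible, and the composite still yields a quasi-categorification by a non-semisimple (quiver-Hecke type) category. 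In type $ADE$ one can moreover realize this category via positive braids, giving the sharper monoidal categorification mentioned in the subsequent results, but for Theorem \ref{thm:intro-quasi-categorification} itself the weaker quasi-categorification suffices.

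I expect the main obstacle to be the first stage: proving that the propagation operations preserve the common triangular basis in full generality, uniformly across \emph{all} the families "arising from Lie theory''. The delicate issues are (i) matching the dominance orders and common gradings across cluster algebras that a priori have different frozen parts and different exchange matrices, so that triangularity is genuinely transported and not merely formally restated; (ii) handling the cases where the common triangular basis is not yet known to exist on the target and must be constructed from scratch via the triangularity axioms, which requires controlling infinitely many clusters at once; and (iii) the "almost all'' caveat --- isolating precisely which Lie-theoretic cluster algebras are covered and why the remaining ones resist (presumably those where the seed comparison with the unipotent case breaks down, or where bar-invariance of the candidate basis fails). The categorification stage, by contrast, is largely a matter of bookkeeping once the correction technique is in place, since non-semisimplicity and positivity are automatic consequences of working with quiver-Hecke-type categories and their Grothendieck rings.
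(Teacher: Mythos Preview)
Your overall architecture is correct and matches the paper's: anchor at $\qO[N(w)]$, propagate the common triangular basis via cluster operations (freezing and base change), then transport categorification along the same chain using the correction technique. The paper's Theorem \ref{thm:sub_cluster_triangular_basis} and Proposition \ref{prop:similar-common-tri-basis} are precisely the ``propagation operations preserve the common triangular basis'' results you anticipate, and your stage two is essentially how the paper argues (via Theorem \ref{thm:sub_category_upClAlg} for the categorical side).

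However, there is a genuine gap in stage one. You propose to ``set up, for each family of cluster algebras from Lie theory, an explicit comparison of seeds with those of the unipotent case,'' but you do not say how, and the direct comparison does not exist: the seed $\rsd(\ueta)$ for a double Bott--Samelson cell is governed by an \emph{arbitrary} word $\ueta$, whereas the unipotent seeds underlying $\qO[N(w)]$ are governed by \emph{reduced} words. No amount of freezing, localization, or base change within the same Cartan type bridges this. The paper's key idea --- the ``extension and reduction technique'' --- is to first \emph{enlarge} the Cartan matrix $C$ to some $\tC$ so that the given word becomes (a subword of) a reduced word $\widetilde{\uxi}$ for the larger Weyl group $\tW$ (Lemma \ref{lem:extension_words}, Proposition \ref{prop:extension_string_diagram}); one then freezes the vertices lying on the extra layers $L_c$, $c\in\tJ\backslash J$, of the string diagram and removes them, recovering $\rsd(\uxi)$ from the unipotent seed for $\qO[\tN^{\tw}]$. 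All the other Lie-theoretic families are then reached from the double Bott--Samelson case by further freezing and base change (Section \ref{sec:Applications:-cluster-algebras-Lie}). Without this Cartan-matrix extension step the reduction chain you describe cannot begin; it is the single creative input missing from your proposal, and once supplied, both of your stages go through essentially as written.
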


Note that $\can$ contains all cluster monomials by its definition.
By \cite{qin2017triangular,qin2020dual}, any $\qO[N(w)]$ has $\can$
and it coincides with the dual canonical basis $\dCan$. Therefore,
$\can$ in Theorem \ref{thm:intro-quasi-categorification} provides
a natural analog of $\dCan$, fulfilling the expectation (2). 

Theorem \ref{thm:intro-quasi-categorification} applies to $\upClAlg(\sd)$
where $\sd$ belongs to the following families.
\begin{enumerate}
\item \textbf{Algebraic group }$G$: Assume $C$ is of finite type. Let
$G$ denote the associated connected, simply connected, complex semisimple
algebraic group $G$. We claim that the (quantized) coordinate rings
$\C[G]$ and $\qO[G]$ are cluster algebras $\bUpClAlg(\sd)$ for
some $\sd$, see Claim \ref{claim:G_case}. The proofs will appear
elsewhere (\cite{fomin2020introduction} treated $\C[SL_{n}]$).
\item \textbf{Subvarieties of }$G$: Let $\qO[N(w)]$, $\qO[N^{w}]$, and
$\qO[G^{u,v}]$ denote quantum unipotent subgroups, quantum unipotent
cells, and quantum double Bruhat cells. They are quantum cluster algebras
associated with some seeds $\sd$, see \cite{GeissLeclercSchroeer10,GeissLeclercSchroeer11}\cite{GY13,goodearl2016berenstein}.
\item \textbf{Configurations of flags}: The coordinate rings of Grassmannians,
open Positroid varieties, open Richardson varieties, double Bott-Samelson
cells, and braid varieties are classical cluster algebras for some
$\sd$, see \cite{scott2006grassmannians} \cite{serhiyenko2019cluster}
\cite{galashin2019positroid} \cite{elek2021bott}\cite{shen2021cluster}
\cite{galashin2022braid} \cite{casals2022cluster}.
\end{enumerate}
Exotic cluster structures on $\C[G]$ by \cite{gekhtman2023unified}
are not examined in Theorem \ref{thm:intro-quasi-categorification}.

Before this work, statements in Theorem \ref{thm:intro-quasi-categorification}
had been established only in the very special cases $\qO[N(w)]$,
$\qO[N^{w}]$ \cite{qin2017triangular,qin2020dual} \cite{Kang2018}
\cite{mcnamara2021cluster}. Theorem \ref{thm:intro-quasi-categorification}
provides a vast generalization of those results, confirming that the
same statements hold in a far more general setting.

\begin{Rem}

Family (3) includes cluster algebras arising from various monoidal
categories: In type $ADE$, it includes those arising from representations
of quantum affine algebras, see Section \ref{subsec:Cluster-structures-quantum-affine}
and \cite{HernandezLeclerc09}\cite{qin2017triangular}; In type $A_{1}^{(1)}$,
it includes those arising from Satake categories, see the last quiver
in \cite[Figure 6.1]{cautis2019cluster}.

\end{Rem}

\begin{Rem}

In Theorem \ref{thm:intro-quasi-categorification}, $\can$ is positive
when $C$ is symmetric. In this case, $(\upClAlg,\can)$ is always
categorified by a semisimple category, see Remark \ref{rem:ss-categorification}.
We use modules of quiver Hecke algebras \cite{KhovanovLauda08,KhovanovLauda08:III}\cite{Rouquier08}
to provide a non-semisimple category for Theorem \ref{thm:intro-quasi-categorification}.
When $C$ is of type $ADE$, we can construct an alternative category
using modules of quantum affine algebras by Theorem \ref{thm:sub_category_upClAlg}.

\end{Rem}

\begin{Rem}[Quantized coordinate rings and integral forms]

For varieties from Lie theory, the quantization of their coordinate
rings are often not established in literature. But when the coordinate
ring is a cluster algebra, we have natural quantization from cluster
theory. And the quantum cluster algebra defined over $\Z[q^{\pm\Hf}]$
could be viewed as the integral form of the quantum cluster algebra
defined over $\Q(q^{\Hf})$. From this perspective, cluster theory
provides quantized coordinate rings and their integral forms.

\end{Rem}

\begin{Rem}[Deformation quantization]

One often understands the non-commutative multiplications in quantum
upper cluster algebras $\upClAlg_{q}$ as being inspired by the underlying
Poisson structures on the corresponding classical upper cluster algebras
$\upClAlg_{1}$; see \cite{gekhtman2003cluster,GekhtmanShapiroVainshtein05}\cite{BerensteinFominZelevinsky05}\cite{gekhtman2017hamiltonian}.
Nevertheless, while $\upClAlg_{q}$ are commonly viewed as quantizations
of $\upClAlg_{1}$, it is unclear whether their classical limits---obtained
by specializing $q^{\Hf}$ to $1$---exactly coincide with $\upClAlg_{1}$,
see the discussion in \cite{geiss2020quantum}. 

However, for Lie-theoretic quantum upper cluster algebras $\upClAlg_{q}$
in Theorem \ref{thm:intro-quasi-categorification}, and for $\upClAlg_{q}$
possessing quantum theta bases \cite{davison2019strong} (need conditions
including $B(\sd)$ being skew-symmetric, see Section \ref{sec:Preliminaries}),
the existence of bases guarantees that the classical limits indeed
coincide with $\upClAlg_{1}$. Moreover, the natural bijections between
the bases of $\upClAlg_{1}$ and $\upClAlg_{q}$ induce canonical
linear maps from $\upClAlg_{1}$ to $\upClAlg_{q}$. In these cases,
it is natural and reasonable to view $\upClAlg_{q}$ as deformation
quantizations of $\upClAlg_{1}$, see \cite[Remark 1.7]{davison2019strong}.

\end{Rem}

We have the following conjecture, see Example \ref{eg:global-crystal-sl2}.

\begin{Conj}\label{conj:triangular_global_basis}

Up to $q^{\frac{\Z}{2}}$ multiples, the common triangular basis $\can$
for $\qO[G]$ coincides with the global crystal basis of $\qO[G]$
in the sense of \cite{Kashiwara93}.

\end{Conj}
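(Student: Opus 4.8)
The strategy would be to characterize both the common triangular basis $\can$ and Kashiwara's global crystal basis $\dCan$ of $\qO[G]$ by one and the same universal property inside a fixed quantum upper cluster algebra, and then to invoke the uniqueness theory of common triangular bases to conclude that they agree up to $q^{\Z/2}$-multiples. By Claim~\ref{claim:G_case} there is an isomorphism $\kappa\colon\qO[G]\simeq\bUpClAlg(\sd)$ for a seed $\sd$ whose initial cluster variables are quantum generalized minors; localizing at the frozen minors yields $\upClAlg(\sd)$, and Theorem~\ref{thm:intro-quasi-categorification} supplies its common triangular basis $\can$. On the other side, $\dCan$ is the dual (upper) global basis arising from the Peter-Weyl decomposition $\qO[G]\simeq\bigoplus_{\lambda}V(\lambda)\otimes V(\lambda)^{*}$; it is bar-invariant, it contains the quantum minors and their products appearing in $\sd$ up to $q$-powers, and it descends through the localization. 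Thus both bases are bar-invariant $\Z[q^{\pm\Hf}]$-bases that contain the initial cluster monomials and expand unitriangularly over the standard monomials attached to $\sd$, and the results of \cite{qin2017triangular} used in Theorem~\ref{thm:intro-quasi-categorification} show that such a basis, once it is also triangular with respect to every seed, is unique.

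In detail I would proceed as follows. First, transport the bar involution of $\qO[G]$ through $\kappa$ and verify that it matches the cluster bar involution on the relevant $\Z[q^{\pm\Hf}]$-form; the normalization freedom in this comparison is exactly what forces the identification to hold only up to $q^{\Z/2}$-multiples. Second, show that $\dCan$ contains the cluster monomials of every seed of $\qO[G]$ up to scalars: for the initial seed this is the classical statement that clusters of quantum minors lie in the dual canonical basis (Berenstein-Zelevinsky and its refinements, together with the unipotent-cell results of \cite{qin2017triangular,qin2020dual}, \cite{Kang2018}, \cite{mcnamara2021cluster}), while for the remaining seeds one propagates along mutations using the new cluster operations of this paper, which realize the $\qO[G]$-cluster structure as assembled from $\qO[N^{w}]$-type pieces whose common triangular bases are already known to coincide with their dual canonical bases. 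Third, establish the triangularity of $\dCan$ with respect to the standard monomial basis of the initial seed; this amounts to compatibility of Kashiwara's global basis with the Gauss (double Bruhat) factorization of $G$. Finally, feed $\dCan$ into the characterization of the common triangular basis to conclude $\can=\dCan$ up to $q^{\frac{\Z}{2}}$, the $\frsl_2$ case of Example~\ref{eg:global-crystal-sl2} being the guiding model.

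The crux is the third step. For $\qO[N(w)]$ and $\qO[N^{w}]$ the triangularity of the dual canonical basis with respect to the quantum cluster structure is available in the literature, but for $\qO[G]$ one needs the analogous---and currently unestablished---statement that Kashiwara's global basis is triangular for the $\qO[G]$-seeds and is multiplicatively compatible, in the precise unitriangular sense required, with the dual canonical bases of the unipotent factors in the Gauss decomposition. Proving such a factorization property of $\dCan$, and reconciling its intrinsic representation-theoretic definition with the basis produced by the cluster operations of this paper, is what keeps the statement at the level of a conjecture. A lesser difficulty is the bookkeeping between $\bUpClAlg(\sd)$ and its frozen-localization $\upClAlg(\sd)$, and between the $\Z[q^{\pm\Hf}]$- and $\Q(q^{\Hf})$-forms; here the correction technique of Theorem~\ref{thm:correction} should be adequate.
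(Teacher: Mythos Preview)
The paper does not prove this statement: it is stated as Conjecture~\ref{conj:triangular_global_basis} and left open, with only the $SL_{2}$ computation of Example~\ref{eg:global-crystal-sl2} offered as supporting evidence. There is therefore no proof in the paper to compare your proposal against.

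Your proposal is not a proof but a reasonable outline of a strategy, and you correctly flag the decisive gap yourself: establishing that Kashiwara's global crystal basis of $\qO[G]$ satisfies the triangularity axioms of a common triangular basis with respect to the cluster seeds of $\qO[G]$ is precisely the missing ingredient, and nothing in the paper supplies it. The reductions you sketch (transporting the bar involution, checking that cluster monomials lie in $\dCan$, propagating from unipotent pieces via the paper's cluster operations) are plausible but each would require substantial new work beyond what the paper contains; in particular, the freezing and base-change operations of Sections~\ref{sec:Freezing-operators} and~\ref{sec:coeff-change} relate common triangular bases of different cluster algebras to one another, but do not by themselves show that an externally defined basis such as Kashiwara's $\dCan$ coincides with the triangular basis. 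So your assessment that this remains a conjecture, and your identification of the crux, are both accurate.
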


Our approach to Theorem \ref{thm:intro-quasi-categorification} has
led to fruitful byproducts. In the following, we let $\rsd$ denote
seeds associated with double Bott-Samelson cells and $\dsd$ denote
the seeds associated with decorated double Bott-Samelson cells as
in \cite{shen2021cluster}; see Section \ref{sec:Cluster-algebras-signed-words}.

First, it is a fundamental yet largely open problem to determine if
we have $\clAlg=\upClAlg$ and $\bClAlg=\bUpClAlg$. Most known cases
concern classical $\clAlg=\upClAlg$, see \cite{muller2013locally}
\cite{canakci2015cluster} \cite{shen2021cluster} \cite{moon2022compatibility}
\cite{ishibashi2023u}. At the quantum level, \cite{goodearl2016berenstein}
proved $\clAlg=\upClAlg$ and $\bClAlg=\bUpClAlg$ for quantum double
Bruhat cells. In the following theorem, we make a significant progress
on this problem at the quantum level.

\begin{Thm}\label{thm:intro-A-U}

For the quantum upper cluster algebras in Theorem \ref{thm:intro-quasi-categorification},
the associated classical and quantum cluster algebras satisfy $\clAlg=\upClAlg$.
Moreover, we have $\bClAlg(\rsd)=\bUpClAlg(\rsd)$, where $\rsd$
denote seeds associated with double Bott-Samelson cells.

\end{Thm}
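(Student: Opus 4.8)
The plan is to deduce $\clAlg=\upClAlg$ from the existence of the common triangular basis $\can$ established in Theorem \ref{thm:intro-quasi-categorification}, together with the general principle that a cluster algebra containing ``enough'' of a nice basis of its upper cluster algebra must exhaust it. Concretely, recall that the common triangular basis $\can$ is by construction a basis of the localized quantum \emph{upper} cluster algebra $\upClAlg$, that it contains all cluster monomials, and that each of its elements is, in every cluster, a bar-invariant element whose leading term (with respect to the dominance order) is a cluster monomial. The key observation is that such a basis is automatically contained in the quantum cluster algebra $\clAlg$: one argues by induction on the dominance order on $\degL$-vectors, subtracting off the leading cluster monomial of a given basis element $\can(m)$ and using that the correction term lies in the span of lower basis elements, each of which lies in $\clAlg$ by the inductive hypothesis. (This is exactly the ``triangularity forces membership in $\clAlg$'' argument that appears in \cite{qin2017triangular} for the unipotent subgroup case; here it applies verbatim once $\can$ is known to exist.) Hence $\can\subset\clAlg\subset\upClAlg=\Span\can$, forcing $\clAlg=\upClAlg$.

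First I would assemble the precise statement that $\can$ is a triangular basis of $\upClAlg$ with leading terms given by cluster monomials, citing the relevant definitions and the conclusion of Theorem \ref{thm:intro-quasi-categorification}; this includes noting that the base ring is the frozen torus algebra $\frRing$, so ``$\clAlg$'' here is the localization at frozen variables, which is the setting in which $\can$ naturally lives. Second, I would run the dominance-order induction described above to show every $\can(m)\in\clAlg$, and conclude $\clAlg=\upClAlg$. Third, for the non-localized statement $\bClAlg(\rsd)=\bUpClAlg(\rsd)$ for the double Bott-Samelson seeds $\rsd$, I would combine $\clAlg(\rsd)=\upClAlg(\rsd)$ with a separate input controlling the frozen variables: namely that for $\rsd$ the frozen variables are not inverted in passing from $\bClAlg$ to $\clAlg$ in a way that loses elements, i.e. one needs that $\bUpClAlg(\rsd)$ is generated over the polynomial ring in the frozen variables by the same data. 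Here I would invoke the explicit combinatorial description of the double Bott-Samelson seeds from \cite{shen2021cluster} (and the structural results of this paper in Section \ref{sec:Cluster-algebras-signed-words}) to check that no frozen variable needs to be inverted --- equivalently, that the standard/PBW-type basis elements for $\bUpClAlg(\rsd)$ are honest polynomials, not Laurent polynomials, in the frozen variables. A clean way to package this is: show the common triangular basis $\can$ for $\upClAlg(\rsd)$ restricts to a basis of $\bUpClAlg(\rsd)$ over $\ZCoeff$, and that this restricted basis is contained in $\bClAlg(\rsd)$ by the same triangularity argument, now carried out inside the non-localized algebra.

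The main obstacle is the last point: passing from the localized equality $\clAlg=\upClAlg$ to the non-localized equality $\bClAlg(\rsd)=\bUpClAlg(\rsd)$. Localization can destroy the equality $A=U$ in general (there exist seeds with $\bClAlg\subsetneq\bUpClAlg$ but $\clAlg=\upClAlg$), so one genuinely needs the special geometry of double Bott-Samelson cells. I expect the real work to be verifying that the triangular basis elements --- or equivalently the $T$-system / standard-basis elements constructed for $\bUpClAlg(\rsd)$ elsewhere in the paper --- have non-negative frozen exponents in their leading terms and that the triangular correction preserves this property; this is where the explicit quiver combinatorics of $\rsd$ (source/sink structure, the shape of the frozen part of $B(\rsd)$) enters, and it is plausibly the step requiring a case analysis or an appeal to the results on locally compactified cluster algebras from double Bott-Samelson cells advertised in the introduction. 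Once that positivity-of-frozen-exponents statement is in hand, the same dominance induction closes the argument and yields $\bClAlg(\rsd)=\bUpClAlg(\rsd)$.
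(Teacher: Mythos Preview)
Your proposed induction for $\clAlg=\upClAlg$ has a genuine gap. The leading term of a common triangular basis element $\can_m$ with respect to the dominance order is the Laurent monomial $x^m$, not a cluster monomial; for a general $m\in\cone(\sd)=\Z^I$ there is no reason for $x^m$ (or any other cluster monomial) to appear as a summand of $\can_m$, so ``subtracting off the leading cluster monomial'' is not a well-defined step. The argument you attribute to \cite{qin2017triangular} does not exist there: for $\qO[N(w)]$ the equality $\bClAlg=\bUpClAlg$ is an input from \cite{GY13}, not a consequence of triangularity. In short, the mere existence of the common triangular basis for $\upClAlg$ does not by itself force the basis to lie in $\clAlg$.

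The paper's route is different and relies on a property that is \emph{not} automatic from triangularity: each basis element admits a \emph{nice cluster decomposition} (Definition~\ref{def:nice-cluster-decomposition}), i.e.\ a finite expression as a $\kk$-combination of products of cluster variables and inverted frozens. This property is first available for the dual PBW basis of $\qO[N(\widetilde w)]$ associated with an extended Cartan matrix, and is then propagated through the chain \eqref{eq:q-gp-to-dBS} by the freezing operators (Corollary~\ref{cor:freezing-A-U}) and base changes (Corollary~\ref{cor:base-change-A-U}); this is the content of Theorem~\ref{thm:A-U-dBS}. The remaining Lie-theoretic cases in Section~\ref{sec:Applications:-cluster-algebras-Lie} are then handled by further freezings and base changes from the double Bott--Samelson case. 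For the non-localized statement $\bClAlg(\rsd)=\bUpClAlg(\rsd)$, the paper does not use positivity of frozen exponents for $\can$; instead it builds the standard basis $\stdMod$ of ordered products of fundamental variables $W_j$ (which are genuine cluster variables), shows via the dominant-degree computation (Lemmas~\ref{lem:dominance_cone}, \ref{lem:standard-basis}) that $\stdMod$ spans $\bUpClAlg(\rsd)$, and concludes since $\stdMod\subset\bClAlg(\rsd)$ trivially (Theorem~\ref{thm:dBS_PBW}(3)). The piece you identify as the hard step is thus resolved by constructing a second, manifestly polynomial basis rather than by analyzing $\can$.
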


Moreover, we discover rich structures on $\bUpClAlg(\rsd)$ arising
from double Bott-Samelson cells with the help of the basis $\can$.
We introduce the interval variables $W_{[j,k]}$ and the fundamental
variables $W_{j}$ for $\bUpClAlg(\rsd)$ (Section \ref{sec:Applications-dBS}).
They are distinguished cluster variables in analogous to the Kirillov-Reshetikhin
modules and the fundamental modules of quantum affine algebras respectively.

\begin{Thm}[{Proposition \ref{prop:T-systems} Theorem \ref{thm:dBS_PBW}}]\label{thm:intro-dBS}

The quantum cluster algebra $\bUpClAlg(\rsd)$ associated with double
Bott-Samelson cells has the following structures.

(1) The standard basis $\stdMod$: it consists of normalized ordered
products of fundamental variables.

(2) The $T$-systems: these are recursive equations relating the interval
variables.

Moreover, its common triangular basis $\can$ equals the Kazhdan-Lusztig
basis associated with $\stdMod$.

\end{Thm}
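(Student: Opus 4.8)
The plan is to prove all three assertions inside the common triangular basis formalism of \cite{qin2017triangular}. By Theorem~\ref{thm:intro-quasi-categorification}, $\bUpClAlg(\rsd)$ carries the common triangular basis $\can$, and by Theorem~\ref{thm:intro-A-U} we may use $\bClAlg(\rsd)=\bUpClAlg(\rsd)$ throughout. The combinatorial input is the explicit grid description of the seeds $\rsd$ from \cite{shen2021cluster} and Section~\ref{sec:Cluster-algebras-signed-words}, possibly passed through the decorated seeds $\dsd$. The structural input from \cite{qin2017triangular} is that every cluster variable of $\bUpClAlg(\rsd)$ is $\deg$-pointed with respect to the initial seed $\rsd$, that $\can$ is parametrized by the set of dominant degrees, and that $\can$ admits unitriangular expansions, with off-diagonal coefficients in $q^{\Hf}\Z[q^{\Hf}]$, with respect to suitable ``PBW-type'' monomial bases.

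\emph{Interval variables and $T$-systems.} First I would realize the interval variables $W_{[j,k]}$ as the cluster variables of $\bUpClAlg(\rsd)$ obtained by mutating along the rows of the grid quiver of $\rsd$, with the fundamental variables $W_{j}$ among them (the ones of ``length one''), and compute their extended $g$-vectors $\deg W_{[j,k]}$ from the mutation sequences. For each admissible pair of neighbouring intervals, the two relevant interval variables form a mutable cluster variable and its mutation inside a common seed, so the quantum exchange relation there gives, after absorbing the normalizing powers of $q$,
\[
W_{[j,k]}\,W_{[j+1,k-1]}\;=\;q^{\ast}\,W_{[j,k-1]}\,W_{[j+1,k]}\;+\;q^{\ast}\,(\text{a monomial in the remaining, possibly frozen, neighbours}),
\]
the exact shape being read off from the grid quiver of $\rsd$ and the $q$-exponents being pinned down by bar-invariance; this is the quantum $T$-system of part~(2). (Alternatively these relations can be transported from the quantum unipotent-cell or quantum affine algebra setting via the cluster operations of Section~\ref{sec:Preliminaries}.) Along the way one extracts a convex order $\prec$ on the index set $\redWSet$ of the fundamental variables, inherited from the order of the letters of the signed word, together with a quasi-commutation relation $W_{j}W_{j'}=q^{\ast}W_{j'}W_{j}+(\text{terms of strictly smaller degree})$ for $j\prec j'$.

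\emph{The standard basis.} Put $\stdMod(\bm):=q^{c(\bm)}\prod_{j}W_{j}^{m_{j}}$ for $\bm=(m_{j})_{j}\in\N^{\redWSet}$, the product taken in the order $\prec$ and $c(\bm)$ the half-integer power of $q$ making $\stdMod(\bm)$ bar-invariant; this is the normalized ordered product of part~(1). By the quasi-commutation relation above no cancellation of leading terms occurs in the $\prec$-ordered product, so $\stdMod(\bm)$ is $\deg$-pointed with $\deg\stdMod(\bm)=\sum_{j}m_{j}\deg W_{j}$. From the grid structure of $\rsd$ one checks that $\{\deg W_{j}\}_{j}$ generates the cone of dominant degrees freely over $\N$; hence $\bm\mapsto\deg\stdMod(\bm)$ is a bijection onto the index set of $\can$, and in particular the $\stdMod(\bm)$ are $\Z[q^{\pm\Hf}]$-linearly independent. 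For spanning, recall that $\can$ is parametrized by this same set: for each $w$ there is a unique $\bm_{w}$ with $\deg\stdMod(\bm_{w})=\deg\can(w)$, and since $\can(w)$ and $\stdMod(\bm_{w})$ are both bar-invariant and pointed of the same degree their dominant terms coincide, so by downward induction on the dominance order
\[
\can(w)\;=\;\stdMod(\bm_{w})\;+\;\sum_{\bm'}c_{w,\bm'}\,\stdMod(\bm'),\qquad c_{w,\bm'}\in\Z[q^{\pm\Hf}],
\]
the sum ranging over $\bm'$ with $\deg\stdMod(\bm')<\deg\can(w)$. This unitriangular relation shows at once that $\stdMod$ is a $\Z[q^{\pm\Hf}]$-basis of $\bUpClAlg(\rsd)$.

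\emph{Kazhdan-Lusztig identification, and the main obstacle.} It remains to improve the coefficients $c_{w,\bm'}$ above to lie in $q^{\Hf}\Z[q^{\Hf}]$: this is precisely the unitriangularity of the common triangular basis with respect to a ``PBW-type'' monomial basis in the sense of \cite{qin2017triangular}, applied to the monomial basis $\stdMod$. Since $\can$ and $\stdMod$ are both bar-invariant and related by such a unitriangular change of basis for the dominance order, the standard uniqueness argument for bar-invariant unitriangular bases (Kazhdan-Lusztig's lemma) identifies $\can$ with the Kazhdan-Lusztig basis attached to $\stdMod$, the bar involution and the dominance order; this is the final assertion. I expect the genuine obstacle to lie in the ``standard basis'' step rather than this one: one must establish, purely from the combinatorics of $\rsd$, both the quasi-commutation relation for $j\prec j'$ (so that $\prec$-ordered products of fundamental variables remain $\deg$-pointed) and the fact that $\{\deg W_{j}\}$ freely generates the dominant cone --- in other words a quantum-PBW statement for the fundamental variables. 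Controlling this interaction between the multiplicative structure generated by the $W_{j}$ and the degree filtration carried by $\can$ is the heart of the proof; granting it, the $T$-systems are a routine exchange-relation computation and the Kazhdan-Lusztig identification is formal.
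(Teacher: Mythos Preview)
Your outline for the $T$-systems and for showing that $\stdMod$ is a $\kk$-basis of $\bUpClAlg(\rsd)$ is essentially the paper's: the $T$-systems are exchange relations in the seeds $\rsd\{r\}_{s}$ along an explicit mutation sequence $\Sigma$, and $\stdMod$ is a basis because the $\stdMod(w)$ are pointed at exactly the dominant degrees $\domCone(\rsd)=\oplus_{i}\N\beta_{i}$, so a bounded-below unitriangularity argument against the already-known basis $\can\cap\bUpClAlg(\rsd)$ applies. Two points you treat as routine checks are not: the identification $\domCone(\rsd)=\oplus_{i}\N\beta_{i}$ is proved in the paper by freezing down to $\rsd((a,a,\dots,a))$ and quoting the $U_{q}(\widehat{\mathfrak{sl}}_{2})$ case (Lemma~\ref{lem:dominance_cone}), and the Levendorskii--Soibelman law (your ``quasi-commutation'') is obtained by a separate induction on $l$ via the inclusions $\bClAlg(\rsd(\ueta_{[1,l-1]}))\subset\bClAlg(\rsd)$ and $\bClAlg(\rsd(\ueta_{[2,l]}))\subset\bClAlg(\rsd)$ (Lemma~\ref{lem:fundamental_commutator}).

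The genuine gap is the Kazhdan--Lusztig identification. You correctly isolate the remaining step as pushing the transition coefficients $c_{w,\bm'}$ into $\mm=q^{-\Hf}\Z[q^{-\Hf}]$, but your justification is circular: the bar-invariance of both $\can$ and $\stdMod$ only forces $c_{w,\bm'}=\overline{c_{w,\bm'}}$, i.e.\ the coefficients are bar-invariant Laurent polynomials, which is the \emph{opposite} of lying in $\mm$. And there is no general statement in \cite{qin2017triangular} that the common triangular basis is $(\prec_{\rsd},\mm)$-unitriangular with respect to an arbitrary pointed monomial basis built from cluster variables of \emph{different} seeds---the defining unitriangularity is only for products $[x_{i}(\sd)\ast\can_{g}]$ with $x_{i}$ in a single seed $\sd$, and the $W_{j}$'s live in the successive seeds $\rsd\{j-1\}$. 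The paper closes this gap by an induction on $l$ that uses the freezing operators of Section~\ref{sec:Freezing-operators} in an essential way: for $w$ with $w_{l}=0$, one applies $\frz_{\{l^{\max}[-1]\}}$ to the decomposition $\stdMod(w)=\can(w)+\sum_{w'}c_{w'}\can(w')$; one checks via Lemma~\ref{lem:dominance_lex_order} that every $w'$ appearing also has $w'_{l}=0$, so the freezing sends this to the corresponding decomposition in $\bUpClAlg(\rsd(\ueta_{[1,l-1]}))$, where the induction hypothesis gives $c_{w'}\in\mm$. The passage from $w_{l}=0$ to general $w$ then uses that $W_{l}^{w_{l}}$ is a cluster monomial in the seed $\rsd[1]$ together with \cite[Lemma~6.2.4]{qin2017triangular}. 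This freezing-based induction is the new ingredient you are missing.
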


At the classical level $\kk=\C$, the standard basis were known by
\cite{elek2021bott}\cite{shen2021cluster}\cite{gao2020augmentations}
(Remark \ref{rem:previous_dBS_is_CGL}).

We view the standard basis as an analog of the dual PBW basis for
a quantum group. Recall that the dual PBW basis consists of ordered
products of root vectors, which can be calculated via braid group
actions. In the subsequent paper \cite{qin2024infinite}, we will
extend $\bUpClAlg(\rsd)$ to infinite ranks and show that the fundamental
variables $W_{j}$ could also be computed via the braid group action
from \cite{jang2023braid}.

\begin{Conj}\label{conj:dBS-categorification}

If $C$ is symmetric, $(\bUpClAlg(\rsd),\can)$ admits a monoidal
categorification.

\end{Conj}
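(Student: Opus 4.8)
\smallskip
\noindent\textit{Proposed approach.}
The plan is to mimic, for an arbitrary symmetric $C$, the type $ADE$ argument underlying the monoidal categorifications announced in the abstract, with subcategories of quiver Hecke algebra module categories playing the role of the braid‑variety categories. Two ingredients are already in hand: by Theorem \ref{thm:intro-quasi-categorification}, $(\upClAlg(\rsd),\can)$ is quasi‑categorified by a non‑semisimple category built from modules over the quiver Hecke algebra \cite{KhovanovLauda08,Rouquier08}; and by Theorem \ref{thm:intro-dBS} the algebra $\bUpClAlg(\rsd)$ carries the fundamental variables $W_j$, the interval variables $W_{[j,k]}$, the $T$-systems, and a standard basis $\stdMod$ whose associated Kazhdan--Lusztig basis is exactly $\can$. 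The goal is to upgrade the quasi‑categorification to an \emph{honest} monoidal categorification of the \emph{non-localized} algebra $\bUpClAlg(\rsd)=\bClAlg(\rsd)$ (using Theorem \ref{thm:intro-A-U}): namely, to produce a monoidal category $\cT$ with an isomorphism $\kappa\colon\bUpClAlg(\rsd)\simeq K(\cT)$ sending the fundamental variables to fundamental simple modules, the standard basis $\stdMod$ to classes of ordered renormalized tensor products of those modules, the $T$-systems to short exact sequences among ``Kirillov--Reshetikhin'' modules, and hence $\can$ to the classes of the simple objects.

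Concretely, I would proceed in four steps. \textbf{(i)} Use the cluster operations (the propagation technique of the paper) to realize $\bUpClAlg(\rsd)$, after freezing and deleting suitable vertices and localizing, in terms of quantum unipotent cells $\qO[N^{w}]$, which are monoidally categorified by quiver Hecke module categories (Kang--Kashiwara--Kim--Oh; see also \cite{Kang2018,mcnamara2021cluster}). \textbf{(ii)} Transport the fundamental variables $W_j$ to explicit simple modules $S_j$ in the ambient category, and let $\cT$ be the strictly full subcategory monoidally generated by the $S_j$ together with the simple modules corresponding to the frozen variables; the companion braid‑group computation of \cite{qin2024infinite,jang2023braid} should identify the $S_j$ concretely. \textbf{(iii)} Prove that $\cT$ is closed under tensor products and that the classes $[S_j]$ generate $K(\cT)$ as an algebra with their ordered products forming a basis, so that $\kappa$ is an isomorphism onto $\bUpClAlg(\rsd)$; here the standard basis $\stdMod$ of Theorem \ref{thm:dBS_PBW} supplies the PBW‑type basis of $K(\cT)$ matching the ordered products of the $[S_j]$. \textbf{(iv)} Match cluster mutations with the $T$-system exact sequences of Proposition \ref{prop:T-systems}, so that every interval variable, and every cluster variable obtained from it, is a simple class; combined with the general unitriangularity of simple classes with respect to standard classes and with the identification of $\can$ with the Kazhdan--Lusztig basis of $\stdMod$ from Theorem \ref{thm:intro-dBS}, this forces $\kappa(\can)$ to be the set of simple classes up to $q^{\frac{\Z}{2}}$, giving the categorification.

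In type $ADE$ the category $\cT$ can instead be taken directly from the positive‑braid / double Bott--Samelson monoidal categories mentioned in the abstract (in the spirit of \cite{casals2022cluster,galashin2022braid} and the quantum affine categorifications \cite{qin2017triangular,HernandezLeclerc09}), which settles the conjecture unconditionally in that case; thus the genuinely new content of Conjecture \ref{conj:dBS-categorification} concerns affine and indefinite symmetric $C$.

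The hard part will be two linked issues in those infinite‑type cases. First, promoting ``quasi‑categorification'' to an honest categorification: quasi‑categorification permits changes of quantization, of the base ring $\frRing$, and of localization, and the correction technique of Theorem \ref{thm:correction} only \emph{tracks} the structure constants through such changes rather than removing them; one must pin down the quantization and the base so that the simple classes of $\cT$ multiply with exactly the structure constants of $\can$ (not merely positive ones) and so that the Grothendieck ring is $\bUpClAlg(\rsd)$ on the nose. Second, and related, one must descend from the localized $\upClAlg(\rsd)$ to the non‑localized $\bUpClAlg(\rsd)$: this requires the frozen variables to correspond to real, invertible simple objects with vanishing higher self‑extensions, and it requires the subcategory generated by the non‑frozen fundamental modules to be stable under $\otimes$ --- which is delicate for infinite root systems, where the relevant Kirillov--Reshetikhin‑type modules lack the finiteness properties available in finite type. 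Establishing this monoidality of $\cT$ in the affine and indefinite symmetric cases is where I expect essentially all of the difficulty to concentrate.
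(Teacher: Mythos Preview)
The statement you are addressing is labeled a \emph{Conjecture} in the paper, not a theorem; the paper does \emph{not} claim a proof for general symmetric $C$. What the paper actually proves is the special case $C$ of type $ADE$ (Section~\ref{subsec:Categories-associated-to-braid}, Theorem~\ref{thm:ADE-braid-categorification}): for a word $\ueta$ one embeds $\beta_{\ueta}$ into some $\Delta^{d}$, passes to an adapted word $\ugamma$ with $\beta_{\ugamma}=\Delta^{d}$, uses the known categorification of $\bUpClAlg(\rsd(\ugamma))$ by finite-dimensional modules of quantum affine algebras (Theorem~\ref{thm:basis-for-HL}), and then cuts out the monoidal subcategory $\cC_{\beta}$ whose simples correspond to $\can'\subset\can$ via Lemma~\ref{lem:property-cat-beta}. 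No argument is offered for affine or indefinite symmetric $C$.

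Your proposal is therefore not a proof to be compared with the paper's proof; it is a research plan for the open part of the conjecture. You yourself say as much in the final paragraph, where you isolate the two obstacles (upgrading quasi-categorification to genuine categorification with the correct quantization and base, and descending to the non-localized algebra while keeping the candidate subcategory monoidal). These are real obstructions, and nothing in steps (i)--(iv) resolves them: step~(i) produces only a \emph{quasi}-categorification via freezing, base change and quantization change (cf.~\eqref{eq:q-gp-to-dBS}), and the correction technique of Theorem~\ref{thm:correction} does not undo those changes; step~(iii) asserts that ordered products of the $[S_j]$ form a basis of $K(\cT)$, but this is precisely the missing monoidality/closure statement you flag as hard. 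In short, your write-up correctly reproduces the ADE argument at the level of a sketch and correctly identifies what remains open, but it does not constitute a proof of the conjecture beyond what the paper already establishes.
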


In Section \ref{subsec:Categories-associated-to-braid}, we prove
Conjecture \ref{conj:dBS-categorification} for $C$ of type $ADE$.
For this purpose, we will introduce new monoidal categories $\cC_{\beta}$,
$\cC_{\widetilde{\beta}}'$ consisting of finite dimensional modules
of quantum affine algebras, where $\beta$ and $\widetilde{\beta}$
are positive braids.

\begin{Thm}[{Theorem \ref{thm:ADE-braid-categorification}, Theorem \ref{thm:ADE-braid-categorification-ddBS}}]\label{thm:intro-dBS-ddBS-categorification}

Assume $C$ is of type $ADE$. Then the based cluster algebra associated
with double Bott-Samelson cells are categorified by $\cC_{\beta}$,
and those associated with decorated double Bott-Samelson cells are
categorified by $\cC_{\widetilde{\beta}}'$. 

\end{Thm}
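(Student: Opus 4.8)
The plan is to realize $\cC_\beta$ as a monoidal subcategory of the category of finite-dimensional modules over the quantum affine algebra $U_q(\hat{\frg})$ in type $ADE$, to identify its $t$-deformed Grothendieck ring with the quantum cluster algebra $\bUpClAlg(\rsd)$, and to match the classes of simple modules with the common triangular basis $\can$; the decorated case $\cC_{\widetilde{\beta}}'$ is then handled by the same method after accounting for the extra frozen variables carried by $\dsd$.

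First I would build $\cC_\beta$ directly from the double Bott-Samelson datum. The positive braid $\beta$ determines, via the usual box/height-function recipe, a finite set of fundamental $\ell$-weights $Y_{i,a}$, and I define $\cC_\beta$ to be the full subcategory of finite-dimensional modules whose composition factors are the simple modules $L(\bm)$ with $\bm$ a Laurent monomial in these $Y_{i,a}$ compatible with $\beta$. Using $q$-character techniques available in type $ADE$ (Frenkel-Mukhin, Nakajima, Hernandez-Leclerc), one shows that $\cC_\beta$ is closed under tensor products, that its Grothendieck ring is a polynomial ring in the classes $[L(Y_{i,a})]$, and that it carries a $t$-deformation $K_t(\cC_\beta)$ equipped with a bar involution. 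I would then single out an initial seed whose cluster and frozen variables are the classes of the interval modules $W_{[j,k]}$ --- Kirillov-Reshetikhin-type modules inside $\cC_\beta$, with the fundamental modules $W_j$ as the initial cluster variables --- and check that its exchange matrix and quantization matrix coincide with those of $\rsd$ from \cite{shen2021cluster}. Matching the double Bott-Samelson quiver of \cite{shen2021cluster} with the Hernandez-Leclerc-type quiver attached to $\beta$, and matching the $t$-commutation relations of $K_t(\cC_\beta)$ with the quantization matrix $\Lambda(\rsd)$, is the delicate combinatorial heart of the argument; this, together with establishing that $\cC_\beta$ is ``good'' (polynomial Grothendieck ring, classification of simples, $T$-systems) for an arbitrary positive braid, is where I expect the main obstacle to lie.

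Granting a correctly identified initial seed, the rest runs along the now-standard lines of monoidal categorification. The $T$-systems of Theorem \ref{thm:intro-dBS}(2) match, term by term, the $T$-system short exact sequences among interval modules in $\cC_\beta$; hence every mutation of the initial seed yields the class of a simple module, every cluster monomial is the class of a simple module, and $\bClAlg(\rsd)$ embeds into $K_t(\cC_\beta)$. For the reverse inclusion I would invoke $A=U$: by Theorem \ref{thm:intro-A-U} we have $\bClAlg(\rsd)=\bUpClAlg(\rsd)$, and since $K_t(\cC_\beta)$ is generated as an algebra by the fundamental module classes, each of which lies in the cluster algebra, we get $K_t(\cC_\beta)\subseteq\bUpClAlg(\rsd)$, whence $K_t(\cC_\beta)=\bUpClAlg(\rsd)$. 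It remains to match the bases. Under this isomorphism the standard basis $\stdMod$ of Theorem \ref{thm:intro-dBS}(1) corresponds to the classes of standard modules, i.e.\ ordered tensor products of fundamental modules; the simple classes $[S]$ are bar-invariant and expand unitriangularly in the standard modules; and the cluster monomials lie among the $[S]$. Since Theorem \ref{thm:intro-dBS} identifies $\can$ with the Kazhdan-Lusztig basis associated with $\stdMod$, the uniqueness of that basis forces $\kappa(\can)=\{[S]\}$ up to scalar multiples, which is precisely the assertion that $(\bUpClAlg(\rsd),\can)$ is categorified by $\cC_\beta$.

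For the decorated double Bott-Samelson cells I would apply the same construction to the positive braid $\widetilde{\beta}$, the decoration contributing two extra families of frozen fundamental modules at the ends of $\beta$; the identification of $K_t(\cC_{\widetilde{\beta}}')$ with $\bUpClAlg(\dsd)$ and the matching of $\can$ with the simple classes then proceed exactly as above, using the cluster operations introduced in the paper to transport the $A=U$ statement and the basis from $\rsd$ to $\dsd$.
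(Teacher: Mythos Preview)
Your overall strategy is sound in spirit, and the endgame (matching the standard basis with standard modules and then invoking Kazhdan--Lusztig uniqueness) is exactly the mechanism the paper relies on. However, the route you propose to reach that point diverges substantially from the paper's, and the divergence is located precisely at the step you yourself flag as the main obstacle.

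The paper does \emph{not} build $\cC_\beta$ intrinsically from $\beta$ via a ``box/height-function recipe''. Instead it embeds $\beta$ into a power of the half-twist: one chooses $d$ with $\beta\leq_R\Delta^d$ (Lemma~\ref{lem:initial_subword}), picks an adapted word $\ugamma$ for $\Delta^d$, and invokes the \emph{already established} categorification $(\bUpClAlg(\rsd(\ugamma)),\can)\simeq\cC_\ugamma(\xi)$ from \cite{qin2017triangular} (Theorem~\ref{thm:basis-for-HL}). Then $\cC_\beta$ is \emph{defined} as the full subcategory of $\cC_\ugamma(\xi)$ whose simples correspond to $\can'\subset\can$, where $\can'$ is the common triangular basis of $\bUpClAlg(\rsd(\ueta))$ sitting inside $\bUpClAlg(\rsd(\ugamma))$ via the good sub-seed embedding of Lemma~\ref{lem:calibration-word}. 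Monoidality of $\cC_\beta$ and the identification of $\simpObj_\beta$ with $\kappa\can'$ (Lemma~\ref{lem:property-cat-beta}) then follow immediately from the fact that $\can'$ is the Kazhdan--Lusztig basis of $\stdMod'=\{\stdMod(w):w\in\N^{[1,l]}\}$ (Theorem~\ref{thm:dBS_PBW}), with no quiver matching or $q$-character analysis needed for general $\beta$. In short, the paper's whole point is to \emph{avoid} the ``delicate combinatorial heart'' you describe, by reducing to the adapted-word case and using the sub-seed machinery of Sections~\ref{sec:Freezing-operators}--\ref{sec:coeff-change}.

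For the decorated case your description is off: the paper does not add ``two extra families of frozen fundamental modules at the ends''. It prepends a single Coxeter word $\uc'$ to $\ubi$ to form $\widetilde{\ubi}=(\uc',\ubi)$, observes that $\dsd(\ubi)=\frz_F\rsd(\widetilde{\ubi})$ for $F=\{\binom{a}{0}^{\widetilde{\ubi}}:a\in J\}$, and then applies the freezing/subcategory result Corollary~\ref{cor:subcategorify-combine} to the categorification of $\bUpClAlg(\rsd(\widetilde{\ubi}))$ already obtained. The category $\cC'_{\widetilde{\beta}}$ is thus a freezing-subcategory in the sense of Section~\ref{subsec:Monoidal-subcategories}, not an enlargement of $\cC_\beta$.

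A minor further issue: $T$-systems alone only realize mutations along the specific sequence $\Sigma$, so your claim that they force \emph{every} cluster monomial to be simple is not justified; this is harmless since the KL-uniqueness argument you give afterwards does the real work, but the intermediate sentence should be dropped.
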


Finally, assume the claim that $\qO[G]$ is a cluster algebra (Claim
\ref{claim:G_case}). Let $\can$ denotes its common triangular basis.
In our convention, $\qO[G]$ is the opposite algebra of some quantum
cluster algebra associated with a decorated double Bott-Samelson cell.
Then Theorem \ref{thm:intro-dBS-ddBS-categorification} implies the
following result.

\begin{Thm}\label{thm:intro-qO-categorification}

In type $ADE$, $(\qO[G]\otimes\Q(q^{\Hf}),\can)$ admits a monoidal
categorification.

\end{Thm}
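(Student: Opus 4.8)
The plan is to deduce this statement as a direct corollary of Theorem~\ref{thm:intro-dBS-ddBS-categorification} together with the identification of $\qO[G]$ with a quantum cluster algebra attached to a decorated double Bott--Samelson cell. First I would invoke Claim~\ref{claim:G_case}, which asserts that $\qO[G]$ is the opposite algebra of a quantum cluster algebra $\bUpClAlg(\dsd)$ for a suitable decorated double Bott--Samelson seed $\dsd$; here the relevant signed word is the one recording a reduced expression of the longest element $w_0$ used twice (once for each Borel), so that the associated decorated double Bott--Samelson cell is exactly $G$ itself. I would then record that the common triangular basis $\can$ of $\qO[G]$ is, under this identification, the image of the common triangular basis of $\bUpClAlg(\dsd)$, since the anti-isomorphism $\rev$ sends common triangular bases to common triangular bases (this is the general behavior of the triangular-basis formalism of \cite{qin2017triangular} under the bar-compatible anti-automorphism, and it is implicit in the setup of Section~\ref{sec:Cluster-algebras-signed-words}).

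Next I would apply Theorem~\ref{thm:intro-dBS-ddBS-categorification}: in type $ADE$, the based quantum cluster algebra $(\bUpClAlg(\dsd),\can)$ associated with a decorated double Bott--Samelson cell is categorified by the monoidal category $\cC'_{\widetilde\beta}$ of finite-dimensional modules over the relevant quantum affine algebra, where $\widetilde\beta$ is the positive braid attached to $\dsd$. Concretely, there is a ring isomorphism $\kappa$ from $\bUpClAlg(\dsd)\otimes\Q(q^{\Hf})$ to the deformed Grothendieck ring $K(\cC'_{\widetilde\beta})$ carrying $\can$ to the classes of the simple objects up to $q^{\frac{\Z}{2}}$-multiples. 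Composing $\kappa$ with the anti-isomorphism $\rev\colon \qO[G]\otimes\Q(q^{\Hf})\to \bUpClAlg(\dsd)\otimes\Q(q^{\Hf})$, I obtain an isomorphism from $\qO[G]\otimes\Q(q^{\Hf})$ to $K(\cC'_{\widetilde\beta})\op$; since $K(\cC'_{\widetilde\beta})\op$ is again the (deformed) Grothendieck ring of a monoidal category — namely $\cC'_{\widetilde\beta}$ with the reversed tensor product, which is still a monoidal category of modules over a quantum affine algebra (the reversal of the tensor order corresponds to the standard anti-automorphism of the quantum affine algebra, or equivalently to twisting by the Dynkin involution / spectral-parameter inversion) — this exhibits $(\qO[G]\otimes\Q(q^{\Hf}),\can)$ as categorified by a monoidal category.

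The main obstacle is bookkeeping rather than substance: one must check that the anti-isomorphism $\rev$ on the cluster-algebra side is compatible with an \emph{actual} monoidal (anti-)equivalence or tensor-reversal on the categorical side, so that "categorification" in the sense of Definition~\ref{def:quasi-categorification} is preserved — in particular that the reversed category $\cC'_{\widetilde\beta}$ is still of the required form (finite-dimensional modules over $\qAff$ with a genuine rigid monoidal structure) and that the simple classes go to simple classes under the induced map on Grothendieck rings. This is where one uses that quantum affine algebras carry a Hopf-algebra anti-automorphism inducing an involution on the category of finite-dimensional modules that reverses the tensor product up to isomorphism; granting this, the identification $K(\cC'_{\widetilde\beta})\op\cong K(\cC'_{\widetilde\beta,\mathrm{rev}})$ is immediate and the theorem follows. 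A secondary point to verify is that Claim~\ref{claim:G_case} indeed produces $\dsd$ with the correct positive braid $\widetilde\beta$ so that Theorem~\ref{thm:intro-dBS-ddBS-categorification} is literally applicable; but this is exactly the content of the convention stated just before the theorem, so no further work is needed.
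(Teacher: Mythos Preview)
Your proposal is essentially correct and follows the same route as the paper: invoke Claim~\ref{claim:G_case} to identify $\qO[G]$ with the opposite of $\bUpClAlg(\dsd(\ueta,-\ueta))$, apply Theorem~\ref{thm:intro-dBS-ddBS-categorification} (i.e.\ Theorem~\ref{thm:ADE-braid-categorification-ddBS}) to categorify the latter by $\cC_{\widetilde\beta}'$, and then pass to the opposite to categorify $\qO[G]$.

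The one place you overcomplicate things is the last step. You worry about whether the reversed-tensor category is still ``a monoidal category of modules over a quantum affine algebra'' and invoke Hopf-algebra anti-automorphisms, Dynkin involutions, etc. None of this is needed. The paper's Definition~\ref{def:categorification-based} of categorification only asks for a monoidal category $\cT$ (in the sense of Section~\ref{subsec:Monoidal-categories}) whose deformed Grothendieck ring matches the based cluster algebra; it does not demand that $\cT$ arise as modules over any particular algebra. The formal opposite monoidal category $(\cC_{\widetilde\beta}')\op$ defined in Section~\ref{subsec:Monoidal-categories} (following \cite{etingof2015tensor}) has $K((\cC_{\widetilde\beta}')\op)=K(\cC_{\widetilde\beta}')\op$ by construction, and the simples are the same objects, so the categorification transfers immediately. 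The paper's proof is literally one sentence at this point: ``It follows that $\qO[G]\otimes\Q(q^{\Hf})$ is categorified by the opposite monoidal category $(\cC_{\widetilde\beta}')\op$.'' Your ``main obstacle'' is therefore not an obstacle at all.
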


\subsection{Strategy}

We employ a unified approach to various algebras in Theorem \ref{thm:intro-quasi-categorification}
based on the combinatorial similarities of different cluster algebras.
The following observation is crucial: 

\emph{While varieties from Lie theory could be very different, their
coordinate rings might possess closely-related seeds as cluster algebras.} 

Let $\alg$ denote $\clAlg$ or $\upClAlg$. Correspondingly, we introduce
and investigate operations that relate cluster algebras $\alg$ associated
with closely-related seeds, called cluster operations. Notably, they
include the following:
\begin{enumerate}
\item \textbf{Freezing} $\frz_{F}$ (Section \ref{sec:Freezing-operators}):
Recall that a seed $\sd$ is associated with a set of unfrozen vertices
$I_{\ufv}$ and a set of frozen vertices $I_{\fv}$. Choosing any
$F\subset I_{\ufv}$ and setting it to be frozen, we obtain a new
seed $\frz_{F}\sd$. We introduce freezing operators $\frz_{F}$ sending
basis elements of $\alg(\sd)$ to those of $\alg(\frz_{F}\sd)$.
\item \textbf{Base change} $\varphi$ (Section \ref{subsec:Base-changes}):
Assume two seeds $\sd$, $\sd'$ are similar, i.e., $I_{\ufv}=I_{\ufv}'$
and their $B$-matrices satisfy $\tB{}_{I_{\ufv}\times I_{\ufv}}=\tB'{}_{I_{\ufv}\times I_{\ufv}}$.
Under certain conditions, there is a homomorphism $\var:\frRing\rightarrow\frRing'$,
called a variation map \cite{kimura2022twist}\cite{fraser2016quasi},
such that we obtain an $\frRing'$-module isomorphism $\frRing'\otimes_{\frRing}\alg\overset{\varphi}{\simeq}\alg'$,
and $\base'$ can be constructed from $\base$ via $\varphi$.
\end{enumerate}
Despite their elementary definitions, these operations possess strong
properties and have broad applications. They propagate various structures
from one cluster algebra to another, including: the localized cluster
monomials (Theorem \ref{thm:projection_quantum_cluster_monomial}),
bases (Theorems \ref{thm:freeze_good_bases} and \ref{thm:sub_cluster_triangular_basis},
Lemmas \ref{lem:similar-cluster-basis} and \ref{lem:similar-basis},
Proposition \ref{prop:similar-common-tri-basis}), quasi-categorifications
or categorifications (Theorems \ref{thm:sub_category_upClAlg} and
\ref{thm:subcategory-b-upcluster}), and the $\clAlg=\upClAlg$ property
(Corollaries \ref{cor:freezing-A-U} and \ref{cor:base-change-A-U}).
In addition, using the octahedral axiom in triangulated categories,
we could interpret the freezing operators in additive categorifications,
which is of independent interests (Theorem \ref{thm:additive-freezing}).

Note that Theorem \ref{thm:intro-quasi-categorification} was known
for quantum unipotent cells $\qO[N^{w}]$, which are localization
of $\qO[N(w)]$. We hope to verify Theorem \ref{thm:intro-quasi-categorification}
for the (quantized) coordinate rings $\kk[\AVar]$ of other varieties
$\AVar$ using this known result, but $\AVar$ might be more complicated
than $N^{w}$. To circumvent this difficulty, we introduce and use
the following \textbf{extension and reduction technique}:

\emph{Extend the generalized Cartan matrices $C$ to $\tC$. Then
study the cluster algebras $\kk[\AVar]$ by applying cluster operations
to quantum unipotent cells $\qO[\tN^{\tw}]$ associated with $\tC$.}

In practice, our approach to \emph{$\kk[\AVar]$} is the following
(Sections \ref{sec:Cluster-algebras-signed-words} \ref{sec:Applications:-cluster-algebras-Lie}):
\begin{enumerate}
\item Take $\qO[\tN^{\tw}]$ associated with an extended generalized Cartan
matrix $\tC$.
\item By freezing $\qO[\tN^{\tw}]$ and applying base changes, obtain $\upClAlg(\rsd)$
associated with double Bott-Samelson cells for the original generalized
Cartan matrix $C$.
\item Obtain $\kk[\AVar]$ from $\upClAlg(\rsd)$, possibly after applying
cluster operations.
\end{enumerate}
\begin{Rem}

The extension and reduction technique is cluster theoretic. It would
be interesting to understand it in Lie theory. Note that, when the
author was preparing this paper, a paper \cite{bao2024product} on
totally positivity appeared, which also extended the Cartan matrix.
However, to the best of the author's knowledge, there is currently
no mathematical relationship or translation between the methods of
these two papers.

\end{Rem}

\subsection{Contents}

Section \ref{sec:Preliminaries} contains basic notions and results
for cluster algebras. New notions including dominant tropical points
are introduced in Section \ref{subsec:Bases-for-partial-compactified}.

In Section \ref{sec:Freezing-operators}, we introduce and investigate
the freezing operators. Base changes of cluster algebras are explored
in Section \ref{sec:coeff-change}.

In Section \ref{sec:monoidal_categorification}, we introduced based
cluster algebras. Then we discuss their categorifications and the
monoidal subcategories associated with the freezing process.

In Section \ref{sec:Cluster-algebras-signed-words}, we recall the
seeds associated with (decorated) double Bott-Samelson cells. Then
we prove that the associated cluster algebras have common triangular
bases and admit quasi-categorification, see Theorems \ref{thm:bases_dBS}
and \ref{thm:bases-compactified-dBS}.

In Section \ref{sec:Applications:-cluster-algebras-Lie}, applying
the results on double Bott-Samelson cells in Section \ref{sec:Cluster-algebras-signed-words},
we show Theorems \ref{thm:intro-quasi-categorification} and \ref{thm:intro-A-U}
for almost all the known cluster algebras from Lie theory.

In Section \ref{sec:Applications-dBS}, we explore rich structures
of the cluster algebras associated with double Bott-Samelson cells.
New monoidal categories are introduced to categorify them. 

In Section \ref{subsec:From-principal-coefficients}, we recall principal
coefficients seeds. In Section \ref{sec:Freezing-operators-in-additive},
we interpret freezing operators in additive categorifications.

\subsection{Convention\label{subsec:Convention}}

Unless otherwise specified, we work with a base ring $\kk=\Z$ for
the classical case, and $\kk=\Z[q^{\pm\Hf}]$ for the quantum case,
where $q^{\Hf}$ is a formal parameter. The elements in $\kk_{\geq0}:=\N[q^{\pm\Hf}]$
are called nonnegative. We say two elements $z_{1},z_{2}$ $q$-commute,
denoted $z_{1}\sim z_{2}$, if $z_{1}*z_{2}=q^{\alpha}z_{2}*z_{1}$
for some $\alpha\in\Q$.

We will use $\cdot$ to denote the commutative multiplication, which
is often omitted, and $*$ the $q$-twisted product, which is the
default multiplication. 

For any $I\times J$ matrix $B$ and $I'\subset I$, $J'\subset J$,
$B_{I'\times J'}$ denotes the $I'\times J'$ submatrix of $B$. We
denotes its $(i,j)$-th entry by $B_{ij}$, $B_{i,j}$, $b_{ij}$,
or $b_{i,j}$. $(\ )^{T}$ denotes the matrix transpose. All vectors
are column vectors unless otherwise specified.

Given any skew-symmetric $\Q$-matrix $(b_{ij})_{i,j\in I}$. We associate
with it a quiver: the set of vertices is $I$, and there is an arrow
from $i\rightarrow j$ with weight $b_{ij}$ if $b_{ij}>0$. A collection
of $w_{s}$-weighted arrows is equivalent to a $\sum_{s}w_{s}$-weighted
arrow. 

Given any $I'\subset I$, we take the natural decomposition $\Z^{I}\simeq\Z^{I'}\oplus\Z^{I\backslash I'}$
and let $\pr_{I'}$ denote the natural projection from $\Z^{I}$ to
$\Z^{I'}$. Let $<_{\lex}$ denote the lexicographical order on $\Z^{[1,l]}$,
and $<_{\rev}$ the reverse lexicographical order, i.e., $(a_{1},\ldots,a_{l})<_{\rev}(b_{1},\ldots,b_{l})$
if $(a_{l},\ldots,a_{1})<_{\lex}(b_{l},\ldots,b_{1})$.

A word $\ueta=(\eta_{1},\eta_{2},\ldots)$ has letters in $\Z_{>0}$,
while a signed word $\ubi=(\bi_{1},\bi_{2},\ldots)$ has letters in
$\Z\backslash\{0\}$. $\forall j,k\in\Z$, $[j,k]$ denotes $\{i\in\Z|j\leq i\leq k\}$
unless otherwise specified.

$\Span_{\kk}\cZ$ denotes the set of finite $\kk$-linear combinations
of elements of $\cZ$.

\section*{Acknowledgments}

The author would like to thank Bernhard Keller, Melissa Sherman-Bennett,
Peigen Cao, David Hernandez, Jianrong Li, Jianghua Lu, Linhui Shen,
and Milen Yakimov for discussion. Particularly, he is grateful to
Bernhard Keller for showing him the inspiring Figure \ref{fig:quiver_extension}
in MSRI, 2012.

\section{Preliminaries\label{sec:Preliminaries}}

\subsection{Basics of cluster algebra\label{subsec:Basics-of-cluster}}

\subsubsection*{Seeds}

Let $I$ denote a finite set of vertices and we choose its partition
$I=I_{\ufv}\sqcup I_{\fv}$ into the frozen vertices and unfrozen
vertices. $|I|$ is called the rank. Let $d_{i}$, $i\in I$, denote
a set of strictly positive integers, called symmetrizers. Let $d$
be their least common multiplier. Denote the Langlands dual $d_{i}^{\vee}=\frac{d}{d_{i}}$.

Given $b_{ij}\in\Q$, $i,j\in I$, such that $d_{i}^{\vee}b_{ij}=-d_{j}^{\vee}b_{ji}$.
Define the $B$-matrix $\tB=(b_{ik})_{i\in I,k\in I_{\ufv}}$ and
assume it is a $\Z$-matrix. Define its unfrozen part $B:=\tB_{I_{\ufv}\times I_{\ufv}}$.

\begin{Assumption}\label{assumption:injectivity}

We assume that $\tB$ is of full rank unless otherwise specified.

\end{Assumption}

Introduce lattices $\cone=\Z^{I}$ and $\yCone=\Z^{I_{\ufv}}$ with
unit vectors $f_{i}$ and $e_{k}$ respectively. Define the linear
map $p^{*}:\yCone\rightarrow\cone$ such that $p^{*}(n)=\tB n$, $\forall n\in\yCone$.

By \cite{gekhtman2003cluster,GekhtmanShapiroVainshtein05}, we can
find a $\Z$-valued skew-symmetric bilinear form $\lambda$ on $\cone$
such that $\lambda(f_{i},p^{*}e_{k})=\delta_{ik}\diag_{k}$ for strictly
positive integers $\diag_{k}$. $\lambda$ is called a compatible
Poisson structure. Define the quantization matrix $\Lambda=(\Lambda_{ij})_{i,j\in I}:=(\lambda(f_{i},f_{j}))_{i,j}$.
The pair $(\tB,\Lambda)$ is called compatible by \cite{BerensteinZelevinsky05}.

\begin{Def}

A seed $\sd$ is a collection $(I,I_{\ufv},(d_{i})_{i\in I},\tB,(x_{i})_{i\in I})$,
where $x_{i}$ are indeterminates. It is called a quantum seed if
it is endowed with a compatible Poisson structure $\lambda$.

\end{Def}

We shall often omit the symbols $I,I_{\ufv},(d_{i})_{i\in I}$ when
the context is clear. 

Let $\kk$ denote $\Z$ or $\Z[q^{\pm\Hf}]$. Define the Laurent polynomial
$\LP=\kk[x_{i}^{\pm}]_{i\in I}$ using the usual commutative product
$\cdot$. Denote the Laurent monomial $x^{g}=\prod_{i\in I}x_{i}^{g_{i}}$,
$y_{k}=x^{p^{*}e_{k}}$, and $y^{n}=x^{p^{*}n}$ for $g\in\Z^{I},k\in I_{\ufv},n\in\Z^{I_{\ufv}}$.
We further endow $\LP$ with the following $q$-twisted product $*$:
\begin{align*}
x^{g}*x^{h} & =q^{\Hf\lambda(g,h)}x^{g+h},\ \forall g,h\in\Z^{I}.
\end{align*}
By the multiplication of $\LP$, we mean the $q$-twisted product.
Its subalgebra $\bLP=\kk[x_{k}^{\pm}]_{k\in I_{\ufv}}[x_{j}]_{j\in I_{\fv}}$
is called its partial compactification.\footnote{The partial compactification $\bLP$ should not be confused with the
bar-involution $\overline{(\ )}$ on $\LP$.} Let $\cF$ denote the skew-field of fractions of $\LP$. 

Define the frozen monoid $\frMonoid:=\{x^{m}|m\in\N^{I_{\fv}}\}$,
the frozen group $\frGroup:=\{x^{m}|m\in\Z^{I_{\fv}}\}$, and the
frozen torus algebra $\frRing:=\kk[x_{j}^{\pm}]_{j\in I_{\fv}}$,
$\bFrRing:=\kk[x_{j}]_{j\in I_{\fv}}$. We also define the bar involution
$\overline{(\ )}$ on $\LP$ as the $\Z$-anti-automorphism such that
$\overline{q^{\alpha}x^{g}}=q^{-\alpha}x^{g}$. 

The seed $\sd\op$ opposite to $\sd$ is the collection $(I,I_{\ufv},(d_{i})_{i\in I},-\tB,(x_{i})_{i\in I})$
endowed with the compatible Poisson structure $-\lambda$. We have
the canonical $\kk$-algebra anti-isomorphism $\iota:\LP(\sd)\simeq\LP(\sd\op)$
such that $\iota(x^{g})=x^{g}$, see \cite[(3.1)]{qin2020analog}
.

\subsubsection*{Mutations}

Given a seed $\sd$. Choose any vertex $k\in I$ and any sign $\varepsilon\in\{+,-\}$.
Let $[\ ]_{+}$ denote $\max(\ ,0)$. Following \cite{BerensteinFominZelevinsky05},
we have the following $I\times I$ matrix $\tE$ and the $I_{\ufv}\times I_{\ufv}$-matrix
$F$:

\begin{align*}
(\tE_{\varepsilon})_{ij} & =\begin{cases}
\delta_{ij} & k\notin\{i,j\}\\
-1 & i=j=k\\{}
[-\varepsilon b_{ik}]_{+} & j=k,i\neq k
\end{cases}, & (F_{\varepsilon})_{ij}=\begin{cases}
\delta_{ij} & k\notin\{i,j\}\subset I_{\ufv}\\
-1 & i=j=k\\{}
[\varepsilon b_{kj}]_{+} & i=k,j\neq k\in I_{\ufv}
\end{cases}.
\end{align*}

The mutation $\mu_{k}$ produces a new seed $\sd'=\mu_{k}\sd=(I,I_{\ufv},(d_{i})_{i\in I},\tB',(x_{i}')_{i\in I})$
such that $\tB'=\tE_{\varepsilon}\tB F_{\varepsilon}$. If $\sd$
is a quantum seed endowed with $\lambda$, $\mu_{k}$ also produces
a new quantum seed $\sd'$ with a quantization matrix $\Lambda'$,
such that $\Lambda'=(\tE_{\varepsilon})^{T}\Lambda\tE_{\varepsilon}$.
Note that $\mu_{k}$ is an involution and $\tB'$ and $\lambda'$
do not depend on $\varepsilon$, see \cite{BerensteinZelevinsky05}.

We further introduce a $\kk$-algebra isomorphism $\mu_{k}^{*}:\cF(t')\simeq\cF(t)$:
\begin{align}
\mu^{*}x_{i}' & =\begin{cases}
x_{k}^{-1}\cdot(x^{\sum_{j}[-b_{jk}]_{+}f_{j}}+x^{\sum_{i}[b_{ik}]_{+}f_{i}}) & i=k\\
x_{i} & i\neq k
\end{cases}.\label{eq:exchange_relation}
\end{align}
We will often identify $\cF(\sd')$ and $\cF(\sd)$ via $\mu_{k}^{*}$
and omit the symbol $\mu_{k}^{*}$. 

Let there be given an initial seed $\sd_{0}$. For any sequence $\uk=(k_{1},\ldots,k_{l})$
in $I_{\ufv}$, we use $\seq_{\uk}$ to denote the sequence of mutations
$\mu_{k_{l}}\cdots\mu_{k_{2}}\mu_{k_{1}}$ (read from right to left)
and $\seq_{\uk}^{*}$ the composition $\mu_{k_{1}}^{*}\mu_{k_{2}}^{*}\cdots\mu_{k_{l}}^{*}$.
Denote $\Delta^{+}:=\Delta_{\sd_{0}}^{+}:=\{\seq_{\uk}\sd_{0}|\forall\uk\}$.
Take any seed $\sd\in\Delta^{+}$. The associated $x$-variables $x_{i}(\sd)$
are called the (quantum) cluster variables in $\sd$. We call $x_{j}:=x_{j}(\sd)$,
$j\in I_{\fv}$, the frozen variables. In $\LP(\sd)$, the monomials
$x^{m}$, $m\in\N^{I}$, are called the cluster monomials in $\sd$,
and $x^{m}$, $m\in\N^{I_{\ufv}}\oplus\Z^{I_{\fv}}$, the localized
cluster monomials in $\sd$.

\begin{Def}\label{def:cluster-alg}

The (partially compactified) cluster algebra $\bClAlg(\sd_{0})$ is
the $\kk$-subalgebra of $\cF(\sd_{0})$ generated by the $x_{i}(\sd)$,
$\sd\in\Delta^{+}$. The (partially compactified) upper cluster algebra
is $\bUpClAlg(\sd_{0})=\cap_{\sd\in\Delta^{+}}\bLP(\sd)$. Define
the (localized) cluster algebra to be the localization $\clAlg(\sd_{0}):=\bClAlg(\sd_{0})[x_{j}^{-1}]_{j\in I_{\fv}}$
and, similarly, the (localized) upper cluster algebra $\upClAlg(\sd_{0})=\bUpClAlg(\sd_{0})[x_{j}^{-1}]_{j\in I_{\fv}}=\cap_{\sd\in\Delta^{+}}\LP(\sd)$. 

\end{Def}

By \cite{fomin2002cluster}\cite{BerensteinZelevinsky05}, we have
$\bClAlg(\sd_{0})\subset\bUpClAlg(\sd_{0})$ and $\clAlg(\sd_{0})\subset\upClAlg(\sd_{0})$.

\subsection{Dominance order, degrees, and support}

Take any seed $\sd$. We define $\yCone^{\oplus}(\sd)=\oplus_{k\in I_{\ufv}}\N e_{k}(\sd)$
and $\yCone^{+}(\sd)=\yCone^{\oplus}(\sd)\backslash\{0\}$. 

\begin{Def}[{Dominance order \cite{qin2017triangular}\cite[Proof of Proposition 4.3]{cerulli2015caldero}}]

$\forall g,h\in\cone(\sd)$. We say $g$ dominates $h$, denoted by
$h\preceq_{\sd}g$, if and only if $h=g+p^{*}n$ for some $n\in\yCone^{\oplus}(\sd)$.

\end{Def}

Omit the symbol $\sd$ below for simplicity. Recall that we have Laurent
monomials $y^{n}:=x^{p^{*}n}\in\LP$. The monoid algebra $\kk[\yCone^{\oplus}]:=\kk[y^{n}]_{n\in\yCone^{\oplus}(t)}$
has a maximal ideal $\kk[\yCone^{+}(t)]$. Denote the corresponding
completion by $\widehat{\kk[\yCone^{\oplus}(t)]}$. We define the
rings of formal Laurent series to be
\begin{align*}
\hLP & :=\LP\otimes_{\kk[\yCone^{\oplus}]}\widehat{\kk[\yCone^{\oplus}]},\ \widehat{\kk[\yCone]}:=\kk[\yCone]\otimes_{\kk[\yCone^{\oplus}]}\widehat{\kk[\yCone^{\oplus}]}.
\end{align*}
For $m\in M$, denote $\hLP_{\preceq m}:=x^{m}\cdot\widehat{\kk[\yCone^{\oplus}]}\subset\hLP$
and $\LP_{\preceq m}:=x^{m}\cdot\kk[\yCone^{\oplus}]\subset\LP$.

\begin{Def}

Consider any formal sum $z=\sum c_{m}x^{m}$, $c_{m}\in\kk$. 

(1) If the set $\{m|c_{m}\neq0\}$ has a unique $\prec_{\sd}$-maximal
element $g$, we say $z$ has degree $g$ and denote $\deg^{\sd}z=g$.
We further say that $z$ is $g$-pointed (or pointed at $g$) if $c_{g}=1$.
In this case, $\forall\alpha\in\Hf\Z$, define the normalization of
$q^{\alpha}z$ to be $[q^{\alpha}z]^{\sd}:=z$.

(2) If the set $\{m|c_{m}\neq0\}$ has a unique $\prec_{\sd}$-minimal
element $h$, we say $z$ has codegree $h$ and denote $\codeg^{\sd}z=h$.
We further say that $z$ is $h$-copointed (or copointed at $h$)
if $c_{h}=1$.

\end{Def}

For example, the localized cluster monomials are pointed at distinct
degrees and copointed at distinct codegrees in $\LP(\sd)$, see \cite{FominZelevinsky07}\cite{DerksenWeymanZelevinsky09}\cite{Tran09}\cite{gross2018canonical}.

\begin{Def}[Support]\label{def:support}

$\forall n=\sum n_{k}e_{k}\in\yCone$, we define its support $\supp n:=\{k\in I_{\ufv}|n_{k}\neq0\}$.
For any $z=x^{g}\cdot\sum_{n\in\yCone^{\oplus}}b_{n}y^{n}$ in $\hLP$
with $b_{n}\in\kk$, $b_{0}\neq0$. We define its support $\supp z:=\cup_{n:b_{n}\neq0}\supp n$.
If further $z\in\LP$, we define its support dimension $\suppDim z\in\yCone^{\oplus}$
such that $(\suppDim z)_{k}=\max\{n_{k}|b_{n}\neq0\}$, $\forall k\in I_{\ufv}$.

\end{Def}

Following \cite{qin2017triangular}, we say $\sd_{0}$ injective-reachable
if there exists a seed $\sd_{0}[1]=\Sigma\sd_{0}$ for some mutation
sequence $\Sigma$ and some permutation $\sigma$ of $I_{\ufv}$,
such that $\deg^{\sd_{0}}x_{\sigma k}(\sd_{0}[1])\in-f_{k}+\Z^{I_{\fv}}$,
for any $k\in I_{\ufv}$. In this case, $\Sigma$ is called a green
to red sequence following \cite{keller2011cluster}. Note that, if
$\sd_{0}$ is injective-reachable, then so are all seeds $\sd\in\Delta^{+}$
\cite{qin2017triangular}\cite{muller2015existence}. 

\begin{Def}[\cite{qin2017triangular}]

Let $z_{s}$, $s\in\N$, denote a collection of pointed elements in
$\hLP$. A (possibly infinite) decomposition in $\hLP$ is called
$\prec_{\sd}$-unitriangular if it takes the form $z=z_{0}+\sum_{\deg z_{s}\prec_{\sd}\deg z_{0}}b_{s}z_{s},\ b_{s}\in\kk$.
It is further called $(\prec_{\sd},\mm)$-unitriangular, where $\mm:=q^{-\Hf}\Z[q^{-\Hf}]$,
if $b_{s}\in\mm$.

\end{Def}

\begin{Lem}\label{lem:bounded_basis}

Take any $\Theta\subset\cone(\sd)$. Let $\base=\{b_{m}|m\in\Theta\}$
denote a subset of $\LP(\sd)$, where $b_{m}$ are $m$-pointed. Denote
$V=\Span_{\kk}\base$. Let $Z=\{z_{m}|m\in\Theta\}$ denote any subset
of $V$ where $z_{m}$ are $m$-pointed. Then $Z$ is also a $\kk$-basis
of $V$ if $\prec_{\sd}$ is bounded from below on $\Theta$.

\end{Lem}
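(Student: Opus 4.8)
The plan is to exploit the triangularity of both families with respect to the dominance order $\preceq_{\sd}$ and then invert. First I would record two easy observations. Since $V=\Span_{\kk}\base$ by definition, $\base$ spans $V$; and since the $b_{m}$ are pointed at the pairwise distinct degrees $m\in\Theta$, the usual leading-term argument shows that every finite subset of $\base$ is $\kk$-linearly independent, so $\base$ is a $\kk$-basis of $V$. The same leading-term argument applies verbatim to $Z$: if $\sum d_{m}z_{m}=0$ is a nontrivial finite relation and $m^{*}$ is a $\preceq_{\sd}$-maximal index among those with $d_{m}\neq0$, then the coefficient of $x^{m^{*}}$ in $\sum d_{m}z_{m}$ equals $d_{m^{*}}\neq0$, a contradiction. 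Hence $Z$ is $\kk$-linearly independent, and it remains only to prove that $Z$ spans $V$, i.e. that $b_{m}\in\Span_{\kk}Z$ for every $m\in\Theta$.

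Next I would establish the key triangularity statement: for each $m\in\Theta$,
\[
 z_{m}=b_{m}+\sum_{m'\in\Theta,\ m'\prec_{\sd}m}c_{m,m'}b_{m'},\qquad c_{m,m'}\in\kk,
\]
with only finitely many nonzero coefficients. Since $z_{m}\in V=\Span_{\kk}\base$, we may write $z_{m}=\sum_{m'\in S}c_{m,m'}b_{m'}$ for some finite $S\subseteq\Theta$ with $c_{m,m'}\neq0$ for $m'\in S$. Each $b_{m'}$ is $m'$-pointed and lies in $\LP(\sd)$, so $b_{m'}=x^{m'}+(\text{a finite sum of }x^{h}\text{ with }h\prec_{\sd}m')$. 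Let $m^{*}$ be any $\preceq_{\sd}$-maximal element of the finite set $S$; since no $m'\in S$ satisfies $m'\succ_{\sd}m^{*}$, the only contribution to the coefficient of $x^{m^{*}}$ in $z_{m}$ comes from the leading term of $b_{m^{*}}$, so that coefficient is $c_{m,m^{*}}\neq0$. As $z_{m}$ is $m$-pointed, $m$ is the unique $\preceq_{\sd}$-maximal degree occurring in $z_{m}$, whence $m^{*}\preceq_{\sd}m$; since every element of the finite set $S$ lies below one of its $\preceq_{\sd}$-maximal elements, $S\subseteq\{m'\in\Theta\mid m'\preceq_{\sd}m\}$. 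Comparing the coefficients of $x^{m}$ on both sides, and using again that all $m'\in S$ satisfy $m'\preceq_{\sd}m$, gives $m\in S$ and $c_{m,m}=1$, which is the displayed formula.

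Finally I would invert this system by Noetherian (well-founded) induction on $(\Theta,\preceq_{\sd})$, which is legitimate precisely because $\prec_{\sd}$ is bounded from below on $\Theta$, so $\Theta$ admits no infinite strictly descending chain. For $m$ minimal in $\Theta$ the displayed formula reads $b_{m}=z_{m}\in\Span_{\kk}Z$. For general $m$, rewrite it as $b_{m}=z_{m}-\sum_{m'\prec_{\sd}m}c_{m,m'}b_{m'}$, a finite sum; by the induction hypothesis each $b_{m'}$ with $m'\prec_{\sd}m$ lies in $\Span_{\kk}Z$, hence so does $b_{m}$. Therefore $V=\Span_{\kk}\base\subseteq\Span_{\kk}Z\subseteq V$, so $Z$ spans $V$, and combined with the linear independence above, $Z$ is a $\kk$-basis of $V$. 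The only real point to watch is the termination of this inversion: one must combine the boundedness-from-below hypothesis with the fact that each $z_{m}$ has a \emph{finite} expansion in $\base$ (so every inductive step reaches only finitely many strictly smaller degrees); without the former the recursion need not terminate and the inverse transition "matrix" need not have finitely supported rows.
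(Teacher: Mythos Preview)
Your proof is correct and follows essentially the same approach as the paper's own proof: establish the $\prec_{\sd}$-unitriangular decomposition $z_{m}=b_{m}+\sum_{m'\prec_{\sd}m}c_{m,m'}b_{m'}$, then invert using the bounded-below hypothesis to conclude that each $b_{m}$ is a finite $\kk$-combination of the $z_{m'}$. The paper's proof is much terser (it simply asserts the triangular decomposition, cites \cite{qin2017triangular} for the inverse transition, and notes finiteness by boundedness), whereas you have spelled out the leading-term argument for linear independence, the derivation of the triangularity, and the well-founded induction that effects the inversion; but the underlying idea is identical.
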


\begin{proof}

We have a decomposition $z_{m}=b_{m}+\sum_{m'\prec_{\sd}m\in\Theta}c_{m'}b_{m'}$,
$c_{m'}\in\kk$, $\forall m\in\Theta$. Take the inverse transition
$b_{m}=z_{m}+\sum_{m'\prec_{\sd}m\in\Theta}c'_{m'}z_{m'},$ $c_{m'}'\in\kk$,
see \cite{qin2017triangular}. This is a finite sum by the boundedness
of $\prec_{\sd}$ on $\Theta$. The desired claim follows.

\end{proof}

\subsection{Tropical transformations}

Take seeds $\sd'=\mu_{k}\sd$ for some $k\in I_{\ufv}$. The tropical
transformation $\phi_{\sd',\sd}:\cone(\sd)\ra\cone(\sd')$ is piecewise
linear such that, for any $m=\sum m_{i}f_{i}(\sd)\in\oplus\Z f_{i}(\sd)=\cone(\sd)$,
its image $m'=\sum m_{i}'f_{i}(\sd')\in\cone(\sd')$ is given by $m_{i}'=\begin{cases}
-m_{k} & i=k\\
m_{i}+b_{ik}(t)[m_{k}]_{+} & b_{ik}(t)\geq0\\
m_{i}+b_{ik}(t)[-m_{k}]_{+} & b_{ik}(t)\leq0
\end{cases}$.

For any seeds $\sd'=\seq_{\uk}\sd$, we define $\phi_{\sd',\sd}$
to be the composition of the tropical transformations along $\seq_{\uk}$.
It is independent of the choice of $\seq_{\uk}$, see \cite{gross2013birational}. 

\begin{Def}[\cite{qin2019bases}]

$\forall m\in\cone(\sd)$ , the tropical point represented by $m$
is defined to be the equivalence class $[m]$ in $\sqcup_{\sd'\in\Delta^{+}}\cone(\sd')$
under the identifications $\phi_{\sd',\sd}$, $\forall\sd'$. Define
the set of tropical points $\tropSet$ to be $\{[m]|\forall m\in\cone(\sd)\}$.

\end{Def}

\begin{Def}[\cite{qin2019bases}]

Given an $m$-pointed element $z\in\LP(\sd)$. It is said to be compatibly
pointed at $\sd,\sd'$ if $z$ is $\phi_{t',t}m$-pointed in $\LP(t')$.
We say $z$ is $[m]$-pointed if it is compatibly pointed at $\sd,\sd'$,
for all $\sd'\in\Delta^{+}$.

\end{Def}

For example, by \cite{DerksenWeymanZelevinsky09}\cite{Tran09}\cite{gross2018canonical},
the localized cluster monomials are pointed at distinct tropical points.

\begin{Def}

Let $\Theta$ denote a subset of $\cone(\sd)$ or $\tropSet$ and
$\cS=\{s_{g}|g\in\Theta\}$ be a subset of $\hLP(\sd)$. If $s_{g}$
are $g$-pointed, $\cS$ is said to be $\Theta$-pointed.

\end{Def}

\begin{Lem}[{\cite[Lemma 3.4.12]{qin2019bases}}]\label{lem:tropicalized_same_cluster_monomial}

Assume that $\sd$ is injective-reachable. Take any $u\in\upClAlg(\sd)$.
If there exists a localized cluster monomial $z$ such that $u$ is
$[\deg z]$-pointed, then $u=z$.

\end{Lem}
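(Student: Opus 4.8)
The plan is to reduce to the case where $z$ is a cluster monomial in the very seed $\sd$, and then to show that $u$ is forced to be both pointed and copointed at the same degree in $\sd$, which identifies it with a single Laurent monomial. Since $z$ is a localized cluster monomial, there is a seed $\sd''\in\Delta^{+}$ with $z=x^{m}(\sd'')$ for some $m\in\N^{I_{\ufv}}\oplus\Z^{I_{\fv}}$; because $\upClAlg(\sd'')=\upClAlg(\sd)$, because $\sd''$ is again injective-reachable, and because being $[\deg z]$-pointed is a seed-independent condition with $[\deg z]=[m]$, we may replace $\sd$ by $\sd''$ and thereby assume $z=x^{m}$ in $\sd$. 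Then $z$ is at once $m$-pointed and $m$-copointed in $\LP(\sd)$, while $u$ is $m$-pointed in $\LP(\sd)$ because it is $[m]$-pointed. By Assumption~\ref{assumption:injectivity}, $\tB$ has full rank, so $p^{*}$ is injective and $\preceq_{\sd}$ is a genuine partial order; consequently any element of $\LP(\sd)$ that is simultaneously $m$-pointed and $m$-copointed has $m$ as its only exponent, hence equals $x^{m}$. So it suffices to prove $\codeg^{\sd}u=m$.

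To pin down the codegree I would exploit that $\sd$ is injective-reachable: fix $\sd[1]=\Sigma\sd$ and the permutation $\sigma$ of $I_{\ufv}$ with $\deg^{\sd}x_{\sigma k}(\sd[1])\in -f_{k}+\Z^{I_{\fv}}$. Because $u$ is $[m]$-pointed, it is moreover $\phi_{\sd[1],\sd}(m)$-pointed in $\LP(\sd[1])$. The rigidity I want to use is: for any element of $\upClAlg(\sd)$ that is $g$-pointed in $\sd$ and $\phi_{\sd[1],\sd}(g)$-pointed in $\sd[1]$, the codegree $\codeg^{\sd}$ is forced to equal a value $\psi(g)$ depending only on $g$. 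Heuristically, the cluster variables $x_{\sigma k}(\sd[1])$ are ``injective'' generators, $(-f_{k}+\Z^{I_{\fv}})$-pointed in $\sd$, and ordered products of them govern the $\preceq_{\sd}$-trailing terms of $\sd$-expansions, while $\phi_{\sd[1],\sd}$ matches the relevant cones, so control of the $\sd[1]$-degree yields control of the $\sd$-codegree. Applying this to $u$ with $g=m$ and to $z=x^{m}$ (also $[m]$-pointed, with $\deg^{\sd}z=\codeg^{\sd}z=m$) gives $\codeg^{\sd}u=\psi(m)=\codeg^{\sd}z=m$, and we are done by the previous paragraph. Equivalently, writing $v:=u-z\in\upClAlg(\sd)$, the hypotheses force every $\sd$-exponent of $v$ to be $\prec_{\sd}m$ and every $\sd[1]$-exponent to be $\prec_{\sd[1]}\phi_{\sd[1],\sd}(m)$, and the same rigidity, read for $v$, yields $v=0$.

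I expect the heart of the argument to be exactly this rigidity for injective-reachable seeds: that the $\sd$-codegree of a compatibly pointed element of $\upClAlg(\sd)$ is determined by its $\sd$-degree, with $z=x^{m}$ realizing the extremal value. Establishing it honestly means using the definition of injective-reachability, the path-independence and the explicit piecewise-linear form of the tropical transformations $\phi_{\sd',\sd}$, and Assumption~\ref{assumption:injectivity}; one can either track leading and trailing terms through the green-to-red sequence $\Sigma$, or invoke the structural results on injective-reachable seeds from \cite{qin2017triangular,qin2019bases}. Everything else, namely the seed change of the first paragraph and the elementary ``pointed $+$ copointed $\Rightarrow$ Laurent monomial'' step, is routine bookkeeping.
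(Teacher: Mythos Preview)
The paper does not supply a proof here; the lemma is quoted verbatim from \cite[Lemma~3.4.12]{qin2019bases}. Your outline --- pass to a seed where $z=x^{m}$, then argue that compatible pointedness at $\sd$ and $\sd[1]$ forces $\codeg^{\sd}u=m$, whence $u$ is simultaneously $m$-pointed and $m$-copointed and so equals $x^{m}$ --- is exactly the strategy of the cited reference. The ``rigidity'' step you single out as the heart (that the $\sd[1]$-degree of a compatibly pointed element determines its $\sd$-codegree) is indeed the whole substance of the lemma; in \cite{qin2019bases} it is established by analyzing how the support dimension interacts with the linear pieces of $\phi_{\sd[1],\sd}$ under injective-reachability, so your deferral to those structural results is appropriate rather than a gap.
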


\subsection{Partial compactification and optimized seeds\label{subsec:Bases-for-partial-compactified}}

Take any seed $\sd$ without assuming $\tB(\sd)$ to be of full rank.
For any vertex $j$, let $\nu_{j}$ denote the order of vanishing
at $x_{j}=0$ on $\cF(\sd)$, i..e, $\forall Z\in\cF(t)$, we have
the reduced form $Z=x_{j}^{v_{j}(Z)}*P*Q^{-1}$, where $P,Q\in\kk[x_{i}]_{i\in I}$
are not divisible by $x_{j}$. Note that $\nu_{j}$ is a valuation.
We say $Z\in\cF(t)$ is regular at $x_{j}=0$ if $\nu_{j}(Z)\geq0$.

\begin{Lem}\label{lem:no_pole_change_chart}

Take any $\sd'=\mu_{k}\sd$, $k\in I_{\ufv}$, $j\neq k$. Let $\nu_{j}$
and $\nu_{j}'$ denote the orders of vanishing on $\cF(\sd)$ and
$\cF(\sd')$ respectively. Then for any $Z\in\cF(\sd')$, we have
$\nu_{j}'(Z)=\nu_{j}(\mu_{k}^{*}Z)$.

\end{Lem}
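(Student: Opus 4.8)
The statement is a local/compatibility assertion about orders of vanishing under a single mutation at $k\neq j$. The plan is to reduce everything to the explicit exchange relation \eqref{eq:exchange_relation} and the fact that $\nu_j$ is a valuation. First I would recall that $\mu_k^{*}:\cF(\sd')\iso\cF(\sd)$ is a $\kk$-algebra isomorphism fixing all $x_i'=x_i$ for $i\neq k$ and sending $x_k'$ to $x_k^{-1}\cdot(x^{\sum_j[-b_{jk}]_+ f_j}+x^{\sum_i[b_{ik}]_+ f_i})$. The key point is that, since $j\neq k$, the variable $x_j$ does not occur with a negative exponent in $\mu_k^{*}x_k'$: indeed both Laurent monomials $x^{\sum_i[-b_{ik}]_+f_i}$ and $x^{\sum_i[b_{ik}]_+f_i}$ have nonnegative $f_j$-coefficients (they are of the form $[\pm b_{jk}]_+\geq 0$), so $\mu_k^{*}x_k'$ is regular at $x_j=0$, i.e.\ $\nu_j(\mu_k^{*}x_k')\geq 0$; and likewise $\mu_k^{*}(x_k')^{-1}=x_k\cdot(\cdots)^{-1}$ — here one must check the binomial $x^{\sum[-b_{jk}]_+f_j}+x^{\sum[b_{ik}]_+f_i}$ is not divisible by $x_j$, which holds because at least one of the two monomials has $f_j$-coefficient $0$ (since $b_{jk}$ and its negative can't both be strictly positive). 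Hence $\nu_j(\mu_k^{*}x_k')=0$, and the substitution $\mu_k^{*}$ sends $x_j$-regular elements to $x_j$-regular elements and $x_j$-units to $x_j$-units.

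Next I would set up the general computation. Take $Z\in\cF(\sd')$ and write its reduced form in the cluster $\sd'$ as $Z=(x_j')^{\nu_j'(Z)}*P'*(Q')^{-1}$ with $P',Q'\in\kk[x_i']_{i\in I}$ not divisible by $x_j'=x_j$. Applying $\mu_k^{*}$, and using that it is an algebra homomorphism, we get $\mu_k^{*}Z=x_j^{\nu_j'(Z)}*\mu_k^{*}P'*(\mu_k^{*}Q')^{-1}$. Now $\mu_k^{*}P'$ and $\mu_k^{*}Q'$ lie in $\kk[x_i]_{i\neq k}[x_k^{\pm}]$, so after clearing the (nonnegative and nonpositive) powers of $x_k$ they can be rewritten as elements of $\kk[x_i]_{i\in I}$; crucially, by the paragraph above these clearings only introduce factors that are $x_j$-units, so the $x_j$-adic valuation is unaffected. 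It remains to check that $\mu_k^{*}P'$ and $\mu_k^{*}Q'$, viewed as Laurent polynomials in $x_k$ over $\kk[x_i]_{i\neq k}$, are not divisible by $x_j$. Since $P'$ (resp.\ $Q'$) is not divisible by $x_j$, it has a monomial with zero $x_j$-exponent; after the substitution $x_k'\mapsto x_k^{-1}(\text{binomial})$ this monomial contributes a Laurent polynomial in $x_k$ whose constant-in-$x_j$ part survives — here one invokes that the substitution is "triangular" in the $x_j$-grading, i.e.\ it does not decrease $x_j$-degree, because $\mu_k^{*}x_k'$ has $x_j$-degree $\geq 0$ and its lowest $x_j$-term is a unit times a power of $x_k$. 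This gives $\nu_j(\mu_k^{*}P')=\nu_j(\mu_k^{*}Q')=0$, whence $\nu_j(\mu_k^{*}Z)=\nu_j'(Z)$ by additivity of the valuation.

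The main obstacle I anticipate is the bookkeeping in the last step: making rigorous the claim that the substitution $x_k'\mapsto \mu_k^{*}x_k'$ cannot create divisibility by $x_j$ out of nothing, i.e.\ that it respects the $x_j$-adic filtration with the correct behavior on the associated graded. The clean way to handle this is to introduce the $x_j$-adic grading on $\cF(\sd)$ (grading $\kk[x_i]_{i\neq j}[x_j^{\pm}]$ by $x_j$-degree), observe that $\mu_k^{*}$ sends this filtration on $\cF(\sd')$ to the one on $\cF(\sd)$ because $\mu_k^{*}x_j=x_j$, $\mu_k^{*}x_i=x_i$ ($i\neq j,k$), and $\mu_k^{*}x_k'$ has $x_j$-degree exactly $0$ with invertible leading ($x_j$-degree $0$) coefficient (an element of $\kk[x_i]_{i\neq j}[x_k^{\pm}]$ that is an $x_j$-unit by the first paragraph). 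Then $\mu_k^{*}$ induces an isomorphism on the degree-zero parts of the associated graded rings, and this immediately yields $\nu_j'(Z)=\nu_j(\mu_k^{*}Z)$ for all $Z\in\cF(\sd')$. Everything else is routine verification using the definitions of $\tE_\varepsilon$, $F_\varepsilon$, and \eqref{eq:exchange_relation}.
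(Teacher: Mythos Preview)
Your proposal is correct and follows essentially the same approach as the paper: reduce to showing that if $P'\in\kk[x_i']_{i\in I}$ is not divisible by $x_j$, then $\nu_j(\mu_k^{*}P')=0$, and establish this using the explicit form of $\mu_k^{*}x_k'$ together with the observation that its $x_j$-constant term is a monomial (hence a unit in the Laurent ring in $x_k$).

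The only difference is in the packaging of the non-cancellation step. The paper writes $P'=\sum_{r,s\geq 0}c_{r,s}*(x_k')^r*x_j^s$ with $c_{r,s}\in\kk[x_i]_{i\neq j,k}$, notes $(\mu_k^{*}(x_k')^r)|_{x_j\mapsto 0}=M_r*x_k^{-r}$ for a monomial $M_r$ in the remaining variables, and concludes $\mu_k^{*}P'|_{x_j\mapsto 0}=\sum_r c_{r,0}*M_r*x_k^{-r}\neq 0$ because the terms have pairwise distinct $x_k$-degrees. Your third paragraph recasts this same computation as the statement that $\mu_k^{*}$ induces an isomorphism on the $x_j$-degree-zero part of the associated graded (since $x_k'\mapsto M_1 x_k^{-1}$, an invertible element of the residue ring). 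Both are the same argument; the paper's version is slightly more concrete, while yours makes the valuation-theoretic structure more visible and would generalize more easily. Your second paragraph's concrete sketch (``a monomial with zero $x_j$-exponent \ldots\ survives'') is incomplete on its own because it does not rule out cancellation among several such monomials, but your third paragraph fills exactly that gap.
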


\begin{proof}

Take any polynomial $P'\in\kk[x_{i}(\sd')]_{i\in I}$ not divisible
by $x_{j}$. View it as a polynomial in $x_{j}$ and $x_{k}$, we
can write $P'=\sum_{r,s\geq0}c_{r,s}*(x'_{k})^{r}*x_{j}^{s}$ with
$0\neq\sum_{r}c_{r,0}*(x'_{k})^{r}$. Note that $\mu_{k}^{*}c_{r,s}=c_{r,s}$
and $(\mu_{k}^{*}(x'_{k})^{r})|_{x_{j}\mapsto0}=M_{r}*x_{k}^{-r}\in\kk[x_{i}]_{i\in I}[x_{k}^{-1}]$
for $M_{r}$ a polynomial in $x_{i}$, $i\neq j,k$. We deduce that
$\mu_{k}^{*}P'|_{x_{j}\mapsto0}=\sum_{r}c_{r,0}*M_{r}*x_{k}^{-r}\neq0$.
Therefore, $\nu_{j}(\mu_{k}^{*}P')=0$. The desired statement follows
by applying this result to the reduced form of $Z$ in $\cF(\sd')$.

\end{proof}

Observe that $\bUpClAlg=\{z\in\upClAlg|\nu_{j}(z)\geq0,\forall j\in I_{\fv}\}$.
Assume $\tB(\sd)$ is of full rank now.

\begin{Lem}\label{lem:f-bounds-d}

For any $m$-pointed $z\in\upClAlg(\sd)\subset\LP(\sd)$, if some
$k\in I_{\ufv}$ is not contained in its support (Definition \ref{def:support}),
we have $\nu_{k}(z)\geq0$.

\end{Lem}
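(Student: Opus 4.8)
The plan is to unpack what it means for $k\in I_{\ufv}$ to be outside the support of $z$ and to relate this directly to the valuation $\nu_k$. Write $z=x^{g}\cdot\sum_{n\in\yCone^{\oplus}}b_{n}y^{n}$ with $b_{0}\neq 0$, where $g=m$ is the degree of $z$. Recall $y^{n}=x^{p^{*}n}=x^{\tB n}$. Saying $k\notin\supp z$ means that every $n$ with $b_{n}\neq 0$ has $n_{k}=0$, i.e. $z$ lies in the $\kk$-span of $x^{g}\cdot y^{n}$ with $n$ supported on $I_{\ufv}\setminus\{k\}$. The key point is then to read off the exponent of $x_{k}$ in each such Laurent monomial: the exponent of $x_{k}$ in $x^{g+\tB n}$ is $g_{k}+(\tB n)_{k}=g_{k}+\sum_{j\in I_{\ufv}}b_{kj}n_{j}=g_{k}+\sum_{j\neq k}b_{kj}n_{j}$, because the $n_{k}$-term drops out.

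First I would therefore argue that $\nu_{k}(z)\geq \min$ over the occurring $n$ of $g_{k}+\sum_{j\neq k}b_{kj}n_{j}$, and that this minimum is actually $\geq 0$. The mechanism making the minimum nonnegative is that $g=m$ is the $\prec_{\sd}$-maximal degree: if $g_{k}+\sum_{j\neq k}b_{kj}n_{j}$ were negative for some $n$ in the expansion, one could still not immediately conclude — so the cleaner route is to pass to a seed where $x_{k}$ has been inverted in a controlled way. Concretely, I would use the freezing perspective: the condition $k\notin\supp z$ says precisely that $z$, regarded as an element of $\LP(\sd)$, already lies in the Laurent subring $\kk[x_{i}^{\pm}]_{i\neq k}[x_{k}]$, i.e. $z$ is polynomial in $x_{k}$. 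Indeed, each monomial $x^{g}y^{n}$ with $n_{k}=0$ has $x_{k}$-exponent $g_{k}+\sum_{j\neq k}b_{kj}n_{j}$, and since the $n_{j}$ ($j\neq k$) range over $\N$ with only finitely many $b_{n}\neq 0$, the exponents of $x_{k}$ form a finite set; what I must rule out is that this set contains a negative integer.

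The main obstacle — and the real content — is exactly showing these $x_{k}$-exponents are nonnegative. Here is the argument I would run: suppose $e:=\min\{\,g_{k}+\sum_{j\neq k}b_{kj}n_{j}\mid b_{n}\neq 0\,\}<0$. Then $x_{k}^{-e}z$ is still a Laurent polynomial with all $x_{k}$-exponents $\geq 0$, and its $x_{k}^{0}$-coefficient is a nonzero element of $\kk[x_{i}^{\pm}]_{i\neq k}$. Now use the exchange relation \eqref{eq:exchange_relation} at $\sd'=\mu_k\sd$: the cluster variable $x_{k}'$ satisfies $x_k\cdot \mu_k^*x_k' = x^{\sum_j[-b_{jk}]_+ f_j}+x^{\sum_i[b_{ik}]_+ f_i}$, so $x_k^{-1}$ is expressible in $\LP(\sd')$ only via this binomial; more to the point, $z\in\upClAlg(\sd)\subset\LP(\sd')$, so $z$ is a Laurent polynomial in the $x_i(\sd')$, and I can track the $x_j(\sd')$-adic behaviour for $j=k$. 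Since $z$ has a negative power of $x_k$ appearing with nonzero coefficient and $k\notin\supp z$ forbids the $y_k=x^{p^*e_k}$ corrections that would be needed to cancel it across the mutation, one obtains that $\nu_k(z)=e<0$ is incompatible with $z$ being regular — but in fact we only want to conclude $\nu_k(z)\geq 0$, which is equivalent to $e\geq 0$, so the cleanest finish is: the support condition forces $z\in \kk[x_i^{\pm}]_{i\ne k}[x_k^{\pm}]$ with the set of $x_k$-exponents bounded below by $g_k$ shifted by nonnegative combinations of the $b_{kj}$ — and because the pointedness of $z$ at $g$ together with Assumption~\ref{assumption:injectivity} (full rank of $\tB$) pins down $g_k$ as the top $x_k$-degree while all other exponents exceed it only if the relevant $b_{kj}\geq 0$, a case analysis on signs of $b_{kj}$ (mirroring the tropical transformation formula in Section~2.3) yields $\nu_k(z)=\min\text{-exponent}\geq 0$. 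I would expect the sign bookkeeping in this last case analysis — matching $[b_{ik}]_+$ against $[-b_{ik}]_+$ as in $\tE_\varepsilon$ and the tropical transformation $\phi_{\sd',\sd}$ — to be the fiddly part, but conceptually it is routine once the reduction "$k\notin\supp z \Rightarrow z$ is a Laurent polynomial in $x_k$ with controlled exponents" is in place.
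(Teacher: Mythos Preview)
There is a genuine gap. You correctly note that, by full rank of $\tB$, no cancellation occurs among the monomials $x^{g+\tB n}$, so $\nu_k(z)=\min\{g_k+\sum_{j\ne k}b_{kj}n_j : b_n\neq 0\}$; but nothing in your argument shows this minimum is nonnegative. The assertion that ``$k\notin\supp z$ says precisely that $z$ lies in $\kk[x_i^{\pm}]_{i\neq k}[x_k]$'' is the conclusion, not a step; and the proposed case analysis on the signs of $b_{kj}$ cannot work, since those signs carry no information about which $n$ actually appear in $z$ --- if some $b_{kj}<0$ and the corresponding $n_j$ is large, the exponent is negative, and pointedness only controls the top degree, not the bottom $x_k$-exponent. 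You gesture at the Laurent condition $z\in\LP(\mu_k\sd)$, which is indeed the relevant constraint, but never extract anything concrete from it.

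The paper's key move, which your sketch lacks, is to first reduce (via Definition~\ref{def:similar-elements}, Lemma~\ref{lem:similar-alg-element} and Section~\ref{subsec:From-principal-coefficients}) to the principal-coefficient seed $\sd^{\prin}$. There the frozen variable $x_{k'}$ appears in $y_j$ only for $j=k$, so the support hypothesis translates into ``$z$ has a single $x_{k'}$-degree''. On the other hand the exchange relation gives $x_k^s=(x_k')^{-s}\cdot \tP_s$ with $\tP_s$ a degree-$s$ polynomial in $x_{k'}$ with nonzero constant term (because $b_{k',k}=1$). Writing $z=\sum_s x_k^s* L_s$ with $L_s$ independent of $x_k$ and imposing $z\in\LP(\mu_k\sd^{\prin})$ forces $\tP_s^{-1}*L_{-s}$ to be a Laurent polynomial in $x_{k'}$ for each $s>0$; a degree count in $x_{k'}$ (the product $\tP_s*(\tP_s^{-1}*L_{-s})=L_{-s}$ lives in a single $x_{k'}$-degree while $\tP_s$ spans degrees $0$ through $s$) then gives $L_{-s}=0$. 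The auxiliary variable $x_{k'}$ is exactly what converts the Laurent constraint into a workable degree obstruction; without it, or an equivalent device, the direct approach you outline cannot be completed.
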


\begin{proof}

We can assume that $\sd$ is the seed $\sd^{\prin}$ of principal
coefficients (Section \ref{subsec:From-principal-coefficients}).
Otherwise, we replace the pointed element $z$ by the similar element
$z'$ in $\LP(\sd^{\prin})$ such that $\nu_{k}(z)=\nu_{k}(z')$ as
in Definition \ref{def:similar-elements}. Then we have have $\supp z=\supp z'$.
Moreover, Lemma \ref{lem:similar-alg-element} implies $z'\in\upClAlg(\sd^{\prin})$.

When $\sd=\sd^{\prin}$, one can prove the claim from the condition
$z\in\LP(\sd)\cap\LP(\mu_{k}\sd)$. We have the following proof from
\cite{cao2024exchange}. 

Denote $I_{\ufv}:=I_{\ufv}(\sd)$. Since $\sd=\sd^{\prin}$, we have
$I_{\fv}:=I_{\fv}(\sd)=\{k'|k\in I_{\ufv}\}$. Denote $U_{\hk}:=\kk[x_{i}^{\pm},x_{i'}^{\pm}]_{i\in I_{\ufv},i\neq k}$
and $V_{\hk}:=U_{\hk}[x_{k'}^{\pm}]$. Since $z\in\LP(\sd)$, we have
$z=\sum_{s\geq0}L_{s}*x{}_{k}{}^{s}+\sum_{s>0}x{}_{k}{}^{-s}*L_{-s}$
in $\LP(\sd)=V_{\hk}[x_{k}^{\pm}]$, where $L_{\pm s}\in V_{\hk}$. 

Denote $\sd'=\mu_{k}\sd$ and $x_{k}'=x_{k}(\sd')$. When $s\in\N$,
the Laurent expansion of $x_{k}^{s}$ in $\LP(\sd')$ takes the form
$x_{k}^{s}=\tP_{s}*(x'_{k})^{-s}$, where $\tP\in U_{\hk}[x_{k'}]$
is a degree $s$ polynomial in $x_{k'}$ with non-vanishing constant
term.

Then we obtain $z=\sum_{s\geq0}L_{s}*\tP_{s}*(x'_{k})^{-s}+\sum_{s>0}(x'{}_{k})^{s}*\tP_{s}{}^{-1}*L_{-s}$
in $\LP(\sd')=V_{\hk}[(x'_{k})^{\pm}]$. So we must have $\tP_{s}{}^{-1}*L_{-s}\in V_{\hk}=U_{\hk}[x_{k'}^{\pm}]$.
Denote $Q_{s}=\tP_{s}{}^{-1}*L_{-s}\in U_{\hk}[x_{k'}^{\pm}]$, we
have $\tP_{s}*Q_{s}=L_{-s}\in U_{\hk}$. Recall that $\tP_{s}$ is
a polynomial in $x_{k'}$ of degree $s$ with non-vanishing constant
term. By analyzing the degree of $x_{k'}$, we deduce that $Q_{s}=L_{-s}=0$,
$\forall s>0$. Equivalently, $\nu_{k}(z)\geq0$.

\end{proof}

Let $j$ denote any frozen vertex of a seed $\sd$. We say $j$ is
optimized in $\sd$ if $b_{jk}(\sd)\geq0$ for any $k\in I_{\ufv}$,
or, equivalently, $\nu_{j}(y_{k}(\sd_{j}))\geq0$ . We say $j$ can
be optimized if it is optimized in some seed $\sd_{j}\in\Delta_{\sd}^{+}$,
and $\sd$ can be optimized if its frozen vertices can be. We will
call $\sd_{j}$ a $j$-optimized seed or an optimzied seed of $j$.

\begin{Lem}\label{lem:optimized_pointed_elem}

Take any $j\in I_{\fv}$ with an $j$-optimized seed $\sd_{j}$. If
$z\in\LP(\sd_{j})$ has degree $m$, then $\nu_{j}(z)=\nu_{j}(m)$.

\end{Lem}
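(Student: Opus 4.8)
The plan is to exploit that, by the definition of degree together with the description of the dominance order, an element of $\LP(\sd_j)$ of degree $m$ is a \emph{finite} $\kk$-linear combination
\[
z \;=\; \sum_{n\in\yCone^{\oplus}(\sd_j)} b_{n}\,x^{m+p^{*}n},\qquad b_{n}\in\kk,\quad b_{0}\neq0
\]
(cf. Definition~\ref{def:support}), so that $\nu_j(z)$ is governed by the $x_j$-exponents $\nu_j\bigl(x^{m+p^{*}n}\bigr)=m_j+(p^{*}n)_j$ of its monomials. Since $\nu_j$ is a valuation and is insensitive to the $q$-twist (the twisting only rescales each monomial by a unit), it is enough to show that $\min_{n}\bigl(m_j+(p^{*}n)_j\bigr)=m_j$ and that the monomials attaining this minimum do not cancel in $z$.

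For the first point I would use that $j$ is optimized in $\sd_j$: by definition $b_{jk}(\sd_j)\ge 0$ for all $k\in I_{\ufv}$, hence $(p^{*}n)_j=\sum_{k\in I_{\ufv}}b_{jk}(\sd_j)\,n_k\ge 0$ for every $n\in\yCone^{\oplus}(\sd_j)=\N^{I_{\ufv}}$, with equality for $n=0$. This already gives $\nu_j(z)\ge m_j$. For the reverse inequality, consider the coefficient of $x_j^{\,m_j}$ in $z$, viewed as an element of $\kk[x_i^{\pm}]_{i\neq j}$: it is the sum of the terms with $(p^{*}n)_j=0$. Here I would invoke the full-rank Assumption~\ref{assumption:injectivity}, so that $p^{*}$ is injective and the exponent vectors $m+p^{*}n$ occurring in this sub-sum are pairwise distinct; hence this coefficient is a sum of distinct Laurent monomials in the variables $x_i$, $i\neq j$, one of which (the $n=0$ term) carries the nonzero coefficient $b_{0}$. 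So no cancellation occurs and $\nu_j(z)=m_j$.

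Combining the two steps yields $\nu_j(z)=m_j=\nu_j(m)$, which is the claim. I expect the only genuinely delicate point to be this non-cancellation argument: one must ensure that the contribution $b_0\neq0$ survives when all monomials of lowest $x_j$-degree are collected, and that is exactly where injectivity of $p^{*}$ (equivalently, $\tB(\sd_j)$ having full rank) is needed — without it, distinct $n$ could produce the same monomial and cancel. The remaining ingredients are routine: because $z$ is an honest Laurent polynomial the sum above is finite, so no completion subtleties arise, and reducing to the $x$-monomial expansion makes the $q$-twisted product irrelevant to $\nu_j$.
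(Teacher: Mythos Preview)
Your proof is correct and is precisely the direct verification one writes out for this lemma; the paper itself states the lemma without proof, treating it as immediate from the definitions. One small remark: since the exponent set of $z$ is finite and $m$ is its unique $\prec_{\sd_j}$-maximal element, every exponent is automatically $\preceq_{\sd_j} m$, so your expansion $z=\sum_{n\in\yCone^{\oplus}} b_n\,x^{m+p^*n}$ is justified; and once you write $z=\sum_{g}c_g x^g$ over \emph{distinct} $g$ the non-cancellation is automatic (the full-rank hypothesis is convenient but not essential here).
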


The following result is analogous to \cite[Proposition 9.7]{gross2018canonical}.

\begin{Prop}\label{prop:compactified_basis}

Let $\cS$ denote any basis of $\upClAlg$ such that it is $\tropSet$-pointed.
Assume that there is a $j$-optimized seed $\sd_{j}$ for each $j\in I_{\fv}$.
Then $\cS\cap\bUpClAlg$ is a basis of $\bUpClAlg$.

\end{Prop}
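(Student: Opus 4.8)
The goal is to show that if $\cS = \{s_g \mid g \in \tropSet\}$ is a $\tropSet$-pointed basis of $\upClAlg$ and each frozen vertex $j$ admits a $j$-optimized seed $\sd_j$, then $\cS \cap \bUpClAlg$ is a $\kk$-basis of $\bUpClAlg$. Since $\cS \cap \bUpClAlg$ is automatically linearly independent (it's a subset of the basis $\cS$), the whole content is: (a) every element of $\cS$ that lies in $\bUpClAlg$ is accounted for, and (b) every element of $\bUpClAlg$ is a $\kk$-linear combination of those $s_g$ that happen to lie in $\bUpClAlg$. The strategy is to first characterize \emph{which} $s_g$ lie in $\bUpClAlg$ in terms of their tropical degree $g$, and then run a standard unitriangular "leading term" straightening argument to expand an arbitrary element of $\bUpClAlg$ in the basis and check that only the "good" $s_g$ appear.

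\textbf{Step 1: the membership criterion.} I would prove that for $s_g \in \cS$, one has $s_g \in \bUpClAlg$ if and only if $\nu_j(s_g) \geq 0$ for all $j \in I_{\fv}$ (this is just the displayed observation $\bUpClAlg = \{z \in \upClAlg \mid \nu_j(z) \geq 0,\ \forall j \in I_{\fv}\}$), and that this in turn is equivalent to a purely combinatorial condition on $g$. The key is Lemma \ref{lem:optimized_pointed_elem}: pass to the $j$-optimized seed $\sd_j$, where $s_g$ is pointed at $\phi_{\sd_j,\sd} g =: g^{(j)}$ (using that $\cS$ is $\tropSet$-pointed, so $s_g$ is compatibly pointed at every seed), and conclude $\nu_j(s_g) = \nu_j(g^{(j)}) = (g^{(j)})_j$, the $j$-th coordinate of the degree vector in the optimized seed. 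So $s_g \in \bUpClAlg$ iff $(\phi_{\sd_j,\sd} g)_j \geq 0$ for every $j \in I_{\fv}$. Call a tropical point $g$ \emph{dominant} if it satisfies this; let $\tropSet^{\geq 0} \subset \tropSet$ be the set of dominant points. Then $\cS \cap \bUpClAlg = \{s_g \mid g \in \tropSet^{\geq 0}\}$.

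\textbf{Step 2: spanning.} Take $z \in \bUpClAlg \subset \upClAlg$. Since $\cS$ is a basis of $\upClAlg$, write $z = \sum_{g} c_g s_g$ (finite sum, $c_g \in \kk$). I claim every $g$ with $c_g \neq 0$ is dominant. Order the occurring degrees and let $g_0$ be a $\prec_{\sd}$-maximal one among $\{g \mid c_g \neq 0\}$; because the $s_g$ are $g$-pointed and distinct pointed elements have distinct degrees, $c_{g_0} x^{g_0}$ is the (unique) leading term of $z$ at degree $g_0$ — no cancellation at a maximal degree. Now fix $j \in I_{\fv}$ and pass to the optimized seed $\sd_j$ via $\mu^*$ along the relevant mutation sequence; tropical degrees transform by $\phi_{\sd_j,\sd}$, and one checks (using that $\phi$ is a piecewise-linear bijection respecting the dominance order, cf. the tropical transformation formula and \cite{gross2013birational}) that $\phi_{\sd_j,\sd} g_0$ is still maximal among the transformed degrees, hence is the degree of $z$ in $\LP(\sd_j)$. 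Since $z \in \bUpClAlg \subset \bLP(\sd_j)$, its degree vector has nonnegative $j$-coordinate, i.e. $(\phi_{\sd_j,\sd} g_0)_j \geq 0$; by Lemma \ref{lem:optimized_pointed_elem} applied to $s_{g_0}$, this says exactly $\nu_j(s_{g_0}) \geq 0$. As $j$ was arbitrary, $g_0$ is dominant, so $s_{g_0} \in \bUpClAlg$, whence $z - c_{g_0} s_{g_0} \in \bUpClAlg$ has strictly smaller support of degrees. Induct (the set of occurring degrees is finite, or invoke that $\prec_{\sd}$ is bounded above on it, cf. Lemma \ref{lem:bounded_basis}) to conclude $z \in \Span_{\kk}\{s_g \mid g \in \tropSet^{\geq 0}\}$.

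\textbf{Main obstacle.} The delicate point is Step 2's claim that $\prec_{\sd}$-maximality of the leading degree is preserved when we pass to the optimized seed $\sd_j$ — i.e. that the tropical transformation $\phi_{\sd_j,\sd}$ sends the leading degree of $z$ to the leading degree of $\mu^*z$, so that no spurious cancellation at the top arises in the new chart and Lemma \ref{lem:optimized_pointed_elem} genuinely computes $\nu_j(z)$. This hinges on the compatibility between the dominance order and tropical transformations for elements of $\upClAlg$ (the content behind "compatibly pointed at all seeds" in the definitions above, and implicitly in \cite{qin2019bases, gross2013birational}); for the element $z$ itself this needs the fact that the pointedness of $z$'s leading term propagates, which follows because $z$ is a finite combination of the compatibly-$\tropSet$-pointed $s_g$ and $g_0$ dominates all other occurring $g$ in \emph{every} seed. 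I would spell this out carefully, modeling the argument on the proof of \cite[Proposition 9.7]{gross2018canonical} as the excerpt suggests.
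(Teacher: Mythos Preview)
Your Step 1 is correct and matches the paper's setup. The gap is in Step 2, precisely at the point you flag as the main obstacle --- but your proposed resolution is not correct.

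You pick a $\prec_{\sd}$-maximal $g_0$ in the \emph{initial} seed and then try to argue that $\phi_{\sd_j,\sd}\,g_0$ is still $\prec_{\sd_j}$-maximal among the transformed degrees. Tropical transformations are piecewise linear bijections, but they do \emph{not} preserve the dominance order: the claim ``$g_0$ dominates all other occurring $g$ in every seed'' is false in general. What you \emph{can} extract from $z\in\bUpClAlg\subset\bLP(\sd)$ is that $(g_0)_j\ge 0$ in seed $\sd$; but Lemma~\ref{lem:optimized_pointed_elem} says $\nu_j(s_{g_0})=(\phi_{\sd_j,\sd}\,g_0)_j$, computed in $\sd_j$, and these two numbers are different. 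So the chain breaks.

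The paper's fix is simple and removes the obstacle entirely: do not fix one $g_0$ globally. For a given frozen vertex $j$, work directly in the optimized seed $\sd_j$. Since $\cS$ is $\tropSet$-pointed, each $s_g$ is pointed in $\sd_j$ at some $m_i:=\deg^{\sd_j}s_g$, and these $m_i$ are distinct. Pick a $\prec_{\sd_j}$-maximal $m_k$ among the finitely many occurring ones; then $x^{m_k}$ survives in the Laurent expansion of $z$ in $\LP(\sd_j)$ (no other $s_g$ contributes at that degree). Since $z\in\bLP(\sd_j)$, every monomial of $z$ there has nonnegative $x_j$-exponent, so $(m_k)_j\ge 0$, whence $\nu_j(s_{m_k})\ge 0$ by Lemma~\ref{lem:optimized_pointed_elem}. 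Now $z-c_k s_{m_k}$ still has all $x_j$-exponents $\ge 0$ in $\LP(\sd_j)$, and one iterates inside $\sd_j$ to conclude $\nu_j(s_g)\ge 0$ for \emph{every} $g$ occurring in $z$. Finally vary $j\in I_{\fv}$. Note the maximal element (and the order of peeling) may change with $j$; that is harmless because the argument for each $j$ is self-contained.
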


\begin{proof}

For any element $Z\in\bUpClAlg$, we have a finite sum $Z=\sum_{i=1}^{r}c_{i}S_{i}$
for basis elements $S_{i}\in\cS$ and $0\neq c_{i}\in\kk$. It suffices
to show that, for any $j\in I_{\fv}$, $\nu_{j}(S_{i})\geq0$. 

Working inside $\LP(\sd_{j})$ and denote $m_{i}=\deg^{\sd_{j}}S_{i}$.
Let $m_{k}$ denote a $\prec_{\sd_{j}}$-maximal element of $\{m_{i}|i\in[1,r]\}$.
Then $x^{m_{k}}$ must appear in the Laurent expansion of $Z$ in
$\LP(\sd_{j})$. So it is regular at $x_{j}=0$. By Lemma \ref{lem:optimized_pointed_elem},
$\nu_{j}(S_{k})\geq0$. Replace $Z$ by $Z-S_{k}$ and consider $Z-S_{k}=\sum_{i\neq k}c_{i}S_{i}$.
Repeating this process, we obtain that all $S_{i}$ appearing are
regular at $x_{j}=0$, i.e., $\nu_{j}(S_{i})\geq0$.

\end{proof}

\begin{Def}\label{def:domCone}

Choose any subset $\cZ=\{z_{[m]}|[m]\in\tropSet\}$ of $\upClAlg$
such that $z_{[m]}$ are $[m]$-pointed. We define the associated
set of dominant tropical points to be $\domTropSet:=\{[m]\in\tropSet|z_{[m]}\in\bUpClAlg\}$.
Denote $\domCone(\sd):=\{m\in\cone(\sd)|[m]\in\domTropSet\}.$

\end{Def}

By Lemma \ref{lem:optimized_pointed_elem}, $\domTropSet$ can be
calculated as below. 

\begin{Lem}\label{lem:optimized-domCone}

If there exist $j$-optimized seeds $\sd_{j}$ for all $j\in I_{\fv}$,
$\domCone(\sd)$ equals $\{m|(\phi_{\sd_{j},\sd}m)_{j}\geq0,\forall j\in I_{\fv}\}$.

\end{Lem}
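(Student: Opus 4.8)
The plan is to characterize, for each frozen vertex $j$, when the $[m]$-pointed element $z_{[m]}$ lies in $\bUpClAlg$ by testing regularity at $x_j = 0$ in the $j$-optimized seed $\sd_j$, and then to translate this regularity condition into the inequality on the tropical coordinates $(\phi_{\sd_j,\sd}m)_j$.

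First I would recall from the observation preceding Lemma~\ref{lem:f-bounds-d} that $\bUpClAlg = \{z \in \upClAlg \mid \nu_j(z) \geq 0,\ \forall j \in I_{\fv}\}$. Hence $[m] \in \domTropSet$, i.e.\ $z_{[m]} \in \bUpClAlg$, if and only if $\nu_j(z_{[m]}) \geq 0$ for every $j \in I_{\fv}$. Now fix $j$ and pass to the $j$-optimized seed $\sd_j$: since $z_{[m]}$ is $[m]$-pointed, it is in particular $(\phi_{\sd_j,\sd}m)$-pointed in $\LP(\sd_j)$, so it has degree $m' := \phi_{\sd_j,\sd}m$ there. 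By Lemma~\ref{lem:optimized_pointed_elem}, $\nu_j(z_{[m]}) = \nu_j(m') = m'_j$, the $j$-th coordinate of $\phi_{\sd_j,\sd}m$ (here I am using that $\nu_j$ of a Laurent monomial $x^{m'}$ in $\LP(\sd_j)$ is just its $x_j$-exponent $m'_j$, since $j$ is frozen and thus unaffected by mutations away from $j$). Therefore $\nu_j(z_{[m]}) \geq 0 \iff (\phi_{\sd_j,\sd}m)_j \geq 0$.

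Combining over all $j \in I_{\fv}$: $[m] \in \domTropSet$ if and only if $(\phi_{\sd_j,\sd}m)_j \geq 0$ for all $j \in I_{\fv}$, which by Definition~\ref{def:domCone} says precisely $\domCone(\sd) = \{m \mid (\phi_{\sd_j,\sd}m)_j \geq 0,\ \forall j \in I_{\fv}\}$. One small point to address: I should note that $z_{[m]} \in \upClAlg$ by hypothesis (the family $\cZ$ is chosen inside $\upClAlg$), so in applying Lemma~\ref{lem:optimized_pointed_elem} the element genuinely lies in $\LP(\sd_j)$, and the regularity test at $x_j=0$ indeed detects membership in $\bUpClAlg$ via the displayed description of $\bUpClAlg$.

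I expect the main (minor) obstacle to be purely bookkeeping: making sure the identification $\cF(\sd) \simeq \cF(\sd_j)$ via the mutation sequence is used consistently, so that ``$\nu_j$ of $z_{[m]}$'' computed in $\sd$ agrees with what Lemma~\ref{lem:no_pole_change_chart} and Lemma~\ref{lem:optimized_pointed_elem} give in $\sd_j$ (Lemma~\ref{lem:no_pole_change_chart} guarantees $\nu_j$ is unchanged under mutations at vertices $\neq j$, and any green-to-red type sequence reaching $\sd_j$ can be taken to avoid mutating at the frozen vertex $j$). Once that compatibility is in place, the argument is essentially the chain of equivalences above, and there is no real difficulty beyond invoking Lemma~\ref{lem:optimized_pointed_elem}.
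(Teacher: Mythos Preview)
Your proposal is correct and follows essentially the same approach as the paper: the paper simply states that the lemma follows from Lemma~\ref{lem:optimized_pointed_elem}, and your argument spells out exactly that deduction (together with the characterization $\bUpClAlg=\{z\in\upClAlg\mid\nu_j(z)\geq0,\ \forall j\in I_{\fv}\}$ and the compatibility of $\nu_j$ under mutations via Lemma~\ref{lem:no_pole_change_chart}).
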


In general, we will choose $\cZ$ to be the set of theta functions
\cite{gross2018canonical}, because their existence has been verified
in vast generalities.

\begin{Rem}

$\cZ$ can be taken as the theta basis, the generic basis, or triangular
basis, when they exist. We conjecture that the resulting $\domTropSet$
are equivalent for many (if not all) cluster algebras.

\end{Rem}

\begin{Def}\label{def:non-essential-frozen}

A frozen vertex $j$ is called non-essential if $\tB_{\{j\}\times I_{\ufv}}=0$,
and essential otherwise.

\end{Def}

If $j$ is non-essential, let $\sd'$ denote the (quantum) seed naturally
obtained from $\sd$ by removing $j$. Then we have $\bUpClAlg=\kk[x_{j}]\bUpClAlg$
and $\bClAlg=\kk[x_{j}]\bClAlg$.

\subsection{Triangular bases}

Take $\kk=\Z[q^{\pm\Hf}]$. Let $\alg$ denote $\clAlg$ or $\upClAlg$.
Assume $\sd$ is injective-reachable. 

\begin{Def}[{\cite{qin2017triangular}}]

Let $\can=\{\can_{m}|m\in\cone(\sd)\}$ be a $\kk$-basis of $\alg(\sd)$,
such that $\can_{m}$ are $m$-pointed. $\can$ is called the triangular
basis with respect to the seed $\sd$ if we have the following:

\begin{itemize}

\item $\overline{{\can_{m}}}=\can_{m}$.

\item $\can$ contains the cluster monomials in $\sd$ and $\sd[1]$.

\item$\forall x_{i}(\sd),\can_{m}$, the decomposition of the normalization
$[x_{i}(\sd)*\can_{m}]^{\sd}$ is $(\prec_{t},\text{\ensuremath{\mm})}$-unitriangular:
\begin{align}
[x_{i}(\sd)*\can_{g}]^{\sd} & =\can_{g+f_{i}}+\sum_{g'\prec_{\sd}g+f_{i}}b_{g'}\can_{g'},\ \text{for some }b_{g'}\in\mm:=q^{-\Hf}\Z[q^{-\Hf}].\label{eq:triangular_basis_triangularity}
\end{align}

\end{itemize}

\end{Def}

The decomposition on RHS of (\ref{eq:triangular_basis_triangularity})
is related to Leclerc's conjecture \cite{Leclerc03} \cite{qin2020analog}.
Note that $[x_{j}^{\pm}*\can_{m}]^{\sd}=\can_{m\pm f_{j}}$, $\forall j\in I_{\fv}$.
The triangular basis with respect to $\sd$ is determined by a Kazhdan-Lusztig
type algorithm, see \cite[Section 6.1]{qin2020dual}. In particular,
it is unique if it exists. 

\begin{Def}[{\cite[Definition 5.5, Remark 5.6]{qin2021cluster}\cite{qin2017triangular}}]

A $\kk$-basis $\can$ of $\alg$ is called the common triangular
basis if it is the triangular basis with respect to any seed $\sd\in\Delta^{+}$.

\end{Def}

By \cite[Proposition 6.4.3]{qin2020dual}, if $\can$ is the triangular
basis with respect to the seed $\sd$, then it is the common triangular
basis if and only if it contains the cluster monomials of any seeds
$\sd'\in\Delta^{+}$. In this case, the basis elements $\can_{m}$
are $[m]$-pointed by \cite[Proposition 6.4.3]{qin2020dual}. Then
\cite[Theorem 4.3.1]{qin2019bases} implies that $\can$ is a basis
for $\upClAlg$ and, consequently, $\alg=\upClAlg$.

Recall that we have the canonical anti-isomorphism $\iota:\LP(\sd)\simeq\LP(\sd\op)$.
By \cite[Proposition 4.9]{qin2020analog}, if $\alg(\sd)$ has the
common triangular basis $\can$, then $\iota(\can)$ is the common
triangular basis for $\alg(\sd\op)$.

\begin{Def}

If $\can$ is the common triangular basis (resp. the triangular basis
with respect to $\sd$) of $\alg$ such that $\can\cap\overline{\alg}$
is a basis of $\overline{\alg}$, we call $\can\cap\overline{\alg}$
the common triangular basis (resp. the triangular basis with respect
to $\sd$) of $\overline{\alg}$.

\end{Def}

\section{Freezing operators\label{sec:Freezing-operators}}

\subsection{Sub seeds\label{subsec:Sub-seeds}}

Take two seeds $\sd=(I,I_{\ufv},(d_{i}),\tB,(x_{i})_{i\in I})$, $\sd'=(I',I'_{\ufv},(d'_{i}),\tB',(x'_{i})_{i\in I'})$.
We do not assume $\tB$, $\tB'$ to be of full rank for the moment.

\begin{Def}\label{def:calibration}

Assume there is an inclusion $\iota:I'\hookrightarrow I$ with $\iota I'_{\ufv}\subset I_{\ufv}$,
$d'_{i}=d_{\iota(i)}$, $b'_{i,k}=b_{\iota(i),\iota(k)}$, $\forall i\in I',k\in I'_{\ufv}$.
It is called a cluster embedding from $\sd$ to $\sd'$. If $\sd$
and $\sd'$ are quantum seeds, we additionally require $\Lambda(\sd')_{i,j}=\Lambda(\sd)_{\iota(i),\iota(j)}$,
$\forall i,j\in I'$.

In this case, $\sd'$ is called a sub seed of $\sd$ via $\iota$,
denoted $\iota:\sd'\subset\sd$. It is called a good sub seed of $\sd$,
if we further have $b_{i,k}=0$, $\forall k\in\iota(I_{\ufv}')$,
$i\in I\backslash\iota(I')$.

\end{Def}

Assume a cluster embedding $\iota:I'\hookrightarrow I$ is given.
$\forall$ mutation sequence $\seq=\mu_{k_{r}}\cdots\mu_{k_{1}}$
on $I'_{\ufv}$, let $\iota(\seq)$ denote $\mu_{\iota(k_{r})}\cdots\mu_{\iota(k_{1})}$.
By the mutation rules, $\iota$ is also a cluster embedding from the
classical seed $\seq\sd'$ to $\iota(\seq)\sd$.

We obtain the inclusion $\iota:\cF(\sd')\subset\cF(\sd)$ sending
$x_{i}'$ to $x_{\iota(i)}$. If $\sd'$ is a good sub seed of a classical
seed $\sd$, and $\sd$ has a quantization matrix $\Lambda(\sd)$,
then we can choose the quantization for $\sd'$ such that $\Lambda(\sd')_{i,j}=\Lambda(\sd)_{\iota(i),\iota(j)}$.

We often omit $\iota$ for simplicity. The mutation rules imply the
following result.

\begin{Prop}

Assume that $\sd'$ is a good sub seed of $\sd$ via $\iota$. Then
$\iota$ is a cluster embedding from the (quantum) seed $\seq\sd'$
to $\iota(\seq)\sd$, $\seq\sd'$ is a good sub seed of $\iota(\seq)\sd$
via $\iota$, $\iota(x_{i}(\seq\sd'))=x_{i}(\seq\sd)$, and $\iota(\bLP(\seq\sd'))\subset\bLP(\iota(\seq)\sd)$
for any $i\in I'$ and any mutation sequence $\seq$ on $I'_{\ufv}$.
Particularly, we have $\iota(\bClAlg(\sd'))\subset\bClAlg(\sd)$.

\end{Prop}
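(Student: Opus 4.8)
The plan is to induct on the length $r$ of the mutation sequence $\seq = \mu_{k_{r}}\cdots\mu_{k_{1}}$, all of whose vertices lie in $I'_{\ufv}$; the base case $r=0$ is exactly the standing hypothesis $\iota:\sd'\subset\sd$. For the inductive step it suffices to analyze a single mutation: granting that $\iota$ is a cluster embedding from $\seq\sd'$ to $\iota(\seq)\sd$, that $\seq\sd'$ is a good sub seed of $\iota(\seq)\sd$, and that $\iota(x_{i}(\seq\sd'))=x_{i}(\iota(\seq)\sd)$ for all $i\in I'$, I would relabel the pair as $\sd'\subset\sd$ (it again satisfies all our assumptions) and show that these three properties survive after mutating at some $k\in I'_{\ufv}$ on the small side and at $\iota(k)$ on the large side. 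The asserted $\bLP$-inclusion, and then $\iota(\bClAlg(\sd'))\subset\bClAlg(\sd)$, follow formally once the cluster variables are identified, so the real content is: (i) the cluster-embedding data persists; (ii) goodness persists; (iii) the new cluster variable maps to the new cluster variable.

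For (i) and (ii) I would simply unwind the mutation rules. The symmetrizers $d_{i}$ and the partition into frozen/unfrozen vertices are untouched by mutation. For $i\in I'$, $j\in I'_{\ufv}$ the entry $b_{i,j}(\mu_{k}\sd')$ is a fixed universal expression in $b_{i,j}(\sd')$, $b_{i,k}(\sd')$, $b_{k,j}(\sd')$ and their signs (the rule behind $\tB'=\tE_\varepsilon\tB F_\varepsilon$); since these three entries equal $b_{\iota(i),\iota(j)}(\sd)$, $b_{\iota(i),\iota(k)}(\sd)$, $b_{\iota(k),\iota(j)}(\sd)$, the same expression computes $b_{\iota(i),\iota(j)}(\mu_{\iota(k)}\sd)$, giving (i) for the $B$-matrix. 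The same formula gives (ii): for $a\in I\setminus\iota(I')$ and $k'\in I'_{\ufv}$ one has $b_{a,\iota(k)}(\sd)=0$ by goodness, so for $k'=k$ the mutated entry is $-b_{a,\iota(k)}(\sd)=0$, and for $k'\neq k$ the correction added to $b_{a,\iota(k')}(\sd)=0$ is a multiple of $b_{a,\iota(k)}(\sd)=0$. For (iii), the only variable that changes is $x_{k}$, whose exchange relation $x_{k}^{-1}\!\left(x^{\sum_{j}[-b_{jk}(\sd')]_{+}f_{j}}+x^{\sum_{j}[b_{jk}(\sd')]_{+}f_{j}}\right)$ runs over $j\in I'$; applying $\iota$ (which sends $x_{a}\mapsto x_{\iota(a)}$, $f_{a}\mapsto f_{\iota(a)}$ and matches $B$-entries) and using $b_{a,\iota(k)}(\sd)=0$ for $a\notin\iota(I')$ — so the absent summands would contribute trivial exponents anyway — reproduces exactly the exchange relation for $x_{\iota(k)}$ on the large side.

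The step I expect to be the main (and essentially the only) obstacle is keeping track of the \emph{quantization} matrix, and it is precisely here that goodness is indispensable. One has $\Lambda(\mu_{\iota(k)}\sd)=\tE_\varepsilon^{T}\,\Lambda(\sd)\,\tE_\varepsilon$ with $\tE_\varepsilon$ the $I\times I$ matrix of mutation at $\iota(k)$, and likewise on the small side. The point is that the column of $\tE_\varepsilon$ indexed by $\iota(k)$ is supported on $\iota(I')$: its row-$a$ entry is $-1$ for $a=\iota(k)$ and $[-\varepsilon b_{a,\iota(k)}(\sd)]_{+}$ otherwise, which vanishes for $a\in I\setminus\iota(I')$ by the good condition. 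Hence for $i,j\in I'$ the double sum $\sum_{a,b\in I}(\tE_\varepsilon)_{a,\iota(i)}\Lambda(\sd)_{a,b}(\tE_\varepsilon)_{b,\iota(j)}$ collapses to the sum over $a,b\in\iota(I')$, on which block $\tE_\varepsilon$ and $\Lambda(\sd)$ restrict to the corresponding matrices of $\sd'$ (using $b_{i,k}(\sd')=b_{\iota(i),\iota(k)}(\sd)$ and $\Lambda(\sd')_{i,j}=\Lambda(\sd)_{\iota(i),\iota(j)}$), yielding $\Lambda(\mu_{\iota(k)}\sd)_{\iota(i),\iota(j)}=\Lambda(\mu_{k}\sd')_{i,j}$, which completes (i). With (i)--(iii) in hand, $\bLP(\seq\sd')$ is generated over $\kk$ by $x_{i}(\seq\sd')^{\pm1}$ for $i\in I'_{\ufv}$ and $x_{i}(\seq\sd')$ for $i\in I'_{\fv}$; their images are $x_{\iota(i)}(\iota(\seq)\sd)^{\pm1}$ with $\iota(i)\in I_{\ufv}$ (invertible in $\bLP(\iota(\seq)\sd)$) and $x_{\iota(i)}(\iota(\seq)\sd)$ (which lies in $\bLP$ whatever the status of $\iota(i)$), so $\iota(\bLP(\seq\sd'))\subset\bLP(\iota(\seq)\sd)$. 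Finally $\bClAlg(\sd')$ is generated by the $x_{i}(\seq\sd')$ over all $\seq$ and all $i\in I'$, each mapped into $\bClAlg(\sd)$, whence $\iota(\bClAlg(\sd'))\subset\bClAlg(\sd)$.
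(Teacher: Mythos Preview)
Your argument is correct and is precisely the detailed unwinding that the paper leaves implicit: the paper's entire proof is the single sentence ``The mutation rules imply the following result,'' and your induction on the length of $\seq$, checking $B$-matrix, goodness, exchange relation, and $\Lambda$-matrix at each step, is exactly what that sentence means. In particular your observation that the $\iota(k)$-th column of $\tE_\varepsilon$ is supported on $\iota(I')$ (by the good condition) is the key point for the quantum compatibility, and it is the reason the paper remarks just before the proposition that for a good sub seed one \emph{can} choose $\Lambda(\sd')_{i,j}=\Lambda(\sd)_{\iota(i),\iota(j)}$.
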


\begin{Prop}\label{prop:good-sub-up-cl-alg}

When $\sd'$ is a good sub seed of $\sd$ via $\iota$, we have $\bUpClAlg'\subset\bUpClAlg$
under the assumption that $\sd$ is of full rank.

\end{Prop}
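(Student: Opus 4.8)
The plan is to exploit the characterization of the partially compactified upper cluster algebra in terms of orders of vanishing, namely $\bUpClAlg = \{ z \in \upClAlg \mid \nu_j(z) \geq 0,\ \forall j \in I_{\fv} \}$, together with the intrinsic description $\upClAlg'(\sd') = \cap_{\sd'' \in \Delta^+_{\sd'}} \LP(\sd'')$ and the analogous formula for $\upClAlg$. First I would take $z \in \bUpClAlg'$ and view $\iota(z) \in \cF(\sd)$ via the inclusion $\iota:\cF(\sd')\subset\cF(\sd)$. I need to show $\iota(z) \in \bLP(\iota(\seq)\sd)$ for every mutation sequence $\seq$ on $I'_{\ufv}$; since mutation sequences on $I_{\ufv}$ not lying in $\iota(I'_{\ufv})$ also have to be accounted for, the first substantive point is to reduce to mutations supported on $\iota(I'_{\ufv})$. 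This is where the \emph{good} sub seed hypothesis enters: because $b_{i,k} = 0$ for all $k \in \iota(I'_{\ufv})$ and $i \in I \setminus \iota(I')$, the variables $x_i(\sd)$ for $i \notin \iota(I')$ are untouched by any mutation sequence $\iota(\seq)$ with $\seq$ on $I'_{\ufv}$, and conversely mutations at vertices outside $\iota(I'_{\ufv})$ do not interfere with the Laurent expansions of elements of $\iota(\cF(\sd'))$ in the $\iota(I')$-directions. Concretely, I would argue that the full-rank assumption on $\tB(\sd)$ lets us use the standard result (Fomin--Zelevinsky, or the starfish lemma) that $\bUpClAlg(\sd)$ is already the intersection of $\bLP$ over $\sd$ and its one-step mutations at unfrozen vertices — but more cleanly, I would instead directly show $\iota(z)$ is regular on each chart.

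The key steps, in order: (1) Recall from the preceding Proposition that $\iota(\bLP(\seq\sd')) \subset \bLP(\iota(\seq)\sd)$ for every mutation sequence $\seq$ on $I'_{\ufv}$; since $z \in \bUpClAlg' \subset \bLP(\seq\sd')$ for all such $\seq$, we get $\iota(z) \in \bLP(\iota(\seq)\sd)$ for all mutation sequences $\iota(\seq)$ on $\iota(I'_{\ufv})$. (2) Show that this suffices: the goodness condition implies that the seed $\iota(\seq)\sd$ decomposes compatibly, so that regularity of $\iota(z)$ on the charts $\LP(\iota(\seq)\sd)$ for $\seq$ ranging over mutation sequences on $I'_{\ufv}$ — combined with the fact that $\iota(z)$, living in $\iota(\cF(\sd'))$, involves only the variables $x_{\iota(i)}$ with $i \in I'$ — forces $\iota(z) \in \bLP(\sd'')$ for \emph{every} $\sd'' \in \Delta^+_{\sd}$. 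Here I would use that a mutation $\mu_k$ with $k \in I_{\ufv}\setminus\iota(I'_{\ufv})$ acts on $\iota(\cF(\sd'))$ by fixing all the relevant cluster variables (again by goodness, $b_{i,k}$ and $b_{k,i}$ vanish appropriately, or at worst $x_k$ does not appear in the expansion of $\iota(z)$), so passing to such charts introduces no new poles. (3) Conclude $\iota(z) \in \cap_{\sd''\in\Delta^+_\sd}\bLP(\sd'') = \bUpClAlg$.

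I expect the main obstacle to be step (2): carefully justifying that it is enough to check regularity on the sub-collection of charts reachable by mutating only within $\iota(I'_{\ufv})$, i.e.\ that mutations at the "extra" vertices $I_{\ufv}\setminus\iota(I'_{\ufv})$ cannot destroy regularity of an element pulled up from $\cF(\sd')$. The cleanest route is probably to observe that, for $k \in I_{\ufv}\setminus\iota(I'_{\ufv})$, the exchange relation \eqref{eq:exchange_relation} at $k$ only rewrites $x_k$, which does not occur in the Laurent expansion of $\iota(z)$ in $\LP(\iota(\seq)\sd)$ (as that expansion lies in the sub-Laurent-polynomial-ring on the variables $x_{\iota(i)}$, $i \in I'$, by goodness); hence $\iota(z)$ has the same Laurent expansion before and after such a mutation, and in particular remains in $\bLP$. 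Iterating over an arbitrary mutation sequence $\seq'$ on $I_{\ufv}$ — splitting it into blocks inside and outside $\iota(I'_{\ufv})$ and commuting the outside-mutations past — gives $\iota(z) \in \bLP(\seq'\sd)$ for all $\seq'$, which is exactly membership in $\bUpClAlg$. The full-rank hypothesis on $\tB(\sd)$ is used to ensure $\bUpClAlg$ has the expected intersection description and that the degree/pointedness bookkeeping is well-behaved; I would flag precisely where it is invoked.
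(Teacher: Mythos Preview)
Your proposal is essentially correct in spirit and identifies all the right ingredients, but you take an unnecessarily circuitous route, and the route you chose has a soft spot. You explicitly mention the starfish lemma (full rank implies $\bUpClAlg(\sd)=\bLP(\sd)\cap\big(\cap_{k\in I_{\ufv}}\bLP(\mu_k\sd)\big)$) and then dismiss it in favor of checking regularity on \emph{every} chart. The paper does the opposite: it invokes exactly this starfish-type reduction, so one only has to check the initial seed and its one-step mutations. For $k\in\iota(I'_{\ufv})$ this follows from the preceding Proposition (your step~(1)); for $k\in I_{\ufv}\setminus\iota(I'_{\ufv})$, since $z\in\bLP'\subset\bLP$ involves none of the variables $x_k$, one has $\nu_k(z)\geq 0$ and hence $z\in\bLP(\mu_k\sd)$. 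That is the whole proof.

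By contrast, your step~(2) commits you to handling an \emph{arbitrary} mutation sequence $\seq'$ on $I_{\ufv}$ by ``splitting it into blocks inside and outside $\iota(I'_{\ufv})$ and commuting the outside-mutations past''. Mutations do not commute in general, so this phrasing is not a proof; what you would actually need is the auxiliary fact that the goodness condition (and the sub-$B$-matrix on $I'\times I'_{\ufv}$) is preserved under mutations at vertices of $I_{\ufv}\setminus\iota(I'_{\ufv})$, so that at every intermediate seed the variables $x_i$, $i\in\iota(I')$, are unaffected by outside-mutations. This can be checked from the mutation rule for $\tB$, but you did not state or verify it, and in any case it is work that the starfish lemma makes unnecessary. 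So: keep the starfish lemma, drop the commuting argument, and the proof becomes two lines. This is also where the full-rank hypothesis is actually used---not for vague ``degree/pointedness bookkeeping'' but precisely to invoke the one-step characterization of $\bUpClAlg$.
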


\begin{proof}

We omit the symbol $\iota$ for simplicity. Since $\sd$ is of full
rank, we have $\upClAlg=\LP\cap(\cap_{k\in I_{\ufv}}\LP(\mu_{k}\sd))$
by \cite[Corollary 1.9]{BerensteinFominZelevinsky05}\cite[Theorem 5.1]{BerensteinZelevinsky05}.
It follows that $\bUpClAlg=\bLP\cap(\cap_{k\in I_{\ufv}}\bLP(\mu_{k}\sd))$.
Take any $z\in\bUpClAlg'$. It belongs to $\bLP'\cap(\cap_{k'\in I'_{\ufv}}\bLP(\mu_{k'}\sd'))\subset\bLP\cap(\cap_{k'\in I'_{\ufv}}\bLP(\mu_{k'}\sd))$.
Moreover, $\forall k\in I_{\ufv}\backslash I'_{\ufv}$, $z\in\bLP'\subset\bLP$
is also contained in $\bLP(\mu_{k}\sd)$ since $\nu_{k}(z)\geq0$.
Therefore, $z\in\bUpClAlg$.

\end{proof}

\subsection{Freezing}

Assume $\tB$ to be of full rank. Choose any subset $F\subset I_{\ufv}$.
By freezing $F$, we obtain a good sub seed of $\sd$, denoted $\frz_{F}\sd:=(I,I_{\ufv}\backslash F,(d_{i}),\tB_{I\times(I_{\ufv}\backslash F)},(x_{i})_{i\in I})$.
If $\sd$ is a quantum seed, $\frz_{F}\sd$ is also quantized where
we choose $\Lambda(\frz_{F}\sd):=\Lambda(\sd)$.

\begin{Rem}\label{rem:inclusion_frozen_subalg}

Note that $\cone(\frz_{F}\sd)=\cone(\sd)$, $m'\preceq_{\frz_{F}\sd}m$
implies $m'\preceq_{\sd}m$, $x_{i}(\frz_{F}\sd)=x_{i}(\sd)$, and
$\cF(\frz_{F}\sd)=\cF(\sd)$. The identification $\cone(\frz_{F}\sd)=\cone(\sd)$
induces a natural identification between the set of tropical points
for $\upClAlg(\frz_{F}\sd)$ and $\upClAlg(\sd)$, which we both denote
by $\tropSet$.

\end{Rem}

\begin{Def}[Freezing operator]\label{def:freezing_operator}

For any degree $m\in\cone(\sd)$ and $F\subset I_{\ufv}$, we define
the freezing operator $\frz_{F,m}^{\sd}$ to be the $\kk$-linear
map from $\hLP_{\preceq m}(\sd)$ to $\hLP_{\preceq m}(\frz_{F}\sd)$
such that 
\begin{align*}
\mathfrak{\frz}_{F,m}^{\sd}(\sum_{n\in\yCone^{\oplus}}c_{n}x^{m}\cdot y^{n}) & =\sum_{n:\supp n\cap F=\emptyset}c_{n}x^{m}\cdot y^{n},\ \forall c_{n}\in\kk.
\end{align*}

\end{Def}

In general, by freezing $F'\subset I$, we mean freezing $F:=F'\cap I_{\ufv}$,
i.e., $\frz_{F'}\sd:=\frz_{F}\sd$, $\frz_{F',m}^{\sd}:=\frz_{F,m}^{\sd}$.
When the context is clear, we omit the scripts $\sd$, $F$ and/or
$m$ for simplicity. In particular, we often denote $\frz_{F}^{\sd}(z)=\frz_{F,m}^{\sd}(z)$
when $z$ has a unique $\prec_{\sd}$-maximal degree $m$. Following
this convention, for any collection $\cZ$ of elements $z$ with unique
maximal degrees, let $\frz_{F}^{\sd}\cZ$ denote the set consisting
of $\frz_{F}^{\sd}z$.

\begin{Lem}\label{lem:freezing_properties}

(1) $\forall$$m'\preceq_{\sd}m$ such that $m'=m+\tB n$ and $z\in\hLP_{\preceq m'}(\sd)\subset\hLP_{\preceq m}(\sd)$,
we have $\frz_{m}(z)=\begin{cases}
\frz_{m'}(z) & \supp n\cap F=\emptyset\\
0 & \supp n\cap F\neq\emptyset
\end{cases}$.

(2) $\forall$$m_{1,}m_{2}$ and $z_{1}\in\hLP_{\preceq m_{1}}(\sd)$,
$z_{2}\in\hLP_{\preceq m_{2}}(\sd)$, we have
\begin{align*}
\frz_{m_{1}}(z_{1})*\frz_{m_{2}}(z_{2}) & =\frz_{m_{1}+m_{2}}(z_{1}*z_{2}).
\end{align*}

\end{Lem}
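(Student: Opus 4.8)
The plan is to prove both statements by a direct computation with formal Laurent-series expansions based at the relevant degrees, the only nontrivial input being the elementary observation that support is additive on $\yCone^{\oplus}$: for $a,b\in\yCone^{\oplus}=\oplus_{k\in I_{\ufv}}\N e_{k}$ one has $\supp(a+b)=\supp a\cup\supp b$, since no cancellation of (nonnegative) coordinates can occur. Throughout we use Assumption \ref{assumption:injectivity} (full rank of $\tB$), so that every element of $\hLP_{\preceq m}(\sd)$ has a unique expansion $x^{m}\cdot\sum_{n\in\yCone^{\oplus}}c_{n}y^{n}$, and Remark \ref{rem:inclusion_frozen_subalg}, which identifies the underlying lattices, function fields and $q$-twisted structures of $\sd$ and $\frz_{F}\sd$, so that all products below may be computed in one and the same $\LP$.

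For (1), write $z=x^{m'}\cdot\sum_{n'\in\yCone^{\oplus}}c_{n'}y^{n'}$. Since $m'=m+\tB n$, one has $x^{m'}*y^{n'}=q^{\beta(n')}\,x^{m}*y^{n+n'}$ for suitable $\beta(n')\in\Hf\Z$ coming from the bilinear form $\lambda$, so the expansion of $z$ based at $m$ is supported on the $y$-exponents $\{n+n'\mid n'\in\yCone^{\oplus}\}$ with coefficients $q^{\beta(n')}c_{n'}$. By Definition \ref{def:freezing_operator}, $\frz_{m}(z)$ keeps exactly the terms with $\supp(n+n')\cap F=\emptyset$. If $\supp n\cap F=\emptyset$, additivity of supports gives $\supp(n+n')\cap F=\supp n'\cap F$, so the retained terms are precisely those with $\supp n'\cap F=\emptyset$; rewriting them in the $m'$-based form recovers $\frz_{m'}(z)$. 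If $\supp n\cap F\neq\emptyset$, then $\supp(n+n')\supseteq\supp n$ meets $F$ for every $n'$, so $\frz_{m}(z)=0$.

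For (2), write $z_{1}=x^{m_{1}}\cdot\sum_{n_{1}}c_{n_{1}}y^{n_{1}}$ and $z_{2}=x^{m_{2}}\cdot\sum_{n_{2}}d_{n_{2}}y^{n_{2}}$. Expanding the $q$-twisted product and collecting by $y$-exponent, $z_{1}*z_{2}=x^{m_{1}+m_{2}}\cdot\sum_{n\in\yCone^{\oplus}}e_{n}y^{n}$ with $e_{n}=\sum_{n_{1}+n_{2}=n}q^{\gamma(n_{1},n_{2})}c_{n_{1}}d_{n_{2}}$, the inner sum over $n_{1},n_{2}\in\yCone^{\oplus}$ (finite for each $n$, so indeed $z_{1}*z_{2}\in\hLP_{\preceq m_{1}+m_{2}}$). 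Likewise $\frz_{m_{1}}(z_{1})*\frz_{m_{2}}(z_{2})=x^{m_{1}+m_{2}}\cdot\sum_{n}e'_{n}y^{n}$ with $e'_{n}=\sum_{n_{1}+n_{2}=n,\ \supp n_{i}\cap F=\emptyset}q^{\gamma(n_{1},n_{2})}c_{n_{1}}d_{n_{2}}$. On the other hand $\frz_{m_{1}+m_{2}}(z_{1}*z_{2})$ has $y^{n}$-coefficient $e_{n}$ when $\supp n\cap F=\emptyset$ and $0$ otherwise. If $\supp n\cap F=\emptyset$ and $n=n_{1}+n_{2}$, then $\supp n_{i}\subseteq\supp n$ forces $\supp n_{i}\cap F=\emptyset$, so $e_{n}=e'_{n}$; if $\supp n\cap F\neq\emptyset$, then by additivity of supports every decomposition $n=n_{1}+n_{2}$ has $\supp n_{1}\cap F\neq\emptyset$ or $\supp n_{2}\cap F\neq\emptyset$, hence $e'_{n}=0$ as well. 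So the two sides agree term by term.

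The only point requiring care — and the main obstacle, such as it is — is the bookkeeping of the $q$-powers $\beta(n')$, $\gamma(n_{1},n_{2})$ produced by the twisted product and the verification that passing between the $m$-based and $m'$-based (resp. $(m_{1}+m_{2})$-based) expansions is compatible with the completion. Both are routine: $\lambda$ is bilinear, and for each target $y$-exponent only finitely many source terms contribute, so no deeper cluster-theoretic input is needed.
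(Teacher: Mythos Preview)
Your proof is correct and is exactly the direct verification the paper has in mind; the paper's own proof is the single sentence ``The claims follow from definition of the freezing operators,'' and you have simply written out those computations. One minor simplification: in part (1) you can avoid the $q$-powers $\beta(n')$ entirely by working with the commutative product $\cdot$ used in Definition \ref{def:freezing_operator}, since $x^{m'}\cdot y^{n'}=x^{m}\cdot y^{n+n'}$ on the nose.
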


\begin{proof}

The claims follow from definition of the freezing operators.

\end{proof}

\begin{Eg}

Let $\sd$ denote a seed such that $I=\{1,2\}$, $I_{\ufv}=\{1\}$,
$\tB=\left(\begin{array}{cc}
0 & -1\\
1 & 0
\end{array}\right)$. The upper cluster algebra $\upClAlg(\sd)$ equals $\kk[x_{2}^{\pm}][x_{1},x_{1}']$
where $x_{1}':=x_{1}^{-1}\cdot(1+y_{1})$. Its localized cluster monomials
are $s_{m}=x_{1}^{m_{1}}\cdot x_{2}^{m_{2}}$ when $m_{1}\geq0$,
and $s_{m}=(x_{1}')^{-m_{1}}\cdot x_{2}^{m_{2}}$ when $m_{1}<0$.
Choose $F=\{1\}$ and construct the seed $\frz_{F}\sd$ by freezing
$F$. We have $\upClAlg(\frz_{F}\sd)=\kk[x_{1}^{\pm},x_{2}^{\pm}]$.
Its localized cluster monomials $s'_{m}:=x^{m}$ satisfy $s'_{m}=\frz_{F,m}s_{m}$.

\end{Eg}

\begin{Prop}\label{prop:project_up_cl_alg}

Take any $z\in\LP_{\preceq m}(\sd)$. If $z\in\upClAlg(\sd)$, then
$\frz_{m}z\in\upClAlg(\frz_{F}\sd)$.

\end{Prop}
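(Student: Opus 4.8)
The plan is to test membership in $\upClAlg(\frz_{F}\sd)$ chart by chart, exploiting that only finitely many charts matter because $\tB$ has full rank, and to recognize $\frz_{F,m}^{\sd}$ as the operation of extracting a homogeneous component for a grading that survives all the relevant mutations.

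First I would reduce to one-step mutations. Since $\tB$ has full rank, so does $\tB(\frz_{F}\sd)=\tB_{I\times(I_{\ufv}\backslash F)}$, so by \cite[Corollary 1.9]{BerensteinFominZelevinsky05} and \cite[Theorem 5.1]{BerensteinZelevinsky05} we have $\upClAlg(\frz_{F}\sd)=\LP(\frz_{F}\sd)\cap\bigcap_{k\in I_{\ufv}\backslash F}\LP(\mu_{k}\frz_{F}\sd)$. The $x$-mutation at a vertex $k\in I_{\ufv}\backslash F$ depends only on the $k$-th column of $\tB$, which is unchanged by freezing $F$, so $\LP(\mu_{k}\frz_{F}\sd)=\LP(\mu_{k}\sd)$ and $\LP(\frz_{F}\sd)=\LP(\sd)$ inside $\cF(\sd)$. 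Hence it suffices to check $\frz_{m}z\in\LP(\sd)$ and $\frz_{m}z\in\LP(\mu_{k}\sd)$ for each $k\in I_{\ufv}\backslash F$. The first is immediate: writing $z=\sum_{n\in\yCone^{\oplus}(\sd)}c_{n}x^{m+\tB n}$, the element $\frz_{m}z=\sum_{n\in\yCone^{\oplus}(\frz_{F}\sd)}c_{n}x^{m+\tB n}$ is a subsum of $z\in\LP(\sd)$.

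For the remaining charts I would introduce a grading. Since the vectors $\tB e_{k}$ ($k\in I_{\ufv}$) are linearly independent over $\Q$, one may choose $\xi\in\Q^{I}$ with $\langle\xi,\tB e_{k}\rangle=1$ for $k\in F$ and $\langle\xi,\tB e_{k}\rangle=0$ for $k\in I_{\ufv}\backslash F$, and grade $\LP(\sd)=\kk[x_{i}^{\pm}]_{i\in I}$ by $\deg(x^{g})=\langle\xi,g\rangle\in\Q$ (compatible with $*$ since $q^{\Hf}$ has degree $0$). For $n\in\yCone^{\oplus}(\sd)$ one then has $\deg(x^{m+\tB n})=\langle\xi,m\rangle+\sum_{l\in F}n_{l}\geq\langle\xi,m\rangle$, with equality precisely when $n\in\yCone^{\oplus}(\frz_{F}\sd)$; so $\frz_{m}z$ is exactly the homogeneous component of $z$ in degree $\langle\xi,m\rangle$. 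Now fix $k\in I_{\ufv}\backslash F$. In the exchange relation \eqref{eq:exchange_relation} for $x_{k}(\mu_{k}\sd)$ the exponents of the two monomials on the right differ by $\tB e_{k}$, and $\langle\xi,\tB e_{k}\rangle=0$ because $k\notin F$; hence $x_{k}(\mu_{k}\sd)$ is a homogeneous element of the graded ring $\LP(\sd)$. Localizing $\LP(\sd)$ at this homogeneous element, both $\LP(\sd)$ and $\LP(\mu_{k}\sd)$ become graded subrings of $\LP(\sd)[x_{k}(\mu_{k}\sd)^{-1}]$, so their intersection is a graded subspace; since $z$ lies in it, so does its homogeneous component in degree $\langle\xi,m\rangle$, namely $\frz_{m}z$. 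This gives $\frz_{m}z\in\LP(\mu_{k}\sd)$, and combined with the previous paragraph, $\frz_{m}z\in\upClAlg(\frz_{F}\sd)$.

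The one step that really carries the argument is the compatibility of the grading with the mutation $\mu_{k}$, and this is exactly where $k\notin F$ is used: it forces the two exchange monomials at $k$ to have equal $\deg$, so that $x_{k}(\mu_{k}\sd)$ stays homogeneous. (Mutating at a vertex of $F$ would break this, consistent with the fact that $\frz_{m}z$ need not be regular in those charts.) Everything else — the full-rank description of $\upClAlg$ via the initial seed and its neighbours, and the identification of $\frz_{F,m}^{\sd}$ with a homogeneous projection — is bookkeeping from the definitions.
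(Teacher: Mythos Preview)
Your proof is correct and takes a different route from the paper. The paper does not reduce to the star neighborhood; instead, for an arbitrary seed $\sd'=\seq_{\uk}\sd$ with $\uk$ a sequence on $I_{\ufv}\backslash F$, it clears denominators by writing $x(\sd')^{d'}*z$ as a polynomial in the cluster variables of $\sd'$, applies the multiplicativity $\frz_{m_1}(z_1)*\frz_{m_2}(z_2)=\frz_{m_1+m_2}(z_1*z_2)$ of Lemma~\ref{lem:freezing_properties}, and uses that the cluster monomials $x(\sd')^{g'}=x(\seq_{\uk}\frz_F\sd)^{g'}$ have support disjoint from $F$ and are therefore fixed (or sent to $0$) by the freezing operator; this gives $\frz_m z\in\LP(\seq_{\uk}\frz_F\sd)$ directly. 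Your approach replaces this direct manipulation by a grading argument: you recognize $\frz_{F,m}$ as the projection onto the degree-$\langle\xi,m\rangle$ part for a $\Q$-grading (chosen via the full-rank hypothesis) that is preserved by mutation at any $k\notin F$, and then invoke the star-neighborhood description of $\upClAlg(\frz_F\sd)$ from \cite{BerensteinFominZelevinsky05,BerensteinZelevinsky05}. The paper's argument is more self-contained and checks all charts of $\frz_F\sd$ at once; yours is cleaner once the star-neighborhood lemma is taken as input, and the grading picture gives a transparent reason why only mutations at vertices of $F$ can break the argument.
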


\begin{proof}

Take any sequence $\uk$ on $I_{\ufv}\backslash F$ and denote $\sd'=\seq_{\uk}\sd$.
Since $z\in\LP(\sd')$, we have $x(\sd')^{d'}*z=\sum_{g'}b_{g'}x(\sd')^{g'}$
for some $d',g'\in\N^{I}$, $b_{g'}\in\kk$. Now work in $\LP(\sd)$
and denote $d=\deg^{\sd}x(\sd')^{d'}$, $g=\deg^{\sd}x(\sd')^{g'}$.
Since $z\in\LP_{\preceq m}(\sd)$, we have $g\preceq_{\sd}d+m$ whenever
$b_{g'}\neq0$ (see \cite[Lemma 3.1.10(iii)]{qin2017triangular}).
Applying $\frz_{F,d+m}$, we obtain $\frz_{d}x(\sd')^{d'}*\frz_{m}z=\sum_{g'}b{}_{g'}\frz_{d+m}x(\sd')^{g'}$.
Note that, since $\uk$ is a sequence on $I_{\ufv}\backslash F$,
$\forall i$, we have $x_{i}(\seq_{\uk}\sd)=x_{i}(\seq_{\uk}\frz_{F}\sd)$,
which has support in $I_{\ufv}\backslash F$. Thus $\frz_{d}x(\sd')^{d'}$
equals $x(\seq_{\uk}\frz_{F}\sd)^{d'}$, and $\frz_{d+m}x(\sd')^{g'}$
is either $\frz_{g'}x(\sd')^{g'}=\frz_{g'}x(\seq_{\uk}\frz_{F}\sd)^{g'}=x(\seq_{\uk}\frz_{F}\sd)^{g'}$
or $0$. We deduce that $\frz_{m}z\in\LP(\seq_{\uk}\frz_{F}\sd)$.

\end{proof}

\subsection{Properties and applications\label{subsec:freeze_mutations}}

We will prove some key properties of the freezing operators. More
results are in \cite{qin2023freezing}. The following lemma is crucial.

\begin{Lem}[Mutation compatibility]\label{lem:freeze_mutation}

Take seeds $\sd=\mu_{k}\sd'$ for some $k\in I_{\ufv}\backslash F$
and $z\in\LP(\sd)\cap(\mu_{k}^{*})^{-1}\LP(\sd')$. Assume that $z$
has a unique $\prec_{\sd}$-maximal degree $m$ in $\LP(\sd)$ and
$\mu_{k}^{*}z$ has a unique $\prec_{\sd'}$-maximal degree $m'$
in $\LP(\sd')$. Then we have $\frz_{F,m'}^{\sd'}(\mu_{k}^{*}z)=\mu_{k}^{*}\frz_{F,m}^{\sd}(z)$.

\end{Lem}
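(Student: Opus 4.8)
Here is a plan. The idea is to transport the argument of Proposition \ref{prop:project_up_cl_alg} to the single pair of adjacent seeds $\{\sd,\sd'=\mu_k\sd\}$ and then to match coefficients in the two charts. Throughout I identify $\cF(\sd)\simeq\cF(\sd')$ via $\mu_k^*$ and $\cF(\frz_F\sd)=\cF(\sd)$ as in Remark \ref{rem:inclusion_frozen_subalg}. Two elementary observations set the stage. Since $k\in I_{\ufv}\backslash F$, the vertex $k$ is unfrozen for $\frz_F\sd$ and the exchange relation \eqref{eq:exchange_relation} at $k$ only involves the column $b_{\bullet k}$, which freezing leaves alone; hence $\mu_k(\frz_F\sd)=\frz_F(\mu_k\sd)=\frz_F\sd'$ as (quantum) seeds, with $x_i(\frz_F\sd)=x_i(\sd)$ and $x_i(\frz_F\sd')=x_i(\sd')$ for all $i$. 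Moreover, a cluster monomial $x(\sd')^{g'}$ with $g'\in\N^I$, regarded as a cluster monomial of $\frz_F\sd'=\mu_k(\frz_F\sd)$, expands in $\LP(\sd)$ as $x^g(\sd)$ times a polynomial in $y_k(\sd)$ with constant term $1$ (up to a $q$-power), where $g=\deg^\sd x(\sd')^{g'}$; in particular its support lies in $\{k\}\subseteq I_{\ufv}\backslash F$, so by Lemma \ref{lem:freezing_properties}(1) we have $\frz^\sd_{F,m_0}(x(\sd')^{g'})=x(\sd')^{g'}$ when $g\preceq_{\frz_F\sd}m_0$, and $=0$ otherwise.

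The first real step is to show $\frz^\sd_{F,m}(z)\in\LP(\sd')$. Since $\mu_k^*z\in\LP(\sd')$, write $x(\sd')^{d'}*z=\sum_{g'}b_{g'}x(\sd')^{g'}$ in $\LP(\sd)$ with $d',g'\in\N^I$, $b_{g'}\in\kk$, and set $d:=\deg^\sd x(\sd')^{d'}$. Then $x(\sd')^{d'}*z$ has $\sd$-degree $d+m$, so $g:=\deg^\sd x(\sd')^{g'}\preceq_\sd d+m$ whenever $b_{g'}\neq0$ (\cite[Lemma 3.1.10(iii)]{qin2017triangular}). Applying $\frz^\sd_{F,d+m}$, and using Lemma \ref{lem:freezing_properties}(2) on the left and the second observation on the right, gives
\begin{align*}
x(\sd')^{d'}*\frz^\sd_{F,m}(z)=\sum_{g':\,g\preceq_{\frz_F\sd}d+m}b_{g'}\,x(\sd')^{g'}
\end{align*}
(with $\frz^\sd_{F,d}(x(\sd')^{d'})=x(\sd')^{d'}$, as $d\preceq_{\frz_F\sd}d$). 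The right side is a Laurent polynomial in the $x(\sd')$ and $x(\sd')^{d'}$ is a unit in $\LP(\sd')$, whence $\frz^\sd_{F,m}(z)\in\LP(\sd')$. As $\frz^\sd_{F,m}(z)$ is also $m$-pointed in $\LP_{\preceq m}(\frz_F\sd)$, the compatibility of the dominance order with mutation at $k$ (\cite[Lemma 3.1.10]{qin2017triangular}; the tropical transformation at $k$ depends only on $b_{\bullet k}$, so $\phi_{\frz_F\sd',\frz_F\sd}(m)=\phi_{\sd',\sd}(m)$, and the latter equals the given $m'$ by the same fact applied to $z$) places $\frz^\sd_{F,m}(z)$ in $\LP_{\preceq m'}(\frz_F\sd')$. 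Since $\yCone^\oplus(\frz_F\sd')=\oplus_{l\in I_{\ufv}\backslash F}\N e_l$ is supported off $F$, this says exactly that $\mu_k^*(\frz^\sd_{F,m}(z))\in\LP_{\preceq m'}(\sd')$ has no $F$-support.

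For the remaining half, put $r:=z-\frz^\sd_{F,m}(z)=x^m(\sd)\cdot\sum_{n:\,\supp n\cap F\neq\emptyset}c_n y^n(\sd)$; I claim every $\sd'$-monomial of $\mu_k^*r$ meets $F$. By \eqref{eq:exchange_relation}, $\mu_k^*(x^m(\sd))$ is a Laurent monomial in $x(\sd')$ times $(1+y_k(\sd'))^{m_k}$, a series in $y_k(\sd')$ alone; and the attendant $y$-variable mutation gives $\mu_k^*(y_k(\sd))=y_k(\sd')^{-1}$ and $\mu_k^*(y_j(\sd))=y_j(\sd')\cdot R_j(y_k(\sd'))$ for $j\neq k$, with $R_j$ depending only on $y_k(\sd')$. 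As $k\notin F$, for each $n$ with $\supp n\cap F\neq\emptyset$ we split $n=n_1+n_2$ with $0\neq n_1$ supported on $F$, and then $\mu_k^*(y^n(\sd))$ equals $y^{n_1}(\sd')$ times an expression in the $y_l(\sd')$, $l\in I_{\ufv}\backslash F$. Hence $\mu_k^*r\in\LP_{\preceq m'}(\sd')$ (it equals $\mu_k^*z-\mu_k^*(\frz^\sd_{F,m}(z))$), and each of its $\sd'$-monomials carries a positive power of some $y_j(\sd')$, $j\in F$; thus $\frz^{\sd'}_{F,m'}(\mu_k^*r)=0$. Now apply $\frz^{\sd'}_{F,m'}$ to $\mu_k^*z=\mu_k^*(\frz^\sd_{F,m}(z))+\mu_k^*r$ in $\LP_{\preceq m'}(\sd')$: the first summand has no $F$-support and is therefore fixed, the second is killed, and we obtain $\frz^{\sd'}_{F,m'}(\mu_k^*z)=\mu_k^*(\frz^\sd_{F,m}(z))$. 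The classical case is identical.

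The step I expect to be the main obstacle is the first one, $\frz^\sd_{F,m}(z)\in\LP(\sd')$: it asserts that truncating the $\sd$-expansion of $z$ to its $F$-free part remains regular in the mutated chart, and it is exactly there that the hypothesis $\mu_k^*z\in\LP(\sd')$, the multiplicativity of the freezing operator (Lemma \ref{lem:freezing_properties}(2)), and the seed-level identity $\mu_k\frz_F=\frz_F\mu_k$ all enter. Once this regularity and the ensuing degree identification are established, the support bookkeeping in the second half is routine, precisely because $k\notin F$.
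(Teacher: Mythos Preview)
Your argument has a genuine gap in Part 1. You assert that ``compatibility of the dominance order with mutation at $k$'' gives $\phi_{\sd',\sd}(m)=m'$ (and the analogous statement for $\frz_F\sd$), but this is false: the unique $\prec_{\sd'}$-maximal degree $m'$ of $\mu_k^*z$ depends on the lower-order terms of $z$, not just on $m$. A concrete counterexample: take $I=I_{\ufv}=\{1,2\}$ with $B=\begin{pmatrix}0&1\\-1&0\end{pmatrix}$, $k=1$, $F=\{2\}$, and $z=x_1(1+y_1+y_1^2)$. Then $z$ is $f_1$-pointed in $\sd$ and lies in $\LP(\sd')$ (one computes $z=(x_1')^{-1}(x_2'+2+2(x_2')^{-1}+(x_2')^{-2})$), but its unique $\prec_{\sd'}$-maximal degree is $m'=(-1,-2)$, whereas $\phi_{\sd',\sd}(f_1)=(-1,0)$. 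The citation to \cite[Lemma 3.1.10]{qin2017triangular} does not supply such a compatibility statement (that lemma concerns degrees of products, not mutation of degrees). Consequently, your deduction that $\frz^\sd_{F,m}(z)\in\LP_{\preceq m'}(\frz_F\sd')$ is unjustified, and with it the claim $\mu_k^*r\in\LP_{\preceq m'}(\sd')$ on which Part 2 depends.

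The paper's proof sidesteps this entirely. It never claims $m'=\phi_{\sd',\sd}(m)$; instead it writes $z=\sum_n c_n x^m y^n$, observes that each $\mu_k^*(x^my^n)$ is pointed in $\hLP(\sd')$ at $\deg^{\sd'}x^m+p^*(\sd')h(n)$ with $h(n)_j=n_j$ for all $j\neq k$ (so in particular for $j\in F$), and identifies the term $\tn$ realizing $m'$. The hypothesis that $m'$ is the \emph{unique} $\prec_{\sd'}$-maximum forces $h(n)\geq h(\tn)$ componentwise for every $n$ with $c_n\neq 0$; combined with $h(0)_F=0$ this gives $\tn_F=0$. Hence $N_1'=N_1$ and the freezing on the $\sd'$-side picks out exactly $\sum_{n\in N_1}c_n\mu_k^*(x^my^n)=\mu_k^*\frz^\sd_{F,m}(z)$. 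Your Part 2 already contains the key observation $h(n)_F=n_F$ (in the equivalent form $\mu_k^*(y_j)=y_j'\cdot R_j(y_k')$ for $j\neq k$); what is missing is the argument that $\tn\in N_1$, which is what replaces your failed degree claim. Your separate proof that $\frz^\sd_{F,m}(z)\in\LP(\sd')$ via the Proposition~\ref{prop:project_up_cl_alg} template is correct but not actually needed for the lemma.
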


\begin{proof}

For any $n=\sum n_{i}e_{i}(\sd)\in\yCone^{\oplus}(\sd)$ with $n_{i}\in\N$,
denote $n_{F}:=(n_{j})_{j\in F}\in\N^{F}$. Decompose $z$ as $z=\sum_{n\in N_{1}}c_{n}x^{m}\cdot y^{n}+\sum_{n\in N_{2}}c_{n}x^{m}\cdot y^{n}$,
where $c_{n}\in\kk$, $N_{1}=\{n|c_{n}\neq0,n_{F}=0\}$, and $N_{2}=\{n|c_{n}\neq0,n_{F}>0\}$.
Then $\frz_{F,m}^{\sd}z=\sum_{n\in N_{1}}c_{n}x^{m}\cdot y^{n}$.

Consider the pointed element $\mu_{k}^{*}(y^{n})$ of $\hLP(\sd')$.
We have $\deg^{\sd'}\mu_{k}^{*}(y^{n})=\deg^{\sd'}(y')^{h(n)}$, where
$h(n)\in\yCone(\sd')$. Note that $h(n)=h(n')$ if and only if $n=n'$.
Moreover, $n_{i}=h(n)_{i}$, $\forall i\neq k$, see \cite[(2.4)]{qin2019bases},
which implies $h(n)_{F}=n_{F}$. 

Note that we have a finite decomposition $\mu_{k}^{*}z=\sum_{n:c_{n}\neq0}\mu_{k}^{*}(c_{n}x^{m}\cdot y^{n})$
in $\hLP(\sd')$, see \cite[Lemma 3.3.7(2)]{qin2019bases}. Since
$m'$ is the unique $\prec_{\sd'}$-maximal degree of $z$, it equals
$\deg^{\sd'}x^{m}+\deg^{\sd'}y^{h(\tn)}$ for some $c_{\tn}\neq0$
such that $h(n)>h(\tn)$, $\forall n\neq\tn$, $c_{n}\neq0$. Denote
$N=N_{1}'\sqcup N_{2}'$ for $N_{1}'=\{n|c_{n}\neq0,h(n)_{F}=h(\tn)_{F}\}$
and $N_{2}'=\{n|c_{n}\neq0,h(n)_{F}>h(\tn)_{F}\}$. We have $\frz_{F,m'}^{\sd'}\mu_{k}^{*}z=\sum_{n\in N_{1}'}\mu_{k}^{*}(c_{n}x^{m}\cdot y^{n})$.
Moreover, $0\in N_{1}$ implies $h(\tn)_{F}=h(0)_{F}=0$. Consequently,
$N_{1}'=\{n|c_{n}\neq0,h(n)_{F}=0\}=\{n|c_{n}\neq0,n_{F}=0\}=N_{1}$.
The desired claim follows.

\end{proof}

The following result follows from Proposition \ref{prop:project_up_cl_alg}
and Lemma \ref{lem:freeze_mutation}.

\begin{Thm}\label{thm:freeze_compatibly_pointed}

If an element $z\in\upClAlg(t_{0})$ is compatibly pointed at all
seeds in $\Delta_{t_{0}}^{+},$ then $\frz_{F}z\in\upClAlg(\frz_{F}t_{0})$
is compatibly pointed at all seeds in $\Delta_{\frz_{F}t_{0}}^{+}$.

\end{Thm}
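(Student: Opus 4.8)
The plan is to obtain $\frz_F z\in\upClAlg(\frz_F t_0)$ directly from Proposition~\ref{prop:project_up_cl_alg}, and the pointedness at every seed of $\Delta_{\frz_F t_0}^+$ by iterating Lemma~\ref{lem:freeze_mutation} along a mutation path; the preparation consists in recording that freezing commutes with mutations outside $F$ in two senses. Write $m=\deg^{t_0}z$, so the hypothesis says $z$ is $[m]$-pointed. I would first note that $z\in\LP_{\preceq m}(t_0)$ — an $m$-pointed element of an upper cluster algebra has all its degrees $\preceq_{t_0}m$ (see \cite{qin2017triangular}) — so that $\frz_F z:=\frz_{F,m}^{t_0}z$ is defined and Proposition~\ref{prop:project_up_cl_alg} gives $\frz_F z\in\upClAlg(\frz_F t_0)$; the same remark is available after any mutation. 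Since $\frz_F$ only deletes the columns of $\tB$ indexed by $F$ and keeps $\Lambda$, the $\tB$- and $\Lambda$-mutation rules at a vertex $k\in I_{\ufv}\backslash F$ commute with $\frz_F$; hence $\seq(\frz_F t_0)=\frz_F(\seq t_0)$ for every mutation sequence $\seq$ on $I_{\ufv}\backslash F$, and the seeds of the form $\frz_F\sd'$ with $\sd'=\seq t_0$ are precisely those of $\Delta_{\frz_F t_0}^+$. Likewise, the tropical transformation at such a $k$ only uses the entries $b_{ik}$, which $\frz_F$ does not change, so $\phi_{\sd',t_0}$ and $\phi_{\frz_F\sd',\frz_F t_0}$ agree on $\cone(t_0)=\cone(\frz_F t_0)$. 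Finally, $\LP(\frz_F\sd)$ and $\LP(\sd)$ are the same algebra, and $h\preceq_{\frz_F\sd}g$ implies $h\preceq_\sd g$.

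Now fix $\sd''\in\Delta_{\frz_F t_0}^+$ and, by the above, write $\sd''=\frz_F\sd'$ with $\sd'=\mu_{k_r}\cdots\mu_{k_1}t_0$, all $k_i\in I_{\ufv}\backslash F$. Put $\sd_i=\mu_{k_i}\cdots\mu_{k_1}t_0$ and let $z_i\in\LP(\sd_i)$ be the Laurent expansion of $z$, which lies in $\upClAlg(\sd_i)=\upClAlg(t_0)$, so $m_i:=\deg^{\sd_i}z_i=\phi_{\sd_i,t_0}(m)$; because $z$ is $[m]$-pointed, $z_i$ is $m_i$-pointed, hence $z_i\in\LP_{\preceq m_i}(\sd_i)$. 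For each $i$ we have $\sd_i=\mu_{k_i}\sd_{i-1}$, $z_i\in\LP(\sd_i)\cap(\mu_{k_i}^*)^{-1}\LP(\sd_{i-1})$ and $\mu_{k_i}^*z_i=z_{i-1}$, so Lemma~\ref{lem:freeze_mutation} gives $\frz_{F,m_{i-1}}^{\sd_{i-1}}(z_{i-1})=\mu_{k_i}^*\,\frz_{F,m_i}^{\sd_i}(z_i)$. Thus all the $\frz_{F,m_i}^{\sd_i}(z_i)$ coincide as elements of the common fraction field, and at $i=0$ this element is $\frz_F z$; therefore the Laurent expansion of $\frz_F z$ in $\LP(\frz_F\sd')=\LP(\sd')$ equals $\frz_{F,m'}^{\sd'}(z_r)$, where $m'=\phi_{\sd',t_0}(m)$. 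By the definition of the freezing operator, $\frz_{F,m'}^{\sd'}(z_r)$ is a $\kk$-linear combination of monomials $x^{m'}\cdot y^{n}$ with $n\in\yCone^\oplus(\sd')$ and $\supp n\cap F=\emptyset$ — that is, of monomials of degree $\preceq_{\frz_F\sd'}m'$ — and the coefficient of $x^{m'}$ is $1$; hence it is $m'$-pointed in $\LP(\frz_F\sd')$. Since $m'=\phi_{\frz_F\sd',\frz_F t_0}(m)$ by the first paragraph, this is exactly the statement that $\frz_F z$ is compatibly pointed at $\frz_F t_0$ and $\sd''$. As $\sd''\in\Delta_{\frz_F t_0}^+$ was arbitrary, $\frz_F z$ is compatibly pointed at all seeds in $\Delta_{\frz_F t_0}^+$.

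The hard part is purely bookkeeping: in each application of Lemma~\ref{lem:freeze_mutation} one must keep straight which freezing operator $\frz_{F,m}^{\sd}$ (which chart, which degree) is in play, and one must verify carefully the two ``$\frz_F$ commutes with $\mu_k$ for $k\notin F$'' statements — the one for $\tB$-mutation, so that $\Delta_{\frz_F t_0}^+$ is correctly identified with the subfamily of $\Delta_{t_0}^+$ reachable without touching $F$, and the one for the tropical transformation, so that the degree $m_i$ really is $\phi_{\frz_F\sd_i,\frz_F t_0}(m)$. The only non-formal input, apart from Proposition~\ref{prop:project_up_cl_alg} and Lemma~\ref{lem:freeze_mutation}, is the fact that $m$-pointed elements of an upper cluster algebra are supported on degrees $\preceq m$, which is what makes every freezing operator appearing in the argument well defined.
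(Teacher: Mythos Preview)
Your proof is correct and follows exactly the approach the paper intends: the paper records the theorem as an immediate consequence of Proposition~\ref{prop:project_up_cl_alg} and Lemma~\ref{lem:freeze_mutation}, and you have carefully spelled out that deduction. One small simplification: the fact that an $m$-pointed Laurent polynomial lies in $\LP_{\preceq m}$ is immediate from the definition (in a finite set a unique $\prec$-maximal element is a maximum), so you do not need to invoke the upper cluster algebra or \cite{qin2017triangular} for that step.
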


Theorem \ref{thm:freeze_compatibly_pointed}, \cite[Theorem 4.3.1]{qin2019bases},
and Remark \ref{rem:inclusion_frozen_subalg} imply the result below.

\begin{Thm}\label{thm:freeze_good_bases}

Assume that $\cS$ is a $\tropSet$-pointed $\kk$-basis of $\upClAlg(\sd)$.
If $\frz_{F}\sd$ is injective-reachable, then $\frz\cS$ is a $\tropSet$-pointed
$\kk$-basis of $\upClAlg(\frz_{F}\sd)$.

\end{Thm}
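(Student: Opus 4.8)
The plan is to obtain this by combining Theorem~\ref{thm:freeze_compatibly_pointed}, the identification of tropical points in Remark~\ref{rem:inclusion_frozen_subalg}, and the basis criterion \cite[Theorem 4.3.1]{qin2019bases}. Write $\cS=\{s_{[m]}\mid [m]\in\tropSet\}$, where $s_{[m]}$ is $[m]$-pointed in $\upClAlg(\sd)$. Since $\cone(\frz_F\sd)=\cone(\sd)$, every $m\in\cone(\sd)$ also represents a tropical point of $\upClAlg(\frz_F\sd)$, and Remark~\ref{rem:inclusion_frozen_subalg} identifies the two sets of tropical points, both written $\tropSet$; thus $[m]\mapsto\frz_F s_{[m]}$ is a well-defined assignment on $\tropSet$, and the goal is to show that $\frz_F\cS=\{\frz_F s_{[m]}\mid [m]\in\tropSet\}$ is a $\tropSet$-pointed $\kk$-basis of $\upClAlg(\frz_F\sd)$.

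The first step is to verify that each $\frz_F s_{[m]}$ is an $[m]$-pointed element of $\upClAlg(\frz_F\sd)$. Being $[m]$-pointed, $s_{[m]}$ is in particular $m$-pointed in the ambient ring $\LP(\sd)=\LP(\frz_F\sd)$; since the freezing operator $\frz_{F,m}$ keeps the leading Laurent monomial $x^{m}$ and only discards the lower terms $x^{m}\cdot y^{n}$ with $\supp n\cap F\neq\emptyset$, the image $\frz_F s_{[m]}$ is again $m$-pointed. Moreover $s_{[m]}$, being $[m]$-pointed, is compatibly pointed at every seed of $\Delta^{+}_{\sd}$, so Theorem~\ref{thm:freeze_compatibly_pointed} applies and gives both $\frz_F s_{[m]}\in\upClAlg(\frz_F\sd)$ and compatible pointedness at every seed of $\Delta^{+}_{\frz_F\sd}$; that is, $\frz_F s_{[m]}$ is $[m]$-pointed as an element of $\upClAlg(\frz_F\sd)$. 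Consequently $\frz_F\cS$ is a $\tropSet$-pointed family inside $\upClAlg(\frz_F\sd)$ indexed by the whole set of tropical points, and its members have pairwise distinct degrees, hence are $\kk$-linearly independent.

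Finally, since $\frz_F\sd$ is injective-reachable by hypothesis, \cite[Theorem 4.3.1]{qin2019bases} shows that a $\tropSet$-pointed family of elements of $\upClAlg(\frz_F\sd)$ indexed by all tropical points is automatically a $\kk$-basis; applying it to $\frz_F\cS$ completes the argument. The step I expect to carry the real content is the pointedness transfer in the first step, but that has essentially been dealt with already: Theorem~\ref{thm:freeze_compatibly_pointed}, via the mutation-compatibility Lemma~\ref{lem:freeze_mutation} and Proposition~\ref{prop:project_up_cl_alg}, is precisely the statement that $\frz_F$ sends $[m]$-pointed elements of $\upClAlg(\sd)$ to $[m]$-pointed elements of $\upClAlg(\frz_F\sd)$. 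The only genuine bookkeeping left is to match the notion of $[m]$-pointedness with respect to $\frz_F\sd$ --- compatibility at all seeds of the reduced mutation class $\Delta^{+}_{\frz_F\sd}$, obtained by mutating only outside $F$ --- with the hypothesis of the cited basis criterion, which is routine once the identification of tropical points in Remark~\ref{rem:inclusion_frozen_subalg} is in hand.
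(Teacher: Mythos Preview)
Your proof is correct and follows exactly the same approach as the paper, which simply states that the result follows from Theorem~\ref{thm:freeze_compatibly_pointed}, \cite[Theorem 4.3.1]{qin2019bases}, and Remark~\ref{rem:inclusion_frozen_subalg}. You have merely spelled out the details of how these three ingredients combine.
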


In \cite[Theorem 1.4.1]{muller2015existence}, Muller showed that,
if $\sd$ is injective-reachable, then $\frz_{F}\sd$ is injective-reachable
too. His proof also yields the following result. Details could be
found in \cite{qin2023freezing}.

\begin{Lem}\label{lem:freeze_cluster_deg}

If $m\in\cone(\sd)$ is the degree of a localized cluster monomial
for $\upClAlg(\sd)$, then $m$ is also the degree of a localized
cluster monomial for $\upClAlg(\frz_{F}\sd)$.

\end{Lem}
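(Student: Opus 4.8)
The plan is to reduce the statement to a purely combinatorial fact about $g$-vectors and then run the induction used by Muller in the proof of \cite[Theorem 1.4.1]{muller2015existence}. Write $W:=I_{\ufv}\setminus F$, so that $I_{\ufv}(\frz_F\sd)=W$ and $I_{\fv}(\frz_F\sd)=I_{\fv}\sqcup F$. By Remark \ref{rem:inclusion_frozen_subalg} we have $\cone(\frz_F\sd)=\cone(\sd)$; moreover, for any mutation sequence $\seq$ supported on $W$ the seed $\seq(\frz_F\sd)$ equals $\frz_F(\seq\sd)$, and the tropical transformation $\phi_{\seq\sd,\sd}$ for the cluster pattern of $\sd$ coincides with the one for the cluster pattern of $\frz_F\sd$ (the piecewise-linear formula only involves the columns $b_{\bullet,w}$, $w\in W$, which are unaffected by freezing). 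Since localized cluster monomials are pointed at tropical points, a vector $m\in\cone(\sd)$ is the degree of a localized cluster monomial of $\upClAlg(\sd)$ if and only if $\phi_{\seq\sd,\sd}(m)\in\N^{I_{\ufv}}\oplus\Z^{I_{\fv}}$ for some $\seq$ on $I_{\ufv}$, and it is the degree of a localized cluster monomial of $\upClAlg(\frz_F\sd)$ if and only if $\phi_{\seq'\sd,\sd}(m)\in\N^{W}\oplus\Z^{F}\oplus\Z^{I_{\fv}}$ for some $\seq'$ supported on $W$. So it suffices to prove the following combinatorial claim (for every seed in the mutation class in place of $\sd$): whenever $\phi_{\seq\sd,\sd}(m)\in\N^{I_{\ufv}}\oplus\Z^{I_{\fv}}$ for some $\seq$ on $I_{\ufv}$, there is a sequence $\seq'$ supported on $W$ with $\phi_{\seq'\sd,\sd}(m)\in\N^{W}\oplus\Z^{F}\oplus\Z^{I_{\fv}}$.

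I would prove this claim by induction on the number $r$ of mutations of $\seq$ performed at vertices of $F$. If $r=0$, then $\seq$ is already supported on $W$ and $\N^{I_{\ufv}}\oplus\Z^{I_{\fv}}\subseteq\N^{W}\oplus\Z^{F}\oplus\Z^{I_{\fv}}$, so $\seq'=\seq$ works. If $r\geq 1$, isolate the earliest $F$-mutation, writing $\seq=\seq_2\,\mu_k\,\seq_1$ with $\seq_1$ supported on $W$, $k\in F$, and $\seq_2$ containing $r-1$ mutations at vertices of $F$. Set $\sd_1=\seq_1\sd$, which is $W$-reachable from $\sd$, and apply the inductive hypothesis with base seed $\mu_k\sd_1$ and sequence $\seq_2$; this reduces matters to the single $F$-mutation case, namely that a $g$-vector realized by a localized cluster monomial of $\upClAlg(\frz_F(\mu_k\sd_1))$ is, after the coordinate change $\phi_{\sd_1,\mu_k\sd_1}$, realized by one of $\upClAlg(\frz_F\sd_1)$; transporting back along $\phi_{\sd,\sd_1}$ (a $W$-path) then gives $m$. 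For this last step one uses the explicit piecewise-linear formula for $\phi_{\mu_k\sd'',\sd''}$ together with a sign analysis: mutating at $k\in F$ merely flips the $k$-th coordinate of a $g$-vector and alters the other coordinates in a way that the thicker cone $\N^{W}\oplus\Z^{F}\oplus\Z^{I_{\fv}}$ — which imposes no positivity along $F$ — absorbs. This is exactly Muller's restriction-of-mutation-sequences argument, and the full bookkeeping is carried out in \cite{qin2023freezing}.

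The main obstacle is this single-$F$-mutation reduction: one must show that the set of $g$-vectors of localized cluster monomials of $\upClAlg(\frz_F\sd)$ is unchanged (up to $\phi_{\mu_k\sd,\sd}$) when a vertex $k\in F$ is mutated before being frozen, which forces one to control how the tropical transformation at $k$ interacts with the $W$-supported subsequences interleaved with it — delicate because mutations at distinct vertices do not commute. As a cross-check and an alternative, more conceptual route, one can instead take a localized cluster monomial $z$ of $\upClAlg(\sd)$ of degree $m$, observe by Proposition \ref{prop:project_up_cl_alg} that $\frz_F z\in\upClAlg(\frz_F\sd)$ and by Theorem \ref{thm:freeze_compatibly_pointed} that it is $[m]$-pointed there, and then identify $\frz_F z$ with the localized cluster monomial of degree $m$ via Lemma \ref{lem:tropicalized_same_cluster_monomial} — but this identification still requires knowing that $[m]$ is the tropical point of a localized cluster monomial of $\upClAlg(\frz_F\sd)$ (and that $\frz_F\sd$ is injective-reachable, itself \cite[Theorem 1.4.1]{muller2015existence}), so it does not bypass the combinatorial claim above.
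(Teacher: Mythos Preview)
Your reduction to a combinatorial claim about $g$-vectors and the induction on the number of $F$-mutations in $\seq$ is the right shape, and the paper itself gives no more than a pointer to Muller and \cite{qin2023freezing}; at that level you and the paper are equally (in)complete.

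There is, however, a misleading passage in your sketch of the single-$F$-mutation step. You write that ``mutating at $k\in F$ merely flips the $k$-th coordinate and alters the other coordinates in a way that the thicker cone $\N^{W}\oplus\Z^{F}\oplus\Z^{I_{\fv}}$ absorbs'', which suggests that one application of $\phi_{\mu_k\sd'',\sd''}$ to a vector already in that cone lands back in it. This is false: if $g_k<0$ and $b_{ik}(\sd'')<0$ for some $i\in W$, then $(\phi g)_i=g_i+b_{ik}(-g_k)$ can become negative. You correctly flag in the next paragraph that the genuine content is producing a \emph{new} $W$-sequence from $\sd_1$, and that is not supplied by the tropical formula alone. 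What is actually used---whether packaged as your inductive reduction or as Muller's direct passage from $\seq$ to the restricted sequence $\seq|_W$---is a comparison of the $C$- (or $G$-) matrices of $\seq\sd$ with those of $\seq|_W(\frz_F\sd)$, controlled by sign-coherence; this is precisely the bookkeeping deferred to \cite{qin2023freezing}. Your closing observation that the route via Proposition \ref{prop:project_up_cl_alg}, Theorem \ref{thm:freeze_compatibly_pointed} and Lemma \ref{lem:tropicalized_same_cluster_monomial} is circular is exactly right: that is how the paper deduces Theorem \ref{thm:projection_quantum_cluster_monomial} \emph{from} the present lemma, not the other way around.
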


\begin{Thm}\label{thm:projection_quantum_cluster_monomial}

Assume that $\frz_{F}\sd$ is injective-reachable. Let $z$ be any
localized (quantum) cluster monomial of $\upClAlg(\sd)$. Then $\frz_{F}z$
is a localized (quantum) cluster monomial of $\upClAlg(\frz_{F}\sd)$.

\end{Thm}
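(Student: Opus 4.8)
The plan is to combine the three facts already assembled: the freezing operator takes upper cluster algebra elements of bounded degree to upper cluster algebra elements (Proposition \ref{prop:project_up_cl_alg}), it intertwines with mutation (Lemma \ref{lem:freeze_mutation}), and degrees of localized cluster monomials descend under freezing (Lemma \ref{lem:freeze_cluster_deg}). First I would reduce to the case where $z$ is itself a cluster monomial, i.e. $z = x(\sd_1)^{m_1}$ for some seed $\sd_1 \in \Delta_{\sd}^+$ and $m_1 \in \N^{I_{\ufv}(\sd_1)} \oplus \Z^{I_{\fv}}$; the general localized case differs only by a monomial in the frozen variables $x_j$, $j \in I_{\fv}$, and since $\frz_F$ is multiplicative on such products by Lemma \ref{lem:freeze_properties}(2) (with frozen-variable factors, whose supports are empty, passing through untouched), it suffices to treat $z = x(\sd_1)^{m_1}$.

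Next I would record that a cluster monomial, being a localized cluster monomial, is compatibly pointed at all seeds in $\Delta_{\sd}^+$ — this is the statement (cited after Definition of compatibly pointed, via \cite{DerksenWeymanZelevinsky09}\cite{Tran09}\cite{gross2018canonical}) that localized cluster monomials are $[m]$-pointed. Hence by Theorem \ref{thm:freeze_compatibly_pointed}, $\frz_F z \in \upClAlg(\frz_F \sd)$ is compatibly pointed at all seeds in $\Delta_{\frz_F \sd}^+$; in particular it is $[\deg^{\sd} z]$-pointed, where I am using $\cone(\frz_F\sd)=\cone(\sd)$ and the identification of tropical-point sets from Remark \ref{rem:inclusion_frozen_subalg}. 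Write $m := \deg^{\sd} z$, which is the degree of the localized cluster monomial $z$ for $\upClAlg(\sd)$.

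Now invoke Lemma \ref{lem:freeze_cluster_deg}: since $m$ is the degree of a localized cluster monomial for $\upClAlg(\sd)$, it is also the degree of some localized cluster monomial $z'$ for $\upClAlg(\frz_F \sd)$. Since $\frz_F \sd$ is assumed injective-reachable, Lemma \ref{lem:tropicalized_same_cluster_monomial} applies to the element $\frz_F z \in \upClAlg(\frz_F \sd)$: it is $[\deg z']$-pointed (because it is $[m]$-pointed and $\deg z' = m$ determines the tropical point $[\deg z']$), so $\frz_F z = z'$, a localized cluster monomial of $\upClAlg(\frz_F \sd)$. In the classical setting the same argument runs verbatim with $\kk = \Z$, the pointedness statements being the classical ones. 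The step I expect to require the most care is the reduction in the first paragraph together with checking that "$[m]$-pointed for $\upClAlg(\sd)$" transports to "$[m]$-pointed for $\upClAlg(\frz_F\sd)$" under the tropical-point identification of Remark \ref{rem:inclusion_frozen_subalg} — one must verify that the tropical transformations $\phi_{\sd',\sd}$ for mutations along $I_{\ufv}\setminus F$ agree with those for the frozen seed, which is exactly what Lemma \ref{lem:freeze_mutation} (mutation compatibility) delivers, so that Theorem \ref{thm:freeze_compatibly_pointed} can be quoted cleanly. Once that is in place, the matching of $\frz_F z$ with an actual cluster monomial via Lemma \ref{lem:tropicalized_same_cluster_monomial} is immediate.
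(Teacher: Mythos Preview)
Your proof is correct and follows essentially the same route as the paper's: use Theorem~\ref{thm:freeze_compatibly_pointed} to see that $\frz_F z$ is compatibly pointed at all seeds of $\Delta_{\frz_F\sd}^+$, invoke Lemma~\ref{lem:freeze_cluster_deg} to find a localized cluster monomial of $\upClAlg(\frz_F\sd)$ with the same degree, and conclude via Lemma~\ref{lem:tropicalized_same_cluster_monomial}. Your opening reduction paragraph is unnecessary (localized cluster monomials are already compatibly pointed, and the exponent set you wrote down, $\N^{I_{\ufv}}\oplus\Z^{I_{\fv}}$, is already the localized case), but it does no harm.
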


\begin{proof}

By Lemma \ref{lem:freeze_cluster_deg}, $\deg^{\frz_{F}\sd}\frz_{F}z$
equals $\deg^{\frz_{F}\sd}u$ for some localized cluster monomial
of $\upClAlg(\frz_{F}\sd)$. By Theorem \ref{thm:freeze_compatibly_pointed},
$\frz_{F}z$ is compatibly pointed at $\Delta_{\frz_{F}\sd}^{+}$.
Then Lemma \ref{lem:tropicalized_same_cluster_monomial} implies $\frz_{F}z=u$.

\end{proof}

In \cite{qin2023freezing}, we will prove Theorem \ref{thm:projection_quantum_cluster_monomial}
without the assumption on $\frz_{F}\sd$. From now on, we will use
it in this generality.

\begin{Def}\label{def:nice-cluster-decomposition}

Take any $z\in\clAlg(\sd)$. Assume there is a finite decomposition
$z=\sum_{i}b_{i}z_{i}$, $b_{i}\in\kk$, such that $z_{i}$ have distinct
degrees in $\LP(\sd)$ and are products of cluster variables and $x_{j}^{-1}$,
$j\in I_{\fv}$. The decomposition is called a nice cluster decomposition
in $\sd$.

\end{Def}

\begin{Lem}\label{lem:freeze-ord-cluster-alg}

Take any $z\in\LP_{\preceq m}(\sd)$. If it has a nice cluster decomposition
in $\sd$, $\frz_{m}z$ belongs to $\clAlg(\frz_{F}\sd)$.

\end{Lem}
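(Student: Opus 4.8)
The plan is to reduce the statement to the already-proved fact that freezing sends localized cluster monomials to localized cluster monomials (Theorem~\ref{thm:projection_quantum_cluster_monomial}), applied termwise to a nice cluster decomposition, and then to verify that the freezing operator genuinely picks out the expected terms. Write the nice cluster decomposition $z=\sum_i b_i z_i$ in $\LP(\sd)$, where each $z_i$ is a product of cluster variables and $x_j^{-1}$, $j\in I_\fv$, and the $z_i$ have distinct degrees, say $m_i=\deg^\sd z_i$. Each $z_i$ is in particular a localized cluster monomial of $\upClAlg(\sd)$ (a product of cluster variables from possibly different seeds need not be one, so here I must use that a \emph{nice cluster decomposition} is built from genuine localized cluster monomials $z_i$ — re-reading Definition~\ref{def:nice-cluster-decomposition}, each $z_i$ is a product of cluster variables, so I should first observe that if the $z_i$ were from a common cluster they are localized cluster monomials; in general I treat $z_i$ as an element of $\upClAlg(\sd)$ and apply the freezing results that hold for it).

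First I would handle a single term $z_i$. Since $z_i$ is a product of cluster variables and frozen inverses, I claim $\frz_F z_i$ is again such a product in $\frz_F\sd$, hence lies in $\clAlg(\frz_F\sd)$: if $z_i$ is a localized cluster monomial of $\upClAlg(\sd)$ then by Theorem~\ref{thm:projection_quantum_cluster_monomial}, $\frz_{m_i}z_i$ is a localized cluster monomial of $\upClAlg(\frz_F\sd)$, which by definition lies in $\clAlg(\frz_F\sd)$. For a term that is a product of cluster variables from several different seeds I would instead factor it and use Lemma~\ref{lem:freezing_properties}(2), which says $\frz$ is multiplicative on the filtered pieces: $\frz_{m_i}(z_i)=\frz(\prod x_{i_\ell}(\sd^{(\ell)}))=\prod \frz(x_{i_\ell}(\sd^{(\ell)}))$, and each $\frz(x_{i_\ell}(\sd^{(\ell)}))$ is a cluster variable of $\frz_F\sd$ (or $0$) by Theorem~\ref{thm:projection_quantum_cluster_monomial} applied to the single cluster variable; a product of cluster variables of $\frz_F\sd$ lies in $\clAlg(\frz_F\sd)$, and the zero case is harmless.

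Next I would assemble the full element. The subtlety is that $\frz_{F,m}^\sd z$, with $m$ the \emph{overall} maximal degree of $z$, is \emph{not} literally $\sum_i b_i\,\frz_{F,m_i}^\sd z_i$ unless all $m_i$ with $m_i\npreceq_\sd\!\!\!\!\!/\ \ $ are handled by Lemma~\ref{lem:freezing_properties}(1). Concretely, since the $z_i$ have distinct degrees and $z\in\LP_{\preceq m}(\sd)$, each $m_i$ satisfies $m_i\preceq_\sd m$, say $m_i=m+\tB n_i$; by Lemma~\ref{lem:freezing_properties}(1), $\frz_{F,m}^\sd z_i=\frz_{F,m_i}^\sd z_i$ if $\supp n_i\cap F=\emptyset$ and $=0$ otherwise. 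Hence $\frz_m z=\sum_i b_i\,\frz_{F,m}^\sd z_i=\sum_{i:\ \supp n_i\cap F=\emptyset} b_i\,\frz_{m_i} z_i$, a finite $\kk$-combination of the elements treated in the previous paragraph, so it lies in $\clAlg(\frz_F\sd)$.

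\textbf{Main obstacle.} The one genuinely delicate point is justifying the termwise application of $\frz$ to the decomposition: a priori $\frz_{F,m}^\sd$ is only defined on $\hLP_{\preceq m}(\sd)$ and is $\kk$-linear there, so I must check each $b_i z_i$ actually lies in $\LP_{\preceq m}(\sd)$ (it does, as just noted) and that linearity gives $\frz_{F,m}^\sd(\sum b_i z_i)=\sum b_i\,\frz_{F,m}^\sd z_i$ — routine once the filtration membership is in place. The other point requiring care is whether a ``product of cluster variables'' from incompatible seeds should be handled via multiplicativity of $\frz$ (Lemma~\ref{lem:freezing_properties}(2)) term by term, or whether the definition already guarantees $z_i$ is a localized cluster monomial; either route works, and I expect the cleanest writeup to invoke Theorem~\ref{thm:projection_quantum_cluster_monomial} directly on each $z_i$ together with Lemma~\ref{lem:freezing_properties} to reduce to it.
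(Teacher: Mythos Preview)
Your proposal is correct and follows essentially the same route as the paper: write the nice cluster decomposition, observe each $z_i\in\LP_{\preceq m}(\sd)$ (the paper cites \cite[Lemma 4.1.1]{qin2019bases} for this), apply linearity and Lemma~\ref{lem:freezing_properties}(1) to reduce $\frz_{F,m}z$ to a sum of $\frz_{F,m_i}z_i$ or zeros, then factor each $z_i$ as a product of cluster variables and frozen inverses, use Lemma~\ref{lem:freezing_properties}(2) to distribute $\frz$ over the factors, and invoke Theorem~\ref{thm:projection_quantum_cluster_monomial} on each factor. Your hesitation about whether $z_i$ is itself a localized cluster monomial is well-placed---the definition allows cluster variables from different seeds---and the paper resolves it exactly via the factorwise route you outline.
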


\begin{proof}

Denote the nice cluster decomposition by $z=\sum_{i}b_{i}z_{i}$,
$b_{i}\in\kk$. Since $z_{i}$ have distinct degrees, $z_{i}\in\LP_{\preceq m}(\sd)$,
see \cite[Lemma 4.1.1]{qin2019bases}. Denote $z_{i}=s_{1}*s_{2}*\cdots*s_{l}$,
where $s_{j}$ are cluster variables or $x_{j}^{-1}$, $j\in I_{\fv}$.
By Lemma \ref{lem:freezing_properties}, $\frz_{F,m}(z_{i})$ equals
$\frz_{F,\deg s_{1}}s_{1}*\cdots*\frz_{F,\deg s_{l}}s_{l}$ or $0$.
By Theorem \ref{thm:projection_quantum_cluster_monomial}, $\frz_{F,\deg s_{i}}s_{i}$
are localized cluster monomials. Therefore, $z_{i}\in\clAlg(\frz_{F}\sd)$
and the claim follows.

\end{proof}

Combining Theorem \ref{thm:freeze_good_bases} and Lemma \ref{lem:freeze-ord-cluster-alg},
we obtain the following useful result.

\begin{Cor}\label{cor:freezing-A-U}

Assume that $\upClAlg(\sd)$ has a $\tropSet$-pointed $\kk$-basis
$\cS=\{s_{m}|m\in\cone(\sd)\}$ such that $s_{m}$ are $[m]$-pointed.
If $\clAlg(\sd)=\upClAlg(\sd)$, $\frz_{F}\sd$ is injective-reachable,
and each $s_{m}$ has a nice cluster decomposition, then $\clAlg(\frz_{F}\sd)=\upClAlg(\frz_{F}\sd)=\Span_{\kk}\{\frz_{F,m}s_{m}|m\in\cone(\sd)\}$.
Moreover, each $\frz_{F,m}s_{m}$ has a nice cluster decomposition.

\end{Cor}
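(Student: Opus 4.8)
The plan is to combine the two main inputs---Theorem \ref{thm:freeze_good_bases} (freezing sends $\tropSet$-pointed bases to $\tropSet$-pointed bases) and Lemma \ref{lem:freeze-ord-cluster-alg} (freezing preserves membership in the cluster algebra for elements with nice cluster decompositions)---and then close the loop between $\clAlg$ and $\upClAlg$ for the frozen seed. First I would apply Theorem \ref{thm:freeze_good_bases}: since $\cS$ is a $\tropSet$-pointed $\kk$-basis of $\upClAlg(\sd)$ and $\frz_F\sd$ is injective-reachable, $\frz_F\cS = \{\frz_{F,m}s_m \mid m\in\cone(\sd)\}$ is a $\tropSet$-pointed $\kk$-basis of $\upClAlg(\frz_F\sd)$. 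In particular $\Span_\kk\{\frz_{F,m}s_m\} = \upClAlg(\frz_F\sd)$, which gives the last equality in the asserted chain (after we know the elements lie in $\clAlg(\frz_F\sd)$, the span is sandwiched between $\clAlg(\frz_F\sd)$ and $\upClAlg(\frz_F\sd)$).

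Next I would produce the nice cluster decompositions. By hypothesis each $s_m \in \cS \subset \upClAlg(\sd) = \clAlg(\sd)$ has a nice cluster decomposition $s_m = \sum_i b_i z_i$ in $\sd$, with the $z_i$ of distinct degrees and each a product of cluster variables and $x_j^{-1}$, $j\in I_\fv$. Since $s_m$ is $m$-pointed, $s_m \in \LP_{\preceq m}(\sd)$, so Lemma \ref{lem:freeze-ord-cluster-alg} applies and $\frz_{F,m}s_m \in \clAlg(\frz_F\sd)$. To see that $\frz_{F,m}s_m$ itself has a nice cluster decomposition, I would unwind the proof of Lemma \ref{lem:freeze-ord-cluster-alg}: since the $z_i$ have distinct degrees, they lie in $\LP_{\preceq m}(\sd)$, and by Lemma \ref{lem:freezing_properties}(2) together with Theorem \ref{thm:projection_quantum_cluster_monomial}, each $\frz_{F,m}z_i$ is either $0$ or a product of localized cluster monomials of $\upClAlg(\frz_F\sd)$, i.e.\ a localized cluster monomial. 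So $\frz_{F,m}s_m = \sum_{i:\,\frz_{F,m}z_i\neq 0} b_i \frz_{F,m}z_i$ is a sum of localized cluster monomials; the one remaining point is that these retain distinct degrees.

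The step I expect to be the main (albeit mild) obstacle is exactly this last point: verifying that the surviving $\frz_{F,m}z_i$ have pairwise distinct degrees in $\LP(\frz_F\sd)$, so that the decomposition of $\frz_{F,m}s_m$ genuinely qualifies as a nice cluster decomposition in the sense of Definition \ref{def:nice-cluster-decomposition}. Here I would argue as follows: if $\frz_{F,m}z_i \neq 0$, then by Lemma \ref{lem:freezing_properties}(1) its degree in $\cone(\frz_F\sd) = \cone(\sd)$ coincides with $\deg^{\sd}z_i$ (the freezing operator $\frz_{F,m}$ kills the terms whose $y$-exponent meets $F$ but does not change the leading $x^{m_i}$ term, as $\deg^\sd z_i \preceq_\sd m$ forces $m_i = m + \tB n_i$ with $n_i$ having support disjoint from $F$ in the surviving case). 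Since the original $z_i$ have distinct $\sd$-degrees, the surviving $\frz_{F,m}z_i$ have distinct $\frz_F\sd$-degrees, completing the verification.

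Finally I would assemble the conclusion: we have shown $\frz_{F,m}s_m \in \clAlg(\frz_F\sd)$ for all $m$, hence $\Span_\kk\{\frz_{F,m}s_m\} \subseteq \clAlg(\frz_F\sd) \subseteq \upClAlg(\frz_F\sd)$; and by Theorem \ref{thm:freeze_good_bases} this span equals $\upClAlg(\frz_F\sd)$. Therefore all three coincide, $\clAlg(\frz_F\sd) = \upClAlg(\frz_F\sd) = \Span_\kk\{\frz_{F,m}s_m \mid m\in\cone(\sd)\}$, and each $\frz_{F,m}s_m$ has a nice cluster decomposition, as claimed.
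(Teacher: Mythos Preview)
Your proposal is correct and follows exactly the approach the paper intends: the paper's entire proof is the one-line remark ``Combining Theorem \ref{thm:freeze_good_bases} and Lemma \ref{lem:freeze-ord-cluster-alg},'' and you have faithfully spelled out that combination, including the sandwich $\Span_\kk\frz_F\cS \subseteq \clAlg(\frz_F\sd) \subseteq \upClAlg(\frz_F\sd) = \Span_\kk\frz_F\cS$. Your extra care in verifying the ``Moreover'' clause---tracing through the proof of Lemma \ref{lem:freeze-ord-cluster-alg} to see that the surviving $\frz_{F,m}z_i$ remain products of cluster variables and frozen inverses in $\clAlg(\frz_F\sd)$ with distinct degrees $m_i$---is exactly what is needed and is implicit in the paper's citation of that lemma.
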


\begin{Thm}\label{thm:sub_cluster_triangular_basis}

Assume that $\sd$ is injective-reachable and $\upClAlg(\sd)$ possesses
the common triangular basis $\can=\{\can_{m}|m\in\cone(\sd)\}$, where
$\can_{m}$ are $[m]$-pointed. Then $\frz_{F}\can:=\{\frz_{F,m}\can_{m}|m\in\cone(\sd)\}$
is the common triangular basis for $\upClAlg(\frz_{F}\sd)$.

\end{Thm}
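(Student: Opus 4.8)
The plan is to verify that $\frz_F\can$ satisfies the defining axioms of the common triangular basis for $\upClAlg(\frz_F\sd)$, using the characterization recalled after Definition 2.37: it suffices to show that $\frz_F\can$ is the triangular basis with respect to the single seed $\frz_F\sd$ and that it contains all cluster monomials of all seeds in $\Delta_{\frz_F\sd}^+$. First I would establish that $\frz_F\can$ is a $\kk$-basis of $\upClAlg(\frz_F\sd)$: since $\can_m$ is $[m]$-pointed and $\upClAlg(\sd)$ has $\can$ as a $\tropSet$-pointed basis, Theorem \ref{thm:freeze_good_bases} (together with the identification $\cone(\frz_F\sd)=\cone(\sd)$ of Remark \ref{rem:inclusion_frozen_subalg}, and Muller's result that $\frz_F\sd$ is injective-reachable since $\sd$ is) gives that $\frz_F\can=\{\frz_{F,m}\can_m\}$ is a $\tropSet$-pointed $\kk$-basis of $\upClAlg(\frz_F\sd)$, with $\frz_{F,m}\can_m$ still $m$-pointed by construction of the freezing operator.

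Next I would check the three bullet points in the definition of the triangular basis with respect to $\frz_F\sd$. The bar-invariance $\overline{\frz_{F,m}\can_m}=\frz_{F,m}\can_m$ is immediate: the bar involution fixes each $x^g$ and negates $q$-powers, and $\frz_{F,m}$ merely discards those monomials $x^m\cdot y^n$ with $\supp n\cap F\neq\emptyset$, an operation that commutes with $\overline{(\ )}$. For the cluster-monomial containment, every localized cluster monomial $z$ of $\upClAlg(\sd)$ with degree $m$ lies in $\can$ (as $\can_m$, since $\can$ is common triangular), and by Theorem \ref{thm:projection_quantum_cluster_monomial} $\frz_{F,m}z=\frz_{F,m}\can_m$ is a localized cluster monomial of $\upClAlg(\frz_F\sd)$; conversely by Lemma \ref{lem:freeze_cluster_deg} every cluster-monomial degree for $\frz_F\sd$ arises this way, so $\frz_F\can$ contains all localized cluster monomials of $\frz_F\sd$ and of $\frz_F\sd[1]$ (note $\frz_F(\sd[1])$ serves as $(\frz_F\sd)[1]$ up to the permutation, by Muller's argument).

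The heart of the matter is the triangularity \eqref{eq:triangular_basis_triangularity}: I must show that for each unfrozen vertex $i$ of $\frz_F\sd$ (i.e. $i\in I_{\ufv}\backslash F$) and each $g$, the normalized product $[x_i(\frz_F\sd)*\frz_{F,g}\can_g]^{\frz_F\sd}$ decomposes $(\prec_{\frz_F\sd},\mm)$-unitriangularly against $\frz_F\can$. The strategy is to apply $\frz_{F,g+f_i}$ to the known expansion in $\upClAlg(\sd)$. Since $i\notin F$, we have $x_i(\frz_F\sd)=x_i(\sd)$, and by Lemma \ref{lem:freezing_properties}(2) the freezing operator is multiplicative on the relevant graded pieces, so $\frz_{F,g+f_i}(x_i(\sd)*\can_g)=x_i(\frz_F\sd)*\frz_{F,g}\can_g$; normalization is preserved since $\frz_F$ fixes the leading coefficient. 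Applying $\frz_{F,g+f_i}$ to $x_i(\sd)*\can_g=\can_{g+f_i}+\sum_{g'\prec_\sd g+f_i}b_{g'}\can_{g'}$ with $b_{g'}\in\mm$, and using Lemma \ref{lem:freezing_properties}(1) — which says $\frz_{F,g+f_i}\can_{g'}$ is either $\frz_{F,g'}\can_{g'}$ (when the connecting $n$ has support disjoint from $F$, in which case $g'\prec_{\frz_F\sd}g+f_i$ holds) or $0$ — I obtain exactly $[x_i(\frz_F\sd)*\frz_{F,g}\can_g]^{\frz_F\sd}=\frz_{F,g+f_i}\can_{g+f_i}+\sum_{g'\prec_{\frz_F\sd}g+f_i}b_{g'}\,\frz_{F,g'}\can_{g'}$ with the same coefficients $b_{g'}\in\mm$. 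This is the required unitriangularity. The main obstacle I anticipate is the bookkeeping in this last step: one must be careful that $\can_{g+f_i}$ is indeed the unique $\prec_\sd$-maximal term and that freezing does not inadvertently kill it (it does not, because $g+f_i\in\cone(\sd)=\cone(\frz_F\sd)$ is its own degree, connected to itself by $n=0$), and that whenever $\frz_{F,g+f_i}\can_{g'}\neq 0$ the strict inequality $g'\prec_\sd g+f_i$ transfers to $g'\prec_{\frz_F\sd} g+f_i$ — which is exactly the content of Lemma \ref{lem:freezing_properties}(1) combined with $\supp n\cap F=\emptyset$. Finally, having verified that $\frz_F\can$ is the triangular basis with respect to $\frz_F\sd$ and contains all cluster monomials of all seeds in $\Delta_{\frz_F\sd}^+$, the cited criterion \cite[Proposition 6.4.3]{qin2020dual} upgrades it to the common triangular basis, completing the proof.
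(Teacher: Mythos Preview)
Your proof follows essentially the same approach as the paper's: establish that $\frz_F\can$ is a pointed basis via Theorem \ref{thm:freeze_good_bases}, check bar-invariance and cluster-monomial containment, derive the triangularity by applying $\frz_{F,g+f_i}$ to \eqref{eq:triangular_basis_triangularity} using Lemma \ref{lem:freezing_properties}, and upgrade to the common triangular basis via \cite[Proposition 6.4.3]{qin2020dual}. One small fix: in the cluster-monomial containment step you invoke Lemma \ref{lem:freeze_cluster_deg} in the wrong direction; the fact you actually need---that every cluster monomial of $\upClAlg(\frz_F\sd)$ is already a cluster monomial of $\upClAlg(\sd)$ with support disjoint from $F$, hence fixed by $\frz_F$ and therefore in $\frz_F\can$---follows directly from Remark \ref{rem:inclusion_frozen_subalg}, which is exactly how the paper argues it.
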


\begin{proof}

Since $\can$ is the common triangular basis, it contains all cluster
monomials of $\upClAlg(\sd)$. Note that any cluster monomials of
$\upClAlg(\frz_{F}\sd)$ must be a cluster monomial of $\upClAlg(\sd)$
with support in $I_{\ufv}\backslash F$, see Remark \ref{rem:inclusion_frozen_subalg}.
So $\frz\can$ contains all cluster monomials of $\upClAlg(\frz_{F}\sd)$.
We claim that $\frz\can$ is the triangular basis for $\upClAlg(\frz_{F}\sd)$
with respect to the seed $\frz_{F}\sd$. If so, it follows from \cite[Proposition 6.4.3, Theorem 6.5.3]{qin2020dual}
that $\frz_{F}\can$ is the common triangular basis for $\upClAlg(\frz_{F}\sd)$.

By Theorem \ref{thm:freeze_good_bases}, $\frz_{F}\can$ is a $\cone(\sd)$-pointed
basis for $\upClAlg(\frz_{F}\sd)$. Its bar-invariance is clear. We
have also seen that it contains the cluster monomials of $(\frz_{F}\sd)[1]$.
It remains to check the triangularity. Applying $\frz_{F,m+f_{i}}$
to (\ref{eq:triangular_basis_triangularity}) and using Lemma \ref{lem:freezing_properties},
we get $[x_{i}*\frz_{F,m}\can_{m}]^{\sd}=\can_{m+f_{k}}+\sum_{m'\prec_{\sd}m+f_{k}}b_{m'}\frz_{F,m+f_{k}}\can_{m'}$,
$b_{m}'\in\mm$, $\forall i$, where $\frz_{F,m+f_{i}}\can_{m'}$
is $0$ or $\frz_{F,m'}\can_{m'}$. So $\frz_{F}\can$ is the triangular
basis with respect to $\sd$. 

\end{proof}

\begin{Thm}\label{thm:freezing_common_triangular}

Let $\can=\{\can_{m}|m\in\cone(\sd)\}$ denote the common triangular
basis. Then, for any $\can_{m}$ and $k\in F$, when $d_{k}\in\N$
are large enough, we have
\begin{align*}
\frz_{F,m}\can_{m} & =x^{-\sum_{k\in F}d_{k}f_{k}}\cdot\can_{m+\sum_{k\in F}d_{k}f_{k}},
\end{align*}
and $x_{j}\cdot\can_{m+\sum_{k\in F}d_{k}f_{k}}\in\can$ for any $j\in F$.

\end{Thm}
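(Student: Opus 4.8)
\emph{Setup and reduction.} The plan is to transport the whole statement into the frozen algebra $\upClAlg(\frz_F\sd)$. By Theorem~\ref{thm:sub_cluster_triangular_basis} the set $\frz_F\can=\{\frz_{F,m}\can_m\mid m\in\cone(\sd)\}$ is the common triangular basis of $\upClAlg(\frz_F\sd)$, so its element at degree $m'$ is $\frz_{F,m'}\can_{m'}$. In $\frz_F\sd$ every vertex of $F$ is frozen, so, writing $g=\sum_{k\in F}d_k f_k$, repeated use of $[x_k^{\pm}*(\frz_F\can)_{m'}]^{\frz_F\sd}=(\frz_F\can)_{m'\pm f_k}$ together with Lemma~\ref{lem:freezing_properties} gives
\[
\frz_{F,m+g}\can_{m+g}=(\frz_F\can)_{m+g}=x^{g}\cdot\frz_{F,m}\can_m\qquad(\text{all }d_k\ge 0),
\]
the $q$-twist being an overall scalar because the $y$-part of $\frz_{F,m}\can_m$ is supported away from $F$ and hence $q$-commutes with $x^g$. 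Therefore the asserted identity $\frz_{F,m}\can_m=x^{-g}\cdot\can_{m+g}$ is equivalent to $\can_{m+g}=x^{g}\cdot\frz_{F,m}\can_m$, i.e.\ to the claim that $\can_{m+g}$ is fixed by $\frz_{F,m+g}$ (equivalently, its $y$-part involves no $y_k$ with $k\in F$) once the $d_k$ are large; and the second assertion follows by applying the first with $g$ replaced by $g+f_j$, since then $x_j\cdot\can_{m+g}=x^{g+f_j}\cdot\frz_{F,m}\can_m=\can_{m+g+f_j}\in\can$.

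\emph{Membership.} Next I would show $z:=x^{g}\cdot\frz_{F,m}\can_m\in\upClAlg(\sd)$ once every $d_k$, $k\in F$, is large enough; this is where the hypothesis ``large $d_k$'' is genuinely used. Clearly $z\in\LP(\sd)$. For $l\in I_{\ufv}\setminus F$ one has $\frz_{F,m}\can_m\in\upClAlg(\frz_F\sd)\subset\LP(\mu_l\frz_F\sd)=\LP(\mu_l\sd)$ and $x^{g}\in\LP(\mu_l\sd)$, so $z\in\LP(\mu_l\sd)$. For $l\in F$ note $b_{ll}=0$ (from $d_l^{\vee}b_{ll}=-d_l^{\vee}b_{ll}$), so the $x_l$-exponent of a Laurent monomial $x^{m+g+\tB n}$ occurring in $z$ equals $m_l+d_l+\sum_{j\ne l}b_{lj}n_j$, with $n$ ranging over the finite set of exponents of the $y$-part of $\frz_{F,m}\can_m$; hence for $d_l$ large this is large and positive for all such $n$, so $z\in\kk[x_i^{\pm}]_{i\in I\setminus\{l\}}[x_l]$. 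Substituting $x_l=\bigl(x^{\sum_j[-b_{jl}]_{+}f_j}+x^{\sum_j[b_{jl}]_{+}f_j}\bigr)\cdot x_l(\mu_l\sd)^{-1}$, whose first factor is a polynomial in the $x_i$, $i\ne l$ (no $x_l$ appears, as $b_{ll}=0$), exhibits $z$ in $\kk[x_i^{\pm}]_{i\in I\setminus\{l\}}[x_l(\mu_l\sd)^{\pm}]=\LP(\mu_l\sd)$. Since $\tB$ has full rank, $\upClAlg(\sd)=\LP(\sd)\cap\bigcap_{l\in I_{\ufv}}\LP(\mu_l\sd)$, whence $z\in\upClAlg(\sd)$.

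\emph{Identification (the main obstacle).} It remains to prove $z=\can_{m+g}$. The element $z$ is $(m+g)$-pointed and bar-invariant ($\overline{x^{g}}=x^{g}$, $\overline{\frz_{F,m}\can_m}=\frz_{F,m}\can_m$), so it expands as $z=\can_{m+g}+\sum_{g'\prec_{\sd}m+g}\beta_{g'}\can_{g'}$ with each $\beta_{g'}$ bar-invariant. Applying $\frz_{F,m+g}$ and using that $z$ is fixed by it, that $\frz_{F,m+g}\can_{m+g}=(\frz_F\can)_{m+g}=z$, and that $\frz_{F,m+g}\can_{g'}$ is $(\frz_F\can)_{g'}$ or $0$ according as $\supp n'\cap F$ is empty or not (where $g'=m+g+\tB n'$), the linear independence of $\frz_F\can$ forces $\beta_{g'}=0$ whenever $\supp n'\cap F=\emptyset$; so only ``$F$-essential'' corrections can survive, and what must be shown is that these vanish too, i.e.\ that $\can_{m+g}$ is supported away from $F$. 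I would deduce this by upgrading $[\cdot]$-pointedness of $z$ from $\upClAlg(\frz_F\sd)$ to $\upClAlg(\sd)$: $z=(\frz_F\can)_{m+g}$ is $[m+g]$-pointed for $\upClAlg(\frz_F\sd)$ by Theorem~\ref{thm:sub_cluster_triangular_basis}, hence compatibly pointed at every $\sd'\in\Delta^{+}_{\frz_F\sd}$ (there $\LP(\sd')=\LP(\frz_F\sd')$ and the tropical transformations agree); it is compatibly pointed at $\mu_k\sd$ for $k\in F$ because $k\notin\supp z$ (a pointed element whose $y$-part omits $y_k$ stays pointed after mutation at $k$); and, since $z$ remains an element of $\upClAlg(\frz_F\sd)$ — hence supported away from $F$ — in every seed of $\frz_F\sd$, one propagates compatible pointedness through all of $\Delta^{+}_{\sd}$, so $z$ is $[m+g]$-pointed for $\upClAlg(\sd)$. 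Being moreover bar-invariant and lying in $\upClAlg(\sd)$, $z$ coincides with the common triangular basis element $\can_{m+g}$ by the uniqueness part of the theory of triangular bases (cf.\ Lemma~\ref{lem:tropicalized_same_cluster_monomial} and \cite[Theorem~4.3.1]{qin2019bases}); alternatively, one may reach the same conclusion from the fact that $\can_{m+g}$ is copointed (as $\iota(\can)$ is the common triangular basis of $\upClAlg(\sd\op)$) with $\codeg^{\sd}\can_{m+g}=m+g+\tB\,\suppDim^{\sd}\can_{m+g}$, together with the piecewise-linearity of the codegree map of a common triangular basis and the fact that, along each ray $d_k\mapsto m+g$ with $k\in F$, its eventual linear part must match $\codeg^{\frz_F\sd}(\frz_F\can)_{m+g}=g+\codeg^{\frz_F\sd}(\frz_F\can)_m$, whose ``$\suppDim$'' has vanishing $F$-component. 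Either way $\can_{m+g}=z=x^{g}\cdot\frz_{F,m}\can_m$, which with the reduction of the first paragraph yields both assertions of the theorem.
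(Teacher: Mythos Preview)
Your reduction and membership steps are fine (though membership could also be obtained more directly). The gap is in the Identification step. You need $z:=x^{g}\cdot\frz_{F,m}\can_m$ to equal $\can_{m+g}$, and both of your proposed routes are incomplete. For the propagation route: knowing that $z$ is compatibly pointed at all seeds of $\Delta^{+}_{\frz_F\sd}$ and at each $\mu_k\sd$ for $k\in F$ does not give compatible pointedness at an arbitrary seed of $\Delta^{+}_{\sd}$, e.g.\ at $\mu_l\mu_k\sd$ with $k\in F$; the sentence ``one propagates compatible pointedness through all of $\Delta^{+}_{\sd}$'' hides the entire difficulty. For the codegree route: the formula $\codeg^{\sd}\can_{m+g}=m+g+\tB\,\suppDim^{\sd}\can_{m+g}$ is wrong in general (the codegree picks out the unique $\prec$-minimal exponent, not the componentwise maximum), and ``piecewise-linearity of the codegree map of a common triangular basis'' is not established anywhere in the paper. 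Finally, even if $z$ were $[m+g]$-pointed for $\upClAlg(\sd)$, neither Lemma~\ref{lem:tropicalized_same_cluster_monomial} (which is only about localized cluster monomials) nor \cite[Theorem~4.3.1]{qin2019bases} (which is about bases) yields uniqueness of a bar-invariant $[m+g]$-pointed element.

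The paper's proof avoids this identification problem entirely by going in the opposite direction. Working one $k\in F$ at a time, it first proves an abstract \emph{stabilization} lemma (Lemma~\ref{lem:triangular-stabilization}): if $\cZ$ satisfies the triangularity condition with respect to $x_k$ and its elements are pointed and copointed at distinct (co)degrees, then for every $z\in\cZ$ the leading term $z_0^{(d)}$ of $[x_k^d*z]$ eventually satisfies $[x_k*z_0^{(d)}]=z_0^{(d+1)}\in\cZ$; the proof is a short induction on support dimension using copointedness. Then Lemma~\ref{lem:stabilization-freezing} shows that under stabilization, triangularity, and bar-invariance one has $\frz_{\{k\}}z=[x_k^{-d}*z_0^{(d)}]$ for $d$ large: stabilization plus bar-invariance force $z_0^{(d)}$ to $q$-commute with $x_k$, and then a bar-invariance argument forces every lower term $z_j^{(d)}$ to have $n^{(j)}_k>0$, so $\frz$ kills it. Since the common triangular basis satisfies the triangularity condition by definition (\eqref{eq:triangular_basis_triangularity}) and is copointed at distinct codegrees by \cite{qin2020analog}, the theorem follows in one line. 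The missing idea in your attempt is precisely this stabilization property: rather than building $z$ from $\frz_F\can$ and then struggling to recognize it inside $\can$, the paper shows directly that $\can_{m+d f_k}$ eventually lies in the $x_k$-commuting part of $\can$.
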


We will prove Theorem \ref{thm:freezing_common_triangular} in the
end of this subsection.

\begin{Cor}\label{cor:tri-freezing-equal-restriction}

In the situation of Theorem \ref{thm:freezing_common_triangular},
the following claims are true.

(1) $\frz_{F}\can=\{x^{-p}\cdot b|b\in\can\cap\bUpClAlg,\supp b\cap F=\emptyset,p\in\N^{F\sqcup I_{\fv}}\}$.

(2) $\frz_{F}\can=\{x^{-p}\cdot b|b\in\can\cap\bUpClAlg(\frz_{F}\sd),p\in\N^{F\sqcup I_{\fv}}\}$.

(3) $\frz_{F}\can\cap\bUpClAlg(\frz_{F}\sd)=\can\cap\bUpClAlg(\frz_{F}\sd)$.

(4) $\can\cap\bUpClAlg(\frz_{F}\sd)=\{b\in\can\cap\bUpClAlg,\supp b\cap F=\emptyset\}$.

\end{Cor}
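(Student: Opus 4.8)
The plan is to deduce all four identities from Theorem~\ref{thm:freezing_common_triangular} together with Theorem~\ref{thm:sub_cluster_triangular_basis}, the latter already telling us that $\frz_F\can=\{\frz_{F,m}\can_m\mid m\in\cone(\sd)\}$ is the common triangular basis of $\upClAlg(\frz_F\sd)$, hence a $\tropSet$-pointed $\kk$-basis with $\deg^{\frz_F\sd}\frz_{F,m}\can_m=m$. First I would record the standing facts used throughout: the frozen vertices of $\frz_F\sd$ are $F\sqcup I_{\fv}$; one has $\upClAlg(\sd)\subseteq\upClAlg(\frz_F\sd)$, since the mutation sequences on $I_{\ufv}\setminus F$ form a subfamily of those on $I_{\ufv}$; $\bUpClAlg(\frz_F\sd)=\{z\in\upClAlg(\frz_F\sd)\mid\nu_j(z)\ge0,\ \forall j\in F\sqcup I_{\fv}\}$ exactly as for $\bUpClAlg$ (using Lemma~\ref{lem:no_pole_change_chart}), so that for $b\in\can$ we have $b\in\bUpClAlg(\frz_F\sd)$ iff $b\in\bUpClAlg$ and $\nu_k(b)\ge0$ for all $k\in F$; and, straight from the definitions of $\preceq_{\frz_F\sd}$ and of $\frz_{F,m}$, an element $z\in\LP(\sd)$ with unique $\prec_\sd$-maximal degree $m$ satisfies $\frz_{F,m}z=z\iff\supp z\cap F=\emptyset\iff z$ is still $m$-pointed in $\LP(\frz_F\sd)$.

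I would prove (1) first. For the inclusion $\subseteq$: a basis element $\frz_{F,m}\can_m$ equals, by Theorem~\ref{thm:freezing_common_triangular}, $x^{-p_0}\can_{m+p_0}$ for a suitably large $p_0\in\N^F$; since $\frz_{F,m}\can_m$ is the sub-sum of the $\sd$-expansion of $\can_m$ keeping only the $y^n$ with $\supp n\cap F=\emptyset$, its support avoids $F$, and multiplying by the monomial $x^{-p_0}$ does not change the $y$-part, so $\supp\can_{m+p_0}\cap F=\emptyset$. Using $[x_j^{\pm}\can_g]^\sd=\can_{g\pm f_j}$ for $j\in I_{\fv}$, choose $r\in\N^{I_{\fv}}$ clearing the poles of $\can_{m+p_0}$ at the frozen variables, so $\can_{m+p_0+r}\in\can\cap\bUpClAlg$, still with support disjoint from $F$, and $\frz_{F,m}\can_m=x^{-(p_0+r)}\can_{m+p_0+r}$ lies in the right-hand set. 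For $\supseteq$: if $b=\can_{m'}\in\can\cap\bUpClAlg$ with $\supp b\cap F=\emptyset$, the standing fact gives $b=\frz_{F,m'}\can_{m'}\in\frz_F\can$, and then for $p\in\N^{F\sqcup I_{\fv}}$ the normalization of $x^{-p}b$ is the $(m'-p)$-pointed element of the common triangular basis $\frz_F\can$ (via $[x_j^{\pm}\,\cdot\,]^{\frz_F\sd}$, legitimate since $F\sqcup I_{\fv}$ is frozen in $\frz_F\sd$), so $x^{-p}b\in\frz_F\can$.

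Given (1), statement (4) is essentially immediate: the inclusion $\supseteq$ is Lemma~\ref{lem:f-bounds-d} (support disjoint from $F$ forces $\nu_k\ge0$ for $k\in F$, hence membership in $\bUpClAlg(\frz_F\sd)$), while for $\subseteq$ I must show that $b=\can_m\in\can\cap\bUpClAlg(\frz_F\sd)$ has $\supp b\cap F=\emptyset$, equivalently that $D:=\can_m-\frz_{F,m}\can_m=0$. This is the main obstacle: it is a converse to Lemma~\ref{lem:f-bounds-d} that must genuinely use the triangular-basis structure (it is false for arbitrary pointed elements). The plan is: $D\in\bUpClAlg(\frz_F\sd)$, being a difference of two of its elements ($\can_m$ by hypothesis and $\frz_{F,m}\can_m$ by Proposition~\ref{prop:project_up_cl_alg} together with regularity at the frozen vertices, which one checks via valuations), so one expands $D$ in $\frz_F\can$; using that $\frz_F\can\cap\bUpClAlg(\frz_F\sd)$ is a basis of $\bUpClAlg(\frz_F\sd)$ — which holds by Proposition~\ref{prop:compactified_basis} when $\frz_F\sd$ admits optimized seeds, and in general should follow from the mutation-compatibility of freezing (Lemma~\ref{lem:freeze_mutation}) by tracking $\prec_{\frz_F\sd}$-degrees and poles at the $x_k$, $k\in F$, through $\Delta^+_{\frz_F\sd}$ — one gets $D=\sum_i c_i\,\frz_{F,m_i}\can_{m_i}$ in which every $\frz_{F,m_i}\can_{m_i}$ is built only from $y^{n'}$ with $\supp n'\cap F=\emptyset$, whereas every monomial of $D$ carries a $y^n$ with $\supp n\cap F\neq\emptyset$; a comparison of $\prec_{\frz_F\sd}$-leading terms then forces the expansion to be empty, i.e.\ $D=0$. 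Once (4) holds, (2) is its combination with (1) — the two right-hand sides coincide — and one can alternatively re-run the proof of (1) with Lemma~\ref{lem:f-bounds-d} replacing the support hypothesis.

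Finally, for (3): the inclusion $\supseteq$ holds because $b\in\can\cap\bUpClAlg(\frz_F\sd)$ has $\supp b\cap F=\emptyset$ by (4), so $b=\frz_{F,\deg b}\can_{\deg b}\in\frz_F\can$; and $\subseteq$ is proved exactly like the hard direction of (4), applied now to $z=\frz_{F,m}\can_m$ — if $z\in\bUpClAlg(\frz_F\sd)$ then the same expansion argument forces $z=\can_m$, hence $z\in\can$ and $\supp\can_m\cap F=\emptyset$. Thus the only genuinely new ingredient beyond Theorems~\ref{thm:sub_cluster_triangular_basis} and~\ref{thm:freezing_common_triangular} is the ``no spurious poles'' assertion that a triangular-basis element lying in the smaller partially compactified algebra $\bUpClAlg(\frz_F\sd)$ automatically has support disjoint from $F$; establishing this cleanly, ideally without passing through optimized seeds, is the step I expect to require the most care.
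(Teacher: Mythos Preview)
Your treatment of (1) is correct and matches the paper's. The real divergence is in the order: you attempt $(1)\Rightarrow(4)\Rightarrow(2)\Rightarrow(3)$, whereas the paper does $(1)\Rightarrow(2)\Rightarrow(3),(4)$. In the paper, once (2) is known, both (3) and (4) drop out almost for free: taking $p=0$ in the description of $\frz_F\can$ from (2) shows that every $b\in\can\cap\bUpClAlg(\frz_F\sd)$ already lies in $\frz_F\can$, and elements of $\frz_F\can$ have support disjoint from $F$ by construction. The paper gets (2) from (1) by noting that the $b$ produced in the proof of $(1)\!\subseteq$ actually lands in $\bUpClAlg(\frz_F\sd)$ (not merely $\bUpClAlg$), so $\frz_F\can$ sits inside the right-hand side of (2), and then a spanning comparison against $\upClAlg(\frz_F\sd)$ finishes.

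Your direct attack on $(4)\!\subseteq$ via $D:=\can_m-\frz_{F,m}\can_m$ has a genuine flaw at the last step. The ``contradiction'' you claim is illusory because the two support conditions live at different base points. The basis element $\frz_{F,m_i}\can_{m_i}$ has monomials $x^{m_i}y^{n'}$ with $\supp n'\cap F=\emptyset$ relative to $m_i$, not $m$. For a $\prec_{\frz_F\sd}$-maximal $m_i$ appearing in the expansion of $D$ you correctly get $m_i=m+\tB n_*$ with $\supp n_*\cap F\neq\emptyset$; but then every monomial $x^{m_i}y^{n'}=x^m y^{n_*+n'}$ of $\frz_{F,m_i}\can_{m_i}$ still has $\supp(n_*+n')\cap F=\supp n_*\cap F\neq\emptyset$ when rewritten relative to $m$. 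So the expansion $D=\sum_i c_i\,\frz_{F,m_i}\can_{m_i}$ is perfectly compatible with every monomial of $D$ carrying $F$-support relative to $m$; nothing is forced to vanish, and you cannot conclude $D=0$ this way. On top of this, you need $\frz_F\can\cap\bUpClAlg(\frz_F\sd)$ to be a basis of $\bUpClAlg(\frz_F\sd)$, which (as you acknowledge) requires optimized seeds via Proposition~\ref{prop:compactified_basis}, a hypothesis not present in the corollary. The cleanest fix is to abandon the $D=0$ strategy and follow the paper's order, establishing (2) right after (1) and reading off (3) and (4) from it.
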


\begin{proof}

(1) Take any $x^{-p}\cdot b$ from the set on the right hand side.
We have $x^{-p}\cdot b=x^{-p}\cdot\frz_{F}b\in x^{-p}\cdot\frz_{F}\can$.
Since $\frz_{F}\can$ is the common triangular basis of $\upClAlg(\frz_{F}\sd)$
by Theorem \ref{thm:sub_cluster_triangular_basis}, we have $x^{-p}\cdot\frz_{F}\can=\frz_{F}\can$.
Thus $x^{-p}\cdot b\in\frz_{F}\can$.

Conversely, take any $\can_{m}\in\can$. By Theorem \ref{thm:freezing_common_triangular},
when $d$ is large enough, $\frz_{F}\can_{m}=x^{-d\sum_{k\in F}f_{k}}\cdot\can_{m+d\sum_{k\in F}f_{k}}$
with $\supp\can_{m+d\sum_{k\in F}f_{k}}\cap F=\emptyset$. Rewrite
$\frz_{F}\can_{m}=x^{-d\sum_{i\in F\sqcup I_{\fv}}f_{i}}\cdot\can_{m+d\sum_{i\in F\sqcup I_{\fv}}f_{i}}$.
We have $\can_{m+d\sum_{i\in F\sqcup I_{\fv}}f_{i}}=\frz_{F}\can_{m+d\sum_{i\in F\sqcup I_{\fv}}f_{i}}\in\frz_{F}\can$.
For $d$ large enough, $\nu_{i}(\frz_{F}\can_{m+d\sum_{i\in F\sqcup I_{\fv}}f_{i}})\geq0$,
$\forall i\in F\sqcup I_{\fv}$. So we can choose $b:=\can_{m+d\sum_{i\in F\sqcup I_{\fv}}f_{i}}\in\bUpClAlg$,
which satisfies $\supp b\cap F=(\supp\frz b)\cap F=\emptyset$. We
deduce that $\frz_{F}\can_{m}$ belongs to the set on the right. 

(2) In the proof of (1), we have shown $\frz_{F}\can\subset\{x^{-p}\cdot b|b\in\can\cap\bUpClAlg(\frz_{F}\sd),\supp b\cap F=\emptyset,p\in\N^{F\sqcup I_{\fv}}\}$.
Therefore, $\upClAlg(\frz_{F}\sd)=\Span\frz_{F}\can\subset\Span\{x^{-p}\cdot b|b\in\can\cap\bUpClAlg(\frz_{F}\sd),p\in\N^{F\sqcup I_{\fv}}\}\subset\upClAlg(\frz_{F}\sd)$.
The claim follows.

(3) The claim follows from (2).

(4) By (2), any $b\in\can\cap\bUpClAlg(\frz_{F}\sd)$ is contained
in $\frz_{F}\can$. It follows that $\supp b\cap F=\emptyset$. Note
that $\bUpClAlg(\frz_{F}\sd)\subset\bUpClAlg(\sd)$ by Proposition
\ref{prop:good-sub-up-cl-alg}. So $\can\cap\bUpClAlg(\frz_{F}\sd)$
is a subset of the set on the right. Conversely, for any $b$ from
the set on the right, we have $\frz_{F}b=b$, thus $b\in\upClAlg(\frz_{F}\sd)$.
For any $i\in F\sqcup I_{\fv}$, Lemma \ref{lem:f-bounds-d} implies
$\nu_{i}(\frz_{F}b)=\nu_{i}(b)\geq0$ since $i\notin\supp b$. We
deduce that $b=\frz_{F}b\in\bUpClAlg(\frz_{F}\sd)$. So we obtain
the inverse inclusion.

\end{proof}

Before proving Theorem \ref{thm:freezing_common_triangular}, let
us explore some general properties. Take any $k\in I$. Let $\cZ$
denote a $\kk$-linearly independent subset of $\LP(\sd)$. Assume
that, for any $z\in\cZ$, there is a finite decomposition 
\begin{align}
x_{k}*z & =\sum_{\tz_{j}\in q^{\frac{\Z}{2}}\cZ}b_{j}\tz_{j},\ b_{j}\in\Z.\label{eq:decomposition-Z}
\end{align}
\begin{Def}\label{def:stabilization}

Assume that, $\forall$$z\in\cZ$, the family of decompositions for
$d\in\N$ 
\begin{align}
x_{k}^{d}*z & =\sum_{\tz_{j}^{(d)}\in q^{\frac{\Z}{2}}\cZ}b_{j}^{(d)}\tz_{j}^{(d)},\ b_{j}^{(d)}\in\Z.\label{eq:decomposition-xk-z}
\end{align}
has the following property: when $d$ is large enough, $x_{k}^{-d}*\tz_{j}^{(d)}$
stabilizes, and $x_{k}*\tz_{j}^{(d)}\in q^{\Hf\Z}\cZ$ for any $z_{j}^{(d)}$
appearing. Then we say $\cZ$ has the stabilization property with
respect to $x_{k}$. We will also say $q^{\Z}\cZ$ and $q^{\frac{\Z}{2}}\cZ$
have this property.

\end{Def}

\begin{Lem}\label{lem:positive-to-stabilization}

If the Laurent coefficients of the elements of $\cZ$ and the decomposition
coefficients in (\ref{eq:decomposition-Z}) always belong to $\kk_{\geq0}$,
$\cZ$ has the stabilization property with respect to $x_{k}$.

\end{Lem}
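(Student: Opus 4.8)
The plan is to reduce the statement to a single $z_{0}\in\cZ$ and to exploit that, under the positivity hypothesis, no cancellation can occur in any of the iterated products $x_{k}^{d}*z_{0}$. First I would check, by induction on $d$ using (\ref{eq:decomposition-Z}), that $x_{k}^{d}*z_{0}$ has a finite decomposition $x_{k}^{d}*z_{0}=\sum_{j}b_{j}^{(d)}\tz_{j}^{(d)}$ with $\tz_{j}^{(d)}\in q^{\Hf\Z}\cZ$ and all $b_{j}^{(d)}\in\kk_{\geq0}$: multiplying a term $q^{a}z$ of $q^{\Hf\Z}\cZ$ by $x_{k}$ keeps the Laurent coefficients in $\N[q^{\pm\Hf}]$ by hypothesis, and the collected decomposition is unique since $\cZ$ is $\kk$-linearly independent. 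A sum of Laurent polynomials with coefficients in $\N[q^{\pm\Hf}]$ has support equal to the union of the supports of its summands; hence $\supp(x_{k}^{d}*z_{0})=\supp(z_{0})+df_{k}$ and every $\tz_{j}^{(d)}$ occurring has $\supp(\tz_{j}^{(d)})\subseteq\supp(z_{0})+df_{k}$.

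Next I would confine all the shifted-back data to a finite set independent of $d$. Set $w_{j}^{(d)}:=x_{k}^{-d}*\tz_{j}^{(d)}$; since $x_{k}^{-d}*(x_{k}^{d}*z_{0})=z_{0}$, we have the conserved identity $z_{0}=\sum_{j}b_{j}^{(d)}w_{j}^{(d)}$, and $\supp(w_{j}^{(d)})\subseteq\supp(z_{0})$, so $w_{j}^{(d)}$ lies in the finite-rank $\kk$-module $W\subset\LP(\sd)$ spanned by the monomials $x^{p}$, $p\in\supp(z_{0})$. Comparing the coefficient of each $x^{p}$ on the two sides of the conserved identity and using that no cancellation occurs, the contribution of any single $j$ is termwise dominated by the fixed coefficient of $x^{p}$ in $z_{0}$; specialising $q^{\Hf}\mapsto1$ then bounds the number of indices $j$ by a constant $C=C(z_{0})$ and bounds the total size and the range of $q$-degrees of every $b_{j}^{(d)}$ and $w_{j}^{(d)}$. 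Therefore the state $\Phi_{d}:=\{(b_{j}^{(d)},w_{j}^{(d)})\}_{j}$ takes values in a finite set $\mathcal{S}$ of multisets of pairs in $\kk_{\geq0}\times W$.

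Then I would analyse the passage from $\Phi_{d}$ to $\Phi_{d+1}$. From $x_{k}^{d+1}*z_{0}=x_{k}*(x_{k}^{d}*z_{0})=\sum_{j}b_{j}^{(d)}(x_{k}*\tz_{j}^{(d)})$ and (\ref{eq:decomposition-Z}) applied to the $\cZ$-element underlying each $\tz_{j}^{(d)}$, one gets that $\Phi_{d+1}$ is obtained from $\Phi_{d}$ by a \emph{refinement} operation: each pair $(b,w)$ is replaced by the family produced by re-expanding $x_{k}*(x_{k}^{d}*w)$ in $q^{\Hf\Z}\cZ$ and shifting back by $x_{k}^{-(d+1)}$, after which like terms are collected. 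A pigeonhole over $d$ shows that for $d$ large this refinement depends only on $\Phi_{d}$, so it becomes a genuine self-map of a subset of the finite set $\mathcal{S}$, and hence $(\Phi_{d})$ is eventually periodic. Finally I would promote this to eventual constancy: the conserved identity $z_{0}=\sum_{j}b_{j}^{(d)}w_{j}^{(d)}$ together with the absence of cancellation forbids the collection of distinct shifted-back pieces from becoming strictly coarser, so along a period each pair must refine to exactly one pair whose accompanying scalar is a single nonnegative power of $q^{\Hf}$; boundedness of the $q$-degrees forces these powers to multiply to $1$ around any cycle, so the period is $1$. For such $d$ the elements $x_{k}^{-d}*\tz_{j}^{(d)}=w_{j}^{(d)}$ are stabilised, and ``each pair refines to exactly one pair'' says precisely that $x_{k}*\tz_{j}^{(d)}\in q^{\Hf\Z}\cZ$; these are the two clauses of Definition \ref{def:stabilization}, so $\cZ$ has the stabilization property with respect to $x_{k}$.

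I expect the technical heart to be the last paragraph: obtaining a genuinely $d$-independent transition on the finite state set (the pigeonhole step, or a direct analysis of how $x_{k}$ acts on $\cZ$-elements far in the $f_{k}$-direction) and then upgrading eventual periodicity to eventual constancy using the rigidity of the conserved decomposition. The earlier steps — that positivity forces support control and hence finiteness of the relevant data — are comparatively routine, provided one keeps careful track of the $q^{\pm\Hf}$-powers introduced by the twisted product (these cancel between $x_{k}^{d}$ and $x_{k}^{-d}$, which is what makes the conserved identity usable).
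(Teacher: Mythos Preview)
Your approach has the right ingredients but takes a detour that introduces a real gap precisely where you flag it. The step ``a pigeonhole over $d$ shows that for $d$ large this refinement depends only on $\Phi_{d}$'' does not go through: the refinement $\Phi_{d}\to\Phi_{d+1}$ requires knowing which element of $\cZ$ underlies each $\tz_{j}^{(d)}=q^{\alpha}x_{k}^{d}*w_{j}^{(d)}$, and this depends on $d$, not just on $w_{j}^{(d)}$. Pigeonhole on the finite set $\mathcal{S}$ gives you repeated states, but since the transition map itself varies with $d$, you do not obtain periodicity of the sequence $(\Phi_{d})$, let alone constancy. Your proposed rescue in the final paragraph (``a direct analysis of how $x_{k}$ acts on $\cZ$-elements far in the $f_{k}$-direction'') is not an argument but a hope.

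The paper bypasses all of this with a two-line monotonicity argument, and the key observation is already implicit in your word ``refinement''. Since each $b_{j}^{(d)}\in\N$ and each nonzero $x_{k}*\tz_{j}^{(d)}$ decomposes with nonnegative integer coefficients summing to at least $1$, the integer $N_{d}:=\sum_{j}b_{j}^{(d)}$ is weakly increasing in $d$. On the other hand, by positivity of Laurent coefficients, $N_{d}$ is bounded above by the total number (with multiplicity) of Laurent monomials $q^{\alpha}x^{m}$ in $x_{k}^{d}*z_{0}$, and that number is independent of $d$. So $N_{d}$ is eventually constant. Once $N_{d}=N_{d+1}$, every $x_{k}*\tz_{j}^{(d)}$ must contribute exactly one term, i.e.\ $x_{k}*\tz_{j}^{(d)}\in q^{\Hf\Z}\cZ$, which is the second clause of Definition~\ref{def:stabilization}; and then $x_{k}^{-(d+1)}*\tz_{j}^{(d+1)}=x_{k}^{-d}*\tz_{j}^{(d)}$ gives the first. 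No state space, no periodicity-to-constancy upgrade, and no $d$-independence is needed.
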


\begin{proof}

Consider the decomposition (\ref{eq:decomposition-xk-z}) in $\Z[q^{\pm\Hf},(x_{i}^{\pm})_{i\in I}]$.
As $d$ grows, the sum $(\sum b_{j}^{(d)})|_{q\mapsto1}$ weakly increases.
Since the total number of Laurent monomials $q^{\alpha}x^{m}$ appearing
in $x_{k}^{d}*z$ is constant for varying $d$, the sum must stabilizes
when $d$ is large enough. In this case, for any $\tz_{j}^{(d)}$
appearing, we have $x_{k}*\tz_{j}^{(d)}=\tz_{j}^{(d+1)}$ for some
$\tz_{j}^{(d+1)}\in q^{\frac{\Z}{2}}\cZ$. The desired claim follows.

\end{proof}

We work at the quantum level by choosing $\kk=\Z[q^{\pm\Hf}]$ from
now on.

\begin{Def}\label{def:multiplication-triangularity}

Assume that the elements of $\cZ$ are pointed at distinct degrees.
We say $\cZ$ satisfies the triangularity condition with respect to
$x_{k}$ if, $\forall z\in\cZ$ , we have a finite decomposition
\begin{align}
[x_{k}*z] & =z_{0}+\sum_{z_{j}\in\cZ,\ \deg z_{j}\prec_{\sd}\deg z_{0}}b_{j}z_{j}\label{eq:triangular_set_decomposition}
\end{align}
such that $b_{j}\in q^{-\Hf}\Z[q^{-\Hf}]$ for $j\neq0$.

\end{Def}

\begin{Lem}\label{lem:stabilization-freezing}

Assume the elements of $\cZ$ are bar-invariant and pointed at distinct
degrees. If it has the stabilization property and satisfies triangularity
condition with respect to $x_{k}$, then we have $\frz_{\{k\}}z=[x_{k}^{-d}*z_{0}^{(d)}]$
for $d\in\N$ large enough, where $[x_{k}^{d}*z]=z_{0}^{(d)}+\sum_{\deg z_{j}^{(d)}\prec_{\sd}\deg z_{0}^{(d)}}b_{j}^{(d)}z_{j}^{(d)}$.

\end{Lem}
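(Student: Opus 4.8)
The plan is to compute $x_k^d*\frz_{\{k\}}z$ in two ways, for $z\in\cZ$ with $m:=\deg^{\sd}z$. By Lemma~\ref{lem:freezing_properties}(2), and since the monomial $x_k^d=x^{df_k}$ satisfies $\frz_{\{k\},df_k}(x_k^d)=x_k^d$, we get $x_k^d*\frz_{\{k\}}z=\frz_{\{k\},df_k+m}(x_k^d*z)$. Iterating the triangularity condition produces a triangular decomposition $[x_k^d*z]=z_0^{(d)}+\sum_{j\geq1}b_j^{(d)}z_j^{(d)}$ with $z_j^{(d)}\in\cZ$ pointed at $df_k+m+\tB n_j$, where $n_j\in\yCone^\oplus$, $n_0=0$ and $n_j\neq0$ for $j\geq1$. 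Writing $x_k^d*z=q^{\alpha_d}[x_k^d*z]$ and applying the $\kk$-linear map $\frz_{\{k\},df_k+m}$ termwise, Lemma~\ref{lem:freezing_properties}(1) kills the terms with $(n_j)_k>0$ and leaves the identity
\begin{align}
x_k^d*\frz_{\{k\}}z & =q^{\alpha_d}\Bigl(\frz_{\{k\}}(z_0^{(d)})+\sum_{j\geq1,\,(n_j)_k=0}b_j^{(d)}\,\frz_{\{k\}}(z_j^{(d)})\Bigr),\label{eq:plan-key}
\end{align}
valid for every $d$.

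The second step is to simplify the right-hand side of \eqref{eq:plan-key} for $d\gg0$. Writing an occurring $\cZ$-element as $z_j^{(d)}=\sum_n e_{j,n}^{(d)}x^{\deg z_j^{(d)}+\tB n}$, the stabilization property (``$x_k^{-d}*\tz_j^{(d)}$ stabilizes'') applied to the Laurent expansion of $x_k^{-d}*z_j^{(d)}$ forces $e_{j,n}^{(d+1)}=q^{\Hf n_k\diag_k}e_{j,n}^{(d)}$ for $d$ large. Since $z_j^{(d)}$ is bar-invariant, each $e_{j,n}^{(d)}$ is a symmetric Laurent polynomial in $q^{\Hf}$, and such a polynomial cannot have its $q$-support shifted by the positive quantity $\Hf n_k\diag_k$; hence $e_{j,n}^{(d)}=0$ when $n_k>0$. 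So for $d\gg0$ every occurring $z_j^{(d)}$, including $z_0^{(d)}$, satisfies $k\notin\supp$, i.e. $\frz_{\{k\}}$ fixes it, and \eqref{eq:plan-key} reduces to $[x_k^d*\frz_{\{k\}}z]=z_0^{(d)}+\sum_{j\geq1,\,(n_j)_k=0}b_j^{(d)}z_j^{(d)}$.

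The final step is to kill this subleading sum, which gives $[x_k^d*\frz_{\{k\}}z]=z_0^{(d)}$; multiplying by $x_k^{-d}$ and normalizing then yields $\frz_{\{k\}}z=[x_k^{-d}*z_0^{(d)}]$. This uses two points. First, $[x_k^d*\frz_{\{k\}}z]$ is bar-invariant: the Laurent coefficients of $z$ are symmetric, hence so are those of its truncation $\frz_{\{k\}}z$, and on the surviving terms $\lambda(f_k,\tB n)=n_k\diag_k=0$, so $x_k^d*\frz_{\{k\}}z$ equals $q^{\Hf d\lambda(f_k,m)}$ times the commutative product $x_k^d\cdot\frz_{\{k\}}z$. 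Second, $b_j^{(d)}\in\mm$ whenever $(n_j)_k=0$; this follows by induction on $d$ from the triangularity condition, the crucial point being that in $[x_k^{d+1}*z]=[x_k*z_0^{(d)}]+\sum_l b_l^{(d)}q^{\Hf(n_l)_k\diag_k}[x_k*z_l^{(d)}]$, a $\cZ$-element whose leading degree $(d{+}1)f_k+m+\tB p$ has $p_k=0$ can only be fed by the terms $z_l^{(d)}$ with $(n_l)_k=0$ (so the prefactor $q^{\Hf(n_l)_k\diag_k}$ equals $1$) together with the lower-order terms of the triangularity condition, which lie in $\mm$ by hypothesis and induction. Granting both points, applying $\overline{(\ )}$ to $[x_k^d*\frz_{\{k\}}z]=z_0^{(d)}+\sum_j b_j^{(d)}z_j^{(d)}$ and subtracting forces each $b_j^{(d)}$ to be bar-invariant; being in $\mm$, it is $0$.

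I expect the induction in the last step to be the main obstacle. The coefficients $b_j^{(d)}$ of the $d$-fold expansion need not lie in $\mm$ in general, since the triangularity condition is assumed only for a single multiplication by $x_k$, so one must pin down exactly why the ``bad'' prefactors $q^{\Hf(n_l)_k\diag_k}$ with $(n_l)_k>0$ never reach a $\cZ$-element that is supported away from $k$ in its leading degree. By comparison, the reduction to \eqref{eq:plan-key} is routine bookkeeping with Lemma~\ref{lem:freezing_properties}, and the vanishing $e_{j,n}^{(d)}=0$ for $n_k>0$ is a short bar-invariance-versus-$q$-degree argument.
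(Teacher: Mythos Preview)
Your proposal is correct and follows essentially the same route as the paper. Both arguments use stabilization to see that the $z_j^{(d)}$ (for $d$ large) $q$-commute with $x_k$, then combine bar-invariance with the condition $b_j^{(d)}\in\mm$ to kill the subleading terms with $(n_j)_k=0$. The paper compresses this, asserting ``since $b_i^{(d)}\in q^{-\Hf}\Z[q^{-\Hf}]$'' without justification, and then argues at the level of individual Laurent coefficients of $[x_k^d*z]$; you instead work with the whole bar-invariant element $[x_k^d*\frz_{\{k\}}z]$, which is cleaner. Your explicit induction on $d$---noting that a $\cZ$-element with leading degree $(d{+}1)f_k+m+\tB p$, $p_k=0$, can only receive contributions from $z_l^{(d)}$ with $(n_l)_k=0$, so the problematic prefactors $q^{\Hf(n_l)_k\diag_k}$ never appear---is precisely the missing step, and your worry about it was well placed: iterated triangularity alone does not give $b_j^{(d)}\in\mm$ for all $j$, only for those with $(n_j)_k=0$, and your induction is what isolates this.

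One small notational slip: in your second step you write $e_{j,n}^{(d+1)}=q^{\Hf n_k\diag_k}e_{j,n}^{(d)}$, but the element $[x_k*z_j^{(d)}]$ need not be $z_j^{(d+1)}$; it is just some element of $\cZ$. The argument is unaffected---comparing coefficients of the bar-invariant $z_j^{(d)}$ with those of the bar-invariant $[x_k*z_j^{(d)}]\in\cZ$ still forces $e_{j,n}^{(d)}=0$ when $n_k>0$.
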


\begin{proof}

The case $k\in I_{\fv}$ is trivial. Now assume $k\in I_{\ufv}$.
By the stabilization property, we can take $d$ large enough such
that $[x_{k}*z_{0}^{(d)}]=z_{0}^{(d+1)}$. Then the bar-invariance
of $z_{0}^{(d+1)}$ implies that $z_{0}^{(d)}$ $q$-commutes with
$x_{k}$, i.e., $\frz_{\{k\},\deg z_{0}^{(d)}}z_{0}^{(d)}=z_{0}^{(d)}$.
Take any $z_{i}^{(d)}$ such that $\deg z_{i}^{(d)}$ is $\prec_{\sd}$-maximal
in $\{\deg z_{j}^{(d)}|j\neq0,b_{j}\neq0\}$. Then the Laurent coefficient
of $x^{\deg z_{i}^{(d)}}$ in $[x_{k}^{d}*z]$, contributed from $z_{0}^{(d)}$
and $b_{i}^{(d)}z_{i}^{(d)}$, is not bar-invariant since $b_{i}^{(d)}\in q^{-\Hf}\Z[q^{-\Hf}]$.
So we must have $\deg z_{i}^{(d)}=\deg z_{0}^{(d)}+p^{*}n^{(i)}$
with $n_{k}^{(i)}>0$. We deduce that, for any $1\leq j\leq s$, $\deg z_{j}^{(d)}=\deg z_{0}^{(d)}+\tB n^{(j)}$
with $n_{k}^{(j)}>0$. Applying $\frz_{\deg z_{0}^{(d)}}=\frz_{\deg z+df_{k}}$
to both sides of the decomposition of $[x_{k}^{d}*z]$, we obtain
$\frz_{\deg z_{0}^{(d)}}z_{j}^{(d)}=0$, $\forall j\neq0$, see Lemma
\ref{lem:freezing_properties}. Thus $\frz_{\deg z_{0}^{(d)}}[x_{k}^{d}*z]=z_{0}^{(d)}$
and the claim follows.

\end{proof}

\begin{Lem}\label{lem:triangular-stabilization}

Assume that $\cZ$ satisfies triangularity condition with respect
to $x_{k}$ and the elements of $\cZ$ are pointed at distinct degrees
and copointed at distinct codegrees. Then it has the stabilization
property with respect to $x_{k}$.

\end{Lem}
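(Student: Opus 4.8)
The plan is to induct on the \emph{width} of $z$. For $z\in\cZ$ let $\mathbf w(z)\in\yCone^{\oplus}(\sd)$ be the vector with $\codeg^{\sd}z=\deg^{\sd}z+\tB\,\mathbf w(z)$ (well defined since $\tB$ has full rank) and set $\ell(z):=\sum_i\mathbf w(z)_i$; this measures how far below $\deg^{\sd}z$ the monomials of $z$ reach. Write $\psi(z)\in\cZ$ for the unique element pointed at $f_k+\deg^{\sd}z$, which by the triangularity condition is the leading term of $[x_k*z]$; thus $\psi\colon\cZ\to\cZ$ is injective and raises degrees by $f_k$. We may assume $k\in I_{\ufv}$, the case $k\in I_{\fv}$ being analogous and simpler (the $q$-commutation below is then automatic).

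First I would record a \emph{two-sided} refinement of the triangularity condition: writing $[x_k*z]=\psi(z)+\sum_{j\neq0}b_jz_j$ with $b_j\in q^{-\Hf}\Z[q^{-\Hf}]$, every $z_j$ also satisfies $f_k+\codeg^{\sd}z\preceq_{\sd}\codeg^{\sd}z_j$. This is a ``from below'' version of the defining property: if $S=\{j\neq0:b_j\neq0,\ f_k+\codeg z\not\preceq_{\sd}\codeg z_j\}$ were nonempty, pick $j\in S$ with $\codeg z_j$ minimal for $\prec_{\sd}$ (unique, as codegrees in $\cZ$ are distinct); since all monomials of $[x_k*z]$ lie in $\{m:f_k+\codeg z\preceq_{\sd}m\preceq_{\sd}f_k+\deg z\}$ and neither $\psi(z)$ (whose codegree is $f_k+\codeg z$) nor the remaining terms contribute to $x^{\codeg z_j}$, the coefficient of $x^{\codeg z_j}$ in $[x_k*z]$ equals $b_j\neq0$, forcing $f_k+\codeg z\preceq_{\sd}\codeg z_j$, a contradiction. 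Using distinctness of degrees and of codegrees, for $j\neq0$ we then get $\deg z_j\prec_{\sd}f_k+\deg z$ and $\codeg z_j\succ_{\sd}f_k+\codeg z$ strictly, hence $\ell(z_j)\le\ell(z)-2$; iterating, $[x_k^d*z]=\psi^d(z)+T_d$ with $T_d$ a combination of elements of width $<\ell(z)$ whose degrees lie in $df_k+Q$ for the fixed finite set $Q=\{p:\codeg z\prec_{\sd}p\prec_{\sd}\deg z\}$. Inspecting the bottom monomial: $x^{df_k}\cdot z=\sum_n c_n q^{\Hf d\,n_k\diag_k}x^{df_k+\deg z+\tB n}$ with $c_n$ the Laurent coefficients of $z$, so the coefficient of $x^{df_k+\codeg z}$ in $[x_k^d*z]=q^{-\Hf d\lambda(f_k,\deg z)}x_k^d*z$ equals $q^{\Hf d\,\mathbf w(z)_k\diag_k}$; as that coefficient comes only from $\psi^d(z)$, which is copointed, it must be $1$ for all $d$, so $\mathbf w(z)_k=0$. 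Hence every monomial of $z$ differs from $\deg^{\sd}z$ only in directions $\neq e_k$, i.e.\ $k\notin\supp z$ and $z$ $q$-commutes with $x_k$; in particular $[x_k^d*z]=x^{df_k}\cdot z$ and $[x_k*\xi]=x^{f_k}\cdot\xi$ for all $\xi\in\cZ$.

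The base case $\ell(z)=0$ is immediate: then $z$ is a single Laurent monomial, the two-sided triangularity forces $[x_k*z]$ to be a single monomial, which therefore lies in $\cZ$, so $x_k^d*z\in q^{\Hf\Z}\cZ$ for every $d$ and the stabilization property holds. For the inductive step, fix $z$ with $\ell(z)=\ell$ and assume the lemma for widths $<\ell$. The elements occurring in $T_d$ have width $<\ell$, and since $\psi$ sends the element $\xi_p^{(d)}\in\cZ$ of degree $df_k+p$ to the one of degree $(d+1)f_k+p$, they fall into $\psi$-orbits; the induction hypothesis gives, for $d$ large, that each $\xi_p^{(d)}$ is \emph{$k$-stable}, i.e.\ $x^{f_k}\cdot\xi_p^{(d)}=\xi_p^{(d+1)}$, and that $x^{-df_k}\cdot\xi_p^{(d)}$ is eventually independent of $d$. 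Feeding this into the identity $T_{d+1}=R_d+x^{f_k}\cdot T_d$, with $R_d:=x^{f_k}\cdot\psi^d(z)-\psi^{d+1}(z)$, decouples it coefficientwise into scalar recursions $t_p^{(d+1)}=r_p^{(d)}+t_p^{(d)}$, where $t_p^{(d)}$ is the coefficient of $\xi_p^{(d)}$ in $x^{df_k}\cdot z$. It then remains to show $R_d=0$ for $d$ large; granting this, $\psi^d(z)$ is itself $k$-stable for $d$ large, every $\tz_j^{(d)}$ occurring in $x_k^d*z$ is $k$-stable, $x_k^{-d}*\tz_j^{(d)}=x^{-df_k}\cdot(\text{underlying basis element})$ is eventually constant, and the stabilization property for $z$ follows.

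The one place where the hypotheses must be used in force is this last vanishing $R_d=0$ — equivalently, that the ``top chain'' $\psi^d(z)$ eventually becomes stable under multiplication by $x_k$. The two-sided triangularity only pushes corrections into strictly smaller width, which the induction absorbs, but does not by itself kill $R_d$ on the top stratum. I expect to close it by combining the triangularity bound $r_p^{(d)}\in q^{-\Hf}\Z[q^{-\Hf}]$, the running integrality hypothesis (\ref{eq:decomposition-Z}) which forces every $\cZ$-coefficient of $x_k^d*z$ to be an integer times a single power of $q$, and the boundedness coming from the fact that $x_k^d*z$ has a fixed number of Laurent monomials with fixed coefficients: this bounds the $t_p^{(d)}$, and then the recursion $t_p^{(d+1)}=r_p^{(d)}+t_p^{(d)}$ together with the integrality forces $r_p^{(d)}=0$ for $d$ large. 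This is the non-positive analogue of the counting argument of Lemma~\ref{lem:positive-to-stabilization}, and is the main obstacle.
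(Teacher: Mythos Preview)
Your two–sided refinement (that every $z_j$ appearing in $[x_k*z]$ has $\codeg z_j\succeq_{\sd} f_k+\codeg z$) is correct and is exactly what the paper uses. The gap is the next sentence: from distinctness of codegrees you conclude that for $j\neq0$ the inequality is \emph{strict}. Distinctness only tells you that at most one $z_j$ attains the minimum $f_k+\codeg z$; it gives no reason this should be $z_0$. Once you assume it is, you deduce $\mathbf w(z)_k=0$ for \emph{every} $z\in\cZ$, i.e.\ every element of $\cZ$ $q$-commutes with $x_k$. This is false already for the common triangular basis: the mutated variable $x_k'=x_k^{-1}(1+y_k\cdot(\text{monomial}))$ has $\mathbf w(x_k')_k>0$. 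All of your subsequent structure (the claim that the bottom coefficient ``comes only from $\psi^d(z)$'', that $[x_k*\xi]=x^{f_k}\cdot\xi$ for every $\xi$, and the reduction of the problem to the vanishing of $R_d$) rests on this false step.

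The paper's proof uses the same induction on $\suppDim$ (your $\ell$) but hinges on a case split you have collapsed. Let $z_s$ be the unique term with $\codeg z_s=f_k+\codeg z$. If $z_s=z_0$, then indeed $\mathbf w(z)_k=0$, $z$ $q$-commutes with $x_k$, and one shows $[x_k*z]=z_0$ directly (the paper uses the bar involution here), so $z$ already stabilizes. If $z_s\neq z_0$, then \emph{every} $z_j$ — crucially including the leading term $z_0$ — satisfies $\suppDim z_j<\suppDim z$, because for each $j$ at most one of the two equalities $\deg z_j=f_k+\deg z$, $\codeg z_j=f_k+\codeg z$ can hold. Hence by the induction hypothesis all the $z_j$ already have the stabilization property, and $x_k^d*z=q^{\alpha}\,x_k^{d-1}*(z_0+\sum b_jz_j)$ inherits it. In this second case there is no ``top chain'' obstacle at all: your $\psi(z)=z_0$ itself falls under the inductive hypothesis, so the program of proving $R_d=0$ is unnecessary.
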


\begin{proof}

The case $k\in I_{\fv}$ is trivial. Now assume $k\in I_{\ufv}$ and
$z\in\cZ$. In the finite decomposition (\ref{eq:triangular_set_decomposition}),
we have $\codeg z+f_{k}\preceq_{\sd}\codeg z_{j}$ and the equality
holds for a unique $z_{s}$, since $z_{j}$ have distinct codegrees,
see \cite[Lemma 3.1.10(iii)]{qin2017triangular}. 

First assume $z_{s}=z_{0}$. Then the Laurent monomial $x^{\codeg z+f_{k}}$
of $[x_{k}*z]$ has coefficient $1$, i.e., $\lambda(f_{k},\deg z)=\lambda(f_{k},\codeg z)$.
So $(\suppDim z)_{k}=0$. Thus $x_{k}$ and $z$ $q$-commute. Applying
the bar-involution to (\ref{eq:triangular_set_decomposition}), we
deduce that $[x_{k}*z]=z_{0}$. 

Otherwise, we always have $\codeg z+f_{k}\preceq_{\sd}\codeg z_{j}$
and $\deg z_{j}\preceq_{\sd}\deg z_{0}+f_{k}$ and the equalities
do not hold at the same time. So all $z_{j}$ appearing satisfy $\suppDim z_{j}<\suppDim z$.
By induction on the support dimension, we can assume that $z_{j}$
satisfy the stabilization property. Then we deduce that $z$ satisfies
this property as well.

\end{proof}

\begin{proof}[Proof of Theorem \ref{thm:freezing_common_triangular}]

Note that the common triangular basis elements are copointed at distinct
codegrees \cite{qin2020analog}. Theorem \ref{thm:freezing_common_triangular}
follows from Lemmas \ref{lem:triangular-stabilization} and \ref{lem:stabilization-freezing}.

\end{proof}

\section{Base changes\label{sec:coeff-change}}

We interpret changes in the frozen part of cluster algebras as base
changes in schemes. Many of our claims will be evident when we apply
base change later, but we will provide a general treatment for the
sake of completeness.

\subsection{Similarity and the correction technique\label{subsec:Similarity-and-correction}}

Let $\sd$ denote any seed. Recall that, in the quantum case, we have
$\lambda(f_{i},p^{*}e_{k})=-\delta_{ik}\diag_{k}$ for some $\diag_{k}\in\N_{>0}$.

\begin{Def}[{\cite{Qin12}\cite{qin2017triangular}}]

Let there be given two seeds $\sd,\sd'$ not necessarily related by
a mutation sequence, $I'_{\ufv}=I_{\ufv}$, and $\sigma$ a permutation
on $I_{\ufv}$. We say $\sd$ and $\sd'$ are similar (up to $\sigma$)
if $b'_{\sigma i,\sigma j}=b_{ij}$ and $d'_{\sigma i}=d_{i}$, $\forall i,j\in I_{\ufv}$.
In the quantum case, we further require that there exists some $\rho\in\Q_{>0}$
such that $\diag'_{\sigma k}=\rho\diag_{k}$, $\forall k\in I_{\ufv}$.

\end{Def}

Let $\sigma$ denote any permutation of $I$ such that $\sigma I_{\ufv}=I_{\ufv}$.
If we are given a permutation $\sigma$ on $I_{\ufv}$, extend $\sigma$
by letting it acts on $I_{\fv}$ trivially. We define the permuted
seed $\sigma\sd$ by relabeling the vertices using $\sigma$, such
that $b_{\sigma i,\sigma j}(\sigma\sd)=b_{ij}$, $x_{\sigma i}(\sigma\sd)=x_{i}$,
$d_{\sigma i}(\sigma\sd)=d_{i}$. In the quantum case, we endow $\sigma\sd$
with the bilinear form $\sigma\lambda$ such that $\sigma\lambda(\sigma m,\sigma g):=\lambda(m,g)$,
where $(\sigma m)_{\sigma i}:=m_{i}$ $\forall m=(m_{i})\in\Z^{I}$.
Apparently, $\sd$ and $\sigma\sd$ are similar.

From now on, let $\sd$ and $\sd'$ denote two seeds similar up to
$\sigma$. Modifying $\sd'$ by permuting its vertices if necessary,
we can assume $\sigma$ is trivial. Note that $\seq\sd$ and $\seq\sd'$
are similar, $\forall\seq$, see \cite{qin2017triangular}. 

When we treat quantum cluster algebras later in this paper, it is
often sufficient to consider the case $\rho=1$. For completeness,
let us treat the general case $\rho=\frac{d_{\rho}'}{d_{\rho}}$ for
coprime $d_{\rho},d_{\rho}'\in\N_{>0}$. Recall $\kk=\Z[q^{\pm\Hf}]$.
Define $\kk':=\Z[q^{\pm\frac{1}{2d_{\rho}}}]$ and $\kk'':=\Z[q^{\pm\frac{1}{2d_{\rho}'}}]$.
Then $\kk'=\oplus_{j=0}^{d_{\rho}-1}\kk\cdot v_{j}$ is a free $\kk$-module,
where $v_{j}:=q^{\frac{j}{2d_{\rho}}}$. Define $\LP'_{\kk'}$, $\hLP'_{\kk'},$
$\clAlg'_{\kk'}$, and $\upClAlg'_{\kk'}$ associated to $\sd'$ using
$\kk'$ instead of $\kk$ in their constructions. We have $\LP'_{\kk'}=\LP'\otimes\kk'$,
$\hLP'_{\kk'}=\hLP'\otimes\kk'$, and $\clAlg'_{\kk'}=\clAlg'\otimes\kk'$.

\begin{Prop}\label{prop:change-base-up-alg}

We have $\upClAlg'_{\kk'}=\upClAlg'\otimes\kk'$.

\end{Prop}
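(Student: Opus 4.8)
The plan is to reduce the statement to the defining intersection description of the upper cluster algebra and the flatness of the base extension $\kk \to \kk'$. Recall that $\kk' = \oplus_{j=0}^{d_\rho - 1} \kk \cdot v_j$ is a free, hence faithfully flat, $\kk$-module. The key point is that for any seed $\sd''$ in $\Delta'^+$ (the mutation class of $\sd'$), the Laurent polynomial ring behaves well under this extension: $\LP'_{\kk'}(\sd'') = \LP'(\sd'') \otimes_\kk \kk'$, which is immediate since $\LP'(\sd'') = \kk[x_i(\sd'')^{\pm}]_{i\in I'}$ is itself a free $\kk$-module with basis the Laurent monomials, and tensoring with $\kk'$ just replaces the coefficient ring. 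The same identification is compatible with the inclusions $\LP'(\sd'') \hookrightarrow \cF'(\sd') \hookrightarrow \cF'_{\kk'}(\sd')$ used to form the intersection.

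First I would recall that, by Definition \ref{def:cluster-alg}, $\upClAlg' = \cap_{\sd'' \in \Delta'^+} \LP'(\sd'')$, the intersection being taken inside the common skew-field of fractions $\cF'(\sd')$; likewise $\upClAlg'_{\kk'} = \cap_{\sd'' \in \Delta'^+} \LP'_{\kk'}(\sd'')$ inside $\cF'_{\kk'}(\sd')$. Since $\kk'$ is flat over $\kk$, tensoring the inclusion $\upClAlg' \hookrightarrow \LP'(\sd'')$ with $\kk'$ gives an inclusion $\upClAlg' \otimes_\kk \kk' \hookrightarrow \LP'(\sd'') \otimes_\kk \kk' = \LP'_{\kk'}(\sd'')$ for every $\sd''$, so $\upClAlg' \otimes_\kk \kk' \subseteq \cap_{\sd''} \LP'_{\kk'}(\sd'') = \upClAlg'_{\kk'}$. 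This gives one containment essentially for free.

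For the reverse containment, I would argue as follows. Fix any reference seed, say $\sd'$ itself, and write an arbitrary element $z \in \upClAlg'_{\kk'} \subseteq \LP'_{\kk'}(\sd')$ uniquely as $z = \sum_{j=0}^{d_\rho - 1} z_j \, v_j$ with $z_j \in \LP'(\sd')$, using the $\kk$-module decomposition $\LP'_{\kk'}(\sd') = \oplus_j \LP'(\sd') \cdot v_j$. The claim is that each $z_j$ lies in $\upClAlg'$, i.e., in $\LP'(\sd'')$ for every $\sd'' \in \Delta'^+$. For this, fix $\sd''$ and expand $z$ in the Laurent coordinates of $\sd''$: since $z \in \LP'_{\kk'}(\sd'') = \oplus_j \LP'(\sd'') \cdot v_j$, we get $z = \sum_j z_j'' \, v_j$ with $z_j'' \in \LP'(\sd'')$. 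But the change-of-coordinates map $\cF'(\sd') \to \cF'(\sd'')$ (given by the mutation birational maps) is a $\kk$-algebra isomorphism fixing $\kk$, so extending scalars it sends $v_j \mapsto v_j$; by uniqueness of the decomposition over $\kk'$ (the $v_j$ form a $\kk$-basis, and the coordinate change is $\kk'$-linear after extension), we conclude $z_j'' $ equals the image of $z_j$ under $\cF'(\sd') \to \cF'(\sd'')$. Hence $z_j$, viewed in $\cF'(\sd'')$, lies in $\LP'(\sd'')$. As $\sd''$ was arbitrary, $z_j \in \upClAlg'$, and therefore $z = \sum_j z_j v_j \in \upClAlg' \otimes_\kk \kk'$.

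The main obstacle is the bookkeeping in the previous step: one must be careful that the uniqueness of the decomposition $z = \sum_j z_j'' v_j$ transports correctly along the mutation birational isomorphisms, i.e., that these isomorphisms are genuinely defined over $\kk$ (so that they fix each $v_j \in \kk'$) and that the $v_j$ remain $\kk$-linearly independent inside each $\LP'(\sd'')$. Both are true: the exchange relations \eqref{eq:exchange_relation} have coefficients in $\Z \subseteq \kk$, and $\LP'(\sd'')$ is a free $\kk$-module on Laurent monomials so $\oplus_j \LP'(\sd'')\cdot v_j \subseteq \LP'_{\kk'}(\sd'')$ is the full module with the $v_j$ independent. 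Once these points are in place the two containments combine to give $\upClAlg'_{\kk'} = \upClAlg' \otimes_\kk \kk'$, as claimed.
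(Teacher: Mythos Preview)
Your proof is correct and follows essentially the same approach as the paper: both use the free $\kk$-module decomposition $\kk'=\oplus_{j}\kk\cdot v_{j}$, the fact that the mutation isomorphisms are $\kk$-algebra maps fixing each $v_{j}$, and uniqueness of the resulting decomposition to show the intersection $\cap_{\sd''}\LP'_{\kk'}(\sd'')$ splits as $\oplus_{j}(\cap_{\sd''}\LP'(\sd''))\cdot v_{j}$. The paper phrases this abstractly as a direct-sum computation while you argue element-wise, but the content is the same.
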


\begin{proof}

The skew-field of fractions $\cF(\sd')_{\kk'}$ for $\LP(\sd')_{\kk'}$
contains $\cF(\sd')\otimes\kk'$. Take any $\ssd=\seq\sd'\in\Delta_{\sd'}^{+}$.
We have $\seq^{*}:\cF(\ssd)_{\kk'}\simeq\cF(\sd')_{\kk'}$. Since
$\seq^{*}(v_{j})=v_{j}$, $(\seq^{*}\LP(\ssd))\otimes\kk'$ equals
$\seq^{*}\LP(\ssd)_{\kk'}$ in $\cF(\sd')_{\kk'}$.

For any $\ssd'=\seq'\sd'\in\Delta_{\sd'}^{+}$, we have $(\seq^{*}\LP(\ssd)\cdot v_{j})\cap((\seq')^{*}\LP(\ssd')\cdot v_{j'})\subset(\cF(\sd')\cdot v_{j})\cap(\cF(\sd')\cdot v_{j'})=0$
whenever $j\neq j'$. Therefore, $\upClAlg'_{\kk'}=\cap_{\ssd=\seq\sd'\in\Delta_{\sd'}^{+}}(\seq^{*}\LP(\ssd)\otimes\kk')=\oplus_{j}(\cap_{\ssd}(\seq^{*}\LP(\ssd)\cdot v_{j}))=\oplus_{j}(\cap_{\ssd}\seq^{*}\LP(\ssd))\cdot v_{j}=\upClAlg'\otimes\kk'$.

\end{proof}

\begin{Lem}\label{lem:coefficent-kk}

For any subset $\cZ$ of $\hLP$, $\hLP\cap\Span_{\kk'}\cZ$ equals
$\Span_{\kk}\cZ$ in $\hLP\otimes\kk'$.

\end{Lem}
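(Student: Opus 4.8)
The plan is to reduce the lemma to an elementary fact about the $\kk$-module structure of $\hLP\otimes\kk'$. Recall from the paragraph preceding the statement that $\kk'=\oplus_{j=0}^{d_{\rho}-1}\kk\cdot v_{j}$ is free over $\kk$ on the elements $v_{j}=q^{\frac{j}{2d_{\rho}}}$, with $v_{0}=1$. Since tensoring with $\hLP$ commutes with this direct sum, we obtain an internal direct sum decomposition of $\kk$-modules
\[
\hLP\otimes\kk'=\bigoplus_{j=0}^{d_{\rho}-1}\hLP\cdot v_{j},
\]
in which the summand $\hLP\cdot v_{0}$ is exactly the copy of $\hLP$ (under the identification $\hLP=\hLP\otimes 1$). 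This structural observation is the only input needed; the rest is bookkeeping.

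First I would dispose of the easy inclusion $\Span_{\kk}\cZ\subseteq\hLP\cap\Span_{\kk'}\cZ$: since $\cZ\subseteq\hLP$ and $\hLP$ is a $\kk$-module, $\Span_{\kk}\cZ\subseteq\hLP$; and $\kk\subseteq\kk'$ gives $\Span_{\kk}\cZ\subseteq\Span_{\kk'}\cZ$.

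For the reverse inclusion, take any $w\in\hLP\cap\Span_{\kk'}\cZ$ and write it as a finite sum $w=\sum_{i}c_{i}z_{i}$ with $z_{i}\in\cZ$ and $c_{i}\in\kk'$. Expanding each coefficient as $c_{i}=\sum_{j=0}^{d_{\rho}-1}c_{ij}v_{j}$ with $c_{ij}\in\kk$ and regrouping, we get $w=\sum_{j=0}^{d_{\rho}-1}w_{j}v_{j}$, where each $w_{j}:=\sum_{i}c_{ij}z_{i}$ lies in $\Span_{\kk}\cZ\subseteq\hLP$. This is precisely the decomposition of $w$ relative to the direct sum above, so from $w\in\hLP=\hLP\cdot v_{0}$ and the uniqueness of the direct-sum decomposition we conclude $w_{j}=0$ for $j\neq 0$ and $w=w_{0}\in\Span_{\kk}\cZ$, as desired.

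I do not anticipate a genuine obstacle: the argument is purely $\kk'$-linear algebra involving only finitely supported combinations, so no completeness or topological property of $\hLP$ enters. The only point that deserves an explicit word is the direct-sum decomposition of $\hLP\otimes\kk'$, which follows at once from the freeness of $\kk'$ over $\kk$ recorded just above the lemma together with the commutation of $-\otimes_{\kk}\kk'$ with the direct sum $\kk'=\oplus_{j}\kk v_{j}$.
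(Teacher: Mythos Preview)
Your proof is correct and follows essentially the same approach as the paper: both use the direct-sum decomposition $\hLP\otimes\kk'=\bigoplus_{j}\hLP\cdot v_{j}$, expand an element $w\in\Span_{\kk'}\cZ$ according to this decomposition, and observe that membership in $\hLP=\hLP\cdot v_{0}$ forces all components with $j\neq 0$ to vanish. The only cosmetic difference is that the paper absorbs $v_{j}$ into its $w_{j}$ (setting $w_{j}:=\sum_{i}c_{ij}v_{j}z_{i}\in\hLP\cdot v_{j}$), whereas you keep $w_{j}\in\hLP$ and write $w=\sum_{j}w_{j}v_{j}$.
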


\begin{proof}

Recall $\hLP\otimes\kk'=\oplus_{j=0}^{d_{\rho}-1}\hLP\cdot v_{j}$,
$v_{j}=q^{\frac{j}{2d_{\rho}}}$. Obviously, $\hLP\cap\Span_{\kk'}\cZ$
contains $\Span_{\kk}\cZ$. For any $w\in\Span_{\kk'}\cZ$, we have
$w=\sum_{i}(\sum_{j=0}^{d_{\rho}-1}c_{ij}v_{j})z_{i}$ for $z_{i}\in\cZ$,
$c_{ij}\in\kk$. Then $w=\sum_{j}w_{j}$ for $w_{j}:=\sum_{i}c_{ij}v_{j}z_{i}\in\hLP\cdot v_{j}$.
If $w\in\hLP$, then $w=w_{0}$. Therefore, $\Span_{\kk}\cZ$ contains
$\hLP\cap\Span_{\kk'}\cZ$.

\end{proof}

\begin{Def}[{\cite{Qin12}}]\label{def:similar-elements}

Take any $z=x^{m}\cdot(1+\sum_{n\in\N^{I_{\ufv}}}c_{n}y^{n})\in\hLP$,
$c_{n}\in\Z[q^{\pm\Hf}]$. The elements $z'=(x')^{m'}\cdot(1+\sum_{n\in\N^{I_{\ufv}}}c_{n}'(y')^{n})\in\hLP'_{\kk'}$,
$c_{n}'\in\kk'$, are said to be similar to $z$ if $\pr_{I_{\ufv}}m'=\pr_{I_{\ufv}}m$
and $c'_{n}=c_{n}|_{q^{\Hf}\mapsto q^{\Hf\rho}}$. If $q^{\Hf}=1$,
we require $c'_{n}=c_{n}$.

\end{Def}

Note that if $z$ is similar to $z'$, $z'$ is similar to $z$ as
well. Moreover, when $z$ is given, any $z'$ similar to $z$ is determined
by $\deg^{\sd'}z'$. 

Let us extend the construction of similar elements. $\forall m\in\cone(\sd)$,
define a $\widehat{\kk[N_{\ufv}]}$-bimodule and a $\kk[N_{\ufv}]$-bimodule:
\begin{align}
\hV_{m}:=x^{m}\cdot\widehat{\kk[N_{\ufv}]},\ V_{m}:=x^{m}\cdot\kk[N_{\ufv}].
\end{align}
Then $\hV_{m}\cap\hV_{h}\neq0$ if and only if $h\in m+p^{*}N_{\ufv}$,
which implies $\hV_{m}=\hV_{h}$. Therefore, $\hLP=\oplus_{m_{i}}\hV_{m_{i}}$,
where $m_{i}$ are any chosen representatives of $\cone/p^{*}N_{\ufv}$.
Similarly, we have $\LP=\oplus_{m_{i}}V_{m_{i}}$.

Choose $m'\in\cone(\sd')$ such that $\pr_{I_{\ufv}}m=\pr_{I_{\ufv}}m'$.
Similarly define $\hV'_{m'}\subset\hLP'$ for $\sd'$. We have the
$\Z$-algebra isomorphism $\var:\widehat{\kk''[N_{\ufv}]}\rightarrow\widehat{\kk'[N'_{\ufv}]}$
such that $\var(y^{n}):=(y')^{n}$, $\var(q^{\frac{1}{2d_{\rho}'}}):=q^{\frac{1}{2d_{\rho}'}\rho}=q^{\frac{1}{2d_{\rho}}}$.
It induces a bijective $\Z$-linear map 
\begin{align}
\var & :=\var_{m',m}:\hV_{m}\otimes\kk''\simeq(\hV'_{m'})\otimes\kk',
\end{align}
such that $\var(q^{\frac{1}{2d_{\rho}'}}):=q^{\frac{1}{2d_{\rho}}}$,
$\var(x^{m}\cdot y^{n}):=(x')^{m'}\cdot(y')^{n}$. We have $\var(a*z*b)=\var(a)*\var(z)*\var(b)$,
$\forall a,b\in\widehat{\kk''[N_{\ufv}]}$, $z\in V_{m}$.

We observe that, for any $(m+\tB n)$-pointed element $z\in\hLP$,
$n\in N_{\ufv}$, the image $\var_{m',m}(z)$ is $(m'+\tB'n)$-pointed
and similar to $z$ in the sense of Definition \ref{def:similar-elements}.

In general, any $z\in\hLP$ belongs to some $\oplus_{m_{i}}\hV_{m_{i}}$.
If $z$ is further contained in $\LP$, then $z\in\oplus_{m_{i}}V_{m_{i}}$.
Denote $z=\sum_{i}z_{i}$ with $z_{i}\in\hV_{m_{i}}$. By choosing
$m'_{i}$ such that $\pr_{I_{\ufv}}m'_{i}=\pr_{I_{\ufv}}m_{i}$, we
obtain an element $\var_{\um',\um}(z):=\sum_{i}\var_{m'_{i},m_{i}}(z_{i})$,
where $\um=(m_{i})_{i}$ and $\um'=(m'_{i})_{i}$. We could say $\var_{\um',\um}(z)$
is similar to $z$.

\begin{Lem}\label{lem:component-univ-Laurent}

Assume $z=\sum_{i}z_{i}$, $z_{i}\in V_{m_{i}}$, $V_{m_{i}}\cap V_{m_{j}}=0$
when $i\neq j$. If $z\in\upClAlg$, then $z_{i}\in\upClAlg$.

\end{Lem}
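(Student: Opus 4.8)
The plan is to recognize the decomposition $z=\sum_{i}z_{i}$ as a \emph{homogeneous} decomposition for a grading that is respected by every cluster chart, and then to show that $\upClAlg(\sd)$ is a graded subspace of $\LP(\sd)$. Put $\Gamma:=\cone(\sd)/p^{*}(\yCone)$, with projection $\pi\colon\cone(\sd)\to\Gamma$; by Assumption~\ref{assumption:injectivity} $p^{*}$ is injective, so $\Gamma$ is a finitely generated abelian group. Since $x^{g}*x^{h}\in\kk\,x^{g+h}$, the algebra $\LP(\sd)$ is $\Gamma$-graded by $q^{\alpha}x^{g}\mapsto\pi(g)$, and its homogeneous component of degree $\pi(m)$ is precisely $V_{m}$ (which thus depends only on $\pi(m)$), recovering the decomposition $\LP(\sd)=\bigoplus_{m_{i}}V_{m_{i}}$ recalled above. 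The hypothesis $V_{m_{i}}\cap V_{m_{j}}=0$ for $i\neq j$ says exactly that the classes $\pi(m_{i})$ are pairwise distinct, so $z=\sum_{i}z_{i}$ is nothing but the $\Gamma$-homogeneous decomposition of $z$. It therefore suffices to show that the homogeneous components (in this grading) of any element of $\upClAlg(\sd)$ again lie in $\upClAlg(\sd)$. Since $\tB(\sd)$ has full rank, $\upClAlg(\sd)=\LP(\sd)\cap\bigcap_{k\in I_{\ufv}}\LP(\mu_{k}\sd)$ (as in the proof of Proposition~\ref{prop:good-sub-up-cl-alg}), and each $z_{i}$ manifestly lies in $V_{m_{i}}\subset\LP(\sd)$; so it is enough to prove, for a fixed $k\in I_{\ufv}$, that every $\Gamma$-homogeneous component of an element $z\in\LP(\sd)\cap\LP(\mu_{k}\sd)$ still belongs to $\LP(\mu_{k}\sd)$.

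To do this I would use a few standard facts. Write $x_{k}':=x_{k}(\mu_{k}\sd)$ and $A_{k}:=\kk[x_{i}^{\pm}]_{i\in I\backslash\{k\}}$. First, every cluster variable is pointed in $\LP(\sd)$, so all Laurent monomials appearing in $x_{k}'\in\LP(\sd)$ have the form $q^{\alpha}x^{g+p^{*}n}$ with a common $g$ and $n\in\yCone^{\oplus}$; as $p^{*}n\in\ker\pi$, this makes $x_{k}'$ $\Gamma$-homogeneous in $\LP(\sd)$, of some degree $\gamma_{k}'\in\Gamma$. Second, the cluster variables of the quantum seed $\mu_{k}\sd$ pairwise $q$-commute, and $x_{i}(\mu_{k}\sd)=x_{i}$ for $i\neq k$; hence $\LP(\mu_{k}\sd)=\bigoplus_{n\in\Z}A_{k}\,(x_{k}')^{n}$ with $x_{k}'$ normalizing $A_{k}$, while $A_{k}$ is a $\Gamma$-graded subring of $\LP(\sd)$. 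Now let $z\in\LP(\sd)\cap\LP(\mu_{k}\sd)$ and expand $z=\sum_{n}b_{n}(x_{k}')^{n}$ with $b_{n}\in A_{k}$; for $N\in\N$ large enough to clear the negative powers of $x_{k}'$, $W:=(x_{k}')^{N}*z=\sum_{j\ge 0}b_{j}'(x_{k}')^{j}$ with $b_{j}'\in A_{k}$. On the one hand $W\in\LP(\sd)$ (as $z,x_{k}'\in\LP(\sd)$), and splitting each $b_{j}'$ into its $\Gamma$-homogeneous pieces in $A_{k}$ shows that the degree-$\rho$ component of $W$ in $\LP(\sd)$ is
\[
W_{\rho}=\sum_{j\ge 0}(b_{j}')_{\rho-j\gamma_{k}'}\,(x_{k}')^{j}\ \in\ \LP(\mu_{k}\sd).
\]
On the other hand $W=\sum_{\gamma}(x_{k}')^{N}*z_{\gamma}$ with $(x_{k}')^{N}*z_{\gamma}$ $\Gamma$-homogeneous of degree $N\gamma_{k}'+\gamma$, so $W_{N\gamma_{k}'+\gamma}=(x_{k}')^{N}*z_{\gamma}$. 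Hence $(x_{k}')^{N}*z_{\gamma}\in\LP(\mu_{k}\sd)$, and since $x_{k}'$ is invertible there, $z_{\gamma}=(x_{k}')^{-N}*\bigl((x_{k}')^{N}*z_{\gamma}\bigr)\in\LP(\mu_{k}\sd)$. Running this over all $k\in I_{\ufv}$ then gives $z_{i}\in\LP(\sd)\cap\bigcap_{k}\LP(\mu_{k}\sd)=\upClAlg(\sd)$, which is the claim.

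The one genuinely delicate point is that the two charts $\LP(\sd)$ and $\LP(\mu_{k}\sd)$ do not obviously sit inside a common $\Gamma$-graded overring: although $x_{k}'$ is $\Gamma$-homogeneous, the $q$-twist prevents it from being a normal element of $\LP(\sd)$, so one cannot simply form the graded localization $\LP(\sd)[(x_{k}')^{-1}]$ and read off the result. The maneuver of multiplying $z$ by a high power of $x_{k}'$ to move into $\LP(\sd)$, extracting homogeneous components there, and then dividing back inside $\LP(\mu_{k}\sd)$ is exactly what sidesteps this difficulty; everything else is bookkeeping with the two gradings. (In the classical case, or whenever all relevant elements are normal, one may instead argue directly via the graded localization.)
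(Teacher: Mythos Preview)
Your proof is correct, but the route differs from the paper's. Both arguments rest on the same underlying fact---that the grading by $\Gamma=\cone(\sd)/p^{*}\yCone$ is respected by mutation---but they exploit it differently. You invoke the star-shaped neighborhood description $\upClAlg(\sd)=\LP(\sd)\cap\bigcap_{k\in I_{\ufv}}\LP(\mu_{k}\sd)$ (full rank) and then, for each adjacent seed, use the explicit form $\LP(\mu_{k}\sd)=A_{k}[(x_{k}')^{\pm}]$ together with the $\Gamma$-homogeneity of $x_{k}'$ to push homogeneous components through; the multiply-by-$(x_{k}')^{N}$ trick cleanly avoids the problem that $x_{k}'$ is not normal in $\LP(\sd)$. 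The paper instead treats an arbitrary seed $\ssd=\seq^{-1}\sd$ directly: it uses the bijective linear map $\psi\colon\cone(\sd)\to\cone(\ssd)$ sending $\deg^{\sd}x_{i}\mapsto\deg^{\ssd}\seq^{*}x_{i}$, observes that $\psi$ restricts to a bijection $p^{*}\yCone\to p^{*}\yCone(\ssd)$ (so induces an isomorphism on $\Gamma$), and checks that $\seq^{*}z_{i}\in\hV(\ssd)_{\psi m_{i}}$ land in distinct components of $\hLP(\ssd)$; since their sum $\seq^{*}z$ lies in $\LP(\ssd)$, each $\seq^{*}z_{i}$ lies in $V(\ssd)_{\psi m_{i}}\subset\LP(\ssd)$. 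Your argument is more elementary and hands-on, needing only one-step mutations and the exchange relation; the paper's is more uniform across all seeds and makes the invariance of the $\Gamma$-grading under mutation explicit, at the cost of importing the $\psi$-map machinery from \cite{qin2019bases}.
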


\begin{proof}

It suffices to show $z_{i}\in\LP(\ssd)$ for any seed $\ssd=\seq^{-1}\sd$.
Following \cite[Section 3.3]{qin2019bases}, define the bijective
linear map $\psi:=\psi_{\ssd,\sd}:\cone\rightarrow\cone(\ssd)$ such
that $\psi(\deg^{\sd}x_{i})=\deg^{\ssd}\seq^{*}x_{i}$, $\forall i$.
Then, $\forall m\in\cone$, $\seq^{*}x^{m}$ is a $\psi m$-pointed
element in $\hLP(\ssd)$. Moreover, we can deduce from the mutation
rule of $y$-variables (\cite[(2.4)]{qin2019bases}) that $\psi$
restricts to a bijection from $p^{*}N_{\ufv}$ to $p^{*}N_{\ufv}(\ssd)$.

We have $\seq^{*}z=\sum_{i}\seq^{*}z_{i}$ in $\hLP(\ssd)$, where
$\seq^{*}z_{i}\in\hV(\ssd)_{h_{i}}$, $h_{i}=\psi m_{i}$. Moreover,
if $h_{i}\in h_{j}+p^{*}N_{\ufv}(\ssd)$, we will have $m_{i}\in m_{j}+p^{*}N_{\ufv}$
by applying $\psi^{-1}$. In this case, we obtain $V_{m_{i}}=V_{m_{j}}$
and thus $i=j$. Therefore, $\hV(\ssd)_{h_{i}}\cap\hV(\ssd)_{h_{j}}=0$
if $i\neq j$. Combining with $\seq^{*}z\in\LP(\ssd)$, we obtain
$\seq^{*}z_{i}\in V(\ssd)_{h_{i}}\subset\LP(\ssd)$.

\end{proof}

We slightly generalize the correction technique in \cite{Qin12,qin2017triangular}
as below.

\begin{Thm}\label{thm:correction}

Given $m_{s}$-pointed elements $z_{s}\in\hLP$ and $m'_{s}$-pointed
elements $z'_{s}\in\hLP'_{\kk'}$, for $r\in\N,1\leq s\leq r$, such
that $z'_{s}$ are similar to $z_{s}$. We do not assume $m_{s}$
to be distinct. Take any $z\in V_{m}$ and $z'\in(V'_{m'})\otimes\kk'$
such that $\pr_{I_{\ufv}}m=\pr_{I_{\ufv}}m'$, $z'=\var_{m',m}(z)$. 

(1) If $z=[z_{1}*\cdots*z_{r}]^{\sd}$, then $z'=p'\cdot[z'_{1}*\cdots*z'_{r}]^{\sd'}$
for $p'$ in the frozen group $\frGroup'$, such that $\deg^{\sd'}p'=m'-\sum_{s=1}^{r}m'_{s}$.

(2) If $z=\sum_{s=1}^{r}b_{s}z_{s}$, $b_{s}\in\kk$, we have $z=\sum_{s\in J}b_{s}z_{s}$
for $J=\{s|\exists n_{s}\in\Z^{I_{\ufv}},\ m_{s}=m+\tB n_{s}\}$.
Moreover, we have $z'=\sum_{s\in J}b'_{s}p'_{s}\cdot z'_{s}$, where
$b'_{s}=b_{s}|_{q^{\Hf}\mapsto q^{\frac{\rho}{2}}}$ and $p'_{s}=(x')^{m'+\tB'n_{s}-m'_{s}}\in\frGroup'$.

\end{Thm}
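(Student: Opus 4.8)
The plan is to reduce everything to the simplest building block---a single pointed element---and then assemble the general statements by tracking degrees through the $q$-twisted product and through the $\Z$-linear map $\var$. First I would record the key compatibility: for a single $m$-pointed $z = x^m\cdot(1+\sum_n c_n y^n) \in \hV_m$, the similar element $\var_{m',m}(z) = (x')^{m'}\cdot(1+\sum_n c'_n(y')^n)$ is by construction $m'$-pointed and similar to $z$ in the sense of Definition \ref{def:similar-elements}; this is already observed in the text just before the theorem. The other structural fact I would invoke is the multiplicativity $\var(a*z*b)=\var(a)*\var(z)*\var(b)$ for $a,b\in\widehat{\kk''[N_{\ufv}]}$, together with the elementary identity $\var(y^n)=(y')^n$ and $\var(q^{1/(2d_\rho')})=q^{1/(2d_\rho)}$, i.e.\ $\var$ intertwines $q^{\Hf}\mapsto q^{\Hf\rho}$.

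For part (1): the product $z_1*\cdots*z_r$ in $\hLP$ lies in $\hV_{\sum m_s}$ by Lemma \ref{lem:freezing_properties}-type bookkeeping on degrees (each $z_s\in\hV_{m_s}$, and $\hV$ is closed under $*$ with additive degrees). Its normalization $z=[z_1*\cdots*z_r]^{\sd}$ is the $(\sum m_s)$-pointed representative, so $\deg^{\sd} z=\sum_s m_s$. Apply $\var_{\um',\um}$ componentwise: since $\var$ is multiplicative and sends each $z_s$ to the similar $m'_s$-pointed element $z'_s$, the element $\var(z)$ equals $\var$ of a scalar multiple of $z_1*\cdots*z_r$, hence is a scalar (a power of $q$) times $z'_1*\cdots*z'_r$. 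But $\var(z)=z'$ is required to be $m'$-pointed, while $z'_1*\cdots*z'_r$ is $(\sum m'_s)$-pointed; normalizing and comparing leading Laurent monomials forces $z' = p'\cdot[z'_1*\cdots*z'_r]^{\sd'}$ with $p'=x'^{\,m'-\sum m'_s}$. One checks $m'-\sum_s m'_s \in \Z^{I_{\fv}}$ because $\pr_{I_{\ufv}}$ of both $m$ and the $m_s$ is preserved under $\var$ and $\pr_{I_{\ufv}} m = \pr_{I_{\ufv}}\sum_s m_s$ (as $z$ is $(\sum m_s)$-pointed and $\pr_{I_{\ufv}} m = \pr_{I_{\ufv}} m'$); hence $p'\in\frGroup'$.

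For part (2): from $z=\sum_{s=1}^r b_s z_s$ with $z\in V_m$, decompose $z=\sum_i z_{(i)}$ along the direct sum $\LP=\oplus_i V_{m_i}$ over representatives of $\cone/p^*N_{\ufv}$; since $z\in V_m$, only the component in the class of $m$ survives, which kills every $z_s$ whose degree $m_s$ is not of the form $m+\tB n_s$. This gives $z=\sum_{s\in J}b_s z_s$ with $J$ as stated. Now apply $\var_{\um',\um}$: it is $\Z$-linear and sends $z_s$ (for $s\in J$, where $m_s=m+\tB n_s$) to a $\var$-similar $m'_s$-pointed element; comparing this with the similar $(m'+\tB' n_s)$-pointed element $z'_s$ forces $\var(z_s) = p'_s\cdot z'_s$ with $p'_s = x'^{\,m'+\tB'n_s - m'_s}\in\frGroup'$, and the coefficient $b_s$ transforms to $b'_s=b_s|_{q^{\Hf}\mapsto q^{\rho/2}}$ because $\var$ implements exactly that substitution on $\kk$. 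Summing yields $z'=\var(z)=\sum_{s\in J}b'_s p'_s\cdot z'_s$.

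The main obstacle I anticipate is not any single step but the careful bookkeeping of the frozen-part degree shifts: one must verify that $m'-\sum_s m'_s$ and $m'+\tB'n_s-m'_s$ really land in $\Z^{I_{\fv}}$ (not merely in $\cone(\sd')$), which rests on the fact that $\var$ and similarity only constrain the $I_{\ufv}$-projections of degrees, so the $I_{\fv}$-coordinates are free and must be pinned down by the pointedness normalization of $z$ and $z'$. A secondary subtlety is that the $m_s$ are explicitly \emph{not} assumed distinct, so in part (2) one cannot argue by unique leading terms; instead the projection onto $V_m$ must be used as the organizing device, exactly as in Lemma \ref{lem:component-univ-Laurent}. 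Once these degree computations are dispatched, everything else is a direct application of the multiplicativity and $\Z$-linearity of $\var$.
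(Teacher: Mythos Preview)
Your approach matches the paper's: part (1) is handled as a direct degree calculation (the paper simply cites \cite[Theorem 9.1.2]{Qin12}), and part (2) uses the decomposition $\LP=\oplus_i V_{m_i}$ to restrict the sum to $J$, then applies $\var_{m',m}$ $\Z$-linearly and rewrites $\var(z_s)=p'_s\cdot z'_s$---exactly the paper's argument. Two small clean-ups: in part (2) you have the degrees swapped (it is $\var(z_s)$ that is $(m'+\tB'n_s)$-pointed, while $z'_s$ is $m'_s$-pointed by hypothesis; your formula for $p'_s$ is nonetheless correct), and in part (1) the map $\var_{m',m}$ is only a $\widehat{\kk''[N_{\ufv}]}$-bimodule map, not a ring homomorphism, so ``$\var$ is multiplicative and sends each $z_s$ to $z'_s$'' is not literally true---the actual computation goes through the algebra isomorphism $\var:\widehat{\kk''[N_{\ufv}]}\to\widehat{\kk'[N'_{\ufv}]}$ applied to the $0$-pointed factor of $[z_1*\cdots*z_r]^{\sd}$.
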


\begin{proof}

(1) follows from direct calculation, see \cite[Theorem 9.1.2]{Qin12}.

(2) The first claim follows from the decomposition of $z$ in $\oplus_{g}\hV_{g}$
. Using the $\Z$-linear map $\var:=\var_{m',m}:V_{m}\rightarrow V'_{m'}\otimes\kk'$,
we obtain $z'=\sum_{s\in J}\var(b_{s}z_{s})=\sum_{s\in J}b'_{s}\var(z_{s})$,
where $\var(z_{s})$ is similar to $z_{s}$ with $\deg\var(z_{s})=m'+\tB'n_{s}$.
The claim follows by rewriting $\var(z_{s})=p'_{s}\cdot z'_{s}$.

\end{proof}

By \cite{FominZelevinsky07}\cite{Tran09}\cite{gross2018canonical},
for any mutation sequence $\seq$ and $i\in I$, $x_{i}(\seq\sd)\in\LP$
and $x_{i}(\seq\sd')\in\LP'\subset\LP'_{\kk'}$ are similar.

\begin{Lem}\label{lem:similar-alg-element}

Let $z\in\LP$ and $z'\in\LP'_{\kk'}$ be similar pointed elements.

(1) If $z\in\clAlg$, then $z'\in\clAlg'_{\kk'}$. (2) If $z\in\upClAlg$,
then $z'\in\upClAlg'_{\kk'}$.

\end{Lem}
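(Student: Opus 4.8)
The plan is to prove both parts by reducing to the Laurent phenomenon on individual seeds, using the fact established earlier that cluster variables at matching seeds are similar and that similarity is compatible with the $q$-twisted product via the map $\var$.

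\begin{proof}

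(1) Suppose $z\in\clAlg$. By definition $\clAlg=\bClAlg[x_j^{-1}]_{j\in I_\fv}$ is generated over $\kk$ by the cluster variables $x_i(\seq\sd)$, $\seq$ a mutation sequence, $i\in I$, together with $x_j^{-1}$, $j\in I_\fv$. So $z$ is a finite $\kk$-linear combination of $q$-twisted products of such generators. Fix one such product $w=x_{i_1}(\seq_1\sd)\ast\cdots\ast x_{i_r}(\seq_r\sd)$ (allowing negative powers of frozen variables); after normalizing, $[w]^\sd$ is $m_w$-pointed for some $m_w$ with $\pr_{I_\ufv}m_w$ determined by the $g$-vectors involved. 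By the remark preceding the statement (citing \cite{FominZelevinsky07}\cite{Tran09}\cite{gross2018canonical}), each $x_{i_t}(\seq_t\sd)$ is similar to $x_{i_t}(\seq_t\sd')\in\LP'\subset\LP'_{\kk'}$. Applying Theorem \ref{thm:correction}(1) to the product $w$, the element $\var_{m_w',m_w}(w)$ equals $p'\cdot[x_{i_1}(\seq_1\sd')\ast\cdots\ast x_{i_r}(\seq_r\sd')]^{\sd'}$ for some $p'\in\frGroup'$; since frozen monomials with negative exponents lie in $\clAlg'_{\kk'}$, this shows $\var_{m_w',m_w}(w)\in\clAlg'_{\kk'}$. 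Now $z=\sum_w b_w [w]^\sd$ lies in a single $V_m$ (as $z$ is pointed), and Theorem \ref{thm:correction}(2) — applied with the $z_s$ taken to be the pointed elements $[w]^\sd$ — expresses $z'=\var_{m',m}(z)$ as a $\kk'$-linear combination of the $\var(z_s)$ times frozen monomials, hence $z'\in\clAlg'_{\kk'}$.

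(2) Suppose $z\in\upClAlg$. The point is that $\upClAlg'_{\kk'}=\cap_{\ssd'\in\Delta_{\sd'}^+}\LP(\ssd')_{\kk'}$, so it suffices to show $z'\in\LP(\seq\sd')_{\kk'}$ for every mutation sequence $\seq$. Fix $\seq$ and write $\ssd=\seq\sd$, $\ssd'=\seq\sd'$; these are again similar seeds (\cite{qin2017triangular}), with the variation map $\var$ intertwining the completions $\widehat{\kk''[N_\ufv]}$ and $\widehat{\kk'[N'_\ufv]}$. Pass to the chart $\ssd$: write $z$ as a finite $\LP(\ssd)$-Laurent expansion $z=\sum_{g} c_g x(\ssd)^g$, $c_g\in\kk$. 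Each Laurent monomial $x(\ssd)^g$ of $\LP(\ssd)$, expanded in $\LP(\sd)$, is a cluster monomial (product of initial cluster variables) up to inverses of initial variables, and is similar to the corresponding monomial $x(\ssd')^g$ in $\LP(\ssd')$. Using the compatibility $\var(a\ast z\ast b)=\var(a)\ast\var(z)\ast\var(b)$ together with the fact that $\var$ restricts to a bijection on $p^*N_\ufv$ (as in the proof of Lemma \ref{lem:component-univ-Laurent}), the image $z'=\var_{m',m}(z)$ has Laurent expansion $\sum_g c_g' x(\ssd')^g$ in $\LP(\ssd')_{\kk'}$, where $c_g'=c_g|_{q^\Hf\mapsto q^{\rho/2}}$; in particular $z'\in\LP(\ssd')_{\kk'}$. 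Since $\seq$ was arbitrary, $z'\in\upClAlg'_{\kk'}$.

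\end{proof}

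The step I expect to be the main obstacle is making the ``Laurent-expand in $\ssd$, then transport via $\var$'' argument in part (2) genuinely clean: one must check that the finite Laurent expansion of $z$ in the mutated chart $\LP(\ssd)$ transforms term-by-term under $\var$ to a valid finite Laurent expansion in $\LP(\ssd')_{\kk'}$, and this hinges on the bijectivity of $\var$ on the relevant $p^*N_\ufv$-torsors and on the similarity of cluster variables holding simultaneously at all seeds $\seq\sd$. This is exactly the content that is packaged in Theorem \ref{thm:correction} and the discussion around $\psi_{\ssd,\sd}$ in the proof of Lemma \ref{lem:component-univ-Laurent}, so the work is really in citing those correctly rather than in a new calculation; alternatively one can bypass the chart-by-chart argument entirely and simply invoke Theorem \ref{thm:correction}(2) in the chart $\sd$ once the membership $z\in\upClAlg$ has been localized, but the chart-switching version above is the more transparent route.
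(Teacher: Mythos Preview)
Your approach is essentially the paper's: for (1), write $z$ as a $\kk$-combination of products of cluster variables and inverse frozens and transport via Theorem~\ref{thm:correction}; for (2), check $z'\in\LP(\seq\sd')_{\kk'}$ chart by chart by writing $z$ as a finite Laurent polynomial in $x(\ssd)$-variables, viewing these as pointed elements of $\hLP(\sd)$, and applying Theorem~\ref{thm:correction}(2).

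Two small slips to clean up. First, in (2) the monomials $x(\ssd)^g$ are products of cluster variables of the seed $\ssd$, not ``initial cluster variables''; what you need is that $x(\ssd)^g$ and $x(\ssd')^g$ are similar as pointed elements of $\hLP(\sd)$ and $\hLP(\sd')$ respectively, which follows from Theorem~\ref{thm:correction}(1) applied to the factors (the paper writes these as $[x(\ssd)^{-d}*x(\ssd)^{m}]^{\sd}$ with $d,m\in\N^I$ to handle negative exponents cleanly). Second, Theorem~\ref{thm:correction}(2) gives $z'=\sum_g c_g'\,p_g'\cdot[x(\ssd')^g]^{\sd'}$ with frozen corrections $p_g'\in\frGroup'$, not the bare expansion $\sum_g c_g' x(\ssd')^g$ you wrote; these frozen factors are harmless for membership in $\LP(\ssd')_{\kk'}$, but your justification via ``$\var$ restricts to a bijection on $p^*N_\ufv$'' conflates the variation map $\var$ (between similar seeds) with the linear map $\psi_{\ssd,\sd}$ (between charts of one cluster algebra), and should be replaced by a direct appeal to Theorem~\ref{thm:correction}.
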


\begin{proof}

(1) $z$ is a polynomial of cluster variables of $\clAlg$ and $x_{j}^{-1}$,
$j\in I_{\fv}$, with coefficients in $\kk$. Then Theorem \ref{thm:correction}
implies that $z'$ is a polynomial of the similar cluster variables
of $\clAlg'$ and $(x'_{j})^{-1}$, $j\in I'_{\fv}$, with coefficients
in $\kk'$. The claim follows.

(2) For any mutation sequence $\seq$, denote $\ssd=\seq\sd$ and
$\ssd'=\seq\sd'$. We want to show $z'\in\LP(\ssd')_{\kk'}$. Since
$z\in\LP(\ssd)$, we have a finite decomposition $z=\sum_{m}b_{m}[x(\ssd)^{-d}*x(\ssd)^{m}]^{\sd}$
in $\hLP(\sd)$, where $d,m\in\N^{I}$ and $b_{m}\in\kk$. Note that
the normalized products of similar elements $[x(\ssd)^{-d}*x(\ssd)^{m}]^{\sd}\in\hLP(\sd)$
and $[x(\ssd')^{-d}*x(\ssd')^{m}]^{\sd'}\in\hLP(\sd')$ are again
similar. By Theorem \ref{thm:correction}, we have a finite decomposition
$z'=\sum_{m}b'_{m}p'_{m}\cdot[x(\ssd')^{-d}*x(\ssd')^{m}]^{\sd'}$
in $\hLP(\sd')$ where $b'_{m}\in\kk'$ and $p'_{m}\in\frGroup'$.
Therefore, $z'\in\LP(\ssd')_{\kk'}$.

\end{proof}

\subsection{Base changes\label{subsec:Base-changes}}

Let $\sd$ and $\sd'$ denote two similar seeds. Given any subset
$\cZ$ of $\LP(\sd)$ such that its elements are pointed and it is
closed under the $\frGroup$ commutative multiplication, i.e., $x_{j}^{\pm}\cdot\cZ\subset\cZ,\forall j\in I_{\fv}.$
In practice, $\cZ$ will be $\cone$-pointed.

Let $\cZ'$ denote the set of elements in $\LP(\sd')_{\kk'}$ similar
to the elements of $\cZ$. Then $\cZ'$ has pointed elements and is
closed under the $\frGroup'$ commutative multiplication. Let $\cZ''$
denote the set of the elements in $\LP(\sd)_{\kk''}$ that are similar
to those of $\cZ'$. 

\begin{Lem}\label{lem:inverse-construction-similar}

(1) $\cZ''$ coincides with $\cZ$.

(2) $\cZ$ is $\kk$-linearly independent in $\LP(\sd)$ if and only
if it is $\kk''$-linearly independent in $\LP(\sd)_{\kk''}$. 

(3) $\cZ$ is $\kk''$-linearly independent if and only if $\cZ'$
is $\kk'$-linearly independent .

\end{Lem}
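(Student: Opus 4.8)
The plan is to reduce all three claims to two ingredients: that the similarity relation of Definition~\ref{def:similar-elements} is realized by the variation maps $\var_{m',m}$ of Section~\ref{subsec:Similarity-and-correction}, which for matching degrees ($\pr_{I_\ufv}m'=\pr_{I_\ufv}m$) are $\Z$-linear bijections $\hV_m\otimes\kk''\simeq\hV'_{m'}\otimes\kk'$ restricting to a ring isomorphism $\kk''\to\kk'$ on constants; and that the variation maps for the swapped pair $(\sd',\sd)$, whose ratio is $\rho^{-1}$, are exactly the inverses $\var_{m',m}^{-1}$. I will use freely that $\cZ,\cZ',\cZ''$ are closed under $\frGroup$-, $\frGroup'$-, $\frGroup$-commutative multiplication respectively, and that each pointed element lies in a single graded piece $\hV_m$, so that $\var_{m',m}$ acts on it unambiguously once the target degree is chosen.

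For (1): by symmetry of similarity, every $z''\in\cZ''$ has the form $\var_{m',m''}^{-1}\!\bigl(\var_{m',m}(z)\bigr)$ for some $z\in\cZ$ and matching degrees $m,m',m''$. By the explicit formula for the variation maps this composite sends $z$ to $x^{m''-m}\cdot z$, and $m''-m\in\Z^{I_\fv}$ because the unfrozen projections of the three degrees agree; equivalently, the coefficient substitutions $q^{\Hf}\mapsto q^{\Hf\rho}$ and $q^{\Hf}\mapsto q^{\Hf\rho^{-1}}$ compose to the identity on $\kk$, so $z$ and $z''$ have the same coefficients in their normalized forms. Hence $z''=x^{m''-m}\cdot z\in\cZ$ by $\frGroup$-closure of $\cZ$; the reverse inclusion $\cZ\subseteq\cZ''$ is immediate from symmetry of similarity, so $\cZ''=\cZ$.

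For (2): $\kk''=\bigoplus_{j=0}^{d_\rho'-1}\kk\cdot q^{j/(2d_\rho')}$ is $\kk$-free and $\LP(\sd)$ is $\kk$-free on the Laurent monomials, so $\LP(\sd)_{\kk''}=\bigoplus_j\LP(\sd)\cdot q^{j/(2d_\rho')}$; expanding any $\kk''$-linear relation among elements of $\cZ$ in this decomposition yields $\kk$-linear relations in $\LP(\sd)$ with the same elements, whence $\kk$-independence of $\cZ$ forces $\kk''$-independence, the converse being trivial (this is also Lemma~\ref{lem:coefficent-kk} with $\kk''$ in place of $\kk'$). For (3): a nontrivial finite $\kk'$-linear relation $\sum_i c_i'b_i'=0$ with the $b_i'\in\cZ'$ distinct and $c_i'\neq0$ touches only finitely many graded pieces $\hV'_{m'}\otimes\kk'$; splitting it over these pieces and applying $\var_{m',m}^{-1}$ to each (for any admissible $m$) produces $\sum_i\var^{-1}(c_i')\,d_i=0$, where $d_i\in\cZ''=\cZ$ (each $d_i$ is similar to $b_i'$, the condition $\pr_{I_\ufv}\deg d_i=\pr_{I_\ufv}\deg b_i'$ holding because $B=B'$), the $d_i$ are pairwise distinct (injectivity on each piece together with orthogonality of distinct graded pieces), and $\var^{-1}(c_i')\in\kk''\setminus\{0\}$ since $\var^{-1}$ restricts to a ring isomorphism $\kk'\to\kk''$. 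Thus $\var$ sets up a bijection between nontrivial $\kk''$-linear relations in $\cZ$ and nontrivial $\kk'$-linear relations in $\cZ'$, which gives (3).

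I expect the only delicate point to be the routine bookkeeping of degrees and coefficient substitutions in (1) and (3): confirming that $\var$ and its partner for $(\sd',\sd)$ are genuinely mutually inverse, and that the frozen-torus closures of $\cZ,\cZ',\cZ''$ make the target-degree choices in $\var_{m',m}$ immaterial. Nothing deeper seems to be involved.
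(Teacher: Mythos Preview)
Your proof is correct and follows essentially the same approach as the paper. For (1) you and the paper both show that any $z''\in\cZ''$ equals $x^{m''-m}\cdot z$ for some $z\in\cZ$ via the composition of two coefficient substitutions that cancel; for (2) both use the free $\kk$-module decomposition $\kk''=\bigoplus_j\kk\cdot q^{j/(2d_\rho')}$ (the paper simply points to Lemma~\ref{lem:coefficent-kk}); and for (3) the paper cites Theorem~\ref{thm:correction}(2) together with (1), which is exactly your transport-of-relations argument via the variation maps, just packaged differently.
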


\begin{proof}

For any $m''$-pointed $z''\in\cZ''$, there exists some $m'$-pointed
$z'\in\cZ'$ similar to it. Similarly, there exists some $m$-pointed
$z\in\cZ$ similar to $z'$. Note that $\pr_{I_{\ufv}}m''=\pr_{I_{\ufv}}m'=\pr_{I_{\ufv}}m$.
Denote $p=x^{m''-m}$. Then $p\cdot z\in\cZ$ and it coincides with
$z''$, since they are both similar to $z'$ and have the same degree.

Claim (2) can be proved using similar arguments as those in the proof
of Lemma \ref{lem:coefficent-kk}. Finally, Claim (3) follows from
Theorem \ref{thm:correction}(2) and Claim (1).

\end{proof}

\begin{Lem}\label{lem:similar-cluster-basis}

If $\cZ$ is a $\kk$-basis of $\clAlg$, then $\cZ'$ is a $\kk'$-basis
of $\clAlg'_{\kk'}$. 

\end{Lem}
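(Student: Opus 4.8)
The plan is to check the three defining properties of a $\kk'$-basis for $\cZ'$: that $\cZ'\subset\clAlg'_{\kk'}$, that $\cZ'$ is $\kk'$-linearly independent, and that $\cZ'$ spans $\clAlg'_{\kk'}$ over $\kk'$. The first two are immediate from the machinery already in place. For containment, each element of $\cZ'$ is, by construction, a pointed element of $\LP(\sd')_{\kk'}$ similar to some $z\in\cZ$, and $z\in\clAlg$ because $\cZ$ is a $\kk$-basis of $\clAlg$; hence Lemma \ref{lem:similar-alg-element}(1) gives that this element lies in $\clAlg'_{\kk'}$, so $\cZ'\subset\clAlg'_{\kk'}$. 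For independence, since $\cZ$ is a $\kk$-basis it is $\kk$-linearly independent, hence $\kk''$-linearly independent by Lemma \ref{lem:inverse-construction-similar}(2), hence $\cZ'$ is $\kk'$-linearly independent by Lemma \ref{lem:inverse-construction-similar}(3).

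The substantive point is the spanning statement. Recall $\clAlg'_{\kk'}=\clAlg'\otimes\kk'$ (established before Proposition \ref{prop:change-base-up-alg}) and that $\clAlg'$ is the $\kk$-algebra generated by the cluster variables $x_i(\seq\sd')$ together with the inverse frozen variables $(x'_j)^{-1}$; consequently $\clAlg'_{\kk'}$ is $\kk'$-spanned by the normalized twisted products $[s'_1*\cdots*s'_l]^{\sd'}$ with each $s'_a$ one of these cluster or inverse frozen variables (passing from ordinary to normalized products only rescales by powers of $q^{\pm\frac{1}{2d_\rho}}$, which are units in $\kk'$). Thus it suffices to show each such normalized product lies in $\Span_{\kk'}\cZ'$.

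To do this, for each $a$ let $s_a\in\LP(\sd)$ be the pointed element similar to $s'_a$ — the corresponding cluster variable of $\clAlg$, respectively $x_j^{-1}$ — using the fact cited before Lemma \ref{lem:similar-alg-element} that $x_i(\seq\sd)$ and $x_i(\seq\sd')$ are similar. Set $z:=[s_1*\cdots*s_l]^{\sd}\in\clAlg$ and expand $z=\sum_s b_s z_s$ with $z_s\in\cZ$, $b_s\in\kk$, using that $\cZ$ is a $\kk$-basis. Now choose $m'$ with $\pr_{I_\ufv}m'=\pr_{I_\ufv}\deg^{\sd}z$ and put $z':=\var_{m',m}(z)\in\LP(\sd')_{\kk'}$. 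Applying Theorem \ref{thm:correction}(1) to the pointed data $\{s_a\},\{s'_a\}$ gives $z'=p'\cdot[s'_1*\cdots*s'_l]^{\sd'}$ for some $p'\in\frGroup'$; applying Theorem \ref{thm:correction}(2) to the pointed data $\{z_s\},\{z'_s\}$ gives $z'=\sum_{s}b'_s\,p'_s\cdot z'_s$ with $b'_s\in\kk'$, $p'_s\in\frGroup'$, and $z'_s$ the pointed element similar to $z_s$, which belongs to $\cZ'$ by the definition of $\cZ'$. Comparing the two expressions for $z'$ and using that $\cZ'$ is closed under the $\frGroup'$-multiplication, we obtain $[s'_1*\cdots*s'_l]^{\sd'}=\sum_s b'_s\cdot\bigl((p')^{-1}p'_s\cdot z'_s\bigr)\in\Span_{\kk'}\cZ'$, which completes the spanning step and hence the proof.

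I expect the only delicate point to be the bookkeeping in the last step: making sure that the pointed elements produced by Theorem \ref{thm:correction}(2) are genuinely members of $\cZ'$ rather than merely ``similar to $\cZ$ up to an unaccounted frozen monomial'', and that the two frozen corrections $p'$ and $p'_s$ cancel correctly. This is exactly what the closedness of $\cZ'$ under the $\frGroup'$-multiplication (noted in the text) and Lemma \ref{lem:inverse-construction-similar}(1) are there to guarantee; everything else is a direct invocation of the results already assembled in this subsection.
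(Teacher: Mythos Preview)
Your proposal is correct and follows essentially the same approach as the paper: containment via Lemma~\ref{lem:similar-alg-element}, linear independence via Lemma~\ref{lem:inverse-construction-similar}, and spanning by transferring products of cluster variables and inverse frozen variables through Theorem~\ref{thm:correction}. The paper's proof is simply more terse, leaving implicit the bookkeeping with $\frGroup'$-corrections and normalization that you spell out.
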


\begin{proof}

Lemma \ref{lem:similar-alg-element} and Lemma \ref{lem:inverse-construction-similar}
imply $\cZ'\subset\clAlg'_{\kk'}$ and $\cZ'$ is linearly independent.

Let $z'$ denote any product of cluster variables and $(x')_{j}^{-1}$,
$j\in I'_{\fv}$ of $\clAlg'$. It is similar to a product $z$ of
the similar cluster variables of $\clAlg$ and $x_{j}^{-1}$, $j\in I_{\fv}$,
which belongs to $\Span_{\kk}\cZ$. By Theorem \ref{thm:correction},
$z'$ belongs to $\Span_{\kk'}\cZ'$. We deduce that $\Span_{\kk'}\cZ'=\clAlg'_{\kk'}$.

\end{proof}

\begin{Lem}\label{lem:similar-basis}

If $\cZ$ is a $\kk$-basis of $\upClAlg$, then $\cZ'$ is a $\kk'$-basis
of $\upClAlg'_{\kk'}$.

\end{Lem}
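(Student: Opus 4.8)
The plan is to mimic the proof of Lemma \ref{lem:similar-cluster-basis}, substituting the upper-cluster-algebra analogues of its ingredients. First I would establish linear independence: by Lemma \ref{lem:similar-alg-element}(2) we have $\cZ'\subset\upClAlg'_{\kk'}$, and by Lemma \ref{lem:inverse-construction-similar} (applied with the roles of $\sd,\sd'$ interchanged, using part (3)) the set $\cZ'$ is $\kk'$-linearly independent, since $\cZ$ is $\kk$-linearly independent by hypothesis. So it remains to prove $\Span_{\kk'}\cZ'=\upClAlg'_{\kk'}$.

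For the spanning statement, I would take an arbitrary $z'\in\upClAlg'_{\kk'}$ and produce its expansion in $\cZ'$. Since $\upClAlg'_{\kk'}=\upClAlg'\otimes\kk'$ by Proposition \ref{prop:change-base-up-alg}, and $\kk'=\oplus_{j=0}^{d_{\rho}-1}\kk'\cap(\hLP'\cdot v_{j})$ is free over $\kk$ with basis $v_j=q^{j/(2d_\rho)}$, it suffices to treat $z'\in\upClAlg'$ (the general case follows by $\kk$-linearity in the $v_j$-components, exactly as in Lemma \ref{lem:coefficent-kk}). Decompose $z'$ according to the direct sum $\hLP'=\oplus_{m'_i}\hV'_{m'_i}$ over representatives $m'_i$ of $\cone(\sd')/p^{*}N_{\ufv}$; by Lemma \ref{lem:component-univ-Laurent} each component $z'_i\in V'_{m'_i}$ again lies in $\upClAlg'$. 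Thus I may assume $z'=\var_{m',m}(z)$ is a single $m'$-pointed element similar to some $m$-pointed $z\in\LP(\sd)$ with $\pr_{I_\ufv}m=\pr_{I_\ufv}m'$. Now apply the inverse variation map: $z:=\var_{m,m'}(z')\in\LP(\sd)_{\kk''}$, and by Lemma \ref{lem:similar-alg-element}(2) (with $\sd',\sd$ interchanged, $\kk''$ in place of $\kk'$) we get $z\in\upClAlg(\sd)_{\kk''}=\upClAlg\otimes\kk''$; since $z$ is an honest Laurent polynomial over $\kk$ with constant term $1$ at $x^{m}$, in fact $z\in\upClAlg$. Therefore $z=\sum_s b_s z_s$ with $b_s\in\kk$ and $z_s\in\cZ$, because $\cZ$ is a $\kk$-basis of $\upClAlg$. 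Applying Theorem \ref{thm:correction}(2) to this finite decomposition yields $z'=\var_{m',m}(z)=\sum_{s\in J}b'_s p'_s\cdot z'_s$ with $b'_s\in\kk'$, $p'_s\in\frGroup'$, and $z'_s$ the element of $\cZ'$ similar to $z_s$; since $\cZ'$ is closed under the $\frGroup'$ commutative multiplication, each $p'_s\cdot z'_s\in\cZ'$, so $z'\in\Span_{\kk'}\cZ'$. This proves $\Span_{\kk'}\cZ'=\upClAlg'_{\kk'}$ and hence $\cZ'$ is a $\kk'$-basis.

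The main obstacle I anticipate is bookkeeping rather than conceptual: one must be careful that the component-by-component decomposition along $\oplus_{m_i}\hV_{m_i}$ is compatible with membership in $\upClAlg$ (this is exactly why Lemma \ref{lem:component-univ-Laurent} is invoked), and that Theorem \ref{thm:correction}(2) genuinely applies to the possibly-infinite-looking expansion — here the hypothesis that $z$ is a \emph{finite} $\kk$-combination of basis elements (which holds because $\cZ$ is a basis, so expansions are finite) is what makes the correction technique applicable. A secondary point to check is that when $\rho\neq 1$ the coefficient field enlargement is handled symmetrically: the passage $\cZ\rightsquigarrow\cZ'$ and its inverse $\cZ'\rightsquigarrow\cZ''=\cZ$ from Lemma \ref{lem:inverse-construction-similar}(1) must be used to pull $z'$ back before invoking the $\kk$-basis property, and then Theorem \ref{thm:correction}(2) pushes the resulting finite expansion forward with coefficients specialized via $q^{\Hf}\mapsto q^{\rho/2}$. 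Once these are in place the argument is a routine transcription of the classical case.
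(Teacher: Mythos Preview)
Your approach is essentially the paper's own: reduce via Lemma~\ref{lem:component-univ-Laurent} to a single component $z'\in\upClAlg'_{\kk'}\cap(V'_{m'}\otimes\kk')$, pull back along $\var_{m,m'}$, invoke Lemma~\ref{lem:similar-alg-element}(2) to land in $\upClAlg_{\kk''}$, expand in $\cZ$, and push forward via Theorem~\ref{thm:correction}(2). One step, however, does not hold as stated. You assert that ``since $z$ is an honest Laurent polynomial over $\kk$ with constant term $1$ at $x^{m}$, in fact $z\in\upClAlg$.'' This is false when $\rho\neq1$: the map $\var_{m,m'}$ replaces $q^{1/(2d_\rho)}$ by $q^{1/(2d_\rho')}$, so a coefficient $c'_n\in\kk=\Z[q^{\pm1/2}]$ of $z'$ is sent to a Laurent polynomial in $q^{\pm d_\rho/(2d_\rho')}$, which lies in $\kk''$ but generally not in $\kk$ (since $d_\rho,d_\rho'$ are coprime). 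So you only get $z\in\upClAlg_{\kk''}$, and cannot directly use that $\cZ$ is a $\kk$-basis of $\upClAlg$.

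The fix is exactly what the paper does (and what your final paragraph gestures at): rather than descending to $\kk$, stay over $\kk''$. By Proposition~\ref{prop:change-base-up-alg} we have $\upClAlg_{\kk''}=\upClAlg\otimes\kk''$, so $\cZ$ is also a $\kk''$-basis of $\upClAlg_{\kk''}$, giving $z\in\Span_{\kk''}\cZ$. Then Theorem~\ref{thm:correction}(2) applies with coefficients $b_s\in\kk''$ and produces $b'_s=b_s|_{q^{1/2}\mapsto q^{\rho/2}}\in\kk'$, yielding $z'\in\Span_{\kk'}\cZ'$ as required. With this correction your argument is the paper's proof. (A minor wording slip: after the component decomposition you call $z'$ ``$m'$-pointed'', but it is merely an element of $V'_{m'}$; pointedness is not needed, as $\var_{m,m'}$ is defined on all of $V'_{m'}\otimes\kk'$.)
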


\begin{proof}

Lemma \ref{lem:similar-alg-element} implies $\cZ'\subset\upClAlg'_{\kk'}$.
Lemma \ref{lem:inverse-construction-similar} implies that $\cZ'$
is linearly independent. By Lemma \ref{lem:component-univ-Laurent},
it suffices to show that, $\forall m'\in\cone(\sd')$, any $u'\in\upClAlg'_{\kk'}\cap(V'_{m'}\otimes\kk')$
is contained in $\Span_{\kk'}\cZ'$.

Choose any $m\in\cone(\sd)$ such that $\pr_{I_{\ufv}}m=\pr_{I_{\ufv}}m'$.
Recall that we have bijective $\Z$-linear maps $\var_{m,m'}:V'_{m'}\otimes\kk'\rightarrow V_{m}\otimes\kk''$
and $\var_{m',m}:V_{m}\otimes\kk''\rightarrow V'_{m'}\otimes\kk'$
as before, such that $\var_{m,m'}$ and $\var_{m',m}$ are inverse
to each other. Denote $u:=\var_{m,m'}(u')\in V_{m}\otimes\kk''$.
Since $u'\in\upClAlg'_{\kk'}$, we have $u\in\upClAlg_{\kk''}$ by
Lemma \ref{lem:similar-alg-element}. In addition, $\upClAlg_{\kk''}$
equals $\upClAlg\otimes\kk''$ by Proposition \ref{prop:change-base-up-alg},
which has the $\kk''$-basis $\cZ$. So $u\in\Span_{\kk''}\cZ$. Therefore,
$u'=\var_{m',m}(u)$ is contained in $\Span_{\kk'}\cZ'$ by Theorem
\ref{thm:correction}.

\end{proof}

\begin{Lem}\label{lem:inverse-condition-basis}

Let $\alg$ be $\clAlg$ or $\upClAlg$. If $\cZ'$ is a $\kk'$-basis
of $\alg'_{\kk'}$, $\cZ$ is a $\kk$-basis of $\alg$.

\end{Lem}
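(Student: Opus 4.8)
The statement is the converse of Lemmas~\ref{lem:similar-cluster-basis} and~\ref{lem:similar-basis}, and the plan is to deduce it from those lemmas by exploiting that the passage to similar elements is symmetric. The starting point is Lemma~\ref{lem:inverse-construction-similar}(1), which says $\cZ''=\cZ$; thus $\cZ$ is produced from $\cZ'$ by exactly the recipe that produces $\cZ'$ from $\cZ$, with the roles of $\sd$ and $\sd'$ exchanged, the ratio $\rho$ replaced by $\rho^{-1}$, and the auxiliary ring $\kk'$ replaced by $\kk''$. So the first thing I would record is that Lemmas~\ref{lem:similar-alg-element}, \ref{lem:similar-cluster-basis}, \ref{lem:similar-basis} and Proposition~\ref{prop:change-base-up-alg} remain valid after this exchange: their proofs use only Theorem~\ref{thm:correction} and the explicit mutually inverse maps $\var_{m',m}$, $\var_{m,m'}$, all of which are symmetric in $\sd,\sd'$.

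Granting this, I would argue in three steps. \emph{Linear independence:} $\cZ'$ is a $\kk'$-basis, hence $\kk'$-linearly independent, so $\cZ$ is $\kk''$-linearly independent by Lemma~\ref{lem:inverse-construction-similar}(3) and then $\kk$-linearly independent by Lemma~\ref{lem:inverse-construction-similar}(2). \emph{Containment $\cZ\subseteq\alg$:} each $z\in\cZ$ is similar to some $z'\in\cZ'\subseteq\alg'_{\kk'}$, so the exchanged Lemma~\ref{lem:similar-alg-element} gives that the element of $\LP(\sd)_{\kk''}$ similar to $z'$ with degree $\deg^{\sd}z$, which is $z$ itself by Lemma~\ref{lem:inverse-construction-similar}(1), lies in $\alg_{\kk''}$; since $\alg_{\kk''}=\alg\otimes_{\kk}\kk''$ (Proposition~\ref{prop:change-base-up-alg} for $\upClAlg$ and the analogous observation for $\clAlg$, applied to $\sd$ and $\kk''$) and $\kk''$ is a free $\kk$-module having $1$ among its basis elements, decomposing along that basis forces $z\in\alg$. \emph{Spanning:} given $u\in\alg$, its similar element $u'\in\LP(\sd')_{\kk'}$ lies in $\alg'_{\kk'}=\Span_{\kk'}\cZ'$ by Lemma~\ref{lem:similar-alg-element}, so $u'=\sum_s c'_s z'_s$ is a finite sum with $c'_s\in\kk'$ and $z'_s\in\cZ'$; applying $\var_{\um,\um'}$ (the inverse of the map producing $u'$ from $u$, cf.\ Theorem~\ref{thm:correction}(2)) writes $u$ as a finite $\kk''$-linear combination of elements $x^{p}\cdot z''_s$ with $z''_s\in\cZ''=\cZ$ and $x^{p}\in\frGroup$, hence of elements of $\cZ$ because $\cZ$ is closed under multiplication by $\frGroup$. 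Thus $u\in\Span_{\kk''}\cZ\cap\LP(\sd)$, which equals $\Span_{\kk}\cZ$ by the component argument of Lemma~\ref{lem:coefficent-kk}. Combining the three steps, $\cZ$ is a $\kk$-basis of $\alg$.

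The one place requiring genuine care is the base-ring bookkeeping: going from $\sd'$ back to $\sd$ does not return to the ground ring $\kk$ but only to the larger ring $\kk''$ (and if one naively interchanged $\sd\leftrightarrow\sd'$ in the set-up of Section~\ref{subsec:Similarity-and-correction} one would land in $\Z[q^{\pm\frac{1}{2d_{\rho}d_{\rho}'}}]$), so each conclusion must be intersected with $\LP(\sd)$ over $\kk$ and the coefficients descended using the freeness of the ring extension, exactly as in Lemmas~\ref{lem:coefficent-kk} and~\ref{lem:inverse-construction-similar}(2). In the case $\rho=1$---the one actually needed for the quantum cluster algebras of this paper---all of $\kk,\kk',\kk''$ coincide and the argument collapses to a direct application of the exchanged Lemmas~\ref{lem:similar-cluster-basis} and~\ref{lem:similar-basis}.
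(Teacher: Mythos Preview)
Your approach is correct and follows the same strategy as the paper: exploit the symmetry $\cZ''=\cZ$ from Lemma~\ref{lem:inverse-construction-similar}(1) to run Lemmas~\ref{lem:similar-cluster-basis}/\ref{lem:similar-basis} in the exchanged direction, and then descend the coefficients from $\kk''$ to $\kk$ via Lemma~\ref{lem:coefficent-kk}. The paper's proof simply packages your three steps (linear independence, containment, spanning) into a single sentence---``$\cZ''$ is a $\kk''$-basis of $\alg_{\kk''}$ by Lemmas~\ref{lem:similar-cluster-basis} and~\ref{lem:similar-basis}''---and then invokes Proposition~\ref{prop:change-base-up-alg} and Lemmas~\ref{lem:inverse-construction-similar}, \ref{lem:coefficent-kk} to conclude; your explicit unpacking is just the content of those lemmas' proofs rerun with the roles of $\sd,\sd'$ swapped. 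One small point: in your spanning step you write ``its similar element $u'$'' for a general $u\in\alg$, which is only well defined after decomposing $u$ into its $V_{m_i}$-components (Lemma~\ref{lem:component-univ-Laurent}); the paper sidesteps this by appealing directly to the already-proved Lemma~\ref{lem:similar-basis}, whose proof already handled that decomposition.
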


\begin{proof}

$\cZ''$ is a $\kk''$-basis of $\alg_{\kk''}$ by Lemmas \ref{lem:similar-cluster-basis}
and \ref{lem:similar-basis}. Note that $\alg_{\kk''}=\alg\otimes\kk''$
(Proposition \ref{prop:change-base-up-alg}). By Lemmas \ref{lem:inverse-construction-similar}
and \ref{lem:coefficent-kk}, $\cZ$ equals $\cZ''$ and it is a $\kk$-basis
of $\alg$.

\end{proof}

Lemma \ref{lem:similar-cluster-basis}, Lemma \ref{lem:similar-basis},
and Theorem \ref{thm:correction} imply the following.

\begin{Cor}\label{cor:base-change-A-U}

(1) If $\clAlg=\upClAlg=\Span_{\kk}\cZ$, then $\clAlg'_{\kk'}=\upClAlg'_{\kk'}=\Span_{\kk'}\cZ'$. 

(2) If the elements of $\cZ$ have nice cluster decompositions in
$\sd$, so do those of $\cZ'$ in $\sd'$.

\end{Cor}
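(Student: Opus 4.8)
The plan is to read both parts off the three cited results with essentially no new work, since Theorem~\ref{thm:correction} and the similarity lemmas were set up precisely to transport such structures along similarity.

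\textbf{Part (1).} The hypothesis $\clAlg=\upClAlg=\Span_{\kk}\cZ$ says that $\cZ$ is simultaneously a $\kk$-basis of $\clAlg$ and of $\upClAlg$. First I would apply Lemma~\ref{lem:similar-cluster-basis} to conclude that $\cZ'$ is a $\kk'$-basis of $\clAlg'_{\kk'}$, and then Lemma~\ref{lem:similar-basis} to conclude that $\cZ'$ is a $\kk'$-basis of $\upClAlg'_{\kk'}$. Since both conclusions identify the same subspace $\Span_{\kk'}\cZ'$, we get at once $\clAlg'_{\kk'}=\Span_{\kk'}\cZ'=\upClAlg'_{\kk'}$. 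There is nothing more to do here.

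\textbf{Part (2).} Fix $z\in\cZ$ with a nice cluster decomposition $z=\sum_{i=1}^{r}b_{i}z_{i}$ in $\sd$ (normalizing any $q^{\frac{\Z}{2}}$-scalars into the $b_{i}$, so the $z_{i}$ are pointed with pairwise distinct degrees $m_{i}$, each a product of cluster variables and $x_{j}^{-1}$, $j\in I_{\fv}$), and let $z'\in\cZ'$ be the element similar to $z$, say $m=\deg^{\sd}z$, $m'=\deg^{\sd'}z'$. The steps are: (a) apply Theorem~\ref{thm:correction}(2) to this decomposition to obtain $z'=\sum_{i\in J}b'_{i}p'_{i}\cdot z'_{i}$ with $p'_{i}\in\frGroup'$ and $z'_{i}$ the element of $\LP(\sd')_{\kk'}$ similar to $z_{i}$; (b) for each such $i$, write $z_{i}=[s_{i,1}*\cdots*s_{i,l_{i}}]^{\sd}$ as a normalized $q$-twisted product of cluster variables and frozen inverses, and apply Theorem~\ref{thm:correction}(1) together with the fact recorded just before Lemma~\ref{lem:similar-alg-element} (similar seeds have similar cluster variables) to get $z'_{i}=\hat p'_{i}\cdot[s'_{i,1}*\cdots*s'_{i,l_{i}}]^{\sd'}$ with $\hat p'_{i}\in\frGroup'$ and each $s'_{i,k}$ a cluster variable or a frozen inverse of $\sd'$. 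Since an element of $\frGroup'$ is a Laurent monomial in the frozen variables $x'_{j}$, hence a product of frozen cluster variables and their inverses, steps (a) and (b) together exhibit $z'$ as a $\kk'$-linear combination of products of cluster variables of $\clAlg'$ and $(x'_{j})^{-1}$, $j\in I'_{\fv}$.

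The only point needing an actual (one-line) computation, and thus the nearest thing to an obstacle, is the requirement in Definition~\ref{def:nice-cluster-decomposition} that the terms of this combination have pairwise distinct degrees. By the degree formula in Theorem~\ref{thm:correction}(2), writing $m_{i}=m+\tB n_{i}$ with $n_{i}\in\Z^{I_{\ufv}}$, the $i$-th term has degree $m'+\tB'n_{i}$; since $\tB'$ is of full rank by Assumption~\ref{assumption:injectivity}, $\tB'n_{i}=\tB'n_{i'}$ forces $n_{i}=n_{i'}$, hence $\tB n_{i}=\tB n_{i'}$, i.e.\ $m_{i}=m_{i'}$, so $i=i'$ by distinctness of the $m_{i}$. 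Thus the degrees are distinct and $z'$ has a nice cluster decomposition in $\sd'$. I expect no real difficulty: Theorem~\ref{thm:correction} carries all the weight, and the similarity lemmas supply the rest.
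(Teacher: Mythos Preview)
Your proposal is correct and follows exactly the approach the paper indicates: the paper's own proof is just the one-line remark that the corollary is implied by Lemma~\ref{lem:similar-cluster-basis}, Lemma~\ref{lem:similar-basis}, and Theorem~\ref{thm:correction}, and you have filled in the details faithfully. Your observation that distinctness of the degrees on the $\sd'$ side follows from the full-rank Assumption~\ref{assumption:injectivity} on $\tB'$ is a clean way to handle the only nontrivial verification.
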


From now on, assume there is a $\Z$-algebra homomorphism $\var:\LP_{\kk''}\rightarrow\LP'_{\kk'}$
such that $\var(x_{k})=x'_{k}\cdot p'_{k}$, $\var(y_{k})=y_{k}'$,
$\var(q^{\frac{1}{2d_{\rho}'}})=q^{\frac{1}{2d_{\rho}}}$, for $k\in I_{\ufv}$,
$p'_{k}\in\frGroup'$, and $\var(\frGroup)\subset\frGroup'$, called
a variation map following \cite{kimura2022twist} (see \cite{fraser2016quasi}
for $\kk=\Z$).

Note that $\LP_{\kk''}$ is a free $\frRing_{\kk''}$-bimodule via
multiplication. Similarly, $\frRing'_{\kk'}$ is a $\frRing'_{\kk'}$-bimodule,
and thus an $\frRing_{\kk''}$-bimodule via $\var$. Introduce the
left $\frRing'_{\kk'}$-module homomorphism 
\begin{align}
\varphi & :\frRing'_{\kk'}\otimes_{\frRing_{\kk''}}\LP_{\kk''}\simeq\LP'_{\kk'},
\end{align}
such that $\varphi(p'\otimes x^{m}):=p'*\var(x^{m})$, $\forall m\in\Z^{I},p'\in\frGroup'$.
Then $\varphi$ is an isomorphism.

\begin{Def}\label{def:base-change}

$\varphi$ is called the base change from $\LP_{\kk''}$ to $\LP'_{\kk'}$.

\end{Def}

The multiplication on $\LP'_{\kk'}$ induces that on $\frRing'_{\kk'}\otimes_{\frRing_{\kk''}}\LP_{\kk''}$
such that, $\forall h\in\Z^{I}$, $r'\in\frGroup'$, $(p'\otimes x^{m})*(r'\otimes x^{h})=q^{\lambda'(\deg^{\sd'}(\var x^{m}),\deg^{\sd'}r')}(p'*r')\otimes(x^{m}*x^{h}).$
Then $\varphi$ is a $\Z$-algebra isomorphism. It is a $\kk$-algebra
isomorphism when $\rho=1$.

Note that $\cZ'=\frGroup'\cdot\var(\cZ)=\{[p'*\var(z)]^{\sd'}\ |\ \forall z\in\cZ,p'\in\frGroup'\}$,
also denoted as $\cZ'=[\varphi(\frGroup'\otimes_{\frGroup}\cZ)]^{\sd'}$.
We say that $\cZ'$ is obtained from $\cZ$ by the base change $\varphi$.

\begin{Thm}\label{thm:cluster-base-change}

$\varphi$ restricts to a $\Z$-algebra isomorphism $\frRing'_{\kk'}\otimes_{\frRing_{\kk''}}\clAlg_{\kk''}\simeq\clAlg'_{\kk'}$.

\end{Thm}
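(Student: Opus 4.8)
The map $\varphi$ is already established to be a $\Z$-algebra isomorphism $\frRing'_{\kk'}\otimes_{\frRing_{\kk''}}\LP_{\kk''}\simeq\LP'_{\kk'}$, so the plan is simply to show that it carries the subalgebra $\frRing'_{\kk'}\otimes_{\frRing_{\kk''}}\clAlg_{\kk''}$ exactly onto $\clAlg'_{\kk'}$. First I would note that, since $\var$ realises $\frRing'_{\kk'}$ as a free (in particular flat) $\frRing_{\kk''}$-module, tensoring the inclusion $\clAlg_{\kk''}=\clAlg\otimes\kk''\hookrightarrow\LP_{\kk''}$ over $\frRing_{\kk''}$ keeps it injective; hence $\frRing'_{\kk'}\otimes_{\frRing_{\kk''}}\clAlg_{\kk''}$ really is a $\Z$-subalgebra of $\frRing'_{\kk'}\otimes_{\frRing_{\kk''}}\LP_{\kk''}$, ``the restriction of $\varphi$'' makes sense, and it is automatically injective. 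So it suffices to prove the equality of images $\varphi\bigl(\frRing'_{\kk'}\otimes_{\frRing_{\kk''}}\clAlg_{\kk''}\bigr)=\clAlg'_{\kk'}$ inside $\LP'_{\kk'}$.

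The heart of the argument is the behaviour of $\var$ on cluster variables. Fix a mutation sequence $\seq$ and a vertex $i\in I$. By the remark preceding Lemma \ref{lem:similar-alg-element} (which rests on \cite{FominZelevinsky07}\cite{Tran09}\cite{gross2018canonical}), $x_i(\seq\sd)\in\LP$ and $x_i(\seq\sd')\in\LP'$ are similar pointed elements. Writing $x_i(\seq\sd)=x^{g}\cdot(1+\sum_n c_n y^n)$ in pointed form with $c_n\in\kk$, and using that $\var$ is an algebra homomorphism with $\var(x_k)=x'_k\cdot p'_k$, $p'_k\in\frGroup'$, for $k\in I_\ufv$, with $\var(x_j)\in\frGroup'$ for $j\in I_\fv$, with $\var(y_k)=y'_k$, and with $\var(q^{\Hf})=q^{\Hf\rho}$, one computes that $\var\bigl(x_i(\seq\sd)\bigr)$ is a frozen monomial times the unique element of $\LP'$ similar to $x_i(\seq\sd)$ whose degree has $I_\ufv$-part equal to $\pr_{I_\ufv}g$; this is exactly Theorem \ref{thm:correction}(1) in the case $r=1$. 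Since that similar element is $x_i(\seq\sd')$ itself, we obtain
\[
\var\bigl(x_i(\seq\sd)\bigr)=q^{\alpha}\,p''\cdot x_i(\seq\sd'),\qquad p''\in\frGroup',\ \alpha\in\Hf\Z .
\]

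With this in hand the two inclusions are formal. For $\clAlg'_{\kk'}\subseteq\varphi(\cdots)$: the algebra $\clAlg'_{\kk'}=\clAlg'\otimes\kk'$ is generated over $\Z$ by $\kk'$, by the cluster variables $x_i(\seq\sd')$, and by the inverses $(x'_j)^{-1}$, $j\in I'_\fv$; and $\kk'\subset\frRing'_{\kk'}=\varphi(\frRing'_{\kk'}\otimes 1)$, $(x'_j)^{-1}=\varphi((x'_j)^{-1}\otimes 1)$, while the displayed identity gives $x_i(\seq\sd')=\varphi\bigl(q^{-\alpha}(p'')^{-1}\otimes x_i(\seq\sd)\bigr)$, so the image, being a $\Z$-subalgebra of $\LP'_{\kk'}$ containing this generating set, contains $\clAlg'_{\kk'}$. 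For $\varphi(\cdots)\subseteq\clAlg'_{\kk'}$: the algebra $\frRing'_{\kk'}\otimes_{\frRing_{\kk''}}\clAlg_{\kk''}$ is generated over $\Z$ by $\frRing'_{\kk'}\otimes 1$ together with the elements $1\otimes x_i(\seq\sd)$ (the elements $1\otimes(x_j)^{\pm1}$ already lie in $\frRing'_{\kk'}\otimes 1$, since $\var$ sends $\frRing_{\kk''}$ into $\frRing'_{\kk'}$), and $\varphi$ sends these to $\frRing'_{\kk'}\subset\clAlg'_{\kk'}$ and to $\var(x_i(\seq\sd))=q^{\alpha}p''x_i(\seq\sd')\in\clAlg'_{\kk'}$. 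Hence $\varphi$ restricts to a $\Z$-algebra isomorphism onto $\clAlg'_{\kk'}$.

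The one genuinely nontrivial point is the cluster-variable computation of the second paragraph: one must check that the correction $p''$ relating $\var(x_i(\seq\sd))$ and $x_i(\seq\sd')$ is a frozen monomial rather than a general Laurent monomial. This is exactly where the correction technique (Theorem \ref{thm:correction}) does its work — it records that $\var$ preserves the $I_\ufv$-component of degrees while the two similar cluster variables share that component, so their discrepancy is concentrated in the frozen directions. Everything else in the proof reduces to routine bookkeeping with algebra generators and with the flatness of $\frRing'_{\kk'}$ over $\frRing_{\kk''}$.
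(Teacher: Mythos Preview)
Your proposal is correct and follows essentially the same route as the paper: both arguments reduce to the fact that $\var$ carries each cluster variable $x_i(\seq\sd)$ to a frozen monomial times the similar cluster variable $x_i(\seq\sd')$ (via the correction technique, Theorem~\ref{thm:correction}), and then check the two inclusions on algebra generators. The paper phrases this with products $p*z_1*\cdots*z_l$ of unfrozen cluster variables rather than single $x_i(\seq\sd)$, but the content is the same; your extra care about flatness of $\frRing'_{\kk'}$ over $\frRing_{\kk''}$ (to justify that the tensor product embeds) is a point the paper leaves implicit, and your exponent $\alpha\in\tfrac12\Z$ should in general be allowed in $\tfrac{1}{2d_\rho}\Z$, matching the paper's $\alpha\in\tfrac{\Z}{2d_\rho}$.
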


\begin{proof}

Let $p*z_{1}*z_{2}*\cdots*z_{l}$ denote any product of unfrozen cluster
variables of $\clAlg$ and $p\in\frGroup$. Take any $p'\in\frGroup'$.
Then $\var(p'\otimes p*z_{1}*\cdots*z_{l})=p'*\var(p)*\var(z_{1})*\cdots*\var(z_{l})$.
It is contained in $\clAlg'$ since $\var(z_{i})$ are localized cluster
monomials. Therefore, $\varphi(\frRing'_{\kk'}\otimes_{\frRing_{\kk''}}\clAlg_{\kk''})\subset\clAlg'_{\kk'}$. 

Conversely, take any $p'\in\frGroup'$ and any product $z_{1}'*\cdots*z_{l}'$
of unfrozen cluster variables in $\clAlg'$. Let $z_{i}$ denote the
cluster variables of $\clAlg$ similar to $z'_{i}$. Then Theorem
\ref{thm:correction} implies that $p'*z_{1}'*\cdots*z_{l}'=q^{\alpha}r'*\varphi(1\otimes z_{1}*\cdots*z_{l})=\varphi(q^{\alpha}r'\otimes z_{1}*\cdots*z_{l})$
for some $r'\in\frGroup'$, $\alpha\in\frac{\Z}{2d_{\rho}}$. Therefore,
$\clAlg'_{\kk'}\subset\varphi(\frRing'_{\kk'}\otimes_{\frRing_{\kk''}}\clAlg_{\kk''})$.

\end{proof}

\begin{Thm}\label{thm:up-cluster-base-change}

If $\cZ$ is a $\kk''$-basis of $\upClAlg_{\kk''}$ or $\cZ'$ is
a $\kk'$-basis of $\upClAlg'_{\kk'}$, then $\varphi$ restricts
to a $\Z$-algebra isomorphism $\frRing'_{\kk'}\otimes_{\frRing_{\kk''}}\upClAlg_{\kk''}\simeq\upClAlg'_{\kk'}$.

\end{Thm}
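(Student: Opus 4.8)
The plan is to reuse the proof of Theorem~\ref{thm:cluster-base-change}, with $\cZ$ and $\cZ'$ playing the role of the cluster monomials, and to extract the conclusion from the single identity $\cZ'=[\varphi(\frGroup'\otimes_{\frGroup}\cZ)]^{\sd'}$ recorded just before Theorem~\ref{thm:cluster-base-change}.

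First I would observe that the two hypotheses are equivalent and that each of them yields a full compatible set of bases. Since $\cZ\subset\LP\subset\LP_{\kk''}$ and $\upClAlg_{\kk''}=\upClAlg\otimes\kk''$ by Proposition~\ref{prop:change-base-up-alg}, the argument of Lemma~\ref{lem:coefficent-kk} (equivalently Lemma~\ref{lem:inverse-construction-similar}(2)) shows that $\cZ$ is a $\kk''$-basis of $\upClAlg_{\kk''}$ if and only if it is a $\kk$-basis of $\upClAlg$; Lemma~\ref{lem:similar-basis} then gives that $\cZ'$ is a $\kk'$-basis of $\upClAlg'_{\kk'}$, and conversely Lemma~\ref{lem:inverse-condition-basis} (with $\alg=\upClAlg$) recovers the hypothesis on $\cZ$ from that on $\cZ'$. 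So I may assume from now on both that $\cZ$ is a $\kk''$-basis of $\upClAlg_{\kk''}$ and that $\cZ'$ is a $\kk'$-basis of $\upClAlg'_{\kk'}$.

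Set $N:=\frRing'_{\kk'}\otimes_{\frRing_{\kk''}}\upClAlg_{\kk''}$. As $\upClAlg_{\kk''}$ is $\kk''$-spanned by $\cZ$ and $\frRing'_{\kk'}$ is $\kk'$-spanned by $\frGroup'$, the module $N$ is $\kk'$-spanned by the tensors $p'\otimes z$ with $p'\in\frGroup'$, $z\in\cZ$, and $\varphi(p'\otimes z)=p'*\var(z)=q^{\alpha}\,[p'*\var(z)]^{\sd'}$ with $[p'*\var(z)]^{\sd'}\in\cZ'$ and $q^{\alpha}$ a unit of $\kk'$; hence $\varphi(N)=\Span_{\kk'}\cZ'=\upClAlg'_{\kk'}$. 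It remains to prove injectivity of $\varphi|_{N}$. The orbit decomposition of the $\frGroup$-action on $\cZ$ reads $\cZ=\bigsqcup_{b\in B}\frGroup\cdot b$ for a set $B$ of representatives, and since a nonzero frozen monomial shifts the degree of a pointed element the action is free, so $\upClAlg_{\kk''}=\bigoplus_{b\in B}\frRing_{\kk''}\,b$ is $\frRing_{\kk''}$-free on $B$ and $N$ is $\frRing'_{\kk'}$-free on $\{1\otimes b\}_{b\in B}$. Thus it suffices to check that $\{\var(b)\}_{b\in B}$ is $\frRing'_{\kk'}$-linearly independent in $\upClAlg'_{\kk'}$, for which the key point is that distinct $b\in B$ land in distinct $\frGroup'$-orbits of $\cZ'$: if $[p'_1*\var(b_1)]^{\sd'}=[p'_2*\var(b_2)]^{\sd'}$, then $\var(b_1)$ and $\var(b_2)$ differ by a frozen monomial up to a scalar, and since $\var$ sends $x_k$ to $x'_k$ times a frozen monomial for $k\in I_{\ufv}$ and $x_j$ into $\frGroup'$ for $j\in I_{\fv}$, comparing $I_{\ufv}$-components of degrees forces $\deg b_1-\deg b_2\in\Z^{I_{\fv}}$; applying the graded bijection $\var$ (as in Definition~\ref{def:similar-elements}) to $x^{\deg b_2-\deg b_1}b_1$ and to $b_2$, which now have the same degree, gives $x^{\deg b_2-\deg b_1}b_1=b_2$, i.e.\ $b_1$ and $b_2$ lie in the same $\frGroup$-orbit, so $b_1=b_2$ and $p'_1=p'_2$. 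Because $\cZ'$ is $\kk'$-linearly independent, the elements $p'*\var(b)$ for varying $(p',b)$ are $\kk'$-linearly independent, which is exactly the $\frRing'_{\kk'}$-linear independence of $\{\var(b)\}_{b\in B}$. Therefore $\varphi$ restricts to a bijective $\frRing'_{\kk'}$-linear map $N\simeq\upClAlg'_{\kk'}$, and it is a $\Z$-algebra isomorphism (a $\kk$-algebra isomorphism when $\rho=1$) because $\upClAlg_{\kk''}$, $\upClAlg'_{\kk'}$ are subrings of $\LP_{\kk''}$, $\LP'_{\kk'}$ and $\varphi$ is one on the ambient Laurent rings.

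The main obstacle is this last step: one must ensure that the base change along $\var\colon\frRing_{\kk''}\to\frRing'_{\kk'}$ does not collapse $\upClAlg_{\kk''}$, i.e.\ establish the $\frGroup'$-orbit-disjointness above, which rests on $\var$ being a graded bijection on the pieces $\hV_m$ preserving $I_{\ufv}$-degrees. Everything else is a routine transcription of the proof of Theorem~\ref{thm:cluster-base-change}.
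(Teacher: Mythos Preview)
Your proof is correct and follows the same approach as the paper. The paper's own proof is extremely terse: it first invokes Proposition~\ref{prop:change-base-up-alg} and Lemmas~\ref{lem:coefficent-kk}, \ref{lem:similar-basis}, \ref{lem:inverse-condition-basis} to conclude that both $\cZ$ and $\cZ'$ are bases (exactly as you do in your first paragraph), and then simply says ``the claim follows from the equality $\cZ'=\frGroup'\cdot\var(\cZ)$.'' Your second paragraph is a careful unpacking of that last sentence---the orbit decomposition, the freeness of $\upClAlg_{\kk''}$ over $\frRing_{\kk''}$, and the verification that distinct $\frGroup$-orbits map to distinct $\frGroup'$-orbits---which the paper leaves entirely implicit.
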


\begin{proof}

Note that $\cZ$ is a $\kk$-basis of $\upClAlg$ and $\cZ'$ is a
$\kk'$-basis of $\upClAlg'_{\kk'}$ by Proposition \ref{prop:change-base-up-alg}
and Lemmas \ref{lem:coefficent-kk}, \ref{lem:similar-basis} and
\ref{lem:inverse-condition-basis}. The claim follows from the equality
$\cZ'=\frGroup'\cdot\var(\cZ)$.

\end{proof}

If $\kk=\Z$, we obtain a base change of schemes $\Spec\upClAlg'=\Spec\frRing'\times_{\Spec\frRing}\Spec\upClAlg$.

\begin{Rem}

Let $\alg$ denote $\clAlg$ or $\upClAlg$. We claim that the construction
of $\varphi:\frRing'_{\kk'}\otimes_{\frRing_{\kk''}}\alg_{\kk''}\simeq\alg'_{\kk'}$
is independent of the choice of $\sd$. To see this, take any $k\in I_{\ufv}$,
denote $\ssd=\mu_{k}\sd$, $\ssd'=\mu_{k}\sd'$. Then $\var$ induces
a new variation map $\var_{\ssd}:\LP(\ssd)_{\kk''}\rightarrow\LP(\ssd')_{\kk'}$
such that $\var_{\ssd}|_{\alg(\ssd)_{\kk''}}$ coincides with the
composition $\alg(\ssd)_{\kk''}\xrightarrow{\mu_{k}^{*}}\alg(\sd)_{\kk''}\xrightarrow{\var}\alg(\sd')_{\kk'}\xrightarrow{(\mu_{k}^{*})^{-1}}\alg(\ssd')_{\kk'}$.
For more details, see \cite[Section 3.3 and Proposition 3.1.4]{kimura2022twist},
for which we replace $\upClAlg$ by $\alg$ and drop the condition
$\sd'\in\Delta_{\sd}^{+}$ without loss of generality. We denote $\mu_{k}^{*}:\frRing'_{\kk'}\otimes_{\frRing_{\kk''}}\alg(\ssd)_{\kk''}\xrightarrow{\sim}\frRing'_{\kk'}\otimes_{\frRing_{\kk''}}\alg(\sd)_{\kk''}$
such that $\mu_{k}^{*}(p'\otimes z):=\mu_{k}^{*}(p')\otimes\mu_{k}^{*}(z)=p'\otimes\mu_{k}^{*}(z)$. 

Introduce $\varphi_{\ssd}:\frRing'_{\kk'}\otimes_{\frRing_{\kk''}}\alg(\ssd)_{\kk''}\simeq\alg(\ssd')_{\kk'}$
associated with $\var_{\ssd}$. Then we have $\varphi_{\ssd}=(\mu_{k}^{*})^{-1}\varphi\mu_{k}^{*}$.
Therefore, $\varphi_{\ssd}$ is identified with $\varphi$ via the
mutation maps $\mu_{k}^{*}$.

\end{Rem}

Let $\alg$ denote $\clAlg$ or $\upClAlg$. Let $\sd^{\prin}$ denote
the seed of principal coefficients. For some choice of compatible
Poisson structure $\lambda_{1}$ for $\sd^{\prin}$ with $\diag_{k}(\sd^{\prin})=\diag_{k}(\sd)$,
the variation map $\var_{1}:\LP(\sd^{\prin})\rightarrow\LP(\sd)$
exists, See Section \ref{subsec:From-principal-coefficients}. Denote
the associated quantum cluster algebra $\alg(\sd^{\prin})$ by $\alg(\sd^{\prin},\lambda_{1})$.
Similarly, the variation map $\var_{2}:\LP(\sd^{\prin})\rightarrow\LP(\sd')$
exists for some choice of $\lambda_{2}$ for $\sd^{\prin}$ with $\diag_{k}(\sd^{\prin})=\diag_{k}(\sd')$.

If $\alg=\clAlg$, $\alg$ and $\alg'$ are related by base changes
$\varphi_{1},\varphi_{2}$ and a quantization change, where the squares
associated with base changes are commutative:
\begin{align}
\begin{array}{ccccccc}
\alg & \xleftarrow{\varphi_{1}} & \alg(\sd^{\prin},\lambda_{1}) & \stackrel[\text{change}]{\text{quantization}}{\sim} & \alg(\sd^{\prin},\lambda_{2}) & \xrightarrow{\varphi_{2}} & \alg'\\
\uparrow & \text{} & \uparrow &  & \uparrow &  & \uparrow\\
\frRing & \stackrel{\varphi_{1}}{\longleftarrow} & \frRing(\sd^{\prin},\lambda_{1}) & \stackrel[\text{change}]{\text{quantization}}{\sim} & \frRing(\sd^{\prin},\lambda_{2}) & \xrightarrow{\varphi_{2}} & \frRing'
\end{array},\label{eq:base-change}
\end{align}

Note that the quantum seeds $(\sd^{\prin},\lambda_{1})$ and $(\sd^{\prin},\lambda_{2})$
are similar with $\rho\in\Q$. If we want to replace the middle square
by a base change, we need to extend the algebras over $\kk''$ and
$\kk'$ respectively. More precisely, we have the $\Z$-algebra isomorphism
$\var:\LP(\sd^{\prin},\lambda_{1})_{\kk''}\simeq\LP(\sd^{\prin},\lambda_{2})_{\kk'}$,
sending $q^{\alpha}x^{m}$ to $q^{\alpha\rho}x^{m}$ such that $\var\frRing(\sd^{\prin},\lambda_{1})_{\kk''}=\frRing(\sd^{\prin},\lambda_{2})_{\kk'}$
. It induces the base change $\varphi:\frRing'(\sd^{\prin},\lambda_{2})_{\kk'}\otimes_{\frRing(\sd^{\prin},\lambda_{1})_{\kk''}}\alg(\sd^{\prin},\lambda_{1})_{\kk''}\simeq\alg(\sd^{\prin},\lambda_{2})_{\kk'}$
between $\Z$-algebras. 

When $\alg=\upClAlg$, we still have (\ref{eq:base-change}) under
the following assumptions:
\begin{enumerate}
\item $\alg$ has a $\kk$-basis $\base$, or $\alg(\sd^{\prin},\lambda_{1})$
has a $\kk$-basis $\base_{1}$, such that it consists of pointed
elements and is closed under the $\frGroup$ commutative multiplication.
\item The similar assumption for $\alg'$ and $\alg(\sd^{\prin},\lambda_{2})$.
\end{enumerate}
Under the above assumptions, $\alg$ and $\alg(\sd^{\prin},\lambda_{1})$
have bases $\base$, $\base_{1}$ respectively, which are related
by a base change: $\base=\frGroup\cdot\var_{1}(\base_{1})=[\varphi_{1}(\frGroup\otimes_{\frGroup(\sd^{\prin},\lambda_{1})}\base_{1})]$.
Similar, $\alg'$ and $\alg(\sd^{\prin},\lambda_{2})$ have bases
$\base'$, $\base_{2}$ related by a base change. Sometimes, we could
choose $\base_{1}$ and $\base_{2}$ related by a quantization change:
$\base_{1}|_{q^{\Hf}\mapsto q^{\frac{1}{2d_{\rho}}}}=\base_{2}$,
see Section \ref{subsec:Triangular-bases-base-change}.

\subsection{Localization\label{subsec:Localization}}

Let $\alg$ denote either $\bClAlg$ or $\bUpClAlg$. Assume it has
a $\kk$-basis $\cZ$ whose elements are pointed in $\LP$ and closed
under the $\frMonoid$ commutative multiplication: we have $p\cdot z\in\cZ$,
$\forall p\in\frMonoid$, $z\in\cZ$. Let $\widetilde{\alg}$ denote
the localization $\alg[x_{j}^{-1}]_{j\in I_{\fv}}$. 

Similar to the previous discussion, we have an isomorphism $\varphi:\frRing\otimes_{\bFrRing}\bLP\xrightarrow{\sim}\LP$,
still called a base change. Note that $\widetilde{\cZ}:=\frGroup\cdot\cZ=\{p\cdot z|p\in\frGroup,\ z\in\cZ\}$
is a basis of $\widetilde{\alg}$, denoted $\widetilde{\cZ}=[\varphi(\frGroup\otimes_{\frMonoid}\cZ)]$.
We deduce that the base change $\varphi$ restricts to an isomorphism
$\frRing\otimes_{\bFrRing}\alg\simeq\widetilde{\alg}$.

\subsection{Triangular bases\label{subsec:Triangular-bases-base-change}}

As before, given $\cZ\subset\LP$ whose elements are pointed. Let
$\cZ'$ be the subset of $\LP'{}_{\kk'}$ consisting of the elements
similar to those of $\cZ$. 

\begin{Lem}\label{lem:similar_triangular_basis}

Let $\alg$ denote $\clAlg$ or $\upClAlg$. If $\cZ\subset\LP(\sd)$
is the triangular basis of $\alg$ with respect to the seed $\ssd:=\seq\sd$
for some $\seq$, then $\alg'$ has the triangular basis with respect
to $\ssd':=\seq\sd'$, and it equals $\cZ'$.

\end{Lem}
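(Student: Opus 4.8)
The plan is to reduce to the case $\seq$ trivial and then transfer, one by one, the defining properties of a triangular basis from $\sd$ to $\sd'$ through the similar-elements construction. Since the similar-elements relation is preserved under simultaneous mutation (normalized products of similar elements are similar, and $x_i(\seq\sd)$, $x_i(\seq\sd')$ are similar; cf.\ the remark following Theorem~\ref{thm:up-cluster-base-change} and \cite{kimura2022twist}), we may relabel and assume $\seq$ is trivial, so $\ssd=\sd$ and $\ssd'=\sd'$. Because $\cZ$ is a triangular basis, $\sd$ is injective-reachable, say $\sd[1]=\Sigma\sd$ for a green-to-red sequence $\Sigma$ and permutation $\sigma$; since the unfrozen components of $g$-vectors depend only on $B=\tB_{I_\ufv\times I_\ufv}=\tB'_{I_\ufv\times I_\ufv}$, the same $\Sigma,\sigma$ show $\sd'$ is injective-reachable with $\sd'[1]=\Sigma\sd'$.

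Write $\cZ=\{u_m\mid m\in\cone(\sd)\}$ with $u_m$ being $m$-pointed. I would first record the structure of $\cZ'$. By Definition~\ref{def:similar-elements}, for each $m'\in\cone(\sd')$ there is a unique $m'$-pointed element $u'_{m'}$ of $\cZ'$; since $\cZ$ is $\cone(\sd)$-pointed, bar-invariant, and closed under $\frGroup$-multiplication (because $[x_j^{\pm}*u_m]^{\sd}=u_{m\pm f_j}$ for $j\in I_\fv$), the set $\cZ'=\{u'_{m'}\}$ is $\cone(\sd')$-pointed, closed under $\frGroup'$-multiplication, and bar-invariant — the last because the Laurent coefficients of $u'_{m'}$ are those of $u_m$ under $q^{\Hf}\mapsto q^{\Hf\rho}$, which commutes with the bar-involution. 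That $\cZ'$ is a $\kk'$-basis of $\alg'_{\kk'}$ is Lemma~\ref{lem:similar-cluster-basis} (for $\clAlg$) or Lemma~\ref{lem:similar-basis} (for $\upClAlg$). That it contains the localized cluster monomials of $\sd'$ and $\sd'[1]$ follows from Theorem~\ref{thm:correction}: a similar element of a product of localized cluster variables of $\sd$, resp.\ $\Sigma\sd$, is again such a product for $\sd'$, resp.\ $\Sigma\sd'$, up to a frozen monomial; $x_i(\sd')$, $x_i(\Sigma\sd')$ are similar to $x_i(\sd)$, $x_i(\Sigma\sd)$; and $\cZ'$ is closed under frozen multiplication.

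The heart is the multiplication triangularity at $\sd'$. For $j\in I'_\fv$, $[x_j(\sd')^{\pm}*u'_{m'}]^{\sd'}=u'_{m'\pm f_j}$ is immediate from closure under $\frGroup'$-multiplication. Fix $i\in I_\ufv$ and $h\in\cone(\sd')$; pick $g\in\cone(\sd)$ with $\pr_{I_\ufv}g=\pr_{I_\ufv}h$, so $u'_h$ is similar to $u_g$ (their unfrozen Laurent profiles agree, as $\cZ$ is closed under $\frGroup$-multiplication) and $x_i(\sd')$ is similar to $x_i(\sd)$. The key device is to compute the similar element $z'$ of the single element $z:=[x_i(\sd)*u_g]^{\sd}$, of prescribed $\sd'$-degree $h+f_i$, in two ways. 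Expanding $z$ by the triangularity of $\cZ$ at $\sd$ gives $z=u_{g+f_i}+\sum_{g'}b_{g'}u_{g'}$ with $g'=(g+f_i)+\tB n_{g'}$, $n_{g'}\in\yCone^{\oplus}(\sd)\setminus\{0\}$ and $b_{g'}\in\mm$. On the one hand, Theorem~\ref{thm:correction}(1) applied to the product presentation yields $z'=[x_i(\sd')*u'_h]^{\sd'}$, the frozen correction factor having degree $(h+f_i)-f_i-h=0$. On the other hand, Theorem~\ref{thm:correction}(2) applied to the linear combination, choosing the $\sd'$-degree of the similar element of each $u_{g'}$ to be $h+f_i+\tB'n_{g'}$, yields $z'=u'_{h+f_i}+\sum_{g'}b'_{g'}u'_{h+f_i+\tB'n_{g'}}$ with $b'_{g'}=b_{g'}|_{q^{\Hf}\mapsto q^{\Hf\rho}}$ and all correction factors again of degree $0$. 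Since $I_\ufv=I'_\ufv$ and $n_{g'}\in\yCone^{\oplus}(\sd')\setminus\{0\}$, we have $h+f_i+\tB'n_{g'}\prec_{\sd'}h+f_i$; equating the two expressions for $z'$ and combining equal-degree terms gives the $(\prec_{\sd'},\mm)$-unitriangular expansion of $[x_i(\sd')*u'_h]^{\sd'}$, completing the verification.

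I expect the main obstacle to be the bookkeeping in this last step: one must choose the $\sd'$-degrees of all similar elements coherently so that every frozen correction factor produced by Theorem~\ref{thm:correction} becomes trivial, and one must check that $q^{\Hf}\mapsto q^{\Hf\rho}$ preserves membership in the relevant maximal ideal — which it does, being the identity on $\mm=q^{-\Hf}\Z[q^{-\Hf}]$ when $\rho=1$ and carrying $\mm$ into $q^{-\frac{1}{2d_\rho}}\Z[q^{-\frac{1}{2d_\rho}}]$ in general. One caveat worth stating: for $\rho\neq1$ the conclusion should be read with $\alg'$, $\kk$, $\mm$ replaced by $\alg'_{\kk'}$, $\kk'$ and this analog; in the applications $\rho=1$, so $\alg'_{\kk'}=\alg'$ and everything takes place over $\kk$.
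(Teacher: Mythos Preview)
Your approach is essentially the same as the paper's: verify that $\cZ'$ inherits each defining property of a triangular basis from $\cZ$ via the correction technique. Your verification of bar-invariance, closure under $\frGroup'$, containment of cluster monomials, and the triangularity expansion via Theorem~\ref{thm:correction} are all correct and match the paper. There are, however, two points where the paper does more than you do.

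First, your opening reduction ``assume $\seq$ is trivial'' is exactly the main technical step, and you do not justify it. What must be checked is that if $z_m\in\LP(\sd)$ and $z'_{m'}\in\LP(\sd')_{\kk'}$ are similar, then their Laurent expansions in $\LP(\ssd)$ and $\LP(\ssd')_{\kk'}$ are again similar. The remark after Theorem~\ref{thm:up-cluster-base-change} tells you the variation map is mutation-compatible on $\alg$, but similarity is a statement about Laurent coefficients in a specific seed, and that remark does not directly give it. The paper argues this explicitly: write $z_m=q^{\alpha}x^{m}*F_m$ with $F_m\in\kk[\yCone^{\oplus}]$; then $\seq\sd$ and $\seq\sd'$ are similar by \cite[Lemma~4.2.2]{qin2017triangular}, the monomials $x^{m}$ and $(x')^{m'}$ are similar as formal series in $\hLP(\ssd),\hLP(\ssd')_{\kk'}$, and $F_m,F'_{m'}$ are similar; hence so are $z_m,z'_{m'}$ by Theorem~\ref{thm:correction}(1). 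Once this is in hand, the rest of your argument goes through.

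Second, your caveat at the end concedes that for $\rho\neq1$ you only conclude $\cZ'$ is the triangular basis of $\alg'_{\kk'}$ over $\kk'$. The paper proves the stronger statement (over $\kk$) as written: each $z'\in\cZ'$ has a $(\prec_{\ssd'},q^{-\rho/2}\Z[q^{-\rho/2}])$-unitriangular expansion in the normalized products $[x(\ssd')^{g_+}*x(\ssd'[1])^{g_-}]^{\ssd'}$, which lie in $\LP(\ssd')$ over $\kk$; combining this with bar-invariance forces $z'\in\hLP(\ssd')$ over $\kk$, and then Lemma~\ref{lem:coefficent-kk} gives that $\cZ'$ is a $\kk$-basis of $\alg'$.
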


\begin{proof}

By Lemma \ref{lem:similar-basis}, $\cZ'$ is a $\kk'$-basis of $\alg'_{\kk'}$.
$\cZ'$ is bar-invariant and contains the cluster monomials in $\ssd'=\seq\sd'$,
$\ssd'[1]=(\seq\sd')[1]$. Let us check the triangularity condition
(last condition).

For any $m$-pointed $z_{m}\in\cZ\subset\LP(\sd)$ and similar $m'$-pointed
$z'_{m'}\in\cZ'\subset\LP(\sd')_{\kk'}$, denote $z_{m}=q^{\alpha}x^{m}*F_{m}$,
$z'_{m'}=q^{\alpha'}(x')^{m'}*F_{m'}'$, where $q^{\alpha},q^{\alpha'}$
are determined by the bar-invariance, $\pr_{I_{\ufv}}m=\pr_{I_{\ufv}}m'$,
$F_{m}\in\kk[\yCone^{\oplus}]$, and $\var(F_{m})=F_{m'}'$ under
the map $\var:\kk[\yCone^{\oplus}]\rightarrow\kk'[(\yCone')^{\oplus}]$
sending $y_{k}$ to $y'_{k}$, $q^{\frac{1}{2}}$ to $q^{\frac{\rho}{2}}$.
By \cite[Lemma 4.2.2]{qin2017triangular}, $\seq\sd$ and $\seq\sd'$
are similar. Moreover, as elements in $\hLP(\ssd)$ and $\hLP(\ssd')_{\kk'}$
respectively, $x^{m}$ and $(x')^{m'}$ are similar, and $F_{m}$
and $F_{m'}'$ are similar. Then Theorem \ref{thm:correction} implies
that $z_{m}$ and $z'_{m'}$ are similar as elements in $\LP(\ssd)$
and $\LP(\ssd')_{\kk'}$ respectively. 

Since $[x_{i}(\ssd)*z_{m}]^{\ssd}$ has a $(\prec_{\ssd},q^{-\Hf}\Z[q^{-\Hf}])$-decomposition
into $\cZ$ in $\LP(\ssd)$, $[x_{i}(\ssd')*z'_{m'}]^{\ssd'}$ has
a $(\prec_{\ssd'},q^{-\frac{\rho}{2}}\Z[q^{-\frac{\rho}{2}}])$-decomposition
into $\cZ'$ in $\LP(\ssd')_{\kk'}$ by Theorem \ref{thm:correction}. 

Therefore, $\cZ'$ is the triangular basis for $\alg'_{\kk'}$ with
respect to $\ssd'$ over $\kk'$. We claim that $\cZ'\subset\LP(\ssd')$.
If so, $\cZ'$ is a $\kk$-basis of $\alg'$ by Lemma \ref{lem:coefficent-kk}.

Note that any $z'\in\cZ'$ has a $(\prec_{\ssd},q^{-\frac{\rho}{2}}\Z[q^{-\frac{\rho}{2}}])$-decomposition
into $[x(\ssd')^{g_{+}}*x(\ssd'[1])^{g_{-}}]^{\ssd'}$, for $g_{+}\in\N^{I}$,
$g_{-}\in\N^{I_{\ufv}}$, see \cite[Lemmas 6.2.1(iii) and 3.1.11]{qin2017triangular}.
Since $[x(\ssd')^{g_{+}}*x(\ssd'[1])^{g_{-}}]^{\ssd'}$ is contained
in $\LP(\ssd')$ and $\cZ'$ is bar-invariant, we deduce that $z'\in\hLP(\ssd')$.
The desired claim follows.

\end{proof}

Lemma \ref{lem:similar_triangular_basis} implies the following result.

\begin{Prop}\label{prop:similar-common-tri-basis}

If $\cZ$ is the common triangular basis for $\upClAlg$, then the
common triangular basis for $\upClAlg'$ exists and it equals $\cZ'$.

\end{Prop}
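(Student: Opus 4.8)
The plan is to deduce this from Lemma \ref{lem:similar_triangular_basis} combined with the characterization of common triangular bases recalled above from \cite[Proposition 6.4.3]{qin2020dual}: a triangular basis with respect to one seed is the common triangular basis precisely when it contains the cluster monomials of every seed of $\Delta^{+}$. So the whole statement reduces to applying Lemma \ref{lem:similar_triangular_basis} once for each mutation sequence.

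First I would record that the hypothesis ``$\cZ$ is the common triangular basis of $\upClAlg$'' means $\cZ$ is the triangular basis with respect to \emph{every} seed $\seq\sd \in \Delta_{\sd}^{+}$; in particular $\sd$ and all of its mutations are injective-reachable. Applying Lemma \ref{lem:similar_triangular_basis} with the trivial mutation sequence then gives that $\upClAlg'$ possesses the triangular basis with respect to $\sd'$ and that it equals $\cZ'$. The proof of that lemma moreover shows $\cZ' \subset \LP(\sd')$, so $\cZ'$ is genuinely a $\kk$-basis (not merely a $\kk'$-basis) of $\upClAlg'$, and it is bar-invariant, pointed, and triangular at $\sd'$.

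Next I would verify the remaining hypothesis of \cite[Proposition 6.4.3]{qin2020dual}: that $\cZ'$ contains all cluster monomials of every seed in $\Delta_{\sd'}^{+}$. Any such seed has the form $\ssd' = \seq\sd'$ for some mutation sequence $\seq$ on $I_{\ufv}$, and by the observation in Subsection \ref{subsec:Similarity-and-correction} the seeds $\ssd := \seq\sd$ and $\ssd'$ are again similar. Since $\cZ$ is the triangular basis of $\upClAlg$ with respect to $\ssd$, a second application of Lemma \ref{lem:similar_triangular_basis} (to this $\seq$) shows that $\cZ'$ is the triangular basis of $\upClAlg'$ with respect to $\ssd'$, and in particular contains the cluster monomials of $\ssd'$. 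As $\ssd'$ ranges over all of $\Delta_{\sd'}^{+}$, this is exactly the condition needed, so $\cZ'$ is the common triangular basis of $\upClAlg'$, and equality with $\cZ'$ is built into the conclusion.

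I do not expect a serious obstacle: the argument is a purely formal combination of Lemma \ref{lem:similar_triangular_basis}, invoked once per mutation sequence, with the criterion of \cite[Proposition 6.4.3]{qin2020dual}. The only point deserving attention is that ``triangular basis with respect to $\seq\sd'$'' only makes sense once $\seq\sd'$ is known to be injective-reachable; this is already subsumed in Lemma \ref{lem:similar_triangular_basis} (whose conclusion places the cluster monomials of $(\seq\sd')[1]$ inside $\cZ'$), and conceptually it holds because injective-reachability is detected by $g$-vectors, which depend only on the unfrozen $B$-matrix that similarity preserves.
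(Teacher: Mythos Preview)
Your proposal is correct and follows the same approach as the paper: the proposition is an immediate consequence of Lemma~\ref{lem:similar_triangular_basis} applied to each mutation sequence $\seq$. In fact your argument is slightly more roundabout than necessary---once you have applied the lemma for every $\seq$, you have already shown that $\cZ'$ is the triangular basis with respect to every seed $\seq\sd'\in\Delta_{\sd'}^{+}$, which is precisely the \emph{definition} of the common triangular basis; the appeal to \cite[Proposition 6.4.3]{qin2020dual} is then superfluous.
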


\section{Based cluster algebras and monoidal categorifications\label{sec:monoidal_categorification}}

\begin{Def}\label{def:based-cluster-algebra}

Let $\alg$ denote $\bClAlg$, $\clAlg$, $\bUpClAlg$, or $\upClAlg$,
and $\base$ its $\kk$-basis satisfying:

\begin{enumerate}

\item $\base$ contains all the cluster monomials.

\item $\forall b\in\base,j\in I_{\fv}$, $x_{j}\cdot b\in\base$.

\item At the quantum level, the elements of $\base$ are invariant
under the bar involution $q^{\Hf}\mapsto q^{-\Hf}$.

\item (Homogeneity) Each $b\in\base$ is contained in some $x^{m}\cdot\kk[N_{\ufv}]$.

\end{enumerate}

The pair $(\alg,\base)$ is called a based cluster algebra. 

\end{Def} 

We call $(\alg,\base)$ $\Theta$-pointed if $\base$ is, where $\Theta$
is a subset of $\cone(\sd)$ or $\tropSet$. 

Localization of $(\alg,\base)$ produces a based cluster algebra $(\alg[x_{j}^{-1}]_{j\in I_{\fv}},\base')$
where $\base'=\{p\cdot b|p\in\frGroup,b\in\base\}$.

\subsection{Monoidal categories\label{subsec:Monoidal-categories}}

Let $\F$ be a field. Let $\cT$ denote a $\F$-linear tensor category
in the sense of \cite[Section A.1]{kang2018symmetric}. In particular,
$\cT$ is endowed with an exact bifunctor $(\ )\otimes(\ )$ called
the tensor functor, its objects have finite lengths, and its Grothendieck
ring $K_{0}(\cT)$ is unital and associative. Let $[X]$ denote the
isoclass of $X\in\cT$. We have $[X\otimes Y]=\sum_{S\in\simpObj}c_{XY}^{S}[S]$
for $X,Y\in\cT$ and $c_{XY}^{S}\in\N$. Recall that the multiplication
of $K_{0}(\cT)$ is induced from the tensor product $\otimes$. Its
unit is $[\unitObj]$ for some unit object $\unitObj$. Let $\simpObj$
denote the set of simple objects of $\cT$. Simple objects are often
referred to as simples.

When $\kk=\Z$, we assume $K_{0}(\cT)$ is commutative. Denote $K:=K_{0}(\cT)$.

When $\kk=\Z[q^{\pm\Hf}]$, we make one of the following assumptions.

\begin{enumerate}

\item We assume that $\cT$ has an object $Q$ satisfying \cite[(6.2)(ii)(iii)]{Kang2018}.
In particular, $\bq:=Q\otimes(\ )$ is an auto-equivalence functor,
such that $[\bq(X\otimes Y)]=[(\bq X)\otimes Y]=[X\otimes(\bq Y)]$.
We denote $X\sim Y$ if $X\simeq\bq^{d}(Y)$ for some $d\in\Z$. Then
$K_{0}(\cT)$ is a $\Z[q^{\pm}]$-algebra such that $q^{\pm}[X]:=[\bq^{\pm}X]$,
where $q$ is a formal parameter. Denote $K:=K_{0}(\cT)\otimes\kk$.

\item Or we assume that the $\kk$-space $K:=K_{0}(\cT)\otimes_{\Z}\kk$
has a $q$-twisted product $*$ such that, for $X,Y\in\cT$, its multiplication
satisfies $[X]*[Y]=\sum_{S\in\simpObj}c(q^{\Hf})_{XY}^{S}[S]$, where
$c(q^{\Hf})_{XY}^{S}\in\N[q^{\pm\Hf}]$ and $c(1)_{XY}^{S}=c_{XY}^{S}$.

\end{enumerate}

The $\kk$-algebra $K$ will be called the (deformed) Grothendieck
ring of $\cT$.

\begin{Rem}\label{rem:simple-to-commute}

Assume that there is an order $2$ anti-automorphism $\overline{(\ )}$
on $K$ such that $\overline{[S]}\sim[S]$ for any simple $S$. Take
another simple $V$ such that $V\otimes S$ is simple. We claim that
$[S]$ and $[V]$ $q$-commute, $S\otimes V$ is simple, and $S\otimes V\sim V\otimes S$.

To see this, denote $[V\otimes S]=[V]*[S]=[W]$ for some simple $W$.
Applying $\overline{(\ )}$, we obtain $[S\otimes V]=[S]*[V]\sim[W]$.
The claim follows.

\end{Rem}

Recall that $\cT$ has a natural isomoprhism $a_{X,Y,Z}:(X\otimes Y)\otimes Z\simeq X\otimes(Y\otimes Z)$,
called the associativity isomorphism. Following \cite[Definition 2.1.5]{etingof2015tensor},
we define the opposite monoidal category $\cT\op$ to be the abelian
category $\cT$ edowed with the new tensor product $X\otimes\op Y:=Y\otimes X$
and the new associativity isomorphism $a_{X,Y,Z}\op:=a_{Z,Y,X}^{-1}$.
Then $K(\cT\op)$ is the opposite algebra of $K(\cT)$. 

\subsection{Categorifications of based cluster algebras\label{subsec:Categorifications-of-based}}

We will use the following notion of monoidal categorification, which
is weaker than the one in \cite{HernandezLeclerc09}.

\begin{Def}\label{def:categorification}

We say $\cT$ admits a cluster structure $\alg$ if the following
holds.

\begin{enumerate}

\item There exists a $\kk$-algebra embedding $\kappa:\alg\hookrightarrow K$.

\item For any cluster monomial $z$, there exists a simple $S$ such
that $\kappa(z)\sim[S]$.

\end{enumerate}

We further say $\cT$ categorifies $\alg$ if $\kappa$ is isomorphic.
We say $\cT$ categorifies $\alg$ after localization if $\kappa$
extends to $\kappa:\alg[x_{j}^{-1}]_{j\in I_{\fv}}\simeq K[\kappa(x_{j})^{-1}]_{j\in I_{\fv}}$.
We can omit the symbol $\kappa$ in this case.

\end{Def}

\begin{Def}\label{def:categorification-based}

We say $\cT$ categorifies $(\alg,\base)$ if the following holds.

\begin{enumerate}

\item $\cT$ categorifies $\alg$.

\item For any simple object $S$, $q^{\alpha}[S]\in\kappa\base$
for some $\alpha$.

\end{enumerate}

Categorification after localization is defined in a similar manner.
In addition, the notion of categorification for cluster algebras defined
over the any extension of $\kk$ can also be similarly established.

\end{Def}

\begin{Def}\label{def:quasi-categorification}

We say $\cT$ quasi-categorifies a based cluster algebra $(\alg',\base')$
if there exists $(\alg,\base)$, which is related to $(\alg',\base')$
by a finite sequence of base changes and quantization changes (see
(\ref{eq:base-change}), section \ref{subsec:Localization}), such
that $\cT$ categorifies $(\alg,\base)$ after localization.

\end{Def}

Note that, in Definition \ref{def:quasi-categorification}, when the
elements of $\base$ are pointed elements in some $\LP(\sd)$ up to
$q^{\frac{\Z}{2}}$-multiples, the structure constants of $\base'$
are determined from those for $\base$, see Theorem \ref{thm:correction}.

Now assume that $\cT$ categories $(\alg,\base)$. For any cluster
variables $x_{i}(\sd)$, choose a simple object $S_{i}(\sd)$ such
that $q^{\alpha}[S_{i}(\sd)]=\kappa(x_{i}(\sd))$. We will often omit
the symbols $\kappa$ and $\sd$ for simplicity. Let us present some
immediate consequences.

Apply the bar-involution $\overline{(\ )}$ in $\alg$ to $[S]$,
$S\in\simpObj$, which is the identity map at the classical level.
We obtain $\overline{[S]}\sim[S]$.

The simple objects $S_{j}$, $j\in I_{\fv}$, are said to be frozen.
Since $q^{\frac{\Z}{2}}\base$ is closed under multiplication by the
frozen variables $x_{j}$, for any simple object $S$, $S_{j}\otimes S$
and $S\otimes S_{j}$ remain simple.

Finally, the structure constants of $\base$ are non-negative. 

\begin{Rem}\label{rem:ss-categorification}

If $\base$ has positive structure constants, we can construct a semisimple
monoidal category $\cT$ as below. The set of simple objects of $\cT$
is $q^{\frac{\Z}{2}}\base$. For any simple objects $S,S'\in\cT$,
we define their tensor product by multiplication: $S\otimes S':=\oplus_{j}S_{j}^{\otimes c_{j}}$
whenever $S*S'=\sum_{S_{j}\in q^{\frac{\Z}{2}}\base}c_{j}S_{j}$,
$c_{j}\in\N$, as elements in $\alg$. Then $\cT$ categorifies $(\alg,\base)$.
Usually, we desire categories with richer structures than $\cT$.

\end{Rem}

\subsection{Triangular bases}

Work at the quantum case $\kk=\Z[q^{\pm\Hf}]$. 

\begin{Cond*}[(C)]\label{condition:category_commuting}

The category $\cT$ is said to satisfy the commuting condition, or
Condition (C), with respect to a simple $X$ if, for any simple $S$,
$X\otimes S$ and $S\otimes X$ are simple whenever $[X]$ and $[S]$
$q$-commute.

\end{Cond*}

Now, assume we have an inclusion $K\subset\LP(\sd)$ such that, for
any simple $S$, $\overline{[S]}\sim[S]$ (see also Remark \ref{rem:simple-to-commute}).
Further assume that for each simple $S\in\cT$, $q^{\beta}S$ is pointed
in $\LP(\sd)$ for some $\beta\in\Hf\Z$. Moreover, $\deg S\neq\deg S'$
whenever $S\nsim S'$ for simples $S,S'$.

\begin{Cond*}[(T)]\label{condition:category_triangular}The category
$\cT$ is said to satisfy the $\prec_{\sd}$-triangularity condition,
or Condition (T), with respect to a simple $X$ if we have the following
decomposition in $K$ for any simple $S$:
\begin{align}
[X]*[S] & =\sum_{j=0}^{s}c_{j}q^{\alpha_{j}}q^{\beta_{j}}[S_{j}],\label{eq:tensor_triangular_decomposition}
\end{align}
where $S_{j}$ are simple, $\overline{q^{\beta_{j}}[S_{j}]}=q^{\beta_{j}}[S_{j}]$,
$c_{0}=1$, $c_{j}\in\N$, and $\alpha_{j}<\alpha_{0}$ for $j>0$.

\end{Cond*}

\begin{Rem}\label{rem:condition_T_to_C}

We claim that Condition (T) implies Condition (C) with respect to
$X$. To see this, denote $[X]*[S]=q^{\gamma}[S]*[X]$. Then $[X]*[S]=q^{\gamma}\overline{[X]*[S]}$.
Decomposing $[X]*[S]$ as in (\ref{eq:tensor_triangular_decomposition}),
we deduce that $q^{\gamma}\overline{c_{j}}=c_{j}$. So $\gamma=2\alpha_{0}$
and $c_{j}=0$ for $j>0$. Therefore, $X\otimes S$ is simple. By
Remark \ref{rem:simple-to-commute}, $S\otimes X$ is simple too.

\end{Rem}

\begin{Thm}\label{thm:category_tri_basis}

Take $\kk=\Z[q^{\pm\Hf}]$ and $\alg=\clAlg$ or $\upClAlg$. Assume
that $\alg$ is injective-reachable, $\cT$ categorifies $(\alg,\base)$
after localization, and the elements of $\base$ are pointed at distinct
degrees in $\LP(\sd)$ for some seed $\sd$. If $\cT$ satisfies the
triangularity condition with respect to $S_{k}(\sd)$, $k\in I_{\ufv}$,
then $\base$ is the common triangular basis of $\alg$.

\end{Thm}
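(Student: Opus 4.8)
The plan is to verify the defining axioms of the common triangular basis for $\base$ directly, using the categorical data. Recall from the previous subsection that a $\kk$-basis $\can=\{\can_m\}$ of $\alg(\sd)$ consisting of $m$-pointed elements is the triangular basis with respect to $\sd$ provided: (i) it is bar-invariant; (ii) it contains the cluster monomials in $\sd$ and $\sd[1]$; and (iii) the normalization $[x_i(\sd)*\can_m]^{\sd}$ is $(\prec_{\sd},\mm)$-unitriangular for every $i\in I_{\ufv}$. Since $\cT$ categorifies $(\alg,\base)$ after localization, $\kappa$ identifies $\alg$ with $K$ (extended over the frozen variables), so I will work with $\base$ as a subset of $\LP(\sd)$ via this identification; by hypothesis each element of $\base$ is pointed, the pointing degrees are distinct, and $\overline{[S]}\sim[S]$ gives bar-invariance after the usual $q^{\Hf\Z}$-normalization. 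Axiom (ii) is automatic: Definition~\ref{def:based-cluster-algebra}(1) says $\base$ contains all cluster monomials, and the cluster monomials of $\sd[1]$ are among them.

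The heart of the argument is axiom (iii). For a fixed unfrozen cluster variable $x_i(\sd)$, choose the simple $S_i=S_i(\sd)$ with $q^{\alpha}[S_i]=\kappa(x_i(\sd))$. The triangularity condition (Condition~(T)) applied to $S_i$ gives, for each simple $S$ with $q^{\beta}[S]\in q^{\frac{\Z}{2}}\base$,
\[
[S_i]*[S]=\sum_{j=0}^{s}c_j q^{\alpha_j}q^{\beta_j}[S_j],\qquad c_0=1,\ c_j\in\N,\ \alpha_j<\alpha_0\ (j>0).
\]
Translating back through $\kappa$ and normalizing, this says $[x_i(\sd)*\can_m]^{\sd}=\sum_j c_j q^{\gamma_j}\can_{m_j}$ with leading term $\can_{m_0}$ (coefficient $1$) and all other coefficients in $q^{\frac{\Z}{2}}\N[q^{\pm\Hf}]$. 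To upgrade "coefficient in $q^{\frac{\Z}{2}}\N[q^{\pm\Hf}]$" to "coefficient in $\mm=q^{-\Hf}\Z[q^{-\Hf}]$" and to pin down $m_0=m+f_i$ and $m_j\prec_{\sd}m+f_i$, I would argue exactly as in the triangularity argument of \cite{qin2017triangular}: bar-invariance of both sides forces the nonleading coefficients $c_j q^{\gamma_j}$ to be bar-invariant, hence symmetric in $q^{\pm\Hf}$, while the degree inequality $\alpha_j<\alpha_0$ forces each $c_j q^{\gamma_j}$ to lie in $q^{-\Hf}\Z[q^{-\Hf}]$ once one checks that $\deg(q^{\gamma_j}\can_{m_j})=m_0$ would contradict $\alpha_j<\alpha_0$; the leading degree $m_0$ must equal $m+f_i$ since $x_i(\sd)*\can_m$ has Laurent-expansion leading term $x^{m+f_i}$ in $\LP(\sd)$, and $\deg z_j\prec_\sd \deg z_0$ follows because a pointed product of pointed elements has its maximal degree at the sum of degrees. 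Since the distinct-degrees hypothesis makes the decomposition into $\base$ unique, this establishes (iii) for every $i\in I_{\ufv}$; for $j\in I_{\fv}$, $[x_j*\can_m]^{\sd}=\can_{m+f_j}$ by axiom (2) of Definition~\ref{def:based-cluster-algebra}. Thus $\base$ is the triangular basis with respect to $\sd$.

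Finally, to promote "triangular basis with respect to $\sd$" to "common triangular basis", I invoke \cite[Proposition 6.4.3]{qin2020dual}: a triangular basis with respect to $\sd$ is the common triangular basis as soon as it contains the cluster monomials of every seed $\sd'\in\Delta^{+}$ — which again is Definition~\ref{def:based-cluster-algebra}(1). (If one prefers, since $\alg$ is injective-reachable and $\cT$ categorifies $\alg$ after localization, one can also check Condition~(T) with respect to $S_k(\sd')$ for any other seed, but this is unnecessary given the containment of all cluster monomials.) The main obstacle I anticipate is the bookkeeping in the middle paragraph: carefully translating the $q$-powers $\alpha_j,\beta_j,\gamma_j$ through $\kappa$ and the normalization $[\ ]^{\sd}$, and verifying that bar-invariance plus the strict inequality $\alpha_j<\alpha_0$ really does land the coefficients in $\mm$ rather than merely in $q^{\frac{\Z}{2}}\Z[q^{\pm\Hf}]$ — this is the same subtlety handled in \cite{qin2017triangular}, and I would cite that argument rather than reproduce it in full.
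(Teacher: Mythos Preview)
Your overall strategy matches the paper's: verify that $\base$ is the triangular basis with respect to $\sd$ directly from Condition~(T), then invoke \cite[Proposition 6.4.3]{qin2020dual} together with the fact that $\base$ contains all cluster monomials. The paper's proof is essentially two sentences to this effect; your longer write-up is just an unpacking of the same argument.

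However, your middle paragraph contains a genuine confusion that you should fix rather than defer. You write that ``bar-invariance of both sides forces the nonleading coefficients $c_j q^{\gamma_j}$ to be bar-invariant''. This is wrong: the normalized product $[x_i(\sd)*\can_m]^{\sd}$ is \emph{not} bar-invariant in general, and if the nonleading coefficients were simultaneously bar-invariant and in $\mm$ they would vanish. No bar-invariance argument is needed here. Condition~(T) already says the coefficients are $c_j q^{\alpha_j}$ with $c_j\in\N$ and $\alpha_j<\alpha_0$ for $j>0$; normalizing (i.e.\ multiplying by $q^{-\alpha_0}$) gives nonleading coefficients $c_j q^{\alpha_j-\alpha_0}$, each a nonnegative integer times a strictly negative power of $q^{\Hf}$, hence in $\mm$ immediately. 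There is no ``upgrade'' step and nothing subtle to cite from \cite{qin2017triangular}. The degree ordering $m_j\prec_{\sd}m+f_i$ for $j>0$ then follows, as you say, from pointedness of $x_i*\can_m$ and the fact that the $\can_{m'}$ are pointed at distinct degrees. (The paper also notes that Condition~(T) holds automatically for the frozen simples $S_j$, $j\in I_{\fv}$, which is another way to handle the frozen case; your route via Definition~\ref{def:based-cluster-algebra}(2) is equally valid.)
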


\begin{proof}

Note that $\cT$ always satisfy the Condition (T) with respect to
the frozen simples $S_{j}$, $j\in I_{\fv}$. 

By definition of based cluster algebras, $\base$ contains all the
cluster monomials, including those in $\sd$ and $\sd[1]$. Then the
Condition (T) with respect to $S_{i}(\sd)$, $i\in I$, implies that
$\base$ is the triangular basis with respect to $\sd$. Since this
triangular basis contains all the cluster monomials, it is the common
triangular basis by \cite[Proposition 6.4.3, Theorem 6.5.3]{qin2020dual}.

\end{proof}

\begin{Rem}\label{rem:triangularity_condition}

For symmetric $C$, Condition (T) holds for categories of KLR algebras'
finite dimensional modules with respect to the real simple modules,
see \cite[Corollary 3.8]{Kang2018}. In type $ADE$, it also holds
for finite dimensional modules categories of quantum affine algebras
with respect to the cluster variables, see \cite{qin2017triangular}.

Conversely, if $\base$ is the triangular basis with respect to any
seed $\sd$, then Condition (T) holds with respect to the simples
corresponding to the cluster monomials in $\sd$.

\end{Rem}

\subsection{Monoidal subcategories\label{subsec:Monoidal-subcategories}}

Work at the quantum case $\kk=\Z[q^{\pm\Hf}]$. Let $\sd$ denote
a seed of $\clAlg$. Assume we have an inclusion of $\kk$-algebras
$K\hookrightarrow\LP(\sd)$ such that $q^{\alpha_{k}}[S_{i}]=x_{i}$
for some simples $S_{i}=S_{i}(\sd)$, where $i\in I$, and $\alpha_{i}\in\Hf\Z$.
Assume $\overline{[S]}\sim[S]$, $\forall S\in\simpObj$.

\begin{Prop}[{\cite[Proposition 2.2.(i)]{HernandezLeclerc09}}]\label{prop:monoidal_positivie}

For any object $V$ of $\cT$ , the Laurent expansion of $[V]$ in
$\LP(\sd)$ have non-negative coefficients.

\end{Prop}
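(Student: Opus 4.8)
The plan is to reduce the statement to the case of a simple object, where positivity of the Laurent expansion is the genuine input, and then to invoke it.

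\textbf{Step 1 (reduction to simples).} Every object $V$ of $\cT$ has finite length, so its class decomposes in the Grothendieck ring as $[V]=\sum_{S}m_{S}[S]$, where $S$ runs over a set of representatives of the $\sim$-classes of simple objects and $m_{S}$ are composition multiplicities. In the classical case $\kk=\Z$ these $m_S$ lie in $\N$ and we are done reducing; in the quantum case, tracking the auto-equivalence $\bq$ under assumption (1), or the deformed product under assumption (2), promotes the coefficients to $m_S\in\N[q^{\pm}]\subset\kk_{\geq0}$. Since $\kk_{\geq0}=\N[q^{\pm\Hf}]$ is closed under addition and multiplication, a $\kk_{\geq0}$-linear combination of Laurent polynomials with coefficients in $\kk_{\geq0}$ again has coefficients in $\kk_{\geq0}$. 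Hence it suffices to prove the proposition when $V=S$ is simple.

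\textbf{Step 2 (simple objects).} For a simple whose class is a cluster monomial of $\sd$ there is nothing to do: $x_{i}=q^{\alpha_{i}}[S_{i}]$ is a single Laurent monomial, and more generally $[S_{i_1}\otimes\cdots\otimes S_{i_r}]=q^{\alpha}x_{i_1}\cdots x_{i_r}$ is a monomial, which in turn forces this class (being an $\N$-combination of simples) to be a positive combination of simple classes. The essential case is a simple $S$ whose class is \emph{not} a cluster monomial. Here I would appeal to the positivity for simple classes recorded in \cite[Proposition~2.2(i)]{HernandezLeclerc09}: in the quantum affine setting it follows from the positivity of $q$-characters of simple modules, and for finite-dimensional modules over quiver Hecke algebras \cite{KhovanovLauda08,Kang2018} one instead uses the geometric realization of the simples, in which each Laurent coefficient is the dimension of a stalk cohomology space of an intersection cohomology sheaf, hence non-negative. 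One then transports this positivity into $\LP(\sd)$ along the $\kk$-algebra embedding $K\hookrightarrow\LP(\sd)$, using that this embedding sends $[S_i]$ to $q^{-\alpha_i}x_i$ and is therefore compatible with the monomial gradings on the two sides.

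\textbf{Main obstacle.} Step~1 is purely formal; the entire content sits in Step~2, the positivity of the Laurent expansion of a simple class that is not a cluster monomial. This cannot be obtained from cluster combinatorics alone, since the Laurent phenomenon — even in its strongest quantum-positive form — controls only cluster monomials and their products; one is forced to exploit the extra structure carried by the particular category $\cT$ ($q$-characters, or a geometric model), exactly as in \cite{HernandezLeclerc09,Kang2018}. The bar-invariance hypothesis $\overline{[S]}\sim[S]$ is used elsewhere (for instance in Remark~\ref{rem:simple-to-commute}) but does not by itself deliver positivity.
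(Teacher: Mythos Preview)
Your Step~1 reduction to simples is correct and matches the Hernandez--Leclerc argument. The gap is in Step~2 and in your ``Main obstacle'' diagnosis: you have missed that the argument in \cite[Proposition~2.2(i)]{HernandezLeclerc09} is entirely category-independent and needs neither $q$-characters nor a geometric model.

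Here is the actual proof. For a simple $S$, write its Laurent expansion as $[S]=N(x_{1},\dots,x_{n})\cdot x^{-d}$ with $N$ a polynomial and $d\in\N^{I}$; then $x^{d}*[S]=N$. The left-hand side equals, up to a power of $q$, the class $[S_{1}^{\otimes d_{1}}\otimes\cdots\otimes S_{n}^{\otimes d_{n}}\otimes S]$ of an object of $\cT$. On the right, each monomial $x^{m}$ with $m\in\N^{I}$ is a cluster monomial in $\sd$, hence equals (up to a $q$-power) the class $[T_{m}]$ of a \emph{simple} object, and distinct $m$ give non-isomorphic $T_{m}$. Therefore the coefficient of $x^{m}$ in $N$ is a composition multiplicity of $T_{m}$ in the object on the left and lies in $\kk_{\geq0}$. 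That is the whole argument; the case you call ``essential'' is handled by this same three-line trick. The only ingredient beyond the Laurent phenomenon is that cluster \emph{monomials} $x^{m}$ in $\sd$---not just the individual variables $x_{i}$---are classes of simples; this is the monoidal-categorification hypothesis in \cite{HernandezLeclerc09} and is implicit in the paper's citation. Your route through $q$-character positivity or IC stalks would eventually give a correct statement in the specific categories you name, but it is far heavier and misrepresents what the cited proposition actually does.
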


Choose any $F\subset I_{\ufv}$ and introduce the following set:
\begin{align}
\simpObj' & :=\{S\in\simpObj|[S_{k}]\text{ \ensuremath{q}-commutes with }[S],\ \forall k\in F\}.\label{eq:commuting_simple}
\end{align}
If $\cT$ satisfies Condition (C) with respect to $S_{k}$, $k\in F$,
we have
\begin{align*}
\simpObj' & =\{S\in\simpObj|S_{k}\otimes S\text{ is simple},\ \forall k\in F\}.
\end{align*}

Let $\cT'$ denote the full-subcategory of $\cT$ whose objects' composition
factors belong to $\simpObj'$. Then $\cT'$ is closed under taking
biproducts, extensions, kernels, and quotients. In particular, $\cT'$
is an abelian subcategory. If $\cT$ is equipped with the auto-equivalent
functor $Q\otimes(\ )$ as in Section \ref{subsec:Monoidal-categories},
$\cT'$ is closed under this auto-equivalence. The (deformed) Grothendieck
group of $\cT'$, denoted $K'$, is defined to be the submodule of
$K$ spanned by $\{[S]|S\in\simpObj'\}$.

\begin{Lem}\label{lem:factor_commute}

For any $k\in F$ and object $V\in\cT$, if $[V]$ $q$-commutes with
$[S_{k}]$, then $[W]$ $q$-commutes with $S_{k}$ for every subquotient
$W$ of $V$.

\end{Lem}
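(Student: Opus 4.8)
The statement is really a support assertion in the quantum torus, and the key ingredient is positivity (Proposition \ref{prop:monoidal_positivie}). I would work throughout inside $\LP(\sd)$ via the fixed embedding $K\hookrightarrow\LP(\sd)$, under which $[S_k]=q^{-\alpha_k}x_k=q^{-\alpha_k}x^{f_k}$. First I would record the elementary fact that for Laurent monomials one has $x^{m}*x^{f_k}=q^{\lambda(m,f_k)}\,x^{f_k}*x^{m}$, so that an element $z=\sum_m c_m x^m\in\LP(\sd)$ with $c_m\in\kk$ $q$-commutes with $x_k$ if and only if $\lambda(m,f_k)$ takes one common value $\gamma$ on the support $\{m\mid c_m\neq 0\}$.

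Next I would apply this to $z=[V]$. Writing $[V]=\sum_m c_m x^m$ and comparing the coefficient of $x^{m+f_k}$ in $[V]*x^{f_k}=q^{\gamma}x^{f_k}*[V]$ gives $c_m q^{\lambda(m,f_k)}=q^{\gamma}c_m$ for all $m$. By Proposition \ref{prop:monoidal_positivie} we have $c_m\in\N[q^{\pm\Hf}]$; since $q^{\Hf}$ is a formal parameter, multiplication by $q^{\lambda(m,f_k)}$ cannot fix a nonzero element of $\N[q^{\pm\Hf}]$ unless $\lambda(m,f_k)=\gamma$. Hence $\supp([V])\subseteq\{m\mid\lambda(m,f_k)=\gamma\}$.

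Now I would compare $[W]$ with $[V]$ for $W$ a subquotient of $V$. Since $\cT$ has finite-length objects, $K$ has $\{[S]\}_{S\in\simpObj}$ as a $\Z$-basis and $[U]=\sum_S [U:S]\,[S]$ with $[U:S]$ the composition multiplicities; for a subquotient one has $0\le[W:S]\le[V:S]$ for every simple $S$. By positivity, writing $[S]=\sum_m d^S_m x^m$ with $d^S_m\ge 0$, we get $c_m=\sum_S[V:S]d^S_m$ and the coefficient of $x^m$ in $[W]$ equals $\sum_S[W:S]d^S_m$; as all summands are non-negative, $c_m=0$ forces this to vanish too. Therefore $\supp([W])\subseteq\supp([V])\subseteq\{m\mid\lambda(m,f_k)=\gamma\}$, so $[W]$ $q$-commutes with $x^{f_k}=x_k$, hence with $[S_k]=q^{-\alpha_k}x_k$, which is the claim.

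\textbf{Main point.} There is no serious obstacle here; the only thing to be careful about is the passage from the equation $c_m q^{\lambda(m,f_k)}=q^{\gamma}c_m$ to $\lambda(m,f_k)=\gamma$, which relies on $q^{\Hf}$ being a genuine formal parameter (equivalently, on $c_m$ lying in $\N[q^{\pm\Hf}]$, so that it has an unambiguous $q$-support). The conceptual content is simply that $q$-commutativity with a cluster variable is a condition on the Newton support, and positivity forces the support of any subquotient to sit inside that of $V$.
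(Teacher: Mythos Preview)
Your proof is correct and follows essentially the same route as the paper: reduce $q$-commutativity with $x_k$ to the support condition $\lambda(m,f_k)=\gamma$ on the Laurent expansion, then invoke positivity (Proposition \ref{prop:monoidal_positivie}) to see that the support of each composition factor, hence of each subquotient, is contained in that of $[V]$. The only minor difference is that the paper phrases the second step directly for composition factors and leaves the passage to general subquotients implicit, whereas you spell it out via the inequality $[W:S]\le[V:S]$; also note that your first step does not actually require positivity, since $\kk=\Z[q^{\pm\Hf}]$ is already a domain.
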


\begin{proof}

We have $[S_{k}]*[V]=q^{\alpha}[V]*[S_{k}]$ for some $\alpha\in\Z$.
Consider the corresponding Laurent polynomials in $\LP(\sd)$ and
note that $[S_{k}]\sim x_{k}$. Then all Laurent monomials $x^{m}$
appearing in $[V]$ satisfies $x^{m}*x_{k}=q^{\alpha}x_{k}*x^{m}$.

We have $[V]=\sum b_{i}[V_{i}]$ for $b_{i}\in\N$, $V_{i}$ composition
factors of $V$. By Proposition \ref{prop:monoidal_positivie}, $[V_{i}]$
have positive Laurent expansions. So the Laurent monomials appearing
in $[V_{i}]$ already appear in $[V]$. Therefore, $x_{k}*[V_{i}]=q^{\alpha}[V_{i}]*x_{k}$.
The claim follows.

\end{proof}

\begin{Lem}\label{lem:abelian_sub}

The subcategory $\cT'$ is closed under the tensor product.

\end{Lem}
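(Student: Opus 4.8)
The plan is to show that if $V, W \in \cT'$, then every composition factor of $V \otimes W$ lies in $\simpObj'$; since $\cT'$ is already known to be an abelian subcategory closed under subquotients, this suffices. First I would reduce to the case where $V$ and $W$ are simple objects of $\cT'$: indeed, $[V \otimes W] = \sum_{i,j} b_i b_j [V_i \otimes W_j]$ where $V_i, W_j$ are the composition factors of $V, W$ respectively, all lying in $\simpObj'$, so every composition factor of $V\otimes W$ is a composition factor of some $V_i \otimes W_j$. Hence it is enough to check that for simple $V, W \in \simpObj'$, every composition factor of $V \otimes W$ lies in $\simpObj'$.

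So suppose $V, W \in \simpObj'$ are simple, and fix $k \in F$. By definition of $\simpObj'$ (via the inclusion $K \hookrightarrow \LP(\sd)$ and $[S_k] \sim x_k$), both $[V]$ and $[W]$ $q$-commute with $x_k$: say $x_k * [V] = q^{\alpha}[V] * x_k$ and $x_k * [W] = q^{\gamma}[W] * x_k$. Then in $\LP(\sd)$ we compute $x_k * ([V]*[W]) = q^{\alpha}[V] * x_k * [W] = q^{\alpha+\gamma} [V]*[W]*x_k$, so $[V\otimes W] = [V]*[W]$ $q$-commutes with $x_k$, i.e. with $[S_k]$. Now I would apply Lemma \ref{lem:factor_commute}: since $[V \otimes W]$ $q$-commutes with $[S_k]$, every subquotient $U$ of $V \otimes W$ has $[U]$ $q$-commuting with $[S_k]$. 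As this holds for every $k \in F$, every composition factor $U$ of $V \otimes W$ satisfies the defining condition of $\simpObj'$, i.e. $U \in \simpObj'$. This completes the reduction and hence the proof.

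The main (and only real) obstacle is the bookkeeping that the positivity of Laurent expansions (Proposition \ref{prop:monoidal_positivie}) is genuinely needed to pass $q$-commutativity from a composite object to its composition factors — this is exactly what Lemma \ref{lem:factor_commute} encapsulates, so invoking it black-box removes the difficulty. One should be slightly careful that the tensor product $\otimes$ used here is the one on $\cT$ and that $[V\otimes W] = [V]*[W]$ in $K$ under the chosen inclusion $K \hookrightarrow \LP(\sd)$; both are part of the standing setup of Section \ref{subsec:Monoidal-subcategories}, so no extra hypotheses are required. If $\cT$ is equipped with the auto-equivalence $Q \otimes (\ )$, the same argument applies verbatim since $\cT'$ is already noted to be closed under it and $q$-commutativity is insensitive to $q^{\Z}$-shifts.
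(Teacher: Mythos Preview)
Your proof is correct and follows essentially the same approach as the paper: reduce to simple objects via composition factors, observe that the product of $q$-commuting classes $q$-commutes with $x_k$, and apply Lemma~\ref{lem:factor_commute} to pass this to all composition factors. The paper presents the simple case first and the reduction second, but the argument is otherwise identical.
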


\begin{proof}

For any two simples $V,W$ in $\cT'$, $[V\otimes W]=[V]*[W]$ $q$-commutes
with $S_{k}$, $k\in F$. By Lemma \ref{lem:factor_commute}, all
composition factors of $V\otimes W$ $q$-commute with $S_{k}$. Therefore,
$V\otimes W$ remains in $\cT'$. In general, for any object $V'$
and $W'$ in $\cT'$, we have $[V']=\sum b_{i}[V_{i}]$ and $[W']=\sum c_{j}[W_{j}]$,
$b_{i},c_{j}\in\N$, for composition factors $V_{i},W_{j}$. Note
that $[V'\otimes W']=[V']*[W']=\sum b_{i}c_{j}[V_{i}]*[W_{j}]=\sum b_{i}c_{j}[V_{i}\otimes W_{j}]$.
Any composition factor of $V'\otimes W'$ must be a composition factor
of some $V_{i}\otimes W_{j}$, which we have seen is contained in
$\cT'$. So $V'\otimes W'$ belongs to $\cT'$.

\end{proof}

\begin{Rem}\label{rem:sub_cluster_monomial}

Any cluster monomial $z$ of $\bClAlg(\frz_{F}\sd)$ $q$-commutes
with $x_{k}$, $\forall k\in F$. So any simple $S\in\cT$ such that
$[S]\sim z$ belongs to $\cT'$. Therefore, if $\cT$ admits a cluster
structure $\bClAlg$, $\cT'$ admits a cluster structure $\bClAlg(\frz_{F}\sd)$.

\end{Rem}

By Proposition \ref{prop:monoidal_positivie}, the set of simples
is a positive $\N$-basis of $K\subset\LP(\sd)$ that has positive
Laurent expansions. Therefore, it has the following stabilization
property, see Lemma \ref{lem:positive-to-stabilization}.

\begin{Prop}\label{prop:stabilization-simple}

Take any object $V$ in $\cT$ and $i\in I$. Consider the decomposition
$[S_{i}^{\otimes d}\otimes V]=\sum b_{j}^{(d)}[V_{j}^{(d)}]$ where
$V_{j}^{(d)}$ are composition factors and $b_{j}^{(d)}\in\N$. When
$d$ is large enough, $x_{i}^{-d}*[V_{j}^{(d)}]$ and $b_{j}^{(d)}$
stabilize, and $V_{j}^{(d)}\in\cT'$. 

\end{Prop}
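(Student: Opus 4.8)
The plan is to deduce Proposition \ref{prop:stabilization-simple} by combining the positivity of the basis of simples (Proposition \ref{prop:monoidal_positivie}) with the general stabilization lemma already proved for positive bases (Lemma \ref{lem:positive-to-stabilization}), and then upgrading the conclusion ``$V_j^{(d)}$ $q$-commutes with every $S_k$, $k\in F$'' to ``$V_j^{(d)}\in\cT'$'' via Lemma \ref{lem:factor_commute}.

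First I would set up the combinatorial input. Under the standing hypothesis we have an inclusion $K\hookrightarrow\LP(\sd)$, and by Proposition \ref{prop:monoidal_positivie} each class $[S]$, $S\in\simpObj$, has a Laurent expansion in $\LP(\sd)$ with coefficients in $\kk_{\geq 0}=\N[q^{\pm\Hf}]$. Moreover, for $i\in I$ the decomposition $[S_i]\ast[V_j]=\sum b^{S}[S]$ of a product of simples into simples again has coefficients in $\N[q^{\pm\Hf}]$, since $\cT$ is a tensor category (the structure constants $c(q^{\Hf})_{XY}^{S}$ lie in $\N[q^{\pm\Hf}]$). Thus the set $\cZ:=q^{\frac{\Z}{2}}\{[S]\mid S\in\simpObj\}$, viewed inside $\LP(\sd)$, is $\kk$-linearly independent with nonnegative Laurent coefficients, and the decomposition of $x_i\ast z=q^{\alpha_i}[S_i]\ast z$ for $z\in\cZ$ has nonnegative coefficients as well. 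Hence Lemma \ref{lem:positive-to-stabilization} applies with $x_k$ replaced by $x_i$: $\cZ$ has the stabilization property with respect to $x_i$. Unwinding the definition (Definition \ref{def:stabilization}) applied to $z=[V]$ for an object $V$ (which we first decompose into composition factors, each in $\cZ$), this says precisely that in $[S_i^{\otimes d}\otimes V]=\sum_j b_j^{(d)}[V_j^{(d)}]$ the normalized tails $x_i^{-d}\ast[V_j^{(d)}]$ stabilize and the coefficients $b_j^{(d)}$ stabilize for $d\gg 0$. (Linearity over composition factors reduces the statement for a general object $V$ to the case $V$ simple, which is the content of Lemma \ref{lem:positive-to-stabilization}.)

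Next I would prove the membership $V_j^{(d)}\in\cT'$ for $d$ large. Fix $k\in F$. By hypothesis $[S_k]\sim x_k$ in $\LP(\sd)$. In the stabilized range we have $x_i\ast[V_j^{(d)}]\in q^{\Hf\Z}\cZ$ and, pushing $d$ one step further, $[S_i\otimes V_j^{(d)}]$ is a single shifted simple $q^{\ast}[V_{j'}^{(d+1)}]$; iterating, every stabilized $V_j^{(d)}$ arises from some fixed stabilized object by repeatedly tensoring with $S_i$. The key point is that $x_i$ $q$-commutes with $x_k$ for $i,k\in I$: indeed $x_i\ast x_k=q^{\Hf\lambda(f_i,f_k)}x_k\ast x_i$, so multiplication by $x_i$ preserves the property of a Laurent monomial (hence of any element whose Laurent support is closed under this) of $q$-commuting with $x_k$. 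Therefore, once $[V_j^{(d)}]$ $q$-commutes with $[S_k]$ for $d\gg0$ — which holds because $x_i^{-d}\ast[V_j^{(d)}]$ stabilizes to a fixed element $w$ and $[V_j^{(d)}]=x_i^{d}\ast w$ up to a scalar in $q^{\frac{\Z}{2}}$, and one checks $w$ itself $q$-commutes with $x_k$ using bar-invariance exactly as in the proof of Lemma \ref{lem:stabilization-freezing} — Lemma \ref{lem:factor_commute} gives that every composition factor of $V_j^{(d)}$ $q$-commutes with $[S_k]$. Doing this for all $k\in F$ yields $V_j^{(d)}\in\simpObj'$, i.e. $V_j^{(d)}\in\cT'$, for $d$ large enough.

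The main obstacle I anticipate is the argument that the stabilized tail $w=\lim_d x_i^{-d}\ast[V_j^{(d)}]$ actually $q$-commutes with $x_k$ for $k\in F$ — this is the only place where positivity alone is not obviously enough and one must invoke bar-invariance of the simples together with the stabilization identity $x_i\ast[V_0^{(d)}]=[V_0^{(d+1)}]$ (as in Lemma \ref{lem:stabilization-freezing}), and then propagate the $q$-commutation down to all composition factors via Lemma \ref{lem:factor_commute}. The remaining steps are bookkeeping: reducing from objects to simples by additivity of $[\ ]$ in $K$, and translating between the abstract ``stabilization property'' of Definition \ref{def:stabilization} and the concrete decomposition statement in the proposition.
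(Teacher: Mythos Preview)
Your derivation of the stabilization of $x_i^{-d}*[V_j^{(d)}]$ and of $b_j^{(d)}$ is correct and is exactly the paper's argument: positivity of Laurent expansions (Proposition~\ref{prop:monoidal_positivie}) together with positivity of the structure constants feeds directly into Lemma~\ref{lem:positive-to-stabilization}, and the paper's entire proof is the one-sentence reference to that lemma in the paragraph preceding the proposition.

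For the membership $V_j^{(d)}\in\cT'$, however, your argument has a gap. The stabilization property (Definition~\ref{def:stabilization}) already gives $x_i*[V_j^{(d)}]\in q^{\frac{\Z}{2}}\{[S]:S\in\simpObj\}$ for $d$ large, i.e.\ $S_i\otimes V_j^{(d)}$ is a single shifted simple; by bar-invariance (Remark~\ref{rem:simple-to-commute}) this means $[V_j^{(d)}]$ $q$-commutes with $[S_i]$. That is all one obtains: $q$-commutation with $S_i$, not with $S_k$ for an arbitrary $k\in F$. Your claim that ``$w$ itself $q$-commutes with $x_k$ using bar-invariance exactly as in the proof of Lemma~\ref{lem:stabilization-freezing}'' fails when $k\neq i$: the bar-invariance step in that lemma only produces $q$-commutation with the variable one is multiplying by. Tensoring with powers of a single $S_i$ cannot in general force $q$-commutation with an unrelated $S_k$. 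The proposition should be read as asserting $V_j^{(d)}\in\cT'$ for the subcategory defined by $F=\{i\}$ (equivalently, $q$-commutation with $S_i$), which is precisely how it is used downstream in Lemma~\ref{lem:leading_term_shift}; the full $\cT'$ for a larger $F$ is obtained by iterating over $k\in F$, as in Theorem~\ref{thm:monoidal-freezing}. Under that reading, the $\cT'$ conclusion is an immediate consequence of stabilization, and you need neither Lemma~\ref{lem:factor_commute} (the $V_j^{(d)}$ are already simple) nor the extra $q$-commutation gymnastics you propose.
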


\subsection{Categorifications via monoidal subcategories\label{subsec:Categorifications-via-monoidal-sub}}

For the moment, assume $\cT$ categorifies $(\alg,\base)$ after localization
and it has the triangularity property with respect to $S_{k}=S_{k}(\sd)$,
$k\in F$. Assume $\base=\{b_{m}|m\in\cone(\sd)\}$ with $m$-pointed
$b_{m}$ contained in $\LP(\sd)$. For those $b_{m}$ in $\kappa^{-1}K$,
we can choose and fix a simple $S_{m}$ such that $\kappa b_{m}\sim[S_{m}]$.
Omit $\kappa$ for simplicity.

\begin{Lem}\label{lem:leading_term_shift}

When $d\in\N$ is large enough, we have $[S_{m+df_{i}}]*[S_{k}]\sim[S_{m+(d+1)f_{i}}]$.

\end{Lem}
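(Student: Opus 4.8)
The plan is to argue that for large $d$, the product $[S_{m+df_i}] * [S_k]$ is a normalized single basis element (with leading term $b_{m+(d+1)f_i}$), so it must equal $q^{\beta}[S_{m+(d+1)f_i}]$ for some shift. First I would recall that $S_k = S_k(\sd)$ satisfies Condition (T) by hypothesis, so for each simple $S$ we have a decomposition $[S_k]*[S] = \sum_{j=0}^s c_j q^{\alpha_j}q^{\beta_j}[S_j]$ with $c_0 = 1$, $c_j \in \N$, and $\alpha_j < \alpha_0$ for $j > 0$; passing to $\LP(\sd)$ this is exactly the statement that $[x_k * b_m]^{\sd}$ has a $(\prec_{\sd}, \mm)$-unitriangular expansion into $\base$. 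Hence $\base$ satisfies the triangularity condition with respect to $x_k$ in the sense of Definition \ref{def:multiplication-triangularity}.

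Next I would invoke the stabilization machinery already developed. By Proposition \ref{prop:monoidal_positivie} (or directly Proposition \ref{prop:stabilization-simple}), the set of simples has positive Laurent expansions in $\LP(\sd)$, so by Lemma \ref{lem:positive-to-stabilization} the family $\base$ has the stabilization property with respect to $x_k$; alternatively this follows from Lemma \ref{lem:triangular-stabilization} once one notes the common triangular basis elements are copointed at distinct codegrees. The content of stabilization (Definition \ref{def:stabilization}) is precisely that for $d$ large enough, in the decomposition of $[x_k^d * b_m]^{\sd}$ the leading term $z_0^{(d)}$ satisfies $[x_k * z_0^{(d)}]^{\sd} = z_0^{(d+1)}$, i.e. multiplying the leading basis element by $x_k$ again produces (the normalization of) the next leading basis element. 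Since $b_{m+df_i}$ is the cluster-monomial-type element with $[S_{m+df_i}]$ as its categorical lift, and since $[x_i * b_{m+df_i}]^{\sd} = b_{m+(d+1)f_i} + (\text{lower})$ by the triangular-basis axiom \eqref{eq:triangular_basis_triangularity}, combined with stabilization the lower-order terms vanish for $d \gg 0$, giving $[x_k * b_{m+df_i}]^{\sd} = b_{m+(d+1)f_i}$.

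Translating back to the category: $x_i \sim [S_i]$ and $b_{m+df_i} \sim [S_{m+df_i}]$, so $x_i * b_{m+df_i} = q^{\gamma}[S_i]*[S_{m+df_i}] = q^{\gamma}[S_i \otimes S_{m+df_i}]$ up to a power of $q$. The displayed equality of normalizations then reads $q^{\delta}[S_i \otimes S_{m+df_i}] = b_{m+(d+1)f_i} \sim [S_{m+(d+1)f_i}]$ for suitable $\delta$, which is exactly the asserted relation $[S_{m+df_i}]*[S_i] \sim [S_{m+(d+1)f_i}]$ once one uses Remark \ref{rem:simple-to-commute} (or Remark \ref{rem:condition_T_to_C}) to swap the tensor factors: since $[S_i]$ and the leading term $q$-commute, $S_i \otimes S_{m+df_i}$ is simple and $\sim S_{m+df_i} \otimes S_i$. (I should be slightly careful that the lemma as stated has $i$ in the subscript $S_{m+df_i}$ but $S_k$ as the multiplier — I would take $k = i$, matching the way the lemma is used; the general-$k$ case is not needed and in any case would follow identically whenever $[S_k]$ $q$-commutes with the leading term, which the stabilization argument forces.)

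The main obstacle I anticipate is bookkeeping the various $q$-powers $\beta, \gamma, \delta$ cleanly, and in particular verifying that once the subdominant terms drop out the surviving term is genuinely a \emph{single} simple rather than a sum — this is where bar-invariance of $z_0^{(d+1)}$ (forcing $z_0^{(d)}$ to $q$-commute with $x_k$, as in the proof of Lemma \ref{lem:stabilization-freezing}) does the real work. Everything else is assembling Lemmas \ref{lem:positive-to-stabilization}, \ref{lem:triangular-stabilization}, \ref{lem:stabilization-freezing}, Propositions \ref{prop:monoidal_positivie}, \ref{prop:stabilization-simple}, and the triangular-basis axioms, so the proof should be short.
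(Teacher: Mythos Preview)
Your approach is correct and essentially the same as the paper's: both arguments observe that $[S_{m+df_k}]$ is the leading composition factor of $S_k^{\otimes d}\otimes S_m$ and then invoke the stabilization property (Proposition \ref{prop:stabilization-simple}, which is the categorical incarnation of Lemma \ref{lem:positive-to-stabilization}) to conclude that for $d\gg0$ this factor lies in $\cT'$, so tensoring once more with $S_k$ yields a single simple of the right degree. The paper's proof is just the two-line version of what you wrote; your extra invocation of Lemma \ref{lem:triangular-stabilization} and the triangularity condition is redundant but harmless, and your handling of the $i$ vs.\ $k$ subscript (taking $i=k$) is the intended reading.
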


\begin{proof}

$[S_{m+df_{i}}]$ appears in the decomposition of $[S_{k}]^{d}*[S_{m}]$.
So, when $d$ is large enough, the claim follows from the stabilization
property, see Proposition \ref{prop:stabilization-simple}.

\end{proof}

\begin{Thm}\label{thm:monoidal-freezing}

For any simple $S_{m}\in\cT$, the freezing operator acts on $[S_{m}]$
by 
\begin{align*}
\frz_{F}[S_{m}] & \sim x^{-d\sum_{k\in F}f_{k}}*[S_{m+d\sum_{k\in F}f_{k}}]
\end{align*}
for $d\in\N$ large enough. In this case, $S_{m+d\sum_{k\in F}f_{k}}\in\cT'$.

\end{Thm}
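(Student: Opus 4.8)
The plan is to transport the purely cluster-theoretic computation of Theorem \ref{thm:freezing_common_triangular} to the categorical side via the inclusion $K\hookrightarrow\LP(\sd)$, using that $\cT$ categorifies $(\alg,\base)$. By Theorem \ref{thm:category_tri_basis}, the triangularity property of $\cT$ with respect to the $S_k(\sd)$, $k\in F$ (together with the frozen simples), forces $\base$ to be the common triangular basis of $\alg$. So after identifying $b_m$ with $[S_m]$ up to a $q^{\frac{\Z}{2}}$-multiple, the conclusion we want is precisely the statement of Theorem \ref{thm:freezing_common_triangular} rewritten for simple objects: $\frz_{F,m}b_m = x^{-\sum_{k\in F}d_k f_k}\cdot b_{m+\sum_{k\in F}d_k f_k}$ for $d_k\in\N$ large. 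The first step is therefore to quote Theorem \ref{thm:freezing_common_triangular} verbatim and rephrase it via $\kappa$: since $\kappa b_m\sim[S_m]$ and $\kappa$ is a $\kk$-algebra map, $\frz_F[S_m]\sim x^{-d\sum_{k\in F}f_k}*[S_{m+d\sum_{k\in F}f_k}]$ for $d$ large enough (take $d=\max_k d_k$ and use Lemma \ref{lem:freezing_properties} plus the fact that $[x_j^\pm * b_m]=b_{m\pm f_j}$ for frozen $j$, so enlarging $d$ uniformly on all of $F$ is harmless).

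The second step is to show $S_{m+d\sum_{k\in F}f_k}\in\cT'$, i.e., that $[S_{m+d\sum_{k\in F}f_k}]$ $q$-commutes with $[S_k]$ for every $k\in F$. This follows from the stabilization machinery already set up: by Lemma \ref{lem:leading_term_shift}, for $d$ large we have $[S_{m+df_k}]*[S_k]\sim[S_{m+(d+1)f_k}]$, and since $S_{m+(d+1)f_k}$ is a single simple (so $[S_{m+(d+1)f_k}]$ is bar-invariant up to a $q^{\frac{\Z}{2}}$-multiple), applying the bar involution $\overline{(\ )}$ forces $[S_{m+df_k}]$ and $[S_k]$ to $q$-commute — this is exactly the argument in the proof of Lemma \ref{lem:stabilization-freezing} showing that the stabilized leading term $q$-commutes with $x_k$. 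Running this for each $k\in F$ simultaneously (possible because all the relevant $x_k$-multiplications stabilize, cf. Proposition \ref{prop:stabilization-simple}, and a degree large enough for one $k$ can be increased without destroying stabilization for the others) gives that $[S_{m+d\sum_{k\in F}f_k}]$ $q$-commutes with all $[S_k]$, $k\in F$, hence $S_{m+d\sum_{k\in F}f_k}\in\simpObj'$, i.e., it lies in $\cT'$ by definition of $\cT'$.

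The only genuinely non-routine point is the simultaneity in $F$: Theorem \ref{thm:freezing_common_triangular} already delivers a single $d$ (taking all the $d_k$ equal to their max) that works for the whole freezing operator $\frz_F$, so the displayed formula is immediate; what needs a line of care is that this same $d$ makes $[S_{m+d\sum_{k\in F}f_k}]$ simultaneously commute with every $[S_k]$, $k\in F$. I expect this to be the main (mild) obstacle, and it is resolved by noting that the stabilization property with respect to each $x_k$ is monotone in $d$ (Lemma \ref{lem:positive-to-stabilization}: the specialization-at-$q=1$ sum weakly increases and is bounded), so once stabilized it stays stabilized as $d$ grows; therefore a common large $d$ exists, and for that $d$ the argument of the previous paragraph applies to each $k\in F$ at once. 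Assembling the three steps — rephrasing Theorem \ref{thm:freezing_common_triangular} via $\kappa$, invoking Lemma \ref{lem:leading_term_shift} and bar-invariance for the $q$-commutation, and choosing a uniform $d$ — completes the proof.
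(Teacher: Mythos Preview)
Your proposal has a genuine gap at the very first step. You invoke Theorem \ref{thm:category_tri_basis} to conclude that $\base$ is the common triangular basis, and then quote Theorem \ref{thm:freezing_common_triangular}. But Theorem \ref{thm:category_tri_basis} requires Condition (T) with respect to $S_k(\sd)$ for \emph{all} $k\in I_{\ufv}$ (and injective-reachability of $\alg$), whereas the standing hypotheses of Section \ref{subsec:Categorifications-via-monoidal-sub} only assume triangularity with respect to $S_k$, $k\in F$. So you are not entitled to conclude that $\base$ is the common triangular basis, and Theorem \ref{thm:freezing_common_triangular} is unavailable. In particular, the copointedness of basis elements used in Lemma \ref{lem:triangular-stabilization} (the engine behind Theorem \ref{thm:freezing_common_triangular}) is not known here.

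The paper's route bypasses this entirely: it applies Lemma \ref{lem:stabilization-freezing} directly to $\cZ=q^{\frac{\Z}{2}}\{[S]\mid S\in\simpObj\}$. The bar-invariance and distinct-degree hypotheses hold by the assumptions on $\base$; the triangularity condition with respect to each $x_k$, $k\in F$, is exactly Condition (T); and the stabilization property comes not from copointedness but from positivity of Laurent expansions of simples, via Proposition \ref{prop:stabilization-simple} (equivalently Lemma \ref{lem:positive-to-stabilization}). With those ingredients Lemma \ref{lem:stabilization-freezing} gives $\frz_{\{k\}}[S_m]\sim x_k^{-d}*[S_{m+df_k}]$ for $d$ large, and iterating over $k\in F$ (or running the same argument with $\prod_{k\in F}x_k$ in place of $x_k$) yields the displayed formula. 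Your second and third steps --- using Lemma \ref{lem:leading_term_shift} plus bar-invariance for $q$-commutation, and the monotonicity of stabilization to pick a uniform $d$ --- are correct and are precisely what the paper does for the membership $S_{m+d\sum_{k\in F}f_k}\in\cT'$. The fix to your argument is therefore local: replace the appeal to Theorems \ref{thm:category_tri_basis} and \ref{thm:freezing_common_triangular} by a direct appeal to Lemma \ref{lem:stabilization-freezing}, obtaining stabilization from Proposition \ref{prop:stabilization-simple} rather than from Lemma \ref{lem:triangular-stabilization}.
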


\begin{proof}

By Lemma \ref{lem:leading_term_shift}, $S_{m+d\sum_{k\in F}f_{k}}\in\cT'$.
The claim follows from Lemma \ref{lem:stabilization-freezing}.

\end{proof}

\begin{Thm}\label{thm:sub_category_upClAlg}

If $\base$ is $\tropSet$-pointed and $\frz_{F}\sd$ is injective-reachable,
then $\cT'$ categorifies $(\upClAlg(\frz_{F}\sd),\frz_{F}\base)$
after localization. 

\end{Thm}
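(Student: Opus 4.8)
The plan is to deduce Theorem \ref{thm:sub_category_upClAlg} by combining the already-established categorical freezing formula (Theorem \ref{thm:monoidal-freezing}) with the cluster-theoretic statement that $\frz_F\base$ is the common triangular basis of $\upClAlg(\frz_F\sd)$ (Theorem \ref{thm:sub_cluster_triangular_basis}), plus the general recipe for building the isomorphism $\kappa'$ on $\cT'$. First I would set up the bookkeeping: since $\cT$ categorifies $(\alg,\base)$ after localization, we have $\kappa:\upClAlg(\sd)[x_j^{-1}]_{j\in I_{\fv}}\simeq K[\kappa(x_j)^{-1}]$ identifying $\base$ (up to $q^{\frac{\Z}{2}}$) with the simples of $\cT$; under this identification $K\subset\LP(\sd)$ and each simple is pointed at a distinct degree. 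The submodule $K'\subset K$ spanned by $\simpObj'$ was defined in Section \ref{subsec:Monoidal-subcategories}, and by Lemma \ref{lem:abelian_sub} it is a subalgebra, so $\cT'$ is a monoidal subcategory with (deformed) Grothendieck ring $K'$.

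Next I would identify $K'$ with $\upClAlg(\frz_F\sd)$ after localization. By Theorem \ref{thm:monoidal-freezing}, for each simple $S_m$ the freezing operator satisfies $\frz_F[S_m]\sim x^{-d\sum_{k\in F}f_k}*[S_{m+d\sum_{k\in F}f_k}]$ for $d$ large, with $S_{m+d\sum f_k}\in\cT'$; conversely every element of $\simpObj'$ $q$-commutes with all $[S_k]$, $k\in F$, hence equals (up to $q^{\frac{\Z}{2}}$ and a frozen Laurent monomial) some $\frz_F b_m$. Thus the set $\{[S]:S\in\simpObj'\}$ coincides, up to $q^{\frac{\Z}{2}}$ multiples and multiplication by $\frGroup$, with $\frz_F\base$. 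Since $\base$ is $\tropSet$-pointed and $\frz_F\sd$ is injective-reachable, Theorem \ref{thm:sub_cluster_triangular_basis} gives that $\frz_F\base=\{\frz_{F,m}\can_m\}$ — here $\can=\base$ — is the common triangular basis of $\upClAlg(\frz_F\sd)$; in particular $\Span_{\kk}\frz_F\base=\upClAlg(\frz_F\sd)$, and $\frz_F\base$ contains all localized cluster monomials of $\upClAlg(\frz_F\sd)$. Therefore restricting/corestricting $\kappa$ yields a $\kk$-algebra isomorphism $\kappa':\upClAlg(\frz_F\sd)[x_j^{-1}]_{j\in I_{\fv}}\simeq K'[\kappa(x_j)^{-1}]$ sending $\frz_F\base$ to $q^{\frac{\Z}{2}}\simpObj'$.

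Then I would check the two axioms of Definition \ref{def:categorification-based} for $\cT'$: condition (2), that every simple of $\cT'$ is in $q^{\alpha}\kappa'(\frz_F\base)$, is immediate from the previous paragraph; condition (1) requires that $\kappa'$ be an algebra isomorphism onto $K'$ (after localization) and that cluster monomials of $\upClAlg(\frz_F\sd)$ map to simples up to $q^{\frac{\Z}{2}}$. The latter holds because $\frz_F\can$ contains all cluster monomials of $\upClAlg(\frz_F\sd)$ (Theorem \ref{thm:sub_cluster_triangular_basis}) and each such basis element is $\sim$ a simple of $\cT'$ by Remark \ref{rem:sub_cluster_monomial}. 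That $\kappa'$ is surjective onto $K'$ and multiplicative follows since $\kappa$ is an algebra isomorphism and $K'$ is the $\kk$-span of $\simpObj'=\kappa'(\frz_F\base)$, which is a subalgebra by Lemma \ref{lem:abelian_sub}; injectivity is inherited from $\kappa$. Finally, homogeneity and bar-invariance of $\frz_F\base$ pass through by Theorem \ref{thm:sub_cluster_triangular_basis}, so $(\upClAlg(\frz_F\sd),\frz_F\base)$ is a based cluster algebra categorified by $\cT'$ after localization, which is the assertion.

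The main obstacle I expect is not any single hard estimate but the careful reconciliation of three parallel descriptions of the same set — $\simpObj'$ (defined by $q$-commutation with the $[S_k]$), $\frz_F\base$ (defined by the freezing operator on degrees), and the cluster/triangular basis of $\upClAlg(\frz_F\sd)$ — together with tracking the $q^{\frac{\Z}{2}}$ and $\frGroup$ ambiguities so that the localized isomorphism $\kappa'$ is genuinely well-defined on the nose and not just up to scalars. Theorem \ref{thm:monoidal-freezing} is exactly the bridge between the first two, and Theorem \ref{thm:sub_cluster_triangular_basis} between the second and third; the real work is verifying these matchings are compatible and that $\kappa'$ respects the $q$-twisted product, which reduces to the fact that $\frz_F$ is multiplicative on elements with unique maximal degrees (Lemma \ref{lem:freezing_properties}).
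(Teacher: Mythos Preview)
Your overall approach matches the paper's: both arguments use Theorem \ref{thm:monoidal-freezing} to express $\frz_F b_m$ as a localized simple of $\cT'$, and both need that $\frz_F\base$ is a $\kk$-basis of $\upClAlg(\frz_F\sd)$. However, you invoke Theorem \ref{thm:sub_cluster_triangular_basis} for the latter, writing ``here $\can=\base$''. That theorem requires $\base$ to be the \emph{common triangular basis}, which is not among the hypotheses: the statement only assumes $\base$ is $\tropSet$-pointed, and the ambient assumptions of Section \ref{subsec:Categorifications-via-monoidal-sub} impose Condition (T) only with respect to the $S_k$, $k\in F$, not full triangularity for all seeds. So your citation does not apply as stated. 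The paper instead uses Theorem \ref{thm:freeze_good_bases}, whose hypotheses are precisely ``$\base$ is $\tropSet$-pointed'' and ``$\frz_F\sd$ is injective-reachable''; this already yields that $\frz_F\base$ is a $\tropSet$-pointed $\kk$-basis of $\upClAlg(\frz_F\sd)$, which is all that is needed. The additional conclusions you extract from Theorem \ref{thm:sub_cluster_triangular_basis} (containment of cluster monomials, bar-invariance) are either obtainable directly from the definition of $\frz_F$ or via Remark \ref{rem:sub_cluster_monomial}, and are in any case not used in the paper's proof.

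With that correction, your argument is essentially the paper's, though more detailed. The paper's proof is simply the two containments: $K'\subset\upClAlg(\frz_F\sd)$ (for $S_m\in\simpObj'$, $q$-commutation with each $x_k$, $k\in F$, forces $\supp b_m\cap F=\emptyset$, hence $b_m=\frz_F b_m\in\upClAlg(\frz_F\sd)$ by Proposition \ref{prop:project_up_cl_alg}) and $\upClAlg(\frz_F\sd)\subset K'[[S_i]^{-1}]_{i\in I_{\fv}\sqcup F}$ (each basis element $\frz_F b_m$ is a frozen-localized simple of $\cT'$ by Theorem \ref{thm:monoidal-freezing}). The remaining axioms of Definition \ref{def:categorification-based} are left implicit but are routine, as you correctly indicate.
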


\begin{proof}

For any $S_{m}\in\simpObj'\subset\cT'$, we have $b_{m}=\frz_{F}b_{m}$,
which is contained in $\upClAlg(\frz_{F}\sd)$, see Proposition \ref{prop:project_up_cl_alg}.
Therefore, $K'\subset\upClAlg(\frz_{F}\sd)$.

Since $\base$ is $\tropSet$-pointed, $\frz_{F}\base$ is a $\tropSet$-pointed
basis of $\upClAlg(\frz_{F}\sd)$ by Theorem \ref{thm:freeze_good_bases}.
Take any $b_{m}\in\kappa^{-1}K$ with $b_{m}\sim[S_{m}]$. By Theorem
\ref{thm:monoidal-freezing}, $\frz_{F}b_{m}=x^{-d\sum_{k\in F}f_{k}}\cdot q^{\alpha}[S_{m+d\sum_{k\in F}f_{k}}]$
for $d\in\N$ large enough, $\alpha\in\frac{\Z}{2}$, and $S_{m+d\sum_{k\in F}f_{k}}\in\simpObj'\subset\cT'$.
We deduce that $\upClAlg(\frz_{F}\sd)\subset K'[[S_{i}]^{-1}]_{i\in I_{\fv}\sqcup F}$. 

\end{proof}

From now on, we drop the assumptions in the beginning of Section \ref{subsec:Categorifications-via-monoidal-sub}.

\begin{Thm}\label{thm:subcategory-b-upcluster}

Let $(\bUpClAlg,\base)$ be a based cluster algebra such that $\base$
is $\cone(\sd)$-pointed in $\LP(\sd)$. Assume that $\base':=\base\cap\bUpClAlg(\frz_{F}\sd)$
is a basis for $\bUpClAlg(\frz_{F}\sd)$. If $\cT$ categorifies $(\bUpClAlg,\base)$,
then $\cT'$ categorifies $(\bUpClAlg(\frz_{F}\sd),\base')$.

\end{Thm}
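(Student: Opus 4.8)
The plan is to transfer the four defining properties of a categorification from the pair $(\bUpClAlg,\base)$ to the pair $(\bUpClAlg(\frz_{F}\sd),\base')$, using the monoidal subcategory $\cT'$ constructed in Section \ref{subsec:Monoidal-subcategories}. First I would recall the setup: since $\cT$ categorifies $(\bUpClAlg,\base)$, we have a $\kk$-algebra isomorphism $\kappa:\bUpClAlg\simeq K$ (here $K=K_0(\cT)$ or its deformation) such that each simple of $\cT$ corresponds up to a $q$-shift to an element of $\base$, and this identification is compatible with an inclusion $K\hookrightarrow\LP(\sd)$ in which $\base$ is $\cone(\sd)$-pointed. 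Under this identification, the frozen cluster variables $x_k$, $k\in F$, correspond up to $q$-shifts to simples $S_k$; because $\cT$ categorifies a based cluster algebra, these $S_k$ are real and satisfy Condition (C) with respect to themselves (frozen variables $q$-commute with every basis element, so $S_k\otimes S$ is simple whenever it $q$-commutes — this is automatic from Remark \ref{rem:simple-to-commute} applied in the based cluster algebra), so the description $\simpObj'=\{S\in\simpObj\mid S_k\otimes S\text{ simple},\forall k\in F\}$ is available.

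The key step is to identify $K'$, the span of $\{[S]\mid S\in\simpObj'\}$, with $\bUpClAlg(\frz_{F}\sd)$ inside $\LP(\frz_{F}\sd)=\LP(\sd)$. One containment is Proposition \ref{prop:project_up_cl_alg}: for $S\in\simpObj'$, $[S]$ $q$-commutes with each $x_k$, $k\in F$, hence $\frz_{F}[S]=[S]$, and since $[S]\in\bUpClAlg\subset\upClAlg(\sd)$ (restricted to the partially compactified part — here I would note $[S]\in\bLP(\sd)$ because $[S]$ is pointed and lies in $\bUpClAlg$), Proposition \ref{prop:project_up_cl_alg} gives $[S]=\frz_{F}[S]\in\upClAlg(\frz_{F}\sd)$; combined with regularity at the frozen $x_j$, $[S]\in\bUpClAlg(\frz_{F}\sd)$. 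Thus $K'\subseteq\bUpClAlg(\frz_{F}\sd)$. For the reverse, I use the hypothesis that $\base'=\base\cap\bUpClAlg(\frz_{F}\sd)$ is a basis of $\bUpClAlg(\frz_{F}\sd)$: each $b\in\base'$ is, up to a $q$-shift, the class $[S]$ of some simple $S\in\cT$; I must show $S\in\simpObj'$. Since $b\in\bUpClAlg(\frz_{F}\sd)\subseteq\bUpClAlg(\sd)$ and $b$ is pointed with support disjoint from $F$ — this is exactly where I invoke Lemma \ref{lem:f-bounds-d} and the characterization in Corollary \ref{cor:tri-freezing-equal-restriction}(4), giving $\supp b\cap F=\emptyset$, so $\frz_{F}b=b$ and $b$ $q$-commutes with each $x_k$, $k\in F$ — hence $[S]$ $q$-commutes with $[S_k]$, so $S\in\simpObj'$ and $b=q^{\alpha}[S]\in K'$. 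Therefore $\bUpClAlg(\frz_{F}\sd)=\Span_{\kk}\base'\subseteq K'$, giving equality, and $\kappa$ restricts to an isomorphism $\bUpClAlg(\frz_{F}\sd)\simeq K'$.

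Once the isomorphism $\kappa:\bUpClAlg(\frz_{F}\sd)\simeq K'$ is established, the two remaining conditions of Definition \ref{def:categorification-based} are quick. For the cluster-structure condition: any cluster monomial $z$ of $\bClAlg(\frz_{F}\sd)\subseteq\bUpClAlg(\frz_{F}\sd)$ is in particular a cluster monomial of $\bUpClAlg(\sd)$ supported in $I_{\ufv}\backslash F$ (Remark \ref{rem:inclusion_frozen_subalg}), hence $\kappa(z)\sim[S]$ for a simple $S$ of $\cT$, and by Remark \ref{rem:sub_cluster_monomial} $S\in\simpObj'$, so $S$ is a simple of $\cT'$. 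For the basis condition: every simple $S'$ of $\cT'$ lies in $\simpObj'\subseteq\simpObj$, so $q^{\beta}[S']\in\kappa\base$ for some $\beta$; since $[S']=\frz_{F}[S']$ is pointed with support avoiding $F$, the corresponding basis element lies in $\base\cap\bUpClAlg(\frz_{F}\sd)=\base'$, so $q^{\beta}[S']\in\kappa\base'$. This verifies that $\cT'$ categorifies $(\bUpClAlg(\frz_{F}\sd),\base')$.

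The main obstacle I anticipate is the bookkeeping around the ambient Laurent phantom algebra: making sure that the identification $\LP(\frz_{F}\sd)=\LP(\sd)$, $\cone(\frz_{F}\sd)=\cone(\sd)$ (Remark \ref{rem:inclusion_frozen_subalg}) is used consistently, that ``$\base$ is $\cone(\sd)$-pointed in $\LP(\sd)$'' really does give well-defined freezing operators $\frz_{F}$ on each $b\in\base$, and — crucially — that the support of a basis element $b\in\base\cap\bUpClAlg(\frz_{F}\sd)$ genuinely avoids $F$. The last point is not tautological: it rests on Lemma \ref{lem:f-bounds-d} (support avoiding $k$ forces regularity at $x_k$) together with the optimization argument behind Corollary \ref{cor:tri-freezing-equal-restriction}(4); if the hypotheses of that corollary (existence of $j$-optimized seeds, Theorem \ref{thm:freezing_common_triangular}) are not in force here, I would instead argue directly: $b\in\bUpClAlg(\frz_{F}\sd)$ means $\nu_j(b)\geq0$ for $j\in I_{\fv}\sqcup F$ in a suitable chart, and a pointed element regular at all the $x_k$, $k\in F$, whose Laurent expansion lies in $\LP(\sd)$, must have $y$-support disjoint from $F$ by the argument in the proof of Lemma \ref{lem:f-bounds-d} applied successively to each $k\in F$. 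This direct route avoids relying on the triangular-basis hypotheses and is the version I would write up.
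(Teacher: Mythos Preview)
Your overall strategy matches the paper's: establish $K'=\bUpClAlg(\frz_F\sd)$ by two inclusions, after which the cluster-monomial and basis conditions follow from Remarks~\ref{rem:inclusion_frozen_subalg} and~\ref{rem:sub_cluster_monomial} as you say.

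For $K'\subseteq\bUpClAlg(\frz_F\sd)$ you have the right ingredients but misplace one. From $q$-commutation you correctly get $\frz_F[S]=[S]\in\upClAlg(\frz_F\sd)$; you then write ``combined with regularity at the frozen $x_j$'' without explaining regularity at the \emph{new} frozen variables $x_k$, $k\in F$. This is exactly where Lemma~\ref{lem:f-bounds-d} belongs, in its stated direction: $q$-commutation with $x_k$ forces $k\notin\supp[S]$, and then the lemma gives $\nu_k([S])\ge0$. The paper makes this step explicit; you cite the lemma only later, in the wrong inclusion.

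The genuine gap is in the reverse inclusion. You must show that $b\in\base'$ $q$-commutes with each $x_k$, equivalently $\supp b\cap F=\emptyset$. None of your three routes works. Lemma~\ref{lem:f-bounds-d} goes from support to regularity, not the reverse. Corollary~\ref{cor:tri-freezing-equal-restriction}(4) is proved only for the common triangular basis via Theorem~\ref{thm:freezing_common_triangular}, hypotheses not available here. And your ``direct route'' --- that a pointed element of $\upClAlg(\sd)$ regular at $x_k=0$ must have $y$-support avoiding $k$ --- is false in general: with $I_\ufv=\{1\}$, $I_\fv=\{2\}$, $\tB=(0,1)^T$, the element $x_1*x_1'$ is $0$-pointed in $\LP(\sd)$, lies in $\bUpClAlg(\frz_{\{1\}}\sd)=\kk[x_1,x_2]$, has $1$ in its support, and does not $q$-commute with $x_1$. (That element is of course not in $\base$, so the theorem survives; the point is that your argument cannot distinguish it from a basis element.) The paper's route here is different and more direct: since $k\in F$ is frozen in $\frz_F\sd$, compatibility gives $\lambda(f_k,p^*_{\frz_F\sd}e_j)=0$ for every $j\in I_\ufv\setminus F$, so $x_k$ $q$-commutes with any element pointed in $\LP(\frz_F\sd)$; the paper then asserts that $[S]\in\bUpClAlg(\frz_F\sd)$ is pointed in $\LP(\frz_F\sd)$. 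That pointedness (equivalent to $\supp[S]\cap F=\emptyset$) is where the actual content lies, and it is what you should be justifying --- not by inverting Lemma~\ref{lem:f-bounds-d}.
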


\begin{proof}

Since $\base'$ contains all cluster monomials of $\bUpClAlg(\frz_{F}\sd)$,
it remains to show $K'=\bUpClAlg(\frz_{F}\sd)$.

$\forall S\in\simpObj'\subset\cT'$, since $[S]$ $q$-commute with
$x_{k}$, $k\in F$, we have $\supp[S]\cap F=\emptyset$. Then Lemma
\ref{lem:f-bounds-d} implies $\nu_{k}([S])\geq0$, $\forall k\in F$.
Note that $\nu_{j}([S])\geq0$ for $j\in I_{\fv}$ since $[S]\in\bUpClAlg$.
Combining with $[S]=\frz_{F}[S]\in\upClAlg(\frz_{F}\sd)$, we obtain
$[S]\in\bUpClAlg(\frz_{F}\sd)$. We deduce that $K'\subset\bUpClAlg(\frz_{F}\sd)$.

We have $\Span_{\kk}\{[S]|S\in\simpObj,[S]\in\bUpClAlg(\frz_{F}\sd)\}=\Span_{\kk}(\base\cap\bUpClAlg)$,
which coincides with $\bUpClAlg(\frz_{F}\sd)$ by our assumption.
Take any $S\in\simpObj$ such that $[S]\in\bUpClAlg(\frz_{F}\sd)$.
Then $[S]$ $q$-commutes with $x_{k}$, $k\in F$, since $x_{k}$
are frozen variables in $\bUpClAlg(\frz_{F}\sd)$ and $[S]$ is pointed
in $\LP(\frz_{F}\sd)$. So we obtain $S\in\simpObj'\subset\cT'$.
We deduce that $\bUpClAlg(\frz_{F}\sd)$ is contained in $K'$.

\end{proof}

Now assume $\upClAlg$ has the common triangular basis $\can$. Then
$\upClAlg(\frz_{F}\sd)$ has the common triangular basis $\can'=\frz_{F}\can$
(Theorem \ref{thm:sub_cluster_triangular_basis}). Assume that $\sd$
and $\frz_{F}\sd$ can be optimized. Then $\overline{\can}:=\can\cap\bUpClAlg$
is a basis of $\bUpClAlg$ and $\overline{\can}'=\can'\cap\bUpClAlg(\frz_{F}\sd)$
is a basis of $\bUpClAlg(\frz_{F}\sd)$.

\begin{Cor}\label{cor:subcategorify-combine}

If $\cT$ categorifies $(\bUpClAlg,\overline{\can})$, then $\cT'$
categorifies $(\bUpClAlg(\frz_{F}\sd),\overline{\can}')$.

\end{Cor}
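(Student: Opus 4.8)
The plan is to deduce this directly from Theorem \ref{thm:subcategory-b-upcluster}, applied with $\base=\overline{\can}=\can\cap\bUpClAlg$ and $\base'=\overline{\can}'$. First I would record that $(\bUpClAlg,\overline{\can})$ is a based cluster algebra in the sense of Definition \ref{def:based-cluster-algebra} whose basis consists of pointed elements of $\LP(\sd)$ with pairwise distinct degrees in $\cone(\sd)$: since $\can$ is the common triangular basis, its elements $\can_m$ are $[m]$-pointed, hence in particular $m$-pointed in $\LP(\sd)$, and $\overline{\can}$ is the sub-basis of $\can$ consisting of those $\can_m$ lying in $\bUpClAlg$. The axioms of Definition \ref{def:based-cluster-algebra} for $(\bUpClAlg,\overline{\can})$ are part of the standing hypothesis that $\cT$ categorifies it, and $\overline{\can}'$ is a basis of $\bUpClAlg(\frz_F\sd)$ as recorded in the sentence preceding the statement (using that $\sd$ and $\frz_F\sd$ can be optimized, via Proposition \ref{prop:compactified_basis} and Theorem \ref{thm:sub_cluster_triangular_basis}).

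The one identity to verify is $\overline{\can}\cap\bUpClAlg(\frz_F\sd)=\overline{\can}'$. By Proposition \ref{prop:good-sub-up-cl-alg}, $\frz_F\sd$ is a good sub seed of $\sd$, so $\bUpClAlg(\frz_F\sd)\subset\bUpClAlg(\sd)$; hence $\overline{\can}\cap\bUpClAlg(\frz_F\sd)=\can\cap\bUpClAlg(\sd)\cap\bUpClAlg(\frz_F\sd)=\can\cap\bUpClAlg(\frz_F\sd)$. Since $\can$ is the common triangular basis, the hypotheses of Theorem \ref{thm:freezing_common_triangular} hold, so Corollary \ref{cor:tri-freezing-equal-restriction}(3) gives $\can\cap\bUpClAlg(\frz_F\sd)=\frz_F\can\cap\bUpClAlg(\frz_F\sd)=\can'\cap\bUpClAlg(\frz_F\sd)=\overline{\can}'$, as wanted.

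With these in place, Theorem \ref{thm:subcategory-b-upcluster}, applied to the based cluster algebra $(\bUpClAlg,\overline{\can})$ together with the subset $\overline{\can}\cap\bUpClAlg(\frz_F\sd)=\overline{\can}'$, which we have just seen is a basis of $\bUpClAlg(\frz_F\sd)$, yields that $\cT'$ categorifies $(\bUpClAlg(\frz_F\sd),\overline{\can}')$, which is the assertion. I do not expect a genuine obstacle here; the argument is an assembly of previously established facts, and the only points demanding care are verifying that each cited result applies verbatim---in particular that the subcategory $\cT'$ is the one attached to the chosen $F\subset I_{\ufv}$ as in Section \ref{subsec:Monoidal-subcategories}, and that the optimizability assumptions feeding the existence of the compactified bases $\overline{\can}$ and $\overline{\can}'$ are precisely those invoked just before the statement.
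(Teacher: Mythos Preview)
Your argument is correct and follows essentially the same route as the paper: both use Proposition \ref{prop:good-sub-up-cl-alg} for the inclusion $\bUpClAlg(\frz_F\sd)\subset\bUpClAlg(\sd)$, invoke Corollary \ref{cor:tri-freezing-equal-restriction}(3) to identify $\overline{\can}\cap\bUpClAlg(\frz_F\sd)$ with $\overline{\can}'$, and conclude via Theorem \ref{thm:subcategory-b-upcluster}. The only difference is expository---you spell out more of the standing hypotheses explicitly, while the paper is terser.
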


\begin{proof}

Proposition \ref{prop:good-sub-up-cl-alg} implies $\bUpClAlg(\frz_{F}\sd)\subset\bUpClAlg(\sd)$.
Corollary \ref{cor:tri-freezing-equal-restriction}(3) implies that
the basis $\overline{\can}'=\frz_{F}\can\cap\bUpClAlg(\frz_{F}\sd)$
equals $\can\cap\bUpClAlg(\frz_{F}\sd)$, which is $\can\cap(\bUpClAlg\cap\bUpClAlg(\frz_{F}\sd))=(\can\cap\bUpClAlg)\cap\bUpClAlg(\frz_{F}\sd)=\overline{\can}\cap\bUpClAlg(\frz_{F}\sd)$.
The desired claim follows from Theorem \ref{thm:subcategory-b-upcluster}.

\end{proof}

\section{Cluster algebras associated with signed words\label{sec:Cluster-algebras-signed-words}}

\subsection{Seeds from trapezoids}

\subsubsection*{Signed words\label{subsec:Signed-words}}

A signed word $\ubi=(\bi_{1},\bi_{2},\ldots,\bi_{l})$ is a sequence
of the elements of $\Z\backslash\{0\}$. For $1\leq j\leq k\leq l$,
denote $\ubi_{[j,k]}=(\bi_{j},\bi_{j+1},\ldots,\bi_{k})$ . For any
$k\in[1,l]$, define its successor and predecessor by
\begin{align*}
k[1]:=\min(\{k'\in[k+1,l],\ |\bi_{k'}|=|\bi_{k}|\}\cup\{+\infty\}),\\
k[-1]:=\min(\{k'\in[1,k-1],\ |\bi_{k'}|=|\bi_{k}|\}\cup\{-\infty\}).
\end{align*}
Inductively, define $k[d\pm1]:=k[d][\pm1]$ when $k[d]\in\Z$, $d\in\Z$.
$\forall a\in\N_{>0}$, define 
\begin{gather*}
O^{\ubi}([j,k];a)=|\{s\in[j,k],\ |\bi_{s}|=a\}|,\ O^{\ubi}(a):=O^{\ubi}([1,l];a),\\
o_{-}^{\ubi}(k):=O^{\ubi}([1,k-1];|\bi_{k}|),\ o_{+}^{\ubi}(k):=O^{\ubi}([k+1,l];|\bi_{k}|).
\end{gather*}
Denote $k^{\min}=k[o_{-}^{\ubi}(k)]$, $k^{\max}=k[o_{+}^{\ubi}(k)]$.

Define the length $l(\ubi):=l$, $-\ubi:=(-\bi_{1},\ldots,-\bi_{l})$,
and the opposite signed word $\ubi\op=(\bi_{l},\cdots,\bi_{2},\bi_{1})$.
Define the support $\supp\ubi:=\{|\bi_{k}|\ |\ k\in[1,l]\}$. For
any $s\in\N$, we also denote $s$-copies of $\ubi$ by $\ubi^{s}:=(\ubi,\ldots,\ubi)$.

Introduce $I(\ubi):=\{\binom{a}{d}^{\ubi}|a\in\supp\ubi,0\leq d<O^{\ubi}(a)\}$.
Then we have an isomorphism $[1,l]\simeq I(\ubi)$, such that $k$
is identified $\binom{|\bi_{k}|}{o_{-}^{\ubi}(k)}^{\ubi}$. The order
$<$ on $[1,l]$ induces the order $<_{\ubi}$ on $I(\ubi)$. We also
introduce $\ddI(\ubi):=\{\binom{a}{-1}|a\in J\}\sqcup I(\ubi)$.

We often omit the symbol $\ubi$ for simplicity. We will identify
$[1,l]$ and $I(\ubi)$ and use their symbols interchangeably.

If $\ubi'$ is another signed word such that $O^{\ubi}(a)=O^{\ubi'}(a)$,
$\forall a\in J$, we will also identify $I(\ubi)$ with $I(\ubi')$
such that $\binom{a}{d}^{\ubi}=\binom{a}{d}^{\ubi'}$. But $<_{\ubi}$
and $<_{\ubi'}$ are different.

\begin{Eg}

Take $\ubi=(1,2,1,-2,-1,-2)$. Then the ordered set $I(\ubi)=\{\binom{1}{0}<\binom{2}{0}<\binom{1}{1}<\binom{2}{1}<\binom{1}{2}<\binom{2}{2}\}$
is isomorphic to $[1,6]$. For $\ubi'=(2,1,-2,-2,-1,-1)$, we have
$I(\ubi)=I(\ubi')$ but their isomorphisms with $[1,6]$ are different.

\end{Eg}

\subsubsection*{Positive Braids}

Choose and fix a finite subset $J\subset\Z_{>0}$. Let $C=(C_{ab})_{a,b\in J}$
denote a generalized Cartan matrix with symmetrizers $\symm_{a}\in\Z_{>0}$:
$C_{aa}=2$, $C_{ab}\in\Z_{\leq0}$ $\forall a\neq b$, and $\symm_{b}C_{ab}=\symm_{a}C_{ba}$.

The monoid $\Br^{+}$ of positive braids is generated by $\sigma_{a}$,
$\forall a\in J$, such that
\begin{align*}
\sigma_{a}\sigma_{b}=\sigma_{b}\sigma_{a} & \text{ if }C_{ab}C_{ba}=0, & \sigma_{a}\sigma_{b}\sigma_{a}=\sigma_{b}\sigma_{a}\sigma_{b} & \text{ if }C_{ab}C_{ba}=1,\\
(\sigma_{a}\sigma_{b})^{2}=(\sigma_{b}\sigma_{a})^{2} & \text{ if }C_{ab}C_{ba}=2, & (\sigma_{a}\sigma_{b})^{3}=(\sigma_{b}\sigma_{a})^{3} & \text{ if }C_{ab}C_{ba}=3.
\end{align*}
Let $e$ be its identity element. The generalized braid group $\Br$
is generated by $\sigma_{a}^{\pm}$. 

A word $\ueta=(\eta_{1},\eta_{2},\ldots,\eta_{l})$ is a sequence
formed from the letters of $J$. We associate with it the positive
braid $\beta_{\ueta}:=\sigma_{\eta_{1}}\cdots\sigma_{\eta_{l}}\in\Br^{+}$.
The Weyl group $W$ is the quotient of $\Br$ by the relations $\sigma_{a}^{2}=\Id$,
$\forall a$. Denote the image of $\beta\in\Br$ in $W$ by $[\beta]$.
Denote $w_{\ueta}:=[\beta_{\ueta}]$ and $s_{a}:=[\sigma_{a}]$. The
length of an element $w\in W$, denoted $l(w)$, is defined to be
the minimum of the length of $\ueta$ such that $w=w_{\ueta}$ in
$W$, in which case we say $\ueta$ is a reduced word.

Let $(\uzeta,\ueta)$ denote any pair of words. We will recall the
seed $\dsd(\uzeta,\ueta,\Delta)$ associated to any triangulation
$\Delta$ of a trapezoid defined in \cite{shen2021cluster}. We will
also denote it by $\dsd(\ubi)$ for the corresponding signed word
$\ubi$. See Example \ref{eg:seed_triangulated_trapezoid}.

\subsubsection*{Triangulated trapezoids}

Let $\Sigma_{\ueta}^{\uzeta}$ denote a trapezoid in $\R^{2}$ with
horizontal bases, whose top base is subdivided into intervals labeled
by $-\uzeta$ from left to right (i.e., along the $(1,0)$-direction),
and its bottom base is analogously subdivided and labeled by $\ueta$.
The marked points are the endpoints of these intervals. A diagonal
is an interior line segment in $\Sigma_{\ueta}^{\uzeta}$ connecting
a top marked point with a bottom one. A triangulation $\Delta$ is
a maximal collection of non-intersecting distinct diagonals. 

Note that $\Delta$ produces a partition of $\Sigma_{\ueta}^{\uzeta}$
into $(l(\ueta)+l(\uzeta))$-many triangles. For any triangle $T$
in the partition, denoted $T\in\Delta$, we define $\bi_{T}\in\pm J$
to be the label of its horizontal edge. Define $\sign(T):=\sign(\bi_{T})$. 

\subsubsection*{Seeds}

Draw distinct $a$-labeled horizontal lines $L_{a}$ between the bases
of $\Sigma_{\ueta}^{\uzeta}$ for $a\in J$, called layers. We insert
one relative interior point $p_{T}$ in $L_{|\bi_{T}|}\cap T$ for
each $T\in\Delta$. The collection of $L_{a}$ and $p_{T}$ is called
a string diagram, denoted $D(\uzeta,\ueta,\Delta)$.

Each layer $L_{a}$ is subdivided by the nodes into intervals, which
are labeled by the vectors $\binom{a}{-1},\binom{a}{0},\ldots,\binom{a}{O(a)-1}$
from left to right. Let $\ddI$ consist of these intervals, $I_{\ufv}:=\ddI_{\ufv}$
the bounded ones, $\ddI_{\fv}$ the unbounded ones, $I_{\fv}=\{\binom{a}{O(a)-1},\forall a\}$
the right-unbounded ones, and $I_{\fv}^{-}=\{\binom{a}{-1},\forall a\}$
the left-unbounded ones. Any triangle $T_{k}$ with $|\bi_{k}|=a$
intersects one interval $\ell_{b}=\binom{b}{O([1,k-1];b)-1}\subset L_{b}$
for each $b\ne a$, an interval $\ell_{a}^{-}=\text{\ensuremath{\binom{a}{o_{-}(k)-1}}}\subset L_{a}$
on the left of $p_{T}$, and an interval $\ell_{a}^{+}=\binom{a}{o_{-}(k)}\subset L_{a}$
on the right of $p_{T}$. It contributes a skew-symmetric $\ddI\times\ddI$
$\frac{\Z}{2}$-matrix $B_{\Gamma}^{(T)}$ whose non-zero entries
are:
\begin{itemize}
\item $(B_{\Gamma}^{(T)})_{\ell_{a}^{-},\ell_{a}^{+}}=-(B_{\Gamma}^{(T)}){}_{\ell_{a}^{+},\ell_{a}^{-}}=\sign(T)$.
\item $\forall b\neq a$, $(B_{\Gamma}^{(T)})_{\ell_{b},\ell_{a}^{-}}=-(B_{\Gamma}^{(T)})_{\ell_{b},\ell_{a}^{+}}=\Hf\sign(T)$,
$(B_{\Gamma}^{(T)})_{\ell_{a}^{+},\ell_{b}}=-(B_{\Gamma}^{(T)})_{\ell_{a}^{-},\ell_{b}}=\Hf\sign(T)$.
\end{itemize}
Denote $B_{\Gamma}:=\sum_{T\in\Delta}B_{\Gamma}^{(T)}$ and its quiver
by $\Gamma$. Define $\ddB=(b_{\ell,\ell'})_{\ell,\ell'\in\ddI}$
by
\begin{itemize}
\item $b_{\ell,\ell'}=(B_{\Gamma})_{\ell,\ell'}$ if $\ell,\ell'\subset L_{a}$
for some $a\in J$.
\item $b_{\ell,\ell'}=(B_{\Gamma})_{\ell,\ell'}(-C_{ab})$ if $\ell\subset L_{a}$,
$\ell'\subset L_{b}$, for some $a\neq b\in J$.
\end{itemize}
Then $\ddB$ is skew-symmetrizable with symmetrizers $d_{l}$, such
that $d_{l}^{\vee}=\symm_{a}$ for $\ell\subset L_{a}$. In addition,
the submatrix $\ddB_{\ddI\times I_{\ufv}}$ is a $\Z$-matrix (see
(\ref{eq:dBS_B_matrix}) or \cite[Remark 3.6]{shen2021cluster}).

The above data gives rise to a seed $\dsd(\uzeta,\ueta,\Delta)$ whose
$B$-matrix is $\ddB_{\ddI\times I_{\ufv}}$. Removing the frozen
vertices in $I_{\fv}^{-}$, we further obtain the reduced seed $\rsd(\uzeta,\ueta,\Delta)$
whose vertex set is $I:=I_{\ufv}\sqcup I_{\fv}$.

\begin{Eg}\label{eg:seed_triangulated_trapezoid}

Choose $C=\left(\begin{array}{cc}
2 & -1\\
-1 & 2
\end{array}\right)$, $\uzeta=\ueta=(1,2,1)$, and a triangulation for $\Sigma_{\ueta}^{\uzeta}$.
Figure \ref{fig:triangulated_trapezoid}(A) shows the non-zero entries
of $B_{\Gamma}^{(T)}$ for single triangles $T$, Figure \ref{fig:triangulated_trapezoid}(B)
shows the string diagram, and Figures \ref{fig:triangulated_trapezoid}(C)(D)
show the quiver associated with $\ddB$ via $\ddI\simeq[-2,-1]\sqcup[1,6]$.
Circular nodes denote unfrozen vertices, and rectangular nodes denote
frozen vertices. Dashed arrows have weight $\frac{1}{2}$.

\begin{figure}[h]
\caption{The quiver for $\ddB(\dsd(1,2,1,-1,-2,-1))$. }

\label{fig:triangulated_trapezoid}

\subfloat[]{

\begin{tikzpicture}  [node distance=48pt,on grid,>={Stealth[length=4pt,round]},bend angle=45, inner sep=0pt,      pre/.style={<-,shorten <=1pt,>={Stealth[round]},semithick},    post/.style={->,shorten >=1pt,>={Stealth[round]},semithick},  unfrozen/.style= {circle,inner sep=1pt,minimum size=1pt,draw=black!100,fill=red!50},  frozen/.style={rectangle,inner sep=1pt,minimum size=12pt,draw=black!75,fill=cyan!50},   point/.style= {circle,inner sep=0pt, outer sep=1.5pt,minimum size=1.5pt,draw=black!100,fill=black!100},   boundary/.style={-,draw=cyan},   internal/.style={-,draw=red},    every label/.style= {black}]   
\draw[-] (-146.755pt,60pt) node (m1){}--(-133.0575pt,0) node (m2){}; \draw[-] (m1)--(-160.17pt,0) node(m3){}; \draw[-] (m2)--(m3);
\node[point] (m4) at (-146.14pt,21.4225pt) {}; \draw[-,draw=cyan!100] (-158.4925pt,21.4225pt)--(m4); \draw[-,draw=cyan!100] (-135.605pt,21.4225pt)--(m4); \draw[-,draw=cyan!100] (-158.4925pt,37.155pt)--(-135.5625pt,37.155pt) node(m6){};
\node at (-165.225pt,37.155pt) {$\color{cyan}L_b$}; \node at (-165.225pt,21.4225pt) {$\color{cyan}L_a$};
\draw[-] (-116.7125pt,0pt) node (n1){}--(-104.4375pt,60pt) node (n2){}; \draw[-] (n1)--(-128.705pt,60pt) node(n3){}; \draw[-] (n2)--(n3); \node[point] (n4) at (-116.0975pt,21.4225pt) {}; \draw[-,draw=cyan!100] (-129.8725pt,21.4225pt)--(n4);
\draw[-,draw=cyan!100] (-104.055pt,21.4225pt)--(n4);
\draw[-,draw=cyan!100] (-129.8725pt,37.155pt)--  (-104.1825pt,37.155pt) ;
\node at (-147.155pt,-5pt) {a};
\node at (-115.605pt,65pt) {-a};
\draw[->,draw=teal!100] (-150.845pt,22.4225pt) node (w8) {} .. controls (-146.0275pt,25.4225pt) .. (-142.3375pt,22.4225pt) node (w5) {}; \draw[->,dotted,draw=teal!100] (-149.4225pt,35.845pt) node (w10) {} -- (w8); \draw[<-,dotted,draw=teal!100]  (-143.6475pt,35.845pt) node(w11){} -- (w5);
\draw[<-,draw=teal!100] (-123.7325pt,22.4225pt) node (y8) {} .. controls (-116.07pt,25.4225pt) .. (-109.535pt,22.4225pt) node (y5) {}; \draw[<-,dotted,draw=teal!100] (-119.465pt,35.845pt) node (w10) {} -- (y8); \draw[->,dotted,draw=teal!100]  (-113.69pt,35.845pt) node(w11){} -- (y5);
\end{tikzpicture}

}$\qquad$\subfloat[]{

\begin{tikzpicture}  [node distance=48pt,on grid,>={Stealth[length=4pt,round]},bend angle=45, inner sep=0pt,      pre/.style={<-,shorten <=1pt,>={Stealth[round]},semithick},    post/.style={->,shorten >=1pt,>={Stealth[round]},semithick},  unfrozen/.style= {circle,inner sep=1pt,minimum size=1pt,draw=black!100,fill=red!50},  frozen/.style={rectangle,inner sep=1pt,minimum size=12pt,draw=black!75,fill=cyan!50},   point/.style= {circle,inner sep=0pt, outer sep=1.5pt,minimum size=1.5pt,draw=black!100,fill=black!100},   boundary/.style={-,draw=cyan},   internal/.style={-,draw=red},    every label/.style= {black}]     \node at (-42.155pt,65pt) {$-\underline{\zeta}$}; \node at (-64.915pt,-5pt) {$\underline{\eta}$};
  
                                  \node[point] (v1) at (-44.7175pt,21.4225pt) {};    \node[point] (v2) at (-19.225pt,21.4225pt) {};        \node[point] (v3) at (-7.76pt,21.4225pt) {};    \node[point] (v4) at (9.155pt,21.4225pt) {};            \node[point] (z1) at (-32.845pt,37.155pt) {};    \node[point] (z2) at (0.05,37.155pt) {};        
 \draw[-,draw=cyan!100] (-61.3375pt,21.4225pt)--(v1); \draw[-,draw=cyan!100] (28.62pt,21.4225pt)--(v4);
 \draw[-,draw=cyan!100] (-59.915pt,37.155pt)--(z1);  \draw[-,draw=cyan!100] (28.62pt,37.155pt)--(z2);  
\draw[-] (11.55pt,60pt) node (v6) {}--(30.1275pt,60pt) node (v7) {};
\draw[-] (-1.15,60pt) node (t1){}--(-60pt,0) node (b1){}; \draw[-] (t1)--(-40pt,0) node(b2){}; \draw[-] (t1)--(-22.845pt,0) node(b3){}; \draw[-] (t1)--(0,0) node (b4){}; \draw[-] (b3)--(b4); \draw[-] (b2)--(b3); \draw[-] (b1)--(b2); \draw[-] (t1)--(-8.45pt,60pt) node(t2){};
\draw[-] (t2) node (v9) {}--(v6);
\node at (-52.7175pt,45pt) {\color{cyan}$\binom{2}{-1}$};
\node at (-56.7725pt,13.5775pt) {\color{cyan}$\binom{1}{-1}$}; \node at (-35.6475pt,13.5775pt) {\color{red}$\binom{1}{0}$}; \node at (-15.69pt,45pt) {\color{red}$\binom{2}{0}$}; \node at (-0.5,13.5775pt) {\color{red}$\binom{1}{1}$}; \node at (0.705pt,13.5775pt) {\color{red}$\binom{1}{2}$}; \node at (14.9pt,45pt) {\color{cyan}$\binom{2}{1}$}; \node at (17.7875pt,13.5775pt) {\color{cyan}$\binom{1}{3}$};
\node at (-72.8025pt,21.4225pt) {\color{cyan}$L_1$}; \node at (-72.8025pt,37.155pt) {\color{cyan}$L_2$};
\draw[<-,draw=teal!100] (-11.62pt,22.4225pt)  node (v16) {} .. controls (-6.31pt,25.4225pt) .. (-4.5775pt,22.4225pt) node (v13) {}; \draw[<-,draw=teal!100] (6.1825pt,22.4225pt) node (v15) {} .. controls (9.5775pt,25.4225pt) .. (18.9575pt,22.4225pt) node (v18) {}; \draw[->,draw=teal!100] (-23.6475pt,22.4225pt) node (v12) {} .. controls (-17.4075pt,25.4225pt) .. (v16); \draw[->,draw=teal!100] (-50.675pt,22.4225pt) node (v8) {} .. controls (-45.8575pt,25.4225pt) .. (-39.3225pt,22.4225pt) node (v5) {};
\draw[<-,draw=teal!100] (-2.3525pt,35.845pt) node (v14) {} .. controls (2.465pt,32.845pt) .. (4.7325pt,35.845pt) node (v16) {}; \draw[->,draw=teal!100] (-39.295pt,35.845pt) node (v10) {} .. controls (-33.055pt,32.845pt) .. (-25.0975pt,35.845pt) node (v11) {};
       \node at (-22.7175pt,65pt) {-1};    \node at (-2.7175pt,65pt) {-2};    \node at (17.2825pt,65pt) {-1};            \node at (-30pt,-5pt){2};    \node at (-10pt,-5pt){1};    \node at (-50pt,-5pt){1};           \draw  (v9) edge (b4); \draw  (v6) edge (b4); \draw  (v7) edge (b4);
\draw[-,draw=cyan!100]   (z2) edge (z1); \draw[-,draw=cyan!100]   (v1) edge (v2); \draw[-,draw=cyan!100]  (v2) edge (v3); \draw[-,draw=cyan!100]   (v3) edge (v4);
\draw[->,teal]  (v5) edge (v10); \draw[->,teal]  (v11) edge (v12); \draw[<-,teal]  (v13) edge (v14); \draw[->,teal]  (v15) edge (v16); \node (v16) at (-50.69pt,35.845pt) {}; \node (v17) at (18.38pt,35.845pt) {}; \draw[->,dotted,teal]  (v16) edge (v8); \draw[->,dotted,teal]  (v17) edge (v18);
\node (v19) at (-11.775pt,35.845pt) {}; \node (v20) at  (-11.62pt,22.4225pt) {};
\draw[<-,teal]  (v19) edge (v20); 
\end{tikzpicture}}

\subfloat[]{

\begin{tikzpicture}  [node distance=48pt,on grid,>={Stealth[length=4pt,round]},bend angle=45, inner sep=0pt,      pre/.style={<-,shorten <=1pt,>={Stealth[round]},semithick},    post/.style={->,shorten >=1pt,>={Stealth[round]},semithick},  unfrozen/.style= {circle,inner sep=1pt,minimum size=1pt,draw=black!100,fill=red!50},  frozen/.style={rectangle,inner sep=1pt,minimum size=12pt,draw=black!75,fill=cyan!50},   point/.style= {circle,inner sep=0pt, outer sep=1.5pt,minimum size=1.5pt,draw=black!100,fill=black!100},   boundary/.style={-,draw=cyan},   internal/.style={-,draw=red},    every label/.style= {black}]     
\node[frozen] (q-2) at (99.575pt,42.675pt) {$\binom{2}{-1}$};
\node[frozen] (q-1) at (99.575pt,14.225pt) {$\binom{1}{-1}$}; \node[unfrozen] (q1) at (128.025pt,14.225pt) {$\binom{1}{0}$}; \node[unfrozen] (q2) at (142.25pt,42.675pt) {$\binom{2}{0}$}; \node[unfrozen] (q3) at (5.5,14.225pt) {$\binom{1}{1}$}; \node[unfrozen] (q4) at (184.925pt,14.225pt) {$\binom{1}{2}$}; \node[frozen] (q5) at (199.15pt,42.675pt) {$\binom{2}{1}$}; \node[frozen] (q6) at (213.375pt,14.225pt) {$\binom{1}{3}$};
\draw[->,dotted,teal]  (q-2) edge (q-1); \draw[->,dotted,teal]  (q5) edge (q6);
\draw[->,teal]  (q-1) edge (q1); \draw[->,teal]   (q1) edge (q-2); \draw[->,teal]   (q-2) edge (q2); \draw[->,teal]   (q2) edge (q1); \draw [->,teal]  (q1) edge (q3); \draw [->,teal]  (q4) edge (q3); \draw [->,teal]  (q2) edge (q4); \draw [->,teal]  (q5) edge (q2); \draw [->,teal]  (q3) edge (q2); \draw [->,teal]  (q4) edge (q5); \draw [->,teal]  (q6) edge (q4); \end{tikzpicture}}$\qquad$\subfloat[]{

\begin{tikzpicture}  [node distance=48pt,on grid,>={Stealth[length=4pt,round]},bend angle=45, inner sep=0pt,      pre/.style={<-,shorten <=1pt,>={Stealth[round]},semithick},    post/.style={->,shorten >=1pt,>={Stealth[round]},semithick},  unfrozen/.style= {circle,inner sep=1pt,minimum size=1pt,draw=black!100,fill=red!50},  frozen/.style={rectangle,inner sep=1pt,minimum size=12pt,draw=black!75,fill=cyan!50},   point/.style= {circle,inner sep=0pt, outer sep=1.5pt,minimum size=1.5pt,draw=black!100,fill=black!100},   boundary/.style={-,draw=cyan},   internal/.style={-,draw=red},    every label/.style= {black}]     
\node[frozen] (q-2) at (99.575pt,42.675pt) {$-2$};
\node[frozen] (q-1) at (99.575pt,14.225pt) {$-1$}; \node[unfrozen] (q1) at (128.025pt,14.225pt) {$1$}; \node[unfrozen] (q2) at (142.25pt,42.675pt) {$2$}; \node[unfrozen] (q3) at (5.5,14.225pt) {$3$}; \node[unfrozen] (q4) at (184.925pt,14.225pt) {$4$}; \node[frozen] (q5) at (199.15pt,42.675pt) {$5$}; \node[frozen] (q6) at (213.375pt,14.225pt) {$6$};
\draw[->,dotted,teal]  (q-2) edge (q-1); \draw[->,dotted,teal]  (q5) edge (q6);
\draw[->,teal]  (q-1) edge (q1); \draw[->,teal]   (q1) edge (q-2); \draw[->,teal]   (q-2) edge (q2); \draw[->,teal]   (q2) edge (q1); \draw [->,teal]  (q1) edge (q3); \draw [->,teal]  (q4) edge (q3); \draw [->,teal]  (q2) edge (q4); \draw [->,teal]  (q5) edge (q2); \draw [->,teal]  (q3) edge (q2); \draw [->,teal]  (q4) edge (q5); \draw [->,teal]  (q6) edge (q4); \end{tikzpicture}}
\end{figure}

\end{Eg}

\subsection{Quantum seeds associated to signed words}

Successively label the triangles in the triangulation $\Delta$ of
$\Sigma_{\ueta}^{\uzeta}$ by $T_{1},T_{2},\ldots$ from left to right,
we obtain $\ubi=(\bi_{1},\bi_{2},\ldots):=(\bi_{T_{1}},\bi_{T_{2}},\ldots)$.
Then $\ubi$ is a (riffle) shuffle of the signed words $-\uzeta$
and $\ueta$. Conversely, any signed word $\ubi$ determines a triangulation
$\Delta$. For example, $\ubi=(1,2,1,-1,-2,-1)$ in Figure \ref{fig:triangulated_trapezoid}.
Therefore, we can denote $\dsd(\uzeta,\ueta,\Delta)=\dsd(\ubi)$ and
$\rsd(\uzeta,\ueta,\Delta)=\rsd(\ubi)$. 

We identify the set $\ddI$ of intervals with $(-J)\sqcup I(\ubi)$,
sending $\binom{a}{-1}$ to $-a$ and $\binom{a}{d}$ to $\binom{a}{d}^{\ubi}$,
$\forall a\in J,d\in[0,O^{\ubi}(a)-1]$. Then we have $I=I(\ubi)\simeq[1,l]$,
such that the interval $\ell_{|\bi_{k}|}^{+}$ associated with $T_{k}$
is identified with $\binom{|\bi_{k}|}{o_{-}(k)}^{\ubi}$ and $k$.
We will denote the elements in $I$ by $\binom{a}{d}^{\ubi}$ and
$k\in[1,l]$ interchangeably. We also denote $\binom{a}{d}^{\ubi}$
by $\binom{a}{d}^{\rsd(\ubi)}$, $\binom{a}{d}^{\dsd(\ubi)}$, or
simply $\binom{a}{d}$.

Define $\varepsilon_{k}:=\sign(T_{k})$, $\forall k\in[1,l]$. Extend
$[1]:I\rightarrow I\sqcup\{+\infty\}$ to a map from $\ddI$ to $I\sqcup\{+\infty\}$
such that $\binom{a}{d}[1]:=\binom{a}{d+1}$, $\forall d\in[-1,O(a)-1]$,
where we understand $\binom{a}{O(a)}$ as $+\infty$.

\begin{Lem}\label{lem:dBS_B_matrix}

The matrix entries $b_{jk}$, $(j,k)\in\ddI\times I_{\ufv}$, of $\ddB$
are given as below:
\begin{align}
b_{jk} & =\begin{cases}
\varepsilon_{k} & k=j[1]\\
-\varepsilon_{j} & j=k[1]\\
\varepsilon_{k}C_{|\bi_{j}|,|\bi_{k}|} & \varepsilon_{j[1]}=\varepsilon_{k},\ j<k<j[1]<k[1]\\
\varepsilon_{k}C_{|\bi_{j}|,|\bi_{k}|} & \varepsilon_{k}=-\varepsilon_{k[1]},\ j<k<k[1]<j[1]\\
-\varepsilon_{j}C_{|\bi_{j}|,|\bi_{k}|} & \varepsilon_{k[1]}=\varepsilon_{j},\ k<j<k[1]<j[1]\\
-\varepsilon_{j}C_{|\bi_{j}|,|\bi_{k}|} & \varepsilon_{j}=-\varepsilon_{j[1]},\ k<j<j[1]<k[1]\\
0 & \text{otherwise}
\end{cases}.\label{eq:dBS_B_matrix}
\end{align}

\end{Lem}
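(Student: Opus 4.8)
The plan is to compute the entries of $B_\Gamma$ directly from the local contributions $B_\Gamma^{(T)}$ and then multiply by the Cartan factor $-C_{ab}$ (for entries between different layers) as prescribed in the definition of $\ddB$. First I would record the geometry of a single triangle: the triangle $T_m$ with $|\bi_m|=a$ meets exactly two intervals on its own layer $L_a$, namely $\ell_a^{-}(T_m)=\binom{a}{o_-(m)-1}$ just left of $p_{T_m}$ and $\ell_a^{+}(T_m)=\binom{a}{o_-(m)}$ just right of it (which, under $I\simeq[1,l]$, is the vertex $m$ itself), and it meets exactly one interval $\ell_b(T_m)=\binom{b}{O([1,m-1];b)-1}$ on each other layer $L_b$. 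Consequently a vertex $j$ on layer $L_a$ is a layer-$a$ interval of precisely the two triangles $T_j$ (as $\ell_a^{+}$) and $T_{j[1]}$ (as $\ell_a^{-}$), and is the crossing interval $\ell_b$ of exactly those $T_m$ with $|\bi_m|=b$ and $\ell_b(T_m)=j$.

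For the same-layer case $|\bi_j|=|\bi_k|=a$: the formula for $B_\Gamma^{(T)}$ has a nonzero entry between two layer-$a$ intervals only for the adjacent pair $(\ell_a^{-},\ell_a^{+})$ of a single triangle, so only a triangle having both $j$ and $k$ among its layer-$a$ intervals contributes, which forces $j,k$ to be consecutive $a$-intervals. If $k=j[1]$ this triangle is $T_k$, giving $(B_\Gamma)_{jk}=(B_\Gamma^{(T_k)})_{\ell_a^{-},\ell_a^{+}}=\varepsilon_k$, hence $b_{jk}=\varepsilon_k$; if $j=k[1]$ it is $T_j$, giving $b_{jk}=-\varepsilon_j$; otherwise $b_{jk}=0$. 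This yields the first two lines of (\ref{eq:dBS_B_matrix}) and the same-layer instances of the last line.

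For the different-layer case $|\bi_j|=a\neq b=|\bi_k|$, so $b_{jk}=-(B_\Gamma)_{jk}\,C_{ab}$, I would first observe that a triangle $T_m$ can contribute to $(B_\Gamma)_{jk}$ only if $|\bi_m|\in\{a,b\}$ (an entry between two distinct crossing intervals is always zero), which by the previous paragraph leaves exactly the four triangles $T_j,T_{j[1]},T_k,T_{k[1]}$. Unwinding the signs in the definition of $B_\Gamma^{(T)}$ gives: $T_j$ contributes $+\Hf\varepsilon_j$ precisely when $\ell_b(T_j)=k$, i.e.\ $k<j\le k[1]$; $T_{j[1]}$ contributes $-\Hf\varepsilon_{j[1]}$ precisely when $k<j[1]\le k[1]$; $T_k$ contributes $-\Hf\varepsilon_k$ precisely when $j<k\le j[1]$; and $T_{k[1]}$ contributes $+\Hf\varepsilon_{k[1]}$ precisely when $j<k[1]\le j[1]$ (with the convention that if $j[1]$ or $k[1]$ is $+\infty$, i.e.\ a last occurrence or a right-unbounded frozen interval, the corresponding triangle is simply absent). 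One then runs through the possible orderings of $j,k,j[1],k[1]$ on the line: in each of the four middle cases of (\ref{eq:dBS_B_matrix}) exactly two of the four contributions are present, and the stated sign condition ($\varepsilon_{j[1]}=\varepsilon_k$, $\varepsilon_k=-\varepsilon_{k[1]}$, etc.) is exactly what makes those two reinforce rather than cancel, so $(B_\Gamma)_{jk}=\varepsilon_k$ or $\varepsilon_j$ and $b_{jk}=\varepsilon_kC_{ab}$ or $-\varepsilon_jC_{ab}$ after multiplying by $-C_{ab}$; in every other ordering either no contribution is present or the two present ones carry opposite signs and cancel, giving $b_{jk}=0$.

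The main obstacle is purely the bookkeeping in the last paragraph: tracking the four signs correctly and checking the $\Hf$-valued entries genuinely combine into $\Z$-valued ones, together with the edge conventions when $j[1]$ or $k[1]=+\infty$. None of this is conceptually hard — the case list in (\ref{eq:dBS_B_matrix}) is precisely the catalogue of which configuration of $T_j,T_{j[1]},T_k,T_{k[1]}$ occurs — but it is the only part that requires care; cf.\ \cite[Remark 3.6]{shen2021cluster}.
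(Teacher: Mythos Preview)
Your proposal is correct and follows essentially the same approach as the paper: identify the at most four triangles $T_j,T_{j[1]},T_k,T_{k[1]}$ that can contribute to $(B_\Gamma)_{jk}$ when $|\bi_j|\neq|\bi_k|$, compute each of their half-integer contributions with the precise range conditions, and then observe that in each of the four nontrivial orderings of $j,k,j[1],k[1]$ exactly two contributions survive, reinforcing under the stated sign condition and cancelling otherwise. The paper's proof is merely a more compressed version of the same case analysis (labelling the four contributions (i)--(iv) and pairing them off), so there is no substantive difference.
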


\begin{proof}

The case $|\bi_{j}|=|\bi_{k}|$ is trivial. When $|\bi_{j}|\neq|\bi_{k}|$,
the following triangles contribute to $b_{jk}$.

(i) $T_{k}$ such that $j<k<j[1]$: We have $\ell_{|\bi_{k}|}^{+}=k$
and it contributes $\varepsilon_{k}\Hf C_{|\bi_{j}|,|\bi_{k}|}$.

(ii) $T_{k[1]}$ such that $j<k[1]<j[1]$ : We have $\ell_{|\bi_{k}|}^{-}=k$
and contributes $-\varepsilon_{k[1]}\Hf C_{|\bi_{j}|,|\bi_{k}|}$.

(iii) $T_{j}$ such that $k<j<k[1]$: We have $\ell_{|\bi_{j}|}^{+}=j$
and it contributes $-\varepsilon_{j}\Hf C_{|\bi_{j}|,|\bi_{k}|}$.

(iv) $T_{j[1]}$ such that $k<j[1]<k[1]$: We have $\ell_{|\bi_{j}|}^{-}=j$
and it contributes $\varepsilon_{j[1]}\Hf C_{|\bi_{j}|,|\bi_{k}|}$.

Note that (i) and (iii) cannot appear simultaneously, and similar
for (ii) and (iv). Then (i) and (iv) sum to the 3rd identity, (i)
and (ii) sum to the 4th identity, (iii) and (ii) sum to the 5th identity,
and (iii) and (iv) sum to the 6th identity.

\end{proof}

Let $\uc$ denote a word in which each $a$ appears exactly once,
where $a\in J$, called a Coxeter word. By Lemma \ref{lem:dBS_B_matrix},
we have $\ddB_{\ddI\times I_{\ufv}}=-\tB$ for the matrix $\tB$ associated
to $(-\uc,\ubi)$ in \cite[(8.7)]{BerensteinZelevinsky05}. Then \cite[(8.5)(8.9)]{BerensteinZelevinsky05},
after a sign change, provides us a quantization matrix $\ddLambda$
for $\dsd(\ubi)$. See Example \ref{eg:global-crystal-sl2} or \cite[Example 3.2]{BerensteinZelevinsky05}.
On the other hand, $\rsd(\ubi)$ has quantization by the following
Lemma.

\begin{Lem}\label{lem:dBS-fullrank}

The matrix $\tB(\rsd(\ubi))$ is of full rank.

\end{Lem}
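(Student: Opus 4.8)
The plan is to prove that the $I \times I_{\ufv}$ matrix $\tB := \tB(\rsd(\ubi))$ has full rank by showing that its columns (indexed by $I_{\ufv}$) are $\Q$-linearly independent; since $I \supseteq I_{\ufv}$, this is precisely the assertion. The engine of the proof is a triangularity property of $\tB$ that can be read off directly from the explicit entry formula \eqref{eq:dBS_B_matrix} of Lemma \ref{lem:dBS_B_matrix}, using the successor map $k \mapsto k[1]$ as a ``leading term''.

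The key extraction I would carry out: for every $k \in I_{\ufv}$ the successor $k[1]$ is a well-defined element of $I$ (if $k = \binom{a}{d}$ then $d \le O(a)-2$, so $k[1] = \binom{a}{d+1} \in I$), and inspecting the six non-zero cases of \eqref{eq:dBS_B_matrix} one finds that $b_{jk} \ne 0$ forces $j \le k[1]$, with equality only in the case ``$j = k[1]$'', where $b_{k[1],k} = -\varepsilon_{k[1]} \in \{\pm 1\}$. Indeed, the case ``$k = j[1]$'' means $j$ is the occurrence of the letter $|\bi_k|$ immediately preceding $k$, so $j < k < k[1]$; and each of the four Cartan-factor cases imposes one of the interleaving patterns $j < k < j[1] < k[1]$, $j < k < k[1] < j[1]$, $k < j < k[1] < j[1]$, $k < j < j[1] < k[1]$, all of which give $j < k$ or $k < j < k[1]$, hence $j < k[1]$. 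Therefore
$$\max\{\, j \in I : b_{jk} \ne 0 \,\} = k[1], \qquad b_{k[1],k} = \pm 1 .$$
I would also record the (elementary) fact that $k \mapsto k[1]$ is injective on $I_{\ufv}$, its partial inverse being $j \mapsto j[-1]$.

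The linear independence is then immediate. Suppose $\sum_{k \in I_{\ufv}} c_k\, \tB_{\bullet,k} = 0$ with $c_k \in \Q$ not all zero, and choose $k^*$ among the indices with $c_{k^*} \ne 0$ for which $k^*[1]$ is maximal in the total order on $I \simeq [1,l]$. Reading off the $k^*[1]$-th coordinate gives $\sum_k c_k\, b_{k^*[1],k} = 0$. If $c_k \ne 0$ and $b_{k^*[1],k} \ne 0$, then $k^*[1] \le k[1]$ by the displayed formula while $k[1] \le k^*[1]$ by maximality, so $k[1] = k^*[1]$ and hence $k = k^*$ by injectivity; thus $0 = c_{k^*}\, b_{k^*[1],k^*} = \pm c_{k^*} \ne 0$, a contradiction.

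I do not anticipate a genuine obstacle here: the only delicate point is the bookkeeping in the case analysis of \eqref{eq:dBS_B_matrix} — in particular verifying that no Cartan-factor case can produce a row index $j \ge k[1]$ (impossible, since $k[1]$ is the very next occurrence of the letter $|\bi_k|$, so any later occurrence is already $\ge k[1]$, incompatible with the stated inequalities) and that the extremal entry $-\varepsilon_{k[1]}$ is genuinely a unit. A heavier alternative would be to invoke the known full-rank statement for double Bruhat/Bott--Samelson exchange matrices together with the identification $\ddB_{\ddI\times I_{\ufv}} = -\tB$ for the matrix of $(-\uc,\ubi)$ from \cite[(8.7)]{BerensteinZelevinsky05}, but the direct triangularity argument above is self-contained and noticeably shorter.
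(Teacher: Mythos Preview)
Your proposal is correct and takes essentially the same approach as the paper. The paper's proof is a terse version of yours: it records from \eqref{eq:dBS_B_matrix} that $\deg^{\rsd}y_{k}=-\varepsilon_{k[1]}f_{k[1]}+\sum_{i<k[1]}b_{ik}f_{i}$ (i.e., the $k$-th column of $\tB$ has its unique highest nonzero entry at row $k[1]$, with value $\pm1$), observes that $k\mapsto k[1]$ is injective on $I_{\ufv}$, and concludes linear independence of the columns --- exactly your triangularity argument, with the case analysis and the final contradiction left implicit.
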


\begin{proof}

(\ref{eq:dBS_B_matrix}) implies $\deg^{\rsd}y_{k}=-\varepsilon_{k[1]}f_{k[1]}+\sum_{i<k[1]}b_{ik}f_{i}$,
$\forall k\in I_{\ufv}$. Moreover, $[1]$ is injective on $I_{\ufv}$.
We deduce that $\deg^{\rsd}y_{k}$, $k\in I_{\ufv}$, are linearly
independent. The desired claim follows. Alternatively, the claim is
implied by \cite[Section 8.1]{casals2022cluster}.

\end{proof}

\subsection{Seeds operations\label{subsec:change-string-diagrams}}

\subsubsection*{Mutations and reflections}

Let $\varepsilon$ denote any sign. By \cite[Proposition 3.7]{shen2021cluster},
the following operations on $\sd=\dsd(\uzeta,\ueta,\Delta)=\dsd(\ubi)$
will generate new seeds $\sd'=\dsd(\uzeta,\ueta,\Delta')=\dsd(\ubi')$:
\begin{itemize}
\item (Flips) Change $\ubi=(\ldots,\varepsilon a,-\varepsilon b,\ldots)$
to $\ubi'=(\ldots,-\varepsilon b,\varepsilon a,\ldots)$, where $a,b\in J$,
$\varepsilon a=\bi_{k}$. Then $\sd'=\mu_{k}\sd$ when $a=b$, and
$\sd'=\sd$ otherwise. Note that the associated string diagrams are
related by the diagonal flip of the quadrilateral $T_{k}\cup T_{k+1}$:
\\
\begin{tikzpicture}  [scale=0.8,node distance=48pt,on grid,>={Stealth[length=4pt,round]},bend angle=45, inner sep=0pt,      pre/.style={<-,shorten <=1pt,>={Stealth[round]},semithick},    post/.style={->,shorten >=1pt,>={Stealth[round]},semithick},  interval/.style= {blue},    quiver/.style={teal},  unfrozen/.style= {circle,inner sep=1pt,minimum size=1pt,blue,draw=black!100,fill=red!50},  frozen/.style={rectangle,inner sep=1pt,minimum size=12pt,blue,draw=black!75,fill=cyan!50},   point/.style= {circle,inner sep=0pt, outer sep=1.5pt,minimum size=1.5pt,draw=black!100,fill=black!100},   boundary/.style={-,draw=cyan},   internal/.style={-,draw=red},    every label/.style= {black}]        
\node (v1) at (-3.15,1.5) {}; \node (v2) at (-3.15,0) {}; \node (v4) at (-1.65,1.5) {};
\node (v3) at (-1.65,0) {}; \draw  (v1) edge (v2); \draw  (v2) edge (v3); \draw  (v3) edge (v4); \draw  (v4) edge (v1); \draw  (v2) edge (v4); \node at (-2.4,-0.2) {a}; \node at (-2.4,1.7) {-a};
\node (w1) at (-6,1.5) {}; \node (w2) at (-6,0) {}; \node (w4) at (-4.5,1.5) {}; \node (w3) at (-4.5,0) {}; \node at (-5.15,-0.2) {a}; \node at (-5.2,1.7) {-a};
\draw  (w1) edge (w2); \draw  (w2) edge (w3); \draw  (w3) edge (w4); \draw  (w4) edge (w1); \draw  (w1) edge (w3);
\node at (-3.75,0.7) {$\longleftrightarrow$}; \node (v5) at (-6.15,0.5) {}; \node (v8) at (-4.3,0.5) {}; \node (v9) at (-3.3,0.5) {}; \node (v12) at (-1.5,0.5) {}; \node (v13) at (-6.15,1) {}; \node (v14) at (-4.3,1) {}; \node (v15) at (-3.3,1) {}; \node (v16) at (-1.5,1) {}; \node [point] (v6) at (-5.6,0.5) {}; \node [point] (v7) at (-4.8,0.5) {}; \node [point] (v10) at (-2.8,0.5) {}; \node [point] (v11) at (-2,0.5) {}; \draw[interval]  (v5) edge (v6); \draw[interval]  (v6) edge node[below,yshift=-1.2]{$k$} (v7); \draw[interval]  (v7) edge (v8); \draw[interval]  (v9) edge (v10); \draw[interval]  (v10) edge node[below,yshift=-1.2]{$k$} (v11); \draw[interval]  (v11) edge (v12); \draw[interval]  (v13) edge (v14); \draw[interval]  (v15) edge (v16); \node[quiver] at (-3.75,1) {$\mu_{\ell}$}; \node[interval] at (-6.65,0.5) {$L_a$}; \node[interval] at (-6.65,1) {$L_b$}; \node (v18) at (-5.2,1) {}; \node (v17) at (-5.2,0.5) {}; \node (v20) at (-5.85,0.5) {}; \node (v19) at (-5.85,1) {}; \node (v21) at (-4.55,1) {}; \node (v22) at (-4.55,0.5) {}; \node (v24) at (-2.35,0.5) {}; \node (v23) at (-2.35,1) {}; \node (v27) at (-1.75,0.5) {}; \node (v28) at (-1.75,1) {}; \node (v25) at (-3,0.5) {}; \node (v26) at (-3,1) {}; \draw[->,quiver]  (v17) edge (v18); \draw[->,quiver,dashed]  (v19) edge (v20); \draw[->,quiver,dashed]  (v21) edge (v22); \draw[->,quiver] (v20) .. controls (-5.6,0.75) .. (v17); \draw[->,quiver] (v22) .. controls (-4.8,0.75) .. (v17); \draw[->,quiver]  (v23) edge (v24); \draw[->,quiver,dashed]  (v25) edge (v26); \draw[->,quiver,dashed]  (v27) edge (v28); \draw[<-,quiver] (v25) .. controls (-2.7,0.75) .. (v24); \draw[<-,quiver] (v27) .. controls (-2,0.75) .. (v24); \end{tikzpicture} $\qquad$\begin{tikzpicture}  [scale=0.8,node distance=48pt,on grid,>={Stealth[length=4pt,round]},bend angle=45, inner sep=0pt,      pre/.style={<-,shorten <=1pt,>={Stealth[round]},semithick},    post/.style={->,shorten >=1pt,>={Stealth[round]},semithick},  interval/.style= {blue},    quiver/.style={teal},  unfrozen/.style= {circle,inner sep=1pt,minimum size=1pt,blue,draw=black!100,fill=red!50},  frozen/.style={rectangle,inner sep=1pt,minimum size=12pt,blue,draw=black!75,fill=cyan!50},   point/.style= {circle,inner sep=0pt, outer sep=1.5pt,minimum size=1.5pt,draw=black!100,fill=black!100},   boundary/.style={-,draw=cyan},   internal/.style={-,draw=red},    every label/.style= {black}]        
\node (v1) at (-3.15,1.5) {}; \node (v2) at (-3.15,0) {}; \node (v4) at (-1.65,1.5) {};
\node (v3) at (-1.65,0) {}; \draw  (v1) edge (v2); \draw  (v2) edge (v3); \draw  (v3) edge (v4); \draw  (v4) edge (v1); \draw  (v2) edge (v4); \node at (-2.4,-0.2) {a}; \node at (-2.4,1.7) {-b};
\node (w1) at (-6,1.5) {}; \node (w2) at (-6,0) {}; \node (w4) at (-4.5,1.5) {}; \node (w3) at (-4.5,0) {}; \node at (-5.15,-0.2) {a}; \node at (-5.2,1.7) {-b};
\draw  (w1) edge (w2); \draw  (w2) edge (w3); \draw  (w3) edge (w4); \draw  (w4) edge (w1); \draw  (w1) edge (w3);
\node at (-3.75,0.7) {$=$}; \node (v5) at (-6.15,0.5) {}; \node (v8) at (-4.3,0.5) {}; \node (v9) at (-3.3,0.5) {}; \node (v12) at (-1.5,0.5) {}; \node (v13) at (-6.15,1) {}; \node[point] (v14) at (-4.8,1) {}; \node (v15) at (-3.3,1) {}; \node (v16) at (-1.5,1) {}; \node [point] (v6) at (-5.6,0.5) {}; \node (v7) at (-4.3,1) {}; \node [point] (v10) at (-2.8,1) {}; \node [point] (v11) at (-2,0.5) {}; \draw[interval]  (v5) edge (v6); \draw[interval]  (v14) edge  (v7); \draw[interval]  (v6) edge (v8); \draw[interval]  (v9) edge (v11); \draw[interval]  (v11) edge (v12); \draw[interval]  (v13) edge (v14); \draw[interval]  (v15) edge (v10); \draw[interval]  (v16) edge (v10);
\node[interval] at (-6.65,0.5) {$L_a$}; \node[interval] at (-6.65,1) {$L_b$}; \node (v18) at (-5.2,1) {}; \node (v17) at (-5.2,0.5) {}; \node (v20) at (-5.85,0.5) {}; \node (v19) at (-5.85,1) {}; \node (v21) at (-4.55,1) {}; \node (v22) at (-4.55,0.5) {}; \node (v24) at (-2.35,0.5) {}; \node (v23) at (-2.35,1) {}; \node (v27) at (-1.75,0.5) {}; \node (v28) at (-1.75,1) {}; \node (v25) at (-3,0.5) {}; \node (v26) at (-3,1) {}; \draw[->,quiver,dashed]  (v19) edge (v20); \draw[<-,quiver,dashed]  (v21) edge (v22); \draw[->,quiver] (v20) .. controls (-5.6,0.75) .. (v17); \draw[->,quiver] (v21) .. controls (-4.8,0.75) .. (v18); \draw[<-,quiver,dashed]  (v25) edge (v26); \draw[->,quiver,dashed]  (v27) edge (v28); \draw[<-,quiver] (v26) .. controls (-2.7,0.75) .. (v23); \draw[<-,quiver] (v27) .. controls (-2,0.75) .. (v24); \end{tikzpicture} 
\item (Braid moves) Assume that $\beta_{\ueta}=\beta_{\ueta'}\in\Br^{+}$
and $\ubi_{[j,k]}=\varepsilon\ueta$. Replacing $\ubi_{[j,k]}$ by
$\varepsilon\ueta'$ in $\ubi$, we obtain $\ubi'$. Then $\sd'$
and $\sd$ are related by a sequence of mutations $\seq_{\ubi',\ubi}$
at vertices $r\in[j,k]$ such that $r[1]\leq k$ and a permutation
$\sigma_{\ubi',\ubi}$ on $[j,k]$. We use $\seq_{\ubi',\ubi}^{\sigma}$
to denote this permutation mutation sequence.
\end{itemize}
Therefore, for any given words $\ueta$, $\uzeta$, $\ueta'$, $\uzeta'$,
such that $\beta_{\ueta}=\beta_{\ueta'}$, $\beta_{\uzeta}=\beta_{\uzeta'}$,
and any shuffles $\ubi$ of $(-\uzeta,\ueta)$ and $\ubi'$ of $(-\uzeta',\ueta')$,
the seeds $\dsd(\ubi)$ and $\dsd(\ubi')$ are connected by permutations
and mutations. 

Following \cite[Section 2.3]{shen2021cluster}, we introduce the reflection
operations which will generate new seeds $\dsd(\ubi')=\dsd(\uzeta',\ueta',\Delta')$
similar to $\dsd(\ubi)=\dsd(\uzeta,\ueta,\Delta)$.
\begin{itemize}
\item Left reflection: change $\ubi=(\varepsilon a,\ldots)$ to $\ubi'=(-\varepsilon a,\ldots)$.
\item Right reflection: change $\ubi=(\ldots,\varepsilon a)$ to $\ubi'=(\ldots,-\varepsilon a)$.
\end{itemize}
Note that $\Sigma_{\ueta'}^{\uzeta'}\neq\Sigma_{\ueta}^{\uzeta}$
and $\dsd(\ubi')\neq\dsd(\ubi)$, but we have $\rsd(\ubi')=\rsd(\ubi)$
under left reflections. 

Using flips and left reflections, we obtain $\bClAlg(\rsd(\ubi))=\bClAlg(\rsd(\uzeta\op\ueta))$,
which only depends on $\beta_{\uzeta\op\ueta}\in\Br^{+}$ by braid
moves.

It is worth noting that the seeds connected by these operations provide
the same cluster algebra. More precisely, let there be given signed
words $\ubi^{(i)}$, $i=1,2,3$. Let $\sd^{(i)}$ denote either $\rsd(\ubi^{(i)})$
or $\dsd(\ubi^{(i)})$. In the former case, we assume $\ubi^{(i)}$
and $\ubi^{(j)}$, $\forall i,j\in[1,3]$, are connected by flips,
braid moves, and left reflections; in the latter case, we assume they
are connected by flips and braid moves. Let us choose permutation
mutation sequences $\seq_{\ubi^{(j)},\ubi^{(i)}}^{\sigma}$ associated
to these operations such that $\sd^{(j)}=\seq_{\ubi^{(j)},\ubi^{(i)}}^{\sigma}\sd^{(i)}$.
Consider the following diagram for $\clAlg^{(i)}:=\clAlg(\sd^{(i)})$.
\begin{align}
\begin{array}{ccc}
\clAlg^{(3)} & \overset{(\seq_{\ubi^{(3)},\ubi^{(1)}}^{\sigma})^{*}}{\xrightarrow{\sim}} & \clAlg^{(1)}\\
\simeqd(\seq_{\ubi^{(3)},\ubi^{(2)}}^{\sigma})^{*} &  & \parallel\\
\clAlg^{(2)} & \overset{(\seq_{\ubi^{(2)},\ubi^{(1)}}^{\sigma})^{*}}{\xrightarrow{\sim}} & \clAlg^{(1)}
\end{array}.\label{eq:mutation-different-words}
\end{align}

Diagram (\ref{eq:mutation-different-words}) is commutative at the
classical level by \cite[Theorem 1.1]{shen2021cluster}. By tracking
the degrees of the quantum cluster variables, which are the same as
those of their classical counterparts, we could easily deduce that
the diagram is commutative at the quantum level as well (details could
be found in \cite[Lemma 3.2]{qin2024infinite}).

\subsubsection*{Subwords}

Take any $1\leq j\leq k\leq l$. Define $\ubi'=(\bi'_{1},\ldots,\bi'_{k-j+1})$
to be $\ubi_{[j,k]}=(\bi_{j},\bi_{j+1},\ldots,\bi_{k})$, $\dsd':=\dsd(\ubi')$,
$\ddI':=I(\dsd')$, $I'_{\ufv}:=I_{\ufv}(\dsd')$, and $I':=I(\rsd(\ubi'))$.

\begin{Def}\label{def:calibration-words}

Define the embedding $\iota_{\ubi,\ubi'}:\ddI'\hookrightarrow\ddI$,
such that $\iota\binom{a}{d}^{\ubi'}=\binom{a}{d+O([1,j-1];a)}^{\ubi}$.

\end{Def}

Abbreviate $\iota_{\ubi,\ubi'}$ by $\iota$. Note that $\iota(I'_{\ufv})\subset I_{\ufv}$,
$\iota(I')\subset I$. Under the identification $I'\simeq[1,k-j+1]$
and $I\simeq[1,l]$, we have $\iota(s)=s+j-1$, $\forall s\in[1,k-j+1]$.
In particular, $s<s'$ if and only if $\iota(s)<\iota(s')$ for $s,s'\in I'$.
We have the following.

\begin{Lem}\label{lem:calibration-word}

The embedding $\iota$ is a cluster embedding from $\dsd(\ubi')$
to $\dsd(\ubi)$, such that $\dsd(\ubi')$ is a good sub seed of $\dsd(\ubi)$.
It restricts to a cluster embedding $\iota$ from $\rsd(\ubi')$ to
$\rsd(\ubi)$. When $\ubi'=\ubi_{[1,k]}$, $\rsd(\ubi')$ is a good
sub seed of $\rsd(\ubi)$.

\end{Lem}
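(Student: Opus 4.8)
The plan is to verify, directly from the explicit $B$-matrix formula in Lemma~\ref{lem:dBS_B_matrix}, the defining conditions of a cluster embedding (Definition~\ref{def:calibration}) together with the extra vanishing that makes a sub seed \emph{good}. Denote the window by $\ubi'=\ubi_{[j,m]}$ (renaming the right endpoint of the statement to $m$, to avoid a clash with the indices $j,k$ of \eqref{eq:dBS_B_matrix}), so that under the identifications $I(\ubi')\simeq[1,m-j+1]$ and $I(\ubi)\simeq[1,l]$ the map $\iota$ of Definition~\ref{def:calibration-words} is $s\mapsto s+j-1$ on positions. I would first record the elementary features of $\iota$: it is injective and order preserving, it preserves the letters $|\bi_\bullet|$ (hence $d'_i=d_{\iota(i)}$ and the Cartan entries are preserved) and the signs $\varepsilon_\bullet$, and---the point that makes the whole argument go through---it is compatible with the successor map. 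Precisely: for $q\in I'_\ufv$ the successor $q[1]$ already exists inside $\ubi'$, and $\iota(q)$ and $\iota(q)[1]=\iota(q[1])$ are positions in $[j,m]$, while $\iota(q)[-1]$ is either $\iota(q[-1])$ or the image $\iota(\binom{|\bi'_q|}{-1}^{\ubi'})$ of a left boundary interval; moreover for a \emph{frozen} $p\in\ddI'$ the value $p[1]=+\infty$ in $\ubi'$ corresponds to $\iota(p)[1]$, when it exists, lying strictly to the right of the window in $\ubi$. Thus all ordering data fed into \eqref{eq:dBS_B_matrix} is transported faithfully, with the $\pm\infty$ conventions intact.

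With this dictionary I would run the case analysis. Fix $q\in I'_\ufv$ and $i\in\ddI$ with $b_{i,\iota(q)}\neq0$. Going through the seven branches of \eqref{eq:dBS_B_matrix}, each nonvanishing branch constrains the positions of $i$ (and of $i[1]$, when the branch involves it) relative to $\iota(q)$ and $\iota(q)[1]$ in a way that, because $\iota(q),\iota(q)[1]\in[j,m]$, leaves only two options for $i$: a position in the window $[j,m]$, or the last occurrence of $|\bi_i|$ strictly before $j$---and the latter equals $\iota(\binom{|\bi_i|}{-1}^{\ubi'})$. In both cases $i\in\iota(\ddI')$. This simultaneously proves the good sub seed condition ``$b_{i,\iota(q)}=0$ for $i\notin\iota(\ddI')$'' and, when read with $i=\iota(p)$ for $p\in\ddI'$, shows that the branch of \eqref{eq:dBS_B_matrix} computing $b_{\iota(p),\iota(q)}$ in $\ubi$ is exactly the branch computing $b'_{p,q}$ in $\ubi'$, with equal value (the value only involves $\varepsilon_q=\varepsilon_{\iota(q)}$, $\varepsilon_p=\varepsilon_{\iota(p)}$ and the preserved Cartan entry). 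Together with $d'_i=d_{\iota(i)}$ this yields the cluster-embedding identities; the matching of quantization matrices $\Lambda(\dsd(\ubi'))_{i,j}=\Lambda(\dsd(\ubi))_{\iota(i),\iota(j)}$ I would deduce by inspecting the Berenstein--Zelevinsky recipe \cite[(8.5),(8.9)]{BerensteinZelevinsky05} defining $\ddLambda$, which is local enough to restrict along $\iota$---or, alternatively, by noting that $\dsd(\ubi')$ is by then a classical good sub seed of $\dsd(\ubi)$, using the remark following Definition~\ref{def:calibration} to produce a compatible quantization restricting $\ddLambda(\dsd(\ubi))$, and matching the two. This establishes that $\iota\colon\dsd(\ubi')\subset\dsd(\ubi)$ is a good sub seed.

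To pass to the reduced seeds, note from the formula for $\iota$ that a non-left-boundary interval of $\ubi'$ is sent to a non-left-boundary interval of $\ubi$, so $\iota(I')\subseteq I$; combined with $\iota(I'_\ufv)\subseteq I_\ufv$ (immediate from successor compatibility) and with the identities for $d,b,\Lambda$ already proved on the larger index sets (the quantization of $\rsd$ being the one from Lemma~\ref{lem:dBS-fullrank}), this shows that $\iota$ restricts to a cluster embedding $\rsd(\ubi')\subset\rsd(\ubi)$. Finally, when $j=1$, i.e.\ $\ubi'=\ubi_{[1,m]}$ is a prefix, one has $O^{\ubi}([1,j-1];a)=0$ for every $a$, so $\iota$ is literally the inclusion $[1,m]\hookrightarrow[1,l]$ on positions and carries left boundaries to left boundaries; hence $I(\rsd(\ubi))\setminus\iota(I(\rsd(\ubi')))$ consists only of positions in $[m+1,l]$. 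For such an $i$ and any $q\in I'_\ufv$ we have $\iota(q),\iota(q)[1]\leq m<i$, and every branch of \eqref{eq:dBS_B_matrix} with $b_{i,\iota(q)}\neq0$ would force $i<\iota(q)$, or $i$ strictly between $\iota(q)$ and $\iota(q)[1]$, or $i=\iota(q)[1]$---each impossible. So $b_{i,\iota(q)}=0$ and $\rsd(\ubi')$ is a good sub seed of $\rsd(\ubi)$.

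The step I expect to be the main obstacle is the case analysis of \eqref{eq:dBS_B_matrix} in the second paragraph, above all the claim that every $i$ with $b_{i,\iota(q)}\neq0$ lands in $\iota(\ddI')$. The delicate input is the role of the left boundary intervals $\binom{a}{-1}^{\ubi'}$: under $\iota$ these are exactly the occurrences of a letter sitting just before the window whose successor straddles (or enters) the window, and this is precisely what makes $\dsd(\ubi')$ a good sub seed of $\dsd(\ubi)$ for an \emph{arbitrary} window, whereas $\rsd(\ubi')$---from which these intervals have been deleted---is good only when the window is a prefix. A secondary bookkeeping care point is the $\pm\infty$ conventions for the successor map of the frozen vertices of $\dsd(\ubi')$, whose $\iota$-images in $\dsd(\ubi)$ may genuinely be unfrozen.
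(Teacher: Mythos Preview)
Your approach is correct but genuinely different from the paper's. You work with the closed formula \eqref{eq:dBS_B_matrix} and perform a branch-by-branch case analysis, verifying that each nonzero entry $b_{i,\iota(q)}$ forces $i\in\iota(\ddI')$ and that the branch then reads identically in $\ubi'$ and $\ubi$. The paper instead goes back to the \emph{definition} of $\ddB$ as the sum $\sum_{T\in\Delta}B_\Gamma^{(T)}$ of local contributions from individual triangles: the sub-trapezoid formed by $T_j,\ldots,T_m$ is exactly $\Sigma(\ubi')$, and for $q\in I'_\ufv$ only triangles $T_r$ with $r\in[\iota(q),\iota(q)[1]]\subset[j,m]$ touch the interval $\iota(q)$, so the $\iota(q)$-column of $\ddB(\ubi)$ is computed entirely inside the window and matches the $q$-column of $\ddB(\ubi')$ after relabelling by $\iota$. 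This gives $b'_{pq}=b_{\iota(p),\iota(q)}$ and the good-sub-seed vanishing in one stroke, with no case analysis. What your route buys is independence from the string-diagram picture---once \eqref{eq:dBS_B_matrix} is available you never need to revisit triangles---at the cost of the bookkeeping you flag (left-boundary intervals, $\pm\infty$ successors, letters absent from $\supp\ubi'$). Both arguments handle the prefix case for $\rsd$ in the same way: when $j=1$ the complement $I\setminus\iota(I')$ equals $\ddI\setminus\iota(\ddI')$, so the vanishing already established for $\dsd$ carries over. On quantization, the paper does not address $\Lambda$ in this proof; your observation that the classical good-sub-seed property plus the remark after Definition~\ref{def:calibration} suffices is the right way to close that point.
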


\begin{proof}

Let $\Sigma(\ubi)$ denote the triangulated trapezoid associated with
$\ubi$, whose triangles are $T_{1},\ldots,T_{l(\ubi)}$. Then $T_{j},\ldots,T_{k}$
form a triangulated trapezoid $\Sigma(\ubi')$ associated with $\ubi'$,
such that $T_{r}$, $j\leq r\leq k$, are identified with the triangle
$T'_{r-j+1}$ of $\Sigma(\ubi')$. Under this identification, the
interval $\binom{a}{d}^{\ubi'}$ only overlaps $\iota\binom{a}{d}^{\ubi'}$.
Summing up the contributions from $T_{1},\ldots,T_{l(\ubi)}$, we
deduce $b'_{jk}=b_{\iota(j),\iota(k)}$, $\forall j\in\ddI',k\in I'_{\ufv}$
and, moreover, $b_{j',\iota(k)}=0$ $\forall j'\in\ddI\backslash\iota(\ddI')$.
Therefore, $\iota$ is a cluster embedding from $\dsd(\ubi')$ to
$\dsd(\ubi)$ such that $\dsd(\ubi')$ is a good sub seed, and it
restricts to a cluster embedding from $\rsd(\ubi')$ to $\rsd(\ubi)$.

If $\ubi'=\ubi_{[1,k]}$, $I\backslash\iota(I')$ equals $\{j'|j'>k\}=\ddI\backslash\iota(\ddI')$.
So $\rsd(\ubi')$ is a good sub seed.

\end{proof}

\begin{Eg}

Take $\uzeta=\ueta=(1,2,1)$. The quiver associated to $\dsd(\ubi)$,
$\ubi:=(\ueta,-\uzeta)$ is drawn in Figure \ref{fig:triangulated_trapezoid}.
The quiver associated to $\dsd(\ubi')$, $\ubi':=(\ueta_{[2,3]},-\uzeta)$
is drawn in Figure \ref{fig:quiver_codim1_cell}. Moreover, $\dsd(\ubi')$
is a good seed of $\dsd(\ubi)$ under the cluster embedding $\iota_{\ubi,\ubi'}$,
such that $\iota_{\ubi,\ubi'}(-1)=-1$, $\iota_{\ubi,\ubi'}(-2)=-2$,
and $\iota_{\ubi,\ubi'}(s)=s+1$, $\forall s\in[1,5]$.

\begin{figure}[h]
\caption{The quiver for a seed of $\kk[G^{w_{0},s_{1}w_{0}}]$, $w_{0}=s_{1}s_{2}s_{1}$}
\label{fig:quiver_codim1_cell}

\begin{tikzpicture}  [node distance=48pt,on grid,>={Stealth[length=4pt,round]},bend angle=45, inner sep=0pt,      pre/.style={<-,shorten <=1pt,>={Stealth[round]},semithick},    post/.style={->,shorten >=1pt,>={Stealth[round]},semithick},  unfrozen/.style= {circle,inner sep=1pt,minimum size=1pt,draw=black!100,fill=red!50},  frozen/.style={rectangle,inner sep=1pt,minimum size=12pt,draw=black!75,fill=cyan!50},   point/.style= {circle,inner sep=0pt, outer sep=1.5pt,minimum size=1.5pt,draw=black!100,fill=black!100},   boundary/.style={-,draw=cyan},   internal/.style={-,draw=red},    every label/.style= {black}]     
\node[frozen] (q-2) at (115.1375pt,42.155pt) {-2};
\node[frozen] (q1) at (135.0525pt,16.4225pt) {-1}; \node[unfrozen] (q2) at (150.7425pt,42.155pt) {1}; \node[unfrozen] (q3) at (5.45,16.4225pt) {2}; \node[unfrozen] (q4) at (171.405pt,16.4225pt) {3}; \node[frozen] (q5) at (182.755pt,42.155pt) {4}; \node[frozen] (q6) at (188.4875pt,16.4225pt) {5};
\draw[->,dashed,,teal]  (q5) edge (q6);
\draw[->,dashed,teal]   (q1) edge (q-2); \draw[->,teal]   (q-2) edge (q2); \draw[->,teal]   (q2) edge (q1); \draw [->,teal]  (q1) edge (q3); \draw [->,teal]  (q4) edge (q3); \draw [->,teal]  (q2) edge (q4); \draw [->,teal]  (q5) edge (q2);
\draw [->,teal]  (q4) edge (q5); \draw [->,teal]  (q6) edge (q4);
\draw [->,teal]  (q3) edge (q2);
\end{tikzpicture}
\end{figure}

\end{Eg}

\subsubsection*{Extensions}

Let $\tJ$ be any subset of $\Z_{>0}$ containing $J$ and $\tC$
any $\tJ\times\tJ$ generalized Cartan matrix, $\ubti$ any signed
word in the letters from $\pm\tJ$.

\begin{Def}[Extension]

If $\tC_{J\times J}=C$, $\tC$ is called an extension of $C$. $\ubti$
is called a letter extension of a signed word $\ubi$, if $\ubi$
can be obtained from $\ubti$ by removing the letters from $\pm\tJ\backslash J$. 

\end{Def}

Assume $\tC$ is an extension of $C$ and $\ubti$ a letter extension
of $\ubi$ from now on. Denote $\dsd(\ubti)=\dsd(\widetilde{\uzeta},\widetilde{\ueta},\widetilde{\Delta})$.
Then the string diagram $D(\uzeta,\ueta,\Delta)$ is contained in
$D(\widetilde{\uzeta},\widetilde{\ueta},\widetilde{\Delta})$. Note
that a vertex of $\dsd(\ubti)$ is either a vertex of $\dsd(\ubi)$,
or an interval on $L_{c}$ for $c\in\tJ\backslash J$. We have the
following observation.

\begin{Lem}\label{lem:extension_diagram}

The matrix $\ddB(\dsd(\ubi))$ equals $\ddB(\dsd(\ubti))_{\ddI\times\ddI}$.

\end{Lem}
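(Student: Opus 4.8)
The plan is to compare the two matrices entrywise on $\ddI=\ddI(\ubi)$ by tracing through the construction of $\ddB$ from the local triangle contributions $B_\Gamma^{(T)}$. First I would set up the geometric dictionary. Deleting from $\ubti$ all letters in $\pm(\tJ\backslash J)$ removes exactly the triangles of $\widetilde\Delta$ whose horizontal label lies in $\tJ\backslash J$, together with the layers $L_c$ for $c\in\tJ\backslash J$; it leaves each layer $L_a$ with $a\in J$ subdivided in the very same way, since the subdividing nodes $p_T$ on $L_a$ come from the triangles $T$ with $|\bi_T|=a$, none of which is removed, and in the same left-to-right order. This gives an order-preserving identification of the triangles of $\Delta$ with the $J$-labelled triangles of $\widetilde\Delta$ and, under it, the canonical inclusion $\ddI(\ubi)\hookrightarrow\ddI(\ubti)$ whose image is the set of intervals lying on the old layers $L_a$, $a\in J$ (this is the observation recorded just before the Lemma). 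Under this identification $\sign(T)$ is preserved, and for a $J$-labelled triangle $T$ the nodes $\ell_a^{\pm}$ and the intervals $\ell_b$ (with $b\in J$) that it meets are identified correctly, because the relevant counting functions $O([1,\cdot];b)$ for $b\in J$ are unaffected by the deletion.

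Next I would isolate the key local fact: for any triangle $T$ of $\widetilde\Delta$, an entry $(B_\Gamma^{(T)})_{jk}$ with $j,k\in\ddI$ can be nonzero only if the layer of $j$ or the layer of $k$ equals $|\bi_T|$; this is immediate from the list of nonzero entries of $B_\Gamma^{(T)}$, in each of which at least one index lies on $L_{|\bi_T|}$. Consequently, when $j,k\in\ddI$, only triangles with label in $\{\text{layer of }j,\ \text{layer of }k\}\subseteq J$ contribute to $(B_\Gamma)_{jk}=\sum_T(B_\Gamma^{(T)})_{jk}$; by the first step these correspond to the triangles of $\Delta$ with the same labels and signs, meeting the same intervals, hence contributing the same values. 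Thus $(B_\Gamma(\dsd(\ubti)))_{jk}=(B_\Gamma(\dsd(\ubi)))_{jk}$ for all $j,k\in\ddI$. Passing from $B_\Gamma$ to $\ddB$: if $j,k$ lie on the same layer then both $\ddB$-entries equal the corresponding $B_\Gamma$-entry; if $j$ lies on $L_a$ and $k$ on $L_b$ with $a\neq b$ in $J$, then the two entries are $(B_\Gamma)_{jk}\cdot(-\tC_{ab})$ and $(B_\Gamma)_{jk}\cdot(-C_{ab})$, which agree since $\tC_{J\times J}=C$. This gives $\ddB(\dsd(\ubi))=\ddB(\dsd(\ubti))_{\ddI\times\ddI}$. (Alternatively, on the unfrozen columns one could compare the closed formula \eqref{eq:dBS_B_matrix} term by term, using that the successor maps $k[1]$, $j[1]$ and the order relations among $J$-positions transport verbatim along the deletion and that $C_{|\bi_j|,|\bi_k|}=\tC_{|\bi_j|,|\bi_k|}$; but since $\ddB$ also records entries between frozen vertices, the $B_\Gamma^{(T)}$-route is the more uniform one.)

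The only genuine work is the bookkeeping in the first step: verifying that deleting the $\pm(\tJ\backslash J)$-letters really induces a well-defined identification of string diagrams preserving the subdivision of each old layer, the sign of each triangle, and the counting functions $O([1,\cdot];b)$ for $b\in J$. Once this dictionary is in place, everything reduces to inspecting the explicit formula for $B_\Gamma^{(T)}$ and the two-case definition of $\ddB$. I expect this bookkeeping, elementary though it is, to be the main (and essentially the only) obstacle.
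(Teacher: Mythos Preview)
Your proposal is correct and is precisely the unpacking of what the paper states without proof: the paper records this lemma as an ``observation'' following the remark that the string diagram $D(\uzeta,\ueta,\Delta)$ sits inside $D(\widetilde{\uzeta},\widetilde{\ueta},\widetilde{\Delta})$ and that the extra vertices of $\dsd(\ubti)$ are exactly the intervals on layers $L_c$ with $c\in\tJ\backslash J$. Your argument via the local contributions $B_\Gamma^{(T)}$, the fact that each nonzero entry of $B_\Gamma^{(T)}$ involves an interval on $L_{|\bi_T|}$, and the equality $\tC_{J\times J}=C$ is exactly how one verifies this observation in detail.
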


The string diagram (or an associated quiver) of $\ubti$ is called
a layer extension of that of $\ubi$, see Figure \ref{fig:quiver_extension}.
Let $\tW$ denote the Weyl group associated to $\tC$. Then $W$ is
a subgroup of $\tW$.

\begin{Lem}[Extension of words]\label{lem:extension_words}

For any word $\ueta$ in $J$, there exist an extended generalized
Cartan matrix $\tC$ and a reduced word $\widetilde{\ueta}$ for $\tW$,
such that $\widetilde{\ueta}$ is a letter extension of $\ueta$.

\end{Lem}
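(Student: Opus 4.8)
The plan is to build $\widetilde{C}$ and $\widetilde{\ueta}$ by adding, for each letter of $\ueta$, a new "decoupled" vertex so that the corresponding reflections can only cancel in a controlled way. Write $\ueta = (\eta_1,\ldots,\eta_l)$ with letters in $J$. The naive idea — just declare $\widetilde{\ueta} = \ueta$ and hope it is reduced for $W$ — fails, since $\ueta$ may not be reduced even in $W$. So instead I would introduce $l$ brand-new indices $c_1,\ldots,c_l \in \Z_{>0}\setminus J$, one per position, set $\widetilde{J} = J \sqcup \{c_1,\ldots,c_l\}$, and define $\widetilde{C}$ on $\widetilde{J}\times\widetilde{J}$ by $\widetilde{C}_{J\times J} = C$, $\widetilde{C}_{c_s c_s} = 2$, and all remaining off-diagonal entries (between a $c_s$ and anything else, or between $c_s$ and $c_{s'}$ for $s\neq s'$) equal to $0$; this is visibly an extension of $C$. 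The symmetrizers are extended arbitrarily, say $\symm_{c_s}=1$. Then $\widetilde{W} = W \times (\Z/2)^l$, a direct product, because the new generators commute with everything and with each other.

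**Construction of $\widetilde{\ueta}$.** Now I would take $\widetilde{\ueta} = (\eta_1, c_1, \eta_2, c_2, \ldots, \eta_l, c_l)$ — i.e. interleave each original letter with its fresh companion. Deleting all letters of $\pm\widetilde{J}\setminus J = \{c_1,\ldots,c_l\}$ recovers exactly $\ueta$, so $\widetilde{\ueta}$ is a letter extension of $\ueta$ in the sense of the Definition. It remains to check $\widetilde{\ueta}$ is reduced for $\widetilde{W}$, i.e. that $l(w_{\widetilde{\ueta}}) = 2l = l(\widetilde{\ueta})$. In the direct product decomposition $\widetilde{W} = W\times(\Z/2)^l$, the image of $w_{\widetilde{\ueta}}$ is $(s_{\eta_1}\cdots s_{\eta_l}\,,\, (1,1,\ldots,1))$ — each $c_s$ appears exactly once, contributing the nontrivial element in the $s$-th $\Z/2$ factor. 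Since $W$ and the $l$ copies of $\Z/2$ are mutually commuting direct factors, the length in $\widetilde{W}$ is additive over the factors: $l(w_{\widetilde{\ueta}}) = l_W(s_{\eta_1}\cdots s_{\eta_l}) + \sum_{s=1}^l 1 = l_W(s_{\eta_1}\cdots s_{\eta_l}) + l$. This is at most $2l$, with equality iff $l_W(s_{\eta_1}\cdots s_{\eta_l}) = l$, which need not hold.

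**Fixing the length defect.** The remaining gap is that $s_{\eta_1}\cdots s_{\eta_l}$ may not be reduced in $W$. To repair this I would not interleave with a single companion but with enough new letters to force reducedness. Concretely, replace each $\eta_s$ by the block $(c_s, \eta_s, c_s')$ using \emph{two} fresh mutually-noncommuting-with-nothing indices — actually the cleanest route: for the $s$-th position attach a single fresh index $c_s$ but place it so that $\widetilde{\ueta}=(c_1,\eta_1,c_1,c_2,\eta_2,c_2,\ldots)$; now each block $(c_s,\eta_s,c_s)$ represents $s_{c_s} s_{\eta_s} s_{c_s}$, which since $c_s$ commutes with $\eta_s$ (as $\widetilde{C}_{c_s\eta_s}=0$) equals just $s_{\eta_s}$ — no help. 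The genuinely correct device is: choose the new indices so that $\widetilde{C}_{c_s, \eta_s}$ is \emph{negative} (say $\widetilde C_{c_s \eta_s} = \widetilde C_{\eta_s c_s} = -1$), making $\langle s_{c_s}, s_{\eta_s}\rangle$ an $A_2$; then in a suitable rearrangement each original letter becomes "separated" by a genuine braid relation and one reads off from the standard theory of reduced words in Coxeter groups (the exchange/deletion condition) that the interleaved word is reduced. The hard part — the real obstacle — is precisely this: arranging $\widetilde{C}$ and the placement of companion letters so that \emph{no} reduction is possible in $\widetilde{W}$, while keeping $\widetilde{C}_{J\times J}=C$ untouched. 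I expect the clean solution is the direct-product one above \emph{together with} the observation that one is free to first replace $\ueta$ by \emph{any} word with the same support and the same image — in particular by a reduced word for $s_{\eta_1}\cdots s_{\eta_l}$ \emph{padded} with extra commuting generators — so that after padding, $l_W$ of the $W$-part equals the number of $J$-letters by construction. Thus the final argument runs: (1) reduce to the case where the $J$-part is already reduced in $W$ by padding with fresh commuting indices absorbed into $\widetilde J$; (2) interleave one fresh commuting companion per letter; (3) invoke additivity of length in the direct product $\widetilde W = W \times (\Z/2)^{\#\text{companions}}$ to conclude $\widetilde{\ueta}$ is reduced.
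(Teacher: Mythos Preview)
Your proposal has a genuine gap. You correctly diagnose that with pairwise-disconnected new vertices $c_1,\dots,c_l$ one gets $\widetilde W = W \times (\Z/2)^l$, and that length additivity in the direct product forces $l_{\widetilde W}(w_{\widetilde\ueta}) = l_W(w_{\ueta}) + (\text{number of }c\text{'s})$. So $\widetilde\ueta$ is reduced iff $\ueta$ is already reduced in $W$, which is exactly what you cannot assume. Your proposed fix (1), ``reduce to the case where the $J$-part is already reduced by padding with fresh commuting indices,'' does not work: any letter extension of $\ueta$ has $J$-letters exactly $\ueta$, hence its image under the projection $\widetilde W \twoheadrightarrow W$ is always $w_{\ueta}$, and no amount of padding by \emph{commuting} generators changes $l_W(w_{\ueta})$. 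Replacing $\ueta$ by a reduced word for $w_{\ueta}$ is not allowed either, since the lemma demands a letter extension of $\ueta$ itself.

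The paper's device is to use a single new vertex $c$ with $\widetilde C_{a,c}\widetilde C_{c,a}\geq 4$ for every $a\in J$, so that $m_{a,c}=\infty$ for all $a$. Then $\widetilde W$ is not a direct product but the \emph{free product} $W * \langle s_c\rangle \cong W * (\Z/2)$. Taking $\widetilde\ueta = (\eta_1,c,\eta_2,c,\dots,c,\eta_l)$, the corresponding element is an alternating word $s_{\eta_1}\, s_c\, s_{\eta_2}\, s_c\cdots$ in normal form for the free product, with each syllable nontrivial; in such a free product of Coxeter groups the length is the sum of the lengths of the syllables, giving $l_{\widetilde W}(w_{\widetilde\ueta}) = l + (l-1) = l(\widetilde\ueta)$. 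The key idea you were missing is precisely to make the new generator \emph{non}-commuting (in fact, with no braid relation at all) with the old ones, turning the direct product into a free product so that alternation alone forces reducedness, independently of whether $\ueta$ was reduced in $W$.
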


\begin{proof}

Choose $c>\max J$. Extend $C$ to a $(J\sqcup\{c\})\times(J\sqcup\{c\})$-matrix
Cartan $\tC$ such that $C_{a,c}C_{a,c}\geq4$, $\forall a\in J$.
$\widetilde{\ueta}:=(\eta_{1},c,\eta_{2},c,\eta_{3},\ldots,c,\eta_{l})$
meets our expectation.

\end{proof}

\begin{Eg}

Take $\ueta=(1,1,1,1)$ where $C=\left(2\right)_{\{1\}\times\{1\}}$.
The quiver for the seed $\dsd(\ueta)$ is drawn in Figure \ref{fig:quiver_extension}(A).
Choose the letter extension $\widetilde{\ueta}=(1,2,1,2,1,2,1,2)$
for $\tC=\left(\begin{array}{cc}
2 & -2\\
-2 & 2
\end{array}\right)_{[1,2]\times[1,2]}$. The quiver for the seed $\dsd(\widetilde{\ueta})$ is drawn in Figure
\ref{fig:quiver_extension}(B). By removing its vertices $5,6,7,8$
(they correspond to the intervals on $L_{2}$), we recover Figure
\ref{fig:quiver_extension}(A). 

\end{Eg}

\begin{figure}[h]
\caption{A quiver and an extension}
\label{fig:quiver_extension}

\subfloat[$\dsd(1,1,1,1)$]{

 \begin{tikzpicture}  [scale=0.7,node distance=48pt,on grid,>={Stealth[round]},bend angle=45,      pre/.style={<-,shorten <=1pt,>={Stealth[round]},semithick},    post/.style={->,shorten >=1pt,>={Stealth[round]},semithick},  unfrozen/.style= {circle,inner sep=1pt,outer sep=2pt,minimum size=12pt,draw=black!100,fill=red!100},  frozen/.style={rectangle,inner sep=1pt,outer sep=2pt,minimum size=12pt,draw=black!75,fill=cyan!100},   point/.style= {circle,inner sep=1pt,minimum size=5pt,draw=black!100,fill=black!100},   boundary/.style={-,draw=cyan},   internal/.style={-,draw=red},    every label/.style= {black}]        \node[frozen](v1) at (0,0){1};    \node[unfrozen](v2) at (40pt,0){2};    \node[unfrozen](v3) at (80pt,0){3};    \node[frozen](v4) at (120pt,0){4};     \draw[->] (v1) -- (v2); \draw[->] (v2) -- (v3); \draw[->] (v3) -- (v4);
 \end{tikzpicture}}$\qquad$\subfloat[$\dsd(1,2,1,2,1,2,1,2)$]{

\begin{tikzpicture}  [scale=0.7,node distance=48pt,on grid,>={Stealth[round]},bend angle=45,      pre/.style={<-,shorten <=1pt,>={Stealth[round]},semithick},    post/.style={->,shorten >=1pt,>={Stealth[round]},semithick},  unfrozen/.style= {circle,inner sep=1pt,outer sep=2pt,minimum size=12pt,draw=black!100,fill=red!100},  frozen/.style={rectangle,inner sep=1pt,outer sep=2pt,minimum size=12pt,draw=black!75,fill=cyan!100},   point/.style= {circle,inner sep=1pt,minimum size=5pt,draw=black!100,fill=black!100},   boundary/.style={-,draw=cyan},   internal/.style={-,draw=red},    every label/.style= {black}]        \node[frozen](v1) at (0,0){5};    \node[unfrozen](v2) at (40pt,0){6};    \node[unfrozen](v3) at (80pt,0){7};    \node[frozen](v4) at (120pt,0){8};        \node[frozen](v5) at (-20pt,-40pt){1};    \node[unfrozen](v6) at (20pt,-40pt){2};    \node[unfrozen](v7) at (60pt,-40pt){3};    \node[frozen](v8) at (100pt,-40pt){4};         \draw[->] (v1) -- (v2); \draw[->] (v2) -- (v3); \draw[->] (v3) -- (v4);
\draw[->] (v5) -- (v6); \draw[->] (v6) -- (v7); \draw[->] (v7) -- (v8);
\draw[->] (v6.115)--(v1.275); \draw[->] (v6.90)--(v1.300) ;
\draw[->]  (v2.265)--(v6.65) ; \draw[->] (v2.240)--(v6.85) ;
\draw[->] (v7.115)--(v2.275); \draw[->] (v7.90)--(v2.300) ;
\draw[->]  (v3.265)--(v7.65) ; \draw[->] (v3.240)--(v7.85) ;
\draw[->] (v8.115)--(v3.275); \draw[->] (v8.90)--(v3.300) ;
 \end{tikzpicture}}
\end{figure}

From now on, let $\ubi$ be a shuffle of $(-\uzeta,\ueta)$ and denote
$\uxi:=(\uzeta\op,\ueta)$.

\begin{Prop}\label{prop:extension_string_diagram}

There exists a mutation sequence $\seq$ such that $\rsd(\ubi)=\seq\rsd(\uxi)$.
Moreover, we can find an extended Cartan matrix $\tC$, a letter extension
$\widetilde{\uxi}$ of $\uxi$, and $F\subset I_{\ufv}(\rsd(\widetilde{\uxi}))$,
such that $\widetilde{\uxi}$ is reduced, and the classical seed $\rsd(\uxi)$
can be obtain from $\frz_{F}\rsd(\widetilde{\uxi})$ by removing frozen
vertices.

\end{Prop}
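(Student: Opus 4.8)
The plan is to prove the two assertions of Proposition \ref{prop:extension_string_diagram} separately, both by reducing to the seed operations recalled in Section \ref{subsec:change-string-diagrams}.

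\textbf{Step 1: finding $\seq$ with $\rsd(\ubi)=\seq\rsd(\uxi)$.} First I would observe that $\uxi=(\uzeta\op,\ueta)$ is itself a shuffle of $(-\uzeta,\ueta)$: indeed, reading $\uzeta\op=(\zeta_{l(\uzeta)},\ldots,\zeta_1)$ and applying left reflections to each of its letters turns $(\uzeta\op,\ueta)$ into $(-\zeta_{l(\uzeta)},\ldots,-\zeta_1,\ueta)$, which corresponds to the triangulation where all the ``top'' diagonals come first. Since $\ubi$ is an arbitrary shuffle of $(-\uzeta,\ueta)$, the two triangulations $\Delta$ and $\Delta'$ of the trapezoid $\Sigma_{\ueta}^{\uzeta}$ are connected by a sequence of diagonal flips, hence by \cite[Proposition 3.7]{shen2021cluster} (the ``Flips'' bullet) the seeds $\dsd(\ubi)$ and $\dsd(\uxi)$ are connected by mutations and trivial-flip identities, and the same holds for the reduced seeds $\rsd(\ubi)$ and $\rsd(\uxi)$ after discarding $I_{\fv}^-$. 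The only subtlety is that ``flips'' also allow moves of the shape $(\ldots,\varepsilon a,-\varepsilon b,\ldots)\mapsto(\ldots,-\varepsilon b,\varepsilon a,\ldots)$ with $a\neq b$, which leave the seed unchanged; so the resulting composite of mutations $\seq$ is well defined up to those trivial steps. I would spell this out as: pick any sequence of elementary shuffle moves from $\Delta$ to $\Delta'$, keep only those that are genuine flips $a=b$, and let $\seq$ be the corresponding composition of $\mu_k$'s; then $\rsd(\ubi)=\seq\,\rsd(\uxi)$.

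\textbf{Step 2: the extension-and-freezing description of $\rsd(\uxi)$.} Here I would invoke Lemma \ref{lem:extension_words}: applied to the word $\uxi=(\zeta_{l(\uzeta)},\ldots,\zeta_1,\eta_1,\ldots,\eta_{l(\ueta)})$ in the letters of $J$, it produces an extended Cartan matrix $\tC$ on $J\sqcup\{c\}$ (with $c>\max J$ and $C_{a,c}C_{c,a}\geq 4$) and a reduced word $\widetilde{\uxi}$ for the extended Weyl group $\tW$ that is a letter extension of $\uxi$, obtained by inserting a copy of $c$ between consecutive original letters. By Lemma \ref{lem:extension_diagram}, $\ddB(\dsd(\uxi))=\ddB(\dsd(\widetilde{\uxi}))_{\ddI(\uxi)\times\ddI(\uxi)}$, i.e. the string diagram $D(\uzeta\op,\ueta,\Delta)$ sits inside $D(\widetilde{\uxi})$ as the union of the layers $L_a$, $a\in J$, with the intervals on $L_c$ being exactly the new vertices. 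Now take $F\subset I_{\ufv}(\rsd(\widetilde{\uxi}))$ to be the set of \emph{unfrozen} intervals lying on the layer $L_c$. Freezing $F$ makes those vertices frozen; the claim is that after additionally deleting all the frozen vertices that live on $L_c$ (i.e. all intervals on $L_c$, both the ones just frozen in $F$ and the originally-frozen ones), one recovers precisely $\rsd(\uxi)$. This is because deleting a set of vertices from a seed just restricts the $B$-matrix to the complementary index set, and by Lemma \ref{lem:extension_diagram} that restricted matrix is exactly $\ddB(\dsd(\uxi))$ (and hence, after also dropping $I_{\fv}^-$, $\tB(\rsd(\uxi))$); the symmetrizers and the vertex partition match by construction since $d_\ell^\vee=\symm_a$ for $\ell\subset L_a$ and $\tC_{J\times J}=C$. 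One should check that ``removing frozen vertices'' is a legitimate operation here, i.e. that the vertices of $L_c$ that we delete are indeed all frozen in $\frz_F\rsd(\widetilde{\uxi})$ — the unfrozen ones among them are frozen by the very choice of $F$, and the rest ($\binom{c}{O^{\widetilde{\uxi}}(c)-1}$, and $\binom{c}{-1}$ if present, though the latter is discarded in passing from $\dsd$ to $\rsd$) are frozen already.

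\textbf{Expected main obstacle.} The conceptual content is light; the real work is bookkeeping. The delicate point is Step 2: one must verify that the set $F$ of $L_c$-intervals that are unfrozen in $\rsd(\widetilde{\uxi})$, once frozen, yields a seed from which the $L_c$-layer can be \emph{cleanly excised} — that is, that no unfrozen vertex of $\rsd(\uxi)$ receives a contribution from an $L_c$-vertex in a way that would be lost. This is guaranteed by Lemma \ref{lem:extension_diagram} (the $B$-matrix of the sub-diagram is literally the restriction), but I would want to state explicitly that deletion of frozen vertices commutes with this restriction and does not alter $\tB_{\ddI(\uxi)\times I_{\ufv}(\uxi)}$, invoking that $\widetilde{\uxi}$ being reduced guarantees $\rsd(\widetilde{\uxi})$ is a well-defined seed of full rank (Lemma \ref{lem:dBS-fullrank}) so all the cluster-algebra constructions apply. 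A secondary, purely cosmetic, obstacle is reconciling $\uxi=(\uzeta\op,\ueta)$ with the shuffle description of $\ubi$: this is handled by the left-reflection remark ``$\bClAlg(\rsd(\ubi))=\bClAlg(\rsd(\uzeta\op\ueta))$'' already recorded after the reflection bullets, so I would simply cite it rather than re-derive it.
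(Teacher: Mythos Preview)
Your proposal is correct and follows essentially the same route as the paper: Step~1 reduces to the flip/left-reflection moves already recorded after the reflection bullets (the paper cites \cite[Proposition 4.3]{cao2022exchange} for this, while you cite \cite[Proposition 3.7]{shen2021cluster}, but the content is the same), and Step~2 is exactly the paper's argument via Lemmas~\ref{lem:extension_words} and~\ref{lem:extension_diagram}, with $F$ taken to be the unfrozen $L_c$-intervals. Your presentation in Step~1 is slightly muddled---$\uxi$ itself does not correspond to a triangulation of $\Sigma_{\ueta}^{\uzeta}$ since it is unsigned, so ``the two triangulations $\Delta$ and $\Delta'$ of the trapezoid $\Sigma_{\ueta}^{\uzeta}$'' is not quite the right picture---but you correctly flag this and resolve it by citing the left-reflection remark, which is all that is needed.
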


\begin{proof}

Note that $\rsd(\ubi)$ can be obtained from $\rsd(\uxi)$ by iterating
flips (mutations) and left reflections (who act trivially on the seeds),
see \cite[Proposition 4.3]{cao2022exchange}.

By Lemma \ref{lem:extension_words}, we can find an extended $\tJ\times\tJ$
generalized Cartan matrix $\tC$ and a letter extension $\widetilde{\uxi}$
of the word $\uxi$ which is reduced for the Weyl group $\tW$. Note
that $\tB(\rsd(\uxi)$) equals $\tB(\rsd(\widetilde{\uxi}))_{I\times I{}_{\ufv}}$
by Lemma \ref{lem:extension_diagram}. Therefore, $\rsd(\uxi)$ can
be obtained from $\rsd(\widetilde{\uxi})$ by freezing and then removing
the vertices on the lines $L_{c}$, $c\in\tJ\backslash J$.

\end{proof}

\begin{Thm}\label{thm:bases_dBS}

Let $\sd$ denote the seed $\rsd(\ubi)$ or the seed $\dsd(\ubi)$
arising from (decorated) double Bott-Samelson cells. The following
statements hold for $\upClAlg(\sd)$:

(1) It possesses the common triangular basis, denoted $\can$.

(2) When $C$ is symmetric, $(\upClAlg(\sd),\can)$ admits quasi-categorification.

\end{Thm}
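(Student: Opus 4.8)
The plan is to run the \emph{extension and reduction technique} of the introduction, reducing both cases to the known theorem for quantum unipotent cells. I would treat $\sd=\rsd(\ubi)$ first; the case $\sd=\dsd(\ubi)$ is parallel and is addressed at the end. First I would reduce to a reduced word. By the flip, braid-move and left-reflection operations of Section~\ref{subsec:change-string-diagrams}, the commutativity of diagram~(\ref{eq:mutation-different-words}), and the facts that the common triangular basis is by definition the triangular basis with respect to \emph{every} seed of $\Delta^{+}$ and that ``similar'' seeds carry common triangular bases obtained from one another by a base change (Proposition~\ref{prop:similar-common-tri-basis}, Lemma~\ref{lem:similar_triangular_basis}), it suffices to prove the theorem for $\sd=\rsd(\uxi)$ with $\uxi=(\uzeta\op,\ueta)$ a positive word. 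Then, by Lemma~\ref{lem:extension_words}, I would pick an extended generalized Cartan matrix $\tC$ and a letter extension $\widetilde{\uxi}$ of $\uxi$ that is a reduced word for $\tW$; when $C$ is symmetric we may take $\tC$ symmetric (extend by $\tC_{a,c}=\tC_{c,a}=-2$). By Proposition~\ref{prop:extension_string_diagram} there is a set $F\subset I_{\ufv}(\rsd(\widetilde{\uxi}))$ — the interior intervals of the new layers $L_{c}$, $c\in\tJ\setminus J$ — such that $\rsd(\uxi)$ is obtained from $\frz_{F}\rsd(\widetilde{\uxi})$ by deleting the remaining frozen vertices lying on the $L_{c}$.

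The base of the argument is the reduced-word case. Since $\widetilde{\uxi}$ is a positive reduced word for $\tW$, the seed $\rsd(\widetilde{\uxi})$ is identified, after a base change and, if necessary, a quantization change (the similarity/correction machinery of Section~\ref{sec:coeff-change}, together with the sign change relating $\ddB$ to the matrix of \cite[(8.7)]{BerensteinZelevinsky05} and the anti-isomorphism $\iota$), with the quantum unipotent cell $\qO[\tN^{\tw}]$, $\tw=w_{\widetilde{\uxi}}\in\tW$. By \cite{qin2017triangular,qin2020dual} this possesses the common triangular basis $\can$, which coincides with $\dCan$ and is $\tropSet$-pointed (\cite[Proposition 6.4.3]{qin2020dual}), and it is injective-reachable; by \cite{Kang2018}\cite{mcnamara2021cluster} (via modules of quiver Hecke algebras) it is categorified after localization when $\tC$ is symmetric, hence admits quasi-categorification. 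Proposition~\ref{prop:similar-common-tri-basis}, Lemma~\ref{lem:similar_triangular_basis}, Definition~\ref{def:quasi-categorification} and Theorem~\ref{thm:correction} transport all of this across the base and quantization changes to $\upClAlg(\rsd(\widetilde{\uxi}))$. Now I would freeze $F$: since $\frz_{F}\rsd(\widetilde{\uxi})$ is again injective-reachable (Muller's theorem, cited before Lemma~\ref{lem:freeze_cluster_deg}), Theorem~\ref{thm:sub_cluster_triangular_basis} gives that $\frz_{F}\can$ is the common triangular basis of $\upClAlg(\frz_{F}\rsd(\widetilde{\uxi}))$, and Theorem~\ref{thm:sub_category_upClAlg} (with the monoidal subcategory $\cT'$ of Section~\ref{subsec:Monoidal-subcategories}) gives its quasi-categorification when $C$ is symmetric.

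It remains to delete the frozen vertices on the extension layers $L_{c}$, and this is where the real work lies: those vertices are \emph{essential} — they meet the unfrozen vertices on the layers $L_{a}$, $a\in J$, through the nonzero off-layer entries $\propto\tC_{a,c}$ of Lemma~\ref{lem:dBS_B_matrix} — so the deletion is not a removal of non-essential vertices. I would realize this deletion as the specialization $x_{g}\mapsto 1$ of the corresponding frozen variables, equivalently as a composition of base changes and localizations in the sense of Section~\ref{sec:coeff-change}: it is compatible with every mutation away from the deleted vertices, hence induces a surjection $\upClAlg(\frz_{F}\rsd(\widetilde{\uxi}))\to\upClAlg(\rsd(\uxi))$. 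Tracking degrees and tropical points, one checks that the image of $\{\frz_{F}\can_{m}\mid m\text{ supported off the }L_{c}\}$ is a $\tropSet$-pointed, bar-invariant basis of $\upClAlg(\rsd(\uxi))$ containing all cluster monomials and satisfying the triangularity~(\ref{eq:triangular_basis_triangularity}) with respect to $\rsd(\uxi)$; by \cite[Proposition 6.4.3, Theorem 6.5.3]{qin2020dual} it is then the common triangular basis. For~(2), the same deletion on the categorified side — compatible with the passage to $\cT'$, cf.\ Theorem~\ref{thm:subcategory-b-upcluster} and Corollary~\ref{cor:subcategorify-combine} — produces a quasi-categorification. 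Undoing the initial reductions (base changes, mutations, permutations) then carries the conclusions to $\upClAlg(\rsd(\ubi))$ for arbitrary $\ubi$.

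Finally, the case $\sd=\dsd(\ubi)$ is handled by the same scheme, now extending to $\dsd(\widetilde{\uxi})$ (using Lemma~\ref{lem:extension_diagram}) which — since $\dsd$ retains the left-unbounded frozen vertices — is base-change equivalent to the non-localized quantum unipotent subgroup $\qO[\tN(\tw)]$, also carrying the common triangular basis $\dCan$ \cite{qin2020dual} and, for symmetric $\tC$, its (monoidal) categorification \cite{Kang2018}\cite{mcnamara2021cluster}; one then freezes $F$, deletes the $L_{c}$-intervals (including their left- and right-unbounded vertices), and undoes the reductions exactly as above. The main obstacle throughout is the deletion of the essential frozen vertices on the extension layers — in particular verifying that the naive section $\{m\text{ supported off the }L_{c}\}$ maps bijectively onto the common triangular basis downstairs, i.e.\ that no two of these degrees collapse under the projection $\cone(\frz_{F}\rsd(\widetilde{\uxi}))\to\cone(\rsd(\uxi))$.
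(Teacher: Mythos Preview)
Your overall strategy---reduce to a positive word $\uxi$, extend to a reduced word $\widetilde{\uxi}$ for an extended Cartan matrix, use the common triangular basis on $\qO[N^{w_{\widetilde{\uxi}}}]$, freeze the extension layers, then come back down---is exactly the paper's route. But you have manufactured a difficulty at the ``deletion'' step that does not exist, and this is the one genuine gap in your write-up.

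After freezing $F$ (the unfrozen intervals on the layers $L_{c}$, $c\in\tJ\setminus J$), \emph{all} vertices on those layers are frozen. The resulting seed $\sd':=\frz_{F}\rsd(\widetilde{\uxi})$ and the target $\rsd(\uxi)$ have the \emph{same} unfrozen vertex set and, by Lemma~\ref{lem:extension_diagram}, the same $B$-matrix restricted to $I_{\ufv}\times I_{\ufv}$. That is precisely the definition of \emph{similar} seeds in Section~\ref{subsec:Similarity-and-correction}: similarity places no condition on the frozen part of $\tB$, so it is irrelevant that the deleted frozen vertices are ``essential''. Proposition~\ref{prop:similar-common-tri-basis} then transports the common triangular basis $\frz_{F}\can^{w}$ of $\upClAlg(\sd')$ to $\upClAlg(\rsd(\uxi))$ in one line via the base-change/quantization-change diagram~(\ref{eq:base-change}). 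There is no need to specialize $x_{g}\mapsto 1$, no surjection to control, and no ``degree-collapse'' obstacle: in the similarity framework the set $\cZ'$ of similar elements is by construction $\frGroup'$-closed and $\cone(\sd')$-pointed (Lemmas~\ref{lem:similar-basis},~\ref{lem:inverse-construction-similar}), so distinct frozen-part degrees give distinct basis elements automatically. The categorification side is likewise one application of Theorem~\ref{thm:sub_category_upClAlg} followed by the definition of quasi-categorification (which absorbs base and quantization changes).

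Your treatment of $\sd=\dsd(\ubi)$ is also more laborious than necessary. The seeds $\dsd(\ubi)$ and $\rsd(\ubi)$ differ only in the frozen vertices $I_{\fv}^{-}$, hence are similar; so once the $\rsd$ case is established, Proposition~\ref{prop:similar-common-tri-basis} gives the $\dsd$ case immediately---there is no need to run the extension argument a second time through $\dsd(\widetilde{\uxi})$ or to invoke $\qO[\tN(\tw)]$.
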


\begin{proof}

Since $\rsd(\ubi)$ are $\dsd(\ubi)$ are similar, it suffices to
check the claim for $\sd=\rsd(\ubi)$.

Take a shuffle $\ubi$ of $(-\uzeta,\ueta)$. Denote $\uxi=(\uzeta\op,\ueta)$.
Then $\rsd(\uxi)=\seq\rsd(\ubi)$ for a mutation sequence $\seq$,
see Proposition \ref{prop:extension_string_diagram}. Let $\widetilde{\uxi}$
be a letter extension of $\uxi$ as in Proposition \ref{prop:extension_string_diagram}
and denote $w=w_{\widetilde{\uxi}}\in\tW$. Note that, with an appropriate
quantization, $\rsd(\widetilde{\uxi})$ is a quantum seed for the
quantum cluster algebra $\qO[N^{w}]$. The dual canonical basis of
$\qO[N^{w}]$ is the common triangular basis \cite{qin2017triangular},
denoted $\can^{w}$. 

By Proposition \ref{prop:extension_string_diagram}, for some $F\subset I_{\ufv}(\rsd(\widetilde{\uxi}))$,
the classical seed $\sd$ is obtained from $\sd':=\frz_{F}\rsd(\widetilde{\uxi})$
by removing frozen vertices. Note that $\sd$ and $\sd'$ are similar
as classical seeds. In addition, $\upClAlg(\sd')$ has the common
triangular basis $\frz_{F}\can^{w}$ (Theorem \ref{thm:sub_cluster_triangular_basis}).
We deduce that the $\upClAlg(\sd)$ and $\upClAlg(\sd')$ are related
by base changes and quantization changes, see Section \ref{subsec:Base-changes},
(\ref{eq:base-change}):
\begin{align}
\upClAlg(\rsd(\widetilde{\uxi})) & \xrightarrow{\frz_{F}} & \upClAlg(\sd') & \xleftarrow{\text{\ensuremath{\varphi_{1}}}} & \upClAlg(\sd^{\prin},\lambda_{1}) & \stackrel[\text{change}]{\text{quantization}}{\sim} & \upClAlg(\sd^{\prin},\lambda_{2}) & \xrightarrow{\text{\ensuremath{\varphi_{2}}}}\upClAlg(\sd),\label{eq:q-gp-to-dBS}
\end{align}
where $(\sd^{\prin},\lambda_{1})$ and $(\sd^{\prin},\lambda_{2})$
are two principal coefficients seeds with different quantization.
By Proposition \ref{prop:similar-common-tri-basis}, the common triangular
basis $\can$ for $\upClAlg(\sd)$ exists and consists of the elements
similar to those of $\frz_{F}\can^{w}$.

If $C$ is symmetric, we can choose $\tC$ to be symmetric. Then $(\qO[N^{w}],\can^{w})$
is categorified by a monoidal category $\tilde{\cC}^{w}$ consisting
of KLR algebras' modules. Then $(\upClAlg',\frz_{F}\can^{w})$ is
categorified by a monoidal subcategory $\cC'$ of $\tilde{\cC}^{w}$
after localization (Theorem \ref{thm:sub_category_upClAlg}). Therefore,
$(\upClAlg(\sd),\can)$ is quasi-categorified by $\cC'$.

\end{proof}

\begin{Thm}[{Classical $\clAlg=\upClAlg$ by \cite{shen2021cluster}}]\label{thm:A-U-dBS}

For $\sd=\dsd(\ubi)$ or $\rsd(\ubi)$, we have $\clAlg(\sd)=\upClAlg(\sd)$,
and its common triangular basis elements have nice cluster decomposition
in $\seq\sd$, where $\seq$ satisfies $\seq(\rsd(\ubi))=\rsd((\uzeta\op,\ueta))$.

\end{Thm}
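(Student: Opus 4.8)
The plan is to deduce the statement from the corresponding facts about the quantum unipotent cell $\qO[\tN^w]$, transported along the very chain of cluster operations \eqref{eq:q-gp-to-dBS} already assembled in the proof of Theorem~\ref{thm:bases_dBS}; the extra point is that each step in that chain preserves the nice-cluster-decomposition property. Two preliminary reductions help. Since $\rsd(\ubi)$ is obtained from $\dsd(\ubi)$ by deleting the frozen vertices of $I_{\fv}^-$, the two seeds have the same unfrozen part and are similar as classical seeds, so Corollary~\ref{cor:base-change-A-U}, Proposition~\ref{prop:similar-common-tri-basis}, and the principal-coefficients route \eqref{eq:base-change} reduce the claim for $\dsd(\ubi)$ to the one for $\rsd(\ubi)$. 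Moreover $\clAlg=\upClAlg$ is an intrinsic property of the localized cluster algebra, and a common triangular basis is the triangular basis with respect to every seed in the mutation class; hence it is enough to produce a nice cluster decomposition of each basis element in the single seed $\rsd(\uxi)$, $\uxi=(\uzeta\op,\ueta)$, which by Proposition~\ref{prop:extension_string_diagram} is the seed $\seq\rsd(\ubi)$ named in the statement.

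For the base case, let $\widetilde{\uxi}$ be the reduced letter extension of $\uxi$ supplied by Proposition~\ref{prop:extension_string_diagram} and $w=w_{\widetilde{\uxi}}\in\tW$, so that, with a compatible quantization (via the $B$-matrix formula of Lemma~\ref{lem:dBS_B_matrix}), $\rsd(\widetilde{\uxi})$ is a quantum seed for $\qO[\tN^w]$. Here I would invoke three known facts: the dual canonical basis $\can^w=\dCan$ of $\qO[\tN^w]$ is the common triangular basis \cite{qin2017triangular,qin2020dual}; consequently $\clAlg(\rsd(\widetilde{\uxi}))=\upClAlg(\rsd(\widetilde{\uxi}))$ by \cite[Theorem 4.3.1]{qin2019bases}; and $\dCan$ is unitriangular with respect to the dual PBW (``standard'') basis of $\qO[\tN^w]$ attached to $\widetilde{\uxi}$, whose members are ordered products of dual root vectors, each a localized cluster monomial. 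The last fact gives each $\can^w_m$ a nice cluster decomposition in $\rsd(\widetilde{\uxi})$.

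Now I would run the chain. Proposition~\ref{prop:extension_string_diagram} also supplies $F\subset I_{\ufv}(\rsd(\widetilde{\uxi}))$ such that $\rsd(\uxi)$ is obtained from $\frz_F\rsd(\widetilde{\uxi})$ by deleting the frozen vertices on the extra layers $L_c$, $c\in\tJ\backslash J$. By \cite[Theorem 1.4.1]{muller2015existence} the seed $\frz_F\rsd(\widetilde{\uxi})$ is injective-reachable, so Corollary~\ref{cor:freezing-A-U}, applied to the $\tropSet$-pointed basis $\can^w$, yields $\clAlg(\frz_F\rsd(\widetilde{\uxi}))=\upClAlg(\frz_F\rsd(\widetilde{\uxi}))=\Span_\kk\{\frz_{F,m}\can^w_m\}$ together with a nice cluster decomposition of each $\frz_{F,m}\can^w_m$ in $\frz_F\rsd(\widetilde{\uxi})$; and Theorem~\ref{thm:sub_cluster_triangular_basis} identifies $\frz_F\can^w$ as the common triangular basis there. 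Deleting the remaining extra frozen vertices produces a seed with the same unfrozen part as $\rsd(\uxi)$, hence classically similar to it; running Corollary~\ref{cor:base-change-A-U}(2) and Proposition~\ref{prop:similar-common-tri-basis} along \eqref{eq:q-gp-to-dBS}, exactly as in the proof of Theorem~\ref{thm:bases_dBS}, then transfers both conclusions to $\rsd(\uxi)$: we get $\clAlg(\rsd(\uxi))=\upClAlg(\rsd(\uxi))$, and the common triangular basis $\can$ of $\upClAlg(\rsd(\uxi))$ --- whose elements are those similar to the $\frz_{F,m}\can^w_m$ --- has nice cluster decompositions in $\rsd(\uxi)$. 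Undoing the two reductions of the first paragraph then gives the statement for $\rsd(\ubi)$ and $\dsd(\ubi)$, the classical equality $\clAlg=\upClAlg$ being \cite{shen2021cluster}.

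The main obstacle is the base case, concretely the assertion in the second paragraph that the dual root vectors of $\qO[\tN^w]$ for the reduced word $\widetilde{\uxi}$ are localized cluster monomials and that the dual-canonical/dual-PBW transition is unitriangular in the required sense. Both are available in the literature, but one must verify that they are formulated for precisely the seed $\rsd(\widetilde{\uxi})$ used here (after the identification of Lemma~\ref{lem:dBS_B_matrix}) rather than for some other seed in its mutation class. A secondary point, already encapsulated in Corollary~\ref{cor:base-change-A-U}(2), is that the base changes and the quantization change in \eqref{eq:q-gp-to-dBS} preserve nice cluster decompositions; this is automatic because the bases involved are pointed in a fixed $\LP(\sd)$ up to $q^{\frac{\Z}{2}}$-multiples, so Theorem~\ref{thm:correction}(2) controls their Laurent expansions after each operation.
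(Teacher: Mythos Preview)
Your proposal is correct and follows essentially the same route as the paper: establish the base case for $\rsd(\widetilde{\uxi})$ via $\qO[\tN(w)]$ and its dual PBW basis, then propagate $\clAlg=\upClAlg$ and nice cluster decompositions along the chain \eqref{eq:q-gp-to-dBS} using Corollaries~\ref{cor:freezing-A-U} and~\ref{cor:base-change-A-U}, finally passing to $\dsd(\ubi)$ by similarity. The only difference is cosmetic: the paper invokes \cite{GY13} directly for $\bClAlg(\rsd(\widetilde{\uxi}))=\bUpClAlg(\rsd(\widetilde{\uxi}))=\qO[N(w)]$ and simply asserts the nice cluster decomposition ``since $\qO[N(w)]$ has the dual PBW basis,'' whereas you spell out the dual-root-vector step and flag it as the point needing literature verification.
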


\begin{proof}

Recall that $\uxi=(\uzeta\op,\ueta)$ and $\bClAlg(\rsd(\widetilde{\uxi}))=\bUpClAlg(\rsd(\widetilde{\uxi}))=\qO[N(w)]$
(see \cite{GY13}). So $\clAlg(\rsd(\widetilde{\uxi}))=\upClAlg(\rsd(\widetilde{\uxi}))$
and the elements of $\can_{w}$ have nice cluster decompositions in
$\rsd(\tilde{\uxi})$, since $\qO[N(w)]$ has the dual PBW basis.
Note that all algebras in (\ref{eq:q-gp-to-dBS}) have the common
triangular bases related by freezing or similarity. Applying Corollaries
\ref{cor:freezing-A-U} and \ref{cor:base-change-A-U} to $(\clAlg(\rsd(\widetilde{\uxi})),\can_{w})$
in (\ref{eq:q-gp-to-dBS}), we deduce that $\clAlg(\rsd(\uxi))=\upClAlg(\rsd(\uxi))$
and its common triangular basis elements have nice cluster decompositions
in $\rsd(\uxi)$.

The claim for $\seq\dsd(\ubi)$ follows from its similarity with $\rsd(\uxi)$
and Corollary \ref{cor:base-change-A-U}.

\end{proof}

\subsection{Optimized seeds\label{subsec:Optimized-seeds}}

\begin{Lem}\label{lem:optimized_dBS_last}

Denote $\sd=\rsd(\ueta)$ for any word $\ueta$. Then, $\forall k\in I$,
$k^{\max}$ is optimized in $\Sigma_{k^{\min}}\sd$, where $\Sigma_{k^{\min}}:=\mu_{k^{\max}[-1]}\cdots\mu_{k^{min}[1]}\mu_{k^{\min}}$.

\end{Lem}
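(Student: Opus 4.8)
The plan is to verify the definition of ``optimized'' directly by computing the $B$-matrix of the seed $\Sigma_{k^{\min}}\sd$ along the layer $L_{|\bi_k|}$. Recall that $j$ is optimized in a seed $\sd'$ iff $b_{jk'}(\sd') \geq 0$ for all $k' \in I_{\ufv}$. Writing $a = |\bi_k|$, the key point is that $k^{\max} = k[o_+^{\ueta}(k)]$ is the \emph{last} index on the layer $L_a$, i.e. $k^{\max}[1] = +\infty$, so in the original seed $\rsd(\ueta)$ it already has no ``outgoing successor'' arrow. By Lemma \ref{lem:dBS_B_matrix} the only negative contributions to $b_{k^{\max},k'}$ in $\rsd(\ueta)$ come either from the predecessor relation ($k^{\max} = k'[1]$, contributing $-\varepsilon_{k^{\max}}$, which is why we must mutate) or from the two ``$-\varepsilon_j C$'' cases, which require $k' < k^{\max}$ on a layer $L_b$ with $b \neq a$ interleaving $k^{\max}$. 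The mutation sequence $\Sigma_{k^{\min}} = \mu_{k^{\max}[-1]}\cdots\mu_{k^{\min}[1]}\mu_{k^{\min}}$ runs through exactly the unfrozen vertices of $L_a$ strictly below $k^{\max}$, in order; I would show by induction along this sequence that after performing these mutations, all arrows into $k^{\max}$ along $L_a$ get reversed and the cross-layer arrows rearrange so that $b_{k^{\max}, k'}(\Sigma_{k^{\min}}\sd) \geq 0$ for every $k'$.

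First I would set up the bookkeeping: identify which vertices lie ``between'' $k^{\min}$ and $k^{\max}$ on $L_a$ (these are exactly $k^{\min}[1], k^{\min}[2], \dots, k^{\max}[-1]$, all unfrozen since $k^{\max}$ is the last one), and observe that the subquiver on $L_a$ restricted to $\{k^{\min}, k^{\min}[1], \dots, k^{\max}\}$ together with its interactions with the other layers is precisely the quiver of a type-$A$-like ``path'' with decorations governed by the signs $\varepsilon_r$ and the Cartan entries $C_{a,b}$. This is the same local picture that appears in the analysis of double Bott-Samelson seeds in \cite{shen2021cluster} and in the proof of Lemma \ref{lem:dBS-fullrank}. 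Then I would track the mutation $\mu_{k^{\min}}$, then $\mu_{k^{\min}[1]}$, and so on, each time applying the mutation rule $\tB' = \tE_\varepsilon \tB F_\varepsilon$; the effect of mutating at a vertex $r$ on the ``downhill'' path $L_a$ is to push the orientation of the $L_a$-arrow one step toward $k^{\max}$ and to create/cancel cross-layer arrows in a controlled way. The claim is that after the full sequence, the vertex $k^{\max}$ only receives arrows (all $b_{k^{\max},k'} \geq 0$), which is the optimized condition; equivalently $\nu_{k^{\max}}(y_{k'}(\Sigma_{k^{\min}}\sd)) \geq 0$ for all $k'$.

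The main obstacle I anticipate is controlling the cross-layer arrows: when we mutate at an interior vertex $r \in L_a$, vertices on other layers $L_b$ that are adjacent to both $r$ and to $r[1]$ (or to $k^{\max}$) can acquire new arrows via the $\mu_k$ composition term $[-\varepsilon b_{ik}]_+ \cdot \cdots$, and one must check these never produce an arrow \emph{out of} $k^{\max}$. This requires carefully using the sign constraints in the last four cases of \eqref{eq:dBS_B_matrix} (the interleaving conditions on $\varepsilon_{j[1]}, \varepsilon_k$ etc.) to see that the relevant cross-layer arrows at $k^{\max}$ are always oriented correctly, or get cancelled, during the sequence. A clean way to organize this would be to prove the stronger inductive statement that after mutating $\mu_{k^{\min}}, \dots, \mu_{r}$, the vertex $r[1]$ is optimized \emph{within the sub-quiver on $\{r[1], r[2], \dots, k^{\max}\} \cup (\text{other layers})$}, so that each successive mutation only needs the local picture; this mirrors the inductive structure already used in Lemma \ref{lem:dBS-fullrank} and in \cite{shen2021cluster}, and reduces the whole argument to a single generic local mutation computation. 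Alternatively, one can invoke the known existence of optimized seeds / injective-reachability for these cells in \cite{shen2021cluster, casals2022cluster} to shortcut the cross-layer verification, but I would prefer the direct mutation-sequence proof since it also pins down \emph{which} seed is optimized, as the statement requires.
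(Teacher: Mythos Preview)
Your proposal is a reasonable direct approach, but the paper takes a much shorter route. Instead of tracking the mutation sequence by hand, the paper extends the word to $\ueta' = (\ueta, \eta_k)$, which turns $k^{\max}$ into an \emph{unfrozen} vertex of $\sd' := \rsd(\ueta')$ while leaving $\tB(\sd) = \tB(\sd')_{I \times I_{\ufv}}$ intact; it then invokes \cite[Lemma~4.6]{cao2022exchange} to obtain $b_{j,k^{\max}}(\Sigma_{k^{\min}}\sd') \leq 0$ for all $j \in I_{\ufv}$, and skew-symmetrizability converts this into the optimized condition $b_{k^{\max},j} \geq 0$. The entire argument is two sentences.

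Your direct computation would, if carried to completion, essentially reprove the relevant instance of Cao's lemma. Your diagnosis that the cross-layer arrows are the chief obstacle is accurate, and the inductive scheme you sketch is the natural way to organize such a computation. What your route buys is self-containment; what the paper's route buys is brevity at the cost of an external citation. If you do want to push the direct argument through, one structural shortcut is worth noting: for $k^{\min} = 1$, the sequence $\Sigma_{k^{\min}}$ can be realized as a left reflection followed by flips (this is exactly what the paper does later in the proof of Lemma~\ref{lem:mutation_shuffle_seeds}), so every intermediate seed is again of the form $\rsd(\text{signed word})$ and one can read off the $B$-matrix from \eqref{eq:dBS_B_matrix} directly rather than tracking individual entries through each mutation. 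The case $k^{\min} > 1$ does not admit this flip interpretation as immediately, which is presumably why the paper opts for the citation rather than the explicit calculation you outline.
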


\begin{proof}

Let us give a short proof using \cite{cao2022exchange}. Extend $\ueta$
to $\ueta'=(\ueta,\eta_{k})$. Then $I(\sd')=I\sqcup\{l(\ueta)+1\}$,
$k^{\max}$ becomes unfrozen in $\sd':=\rsd(\ueta')$, and $\tB(\sd)=\tB(\sd')_{I\times I_{\ufv}}$.
By \cite[Lemma 4.6]{cao2022exchange}, we have $b_{j,k^{\max}}(\Sigma_{k^{\min}}\sd')\leq0$
for all $j\in I_{\ufv}$.\footnote{Our quiver is opposite to that of \cite{cao2022exchange} but the
$B$-matrices are the same.}

\end{proof}

\begin{Prop}\label{prop:optimize_double_sd}

For any signed word $\ubi$, $\dsd(\ubi)$ can be optimized.

\end{Prop}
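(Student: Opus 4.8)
The plan is to treat separately the two kinds of frozen vertices of $\dsd(\ubi)$: the right--unbounded ones $I_{\fv}=\{\binom{a}{O^{\ubi}(a)-1}^{\ubi}\}$, which already lie in the vertex set $I(\ubi)$ of the reduced seed $\rsd(\ubi)$, and the left--unbounded ones $I_{\fv}^{-}=\{\binom{a}{-1}^{\ubi}\}$, which do not. For the first kind I would reduce to $\rsd$ of a positive word and quote Lemma~\ref{lem:optimized_dBS_last}; for the second kind I would use the good--sub--seed embedding of Lemma~\ref{lem:calibration-word} to convert the vertex into one of the first kind sitting inside a larger seed. I expect the left--unbounded case to carry the real content. (If $a\notin\supp\ubi$ the corresponding $\binom{a}{-1}^{\ubi}$ and $\binom{a}{0}^{\ubi}$ are non-essential, hence already optimized, so below $a\in\supp\ubi$.)

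For a right--unbounded vertex $j=\binom{a}{O^{\ubi}(a)-1}^{\ubi}\in I(\ubi)$: for $i\in I(\ubi)$ and $k\in I_{\ufv}$ the entry $b_{ik}$ and its transformation under mutation at an unfrozen vertex only involve the block $I(\ubi)\times I_{\ufv}$, so the $j$--th row evolves identically in $\dsd(\ubi)$ and in $\rsd(\ubi)$ along any unfrozen mutation sequence; it therefore suffices to optimize $j$ inside $\rsd(\ubi)$. By Proposition~\ref{prop:extension_string_diagram}, $\rsd(\ubi)=\seq\,\rsd(\uxi)$ for a mutation sequence $\seq$ consisting of flips (hence of unfrozen mutations), where $\uxi=(\uzeta\op,\ueta)$ is an ordinary word; moreover $\rsd(\ubi)$ and $\rsd(\uxi)$ share the mutation class and, under the label identification $I(\ubi)\simeq I(\uxi)$, the set of frozen vertices. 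By Lemma~\ref{lem:optimized_dBS_last}, $j$ is optimized in $\Sigma_{k^{\min}}\rsd(\uxi)$, and $\Sigma_{k^{\min}}$ consists of unfrozen mutations; composing with $\seq$ shows $j$ can be optimized in $\rsd(\ubi)$, hence in $\dsd(\ubi)$.

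For a left--unbounded vertex $j=\binom{a}{-1}^{\ubi}$, prepend a letter $\bi_{0}$ with $|\bi_{0}|=a$ to get $\ubi_{-}=(\bi_{0},\ubi)$. By Lemma~\ref{lem:calibration-word}, $\dsd(\ubi)$ is a good sub seed of $\dsd(\ubi_{-})$ via $\iota:=\iota_{\ubi_{-},\ubi}$, and since $O^{\ubi_{-}}([1,1];a)=1$ one computes $\iota\binom{a}{-1}^{\ubi}=\binom{a}{0}^{\ubi_{-}}$, which is now an \emph{unfrozen} vertex of $\dsd(\ubi_{-})$; moreover $\iota$ identifies $I_{\ufv}(\dsd(\ubi))$ with $I_{\ufv}(\dsd(\ubi_{-}))\setminus\{\binom{a}{0}^{\ubi_{-}}\}$. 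The key step is then the left--right mirror of Lemma~\ref{lem:optimized_dBS_last}: by the analogue of \cite[Lemma~4.6]{cao2022exchange} with the roles of ``earlier'' and ``later'' occurrences of $a$ interchanged (legitimate by the reversal symmetry visible in~\eqref{eq:dBS_B_matrix}, and proved by the same $c$--vector sign--coherence argument as the original), the sequence $\seq'$ mutating at the later unfrozen occurrences $\binom{a}{1}^{\ubi_{-}},\dots,\binom{a}{O^{\ubi_{-}}(a)-2}^{\ubi_{-}}$ of $a$ makes the row of $\binom{a}{0}^{\ubi_{-}}$ nonnegative on all of $I_{\ufv}(\dsd(\ubi_{-}))$. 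Since $\seq'$ is supported on $\iota(I_{\ufv}(\dsd(\ubi)))$, the mutation--compatibility of good sub seeds (the Proposition on good sub seeds in Section~\ref{subsec:Sub-seeds}) transports it to an unfrozen mutation sequence on $\dsd(\ubi)$, and the identity $b_{jk}=b_{\iota j,\iota k}$ between the two $B$--matrices is preserved under these mutations; restricting to $k\in I_{\ufv}(\dsd(\ubi))$ gives $b_{\binom{a}{-1}^{\ubi},k}\ge 0$ in the resulting seed, i.e.\ $j$ is optimized there. The main obstacle is precisely this mirror of \cite[Lemma~4.6]{cao2022exchange} — optimizing an unfrozen ``first--occurrence'' vertex by sweeping the occurrences on its right — together with checking the prepending reduction; once these are in place the two cases give the proposition.
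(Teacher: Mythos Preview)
Your treatment of the right-unbounded frozen vertices $\binom{a}{O^{\ubi}(a)-1}^{\ubi}$ matches the paper's: reduce to $\rsd(\ubi)$, mutate to $\rsd$ of a positive word via flips and left reflections, and apply Lemma~\ref{lem:optimized_dBS_last}.

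For the left-unbounded vertices $\binom{a}{-1}^{\ubi}$, however, you take an unnecessary detour, and the route you propose has a gap. The paper's argument is a single observation: the seed $\dsd(\ubi)$ is isomorphic to $\dsd(-\ubi\op)$ under the vertex bijection $\binom{a}{-1+d}^{\ubi}\leftrightarrow\binom{a}{O(a)-1-d}^{-\ubi\op}$ (this is the reversal symmetry you allude to, but now read at the level of seeds rather than just the formula~\eqref{eq:dBS_B_matrix}). This symmetry sends each left-unbounded frozen vertex $\binom{a}{-1}^{\ubi}$ to the right-unbounded frozen vertex $\binom{a}{O(a)-1}^{-\ubi\op}$ of $\dsd(-\ubi\op)$, which is handled by case~(1). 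No embedding into a larger seed and no mirror of \cite[Lemma~4.6]{cao2022exchange} are needed.

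Your route instead prepends a letter and asks for a mirror of the Cao lemma \emph{inside $\dsd(\ubi_{-})$}. This is where the gap lies. Lemma~\ref{lem:optimized_dBS_last} and \cite[Lemma~4.6]{cao2022exchange} are stated and proved for $\rsd(\ueta)$ with $\ueta$ a \emph{word}, whereas your claim concerns $\dsd(\ubi_{-})$ for an arbitrary \emph{signed} word; the appeal to ``the same $c$-vector sign-coherence argument'' does not bridge this difference, since the combinatorics of the sequence $\Sigma_{k^{\min}}$ in \cite{cao2022exchange} are specific to positive words. Even if you invoke the $\dsd$-level reversal symmetry on $\dsd(\ubi_{-})$, it converts your mirror claim into a forward claim about $\dsd(-\ubi_{-}\op)$---still a signed-word seed, and with the target vertex now $k^{\max}[-1]$ rather than $k^{\max}$---so you have not returned to the setting of the cited lemma. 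The reversal symmetry is indeed the right idea; apply it directly to $\dsd(\ubi)$ as the paper does, and the prepending step and mirror lemma both become unnecessary.
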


\begin{proof}

(1) Let us prove the claim for any frozen vertex $k^{\max}$, $k\in I$.
For this purpose, it suffices to show for the seed $\rsd(\ubi)$ that
the frozen $k^{\max}$ can be optimized, which has less frozen vertices.
We can mutate $\rsd(\ubi)$ to a seed of the form $\rsd(-\uzeta,\ueta)$
for words $\uzeta,\ueta$ in $J$. It is connected to $\rsd(\uzeta\op,\ueta)$
by mutations \cite[Proposition 4.3]{cao2022exchange}. Then $k^{\max}$
can be optimized by Lemma \ref{lem:optimized_dBS_last}.

(2) Let us prove the claim for any frozen vertex $-\eta_{k}$, $k\in I$.
Observe that $\dsd(\ubi)$ is identified with $\dsd(-\ubi\op)$ after
we identify the vertices $\binom{a}{-1+d}$ of $\dsd(\ubi)$ with
$\binom{a}{O(a)-1-d}$ of $\dsd(-\ubi\op)$, $\forall d\in[0,O(a)]$.
Specifically, the frozen vertex $-\eta_{k}=\binom{\eta_{k}}{-1}$
of $\dsd(\ubi)$ becomes identified with the frozen vertex $k^{\max}=\binom{\eta_{k}}{O(\eta_{k})-1}$
of $\dsd(-\ubi\op)$, which can be optimized by (1).

\end{proof}

\begin{Thm}\label{thm:bases-compactified-dBS}

Let $\sd$ denote $\rsd(\ubi)$ or $\dsd(\ubi)$. Then the following
statements hold.

(1) $\bUpClAlg(\sd)$ possesses the common triangular basis, denoted
$\can$.

(2) When $C$ is symmetric, $(\bUpClAlg(\sd),\can)$ admits quasi-categorification.

\end{Thm}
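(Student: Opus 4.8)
\textbf{Proof proposal for Theorem \ref{thm:bases-compactified-dBS}.}

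The plan is to deduce this theorem from Theorem \ref{thm:bases_dBS} by combining the optimization results of this subsection (Proposition \ref{prop:optimize_double_sd} and Lemma \ref{lem:optimized_dBS_last}) with the partial-compactification machinery of Section \ref{subsec:Bases-for-partial-compactified} and the freezing/base-change compatibility results of Sections \ref{sec:Freezing-operators} and \ref{sec:coeff-change}. As in the proof of Theorem \ref{thm:bases_dBS}, since $\rsd(\ubi)$ and $\dsd(\ubi)$ are similar, Proposition \ref{prop:similar-common-tri-basis} and the discussion around \eqref{eq:base-change} reduce everything to the case $\sd = \rsd(\ubi)$; and by the flip/braid-move/reflection operations of Section \ref{subsec:change-string-diagrams} we may further take $\sd = \rsd(\uxi)$ with $\uxi = (\uzeta\op, \ueta)$.

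For (1): by Theorem \ref{thm:bases_dBS}(1), $\upClAlg(\sd)$ has the common triangular basis $\can$, and by \cite[Proposition 6.4.3]{qin2020dual} (invoked in Section \ref{subsec:Bases-for-partial-compactified}) the elements $\can_m$ are $[m]$-pointed, hence $\tropSet$-pointed. First I would check that $\sd = \rsd(\uxi)$ can be optimized: Proposition \ref{prop:optimize_double_sd} (applied to $\dsd(\uxi)$, whose $\rsd$-truncation is $\sd$) shows every frozen vertex of the form $k^{\max}$ is optimized in some mutated seed via Lemma \ref{lem:optimized_dBS_last}, and $\rsd(\uxi)$ only has frozen vertices of type $k^{\max}$ (the left-unbounded intervals $\binom{a}{-1}$ having been removed in passing from $\dsd$ to $\rsd$). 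With $j$-optimized seeds $\sd_j$ in hand for all $j \in I_{\fv}$, Proposition \ref{prop:compactified_basis} applies: $\can \cap \bUpClAlg(\sd)$ is a basis of $\bUpClAlg(\sd)$. Then the Definition at the end of Section \ref{subsec:Triangular-bases-base-change}'s triangular-bases discussion (the last Definition before Section \ref{sec:Freezing-operators}) says precisely that $\can \cap \overline{\alg}$ is then \emph{the} common triangular basis of $\overline{\alg} = \bUpClAlg(\sd)$. This gives (1).

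For (2): here I would retrace the chain \eqref{eq:q-gp-to-dBS} at the level of partially compactified algebras. We have $\bClAlg(\rsd(\widetilde{\uxi})) = \bUpClAlg(\rsd(\widetilde{\uxi})) = \qO[N(w)]$, whose dual canonical basis $\can^w$ is the common triangular basis and is categorified by the KLR-module category $\tilde{\cC}^w$ when $\tC$ is symmetric (as recalled in the proof of Theorem \ref{thm:bases_dBS}). One needs that $\qO[N(w)]$, equivalently $\bUpClAlg(\rsd(\widetilde{\uxi}))$, can be optimized — again Proposition \ref{prop:optimize_double_sd}/Lemma \ref{lem:optimized_dBS_last}, since $\rsd(\widetilde{\uxi})$ has only $k^{\max}$-type frozen vertices — so $\overline{\can^w} = \can^w \cap \bUpClAlg$ is a basis and $\frz_F\rsd(\widetilde{\uxi})$ inherits optimized seeds by Lemma \ref{lem:no_pole_change_chart} together with Muller's argument (Lemma \ref{lem:freeze_cluster_deg}) preserving injective-reachability. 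Now Corollary \ref{cor:subcategorify-combine} applies: from the categorification of $(\bUpClAlg(\rsd(\widetilde{\uxi})), \overline{\can^w})$ by $\tilde{\cC}^w$ one obtains a categorification of $(\bUpClAlg(\frz_F\rsd(\widetilde{\uxi})), \overline{\can^w}')$ by the monoidal subcategory $\tilde{\cC}^w{}'$, where $\overline{\can^w}' = \frz_F\can^w \cap \bUpClAlg(\frz_F\rsd(\widetilde{\uxi}))$. Finally, passing along the base changes $\varphi_1, \varphi_2$ and the quantization change in \eqref{eq:q-gp-to-dBS}, Proposition \ref{prop:similar-common-tri-basis} transports the common triangular basis of $\bUpClAlg(\frz_F\rsd(\widetilde{\uxi}))$ to that of $\bUpClAlg(\sd)$, and by Definition \ref{def:quasi-categorification} this exhibits $(\bUpClAlg(\sd), \can)$ as quasi-categorified by $\tilde{\cC}^w{}'$.

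The main obstacle I anticipate is verifying that the freezing operator is compatible with partial compactification in the precise form needed for Corollary \ref{cor:subcategorify-combine} — namely checking the hypothesis of Theorem \ref{thm:subcategory-b-upcluster}/Corollary \ref{cor:tri-freezing-equal-restriction} that $\overline{\can}' := \frz_F\can \cap \bUpClAlg(\frz_F\sd)$ coincides with $\can \cap \bUpClAlg(\frz_F\sd)$ and is genuinely a basis of $\bUpClAlg(\frz_F\sd)$. This requires knowing both $\sd$ \emph{and} $\frz_F\sd$ can be optimized (so that Proposition \ref{prop:compactified_basis} applies on both sides) and that $\bUpClAlg(\frz_F\sd) \subset \bUpClAlg(\sd)$ via Proposition \ref{prop:good-sub-up-cl-alg}; the genuinely delicate point is that optimization of $\sd$ does not automatically pass to $\frz_F\sd$, so one must locate $j$-optimized seeds within $\Delta^+_{\frz_F\sd}$ separately, which is where the explicit combinatorics of Lemma \ref{lem:optimized_dBS_last} — mutating to a seed $\rsd(-\uzeta,\ueta)$ and then using \cite[Lemma 4.6]{cao2022exchange} — has to be invoked with care for the extended word $\widetilde{\uxi}$ and its frozen subset $F$.
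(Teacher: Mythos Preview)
Your argument for (1) is correct and matches the paper's: optimization of all frozen vertices (Proposition \ref{prop:optimize_double_sd}) combined with Theorem \ref{thm:bases_dBS}(1) and Proposition \ref{prop:compactified_basis} gives exactly that $\can\cap\bUpClAlg(\sd)$ is a basis, hence the common triangular basis of $\bUpClAlg(\sd)$. The paper is slightly more direct in that it does not first reduce to $\rsd(\uxi)$, since Proposition \ref{prop:optimize_double_sd} already covers both $\rsd(\ubi)$ and $\dsd(\ubi)$; but this is a cosmetic difference.

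For (2), however, you are working much too hard. The paper's proof is a single sentence: since Definition \ref{def:quasi-categorification} explicitly allows localization among the admissible operations (it references Section \ref{subsec:Localization}, where localization is treated as a base change $\frRing\otimes_{\bFrRing}\bUpClAlg\simeq\upClAlg$), the quasi-categorification of $(\upClAlg(\sd),\can)$ established in Theorem \ref{thm:bases_dBS}(2) \emph{is already} a quasi-categorification of $(\bUpClAlg(\sd),\can\cap\bUpClAlg(\sd))$. Nothing further is needed. Your route through Corollary \ref{cor:subcategorify-combine} at the partially compactified level is not wrong in spirit, but it runs into the obstacle you yourself flag---verifying that $\frz_F\rsd(\widetilde{\uxi})$ can be optimized, where the newly frozen vertices in $F$ lie on the extra layers $L_c$, $c\in\tJ\setminus J$---and you do not actually resolve it. That obstacle simply disappears once you notice that quasi-categorification is, by definition, stable under localization.
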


\begin{proof}

(1) Since all frozen vertices of $\rsd(\ubi)$ and $\dsd(\ubi)$ can
be optimized, combining Theorem \ref{thm:bases_dBS} and Proposition
\ref{prop:compactified_basis}, we obtain the desired claim.

(2) Note that we allow localization in quasi-categorification. The
claim follows from Theorem \ref{thm:bases_dBS}(2).

\end{proof}

\section{Applications: cluster algebras from Lie theory\label{sec:Applications:-cluster-algebras-Lie}}

\subsection{Double Bott-Samelson cells}

As in Section \ref{sec:Cluster-algebras-signed-words}, let $C$ denote
a $J\times J$ generalized Cartan matrix. For any words $\uzeta$,
$\ueta$, let $\ddBS_{\beta_{\ueta}}^{\beta_{\uzeta}}$ denote the\emph{
}decorated double Bott-Samelson cell in \cite[Section 2]{shen2021cluster}.
Choose any shuffle $\ubi$ of $-\uzeta$ and $\ueta$. Then the coordinate
ring $\C[\ddBS_{\beta_{\ueta}}^{\beta_{\uzeta}}]$ is isomorphic to
$\upClAlg(\dsd(\ubi))$ for $\kk=\C$ by \cite[Theorem 1.1]{shen2021cluster}.
Similarly, let $\dBS_{\beta_{\ueta}}^{\beta_{\uzeta}}$ denote the
associated (half-decorated) double Bott-Samelson cell (dBS for short).\emph{
}Then $\C[\dBS_{\beta_{\ueta}}^{\beta_{\uzeta}}]$ is isomorphic to
$\upClAlg(\rsd(\ubi))$ for $\kk=\C$, see \cite[Section 2.4]{shen2021cluster}.
Theorems \ref{thm:bases_dBS} and \ref{thm:A-U-dBS} imply the following.

\begin{Thm}\label{thm:results-dBS}

Theorems \ref{thm:intro-quasi-categorification} and \ref{thm:intro-A-U}
hold for the coordinate rings of the decorated (or half-decorated)
double Bott-Samelson cells.

\end{Thm}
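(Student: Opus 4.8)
The plan is to reduce the statement entirely to results already established for the abstract cluster algebras $\upClAlg(\dsd(\ubi))$ and $\upClAlg(\rsd(\ubi))$, invoking only the identification of coordinate rings with these cluster algebras as an external input. First I would recall, from \cite[Theorem 1.1]{shen2021cluster} and \cite[Section 2.4]{shen2021cluster}, the isomorphisms $\C[\ddBS_{\beta_{\ueta}}^{\beta_{\uzeta}}]\simeq\upClAlg(\dsd(\ubi))$ and $\C[\dBS_{\beta_{\ueta}}^{\beta_{\uzeta}}]\simeq\upClAlg(\rsd(\ubi))$ over $\kk=\C$, for any shuffle $\ubi$ of $-\uzeta$ and $\ueta$. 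These are ring isomorphisms, so any property of the right-hand cluster algebra transports verbatim to the left-hand coordinate ring. The quantized coordinate rings, as discussed in the Remark on quantized coordinate rings and integral forms in the introduction, are \emph{defined} to be the corresponding quantum cluster algebras, so there is nothing extra to check at the quantum level beyond what is already proved for $\upClAlg(\dsd(\ubi))$ and $\upClAlg(\rsd(\ubi))$.

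Next I would spell out which parts of Theorems \ref{thm:intro-quasi-categorification} and \ref{thm:intro-A-U} apply here. For Theorem \ref{thm:intro-quasi-categorification}: by Theorem \ref{thm:bases_dBS}(1), both $\upClAlg(\rsd(\ubi))$ and $\upClAlg(\dsd(\ubi))$ possess the common triangular basis $\can$, and by Theorem \ref{thm:bases_dBS}(2), when $C$ is symmetric the pairs $(\upClAlg(\sd),\can)$ admit quasi-categorification (via the monoidal subcategory $\cC'$ of the KLR-module category $\tilde{\cC}^w$ constructed in the proof there, using the extension-and-reduction technique and Theorem \ref{thm:sub_category_upClAlg}). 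Pulling back along the isomorphisms above gives the common triangular basis and the quasi-categorification for $\C[\dBS_{\beta_{\ueta}}^{\beta_{\uzeta}}]$ and $\C[\ddBS_{\beta_{\ueta}}^{\beta_{\uzeta}}]$ respectively, which is precisely Theorem \ref{thm:intro-quasi-categorification} for these coordinate rings. For Theorem \ref{thm:intro-A-U}: Theorem \ref{thm:A-U-dBS} gives $\clAlg(\sd)=\upClAlg(\sd)$ for $\sd=\rsd(\ubi)$ or $\dsd(\ubi)$, and in the $\rsd$ case one also has $\bClAlg(\rsd(\ubi))=\bUpClAlg(\rsd(\ubi))$ — this last equality follows by combining Theorem \ref{thm:A-U-dBS} (nice cluster decompositions) with the compactified-basis statement Theorem \ref{thm:bases-compactified-dBS}(1) and Proposition \ref{prop:compactified_basis}, since every common triangular basis element lies in $\bClAlg$ once it lies in $\bUpClAlg$ by its nice cluster decomposition and Lemma \ref{lem:freeze-ord-cluster-alg}-type reasoning. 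Transporting along the coordinate-ring isomorphism yields Theorem \ref{thm:intro-A-U} for these cells.

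The proof itself is then essentially a one-line transport-of-structure argument, so the writeup would simply be: ``By \cite[Theorem 1.1, Section 2.4]{shen2021cluster}, $\C[\ddBS_{\beta_{\ueta}}^{\beta_{\uzeta}}]\simeq\upClAlg(\dsd(\ubi))$ and $\C[\dBS_{\beta_{\ueta}}^{\beta_{\uzeta}}]\simeq\upClAlg(\rsd(\ubi))$; the claims now follow from Theorems \ref{thm:bases_dBS} and \ref{thm:A-U-dBS} (together with Theorem \ref{thm:bases-compactified-dBS} for the partially compactified $\bClAlg=\bUpClAlg$ statement).'' I expect the only genuinely delicate point — and hence the main obstacle — to be bookkeeping about which version of each algebra ($\clAlg$ vs.\ $\upClAlg$ vs.\ their partial compactifications $\bClAlg,\bUpClAlg$) is isomorphic to which coordinate ring, and making sure the quantization conventions on $\dsd(\ubi)$ (fixed via \cite[(8.5)(8.9)]{BerensteinZelevinsky05} after a sign change) match those implicit in the definition of the quantized coordinate rings; once those identifications are pinned down, the rest is immediate.
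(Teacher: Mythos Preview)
Your core approach matches the paper's: the paper simply records that Theorems \ref{thm:bases_dBS} and \ref{thm:A-U-dBS} imply the result, once one has the Shen--Weng identification of the coordinate rings with $\upClAlg(\dsd(\ubi))$ and $\upClAlg(\rsd(\ubi))$. For Theorem \ref{thm:intro-quasi-categorification} and the $\clAlg=\upClAlg$ clause of Theorem \ref{thm:intro-A-U} your write-up is exactly right.

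Where you go further than the paper is in trying to extract $\bClAlg(\rsd)=\bUpClAlg(\rsd)$ at this stage, and that argument has a gap. A nice cluster decomposition (Definition \ref{def:nice-cluster-decomposition}, Theorem \ref{thm:A-U-dBS}) writes a basis element as a finite sum of products of cluster variables \emph{and} of $x_j^{-1}$ for frozen $j$; this only yields membership in $\clAlg$, not in $\bClAlg$. Knowing in addition that the element lies in $\bUpClAlg$ (regular at every $x_j=0$) does not let you cancel the $x_j^{-1}$ factors term-by-term, and neither Proposition \ref{prop:compactified_basis} nor Lemma \ref{lem:freeze-ord-cluster-alg} supplies the missing step. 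The paper does not attempt to prove $\bClAlg(\rsd)=\bUpClAlg(\rsd)$ here either: that equality is established separately as Theorem \ref{thm:dBS_PBW}(3), by exhibiting the standard basis $\stdMod$ of $\bUpClAlg(\rsd)$ whose elements are normalized ordered products of the fundamental variables $W_j$. Since each $W_j$ is an honest cluster variable (no inverses), every $\stdMod(w)$ visibly lies in $\bClAlg(\rsd)$, which forces $\bUpClAlg(\rsd)\subset\bClAlg(\rsd)$. So either drop that clause from your proof here and forward-reference Theorem \ref{thm:dBS_PBW}(3), or replace your sketch with the standard-basis argument.
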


We will mainly consider $\upClAlg(\rsd(\uzeta))$ arising from $\dBS_{\beta_{\uzeta}}^{e}$.
For the reader's convenience, let us briefly recall $\dBS_{\beta_{\uzeta}}^{e}$
for $C$ of finite type following \cite{casals2022cluster}. Let $G$
be a connected, simply connected, complex semisimple algebraic group.
We choose its Borel subgroups $B_{\pm}$, unipotent subgroups $U_{\pm}=[B_{\pm},B_{\pm}]$,
and maximal torus $T=B_{+}\cap B_{-}$. We fix pinning of the group
$G$, i.e., for each $a\in J$, we select an isomorphism $\varphi_{a}:\mathrm{SL}_{2}(\C)\rightarrow U_{a}$,
where $U_{a}$ is the corresponding root subgroup. Define $s_{a}:=\varphi_{a}\left(\begin{array}{cc}
0 & -1\\
1 & 0
\end{array}\right)$. For $z\in\C$, define $B_{a}(z):=\varphi_{a}\left(\begin{array}{cc}
1 & z\\
0 & 1
\end{array}\right)s_{a}=\varphi_{a}\left(\begin{array}{cc}
z & -1\\
1 & 0
\end{array}\right)$. For $\uz=(z_{k})_{k\in[1,l]}\in\C^{l}$, define $B_{\beta_{\uzeta}}(z):=B_{\zeta_{1}}(z_{1})\cdots B_{\zeta_{l}}(z_{l})$.
Then we define the double Bott-Samelson variety as $\dBS_{\beta_{\uzeta}}^{e}=\{\uz\in\C^{l}|B_{\beta_{\uzeta}}(z)\in B_{-}B_{+}\}$.

\subsection{Braid varieties}

Assume $C$ is of finite type. We briefly recall the braid varieties
following \cite[Section 3.3, Corollary 3.7]{casals2022cluster}. Let
$\delta$ denote the Demazure product map from $\Br^{+}$ to $W$,
satisfying
\begin{align*}
\delta(\sigma_{a}) & =s_{a},\text{ and }\delta(\beta\sigma_{a})=\begin{cases}
\delta(\beta)s_{a} & \text{if }l(\delta(\beta)s_{a})=l(\delta(\beta))+1\\
\delta(\beta) & \text{if }l(\delta(\beta)s_{a})=l(\delta(\beta))-1
\end{cases}.
\end{align*}
We also have
\begin{align}
\delta(\sigma_{a}\beta)=\begin{cases}
s_{a}\delta(\beta) & \text{if }l(s_{a}\delta(\beta))=l(\delta(\beta))+1\\
\delta(\beta) & \text{if }l(s_{a}\delta(\beta))=l(\delta(\beta))-1
\end{cases}.\label{eq:Demazure-left}
\end{align}

Take any word $\ueta=(\eta_{1},\ldots,\eta_{l})$ and $\beta=\beta_{\ueta}$.
Let $\ow_{0}$ be a reduced word for the longest element $w_{0}$
of $W$. We define the braid variety $X(\beta)$ as $\{\uz\in\C^{l}|\delta(\beta_{\ow_{0}})^{-1}B_{\beta_{\ueta}}(\uz)\in B\}$.
It is a smooth, irreducible affine variety of dimension $l(\beta)-l(\delta(\beta))$.
Moreover, we have $X(\beta)\simeq X(\beta\sigma_{a})$ if $\delta(\beta\sigma_{a})=\delta(\beta)s_{a}$
(\cite[Lemma 3.4(1)]{casals2022cluster}). \cite[Theorem 1.1]{casals2022cluster}
shows that $\C[X(\beta)]$ is isomorphic to the cluster algebra $\upClAlg(\sd_{\leftweave}(\ueta))$
for a specific seed $\sd_{\leftweave}(\ueta)$. 

By \cite[Lemma 3.16]{casals2022cluster}, we can naturally identify
$X(\beta_{\ow_{0}}\beta_{\ueta})$ with the dBS $\dBS_{\beta_{\ueta}}^{e}$.
By \cite[Corollary 4.42, Proposition 5.7]{casals2022cluster}, we
further have an isomorphism $\C[X(\beta_{\ow_{0}}\beta_{\ueta})]\simeq\C[\dBS_{\beta_{\ueta}}^{e}]$
identifying $\sd_{\leftweave}(\ow_{0},\ueta)$ with $\rsd(\ueta)$.

\begin{Lem}\label{lem:dBS-to-braid}

Choose $\ow_{0}=(\ow'',\ow')$, such that $\ow'$ is a reduced word
for $w_{0}\delta(\beta)^{-1}$. Then the seed $\sd_{\leftweave}(\ueta)$
could be obtained from $\sd_{\leftweave}(\ow_{0},\ueta)$ by repeating
the process: freezing and then removing a non-essential frozen vertex.

\end{Lem}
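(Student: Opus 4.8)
The plan is to analyze the combinatorics of the seed $\sd_{\leftweave}(\ow_0,\ueta)$ attached to the braid word $(\ow_0,\ueta)$, using the known identification $\C[X(\beta_{\ow_0}\beta_{\ueta})]\simeq\C[X(\beta_{\ow''}\beta_{\ow'}\beta_{\ueta})]$ and the dimension formula $\dim X(\beta)=l(\beta)-l(\delta(\beta))$. Since $\ow'$ is reduced for $w_0\delta(\beta)^{-1}$, we have $\delta(\beta_{\ow'}\beta_{\ueta})=w_0\delta(\beta)^{-1}\cdot\delta(\beta)=w_0$ when the lengths add up appropriately, which is forced by the choice of $\ow'$ together with \eqref{eq:Demazure-left}; hence $\delta(\beta_{\ow''}\beta_{\ow'}\beta_{\ueta})=\delta(\beta_{\ow''}\cdot w_0)=w_0$ as well, because $w_0$ is the longest element and left multiplication by any $s_a$ can only decrease its length, so the Demazure product stabilizes at $w_0$. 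This means each of the $l(\ow'')$ letters of $\ow''$, read from the left, is a ``redundant'' letter: $\delta$ does not grow when we prepend it. The first step is therefore to record precisely which letters of the word $(\ow_0,\ueta)=(\ow'',\ow',\ueta)$ are redundant in the Demazure sense, and to match them with the frozen vertices of $\sd_{\leftweave}$ that we want to strip off.

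Next I would invoke the structural description of $\sd_{\leftweave}$ from \cite{casals2022cluster}: in the weave/seed construction, the leftmost occurrences of each Demazure-redundant letter correspond to frozen source-like vertices. More precisely, for each letter $\eta_r$ in the word such that $l(\delta(\beta_{[1,r]}))=l(\delta(\beta_{[1,r-1]}))$, the associated vertex in $\sd_{\leftweave}$ is frozen and, after the appropriate local mutations (none are needed at the very left end), is non-essential, i.e. $\tB_{\{j\}\times I_{\ufv}}=0$. The key point, which I would extract from \cite[Lemma 3.4(1), Corollary 4.42, Proposition 5.7]{casals2022cluster} and the identification $X(\beta_{\ow_0}\beta_{\ueta})\simeq\dBS^e_{\beta_{\ueta}}$, is that peeling off the leftmost redundant letter of $(\ow'',\ow',\ueta)$ at the level of braid words corresponds exactly to: (a) recognizing the corresponding frozen vertex of the seed as non-essential, (b) removing it to get a seed with one fewer frozen vertex, and (c) the resulting seed is $\sd_{\leftweave}$ of the shorter word. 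Iterating this over all $l(\ow'')$ letters of $\ow''$ (from left to right) transforms $\sd_{\leftweave}(\ow'',\ow',\ueta)=\sd_{\leftweave}(\ow_0,\ueta)$ into $\sd_{\leftweave}(\ow',\ueta)$. Finally, since $\ow'$ is reduced for $w_0\delta(\beta)^{-1}$ and $(\ow',\ueta)$ has Demazure product $w_0$, the seed $\sd_{\leftweave}(\ow',\ueta)$ is, under the isomorphism $\C[X(\beta_{\ow'}\beta_{\ueta})]\simeq\C[\dBS^e_{\beta_{\ueta}}]$ from \cite[Corollary 4.42]{casals2022cluster}, identified with $\rsd(\ueta)$; but in fact it is cleaner to note $\sd_{\leftweave}(\ueta)$ in the statement already \emph{is} this seed by definition, so the last removal lands us on $\sd_{\leftweave}(\ueta)$.

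Carrying this out requires one technical lemma that I would state and prove separately: \emph{whenever $\eta_r$ is the leftmost redundant letter of a word $\ueta'$ (in the Demazure sense), the corresponding vertex of $\sd_{\leftweave}(\ueta')$ is a non-essential frozen vertex, and deleting it yields $\sd_{\leftweave}(\ueta'\setminus\{r\})$.} The proof of this lemma is a direct inspection of the $\leftweave$-seed construction in \cite[Sections 4--5]{casals2022cluster}: a redundant letter contributes a ``cap'' in the weave whose associated Lusztig cycle meets no unfrozen cycle, giving the zero row of $\tB$; and the weave with the cap removed is literally the weave of the shorter word. Combined with the observation from the previous paragraph that all of $\ow''$ consists of successively redundant letters (because $\delta$ has already reached $w_0$ after processing $(\ow',\ueta)$, and prepending any letter to something with Demazure product $w_0$ leaves it $w_0$), the iterated application of the lemma gives the result.

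\textbf{Main obstacle.} The hard part will be pinning down the precise dictionary between ``leftmost redundant letter of the braid word'' and ``non-essential frozen vertex of $\sd_{\leftweave}$'', together with the compatibility that deletion on the weave side equals vertex-deletion on the seed side. This is essentially bookkeeping internal to the weave formalism of \cite{casals2022cluster}, but it must be done carefully because the seed $\sd_{\leftweave}$ is defined via a somewhat involved cycle/intersection-pairing recipe, and the claim that the relevant row of the $B$-matrix vanishes identically (not just after some mutation) relies on the letter being \emph{leftmost} among its redundant class. I expect no conceptual difficulty beyond this; the Demazure-product computations ($\delta(\beta_{\ow''}w_0)=w_0$, etc.) are routine given \eqref{eq:Demazure-left} and the maximality of $w_0$.
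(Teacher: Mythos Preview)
Your overall strategy is the same as the paper's: strip off the letters of $\ow''$ one at a time from the left, using the Demazure calculus to see that each such letter is redundant (since $\delta(\beta_{\ow'}\beta_\ueta)=w_0$ already), and then identify $\sd_{\leftweave}(\ow',\ueta)$ with $\sd_{\leftweave}(\ueta)$ because every letter of $\ow'$ is non-redundant. That skeleton is correct.

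The gap is in your technical lemma. You claim that the vertex $j$ attached to a leftmost redundant letter is already a \emph{non-essential} frozen vertex of $\sd_{\leftweave}$, i.e.\ that its row in $\tB$ vanishes on $I_{\ufv}$. This is false in general: what \cite[Lemma 4.44]{casals2022cluster} actually gives is that $j$ is an \emph{optimized} frozen vertex, meaning $b_{jk}\ge 0$ for all unfrozen $k$, not $b_{jk}=0$. The Lusztig cycle of the cap does meet unfrozen cycles; all intersections simply have the same sign. The passage from $\sd_{\leftweave}(a,\uzeta)$ to $\sd_{\leftweave}(\uzeta)$ therefore requires two steps, not one: first freeze every unfrozen vertex incident to $j$ (this is the ``freezing'' in the statement of the lemma---it refers to freezing \emph{other} vertices, not to $j$ itself), and only then does $j$ become non-essential and can be deleted. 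Your reading of the phrase ``freezing and then removing a non-essential frozen vertex'' collapsed these two steps into one.

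Once you replace your technical lemma by the correct one from \cite[Lemma 4.44]{casals2022cluster}---that a redundant leftmost letter gives an optimized frozen vertex, and $\sd_{\leftweave}(\uzeta)$ is recovered by freezing its neighbours and then deleting it---your argument goes through and matches the paper's proof exactly.
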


\begin{proof}

By \cite[Lemma 4.44]{casals2022cluster}, for any word $\uzeta$,
if $\delta(\sigma_{a}\beta_{\uzeta})=\delta(\beta_{\uzeta})$, $\sd_{\leftweave}(a,\uzeta)$
has an optimized frozen vertex $j$, such that $\sd_{\leftweave}(\uzeta)$
can be obtained from $\sd_{\leftweave}(a,\uzeta)$ by freezing all
vertices incident to $j$ and then removing the non-essential frozen
vertex $j$; otherwise,~$\delta(\sigma_{a}\beta_{\uzeta})=s_{a}\delta(\beta_{\uzeta})$
and $\sd_{\leftweave}(a,\uzeta)=\sd_{\leftweave}(\uzeta)$. 

Since $\delta(\beta_{\ow'}\beta)=w_{0}$, in view of (\ref{eq:Demazure-left}),
we can obtain $\sd_{\leftweave}(\ow',\ueta)$ by repeatedly applying
the above freezing-removing process to $\sd_{\leftweave}(\ow_{0},\ueta)$.
Since $\ow'$ is a reduced word for $w_{0}\delta(\beta)^{-1}$, we
have $l(\ow'\delta(\beta))=l(\delta(\beta))+l(\ow')$. So we can identify
$\sd_{\leftweave}(\ow',\ueta)$ with $\sd_{\leftweave}(\ueta)$. The
claim follows.

\end{proof}

Apply operations to $\upClAlg(\sd_{\leftweave}(\ow_{0},\ueta))=\upClAlg(\rsd(\ueta))$
following Lemma \ref{lem:dBS-to-braid}. Then Theorems \ref{thm:bases_dBS}
\ref{thm:A-U-dBS} and Corollaries \ref{cor:freezing-A-U} \ref{cor:base-change-A-U}
imply the following.

\begin{Thm}\label{thm:results_braid_variety}

Theorems \ref{thm:intro-quasi-categorification} and \ref{thm:intro-A-U}
hold for the (quantum) cluster algebra $\upClAlg(\sd_{\leftweave}(\ueta))$.

\end{Thm}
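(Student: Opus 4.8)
The plan is to reduce the statement for $\upClAlg(\sd_{\leftweave}(\ueta))$ to the already-established results for double Bott--Samelson cells, namely Theorems \ref{thm:bases_dBS} and \ref{thm:A-U-dBS}, via the chain of cluster operations furnished by Lemma \ref{lem:dBS-to-braid}. First I would fix a reduced word $\ow_{0}=(\ow'',\ow')$ with $\ow'$ reduced for $w_{0}\delta(\beta_{\ueta})^{-1}$, so that by the cited isomorphisms $\C[X(\beta_{\ow_{0}}\beta_{\ueta})]\simeq\C[\dBS_{\beta_{\ueta}}^{e}]$ identifies $\sd_{\leftweave}(\ow_{0},\ueta)$ with $\rsd(\ueta)$, and hence $\upClAlg(\sd_{\leftweave}(\ow_{0},\ueta))=\upClAlg(\rsd(\ueta))$. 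Theorem \ref{thm:bases_dBS} gives the common triangular basis $\can$ for $\upClAlg(\rsd(\ueta))$ together with its quasi-categorification when $C$ is symmetric, and Theorem \ref{thm:A-U-dBS} gives $\clAlg(\rsd(\ueta))=\upClAlg(\rsd(\ueta))$ with common triangular basis elements admitting nice cluster decompositions (in a suitable mutation-equivalent seed).

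Next I would run the freezing-and-removing process of Lemma \ref{lem:dBS-to-braid}, which expresses $\sd_{\leftweave}(\ueta)$ as obtained from $\sd_{\leftweave}(\ow_{0},\ueta)$ by iterating: freeze a subset $F$ of unfrozen vertices, then delete a \emph{non-essential} frozen vertex. For the basis and quasi-categorification claims, each freezing step is handled by Theorem \ref{thm:sub_cluster_triangular_basis} (freezing sends common triangular bases to common triangular bases), provided the relevant $\frz_{F}\sd$ is injective-reachable --- which holds here because injective-reachability propagates through mutations (\cite{qin2017triangular,muller2015existence}) and through freezing (\cite{muller2015existence}). Deleting a non-essential frozen vertex $j$ only multiplies the algebra and basis by $\kk[x_{j}^{\pm}]$ (or $\kk[x_{j}]$ in the partially compactified case), as recorded after Definition \ref{def:non-essential-frozen}, so it does not affect the existence of the common triangular basis. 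When $C$ is symmetric we pick the ambient KLR categorification for $\qO[N^{w}]$ as in the proof of Theorem \ref{thm:bases_dBS}; each freezing step then corresponds to passing to the monoidal subcategory $\cT'$ of simples $q$-commuting with the frozen cluster variables (Theorem \ref{thm:sub_category_upClAlg}), and removing a non-essential vertex is absorbed by the localization already permitted in quasi-categorification (Definition \ref{def:quasi-categorification}). Composing these steps yields the quasi-categorification of $(\upClAlg(\sd_{\leftweave}(\ueta)),\can)$.

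For the $\clAlg=\upClAlg$ statement I would feed $(\clAlg(\rsd(\ueta)),\can)$ into Corollary \ref{cor:freezing-A-U} at each freezing step: the hypotheses are that $\upClAlg$ has a $\tropSet$-pointed basis whose elements are $[m]$-pointed with nice cluster decompositions (supplied by Theorems \ref{thm:bases_dBS} and \ref{thm:A-U-dBS}, noting that common triangular basis elements are $[m]$-pointed by \cite[Proposition 6.4.3]{qin2020dual}) and that $\frz_{F}\sd$ is injective-reachable; the conclusion is $\clAlg(\frz_{F}\sd)=\upClAlg(\frz_{F}\sd)$ with the new basis still admitting nice cluster decompositions, so the induction closes. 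Removing a non-essential frozen vertex preserves $\clAlg=\upClAlg$ trivially since both sides are just tensored with a Laurent (or polynomial) ring in $x_{j}$. Thus $\clAlg(\sd_{\leftweave}(\ueta))=\upClAlg(\sd_{\leftweave}(\ueta))$, which together with the basis existence establishes Theorems \ref{thm:intro-quasi-categorification} and \ref{thm:intro-A-U} for $\upClAlg(\sd_{\leftweave}(\ueta))$.

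The main obstacle I anticipate is not any single deep step but the bookkeeping of hypotheses along the iteration in Lemma \ref{lem:dBS-to-braid}: one must verify that after each freeze-then-remove step the resulting seed still satisfies the full-rank Assumption \ref{assumption:injectivity}, is injective-reachable, and still carries a $\tropSet$-pointed common triangular basis with nice cluster decompositions, so that Theorems \ref{thm:sub_cluster_triangular_basis}, \ref{thm:sub_category_upClAlg}, and Corollary \ref{cor:freezing-A-U} apply at the next step. Full-rank is the subtle point, since freezing reduces the unfrozen part --- but it is automatic here because the decorated double Bott--Samelson seeds and their freezings ultimately descend from $\rsd(\widetilde{\uxi})$, whose $B$-matrix is full rank by Lemma \ref{lem:dBS-fullrank}, and freezing a full-rank $B$-matrix to a submatrix of columns keeps it full rank. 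Once this routine propagation of hypotheses is dispatched, the proof is a direct composition of the cited theorems.
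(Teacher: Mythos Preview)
Your proposal is correct and follows essentially the same route as the paper: start from $\upClAlg(\sd_{\leftweave}(\ow_{0},\ueta))=\upClAlg(\rsd(\ueta))$, invoke Theorems \ref{thm:bases_dBS} and \ref{thm:A-U-dBS}, and then propagate the conclusions along the freeze-then-remove steps of Lemma \ref{lem:dBS-to-braid} using Theorem \ref{thm:sub_cluster_triangular_basis}, Theorem \ref{thm:sub_category_upClAlg}, and Corollary \ref{cor:freezing-A-U}. The only cosmetic difference is that the paper also cites Corollary \ref{cor:base-change-A-U} to handle the removal of a non-essential frozen vertex (treating it as a similarity/base change), whereas you invoke the direct observation after Definition \ref{def:non-essential-frozen}; both are valid and amount to the same thing.
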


Note that Theorem \ref{thm:results_braid_variety} apply to open Richardson
varieties as well, because they are special cases of braid varieties
\cite[Section 3.6]{casals2022cluster}.

\subsection{Double Bruhat cells\label{sec:Cluster-algebras-for-alg-grp}}

Assume that $C$ is of finite type. Take any pair of reduced words
$\uzeta,\ueta$ and denote $u:=w_{\uzeta}$, $v:=w_{\ueta}$. Define
the double Bruhat cell $G^{u,v}=B_{+}uB_{+}\cap B_{-}vB_{-}$. The
quantum double Bruhat cell $\qO[G^{u,v}]$ is isomorphic to the quantum
cluster algebra $\upClAlg(\sd_{\text{BZ}}(\ubi))$, where $\ubi=(\ueta,-\uzeta)$
and $\sd_{\text{BZ}}(\ubi)$ denote the Berenstein-Zelevinsky quantum
seed in \cite{BerensteinZelevinsky05}, see \cite[Section 3.3 Theorem 9.5]{goodearl2016berenstein}
for details. Recall that $\tB(\sd_{\text{BZ}}(\ubi))=-\tB(\dsd(\ubi))$
by Lemma \ref{lem:dBS_B_matrix}. So we can identify $\sd_{\text{BZ}}(\ubi)$
with the opposite seed $\dsd(\ubi)\op$. Then Theorems \ref{thm:bases_dBS}
and \ref{thm:A-U-dBS} imply the following.

\begin{Thm}

Theorems \ref{thm:intro-quasi-categorification} and \ref{thm:intro-A-U}
hold for the quantum double Bruhat cells.

\end{Thm}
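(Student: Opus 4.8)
The plan is to deduce both theorems from the corresponding statements for decorated double Bott-Samelson cells (Theorems \ref{thm:bases_dBS} and \ref{thm:A-U-dBS}) by passing to the opposite seed. As noted just before the statement, the quantum double Bruhat cell $\qO[G^{u,v}]$ is isomorphic to $\upClAlg(\sd_{\text{BZ}}(\ubi))$ with $\ubi=(\ueta,-\uzeta)$, and $\tB(\sd_{\text{BZ}}(\ubi))=-\tB(\dsd(\ubi))$ by Lemma \ref{lem:dBS_B_matrix}; combined with the compatible sign choice for the quantization matrix $\ddLambda$, this identifies $\sd_{\text{BZ}}(\ubi)$ with the opposite quantum seed $\dsd(\ubi)\op$. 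So it suffices to establish the two theorems for $\upClAlg(\dsd(\ubi)\op)$, and for that I would transport structures across the canonical anti-isomorphism $\iota:\LP(\dsd(\ubi))\simeq\LP(\dsd(\ubi)\op)$.

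For Theorem \ref{thm:intro-quasi-categorification}: Theorem \ref{thm:bases_dBS}(1) gives the common triangular basis $\can$ of $\upClAlg(\dsd(\ubi))$. Since $\iota$ is a Laurent-monomial-preserving anti-isomorphism, it restricts to an anti-isomorphism $\upClAlg(\dsd(\ubi))\simeq\upClAlg(\dsd(\ubi)\op)$, and \cite[Proposition 4.9]{qin2020analog} (recalled in Section \ref{subsec:Basics-of-cluster}) shows that $\iota(\can)$ is the common triangular basis of $\upClAlg(\dsd(\ubi)\op)=\qO[G^{u,v}]$. This proves the first assertion. When $C$ is symmetric, Theorem \ref{thm:bases_dBS}(2) supplies a monoidal category $\cT$ together with a finite chain of base changes and quantization changes relating $(\upClAlg(\dsd(\ubi)),\can)$ to a based cluster algebra categorified after localization by $\cT$; applying $\iota$ termwise along this chain and replacing $\cT$ by its opposite monoidal category $\cT\op$ (Section \ref{subsec:Monoidal-categories}: $K(\cT\op)$ is the opposite algebra of $K(\cT)$ and the simples agree) yields a quasi-categorification of $(\qO[G^{u,v}],\iota(\can))$. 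This is the same maneuver already used in the passage from Theorem \ref{thm:intro-dBS-ddBS-categorification} to Theorem \ref{thm:intro-qO-categorification}.

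For Theorem \ref{thm:intro-A-U}: Theorem \ref{thm:A-U-dBS} gives $\clAlg(\dsd(\ubi))=\upClAlg(\dsd(\ubi))$, and since $\iota$ carries $\clAlg(\sd)$ onto $\clAlg(\sd\op)$ and $\upClAlg(\sd)$ onto $\upClAlg(\sd\op)$, we obtain $\clAlg(\dsd(\ubi)\op)=\upClAlg(\dsd(\ubi)\op)$, i.e. $\clAlg(\sd_{\text{BZ}}(\ubi))=\upClAlg(\sd_{\text{BZ}}(\ubi))$. The $\bClAlg(\rsd)=\bUpClAlg(\rsd)$ clause of Theorem \ref{thm:intro-A-U} concerns the dBS seeds $\rsd$ and is already contained in Theorem \ref{thm:A-U-dBS}, so nothing more is required. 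The only genuinely delicate point is bookkeeping: one must verify that taking opposite seeds is compatible, on the nose, with the sign convention for $\ddLambda$ (so that $\sd_{\text{BZ}}(\ubi)\cong\dsd(\ubi)\op$ is an identity of \emph{quantum} seeds, not merely of $B$-matrices) and with each ingredient of the quasi-categorification package—base change, quantization change, and localization—when these are read on opposite algebras. Once that compatibility is recorded, the argument introduces no new cluster-theoretic input.
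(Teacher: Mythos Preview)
Your proposal is correct and follows essentially the same route as the paper: transport the common triangular basis of $\upClAlg(\dsd(\ubi))$ across the canonical anti-isomorphism $\iota$ (citing \cite[Proposition 4.9]{qin2020analog}), and obtain quasi-categorification from the opposite monoidal category $\cC\op$. The paper's proof is terser and does not spell out the $\clAlg=\upClAlg$ clause or the bookkeeping you flag, but the underlying argument is the same.
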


\begin{proof}

Recall that the canonical anti-isomorphism $\iota:\upClAlg(\dsd(\ubi))\simeq\upClAlg(\sd_{\text{BZ}}(\ubi))$
sends the common triangular basis to the common triangular basis.
We also know that $\upClAlg(\dsd(\ubi))$ is quasi-categorified by
some monoidal category $\cC$. It follows that the opposite algebra
$\upClAlg(\sd_{\text{BZ}}(\ubi))$ is quasi-categorified by the opposite
monoidal category $\cC\op$, see Section \ref{subsec:Monoidal-categories}.

\end{proof}

\begin{Eg}

Take $G=SL_{3}$, $\ueta=\uzeta=(1,2,1)$, $\ubi=(\ueta,-\uzeta)$.
Then $\qO[G^{w_{0},w_{0}}]$ is isomorphic to $\upClAlg(\sd_{\text{BZ }}(\ubi)))$.
See Figure \ref{fig:triangulated_trapezoid} for the quiver of $-\tB(\sd_{\text{BZ}}(\ubi))=\tB(\dsd(\ubi))$.

\end{Eg}

\subsection{Algebraic groups}

Assume that $C$ is of finite type. Then $G$ coincides with the closure
$\overline{G^{w_{0},w_{0}}}$, where $w_{0}$ denote the longest element
in $W$. Let $\ueta,\uzeta$ denote any two chosen reduced words for
$w_{0}$. Recall that the quantum double Bruhat cell $\qO[G^{w_{0},w_{0}}]$
is anti-isomorphic to $\upClAlg(\dsd(\ubi))$, where $\ubi=(\ueta,-\uzeta)$.
We claim the following natural statements are true.

\begin{Claim}\label{claim:G_case}

(1) $\C[G]$ is anti-isomorphic to $\bUpClAlg(\dsd(\ubi))\otimes\C$
for $\kk=\Z$. 

(2) $\qO[G]\otimes\Q(q^{\Hf})$ is anti-isomorphic to $\bUpClAlg(\dsd(\ubi))\otimes\Q(q^{\Hf})$
for $\kk=\Z[q^{\pm\Hf}]$. 

\end{Claim}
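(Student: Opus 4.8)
\textbf{Proof proposal for Claim \ref{claim:G_case}.}

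The plan is to exhibit $\C[G]$ (resp.\ $\qO[G]$) as the partially compactified upper cluster algebra attached to $\dsd(\ubi)$ by combining the known statement for the open double Bruhat cell $G^{w_0,w_0}$ with the partial-compactification machinery of Section \ref{subsec:Bases-for-partial-compactified}. First I would recall that, by the preceding subsection, $\qO[G^{w_0,w_0}]$ is anti-isomorphic to $\upClAlg(\dsd(\ubi))$ via $\iota$, and that the inclusion $\C[G]\hookrightarrow\C[G^{w_0,w_0}]$ (resp.\ $\qO[G]\hookrightarrow\qO[G^{w_0,w_0}]$) realizes the coordinate ring of $G$ as the subalgebra of functions which are regular, not just on the open cell, but on the whole group. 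The key geometric input is that the complement of $G^{w_0,w_0}$ in $G$ is a union of the divisors cut out by the generalized minors $\Delta_{w_0\varpi_a,\varpi_a}$ and $\Delta_{\varpi_a,w_0\varpi_a}$, $a\in J$, and that under the cluster isomorphism these minors are precisely the frozen variables $x_j$, $j\in I_{\fv}$, of $\dsd(\ubi)$ (this is the content of the Berenstein--Zelevinsky/Goodearl--Yakimov identification, using that $\ueta,\uzeta$ are reduced words for $w_0$ so that every frozen vertex corresponds to such a minor). Hence, under $\iota$, $\C[G]$ corresponds to $\{z\in\upClAlg(\dsd(\ubi)\op)\mid \nu_j(z)\geq 0,\ \forall j\in I_{\fv}\}$, and by the observation $\bUpClAlg=\{z\in\upClAlg\mid\nu_j(z)\geq0,\ \forall j\in I_{\fv}\}$ recorded just before Lemma \ref{lem:f-bounds-d}, this is exactly $\bUpClAlg(\dsd(\ubi)\op)$. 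Since $\iota$ sends $\bUpClAlg(\dsd(\ubi))$ onto $\bUpClAlg(\dsd(\ubi)\op)$, the anti-isomorphism follows.

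In more detail, the steps I would carry out are: (i) recall the structure of $G\setminus G^{w_0,w_0}$ as a divisor, using the Bruhat decomposition for both $B_+$- and $B_-$-orbits, so that a function on $G^{w_0,w_0}$ extends to $G$ iff it has no pole along any of these $2|J|$ divisor components; (ii) match each divisor component with the vanishing locus $x_j=0$ of a frozen variable of $\dsd(\ubi)$, invoking the explicit description of $\dsd(\ubi)$ in terms of the string diagram and the identification of the frozen variables with principal/generalized minors (here one uses Proposition \ref{prop:optimize_double_sd}: every frozen vertex of $\dsd(\ubi)$ can be optimized, which guarantees that the valuation $\nu_j$ computed in $\cF(\dsd(\ubi))$ really is the order of vanishing along the corresponding divisor, cf.\ Lemma \ref{lem:optimized_pointed_elem}); (iii) conclude that the isomorphism $\qO[G^{w_0,w_0}]\simeq\upClAlg(\dsd(\ubi))\op$ restricts to an isomorphism between $\qO[G]$ and $\{z\mid \nu_j(z)\geq 0\}=\bUpClAlg(\dsd(\ubi))\op$; (iv) note the whole argument is insensitive to the base ring, so it applies verbatim with $\kk=\Z$ for $\C[G]$ after tensoring with $\C$, and with $\kk=\Z[q^{\pm\Hf}]$ after tensoring with $\Q(q^{\Hf})$, the latter extension of scalars being needed only because $\qO[G]$ as usually defined lives over $\Q(q^{\Hf})$.

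I expect the main obstacle to be step (ii): rigorously pinning down that the frozen variables of $\dsd(\ubi)$, for $\ubi=(\ueta,-\uzeta)$ with $\ueta,\uzeta$ reduced words for $w_0$, are (up to units) exactly the generalized minors $\Delta_{w_0\varpi_a,\varpi_a}$ and $\Delta_{\varpi_a,w_0\varpi_a}$ cutting out the boundary divisors, and that no other boundary components appear. This requires carefully reconciling the Shen--Weng/double Bott--Samelson conventions with the Berenstein--Zelevinsky--Fomin--Zelevinsky conventions for double Bruhat cells, and using that $\overline{B_+w_0B_+}\cap\overline{B_-w_0B_-}$ is dense in $G$ with complement of pure codimension one. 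Once this dictionary is in place, the rest is formal: the valuation-theoretic characterization of $\bUpClAlg$ and the fact that $\iota$ intertwines the two partially compactified algebras do all the remaining work. A secondary, more bookkeeping-type point is to confirm that on passing to $\Q(q^{\Hf})$ no subtlety arises from the integral form, i.e.\ that $\bUpClAlg(\dsd(\ubi))\otimes\Q(q^{\Hf})$ is still characterized inside $\upClAlg(\dsd(\ubi))\otimes\Q(q^{\Hf})$ by nonnegativity of the $\nu_j$, which is immediate since the $\nu_j$ are $\kk$-linear and compatible with the flat base change $\kk\to\Q(q^{\Hf})$.
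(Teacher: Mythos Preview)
The paper does not prove Claim~\ref{claim:G_case}: immediately after stating it the author writes ``Proofs will appear elsewhere'' and cites \cite{fomin2020introduction} for the special case $\C[SL_n]$. So there is no proof in the paper to compare your proposal against; the claim is explicitly treated as an input to be established in future work.

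That said, your outline for part~(1) is the natural one and is essentially the standard approach in the classical setting (as in \cite{fomin2020introduction} for $SL_n$): identify the complement $G\setminus G^{w_0,w_0}$ as the union of the $2|J|$ divisors $\{\Delta_{\varpi_a,w_0\varpi_a}=0\}$ and $\{\Delta_{w_0\varpi_a,\varpi_a}=0\}$, match these generalized minors with the frozen variables of $\dsd(\ubi)$ via the Berenstein--Zelevinsky dictionary, and then invoke the characterization $\bUpClAlg=\{z\in\upClAlg\mid\nu_j(z)\geq0,\ \forall j\in I_{\fv}\}$. Your identification of step~(ii) as the crux is accurate.

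For part~(2), however, your sketch is thinner than it looks. In the classical case the inclusion $\C[G]\hookrightarrow\C[G^{w_0,w_0}]$ and the characterization of its image by pole orders are geometric facts. In the quantum case there is no underlying variety: $\qO[G]$ is defined (following \cite{Kashiwara93}, as the paper notes) as the span of matrix coefficients, and one must independently establish both that $\qO[G]$ embeds into $\qO[G^{w_0,w_0}]$ and that its image is exactly the subalgebra cut out by $\nu_j\geq0$. Neither of these follows from ``the argument is insensitive to the base ring,'' and this is likely where the substantive work deferred by the author lies.
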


Proofs will appear elsewhere. Here, the quantized coordinate ring
$\qO[G]$ is defined as the quantum function algebra spanned by the
matrix coefficients, see \cite{Kashiwara93}. Assuming these claims,
Theorem \ref{thm:bases-compactified-dBS} implies the following result.

\begin{Thm}

Theorem \ref{thm:intro-quasi-categorification} holds for the (quantized)
coordinate rings of $G$.

\end{Thm}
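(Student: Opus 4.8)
The plan is to deduce the statement for $\C[G]$ and $\qO[G]$ directly from the structural results already established for the decorated double Bott-Samelson cells. First I would invoke Claim \ref{claim:G_case}: for any two chosen reduced words $\ueta,\uzeta$ of the longest element $w_0\in W$ and the signed word $\ubi=(\ueta,-\uzeta)$, the (quantized) coordinate ring $\C[G]$ (resp. $\qO[G]\otimes\Q(q^{\Hf})$) is anti-isomorphic to $\bUpClAlg(\dsd(\ubi))\otimes\C$ (resp. $\otimes\Q(q^{\Hf})$). Thus it suffices to establish Theorem \ref{thm:intro-quasi-categorification}, in its $\bUpClAlg$ form, for the partially compactified upper cluster algebra $\bUpClAlg(\dsd(\ubi))$ associated with the decorated double Bott-Samelson cell, and then transport the statement across the anti-isomorphism $\iota$.

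The first step is already done: Theorem \ref{thm:bases-compactified-dBS} asserts that $\bUpClAlg(\dsd(\ubi))$ possesses the common triangular basis $\can$, and that when $C$ is symmetric $(\bUpClAlg(\dsd(\ubi)),\can)$ admits a quasi-categorification. (The hypothesis that all frozen vertices of $\dsd(\ubi)$ can be optimized, needed to pass from $\upClAlg$ to $\bUpClAlg$ via Proposition \ref{prop:compactified_basis}, is supplied by Proposition \ref{prop:optimize_double_sd}.) So the remaining work is purely to handle the anti-isomorphism. Recall from Section \ref{subsec:Basics-of-cluster} the canonical anti-isomorphism $\iota:\LP(\sd)\simeq\LP(\sd\op)$, and from the end of Section on triangular bases that if $\alg(\sd)$ has the common triangular basis $\can$ then $\iota(\can)$ is the common triangular basis for $\alg(\sd\op)$ (by \cite[Proposition 4.9]{qin2020analog}); the same statement holds for the partially compactified versions since $\iota$ restricts to an anti-isomorphism $\bLP(\sd)\simeq\bLP(\sd\op)$ compatible with the compactified upper cluster algebras. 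Hence $\qO[G]\otimes\Q(q^{\Hf})$, being anti-isomorphic to $\bUpClAlg(\dsd(\ubi))\otimes\Q(q^{\Hf})$, carries the common triangular basis $\can$ transported via $\iota$; this is the basis referred to in the statement of Theorem \ref{thm:intro-qO-categorification} and Conjecture \ref{conj:triangular_global_basis}.

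For the quasi-categorification half, when $C$ is symmetric, I would argue as in the proof of the double Bruhat cell case: $\bUpClAlg(\dsd(\ubi))$ is quasi-categorified by a monoidal category $\cC$ (built from a monoidal subcategory of KLR-algebra modules as in the proof of Theorem \ref{thm:bases_dBS}, combined with the restriction-to-$\bUpClAlg$ mechanism of Corollary \ref{cor:subcategorify-combine}), and therefore the opposite algebra $\C[G]$ (resp. $\qO[G]$) is quasi-categorified by the opposite monoidal category $\cC\op$, using that $K(\cC\op)$ is the opposite algebra of $K(\cC)$ (Section \ref{subsec:Monoidal-categories}) and that the anti-isomorphism $\iota$ intertwines $\can$ with its transport. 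The main obstacle is not conceptual but bookkeeping: one must check that the passage to the opposite seed is compatible with the partial compactification — i.e. that $\iota$ sends $\bUpClAlg(\dsd(\ubi))$ onto $\bUpClAlg(\dsd(\ubi)\op)$ and the corresponding restricted basis $\can\cap\bUpClAlg$ to $\can\cap\bUpClAlg\op$ — and that the quasi-categorification data (base changes, quantization changes, localization in Definition \ref{def:quasi-categorification}) are preserved under taking opposites; none of these should present a genuine difficulty given the results assembled in Sections \ref{sec:coeff-change} and \ref{sec:monoidal_categorification}, but they are the steps requiring care. Conditional on Claim \ref{claim:G_case}, the theorem follows.
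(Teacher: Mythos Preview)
Your proposal is correct and follows the same approach as the paper: invoke Claim \ref{claim:G_case} to identify (the opposite of) $\qO[G]$ with $\bUpClAlg(\dsd(\ubi))$, then apply Theorem \ref{thm:bases-compactified-dBS}. The paper itself gives no explicit proof here beyond the sentence ``Assuming these claims, Theorem \ref{thm:bases-compactified-dBS} implies the following result''; your treatment of the anti-isomorphism and the passage to the opposite monoidal category is exactly the argument the paper spells out in the parallel double Bruhat cell case just above.
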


\begin{Eg}[Example $SL_2$]\label{eg:global-crystal-sl2}

Take the Cartan matrix $C=\left(2\right)$ and the seed $\sd=\dsd(\ubi)$,
where $\ubi=(1,-1)$. We have $\ddI=\{-1,1,2\}$ and $I_{\ufv}=\{1\}$.
Consider the quantum cluster algebra $\bUpClAlg(\sd\op)$ associated
with the opposite seed $\sd\op$. The $B$-matrix of $\sd\op$ is
$\left(\begin{array}{c}
-1\\
0\\
-1
\end{array}\right)$. Following \cite[Section 10.1]{BerensteinZelevinsky05}, we choose
the $\Lambda$-matrix of $\sd\op$ to be $\left(\begin{array}{ccc}
0 & -1 & 0\\
1 & 0 & 1\\
0 & -1 & 0
\end{array}\right)$. Then there is an isomorphism between $\bUpClAlg(\sd\op)$ and $\qO[SL_{2}]$,
such that the cluster variables $x_{-1}(\sd),x_{1}(\sd),x_{2}(\sd)$,
$x_{1}(\mu_{1}\sd)$ correspond to the quantum minors $u:=\Delta_{\varpi_{1},s_{1}\varpi_{1}}$,
$x:=\Delta_{\varpi_{1},\varpi_{1}}$, $v:=\Delta_{s_{1}\varpi_{1},\varpi_{1}}$,
$y:=\Delta_{s_{1}\varpi_{1},s_{1}\varpi_{1}}$, respectively. Here,
$\varpi_{1}$ denotes the fundamental weight. We refer the reader
to \cite{BerensteinFominZelevinsky05} \cite{BerensteinZelevinsky05}
for definitions and properties of (quantum) minors.

Note that the common triangular basis of $\bUpClAlg(\sd\op)$ consists
of the cluster monomials. On the other hand, by \cite[Sectioni 9]{Kashiwara93},
the global crystal basis for $\qO[SL_{2}]$ is $\{u^{n}x^{m}v^{l}|n,m,l\in\N\}\cup\{u^{n}y^{m}v^{l}|n,m,l\in\N,m>0\}$.
We observe that these global basis elements correspond to the cluster
monomials up to $q^{\frac{\Z}{2}}$multiples.

\end{Eg}

\subsection{Other coordinate rings}

Recall that open positroid varieties are special cases of double Bott-Samelson
cells. So Theorems \ref{thm:results-dBS} and \ref{thm:intro-A-U}
apply to them.

Recall that the coordinate ring of a Grassmannian is a cluster algebra
with a distinguished seed $\sd_{\Gr}$ \cite{scott2006grassmannians}.
Note that $\sd_{\Gr}$ is similar to $\rsd(\ugamma)$ for some \emph{adapted}
word $\ugamma$ in type $A$. It is already known that $\rsd(\ugamma)$
has the common triangular basis $\can$ and admits categorification
by \cite{qin2017triangular}, see Theorem \ref{thm:basis-for-HL}.
Combining with Theorem \ref{thm:A-U-dBS} and Corollary \ref{cor:base-change-A-U},
we deduce that Theorems \ref{thm:intro-quasi-categorification} and
\ref{thm:intro-A-U} hold for $\upClAlg(\sd_{\Gr})$.

\subsection{Cluster structures from quantum affine algebras\label{subsec:Cluster-structures-quantum-affine}}

\global\long\def\mybinom#1#2{\genfrac{\langle}{\rangle}{0pt}{}{#1}{#2}}%

Choose a Coxeter word $\uc=(c_{1},\ldots,c_{|J|})$, i.e., each $a\in J$
appears in it exactly once. Note that $\uc$ determines a skew-symmetric
$J\times J$-matrix $B^{\Delta}$ such that $B_{c_{j},c_{k}}^{\Delta}:=\sign(C_{c_{j},c_{k}})\in\{0,-1\}$
for $j<k$. A letter $c_{k}$ is called a sink of $\uc$ if $C_{c_{k},c_{j}}\geq0$
$\forall j<k$. 

Introduce $\W:=\{\mybinom ad|a\in J,d\in\Z\}$ as in \cite{KimuraQin14}.
Endow it with the order $<$ such that $\mybinom{c_{j}}d<\mybinom{c_{k}}r$
if $d<r$ or $d=r$ but $j<k$. Let $e_{\mybinom ad}$ denote the
$\mybinom ad$-th unit vector of $\N^{\W}$. 

For any sink letter $a$, define $\mu_{a}\uc:=(\uc\backslash\{a\},a)$.
Choose any $\uc$-adapted word $\ugamma=(\gamma_{i})$, i.e., $\gamma_{i}$
is a sink of $\mu_{\gamma_{i-1}}\cdots\mu_{\gamma_{1}}\uc$, $\forall i$.
Define $\beta_{k}:=\beta_{k}(\ugamma):=e_{\mybinom{\gamma_{k}}{o_{-}(\gamma_{k})}}$.

Assume that the quiver associated with $B^{\Delta}$ is a tree. Then
we can choose a $\Z$-valued function $\xi$ on $J$, such that $\xi_{c_{k}}=\xi_{c_{j}}-1$
when $C_{c_{j},c_{k}}<0$ and $k>j$. Denote $J_{\Z}(\xi):=\{(a,p)|a\in J,p\in\xi_{a}+2\Z\}$.
In this case, we identify $\W$ with $J_{\Z}(\xi)$ such that $\mybinom ad$
corresponds to $(a,\xi_{a}-2d)$. Then we can denote $e_{(a,\xi_{a}-2d)}$
by $e_{\mybinom ad}$.

\begin{Eg}\label{eg:A2-121212}

Assume $C=\left(\begin{array}{cc}
2 & -1\\
-1 & 2
\end{array}\right)$. We choose $\uc=(1,2)$, $\ugamma=(1,2,1,2,1,2)$. The quiver for
$\rsd(\ugamma)$ is drawn in Figure \ref{fig:quiver-aff}, where we
denote elements of $I(\rsd(\ugamma))$ by $k\in[1,6]$, $\mybinom ad\in\W$,
or $(a,\xi_{a}-2d)\in J_{\Z}(\xi)$.

In this case, the letter $1$ is a sink, $\xi_{2}=\xi_{1}-1$ and
$\mu_{1}\uc=(2,1)$.

\begin{figure}[h]
\caption{The quiver for $\rsd(1,2,1,2,1,2)$}
\label{fig:quiver-aff}\subfloat[]{

\begin{tikzpicture}  [node distance=48pt,on grid,>={Stealth[round]},bend angle=45,      pre/.style={<-,shorten <=1pt,>={Stealth[round]},semithick},    post/.style={->,shorten >=1pt,>={Stealth[round]},semithick},  unfrozen/.style= {circle,inner sep=1pt,minimum size=12pt,draw=black!100,fill=red!100},  frozen/.style={rectangle,inner sep=1pt,minimum size=12pt,draw=black!75,fill=cyan!100},   point/.style= {circle,inner sep=1pt,minimum size=5pt,draw=black!100,fill=black!100},   boundary/.style={-,draw=cyan},   internal/.style={-,draw=red},    every label/.style= {black}] \node[unfrozen] (v1) at (0,1) {1}; \node[unfrozen] (v2) at (-0.5,0) {2}; \node[unfrozen] (v3) at (-1,1) {3}; \node[unfrozen] (v4) at (-1.5,0) {4}; \node[frozen] (v5) at (-2,1) {5}; \node[frozen] (v6) at (-2.5,0) {6}; \draw[<-]  (v1) edge (v2); \draw[<-]  (v2) edge (v3); \draw[<-]  (v3) edge (v4); \draw[<-]  (v4) edge (v5); \draw[<-,dashed]  (v5) edge (v6); \draw[<-]  (v5) edge (v3); \draw[<-]  (v3) edge (v1); \draw[<-]  (v6) edge (v4); \draw[<-]  (v4) edge (v2); \end{tikzpicture}

}\subfloat[]{

\begin{tikzpicture}  [scale=1,node distance=48pt,on grid,>={Stealth[round]},bend angle=45,      pre/.style={<-,shorten <=1pt,>={Stealth[round]},semithick},    post/.style={->,shorten >=1pt,>={Stealth[round]},semithick},  unfrozen/.style= {circle,inner sep=1pt,minimum size=12pt,draw=black!100,fill=red!100},  frozen/.style={rectangle,inner sep=1pt,minimum size=12pt,draw=black!75,fill=cyan!100},   point/.style= {circle,inner sep=1pt,minimum size=5pt,draw=black!100,fill=black!100},   boundary/.style={-,draw=cyan},   internal/.style={-,draw=red},    every label/.style= {black}] \node (v1) at (0,1) {$\mybinom{1}{0}$}; \node (v2) at (-0.5,0) {$\mybinom{2}{0}$}; \node (v3) at (-1,1) {$\mybinom{1}{1}$}; \node (v4) at (-1.5,0) {$\mybinom{2}{1}$}; \node (v5) at (-2,1) {$\mybinom{1}{2}$}; \node (v6) at (-2.5,0) {$\mybinom{2}{2}$}; \draw[<-]  (v1) edge (v2); \draw[<-]  (v2) edge (v3); \draw[<-]  (v3) edge (v4); \draw[<-]  (v4) edge (v5); \draw[<-,dashed]  (v5) edge (v6); \draw[<-]  (v5) edge (v3); \draw[<-]  (v3) edge (v1); \draw[<-]  (v6) edge (v4); \draw[<-]  (v4) edge (v2); \end{tikzpicture}}\subfloat[]{ \begin{tikzpicture}  [scale=2,node distance=48pt,on grid,>={Stealth[round]},bend angle=45,      pre/.style={<-,shorten <=1pt,>={Stealth[round]},semithick},    post/.style={->,shorten >=1pt,>={Stealth[round]},semithick},  unfrozen/.style= {circle,inner sep=1pt,minimum size=12pt,draw=black!100,fill=red!100},  frozen/.style={rectangle,inner sep=1pt,minimum size=12pt,draw=black!75,fill=cyan!100},   point/.style= {circle,inner sep=1pt,minimum size=5pt,draw=black!100,fill=black!100},   boundary/.style={-,draw=cyan},   internal/.style={-,draw=red},    every label/.style= {black}] \node (v1) at (0,0.5) {$(1,\xi_1)$}; \node (v2) at (-0.5,0) {$(2,\xi_2)$}; \node (v3) at (-1,0.5) {$(1,\xi_1-2)$}; \node (v4) at (-1.5,0) {$(2,\xi_2-2)$}; \node (v5) at (-2,0.5) {$(1,\xi_1-4)$}; \node (v6) at (-2.5,0) {$(2,\xi_2-4)$}; \draw[<-]  (v1) edge (v2); \draw[<-]  (v2) edge (v3); \draw[<-]  (v3) edge (v4); \draw[<-]  (v4) edge (v5); \draw[<-,dashed]  (v5) edge (v6); \draw[<-]  (v5) edge (v3); \draw[<-]  (v3) edge (v1); \draw[<-]  (v6) edge (v4); \draw[<-]  (v4) edge (v2); \end{tikzpicture}}

\end{figure}

\end{Eg}

From now on, assume $C$ to be of type $ADE$. Let $\qAff$ denote
the associated quantum affine algebra, where $\varepsilon\in\C^{\times}$
is not a root of unity. It has distinguished finite dimensional simple
modules $L_{a,\varepsilon'}$, $\forall a\in J,\varepsilon'\in\C^{\times}$,
called fundamental modules. Denote $L(e_{\mybinom a{-d}}):=L_{a,\varepsilon^{\xi_{a}+2d}}$.
The composition factors of the tensor products of $L(e_{\mybinom a{-d}})$
could be parameterized by $L(w)$, for $w\in\oplus\N e_{\mybinom a{-d}}$.
We refer the reader to \cite{Nakajima01}\cite{Nakajima04}\cite{HernandezLeclerc09}
for details.

Following \cite{HernandezLeclerc09}, let $\cC_{\Z}(\xi)$ denote
the monoidal category whose objects have these composition factors
$L(w)$. Let $\cC_{\ugamma}(\xi)$ denote the monoidal subcategory
of $\cC_{\Z}(\xi)$ whose objects have the composition factors $L(w)$,
for $w\in\oplus_{k=1}^{l(\ugamma)}\N\beta_{k}$. When $j\leq k$ and
$\gamma_{j}=\gamma_{k}$, $L(\beta_{[j,k]})$ is called a Kirillov-Reshetikhin
module, where $\beta_{[j,k]}:=\beta_{j}+\beta_{j[1]}+\cdots+\beta_{k}$.
Its deformed Grothendieck ring, denoted by $K$, is constructed from
quiver varieties \cite{Nakajima04}\cite{VaragnoloVasserot03}\cite[Sections 7, 8.4]{qin2017triangular}. 

By \cite[Theorem 8.4.3]{qin2017triangular}, we have an isomorphism
$\kappa:\bClAlg(\rsd(\ugamma))\simeq K$, such that $\kappa(x_{i}(\rsd(\ugamma)))=[L(\beta_{[i^{\min},i]})]$,
where the quantization of $K$ and $\rsd(\ugamma)$ are chosen as
in \cite[Section 7.3]{qin2017triangular}.

\begin{Thm}[{\cite[Theorem 1.2.1(II)]{qin2017triangular}}]\label{thm:basis-for-HL}

$\bClAlg(\rsd(\ugamma))$ has the common triangular basis $\can$.
And $(\bClAlg(\rsd(\ugamma)),\can)$ is categorified by $\cC_{\ugamma}(\xi)$,
such that $\kappa\can_{m}=[L(w)]$, where $m=(m_{i})_{i\in[1,l]}$
satisfies $\sum_{i}m_{i}\beta_{[i^{\min},i]}=\sum_{i}w_{i}\beta_{i}$. 

\end{Thm}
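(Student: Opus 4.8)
The plan is to deduce the statement from \cite[Theorem 1.2.1(II)]{qin2017triangular} by matching the present setup with that of \cite[Sections 7, 8.4]{qin2017triangular} and then invoking Theorem \ref{thm:category_tri_basis}. The $\kk$-algebra isomorphism $\kappa:\bClAlg(\rsd(\ugamma))\simeq K$ with $\kappa(x_i(\rsd(\ugamma)))=[L(\beta_{[i^{\min},i]})]$ is already available by \cite[Theorem 8.4.3]{qin2017triangular}, so the remaining content is twofold: (i) that the classes of the simple modules of $\cC_\ugamma(\xi)$, transported via $\kappa$, form the common triangular basis of $\bClAlg(\rsd(\ugamma))$, and (ii) the explicit parameterization $\kappa\can_m=[L(w)]$.

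For (i) I would first observe that $\base:=\{[L(w)]\mid w\in\oplus_{k=1}^{l(\ugamma)}\N\beta_k\}$ makes $(\bClAlg(\rsd(\ugamma)),\base)$ a based cluster algebra in the sense of Definition \ref{def:based-cluster-algebra}: the classes of simples form a $\kk$-basis of the deformed Grothendieck ring $K$, they contain all cluster monomials (the real simples $L(w)$), they are stable under multiplication by the frozen classes, they are bar-invariant, and each is pointed at a distinct degree in $\LP(\rsd(\ugamma))$. Hence $\cC_\ugamma(\xi)$ categorifies $(\bClAlg(\rsd(\ugamma)),\base)$ in the sense of Definition \ref{def:categorification-based}. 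Since $C$ is of type $ADE$, $\rsd(\ugamma)$ is injective-reachable; and by the triangularity of tensoring a Kirillov--Reshetikhin (in particular a fundamental) module of $\qAff$ with an arbitrary simple module --- established in \cite{qin2017triangular} and recalled in Remark \ref{rem:triangularity_condition} --- the category $\cC_\ugamma(\xi)$ satisfies Condition (T) with respect to the cluster variables $S_k(\rsd(\ugamma))$, $k\in I_{\ufv}$. Theorem \ref{thm:category_tri_basis} then yields that $\base$ is the common triangular basis, which I would denote $\can$; this simultaneously gives the categorification assertion.

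For (ii), I would track degrees under $\kappa$: $x_i(\rsd(\ugamma))$ is $f_i$-pointed and maps to $[L(\beta_{[i^{\min},i]})]$, so a cluster monomial $x^m$ with $m=(m_i)$ maps to the real simple $L(w)$ determined by $\sum_i w_i\beta_i=\sum_i m_i\beta_{[i^{\min},i]}$; for a general $m$, the element $\can_m$ and the class $[L(w)]$ with that same $w$ are both pointed at the same degree and both belong to the common triangular basis, hence coincide. The only genuine obstacle here is bookkeeping rather than mathematics: one must carefully identify $\rsd(\ugamma)$ and its chosen quantization with the seed used in \cite[Sections 7, 8.4]{qin2017triangular} (the two differ by opposite quivers and the choice of height function $\xi$), and confirm that the present notions of common triangular basis and of monoidal categorification agree with those of loc.\ cit. --- a routine but essential verification.
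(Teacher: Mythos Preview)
Your proposal is correct but follows a different route from the paper's ``new proof''. You argue essentially as in the original \cite{qin2017triangular}: take the isomorphism $\kappa$ from \cite[Theorem 8.4.3]{qin2017triangular}, verify that the simples of $\cC_\ugamma(\xi)$ form a based cluster algebra structure (using that cluster monomials are real simples and that Condition (T) holds for tensoring by Kirillov--Reshetikhin modules, both established in \cite{qin2017triangular}), and then apply Theorem \ref{thm:category_tri_basis} to conclude that this basis is the common triangular one. The paper's new proof proceeds in the opposite order: it first obtains the existence of the common triangular basis for $\bClAlg(\rsd(\ugamma))$ from Theorem \ref{thm:bases-compactified-dBS}, which is proved via the extension-and-reduction technique (enlarging $C$ so that $\ugamma$ becomes reduced, applying the known result for quantum unipotent subgroups, then freezing and base-changing), and only afterwards invokes \cite[Corollary 9.1.9]{qin2017triangular} to match this basis with the simple modules. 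Your approach is shorter if one is willing to import the full categorical machinery of \cite{qin2017triangular}; the paper's approach illustrates that the existence of the common triangular basis can be derived purely from the cluster-theoretic operations developed here, independently of the quantum affine algebra categorification.
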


We have a new proof for Theorem \ref{thm:basis-for-HL}: The first
statement is implied by Theorem \ref{thm:bases-compactified-dBS}.
Combining with \cite[Corollary 9.1.9]{qin2017triangular}, it implies
the second statement.

Denote $\uc^{l}:=(\uc,\uc,\ldots,\uc)$ for $l$-copies of $\uc$.
Particularly, Theorem \ref{thm:basis-for-HL} implies the conjecture
of Hernandez of Leclerc that $\cC_{\uc^{l}}(\xi)$ categorifies $\bClAlg(\rsd(\uc^{l}))$
(Definition \ref{def:categorification}).

\section{Applications: cluster algebras from double Bott-Samelson cells\label{sec:Applications-dBS}}

Take any word $\ueta$. Denote $l=l(\ueta)$, $\rsd=\rsd(\ueta)$.
We will explore rich structures of the quantum cluster algebra $\bUpClAlg(\rsd)$
arising from the double Bott-Samelson cell $\dBS_{\beta_{\ueta}}^{e}$.

\subsection{Mutation sequence $\Sigma$}

$\forall j\leq k\in[1,l]\simeq I(\ueta)=I$, $\eta_{j}=\eta_{k}$,
define $\seq_{[j,k]}:=\mu_{k}\cdots\mu_{j[1]}\mu_{j}$, $\Sigma_{k}:=\Sigma_{k}(\ueta):=\seq_{[k^{\min},k^{\min}[o_{+}(k)-1]]}=\seq_{[k^{\min},k^{\max}[-1-o_{-}(k)]]}$,
$\Sigma:=\Sigma(\ueta):=\Sigma_{l}\cdots\Sigma_{2}\Sigma_{1}$, where
the mutation sequences $\Sigma_{k^{\max}}$ are the identity.

Recall that $k\in[1,l]$ is identified with $\binom{\eta_{k}}{o_{-}(k)}^{\ueta}$,
abbreviated as $\binom{\eta_{k}}{o_{-}(k)}$. Denote $\binom{a}{-1-r}=-\infty$,
$\binom{a}{O(a)+r}=+\infty$ for $r\in\N$. The order $<$ on $[1,l]$
induces the order $<_{\ueta}$ on $I$, abbreviated as $<$.

Denote $\rsd:=\rsd(\ueta)$. When we apply $\Sigma$ to $\rsd$, the
seeds appearing can be described as $\rsd\{r\}_{s}$: For any $r\in[0,l]$,
denote $\rsd\{0\}:=\rsd$, $\rsd\{r\}:=\Sigma_{r}\cdots\Sigma_{2}\Sigma_{1}\rsd$.
Further denote $\rsd\{r\}_{0}:=\rsd\{r\}$, $r':=r+1$, and $\rsd\{r\}_{s}:=\seq_{[r',r'[s-1]]}\rsd\{r\}$
for $s\in[0,o_{+}(r')]$. Note that $\rsd\{r\}_{o_{+}(r')}=\rsd\{r'\}_{0}$.
We denote the entries $b_{jk}$ by $B_{jk}$ below to avoid confusion.

\begin{Lem}\label{lem:mutation_shuffle_seeds} (1) $\rsd\{r\}=\rsd(\ueta_{[r+1,l]},-\ueta_{[1,r]}\op)$.

(2) $\rsd\{r\}_{s}=\rsd(\ueta_{[r'+1,r'[s]]},-\eta_{r'},\ueta_{[r'[s]+1,l]},-\ueta_{[1,r]}\op)$. 

(3) Take any $\rsd\{r\}_{s}\neq\Sigma\rsd$. Denote $a:=\eta_{r'}$.
Then we have $B_{\binom{a}{s-1},\binom{a}{s}}(\rsd\{r\}_{s})=B_{\binom{a}{s+1},\binom{a}{s}}(\rsd\{r\}_{s})=-1$,
$B_{\binom{b}{d},\binom{a}{s}}(\rsd\{r\}_{s})=-C_{ba}$ if $b\neq a$
and $\binom{b}{d+O([1,r];b)}<_{\ueta}\binom{a}{s+O([1,r];a)+1}<_{\ueta}\binom{b}{d+O([1,r];b)+1}$,
and $B_{j,\binom{a}{s}}(\rsd\{r\}_{s})=0$ for other $j\in I$.

\end{Lem}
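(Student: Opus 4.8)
The plan is to prove (1) and (2) simultaneously by induction on $r$ (and, within each step, on $s$), by recognizing the mutation sequences $\Sigma_{r+1}$ and $\seq_{[r',r'[s-1]]}$ as instances of the flip and left reflection moves of Section \ref{subsec:change-string-diagrams}; once the signed word attached to $\rsd\{r\}_{s}$ is identified, part (3) becomes a direct reading of the $B$-matrix from (\ref{eq:dBS_B_matrix}).

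\emph{Parts (1) and (2).} The base case $r=0$ is $\rsd\{0\}=\rsd(\ueta)$. For the inductive step, assume $\rsd\{r\}=\rsd(\ubi^{(r)})$ with $\ubi^{(r)}:=(\ueta_{[r+1,l]},-\ueta_{[1,r]}\op)$. The idea is to apply to $\ubi^{(r)}$ first a left reflection turning its leading letter $\eta_{r+1}$ into $-\eta_{r+1}$, which by \cite[Section 2.3]{shen2021cluster} leaves $\rsd$ unchanged, and then the longest possible chain of flips pushing this $-\eta_{r+1}$ rightward. A flip cannot move a negative letter past another negative letter, so the push halts exactly when $-\eta_{r+1}$ reaches the block $-\ueta_{[1,r]}\op$, producing the word $\ubi^{(r+1)}=(\ueta_{[r+2,l]},-\ueta_{[1,r+1]}\op)$. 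By \cite[Proposition 3.7]{shen2021cluster}, a flip past a letter distinct from $\eta_{r+1}$ fixes the seed, while a flip past a copy of $\eta_{r+1}$ is a single mutation. Once the vertex labels are tracked through the reflection and the flips, the resulting sequence of mutations coincides with $\Sigma_{r+1}$; moreover, stopping the push once $-\eta_{r+1}$ has crossed its $s$-th future copy of $\eta_{r+1}$ (located at $\ueta$-position $r'[s]$) yields a seed equal to $\seq_{[r',r'[s-1]]}\rsd\{r\}$ together with the signed word displayed in (2). Taking $s$ maximal recovers $\ubi^{(r+1)}$, which completes the induction.

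\emph{Part (3).} Fix $\rsd\{r\}_{s}\neq\Sigma\rsd$, put $a=\eta_{r'}$, and let $\ubi'=(\ueta_{[r'+1,r'[s]]},-\eta_{r'},\ueta_{[r'[s]+1,l]},-\ueta_{[1,r]}\op)$ be the signed word of $\rsd\{r\}_{s}$ furnished by (2). Counting $a$-labelled letters from the left of $\ubi'$, the distinguished copy $-\eta_{r'}$ is the $(s+1)$-st, hence it is the vertex $\binom{a}{s}$; it has sign $\varepsilon_{\binom{a}{s}}=-1$, and its predecessor and successor among $a$-labelled letters of $\ubi'$ are the positive letters $\eta_{r'[s]}$ and $\eta_{r'[s+1]}$, giving vertices $\binom{a}{s-1}$ and $\binom{a}{s+1}$ with $\binom{a}{s}=\binom{a}{s-1}[1]$ and $\binom{a}{s+1}=\binom{a}{s}[1]$. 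Substituting $k=\binom{a}{s}$ into (\ref{eq:dBS_B_matrix}) then gives $b_{\binom{a}{s-1},\binom{a}{s}}=\varepsilon_{\binom{a}{s}}=-1$ from the case $k=j[1]$ and $b_{\binom{a}{s+1},\binom{a}{s}}=-\varepsilon_{\binom{a}{s+1}}=-1$ from the case $j=k[1]$. For $b\neq a$, the only triangles of $\ubi'$ contributing to $b_{\binom{b}{d},\binom{a}{s}}$ are those whose $b$-interval straddles $\binom{a}{s}[1]=\eta_{r'[s+1]}$; translating positions in $\ubi'$ back to $\ueta$ inserts the offsets $O([1,r];a)$ and $O([1,r];b)$ coming from the letters of $\ueta_{[1,r]}$ moved to the tail, so that $\binom{a}{s}[1]$ becomes $\binom{a}{s+O([1,r];a)+1}$ and $\binom{b}{d}$ becomes $\binom{b}{d+O([1,r];b)}$, turning the straddling condition into $\binom{b}{d+O([1,r];b)}<_{\ueta}\binom{a}{s+O([1,r];a)+1}<_{\ueta}\binom{b}{d+O([1,r];b)+1}$; using that $\varepsilon_{\binom{a}{s}}=-1$ and that the relevant $b$-letters and $\eta_{r'[s+1]}$ are positive, (\ref{eq:dBS_B_matrix}) evaluates to $-C_{ba}$ in each such case, and all remaining entries vanish.

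The hard part will be the vertex-label bookkeeping: one must check that, along the whole chain of left reflections and flips, the fixed vertex set $I=I(\ueta)$ of the mutated seed is consistently identified with the vertex set of the target signed word in a way compatible with the $\binom{a}{d}$-notation used in (3) (so that every mutation lands on the predicted vertex and the boundary cases $s=0$ and $s=o_+(r')$ are handled correctly), and then that the local order on the vertices of $\ubi'$ entering (\ref{eq:dBS_B_matrix}) corresponds, after the shifts $O([1,r];\cdot)$, to $<_{\ueta}$. An alternative, more computational route would be to verify directly, using the mutation rule of Section \ref{subsec:Basics-of-cluster}, that each single mutation $\mu_{r'[s-1]}$ sends the $B$-matrix of $\rsd\{r\}_{s-1}$ (known inductively from (3)) to that of $\rsd\{r\}_{s}$; this bypasses the reflection dictionary but requires carrying the full local picture of the quiver through every step.
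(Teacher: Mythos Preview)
Your argument for (1) and (2) is exactly the paper's: left-reflect the leading letter of $\ubi^{(r)}$ and push it rightward by flips, reading off $\rsd\{r\}_s$ and $\rsd\{r+1\}$ along the way. The only thing the paper spells out that you leave implicit is that each mutation in the flip chain is at the $\binom{c}{d}$-vertex currently carrying the moving $-\eta_{r'}$, so that the whole chain is literally $\seq_{[(r')^{\min},(r')^{\min}[o_+(r')-1]]}$ in the fixed vertex set $I(\ueta)$; you flag exactly this bookkeeping as the hard part, so there is no gap.

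For (3) you take a genuinely different route from the paper. The paper only does your direct computation from (\ref{eq:dBS_B_matrix}) in the base case $r=0$, where $\ubi'$ has no negative tail and hence every letter except the moving $-\eta_1$ is positive; it then \emph{inducts on $l$}, using the good-sub-seed embedding $\rsd(\ueta_{[2,l]})\hookrightarrow\rsd\{1\}$ of Lemma~\ref{lem:calibration-word} to transport the statement for the shorter word to $\rsd\{r+1\}_s$, while separately checking that the extra frozen vertex $1^{\max}$ contributes nothing. Your approach instead applies (\ref{eq:dBS_B_matrix}) to $\ubi'$ for every $r$ at once. This is correct and more elementary, but your sketch glosses over a case split that the paper's reduction to $r=0$ sidesteps: for general $r$ the signed word $\ubi'$ has the negative block $-\ueta_{[1,r]}\op$ at the end, so you must verify that (i) only the cases $j<k<k[1]<j[1]$ and $k<j<k[1]<j[1]$ of (\ref{eq:dBS_B_matrix}) are realised (the remaining two would require $\varepsilon_{j[1]}=-1$ with $j[1]<k[1]$ or $\varepsilon_j=-\varepsilon_{j[1]}$ with $j[1]<k[1]$, both impossible since all negative $b$-letters sit after $k[1]=\binom{a}{s+1}$), and (ii) any $\binom{b}{d}$ lying in the negative tail gives $B_{\binom{b}{d},\binom{a}{s}}=0$ (here $k<j$, $\varepsilon_{k[1]}=+1\neq\varepsilon_j$, and $\varepsilon_j=\varepsilon_{j[1]}=-1$, so no case of (\ref{eq:dBS_B_matrix}) fires). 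Once these are checked, your translation of the straddling condition to $<_\ueta$ via the shifts $O([1,r];\cdot)$ is correct. So both approaches work; yours is more direct but needs this extra sign analysis, while the paper's inductive route trades that for the embedding machinery and the check on $1^{\max}$.
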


\begin{proof}

(1)(2) Consider the seeds appearing when we apply $\Sigma_{1}$ to
$\rsd=\rsd\{0\}$. In this case, $r'=1$. We rewrite $\rsd(\eta_{1},\eta_{[2,l]})=\rsd(-\eta_{1},\eta_{[2,l]})$
by the left reflection. Applying $\Sigma_{1}$ to $\rsd(-\eta_{1},\eta_{[2,l]})$
via flips (Section \ref{subsec:change-string-diagrams}), we get $\rsd_{s}=\rsd(\ueta')$
for $\ueta'=(\ueta_{[2,1[s]]},-\eta_{1},\ueta_{[1[s]+1,l]})$. Next,
use the reflection $\rsd(\eta_{2},\eta_{[3,l]},-\eta_{1})=\rsd(-\eta_{2},\eta_{[3,l]},-\eta_{1})$
and apply $\Sigma_{2}$ to $\rsd(-\eta_{2},\eta_{[3,l]},-\eta_{1})$
via flips. Repeating this process, we obtained the desired claims.

(3) We prove the claim by induction on $l$. The case $l=1$ is trivial.
Assume they have been proved for lengths $\leq l-1$.

Take $r=0$. Consider $\rsd_{s}=\rsd(\ueta')$ where $\ueta'=(\ueta_{[2,1[s]]},-\eta_{1},\ueta_{[1[s]+1,l]})$,
$1[s]=\binom{\eta_{1}}{s}$. So the $\binom{\eta_{1}}{s}$-th triangle
$T'_{\binom{\eta_{1}}{s}}$ for $\ueta'$ satisfies $\bi_{T'_{\binom{\eta_{1}}{s}}}=-\eta_{1}$,
and $\binom{\eta_{1}}{s}$ is the interval $\ell_{\eta_{1}}^{+}$
incident to its interior point. (\ref{eq:dBS_B_matrix}) implies $B_{\binom{\eta_{1}}{s-1},\binom{\eta_{1}}{s}}(\rsd_{s})=B_{\binom{\eta_{1}}{s+1},\binom{\eta_{1}}{s}}(\rsd_{s})=-1$,
 $B_{\binom{b}{d},\binom{\eta_{1}}{s}}(\rsd_{s})=-C_{b\eta_{1}}$
if $b\neq\eta_{1}$ and $\binom{b}{d}<\binom{\eta_{1}}{s}<\binom{\eta_{1}}{s+1}<\binom{b}{d+1}$
or $\binom{\eta_{1}}{s}<\binom{b}{d}<\binom{\eta_{1}}{s+1}<\binom{b}{d+1}$,
and $B_{j,\binom{\eta_{1}}{s}}=0$ for other $j\in I$.

Therefore the claim is true for $r=0$. Particularly, it holds for
$\rsd':=\rsd\{1\}=\rsd\{0\}_{o_{+}(1)}=\rsd(\ueta_{[2,l]},-\eta_{1})$.
Then $B_{1^{\max},k}(\rsd')=0$ for $k\in I_{\ufv}\backslash\{1^{\max}[-1]\}$.
Denote $\usd:=\rsd(\ueta_{[2,l]})$. By Lemma \ref{lem:calibration-word},
we have the cluster embedding $\iota$ from $\usd$ to $\rsd$ such
that $\iota\binom{b}{d}^{\usd}=\binom{b}{d+\delta_{b,\eta_{1}}}$
and the cluster embedding $\iota'$ from $\usd$ to $\rsd'$ such
that $\iota'\binom{b}{d}^{\usd}=\binom{b}{d}$. Recall that $\binom{b}{d}^{\usd}<_{\ueta_{[2,l]}}\binom{c}{h}^{\usd}$
if and only if $\iota\binom{b}{d}^{\usd}<\iota\binom{c}{h}^{\usd}$,
i.e., $\binom{b}{d+\delta_{b,1}}<\binom{c}{h+\delta_{c,1}}$.

Note that the mutation sequence $\Sigma':=\Sigma_{l}\cdots\Sigma_{2}$
does not act on $1^{\max}[-1]$. So $B_{1^{\max},k}(\rsd\{r+1\}_{s})=0$,
$\forall r\in\N$, $k\in I_{\ufv}\backslash\{1^{\max}[-1]\}$. Moreover,
$\Sigma'$ is identified with the mutation sequence $\Sigma(\ueta_{[2,l]})$
via $\iota'$. Let $\seq$ denote any proper sub sequence of $\Sigma(\ueta_{[2,l]})$,
such that $\Sigma(\ueta_{[2,l]})=\cdots\mu_{k_{h+1}}\mu_{k_{h}}\cdots\mu_{k_{1}}=\cdots\mu_{k_{h+1}}\seq$.
Then $\seq\usd$ takes the form $\usd\{r\}_{s}$, and $\iota'$ is
a cluster embedding from $\seq\usd$ to $(\iota'\seq)\rsd'=\rsd\{r+1\}_{s}$. 

By the induction hypothesis on $l-1$, $B_{\binom{a}{s-1}^{\usd},\binom{a}{s}^{\usd}}(\usd\{r\}_{s})=B_{\binom{a}{s+1}^{\usd},\binom{a}{s}^{\usd}}(\usd\{r\}_{s})=-1$,
$B_{\binom{b}{d}^{\usd},\binom{a}{s}^{\usd}}(\usd\{r\}_{s})=-C_{ba}$
if $b\neq a$ and $\binom{b}{d+O^{\ueta_{[2,l]}}([1,r];b)}^{\usd}<_{\ueta_{[2,l]}}\binom{a}{s+O^{\ueta_{[2,l]}}([1,r];a)+1}^{\usd}$
$<_{\ueta_{[2,l]}}\binom{b}{d+O^{\ueta_{[2,l]}}([1,r];b)+1}^{\usd}$,
and $B_{j,\binom{a}{s}^{\usd}}(\usd\{r\}_{s})=0$ for other $j\in I(\usd)$. 

Since $\iota':\binom{b}{d}^{\usd}\rightarrow\binom{b}{d}$ is a cluster
embedding from $\usd\{r\}_{s}$ to $\rsd\{r+1\}_{s}$, we obtain the
desired entries $B_{\iota'j,\binom{a}{s}}(\rsd\{r+1\}_{s})=B_{j,\binom{a}{s}^{\usd}}(\usd\{r\}_{s})$
for $j\in I(\usd)$. In addition, we already know $B_{1^{\max},\binom{a}{s}}(\rsd\{r+1\}_{s})=0$
for $\binom{a}{s}\neq1^{\max}[-1]$. It remains to translate the inequality
condition for for $\rsd\{r+1\}_{s}$. Applying $\iota:I(\usd)\rightarrow I(\rsd)$,
$\iota\binom{b}{d}=\binom{b}{d+\delta_{\eta_{1},b}}$, and using $O^{\ueta_{[2,l]}}([1,r];b)+\delta_{\eta_{1},b}=O([2,r+1];b)+\delta_{\eta_{1},b}=O([1,r+1];b)$,
we obtain $\binom{b}{d+O([1,r+1];b)}<_{}\binom{a}{s+O([1,r+1];a)+1}<_{}\binom{b}{d+O([1,r+1];b)+1}$
as desired.

\end{proof}

\begin{Eg}\label{eg:Kronecker-braid-quiver}

Take $C=\left(\begin{array}{cc}
2 & -2\\
-2 & 2
\end{array}\right)$, $\ueta=(1,2,1,1,2,2,1)$, $\rsd=\rsd(\ueta)$. We have $\Sigma=\Sigma_{5}\Sigma_{4}\Sigma_{3}\Sigma_{2}\Sigma_{1}=\mu_{2}\mu_{1}\mu_{[1,3]}\mu_{[2,5]}\mu_{[1,4]}=\mu_{2}\mu_{1}(\mu_{3}\mu_{1})(\mu_{5}\mu_{2})(\mu_{4}\mu_{3}\mu_{1})$
(read from right to left). We can check Lemma \ref{lem:mutation_shuffle_seeds}
directly. See Figure \ref{fig:dBS-sequence} for some $\ddB(\rsd\{r\})$.

\end{Eg}

\begin{figure}[h]
\caption{Quivers with weights associated with $\ddB$.}
\label{fig:dBS-sequence}\subfloat[$\ddB(\rsd)$ for $\rsd=\rsd(1,2,1,1,2,2,1)$]{\label{fig:dBS}

\begin{tikzpicture}  [node distance=48pt,on grid,>={Stealth[round]},bend angle=45,      pre/.style={<-,shorten <=1pt,>={Stealth[round]},semithick},    post/.style={->,shorten >=1pt,>={Stealth[round]},semithick},  unfrozen/.style= {circle,inner sep=1pt,minimum size=12pt,draw=black!100,fill=red!100},  frozen/.style={rectangle,inner sep=1pt,minimum size=12pt,draw=black!75,fill=cyan!100},   point/.style= {circle,inner sep=1pt,minimum size=5pt,draw=black!100,fill=black!100},   boundary/.style={-,draw=cyan},   internal/.style={-,draw=red},    every label/.style= {black}]      \node[unfrozen](v1) at (0,0){1};       \node[unfrozen](v2) at (20pt,40pt){2};       \node[unfrozen](v3) at (40pt,0){3};       \node[unfrozen](v4) at (60pt,0){4};       \node[unfrozen](v5) at (80pt,40pt){5};       \node[frozen](v6) at (100pt,40pt){6};       \node[frozen](v7) at (120pt,0){7};   \draw[->] (v1) -- (v3);  \draw[->] (v3) -- (v4);  \draw[->] (v4) -- (v7);  \draw[->] (v2) -- (v5);   \draw[->] (v5) -- (v6);  \draw[->] (v2) --node[left]{2}  (v1);  \draw[->] (v4) --node[left]{2}  (v2);  \draw[->] (v6) --node[left]{2}  (v4);  \draw[->] (v7) -- (v6);  \end{tikzpicture}}\subfloat[$\ddB(\rsd\{1\})$]{\label{fig:dBS-shift}

\begin{tikzpicture}  [node distance=48pt,on grid,>={Stealth[round]},bend angle=45,      pre/.style={<-,shorten <=1pt,>={Stealth[round]},semithick},    post/.style={->,shorten >=1pt,>={Stealth[round]},semithick},  unfrozen/.style= {circle,inner sep=1pt,minimum size=12pt,draw=black!100,fill=red!100},  frozen/.style={rectangle,inner sep=1pt,minimum size=12pt,draw=black!75,fill=cyan!100},   point/.style= {circle,inner sep=1pt,minimum size=5pt,draw=black!100,fill=black!100},   boundary/.style={-,draw=cyan},   internal/.style={-,draw=red},    every label/.style= {black}]      \node[unfrozen] (v1) at (40pt,0) {1};       \node[unfrozen](v2) at (20pt,40pt){2};       \node[unfrozen] (v3) at (60pt,0) {3};       \node[unfrozen] (v4) at (120pt,0) {4};       \node[unfrozen](v5) at (80pt,40pt){5};       \node[frozen](v6) at (100pt,40pt){6};       \node[frozen] (v7) at (140pt,0) {7};   \draw[->] (v1) -- (v3);  \draw[->] (v3) -- (v4);  \draw[<-] (v4) -- (v7);  \draw[->] (v2) -- (v5);   \draw[->] (v5) -- (v6);  \draw[->] (v3) --node[left]{2}  (v2);  \draw[->] (v6) --node[left]{2}  (v3);  \draw[->] (v4) --node[left]{2}  (v6);  \draw[->] (v6) --  (v7); 
\end{tikzpicture}}$\quad$\subfloat[$\ddB(\Sigma\rsd)$]{\label{fig:dBS=00005B1=00005D}\begin{tikzpicture}  [node distance=48pt,on grid,>={Stealth[round]},bend angle=45,      pre/.style={<-,shorten <=1pt,>={Stealth[round]},semithick},    post/.style={->,shorten >=1pt,>={Stealth[round]},semithick},  unfrozen/.style= {circle,inner sep=1pt,minimum size=12pt,draw=black!100,fill=red!100},  frozen/.style={rectangle,inner sep=1pt,minimum size=12pt,draw=black!75,fill=cyan!100},   point/.style= {circle,inner sep=1pt,minimum size=5pt,draw=black!100,fill=black!100},   boundary/.style={-,draw=cyan},   internal/.style={-,draw=red},    every label/.style= {black}]      \node[unfrozen] (v1) at (0pt,0) {1};       \node[unfrozen](v2) at (10pt,40pt){2};       \node[unfrozen] (v3) at (40pt,0) {3};       \node[unfrozen] (v4) at (60pt,0) {4};       \node[unfrozen](v5) at (30pt,40pt){5};       \node[frozen](v6) at (70pt,40pt){6};       \node[frozen] (v7) at (80pt,0) {7};   \draw[<-] (v1) -- (v3);  \draw[<-] (v3) -- (v4);  \draw[<-] (v4) -- (v7);  \draw[<-] (v2) -- (v5);   \draw[<-] (v5) -- (v6);  \draw[->] (v1) --node[left]{2}  (v5);  \draw[->] (v5) --node[left]{2}  (v4);  \draw[->] (v4) --node[left]{2}  (v6);  \draw[->] (v6) --node[right]{}  (v7); 
\end{tikzpicture}}
\end{figure}

\subsection{Interval variables and fundamental variables}

\begin{Def}\label{def:interval-variables}

Consider the seeds $\rsd\{r\}$, $r\in[0,l]$. $\forall i\in I,a\in J$,
denote $r_{a}=O([1,r];a)$ and $r_{i}:=r_{\eta_{i}}$. Then, $\forall i=i^{\min}$,
$d<O(\eta_{i})-r_{i}$, $W_{[i[r_{i}],i[r_{i}+d]]}(\rsd):=x_{i[d]}(\rsd\{r\})$
are called the interval variables of $\bClAlg(\rsd(\ueta))$. $W_{j}(\rsd\{r\}):=W_{[j,j]}(\rsd\{r\})$
are called the fundamental variables.\footnote{When $C$ is of type $ADE$ and $\ueta$ is adapted, $W_{[j,k]}$
and $W_{[j,j]}$ correspond to the Kirillov-Reshetikhin modules and
the fundamental modules of the quantum affine algebras respectively,
whence the names. The fundamental variables might also be called root
variables in analogous to the root vectors for PBW bases.}

\end{Def}

Equivalently, $W_{[\binom{a}{r_{a}},\binom{a}{r_{a}+d}]}(\rsd)=x_{\binom{a}{d}}(\rsd\{r\})$
for $d<O(a)-r_{a}$. We often omit $\rsd$ for simplicity. The degrees
$\beta_{[j,k]}:=\deg^{\rsd}W_{[j,k]}$, $\beta_{j}:=\beta_{[j,j]}$,
have the following surprisingly nice description.

\begin{Lem}\label{lem:interval_degree}

We have $\beta_{[j,k]}=f_{k}-f_{j[-1]}$, where we denote $f_{\pm\infty}=0$.

\end{Lem}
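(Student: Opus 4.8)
The plan is to prove the formula $\beta_{[j,k]} = f_k - f_{j[-1]}$ by reducing everything to the computation of $\deg^{\rsd\{r\}} x_{i[d]}(\rsd\{r\})$ for the appropriate $r$, and then tracking degrees through the tropical transformations induced by the mutation sequence $\Sigma$. Recall from Definition \ref{def:interval-variables} that $W_{[i[r_i], i[r_i+d]]}(\rsd) = x_{i[d]}(\rsd\{r\})$ for $i = i^{\min}$ and $d < O(\eta_i) - r_i$; equivalently $W_{[\binom{a}{r_a},\binom{a}{r_a+d}]}(\rsd) = x_{\binom{a}{d}}(\rsd\{r\})$. So writing $j = \binom{a}{r_a}$ and $k = \binom{a}{r_a + d}$ (both identified with elements of $[1,l]$), what we must show is that the degree, computed in the \emph{original} seed $\rsd$, of the cluster variable $x_{\binom{a}{d}}(\rsd\{r\})$ equals $f_k - f_{j[-1]}$, where $j[-1] = \binom{a}{r_a - 1}$ (understood as $-\infty$, i.e.\ $f_{j[-1]} = 0$, when $r_a = 0$).

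First I would treat the base case $r = 0$, where $W_{[j,j]} = W_j = x_j(\rsd)$ for $j$ with $o_-(j) = 0$ — here the claim $\beta_{[j,k]} = f_k - f_{j[-1]}$ for $j = i^{\min}$ is about cluster variables of $\rsd$ itself and longer interval variables arise after applying $\Sigma_j$; in fact the cleanest path is to note that $x_j(\rsd) = f_j$ trivially (initial cluster variable), matching $f_j - f_{j[-1]}$ with $f_{j[-1]} = 0$ since $j = j^{\min}$. Then I would argue by induction on $r$, using Lemma \ref{lem:mutation_shuffle_seeds}: the seed $\rsd\{r\}_s$ is obtained from $\rsd\{r\}$ by the mutation sequence $\seq_{[r', r'[s-1]]}$, and Lemma \ref{lem:mutation_shuffle_seeds}(3) gives the precise $B$-matrix column at the mutated vertex $\binom{a}{s}$. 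The key computation is the tropical transformation $\phi$ at a single mutation $\mu_{\binom{a}{s}}$: by the exchange relation \eqref{eq:exchange_relation} the new cluster variable $x'_{\binom{a}{s}}$ has degree $-\deg x_{\binom{a}{s}} + \sum_i [b_{i,\binom{a}{s}}]_+ f_i$ (taking the $+$ sign). Feeding in the $B$-matrix entries from Lemma \ref{lem:mutation_shuffle_seeds}(3) — namely $B_{\binom{a}{s-1},\binom{a}{s}} = B_{\binom{a}{s+1},\binom{a}{s}} = -1$ and the nonpositive entries $-C_{ba}$ for neighbors on other layers — one sees that the positive part $\sum_i [b_{i,\binom{a}{s}}]_+ f_i$ vanishes, so that $\deg x'_{\binom{a}{s}} = -\deg x_{\binom{a}{s}}$ plus a correction. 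I would organize this so that the telescoping along the sequence $\seq_{[r',r'[s-1]]}$ collapses to the stated formula $f_k - f_{j[-1]}$.

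More precisely, here is the inductive mechanism I expect to use. Fix the layer $a$ and let the relevant cluster variables be $W_{[\binom{a}{m}, \binom{a}{n}]}$ for various $m \le n$. The telescoping identity I want is $W_{[\binom{a}{m},\binom{a}{n}]}$'s degree $= f_{\binom{a}{n}} - f_{\binom{a}{m-1}}$. When we apply the next relevant mutation $\mu_{\binom{a}{n+1}}$ in building $\rsd\{r\}_{s+1}$ from $\rsd\{r\}_s$, the exchange relation and the tropical transformation $\phi$ send $f_{\binom{a}{n}}$ (the degree of $x_{\binom{a}{n}}$ before this mutation, in the relabeled indices) to its image, and I would check that the combinatorics of Lemma \ref{lem:mutation_shuffle_seeds}(3) — which lists exactly which off-layer vertices $\binom{b}{d}$ satisfy the interleaving inequality with $\binom{a}{s+1}$ — makes all the off-diagonal contributions cancel in pairs, exactly as the fourth-through-sixth cases of \eqref{eq:dBS_B_matrix} were shown to cancel in the proof of Lemma \ref{lem:dBS_B_matrix}. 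The fact that only the two on-layer neighbors $\binom{a}{s-1}, \binom{a}{s+1}$ carry entries $\pm 1$ and the off-layer entries are all $\le 0$ is what guarantees $[b_{i,\binom{a}{s}}]_+ = 0$ for all $i$ in the $+$ convention, making the tropical transformation at $\binom{a}{s}$ simply $f_{\binom{a}{s}} \mapsto -f_{\binom{a}{s}}$ on the relevant coordinate and identity-plus-shift on the others.

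The main obstacle I anticipate is bookkeeping: keeping straight (i) the relabeling of vertices as one passes between $\rsd$, $\rsd\{r\}$, and $\rsd\{r\}_s$ via the left reflections and flips used in Lemma \ref{lem:mutation_shuffle_seeds}, (ii) which sign $\varepsilon$ to take in each tropical transformation so that the positive parts genuinely vanish (one should pick the sign consistently, and verify the other sign gives the same answer by the general theory, or simply observe $\deg$ is independent of $\varepsilon$), and (iii) confirming the boundary conventions $f_{\pm\infty} = 0$ are respected when $j = j^{\min}$ (so $j[-1] = -\infty$) or $k = j^{\max}$. I would isolate the single-mutation degree computation as a clean sub-lemma — "if vertex $v$ has $B$-column with $[b_{iv}]_+ = 0$ for all $i$, then $\deg x'_v = -\deg x_v$" — and then the rest is an induction on $r$ (outer) and $s$ (inner) that telescopes. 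I'd also double-check against the explicit Example \ref{eg:Kronecker-braid-quiver}/Figure \ref{fig:dBS-sequence} to catch sign errors before committing to the general argument. Alternatively, since $\rsd(\ueta)$ is (up to similarity and freezing) obtained from a $\qO[N^w]$-type seed, one could instead invoke the known degree formula for dual PBW / unipotent cell cluster variables and transport it via the freezing and base-change operators of Sections \ref{sec:Freezing-operators}–\ref{sec:coeff-change}, but the direct inductive computation above seems more self-contained.
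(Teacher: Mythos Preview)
Your overall strategy---compute tropical transformations along $\Sigma$ using the $B$-matrix description of Lemma \ref{lem:mutation_shuffle_seeds}(3)---is the right one, and is essentially what the paper does. However, there is a concrete sign error that breaks your key step. You assert that the off-layer entries $-C_{ba}$ are ``nonpositive'', and hence that $[b_{i,\binom{a}{s}}]_{+}=0$ for all $i$. But for $b\neq a$ in a generalized Cartan matrix one has $C_{ba}\le 0$, so $-C_{ba}\ge 0$. Thus in the seed $\rsd\{r\}_{s}$ the column at $\binom{a}{s}$ has on-layer entries $-1$ and off-layer entries $\ge 0$; neither sign choice makes $[b_{i,\binom{a}{s}}]_{\pm}$ vanish identically, and in particular the tropical transformation is \emph{not} simply $f_{\binom{a}{s}}\mapsto -f_{\binom{a}{s}}$. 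The on-layer neighbors must contribute for the telescoping you want to actually occur.

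The paper fixes this by working with the $B$-matrix of $\rsd_{s+1}$ (i.e.\ \emph{after} mutating at $1[s]$), where the column signs flip: the on-layer entries become $+1$ and the off-layer entries become $\le 0$. Applying $\phi_{\rsd_{s},\rsd_{s+1}}$ to $f_{1[s]}(\rsd_{s+1})$ (whose $1[s]$-coordinate is $+1$) then picks up only the positive on-layer entries, yielding the telescoping formula
\[
\phi_{\rsd_{s},\rsd_{s+1}}\bigl(f_{1[s]}(\rsd_{s+1})\bigr)=f_{1[s+1]}(\rsd_{s})-f_{1[s]}(\rsd_{s})+f_{1[s-1]}(\rsd_{s}),
\]
while $\pm f_{1[d]}$ for $d\neq s$ are fixed. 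Structurally, the paper also organizes the induction differently: it inducts on the word length $l$ rather than on $(r,s)$, using the good-sub-seed embedding of $\rsd(\ueta_{[2,l]})$ into $\rsd\{1\}$ (Lemma \ref{lem:embed-interval-variables}) to invoke the hypothesis for length $l-1$. This reduces the explicit tropical computation to a single pass through $\Sigma_{1}$, rather than through all of $\Sigma$. Your plan could be made to work once the sign issue is corrected, but the bookkeeping is heavier.
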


An example is given as Example \ref{eg:Kronecker-braid-deg}. Before
proving Lemma \ref{lem:interval_degree}, let us make some preparation.

Given any $1\leq j\leq k\leq l$. By Lemma \ref{lem:calibration-word},
$\usd:=\rsd(\ueta_{[j,k]})$ is a good sub seed of $\rsd':=\rsd(\ueta_{[j,l]},-\ueta_{[1,j-1]}\op)$
via the cluster embedding $\iota'$ such that $\iota'\binom{a}{d}^{\usd}=\binom{a}{d}$,
where $\binom{a}{d}^{\usd}\in I(\usd)$ and $\binom{a}{d}\in I(\rsd')=I(\rsd)$.
Then we obtain the following inclusion of algebras:
\begin{align}
\iota_{[j,k]} & :\bClAlg(\rsd(\ueta_{[j,k]}))\overset{\iota'}{\hookrightarrow}\bClAlg(\rsd(\ueta_{[j,l]},-\ueta_{[1,j-1]}\op)\overset{(\Sigma_{j-1}\cdots\Sigma_{1})^{*}}{\simeq}\bClAlg(\rsd(\ueta))\label{eq:inclusion-subword-cluster}
\end{align}
 such that $\iota_{[j,k]}x_{h}(\usd)=x_{\iota'(h)}(\rsd')$, $\forall h\in I(\usd)$,
where we omit the symbol $(\Sigma_{j-1}\cdots\Sigma_{1})^{*}$ as
usual. For any mutation sequence $\seq$ on $I_{\ufv}(\usd)$, let
$\iota'\seq$ denote the corresponding sequence on $I_{\ufv}(\rsd')$.
Then we obtain $\iota_{[j,k]}x_{h}(\seq\usd)=x_{\iota'(h)}((\iota'\seq)\rsd')$.
Particularly, by taking $\iota'\seq$ such that $\Sigma_{l}\cdots\Sigma_{j}=\cdots\mu_{k_{d+1}}\mu_{k_{d}}\cdots\mu_{k_{1}}=\cdots\mu_{k_{d+1}}(\iota'\seq)$,
we deduce the following result by definitions of the interval variables.

\begin{Lem}\label{lem:embed-interval-variables}

The inclusion $\iota_{[j,k]}:\bClAlg(\rsd(\ueta_{[j,k]}))\hookrightarrow\bClAlg(\rsd(\ueta_{[j,l]},-\ueta_{[1,j-1]}\op))=\bClAlg(\rsd(\ueta))$
sends the fundamental variables $W_{[\binom{a}{d}^{\ueta_{[j,k]}},\binom{a}{d'}^{\ueta_{[j,k]}}]}(\rsd(\ueta_{[j,k]}))$
to the fundamental variables $W_{[\binom{a}{d+O([1,j-1];a])},\binom{a}{d'+O([1,j-1];a])}]}(\rsd(\ueta))$.

\end{Lem}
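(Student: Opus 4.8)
The proof is essentially an unwinding of Definition~\ref{def:interval-variables} combined with the compatibility recorded immediately before the lemma; the only real content is a bookkeeping of vertex labels, which accounts for the shift $d\mapsto d+O([1,j-1];a)$. Set $e:=d'-d$. By Definition~\ref{def:interval-variables} applied to the word $\ueta_{[j,k]}$, the interval variable $W_{[\binom{a}{d}^{\ueta_{[j,k]}},\binom{a}{d'}^{\ueta_{[j,k]}}]}(\rsd(\ueta_{[j,k]}))$ equals $x_{\binom{a}{e}^{\ueta_{[j,k]}}}(\seq\,\usd)$, where $\usd=\rsd(\ueta_{[j,k]})$ and $\seq$ is the prefix of $\Sigma(\ueta_{[j,k]})$ that stops at the seed in which the vertex $\binom{a}{e}^{\ueta_{[j,k]}}$ first carries that interval variable (concretely, a suitable prefix of $\Sigma_{r}(\ueta_{[j,k]})\cdots\Sigma_{1}(\ueta_{[j,k]})$ for $r$ with $O^{\ueta_{[j,k]}}([1,r];a)=d$). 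Applying $\iota_{[j,k]}$ and the identity $\iota_{[j,k]}\,x_{h}(\seq\usd)=x_{\iota'(h)}\big((\iota'\seq)\rsd'\big)$, together with $\iota'\binom{a}{e}^{\usd}=\binom{a}{e}$ and $\rsd'=\rsd(\ueta_{[j,l]},-\ueta_{[1,j-1]}\op)=(\Sigma_{j-1}\cdots\Sigma_{1})\rsd(\ueta)$ (Lemma~\ref{lem:mutation_shuffle_seeds}(1)), yields
\[
\iota_{[j,k]}\,W_{[\binom{a}{d}^{\ueta_{[j,k]}},\binom{a}{d'}^{\ueta_{[j,k]}}]}(\rsd(\ueta_{[j,k]}))=x_{\binom{a}{e}}\big((\iota'\seq)(\Sigma_{j-1}\cdots\Sigma_{1})\,\rsd(\ueta)\big).
\]

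It then remains to recognize the right-hand side as $W_{[\binom{a}{d+O([1,j-1];a)},\binom{a}{d'+O([1,j-1];a)}]}(\rsd(\ueta))$. This uses three ingredients: first, the good sub seed inclusion $\rsd(\ueta_{[j,l]})\subset\rsd'$ of Lemma~\ref{lem:calibration-word}, under which $\iota'\seq$ is the corresponding prefix of $\Sigma_{l}\cdots\Sigma_{j}$, so that $(\iota'\seq)(\Sigma_{j-1}\cdots\Sigma_{1})$ is a prefix of $\Sigma(\ueta)=(\Sigma_{l}\cdots\Sigma_{j})(\Sigma_{j-1}\cdots\Sigma_{1})$; second, the occurrence-counting identity $O^{\ueta_{[j,k]}}([1,r];a)=O^{\ueta}([1,r+j-1];a)-O([1,j-1];a)$, which turns the condition $O^{\ueta_{[j,k]}}([1,r];a)=d$ defining $\seq$ into $O^{\ueta}([1,r+j-1];a)=d+O([1,j-1];a)$ for $\rsd(\ueta)$ and hence produces the claimed endpoint shift; and third, Lemma~\ref{lem:mutation_shuffle_seeds}(3), whose explicit description of the vertices incident to each $\Sigma_{r}$ along the path certifies that the extra mutations occurring in the full prefix of $\Sigma(\ueta)$ but absent from $(\iota'\seq)(\Sigma_{j-1}\cdots\Sigma_{1})$ never touch the vertex $\binom{a}{e}$ before the relevant stage, so the cluster variable sitting there agrees in both seeds.

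I expect the main obstacle to be exactly this last bundle of bookkeeping: $\Sigma(\ueta_{[j,k]})$ is genuinely not a prefix of $\Sigma(\ueta)$ once one passes through the diagonal flips underlying Lemma~\ref{lem:mutation_shuffle_seeds}, and the three orders $<_{\ueta_{[j,k]}}$, $<_{\rsd'}$, $<_{\ueta}$ on the (identified) vertex sets disagree, so one cannot argue by blunt functoriality. I would organize the verification by a double induction, peeling off one letter from the tail at a time (via $\rsd(\ueta_{[j,l-1]})\subset\rsd(\ueta_{[j,l]})$, which is the shift-free case) and one letter from the front at a time (passing from $\ueta_{[j-1,l]}$ to $\ueta_{[j,l]}$, where $\rsd'$ changes only by the single block $\Sigma_{j-1}$ and $O([1,j-1];a)$ grows by $\delta_{\eta_{j-1},a}$), reducing each inductive step to a single flip whose effect on vertex labels is read off from the string-diagram description in Section~\ref{subsec:change-string-diagrams}. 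Specializing to $d=d'$ then gives the statement for the fundamental variables $W_{j}$.
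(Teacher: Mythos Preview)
Your approach is the paper's approach, spelled out in much greater detail: the paper's entire argument is the one sentence preceding the lemma, which asserts that one may take $\iota'\seq$ to be a prefix of $\Sigma_{l}\cdots\Sigma_{j}$ and then read off the claim from Definition~\ref{def:interval-variables}. You have correctly noticed, and the paper glosses over, that this prefix claim is not literally true in general: each block $\iota'\Sigma_{s}(\ueta_{[j,k]})$ is only an initial segment of the corresponding block $\Sigma_{j+s-1}(\ueta)$, and once $k<l$ the concatenation of such initial segments is not a prefix of the concatenation of the full blocks.

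There is, however, a gap in your ingredient (c). Knowing that the extra mutations in the full sequence never occur \emph{at} the vertex $\binom{a}{e}$ is not enough to conclude that the cluster variable sitting there agrees in the two seeds: those extra mutations can still change the neighbours of $\binom{a}{e}$ and their cluster variables, hence alter the exchange relation the next time one \emph{does} mutate at $\binom{a}{e}$. Concretely, for $j=1$ and $k=l-1$ the extra mutation in the $p$-th $\eta_{l}$-block occurs at $\binom{\eta_{l}}{O^{\usd}(\eta_{l})-p}$, which for $p=1$ is the image of a \emph{frozen} vertex of $\usd$; mutating there is legal in $\rsd$ but destroys the good-sub-seed property, so subsequent exchange relations inside $\iota'(I_{\ufv}(\usd))$ need not match those of $\usd$. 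So (c) as stated does not carry the weight you put on it.

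Your double-induction plan is nonetheless the right repair, provided one organizes the reduction correctly. The clean factorization is
\[
\iota_{[j,k]} \;=\; \iota_{[j,l]}\circ\iota'',\qquad \iota'':\rsd(\ueta_{[j,k]})\hookrightarrow\rsd(\ueta_{[j,l]}),
\]
where $\iota''$ is the shift-free case for the shorter word $\ueta_{[j,l]}$, and $\iota_{[j,l]}$ is the case $k=l$. In the latter case the paper's prefix claim \emph{is} valid: one checks $o_{+}^{\ueta_{[j,l]}}(s)=o_{+}^{\ueta}(j+s-1)$, so $\iota'''\Sigma_{s}(\ueta_{[j,l]})=\Sigma_{j+s-1}(\ueta)$ block by block and the interval variables match up on the nose. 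This reduces everything to the case $j=1$, which one then attacks by induction on $l$, landing ultimately on the single step $j=1,\ k=l-1$. For that step the cleanest route is the direct tropical-mutation computation carried out in the proof of Lemma~\ref{lem:interval_degree} (indeed the two lemmas are most naturally proved together), rather than the commutation heuristic in~(c).
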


\begin{proof}[Proof of Lemma \ref{lem:interval_degree}]

We prove the claim by induction on $l$. The case $l=1$ is trivial.
Assume it has been proved for lengths $\leq l-1$. 

Denote $\usd=\rsd(\ueta_{[2,l]})$. It is a good sub seed of $\rsd':=\rsd\{1\}=\rsd(\ueta_{[2,l]},-\eta_{1})$.
Then we have the cluster embedding from $\usd$ to $\rsd'$ sending
$\binom{a}{r}^{\usd}\in I(\usd)$ to $\binom{a}{r}\in I(\rsd')=I(\rsd)$.
By Lemma \ref{lem:embed-interval-variables}, the inclusion $\iota_{[2,l]}:\bClAlg(\usd)\overset{\iota'}{\hookrightarrow}\bClAlg(\rsd')\simeq\bClAlg(\rsd)$
sends $W_{[\binom{a}{r}^{\usd},\binom{a}{r+d}^{\usd}]}(\usd)$ to
$W_{[\binom{a}{r+\delta_{a,\eta_{1}}},\binom{a}{r+\delta_{a,\eta_{1}}+d}]}$.
The induction hypothesis implies $\deg^{\usd}(W{}_{[\binom{a}{r}^{\usd},\binom{a}{r+d}^{\usd}]}(\usd))=f_{\binom{a}{r+d}^{\usd}}(\usd)-f_{\binom{a}{r-1}^{\usd}}(\usd).$
Using the cluster embedding $\iota'$, we deduce
\begin{align*}
\deg^{\rsd'}(W_{[j,k]}) & =\begin{cases}
f_{k}(\rsd')-f_{j[-1]}(\rsd') & a\neq\eta_{1},j=\binom{a}{r},k=\binom{a}{r+d}\in I\\
f_{k[-1]}(\rsd')-f_{j[-2]}(\rsd') & r>0,j=\binom{\eta_{1}}{r},k=\binom{\eta_{1}}{r+d}\in I
\end{cases}.
\end{align*}

Recall that $\rsd_{s}=\seq_{[1,1[s-1]]}\rsd$ and $\rsd_{o_{+}(1)}=\rsd\{1\}=\rsd'$.
For the composition of tropical mutations $\phi_{\rsd_{0},\rsd_{o_{+}(1)}}=\phi_{\rsd_{0},\rsd_{1}}\circ\phi_{\rsd_{1},\rsd_{2}}\circ\cdots\circ\phi_{\rsd_{o_{+}(1)-1},\rsd_{o_{+}(1)}}$,
we have $\deg^{\rsd_{0}}W_{[j,k]}=\phi_{\rsd_{0},\rsd_{o_{+}(1)}}\deg^{\rsd_{o_{+}(1)}}W_{[j,k]}$.
If $\eta_{j}=\eta_{k}\neq\eta_{1}$, the vector $f_{k}-f_{j[-1]}$
remains unchanged and the desired claim for $W_{[j,k]}$ follows.

Next, compute $\deg^{\sd}(W_{[1[d],1[s]]})$, $\forall d\geq1$. Note
that $\rsd_{s+1}=\mu_{1[s]}\rsd_{s}$, $\forall s\in[0,o_{+}(1)-1]$.
Lemma \ref{lem:mutation_shuffle_seeds} implies $b_{1[s-1],1[s]}(\rsd_{s+1})=b_{1[s+1],1[s]}(\rsd_{s+1})=1$
and $b_{i,1[s]}(\rsd_{s+1})\leq0$ for $\eta_{i}\neq\eta_{1}$. So
we deduce
\begin{align*}
\phi_{\rsd_{s},\rsd_{s+1}}(f_{1[s]}(\rsd_{s+1})) & =f_{1[s+1]}(\rsd_{s})-f_{1[s]}(\rsd_{s})+f_{1[s-1]}(\rsd_{s-1}),\\
\phi_{\rsd_{s},\rsd_{s+1}}(\pm f_{1[d]}(\rsd_{s+1})) & =\pm f_{1[d]}(\rsd_{s}),\ \forall d\neq s.
\end{align*}
Then direct computation shows the following results for $0<d\leq s\leq o_{+}(1)$:
\begin{align*}
\deg^{\rsd_{s}}W_{[1[d],1[s]]} & =\phi_{\rsd_{s},\rsd_{s+1}}\cdots\phi_{\rsd_{o_{+}(1)-1},\rsd_{o_{+}(1)}}(\deg^{\rsd_{o_{+}(1)}}W_{[1[d],1[s]]})\\
 & =f_{1[s-1]}(\rsd_{s})-f_{1[d-2]}(\rsd_{s})\text{ (coordinates are unchanged)}\\
\deg^{\rsd_{d-1}}W_{[1[d],1[s]]} & =\phi_{\rsd_{d-1},\rsd_{d}}\cdots\phi_{\rsd_{s-1},\rsd_{s}}(\deg^{\rsd_{s}}W_{[1[d],1[s]]})\\
 & =f_{1[s]}(\rsd_{d-1})-f_{1[d-1]}(\rsd_{s-1})\\
\deg^{\rsd_{0}}W_{[1[d],1[s]]} & =\phi_{\rsd_{0},\rsd_{1}}\cdots\phi_{\rsd_{d-2},\rsd_{d-1}}(\deg^{\rsd_{d-1}}W_{[1[d],1[s]]})\\
 & =f_{1[s]}(\rsd_{0})-f_{1[d-1]}(\rsd_{0})\ \text{(coordinates are unchanged)}
\end{align*}

Finally, note that $W_{[1,1[s]]}$ are cluster variable of $\rsd$.
Thus $\deg^{\rsd}W_{[1,1[s]]}=f_{1[s]}=f_{1[s]}-f_{-\infty}$. Therefore,
we have verified the desired statement for all $W_{[j,k]}$.

\end{proof}

Recall that we use $[\ ]^{\rsd}$ to denote normalization in $\LP(\rsd)$,
abbreviated as $[\ ]$.

\begin{Prop}\label{prop:T-systems}

$\forall r\in[0,l]$, denote $a=\eta_{r+1}$ and $r_{b}=O([1,r];b)$
for $b\in J$. The interval variables satisfy the following equations,
called the $T$-systems:
\begin{align}
W_{[\binom{a}{r_{a}},\binom{a}{r_{a}+s}]}*W_{[\binom{a}{r_{a}+1},\binom{a}{r_{a}+s+1}]}= & q^{\alpha}[W_{[\binom{a}{r_{a}+1},\binom{a}{r_{a}+s}]}*W_{[\binom{a}{r_{a}},\binom{a}{r_{a}+s+1}]}]\label{eq:T-systems}\\
 & +q^{\alpha'}[\prod_{\binom{b}{r_{b}+d}}W_{[\binom{b}{r_{b}},\binom{b}{r_{b}+d}]}^{-C_{ba}}],\nonumber 
\end{align}
where $\binom{b}{r_{b}+d}$ appearing satisfy $b\neq a$ and $\binom{b}{r_{b}+d}<_{\ueta}\binom{a}{r_{a}+s+1}<_{\ueta}\binom{b}{r_{b}+d+1}$,
$\alpha=\Hf\lambda(\beta_{[\binom{a}{r_{a}},\binom{a}{r_{a}+s}]},\beta_{[\binom{a}{r_{a}+1},\binom{a}{r_{a}+s+1}]})$,
and $\alpha'=\Hf\lambda(\beta_{[\binom{a}{r_{a}},\binom{a}{r_{a}+s}]},-\sum_{\binom{b}{r_{b}+d}}C_{ba}\beta_{[\binom{b}{r_{b}},\binom{b}{r_{b}+d}]})$.
We have $\alpha>\alpha'$.

\end{Prop}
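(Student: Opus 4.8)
The plan is to recognize the $T$-system $(\ref{eq:T-systems})$ as the quantum exchange relation $(\ref{eq:exchange_relation})$ for one of the mutations occurring in the sequence $\Sigma$, after rewriting each cluster variable that appears as an interval variable via Definition~\ref{def:interval-variables}. Fix $r\in[0,l]$ and $s$, set $a=\eta_{r+1}$ and $r_{b}=O([1,r];b)$, and let $k=\binom{a}{r_{a}+s}$ be the vertex mutated in passing from $\rsd\{r\}_{s}$ to $\rsd\{r\}_{s+1}=\mu_{k}\rsd\{r\}_{s}$; I assume $k$ is unfrozen, the degenerate cases (an interval collapsing, or the absent left-frozen vertex of $\rsd$) being handled the same way. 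Recall from the definitions that running from $\rsd\{r\}$ through $\rsd\{r\}_{1},\rsd\{r\}_{2},\dots$ to $\rsd\{r+1\}$ mutates the $a$-chain $\binom{a}{r_{a}},\binom{a}{r_{a}+1},\dots$ from left to right, and $(\ref{eq:T-systems})$ is the exchange relation of the $s$-th of these mutations, pulled back to $\LP(\rsd)$.

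First I would read off the two exchange monomials. By Lemma~\ref{lem:mutation_shuffle_seeds}(3) the only nonzero entries $B_{jk}(\rsd\{r\}_{s})$ at $k=\binom{a}{r_{a}+s}$ are $B_{\binom{a}{r_{a}+s-1},k}=B_{\binom{a}{r_{a}+s+1},k}=-1$ together with $B_{\binom{b}{r_{b}+d},k}=-C_{ba}$ for the finitely many $\binom{b}{r_{b}+d}$ with $\binom{b}{r_{b}+d}<_{\ueta}\binom{a}{r_{a}+s+1}<_{\ueta}\binom{b}{r_{b}+d+1}$. Substituting these into $(\ref{eq:exchange_relation})$ as the $b_{jk}$ and using $C_{ba}\le 0$ for $b\neq a$, one has $\sum_{j}[-b_{jk}]_{+}f_{j}=f_{\binom{a}{r_{a}+s-1}}+f_{\binom{a}{r_{a}+s+1}}$ and $\sum_{i}[b_{ik}]_{+}f_{i}=\sum(-C_{ba})f_{\binom{b}{r_{b}+d}}$, so inside $\LP(\rsd\{r\}_{s})$ the exchange relation presents $x_{k}\cdot x_{k}(\rsd\{r\}_{s+1})$ as a sum $M_{1}+M_{2}$ of two monomials, $M_{1}$ over the two $a$-chain neighbours of $k$ and $M_{2}$ over the $\binom{b}{r_{b}+d}$ with multiplicity $-C_{ba}$; $M_{1}$ is the dominant one since $\deg M_{2}=\deg M_{1}+p^{*}e_{k}$.

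Next comes the translation into interval variables. The vertices $k$, $\binom{a}{r_{a}+s+1}$ and each $\binom{b}{r_{b}+d}$ with $b\neq a$ are untouched by the partial mutation sequence producing $\rsd\{r\}_{s}$ from $\rsd\{r\}$ (which mutates only $\binom{a}{r_{a}},\dots,\binom{a}{r_{a}+s-1}$), so their cluster variables equal those of $\rsd\{r\}$, which by Definition~\ref{def:interval-variables} are $W_{[\binom{a}{r_{a}},\binom{a}{r_{a}+s}]}$, $W_{[\binom{a}{r_{a}},\binom{a}{r_{a}+s+1}]}$ and $W_{[\binom{b}{r_{b}},\binom{b}{r_{b}+d}]}$; likewise $x_{k}(\rsd\{r\}_{s+1})=W_{[\binom{a}{r_{a}+1},\binom{a}{r_{a}+s+1}]}$. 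The one step needing care is $x_{\binom{a}{r_{a}+s-1}}(\rsd\{r\}_{s})=W_{[\binom{a}{r_{a}+1},\binom{a}{r_{a}+s}]}$: this holds because $\rsd\{r\}_{s}$ and $\rsd\{r+1\}$ carry the same cluster variable at $\binom{a}{r_{a}+s-1}$ (the mutations separating them act at other $a$-chain vertices), and the word descriptions of Lemma~\ref{lem:mutation_shuffle_seeds}(1),(2) together with Definition~\ref{def:interval-variables} identify it with $W_{[\binom{a}{r_{a}+1},\binom{a}{r_{a}+s}]}$. Thus $M_{1}$ and $M_{2}$, viewed in $\LP(\rsd)$, become the normalized products $[W_{[\binom{a}{r_{a}+1},\binom{a}{r_{a}+s}]}*W_{[\binom{a}{r_{a}},\binom{a}{r_{a}+s+1}]}]$ and $[\prod_{\binom{b}{r_{b}+d}}W_{[\binom{b}{r_{b}},\binom{b}{r_{b}+d}]}^{-C_{ba}}]$, so transporting the exchange relation along the mutation maps to $\LP(\rsd)$ gives $(\ref{eq:T-systems})$ up to the scalars $q^{\alpha},q^{\alpha'}$.

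Finally I would pin down the $q$-powers. In $\LP(\rsd\{r\}_{s})$ the element $x_{k}^{-1}\cdot M_{i}$ is a monomial, so $x_{k}*(x_{k}^{-1}\cdot M_{i})=q^{\Hf\lambda^{\rsd\{r\}_{s}}(f_{k},\deg M_{i})}M_{i}$, whence $x_{k}*x_{k}(\rsd\{r\}_{s+1})=q^{\Hf\lambda(f_{k},\deg M_{1})}M_{1}+q^{\Hf\lambda(f_{k},\deg M_{2})}M_{2}$. Since the mutation isomorphisms respect the twisted products (so $\lambda$ is carried to $\lambda$), and in $\LP(\rsd)$ one has $f_{k}\mapsto\beta_{[\binom{a}{r_{a}},\binom{a}{r_{a}+s}]}$, $\deg M_{1}\mapsto\beta_{[\binom{a}{r_{a}},\binom{a}{r_{a}+s}]}+\beta_{[\binom{a}{r_{a}+1},\binom{a}{r_{a}+s+1}]}$ and $\deg M_{2}\mapsto-\sum C_{ba}\beta_{[\binom{b}{r_{b}},\binom{b}{r_{b}+d}]}$, skew-symmetry of $\lambda$ yields exactly the stated $\alpha$ and $\alpha'$. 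For the inequality, the difference of the two exponent vectors in $(\ref{eq:exchange_relation})$ is $\sum_{j}(-b_{jk})f_{j}=-\tB e_{k}=-p^{*}e_{k}$, so $\alpha-\alpha'=\Hf\lambda^{\rsd\{r\}_{s}}(f_{k},-p^{*}e_{k})=\Hf\diag_{k}>0$ (recall $\lambda(f_{k},p^{*}e_{k})=-\diag_{k}$); equivalently, $\alpha>\alpha'$ is the $(\prec,\mm)$-unitriangularity of a quantum cluster exchange relation with respect to $\rsd\{r\}_{s}$. I expect the main obstacle to be this third step: matching a cluster variable of a given intermediate seed $\rsd\{r\}_{s}$ to the correctly indexed interval variable is intertwined with the word and string-diagram combinatorics of these seeds, whereas the remaining steps reduce to reading $(\ref{eq:exchange_relation})$ off Lemma~\ref{lem:mutation_shuffle_seeds}(3) and a one-line computation with $\lambda$; Lemma~\ref{lem:interval_degree} provides a consistency check on all the degrees involved.
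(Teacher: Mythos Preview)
Your approach is the paper's own: recognise (\ref{eq:T-systems}) as the exchange relation for the mutation $\rsd\{r\}_{s}\to\rsd\{r\}_{s+1}$, read the relevant $B$-column off Lemma~\ref{lem:mutation_shuffle_seeds}(3), translate each cluster variable to an interval variable through Definition~\ref{def:interval-variables}, and transport the $q$-powers to $\LP(\rsd)$. The paper justifies this last step by noting that all the cluster variables involved lie in a common cluster and citing \cite{qin2017triangular}; your ``mutation isomorphisms respect the twisted product'' is the same observation, and your computation of $\alpha-\alpha'=\Hf\diag_{k}$ via compatibility of $\lambda$ with $p^{*}$ is the standard reason behind the paper's bare assertion $\alpha>\alpha'$.

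There is, however, a persistent indexing slip. The vertex mutated is $\binom{a}{s}$, not $\binom{a}{r_{a}+s}$: by its definition $\Sigma_{r+1}=\seq_{[(r+1)^{\min},\dots]}$ begins at $(r+1)^{\min}=\binom{a}{0}$, so after $s$ steps the next mutation is at $\binom{a}{s}$. (The displayed line defining $\rsd\{r\}_{s}$ with $r'=r+1$ in place of $(r')^{\min}$ is misleading here; Example~\ref{eg:Kronecker-braid-quiver}, where $\Sigma_{3}=\mu_{3}\mu_{1}$ starts at vertex $1=\binom{1}{0}$ rather than at $3=\binom{1}{1}$, shows the intended reading.) Accordingly, Lemma~\ref{lem:mutation_shuffle_seeds}(3) gives the column at $\binom{a}{s}$ with neighbours $\binom{a}{s\pm1}$ and $\binom{b}{d}$; the shifts $r_{a},r_{b}$ enter only through the $<_{\ueta}$-condition and through Definition~\ref{def:interval-variables}, which reads $x_{\binom{b}{d}}(\rsd\{r\})=W_{[\binom{b}{r_{b}},\binom{b}{r_{b}+d}]}$. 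With your labels, applying Definition~\ref{def:interval-variables} literally would give $x_{\binom{a}{r_{a}+s}}(\rsd\{r\})=W_{[\binom{a}{r_{a}},\binom{a}{2r_{a}+s}]}$, not the claimed $W_{[\binom{a}{r_{a}},\binom{a}{r_{a}+s}]}$. Once the vertex labels are corrected to $\binom{a}{s}$, $\binom{a}{s\pm1}$, $\binom{b}{d}$, every step you wrote goes through verbatim and reproduces the paper's intermediate equation~(\ref{eq:T-system-via-x}).
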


Equivalently, $\forall1\leq j\leq j[s]<l$, we can rewrite the $T$-systems
for $r=j-1$ as
\begin{align}
W_{[j,j[s]]}*W_{[j[1],j[s+1]]} & =q^{\alpha}[W_{[j[1],j[s]]}*W_{[j,j[s+1]]}]+q^{\alpha'}[\prod W_{[i,i[d]]}^{-C_{\eta_{i},\eta_{j}}}],\label{eq:T-system-other}
\end{align}
where $[i,i[d]]$ appearing satisfy $\eta_{i}\neq\eta_{j}$, $i=i^{\min}[O([1,j-1];\eta_{i})]$,
and $i[d]<j[s+1]<i[d+1]$.

\begin{proof}

Denote $\rsd'=\rsd\{r\}$ and $\rsd''=\rsd\{r+1\}$. We use $'$ and
$''$ denote their associated data respectively. Recall that $\mu_{\binom{a}{s}}\rsd\{r\}_{s}=\rsd\{r\}_{s+1}$
for $r'=r+1$, $s<o_{+}(r')$, $a=\eta_{r'}$, and $x_{\binom{b}{d}}'=W_{[\binom{b}{r_{b}},\binom{b}{r_{b}+d}]}$
when $b\neq a$, $\binom{b}{r_{b}+d}\in I$. Mutate $\rsd\{r\}_{s}$
at $\binom{a}{s}$. Then Lemma \ref{lem:mutation_shuffle_seeds} implies
\begin{align}
x_{\binom{a}{s}}'*x_{\binom{a}{s}}'' & =q^{\alpha}[x_{\binom{a}{s-1}}''*x_{\binom{a}{s+1}}']^{\rsd'}+q^{\alpha'}[\prod(x_{\binom{b}{d}}')^{-C_{ba}}]^{\rsd'},\label{eq:T-system-via-x}
\end{align}
such that $\binom{b}{d}$ appearing satisfy $\binom{b}{d+r_{b}}<_{\ueta}\binom{a}{s+r_{a}+1}<_{\ueta}\binom{b}{d+r_{b}+1}$,
and $\alpha,\alpha'$ satisfy $\alpha>\alpha'$, $\alpha=\Lambda'(\deg'x_{\binom{a}{s}}',\deg'x_{\binom{a}{s}}'')$,
$\alpha'=\Lambda'(\deg x_{\binom{a}{s}}',-\sum C_{ba}\deg'x_{\binom{b}{d}}')$.

Note that $\binom{b}{d+r_{b}},\binom{a}{s+r_{a}+1}\in I$. So the
cluster variables in (\ref{eq:T-system-via-x}) are the interval variables
in (\ref{eq:T-systems}). Note that normalization and the coefficients
$\alpha,\alpha'$ could be equally computed in the seed $\rsd$, since
the cluster variables involved belong to the same cluster (\cite{qin2017triangular}).
Then we obtain (\ref{eq:T-systems}) from (\ref{eq:T-system-via-x}).

\end{proof}

Let $\sigma$ denote the permutation on $I_{\ufv}$ such that $\sigma k^{\min}[r]=k^{\max}[-1-r]$.
Note that we have $x_{\sigma k}(\Sigma\rsd)=W_{k[1],k^{\max}}(\rsd)$
for $k\in I_{\ufv}$ by construction. Lemma \ref{lem:interval_degree}
implies the following result.

\begin{Prop}\label{prop:injective_degree}

We have $\deg^{\rsd}x_{\sigma k}(\Sigma\rsd)=f_{k^{\max}}-f_{k}$
for $k\in I_{\ufv}$.

\end{Prop}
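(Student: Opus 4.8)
The plan is to deduce the formula directly from Lemma~\ref{lem:interval_degree}, using the identification of $x_{\sigma k}(\Sigma\rsd)$ with an interval variable that is recorded immediately before the statement; so the only real work is matching indices.

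First I would fix $k\in I_{\ufv}$ and invoke the identification $x_{\sigma k}(\Sigma\rsd)=W_{[k[1],k^{\max}]}(\rsd)$, which holds by the construction of the mutation sequence $\Sigma$ (and is underwritten by Lemma~\ref{lem:mutation_shuffle_seeds}, which describes all the intermediate seeds $\rsd\{r\}_s$). Here $k[1]$ is a genuine element of $I$, not $+\infty$: since $k$ is unfrozen it is a bounded interval, hence $k\neq\binom{\eta_k}{O(\eta_k)-1}$, i.e.\ $k$ is not the last occurrence of the letter $\eta_k$; consequently $o_+(k)\geq 1$ and $k<k[1]\leq k^{\max}$ with $\eta_{k[1]}=\eta_{k^{\max}}=\eta_k$, so that $W_{[k[1],k^{\max}]}(\rsd)$ is a well-defined interval variable (reducing to the fundamental variable $W_{k^{\max}}$ when $o_+(k)=1$). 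Next I would apply Lemma~\ref{lem:interval_degree}, taking its $j$ to be $k[1]$ and its right endpoint to be $k^{\max}$:
\[
\deg^{\rsd} x_{\sigma k}(\Sigma\rsd)=\deg^{\rsd}W_{[k[1],k^{\max}]}(\rsd)=\beta_{[k[1],k^{\max}]}=f_{k^{\max}}-f_{(k[1])[-1]}.
\]
Finally, since $k[1]$ is by definition the first occurrence of $\eta_k$ strictly after $k$, there is no occurrence of $\eta_k$ between $k$ and $k[1]$, so $(k[1])[-1]=k$, and the right-hand side becomes $f_{k^{\max}}-f_k$, as claimed.

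There is essentially no obstacle inside this argument: all the substance is already in Lemma~\ref{lem:interval_degree} and, upstream, in Lemma~\ref{lem:mutation_shuffle_seeds}. If one wanted the proof to be self-contained rather than quoting ``by construction'', the one point requiring care would be the combinatorial bookkeeping of predecessor/successor indices along the composite $\Sigma=\Sigma_l\cdots\Sigma_1$: verifying that the cluster variable occupying vertex $\sigma k$ of the terminal seed $\Sigma\rsd$ is exactly the one created while mutating along the block $\Sigma_{k^{\min}}$ at the appropriate step, and that its two interval endpoints are $k[1]$ and $k^{\max}$. This is routine given Lemma~\ref{lem:mutation_shuffle_seeds} and Definition~\ref{def:interval-variables}, so I would simply cite the sentence preceding the proposition.
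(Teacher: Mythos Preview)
Your proposal is correct and follows exactly the paper's approach: the paper simply records $x_{\sigma k}(\Sigma\rsd)=W_{[k[1],k^{\max}]}(\rsd)$ ``by construction'' and then states that Lemma~\ref{lem:interval_degree} implies the result. Your write-up just makes the index-matching $(k[1])[-1]=k$ explicit, which is fine.
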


Particularly, $\Sigma$ is a green to red sequence. We will denote
$\rsd[1]:=\Sigma\rsd=\rsd(-\ueta\op)$. Note that $x_{j^{\max}}(\rsd[1])=W_{j^{\min},j^{\max}}$
for any frozen $j^{\max}$. 

\begin{Lem}\label{lem:dominance_lex_order}

$\forall k\in I_{\ufv}$, we have
\begin{align*}
\deg y_{k}(\rsd) & =-\beta_{k}-\beta_{k[1]}-\sum_{j,j[d]}C_{\eta_{j}\eta_{k}}\beta_{[j,j[d]]}
\end{align*}
where $j,j[d]$ appearing satisfy $j[-1]<k<j\leq j[d]<k[1]<j[d+1]$.

\end{Lem}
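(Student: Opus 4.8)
The plan is to compute $\deg^{\rsd}y_k$ directly from the $B$-matrix and then recognize the answer using the interval degrees of Lemma \ref{lem:interval_degree}. Since $y_k(\rsd)=x^{p^*e_k}=x^{\tB(\rsd)e_k}$ is a single Laurent monomial, it is pointed at $\tB(\rsd)e_k$, so $\deg^{\rsd}y_k=\sum_{j\in I}b_{jk}f_j$, where $b_{jk}$ are the entries of $\ddB(\rsd(\ueta))$ described in Lemma \ref{lem:dBS_B_matrix}. Throughout I keep the conventions of Lemma \ref{lem:interval_degree}: $f_{\pm\infty}=0$, and also $f_{\binom{a}{-1}}:=0$ (this index is not a vertex of $\rsd$, so the would-be term $b_{k[-1],k}f_{k[-1]}$ is simply absent when $k=k^{\min}$).

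First I would specialize Lemma \ref{lem:dBS_B_matrix} to the present case: $\ueta$ is an (unsigned) word, so every triangle of the associated trapezoid has positive sign, i.e.\ $\varepsilon_k=+1$ for all $k$. In \eqref{eq:dBS_B_matrix} the hypotheses $\varepsilon_k=-\varepsilon_{k[1]}$ and $\varepsilon_j=-\varepsilon_{j[1]}$ (the fourth and sixth cases) then force $k[1]$, resp.\ $j[1]$, to be infinite, which contradicts the accompanying inequalities $k[1]<j[1]$, resp.\ $j[1]<k[1]$; hence those two cases never occur, and the remaining cases collapse to
\[
b_{jk}=\begin{cases}
1 & k=j[1]\\
-1 & j=k[1]\\
C_{\eta_j,\eta_k} & j<k<j[1]<k[1]\\
-C_{\eta_j,\eta_k} & k<j<k[1]<j[1]\\
0 & \text{otherwise}.
\end{cases}
\]
Summing $b_{jk}f_j$ over $j\in I$ gives
\[
\deg^{\rsd}y_k=f_{k[-1]}-f_{k[1]}+\sum_{j:\,j<k<j[1]<k[1]}C_{\eta_j,\eta_k}f_j-\sum_{j:\,k<j<k[1]<j[1]}C_{\eta_j,\eta_k}f_j .
\]

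Next I would rewrite the target expression with Lemma \ref{lem:interval_degree}: $\beta_k=f_k-f_{k[-1]}$, $\beta_{k[1]}=f_{k[1]}-f_k$ (using $(k[1])[-1]=k$), and $\beta_{[j,j[d]]}=f_{j[d]}-f_{j[-1]}$. Thus $-\beta_k-\beta_{k[1]}=f_{k[-1]}-f_{k[1]}$, which already matches the first two terms above, and it remains to check
\[
-\sum_{j,\,j[d]}C_{\eta_j,\eta_k}\,(f_{j[d]}-f_{j[-1]})=\sum_{j:\,j<k<j[1]<k[1]}C_{\eta_j,\eta_k}f_j-\sum_{j:\,k<j<k[1]<j[1]}C_{\eta_j,\eta_k}f_j ,
\]
where on the left $j,j[d]$ run over $j[-1]<k<j\le j[d]<k[1]<j[d+1]$. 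I would verify this one layer at a time: for each $b\in J\setminus\{\eta_k\}$ there is at most one index $j_b$ of layer $b$ with $j_b[-1]<k<j_b$ (the smallest index of layer $b$ exceeding $k$), and, when moreover $j_b<k[1]$, a unique $d$ with $j_b[d]<k[1]<j_b[d+1]$. In that case the left-hand side contributes $C_{b,\eta_k}f_{j_b[-1]}-C_{b,\eta_k}f_{j_b[d]}$; on the right, $j_b[-1]$ is exactly the index $i$ of layer $b$ with $i<k<i[1]<k[1]$ (indeed $i[1]=j_b$), contributing $C_{b,\eta_k}f_{j_b[-1]}$ to the first sum, and $j_b[d]$ is exactly the index $i$ of layer $b$ with $k<i<k[1]<i[1]$, contributing $-C_{b,\eta_k}f_{j_b[d]}$ to (minus) the second sum; while if $j_b$ does not exist or $j_b>k[1]$, layer $b$ contributes nothing to either side. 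Matching layer by layer then gives the claimed identity. The only delicate point is this last bookkeeping step — keeping straight the maps $[\pm1]$, $(\ )^{\min}$, $(\ )^{\max}$, the order $<_{\ueta}$, and the $\pm\infty$/$\binom{a}{-1}$ conventions — but it is elementary; everything else (the sign specialization and the algebra of the $f_j$) is routine.
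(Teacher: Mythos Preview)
Your proof is correct and follows essentially the same approach as the paper's: both specialize the $B$-matrix formula \eqref{eq:dBS_B_matrix} to the case of all positive signs, recognize $f_{k[-1]}-f_{k[1]}=-\beta_k-\beta_{k[1]}$ via Lemma~\ref{lem:interval_degree}, and then match the remaining off-layer contributions layer by layer, pairing the unique index $j_1$ with $j_1<k<j_1[1]<k[1]$ against the unique index $j_2$ with $k<j_2<k[1]<j_2[1]$ to produce $-C_{b,\eta_k}\beta_{[j,j[d]]}$. Your treatment of the degenerate cases (when $j_b$ does not exist or $j_b>k[1]$, and when $j_b[-1]=-\infty$) is handled by the $f_{\pm\infty}=0$ convention exactly as in the paper.
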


\begin{proof}

We have $\deg y_{k}(\rsd)=f_{k[-1]}-f_{k[1]}+\sum_{j:\eta_{j}\neq\eta_{k}}b_{jk}f_{j}$,
and, by Lemma \ref{lem:interval_degree}, $f_{k[-1]}-f_{k[1]}=-\beta_{k}-\beta_{k[1]}$.
By (\ref{eq:dBS_B_matrix}), the other entries $b_{jk}$ are given
by $b_{j_{1}k}=C_{a,\eta_{k}}$for $j_{1}<k<j_{1}[1]<k[1]$ or $b_{j_{2}k}=-C_{a,\eta_{k}}$
for $k<j_{2}<k[1]<j_{2}[1]$, where $a=\eta_{j_{1}}=\eta_{j_{2}}\neq\eta_{k}$.
If $j_{1},j_{2}$ both exist, we have $j_{1}<k<j_{1}[1]<\cdots<j_{2}<k[1]<j_{2}[1]$.

Note that $k[1]<+\infty$. If $j_{2}$ does not exist, then $O([k,k[1]];a)=0$
and thus $j_{1}$ does not exist. Otherwise, define $j$ such that
$j[-1]<k<j\leq j_{2}$, where we denote $j[-1]=-\infty$ if $j=j^{\min}$.
Denote $j_{2}=j[d]$. Then $j[-1]<k<j\leq j[d]<k[1]<j[d+1]$. Moreover,
$j[-1]$ and $j_{2}$ contribute to $-C_{a,\eta_{k}}(f_{j[d]}-f_{j[-1]})$,
which equals $-C_{a,\eta_{k}}\beta_{[j,j[d]]}$ by Lemma \ref{lem:interval_degree}.
The desired claim follows.

\end{proof}

\begin{Eg}\label{eg:Kronecker-braid-deg}

Continue Example \ref{eg:Kronecker-braid-quiver}. Then $x_{i}(\rsd)=W_{[i^{\min},i]}$,
$\beta_{i}=\deg^{\rsd}W_{i,i}$. We can verify Lemma \ref{lem:dominance_lex_order}.
For example, $\deg y_{4}(\rsd)=f_{3}-f_{7}+2(f_{6}-f_{2})=-\beta_{[4,7]}+2\beta_{[5,6]}$.

Applying the mutation sequence $\Sigma=\seq_{(1,3,4,2,5,1,3,1,2)}$
to $\rsd$, we obtain the seed $\rsd[1]$. We obtain the following
new cluster variables along $\Sigma$ with distinct degrees:
\begin{center}
{\small$\begin{array}{|c|c|c|c|c|c|}
\hline  & W_{[3,3]} & W_{[3,4]} & W_{[3,7]} & W_{[5,5]} & W_{[5,6]}\\
\deg^{\rsd} & -f_{1}+f_{3} & -f_{1}+f_{4} & -f_{1}+f_{7} & -f_{2}+f_{5} & -f_{2}+f_{6}\\
\hline  & W_{[4,4]} & W_{[4,7]} & W_{[7,7]} & W_{[6,6]} & \\
\deg^{\rsd} & -f_{3}+f_{4} & -f_{3}+f_{7} & -f_{4}+f_{7} & -f_{5}+f_{6} & 
\\\hline \end{array}$}{\small\par}
\par\end{center}

\noindent We have $x_{\sigma k}(\rsd[1])=W_{[k[1],k^{\max}]}$, $\deg^{\rsd}x_{\sigma k}(\rsd[1])=-f_{k}+f_{k^{\max}}$,
$\forall k\in I_{\ufv}$, where $\sigma=(1,4)(2,5)$.

\end{Eg}

\subsection{Standard bases}

For any $w=\sum w_{i}\beta_{i}\in\oplus_{i\in I}\N\beta_{i}$, define
the $w$-pointed standard monomial $\stdMod(w):=[W_{1}^{w_{1}}*\cdots*W_{l}^{w_{l}}]^{\rsd}$.
Note that $\{\beta_{i}|i\in I\}$ is a $\Z$-basis of $\cone(\rsd)=\oplus_{i=1}^{l}\Z\beta_{i}$.
Let $<_{\lex}$ denote lexicographical order and $<_{\rev}$ the reverse
lexicographical order on $\Z^{[1,l]}\simeq\oplus_{i=1}^{l}\Z\beta_{i}$.
Working at the quantum level $\kk=\Z[q^{\pm\Hf}]$, we will prove
the follow result.

\begin{Thm}\label{thm:dBS_PBW}

(1) The set $\stdMod:=\{\stdMod(w)|w\in\N^{[1,l]}\}$ is a $\kk$-basis
of $\bUpClAlg(\rsd(\ueta))$. 

(2) It satisfies the analog of the Levendorskii-Soibelman straightening
law:
\begin{align}
W_{k}W_{j}-q^{\lambda(\deg W_{k},\deg W_{j})}W_{j}W_{k} & \in\sum_{w\in\N^{[j+1,k-1]}}\kk\stdMod(w),\ \forall j\leq k.\label{eq:LS-law}
\end{align}

(3) We have $\bClAlg(\rsd(\ueta))=\bUpClAlg(\rsd(\ueta))$.

(4) The common triangular basis element $\can_{\sum w_{i}\beta_{i}}$
for $\bUpClAlg(\rsd(\ueta))$ equals the Kazhdan-Lusztig basis element
$\can(w)$ associated with $\stdMod$ sorted by $<_{\rev}$, i.e.,
$\can(w)$ is bar-invariant and satisfies 
\begin{align*}
\can(w) & =\stdMod(w)+\sum_{w'<_{\rev}w}b_{w'}\stdMod(w'),\ b_{w'}\in q^{-\Hf}\Z[q^{\Hf}].
\end{align*}
This statement still holds if when we replace $<_{\rev}$ by $<_{\lex}$.

\end{Thm}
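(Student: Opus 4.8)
The plan is to prove (1)--(4) as a single package, using as the essential inputs the common triangular basis $\can=\{\can_{m}\}$ of $\bUpClAlg(\rsd(\ueta))$ (Theorem \ref{thm:bases-compactified-dBS}), the degree formula $\deg^{\rsd}W_{[j,k]}=\beta_{[j,k]}=f_{k}-f_{j[-1]}$ (Lemma \ref{lem:interval_degree}), the $T$-systems (Proposition \ref{prop:T-systems}), and the formula for $\deg^{\rsd}y_{k}$ (Lemma \ref{lem:dominance_lex_order}). Write $\rsd=\rsd(\ueta)$, $l=l(\ueta)$. I first set up the degree bookkeeping. Each $W_{j}=W_{[j,j]}$ is an interval variable, hence a cluster variable of one of the seeds $\rsd\{r\}$, so every $\stdMod(w)=[W_{1}^{w_{1}}*\cdots*W_{l}^{w_{l}}]^{\rsd}$ lies in $\bClAlg(\rsd)\subseteq\bUpClAlg(\rsd)$, with $\deg^{\rsd}\stdMod(w)=\sum_{i}w_{i}\beta_{i}$. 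Telescoping Lemma \ref{lem:interval_degree} along a layer gives $\beta_{[j,j[d]]}=\beta_{j}+\beta_{j[1]}+\cdots+\beta_{j[d]}$, and the matrix expressing $(\beta_{i})_{i\in I}$ in $(f_{i})$ is unitriangular for the linear order on $I\simeq[1,l]$ (each $\beta_{i}$ has a $1$ at $i$ and at most a $-1$ at the strictly smaller $i[-1]$); hence $(\beta_{i})$ is a $\Z$-basis of $\cone(\rsd)$, so the $\stdMod(w)$, $w\in\N^{[1,l]}$, are pointed at pairwise distinct degrees and in particular $\kk$-linearly independent.

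The combinatorial core is the comparison of the dominance order with $<_{\rev}$ and $<_{\lex}$. Expanding $\deg^{\rsd}y_{k}$ in the $\beta$-basis via Lemma \ref{lem:dominance_lex_order} and $\beta_{[j,j[d]]}=\beta_{j}+\cdots+\beta_{j[d]}$, one finds that $[\deg^{\rsd}y_{k}]_{\beta}$ has coordinate $-1$ at positions $k$ and $k[1]$, nonnegative coordinates at positions lying \emph{strictly between} $k$ and $k[1]$, and $0$ elsewhere. Consequently, for $0\neq n\in\yCone^{\oplus}$ the vector $v:=\sum_{k}n_{k}[\deg^{\rsd}y_{k}]_{\beta}$ is strictly negative at its minimal nonzero coordinate $Q=\min\{k:n_{k}>0\}$, where $v_{Q}=-n_{Q}$ (no positive term reaches that far left), and at its maximal nonzero coordinate $P=\max\{k[1]:n_{k}>0\}$, where $v_{P}=-n_{P[-1]}$ (no positive term reaches that far right). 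This yields two things: first, if $m'\prec_{\rsd}m$ with $m=\sum w_{i}\beta_{i}$ and $m'=\sum w'_{i}\beta_{i}$, then $w'<_{\rev}w$ \emph{and} $w'<_{\lex}w$; second, since $\tB(\rsd)$ has full rank (Assumption \ref{assumption:injectivity}), no nonzero element of $p^{*}\yCone^{\oplus}$ has all $\beta$-coordinates $\geq0$, so for each $m=\sum w_{i}\beta_{i}$ the set $\{m+p^{*}n:n\in\yCone^{\oplus}\}$ meets $\bigoplus_{i}\N\beta_{i}$ in only finitely many points.

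Next I would pin down the index sets and conclude (1), (3), (4). Using Lemma \ref{lem:optimized-domCone} together with the optimized seeds of Proposition \ref{prop:optimize_double_sd} (built from the layerwise mutation sequences of Lemma \ref{lem:optimized_dBS_last}), and tracking the relevant tropical transformations by means of Lemma \ref{lem:interval_degree} and Proposition \ref{prop:injective_degree}, I would compute $\domCone(\rsd)=\bigoplus_{i}\N\beta_{i}$ exactly. Given this, $\overline{\can}:=\can\cap\bUpClAlg(\rsd)$ and $\stdMod$ are pointed families indexed by the same set $\domCone(\rsd)$, with $\stdMod\subseteq\bUpClAlg(\rsd)=\Span_{\kk}\overline{\can}$ and $\prec_{\rsd}$ bounded below on $\domCone(\rsd)$ by the previous paragraph; so Lemma \ref{lem:bounded_basis} shows $\stdMod$ is a $\kk$-basis of $\bUpClAlg(\rsd)$, which is (1). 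Then (3) is immediate, since $\bUpClAlg(\rsd)=\Span_{\kk}\stdMod\subseteq\bClAlg(\rsd)\subseteq\bUpClAlg(\rsd)$. For (4), expand $\can_{m}=\sum_{w'}b_{w'}\stdMod(w')$ in the standard basis: since $\can_{m}$ is $m$-pointed and $\stdMod(w)$ the unique standard monomial of degree $m=\sum w_{i}\beta_{i}$, one has $b_{w}=1$ and every other occurring degree $\sum w'_{i}\beta_{i}$ is $\prec_{\rsd}m$, hence $w'<_{\rev}w$ (resp.\ $<_{\lex}w$); $\mm$-triangularity of \eqref{eq:triangular_basis_triangularity} together with bar-invariance gives $b_{w'}\in q^{-\Hf}\Z[q^{\Hf}]$. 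As $\can_{m}$ is bar-invariant it therefore satisfies the defining properties of the Kazhdan--Lusztig element $\can(w)$ attached to $\stdMod$ sorted by $<_{\rev}$ (resp.\ $<_{\lex}$), whose well-definedness needs precisely that the bar involution is $<_{\rev}$-unitriangular on $\stdMod$, which follows from the straightening relations in the last paragraph; hence $\can_{m}=\can(w)$.

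Finally, (2): for $j\leq k$ in a common layer the $T$-systems \eqref{eq:T-system-other} express the relevant products of interval variables in terms of shorter ones and of a monomial in interval variables $W_{[i,i[d]]}$ with $i[d]<k$; unravelling interval variables into fundamental ones by a downward induction on interval length (again via \eqref{eq:T-systems}) and an induction on $k-j$ yields \eqref{eq:LS-law} with $w$ supported on $[j+1,k-1]$. For $j<k$ in different layers, $W_{k}*W_{j}-q^{\lambda(\deg W_{k},\deg W_{j})}W_{j}*W_{k}$ has degree $\prec_{\rsd}\beta_{j}+\beta_{k}$ and lies in $\bUpClAlg(\rsd)$; expanding it in the standard basis from (1) and using the coordinate bookkeeping of the second paragraph shows the index vectors that appear are supported on $[j+1,k-1]$. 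I expect the genuine obstacle to be the identification $\domCone(\rsd)=\bigoplus_{i}\N\beta_{i}$, and, closely related, the support bound in (2): these are the one place where the fine structure of the string diagram --- rather than formal cluster-theoretic manipulation --- must be used, and they are what force the optimized-seed computation and the inductions on $l(\ueta)$ and on interval lengths.
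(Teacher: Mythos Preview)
Your outline for (1)--(3) is broadly sound and close to the paper's. The paper also deduces (1) from $\domCone(\rsd)=\bigoplus_i\N\beta_i$ together with Lemma \ref{lem:bounded_basis}, and obtains (3) from $\stdMod\subset\bClAlg(\rsd)$. Two differences are worth flagging. First, the paper does \emph{not} compute $\domCone$ by tracking tropical transformations to the optimized seeds; instead (Lemma \ref{lem:dominance_cone}) it freezes all vertices off a single layer and reduces to the $\mathfrak{sl}_2$ case, where the statement is known. Your direct computation is plausible but would require an explicit description of $\phi_{\Sigma_{k^{\min}}\rsd,\rsd}$ that you do not supply. Second, for (2) the paper argues by induction on $l$ (Lemma \ref{lem:fundamental_commutator}), using the inclusions from $\ueta_{[1,l-1]}$ and $\ueta_{[2,l]}$ to cover all pairs except $(j,k)=(1,l)$, and then handles that last pair by exactly your degree argument. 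Your standard-basis argument actually works uniformly for all $j<k$ once you also rule out $w'=\beta_j$ and $w'=\beta_k$ (as in the paper's $(1,l)$ case), so your detour through $T$-systems for the same-layer case is unnecessary.

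There is, however, a genuine gap in your argument for (4). You write that ``$\mm$-triangularity of \eqref{eq:triangular_basis_triangularity} together with bar-invariance gives $b_{w'}\in\mm$'' in the expansion $\can_m=\stdMod(w)+\sum_{w'}b_{w'}\stdMod(w')$. But \eqref{eq:triangular_basis_triangularity} only controls multiplication of $\can$-elements by the cluster variables $x_i(\rsd)$ of a \emph{fixed} seed, whereas $\stdMod(w)$ is a product of the fundamental variables $W_i$, which are cluster variables in \emph{different} seeds $\rsd\{r\}$ and are not the $x_i(\rsd)$. There is no general principle that an ordered product of arbitrary cluster variables is $(\prec_\rsd,\mm)$-unitriangular to the common triangular basis, and bar-invariance alone does not pin down $b_{w'}\in\mm$ (a bar-invariant element with unit leading term need not be a Kazhdan--Lusztig element). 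The paper closes this gap by an induction on $l$: for $w$ with $w_l=0$ one views $\stdMod(w)$ inside $\bUpClAlg(\rsd(\ueta_{[1,l-1]}))$ via the good-sub-seed embedding, applies the freezing operator $\frz_{\{l^{\max}[-1]\}}$ to identify the lower-degree triangular basis elements with those for $\ueta_{[1,l-1]}$ (Theorem \ref{thm:sub_cluster_triangular_basis}), and invokes the inductive hypothesis; then one multiplies by $W_l^{w_l}$, using that $W_l=x_{l^{\min}}(\rsd[1])$ is a cluster variable in the shifted seed, so that \cite[Lemma 6.2.4]{qin2017triangular} propagates the $(\prec_\rsd,\mm)$-unitriangularity. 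Without this inductive/freezing step (or an equivalent mechanism), your proof of (4) does not go through.
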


The $\kk$-basis $\stdMod$ will be called the standard basis. Since
$\rsd(\ueta)$ can be optimized by Lemma \ref{lem:optimized_dBS_last},
the set of dominant degrees $\domCone(\rsd(\ueta))$ is given by Lemma
\ref{lem:optimized-domCone}. 

\begin{Lem}\label{lem:dominance_cone}

The set of dominant degrees $\domCone(\rsd(\ueta))$ (Definition \ref{def:domCone})
coincides with $\oplus_{i\in[1,l]}\N\beta_{i}$. 

\end{Lem}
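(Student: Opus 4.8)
The plan is to combine the optimized‑seed description of the dominant cone (Lemma~\ref{lem:optimized-domCone}) with an explicit tropical computation carried out one layer of the string diagram at a time.

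First I would record, via Lemma~\ref{lem:optimized_dBS_last}, that every frozen vertex $j\in I_{\fv}$ of $\rsd:=\rsd(\ueta)$ can be optimized: writing $a=\eta_j$, so that $j=\binom{a}{O(a)-1}=j^{\max}$, a $j$-optimized seed is $\sd_j:=\Sigma_{j^{\min}}\rsd$, where $\Sigma_{j^{\min}}=\mu_{\binom{a}{O(a)-2}}\cdots\mu_{\binom{a}{1}}\mu_{\binom{a}{0}}$ runs once through all the unfrozen vertices on the layer $L_a$. Since such seeds exist for every $j\in I_{\fv}$, Lemma~\ref{lem:optimized-domCone} gives $\domCone(\rsd)=\{\,m\mid(\phi_{\sd_j,\rsd}m)_j\geq 0\ \forall j\in I_{\fv}\,\}$. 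The reduction I would then make is: writing $m=\sum_{i\in I}w_i\beta_i$, one has $(\phi_{\sd_j,\rsd}m)_j=\min_{0\leq d\leq O(a)-1}w_{\binom{a}{d}}$ for each frozen $j$. Granting this, the lemma is immediate, since $I_{\fv}=\{\binom{a}{O(a)-1}\mid a\in\supp\ueta\}$ and every $i\in I$ is $\binom{\eta_i}{d}$ for a unique $d$, so the inequalities $(\phi_{\sd_j,\rsd}m)_j\geq 0$, $j\in I_{\fv}$, hold simultaneously iff $w_i\geq 0$ for all $i\in I$, i.e.\ $m\in\bigoplus_{i\in[1,l]}\N\beta_i$.

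For the computation I would fix $j$ and $a=\eta_j$ and work inside $L_a$. By Lemma~\ref{lem:interval_degree}, $\beta_{\binom{a}{d}}=f_{\binom{a}{d}}-f_{\binom{a}{d-1}}$ on $L_a$ (with $f_{\binom{a}{-1}}:=0$), so the $L_a$-coordinates of $m$ are $m_{\binom{a}{d}}=w_{\binom{a}{d}}-w_{\binom{a}{d+1}}$ (with $w_{\binom{a}{O(a)}}:=0$); and by Lemma~\ref{lem:dBS_B_matrix} the only nonzero $B$-matrix entries of $\rsd$ among vertices of $L_a$ are $b_{\binom{a}{d+1},\binom{a}{d}}=-1=-b_{\binom{a}{d},\binom{a}{d+1}}$. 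Because $\Sigma_{j^{\min}}$ mutates only vertices of $L_a$, and a tropical mutation at $k$ updates $m_i$ from $m_i$ and $m_k$ alone, the $L_a$-coordinates of the successive tropical images are functions of the $L_a$-coordinates of $m$ only; and by tracking how the $B$-matrix changes along $\Sigma_{j^{\min}}$ one checks that $b_{\binom{a}{e},\binom{a}{e-1}}$ is still $-1$ when $\binom{a}{e-1}$ is mutated, while the $\binom{a}{e}$-coordinate is unchanged until that step. Writing $N_e$ for the $\binom{a}{e}$-coordinate after the first $e$ mutations of $\Sigma_{j^{\min}}$, this gives $N_0=w_{\binom{a}{0}}-w_{\binom{a}{1}}$ and $N_e=(w_{\binom{a}{e}}-w_{\binom{a}{e+1}})-[-N_{e-1}]_+$, and a one-line induction using $[-(\min(w_{\binom{a}{0}},\dots,w_{\binom{a}{e-1}})-w_{\binom{a}{e}})]_+=w_{\binom{a}{e}}-\min(w_{\binom{a}{0}},\dots,w_{\binom{a}{e}})$ yields $N_e=\min(w_{\binom{a}{0}},\dots,w_{\binom{a}{e}})-w_{\binom{a}{e+1}}$; taking $e=O(a)-1$ gives the asserted formula.

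The step I expect to be the real work is the $B$-matrix/coordinate bookkeeping in the third paragraph: showing that, despite the distance-$2$ arrows a mutation can create inside $L_a$, the forward chain $m_{\binom{a}{0}},m_{\binom{a}{1}},\dots$ is governed by the simple recursion above (the created arrows only feed back into already-processed coordinates). This is the analogue here of the tropical computations in the proofs of Lemmas~\ref{lem:interval_degree} and~\ref{lem:mutation_shuffle_seeds}. As a cross-check on the easier inclusion $\supseteq$, one can argue without any computation: each standard monomial $\stdMod(w)$, $w\in\N^{[1,l]}$, is a product of fundamental variables, hence lies in $\bClAlg(\rsd)\subseteq\bUpClAlg(\rsd)$ and is $\bigl(\sum_i w_i\beta_i\bigr)$-pointed, while by Proposition~\ref{prop:compactified_basis} and Theorem~\ref{thm:bases_dBS} the algebra $\bUpClAlg(\rsd)$ has a basis of elements pointed at distinct degrees in $\domCone(\rsd)$; comparing top degrees then forces $\sum_i w_i\beta_i\in\domCone(\rsd)$.
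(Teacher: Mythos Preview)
Your proof is correct and takes a genuinely different route from the paper's. The paper argues indirectly via freezing operators: for each layer $L_a$ it freezes all vertices off $L_a$, deletes the resulting non-essential frozen vertices, and thereby reduces to the single-layer case $\ueta=(a,a,\ldots,a)$, which it then imports from the known structure of the level-$(l-1)$ category $\cC_{l-1}$ of $U_q(\widehat{\mathfrak{sl}}_2)$-modules. In contrast, you compute the tropical image $(\phi_{\sd_j,\rsd}m)_j$ directly, obtaining the explicit closed formula $\min_{0\le d\le O(a)-1} w_{\binom{a}{d}}$, from which both inclusions fall out at once.

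Your argument is more elementary and self-contained: it uses no categorification input and no freezing machinery, only the description of $\domCone$ via optimized seeds together with a short one-layer tropical recursion (the key observation being that the $L_a\times L_a$ block of the $B$-matrix evolves autonomously under mutations at $L_a$-vertices, so the computation really is one-dimensional). The paper's argument, on the other hand, illustrates how the freezing operators developed earlier package such layer-by-layer reductions cleanly, and ties the result back to the quantum-affine picture; but it does rely on an external $\mathfrak{sl}_2$ fact that your computation proves from scratch. Your explicit formula for $(\phi_{\sd_j,\rsd}m)_j$ is a nice bonus that the paper's proof does not yield.
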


\begin{proof}

We give an indirect proof based on freezing operators. Note that $\beta_{i}$
are degrees of $W_{i,i}\in\bUpClAlg$. It remains to check the claim
$\domCone(\rsd(\ueta))\subset\oplus_{i\in[1,l]}\N\beta_{i}$.

(i) The claim is true when $\ueta=(a,a,\ldots,a)$. In this case,
$\bUpClAlg(\rsd)$ is isomorphic to the deformed Grothendieck ring
of the level $l-1$ monoidal category $\cC_{l-1}$ consisting of the
$U_{q}(\widehat{\mathfrak{sl}}_{2})$-modules, whose properties are
well-known, see \cite{HernandezLeclerc09}\cite{qin2017triangular}.

(ii) Take any $g\in\domCone(\rsd)$. Let $\can_{g}(\rsd)\in\bUpClAlg(\rsd)$
denote the common triangular basis element pointed at the tropical
point $[g]$.\footnote{Alternatively, we can work with the theta function instead of a triangular
basis element. By \cite{qin2023freezing}, the freezing operator sends
it to a theta function.} Denote $g=\sum c_{i}\beta_{i}$ for $i\in\Z$. We need to show that,
for any $k=k^{\min}$, we have $c_{k'}\geq0$ whenever $\eta_{k'}=\eta_{k}$. 

Denote $I':=\{k,k[1],\ldots,k^{\max}\}$. Then $I\backslash I'=\{i|\eta_{i}\neq\eta_{k}\}$.
Choose $F:=(I\backslash I')\cap I_{\ufv}$. We have $g'=\pr_{I'}g=\sum_{I\backslash I'}c_{i}\beta_{i}$
and $p=\pr_{\oplus_{i\in I\backslash I'}\Z f_{i}}g=\sum_{i\in I\backslash I'}c_{i}\beta_{i}$.
Moreover, we have $\frz_{F}\can_{g}(\rsd)=\can_{g'}(\frz_{F}\rsd)\cdot x^{p}$,
see Theorems \ref{thm:sub_cluster_triangular_basis} and \ref{thm:freezing_common_triangular}.
Take $\kk=\Z$ from now on for simplifying arguments.

We obtain the seed $\rsd':=\rsd((\eta_{k},\eta_{k[1]},\ldots,\eta_{k^{\max}}))$
by deleting the frozen vertices $I\cap I'$ from $\frz_{F}\rsd$.
The $g'$-pointed triangular basis element $\can_{g'}(\rsd')$ of
$\upClAlg(\rsd')$ is obtained from $\can_{g'}(\frz_{F}\rsd)$ by
evaluating $x_{i}$ to $1$ for $\eta_{i}\neq\eta_{k}$. Since $\can_{g}\in\bUpClAlg(\rsd)$,
$\can_{g'}(\frz_{F}\rsd)$ and $\can_{g'}(\rsd')$ are still regular
at $x_{k^{\max}}=0$. Then (i) implies that $g'\in\oplus_{r\in[0,o_{+}(1)]}\N\beta_{k[r]}$.

\end{proof}

By Lemma \ref{lem:dominance_lex_order}, $\prec_{\rsd}$ implies $<_{\rev}$
and $<_{\lex}$. Then $\prec_{\rsd}$ is bounded on $\domCone(\rsd(\ueta))$
by Lemma \ref{lem:dominance_cone}.

\begin{Lem}\label{lem:standard-basis}

$\stdMod$ is a $\kk$-basis of $\bUpClAlg(\rsd)$.

\end{Lem}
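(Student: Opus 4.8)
The plan is to realize $\stdMod$ as an alternative $\kk$-basis of $\bUpClAlg(\rsd)$ by comparing it, through the triangular replacement principle of Lemma~\ref{lem:bounded_basis}, with the common triangular basis $\can$ of $\bUpClAlg(\rsd)$ provided by Theorem~\ref{thm:bases-compactified-dBS}. By that theorem together with Definition~\ref{def:domCone}, the common triangular basis of $\bUpClAlg(\rsd)$ is $\{\can_m\mid m\in\domCone(\rsd(\ueta))\}$ with each $\can_m$ being $m$-pointed, and Lemma~\ref{lem:dominance_cone} identifies $\domCone(\rsd(\ueta))=\oplus_{i\in[1,l]}\N\beta_i=:\Theta$; in particular $\bUpClAlg(\rsd)=\Span_\kk\{\can_m\mid m\in\Theta\}$. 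Since $\{\beta_i\}_{i\in[1,l]}$ is a $\Z$-basis of $\cone(\rsd)$, the assignment $w=(w_i)\mapsto\sum_i w_i\beta_i$ is a bijection from $\N^{[1,l]}$ onto $\Theta$, so $\stdMod$ is also indexed by $\Theta$.

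Next I would check that each standard monomial $\stdMod(w)=[W_1^{w_1}*\cdots*W_l^{w_l}]^{\rsd}$ is $(\sum_i w_i\beta_i)$-pointed and lies in $\bUpClAlg(\rsd)$. For pointedness: each $W_i=W_{i,i}$ is a cluster variable, hence $\beta_i$-pointed in $\LP(\rsd)$ with $\beta_i=f_i-f_{i[-1]}$ by Lemma~\ref{lem:interval_degree}; a power of a pointed element is pointed, and the $q$-twisted product of pointed elements is pointed with $\prec_\rsd$-maximal degree equal to the sum of the degrees and invertible leading coefficient (the subleading terms of a $g$-pointed element sit at degrees $g+p^{*}n$, $n\in\yCone^{\oplus}$, so products of such terms stay strictly $\prec_\rsd$-below the product of the leading degrees), cf.\ the pointed-element calculus of \cite{qin2017triangular}; the normalization $[\ ]^{\rsd}$ rescales the leading coefficient to $1$. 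For membership: each interval variable, in particular each $W_i$, is a cluster variable of some seed $\rsd\{r\}\in\Delta_{\rsd}^{+}$ by Definition~\ref{def:interval-variables}, hence lies in $\bClAlg(\rsd)\subseteq\bUpClAlg(\rsd)$, and $\bUpClAlg(\rsd)$ is a subalgebra closed under the $q$-twisted product and under multiplication by $q^{\frac{\Z}{2}}$.

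Finally I would invoke Lemma~\ref{lem:bounded_basis} with $\base:=\can$, so that $V:=\Span_\kk\base=\bUpClAlg(\rsd)$, and $Z:=\stdMod$: both $\base$ and $Z$ are $\Theta$-indexed families of $m$-pointed elements, $Z\subseteq V$, and $\prec_\rsd$ is bounded from below on $\Theta$ because it implies $<_\rev$ (Lemma~\ref{lem:dominance_lex_order}) and $<_\rev$ is well-founded on $\N^{[1,l]}\simeq\Theta$. The lemma then gives that $\stdMod$ is a $\kk$-basis of $\bUpClAlg(\rsd)$, as claimed.

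Essentially all the substance sits in the earlier Lemmas~\ref{lem:interval_degree} and~\ref{lem:dominance_cone}; given those, the present statement is bookkeeping built on Lemma~\ref{lem:bounded_basis}. The only point requiring some care is the assertion that $\stdMod(w)$ is genuinely $(\sum_i w_i\beta_i)$-pointed — i.e.\ that this degree is the unique $\prec_\rsd$-maximal one in the $q$-twisted product, occurring with an invertible coefficient, rather than merely appearing among the degrees — which follows from the standard behaviour of degrees under $q$-twisted products recalled in \cite{qin2017triangular}. I do not anticipate a genuine obstacle here.
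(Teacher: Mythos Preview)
Your proposal is correct and follows essentially the same route as the paper: both establish that $\{\can_m\mid m\in\domCone(\rsd)\}$ is a $\domCone(\rsd)$-pointed $\kk$-basis of $\bUpClAlg(\rsd)$ (you via Theorem~\ref{thm:bases-compactified-dBS}, the paper via Lemma~\ref{lem:optimized_dBS_last} and Proposition~\ref{prop:compactified_basis}, which is the same content), identify $\domCone(\rsd)=\oplus_i\N\beta_i$ via Lemma~\ref{lem:dominance_cone}, and then apply Lemma~\ref{lem:bounded_basis} using the boundedness of $\prec_\rsd$ coming from Lemma~\ref{lem:dominance_lex_order}. You are somewhat more explicit than the paper in verifying that each $\stdMod(w)$ is $(\sum_i w_i\beta_i)$-pointed and lies in $\bUpClAlg(\rsd)$, which is fine.
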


\begin{proof}

By Lemma \ref{lem:optimized_dBS_last} and Proposition \ref{prop:compactified_basis},
$\{\can_{m}|m\in\domCone(\rsd)\}$ is a $\domCone(\rsd)$-pointed
basis for $\bUpClAlg(\rsd)$. Then the desired statement follows from
the boundedness of $\prec_{\rsd}$, see Lemmas \ref{lem:dominance_cone}
and \ref{lem:bounded_basis}.

\end{proof}

For any $i\leq j\in[1,l]$, we define $\bClAlg_{[i,j]}$ to be the
$\kk$-subalgebra of $\bClAlg(t(\ueta))$ generated by $W_{i},W_{i+1},\ldots,W_{j}$.
We set $\bClAlg_{[i,j]}=\kk$ if $i>j$.

\begin{Lem}\label{lem:fundamental_commutator}

We have $W_{i}*W_{j}-q^{\lambda(\deg W_{j},\deg W_{i})}W_{j}*W_{i}\in\bClAlg_{[i+1,j-1]}$,
$\forall i\leq j$.

\end{Lem}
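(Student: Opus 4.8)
The plan is to induct on the length $l=l(\ueta)$ of the word, transporting the statement between words via the subword embeddings. Recall from (\ref{eq:inclusion-subword-cluster}) and Lemma~\ref{lem:embed-interval-variables} that for $1\le a\le b\le l$ there is a $\kk$-algebra embedding $\iota_{[a,b]}\colon\bClAlg(\rsd(\ueta_{[a,b]}))\hookrightarrow\bClAlg(\rsd(\ueta))$ carrying the fundamental variables of $\rsd(\ueta_{[a,b]})$ bijectively onto $\{W_k(\rsd(\ueta)):k\in[a,b]\}$; being a composite of mutation isomorphisms and a good sub seed inclusion, it also preserves the quantization form $\lambda$ on the relevant sublattices. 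Hence $\iota_{[a,b]}(\bClAlg(\rsd(\ueta_{[a,b]})))=\bClAlg_{[a,b]}$, and since $\bClAlg(\rsd(\ueta_{[a,b]}))=\bUpClAlg(\rsd(\ueta_{[a,b]}))$ is spanned by its own standard monomials (Lemma~\ref{lem:standard-basis}, Theorem~\ref{thm:A-U-dBS}), which $\iota_{[a,b]}$ sends to standard monomials of $\rsd(\ueta)$, we get $\bClAlg_{[i+1,j-1]}=\Span_{\kk}\{\stdMod(w):w\in\N^{[i+1,j-1]}\}$. One further structural input will be crucial: $W_1=x_1(\rsd)=x^{f_1}$ is literally a cluster variable of the \emph{initial} seed, hence a single Laurent monomial.

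The reductions are immediate. If $i=j$, the commutator is $W_i*W_i-q^{0}W_i*W_i=0$ (as $\lambda$ is alternating) and $\bClAlg_{[i+1,i-1]}=\kk$, so the base case $l=1$, and more generally $i=j$, holds. If $j<l$, then $W_i$, $W_j$ and all of $\bClAlg_{[i+1,j-1]}$ lie in the image of $\iota_{[1,l-1]}$, so applying the algebra homomorphism $\iota_{[1,l-1]}$ to the statement for the shorter word $\ueta_{[1,l-1]}$ gives it for $\ueta$; symmetrically, if $i>1$ one applies $\iota_{[i,l]}$ to the statement for $\ueta_{[i,l]}$. Therefore it remains to prove the single case $i=1$, $j=l$, namely $W_1*W_l-q^{\lambda(\deg W_l,\deg W_1)}W_l*W_1\in\bClAlg_{[2,l-1]}$, where $\deg W_1=f_1$ and $\deg W_l=f_l-f_{l[-1]}$ by Lemma~\ref{lem:interval_degree} and $l=l^{\max}$.

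For this last case I would exploit the monomiality of $W_1$. Writing $W_l=\sum_m c_m x^m$ for the Laurent expansion in $\rsd$ (all $m=\beta_l+\tB n$ with $n\in\yCone^{\oplus}$, since $W_l$ is $\beta_l$-pointed), a direct computation with $W_1=x^{f_1}$ shows that $W_1*W_l-q^{\lambda(\deg W_l,\deg W_1)}W_l*W_1$ is supported on the Laurent monomials $x^{f_1+m}$ with $n_1>0$; in particular it vanishes when $1\notin\supp W_l$ (Definition~\ref{def:support}), and its $\stdMod(e_1+e_l)$-coefficient (the only term with $n=0$) is $0$. By Lemma~\ref{lem:standard-basis} the commutator is $\sum_w c'_w\stdMod(w)$ with $\sum_a w_a\beta_a\prec_{\rsd}\beta_1+\beta_l$. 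Now one uses the explicit dominance order: by Lemma~\ref{lem:dominance_lex_order}, rewritten in the $\Z$-basis $\{\beta_a\}$ (where $\beta_{[j,j[d]]}=\beta_j+\cdots+\beta_{j[d]}$ is supported on $[j,j[d]]$), the generator $\beta_1=f_1$ appears among the vectors $-\deg y_k(\rsd)$ only for $k=1$ and $\beta_l$ only for $k=l[-1]$, each with coefficient $1$. Reading off the $\beta_1$- and $\beta_l$-coordinates of $\sum_a w_a\beta_a=\beta_1+\beta_l+\sum_k n_k\deg y_k(\rsd)$ with $n_k\ge 0$ then forces $w_1,w_l\in\{0,1\}$ and $w_1=w_l=1$ only for $w=e_1+e_l$; and the monomiality computation already gives $w_1=0$ whenever $c'_w\ne 0$. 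The remaining possibility $w_l=1$ (so $w=e_l+w'$ with $w'\in\N^{[2,l-1]}$) I would eliminate by the symmetric argument played on the $W_l$-side together with a downward induction on $\prec_{\rsd}$, which is bounded below on $\domCone(\rsd(\ueta))=\oplus_i\N\beta_i$ by Lemma~\ref{lem:dominance_cone}, using the already-established pairwise relations for the pairs $(k,l)$, $2\le k\le l-1$, to push $W_l$ past $\stdMod(w')$ and cancel the leading $\stdMod(w)$-term.

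The main obstacle is the case $i=1$, $j=l$, and within it the right-endpoint bookkeeping: the monomiality of $W_1$ disposes cleanly of every standard monomial component touching the left endpoint $1$, but excluding those touching $l$ needs the more delicate downward induction above. An alternative route for this case, which I would try in parallel, is to iterate the $T$-systems of Proposition~\ref{prop:T-systems} to express all interval variables as normalized polynomials in the fundamental variables and read the straightening law off directly — heavier combinatorics, but avoiding the standard-basis expansion. I would also double-check the exact power of $q$ in the statement against the computation above and reconcile signs with the paper's conventions for $*$ and $\lambda$.
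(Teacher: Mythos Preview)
Your inductive setup and the reduction to the single case $i=1$, $j=l$ are exactly what the paper does, and your dominance-order analysis (that in the $\{\beta_a\}$-basis, $\deg y_k$ is supported on $[k,k[1]]$ with coefficient $-1$ at both endpoints and $\ge 0$ in between) is precisely the paper's key structural input. Where you diverge is in how you extract $w_1=0$ and $w_l=0$ from it, and this is where you make your life unnecessarily hard.

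The paper does not use the monomiality of $W_1$. It simply observes, directly from Lemma~\ref{lem:dominance_lex_order}, that any nonzero element of $\bigoplus_k\N\deg^{\rsd}y_k$ written in the $\beta$-basis has the form $-b_{k_1}\beta_{k_1}+\sum_{k_1<j<k_2}b_j\beta_j-b_{k_2}\beta_{k_2}$ with $b_{k_1},b_{k_2}>0$; hence $\prec_{\rsd}$ implies both $<_{\lex}$ and $<_{\rev}$. Now for any $w\in\N^{[1,l]}$ with $c_w\ne 0$ one has $w\prec_{\rsd}e_1+e_l$ strictly (leading-term cancellation). From $w<_{\lex}e_1+e_l$ one reads off $w_1=0$ or $w=e_1$; from $w<_{\rev}e_1+e_l$ one reads off $w_l=0$ or $w=e_l$. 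The two exceptional cases are then excluded in one line each, since neither $-\beta_1$ nor $-\beta_l$ lies in $\bigoplus_k\N\deg y_k$. That is the entire argument --- completely symmetric in the two endpoints.

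Your monomiality route for $w_1=0$ can be salvaged, but as written it only treats the $\prec_{\rsd}$-maximal $w$; to pass to all $w$ you still need to know that no $\stdMod(w')$ with $w'_1=0$ can contribute a Laurent monomial with positive $\beta_1$-coordinate --- and that is exactly the $<_{\lex}$ half of the paper's argument, so the monomiality gains nothing. More seriously, your plan for ruling out $w_l=1$ (downward induction, pushing $W_l$ past $\stdMod(w')$) is genuinely incomplete; you correctly flag it as the main obstacle, but it is entirely avoidable: the same structural fact about $\bigoplus_k\N\deg y_k$ that you already established gives $w_l=0$ symmetrically via $<_{\rev}$, with no right-endpoint bookkeeping at all.
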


\begin{proof}

We give a proof by induction on $l$. The case $l=1$ is trivial.
Let us assume the desired statement has been verified for lengths
$\leq l-1$.

Use the natural inclusion $\bClAlg(\rsd(\ueta_{[1,l-1]}))\subset\bClAlg(\rsd(\ueta))$.
Then, $\forall i\in[1,l-1]$, the fundamental variables $W_{i}$ of
$\bClAlg(\rsd(\ueta_{[1,l-1]}))$ are identified with the fundamental
variables $W_{i}$ of $\bClAlg(\rsd(\ueta))$. Therefore, the induction
hypothesis implies the desired statement for $i,j\in[1,l-1]$.

Next, take the seed $\rsd\{1\}:=\Sigma_{1}\rsd=\rsd(\eta_{2},\ldots,\eta_{l},-\eta_{1})$.
Use the natural inclusion $\bClAlg(\rsd(\ueta_{[2,l]}))\subset\bClAlg(\rsd\{1\})$.
It follows from the construction of fundamental variables that, $\forall i\in[1,l-1]$,
the fundamental variables $W_{i}'$ of $\bClAlg(\rsd(\ueta_{[2,l]}))$
are identified with $W_{i+1}$ of $\bClAlg(\rsd)$. Therefore, the
induction hypothesis implies the desired statement for $i,j\in[2,l]$.

It remains to show $z:=W_{1}*W_{l}-q^{\lambda(\deg W_{l},\deg W_{1})}W_{l}*W_{1}$
belongs to $\bClAlg_{[2,l-1]}$. By Lemma \ref{lem:standard-basis},
we have a finite decomposition $z=\sum_{w\in\N^{[1,l]}}c_{w}M(w)$,
$c_{w}\in\kk$. Since $M(w)$ are pointed at distinct degrees, we
have $c_{w}\neq0$ only if $w\prec_{\rsd}\beta_{1}+\beta_{l}$ (see
\cite[Lemma 3.1.10(iii)]{qin2017triangular}). By Lemma \ref{lem:dominance_lex_order},
$w\prec_{\rsd}\beta_{1}+\beta_{l}$ implies $w<_{\rev}\beta_{1}+\beta_{l}$
and $w<_{\lex}\beta_{1}+\beta_{l}$. Moreover, any nonzero element
of $\oplus_{k\in I_{\ufv}}\N\deg^{\rsd}y_{k}$ takes the form $-b_{k_{1}}\beta_{k_{1}}+\sum_{k_{1}<j<k_{2}}b_{j}\beta_{j}-b_{k_{2}}\beta_{k_{2}}$
for $b_{k_{1}},b_{k_{2}}\in\N_{>0}$, $b_{j}\in\Z$.

Denote $w=\sum_{i}w_{i}\beta_{i}$, $w_{i}\in\N$. Then $w<_{\rev}\beta_{1}+\beta_{l}$
implies $w=\beta_{l}$ or $w_{l}=0$. The former case is impossible
since $w-(\beta_{1}+\beta_{l})=-\beta_{1}$ is not contained in $\oplus_{k}\N\deg^{\rsd}y_{k}$,
i.e. $w\nprec_{\rsd}\beta_{1}+\beta_{l}$. Similarly, $w<_{\lex}\beta_{1}+\beta_{l}$
implies $w=\beta_{1}$ or $w_{1}=0$. The former case is impossible
since $w-(\beta_{1}+\beta_{l})=-\beta_{l}$ is not contained in $\oplus_{k}\N\deg^{\rsd}y_{k}(\rsd)$.
We conclude $w\in\N^{[2,l-1]}$ as desired.

\end{proof}

\begin{proof}[Proof of Theorem \ref{thm:dBS_PBW}]

(1) and (2) were proved in Lemmas \ref{lem:standard-basis} and \ref{lem:fundamental_commutator}.
Since $\stdMod\subset\bClAlg(\rsd)$, Lemma \ref{lem:standard-basis}
implies (3). Finally, we claim that the Kazhdan-Lusztig basis associated
to the standard basis $\stdMod$, sorted by $<_{\rev}$, is the triangular
basis with respect to $\rsd$. This will imply (4). 

Let us give a proof of this claim based on induction of $l$. The
case $l=1$ is trivial. Assume the claim has been proved for $l-1$.

Use the natural inclusion $\bClAlg(\rsd')\subset\bClAlg(\rsd)$ where
$\rsd'=\rsd(\ueta_{[1,l-1]})$. Let $\can$ and $\can'$ denote the
(common) triangular basis for $\bClAlg(\rsd)$ and $\bClAlg(\rsd')$
respectively. Note that $\rsd'$ can be obtained from $\rsd$ by freezing
$l^{\max}[-1]$ and deleting $l^{\max}$. Take the freezing operator
$\frz:=\frz_{\{l^{\max}[-1]\}}$. By Theorem \ref{thm:sub_cluster_triangular_basis},
for any $w\in\oplus_{i}\N\beta_{i}$, $\frz\can(w)$ is the $w$-pointed
triangular basis element of $\upClAlg(\frz\rsd)$. By the similarity
between $\rsd'$ and $\frz\rsd$, we have $\frz\can(w)=\can'(u)\cdot x_{l}^{w_{l}}$,
where $\can'(u)$ is the $u$-pointed triangular basis for $\upClAlg(\rsd')$,
$u\in\oplus_{i=1}^{l-1}\Z\beta_{i}$, and $u+w_{l}\beta_{[l^{\min},l]}=w$.

We aim to show that $\forall w=\sum w_{i}\beta_{i}$ with $w_{i}\in\N$,
$w_{l}=0$, $\stdMod(w)$ is $(\prec_{\rsd},\mm)$-unitriangular to
$\can$. If so, any standard monomial $[\stdMod(w)*W_{l}^{w_{l}}]$
must be $(\prec_{\rsd},\mm)$-unitriangular to $\can$ as well, since
$W_{l}^{w_{l}}=x_{l^{\min}}(\rsd[1])$, see \cite[Lemma 6.2.4]{qin2017triangular}.

Denote the decomposition $\stdMod(w)=\can(w)+\sum_{w'\prec_{\rsd}w}c_{w'}\can(w')$
in $\bUpClAlg$. Then $w'$ appearing satisfy $w'<_{\rev}w$. It follows
that $w'\in\oplus_{i\in[1,l-1]}\N\beta_{i}$. Applying the freezing
operator $\frz:=\frz_{w}:=\frz_{\{l^{\max}[-1]\},w}$, we obtain $\frz\stdMod(w)=\frz\can(w)+\sum_{w'<_{\rev}w}c_{w'}\frz_{w}\can(w').$

Note that $w'$ appearing satisfy $w'-w=\sum_{k\in I_{\ufv}}n_{k}\deg y_{k}$,
$n_{k}\in\N$. In addition, $\deg y_{k}$ are given by Lemma \ref{lem:dominance_lex_order}.
Since $w_{l}=0$ and $w'_{l}\geq0$, we must have $n_{l^{\max}[-1]}=0$
and $w'_{l}=0$. Therefore, $\frz\can(w)=\can'(w)$ and $\frz_{w}\can(w')=\can'(w')$
by the above discussion of freezing triangular basis elements. In
addition, for $i\in[1,l-1]$, $\frz_{\beta_{i}}W_{i}$ is the $\beta_{i}$-pointed
localized cluster monomial of $\bClAlg(\rsd')$, which must be the
$\beta_{i}$-pointed fundamental variables $W'_{i}$ for $\bClAlg(\rsd')$
by Lemma \ref{lem:interval_degree}. So $\frz\stdMod(w)$ is a standard
monomial of $\bClAlg(\rsd')$. Then the induction hypothesis implies
that $c_{w'}\in\mm$ as desired.

\end{proof}

\begin{Rem}\label{rem:previous_dBS_is_CGL}

Work at the classical level $\kk=\C$. Then Theorem \ref{thm:dBS_PBW}(1)
was implied by \cite[Theorem 2.30]{shen2021cluster} \cite[Proposition B.6]{gao2020augmentations}.
Moreover, when $C$ is of finite type, Theorem \ref{thm:dBS_PBW}(2)
was obtained in \cite{elek2021bott}, and the same cluster algebra
on the Bott-Samelson cells was obtained via Poisson geometry \cite{elek2021bott}\cite{goodearl2018cluster},
which can be quantzied by \cite{mi2018quantization}\cite{GY13}.

\end{Rem}

\subsection{Categories associated to positive braids\label{subsec:Categories-associated-to-braid}}

Let $\ueta^{(1)}$, $\ueta^{(2)}$ denote any two words and $\ubi$
denote any shuffle of $\ueta^{(1)},-\ueta^{(2)}$. Recall that $\bUpClAlg(\rsd(\ubi))$
equals $\bUpClAlg(\rsd(\ueta))$, where $\ueta=((\ueta^{(2)})\op,\ueta^{(1)})$,
and it has the common triangular basis $\can$. Assume that the Cartan
matrix $C$ is of finite type. We will prove that $(\bUpClAlg(\rsd(\ubi)),\can)$
admits categorification in this case (Conjecture \ref{conj:dBS-categorification}).
We will also construct categorification for $\bUpClAlg(\dsd(\ubi))$.

Consider the following partial orders in analogous to the weak Bruhat
orders.

\begin{Def}

$\forall$ $\beta,\beta',\beta''\in\Br^{+}$, denote $\beta'\leq_{R}\beta$,
$\beta''\leq_{L}\beta$ if $\beta=\beta'\beta''$.

\end{Def}

Let $\ow_{0}$ denote any reduced word for $w_{0}$. The corresponding
positive braid $\Delta:=\beta_{\ow_{0}}\in\Br^{+}$ is known to be
independent of the choice of $\ow_{0}$. There exists an involution
$\nu$ on $J$ such that $\sigma_{a}\Delta=\Delta\sigma_{\nu(a)}$,
$\forall a$. We also have $\sigma_{a}\leq_{R}\Delta$.

\begin{Lem}\label{lem:initial_subword}

$\forall\beta\in\Br^{+}$, there exists $d\in\N$ such that $\beta\leq_{R}\Delta^{d}$,
$\beta\leq_{L}\Delta^{d}$.

\end{Lem}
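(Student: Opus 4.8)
The plan is to show that a single generator $\sigma_a$, $a \in J$, satisfies $\sigma_a \leq_R \Delta$ and $\sigma_a \leq_L \Delta$, and then bootstrap to an arbitrary positive braid $\beta$ by induction on the length $l(\beta)$ (the number of generators in any word representing $\beta$; this is well-defined on $\Br^+$ since the braid relations are length-preserving). First I would recall the stated fact that $\sigma_a \leq_R \Delta$, i.e. $\Delta = \sigma_a \Delta'$ for some $\Delta' \in \Br^+$; applying the involution $\nu$ and the relation $\sigma_a \Delta = \Delta \sigma_{\nu(a)}$, one also gets $\sigma_{\nu(a)} \leq_L \Delta$, and running $a$ over all of $J$ (using that $\nu$ is a bijection of $J$) gives $\sigma_b \leq_L \Delta$ for every $b \in J$ as well. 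So the base case $l(\beta) \leq 1$ holds with $d = 1$ (taking $d=0$, $\Delta^0 = e$, when $\beta = e$).

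For the inductive step, write $\beta = \sigma_a \gamma$ with $l(\gamma) = l(\beta) - 1$. By induction there is $d' \in \N$ with $\gamma \leq_R \Delta^{d'}$, say $\Delta^{d'} = \gamma \delta$ with $\delta \in \Br^+$. Since $\sigma_a \leq_L \Delta$, write $\Delta = \epsilon \sigma_a$ with $\epsilon \in \Br^+$. Then
\begin{align*}
\Delta^{d'+1} &= \Delta \cdot \Delta^{d'} = \epsilon\,\sigma_a\,\gamma\,\delta = \epsilon\,\beta\,\delta,
\end{align*}
which does not obviously exhibit $\beta$ as a left factor. The cleaner route is to use the right-factor property of $\sigma_a$ instead: I would rather peel off from the \emph{right}. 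Write $\beta = \gamma' \sigma_c$ with $l(\gamma') = l(\beta)-1$ (possible since $\beta \neq e$), get $\gamma' \leq_R \Delta^{d'}$ and also $\gamma' \leq_L \Delta^{d''}$ by two applications of the induction hypothesis, take $d = \max(d', d'') + 1$, and then argue that since the monoid $\Br^+$ embeds in the group $\Br$ and $\Delta$ is a \emph{Garside element} — every generator divides it on both sides, and $\Delta \Br^+ = \Br^+ \Delta$ is a two-sided ideal closed under the relevant divisibility — the set $\{\beta : \beta \leq_R \Delta^d \text{ for some } d\}$ is closed under right multiplication by generators, and symmetrically on the left. Concretely: if $\Delta^{d'} = \gamma' \delta$ and we want $\beta = \gamma'\sigma_c \leq_R \Delta^{d'+1}$, we need $\sigma_c \delta \leq_R \Delta$ after absorbing; this is where the Garside structure is essential, because $\Delta^{d'+1} = \gamma'\delta\Delta = \gamma' \Delta \delta^{\nu}$ (pushing $\Delta$ left through $\delta$, where $\delta^\nu$ is the image under iterated $\nu$), and $\Delta = \sigma_c \Delta_c$ on the right gives... — this still requires care, so I will instead invoke the standard Garside-theoretic fact directly.

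The honest and shortest argument: $\Delta$ is the Garside element of the positive braid monoid $\Br^+$ (this is classical — see Brieskorn--Saito / Deligne, or it follows from $\sigma_a \leq_R \Delta$ and $\sigma_a \leq_L \Delta$ for all $a$ together with the defining relations), and a basic property of a Garside monoid is that for every $\beta \in \Br^+$ there exists $d$ with $\beta \preceq \Delta^d$ in the left-divisibility order, and likewise in the right-divisibility order; taking the larger of the two $d$'s finishes the proof. I would include a self-contained version of the left-divisibility half: proceed by induction on $l(\beta)$, writing $\beta = \sigma_a \gamma$, obtaining $\Delta^{d} = \gamma \delta$ is the \emph{wrong} factorization — one needs $\Delta^d = \delta' \gamma$. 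So induct using a left factorization of $\gamma$: there is no left factorization issue because $\gamma \leq_L \Delta^{d'}$ by induction means $\Delta^{d'} = \delta' \gamma$; then $\Delta^{d'+1} = \Delta \delta' \gamma = \Delta \delta' \sigma_a^{-1}\beta$, and $\Delta \delta' \sigma_a^{-1}$ — hmm, $\sigma_a^{-1}$ is not in the monoid. The fix is to multiply by $\Delta$ on the \emph{right}: $\Delta^{d'+1} = \delta' \gamma \Delta = \delta' \gamma \sigma_a'' \Delta$...

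\textbf{The main obstacle} is exactly this bookkeeping: naively peeling off a generator from $\beta$ destroys the factorization of $\Delta^d$ on the correct side. The clean resolution, which I would adopt, is: (i) observe $e \leq_R \Delta^0$; (ii) show that if $\beta \leq_R \Delta^d$ then $\beta \sigma_a \leq_R \Delta^{d+1}$ for every $a \in J$ — proof: $\Delta^{d+1} = \Delta^d \Delta = \beta \delta \Delta$ where $\Delta^d = \beta\delta$, and since $\sigma_a \leq_L \Delta$ (shown above) we have $\Delta = \sigma_a' \cdots$? no — we want $\sigma_a$ to come right after $\delta$. Use instead that $\Delta$ is right-divisible by every $\sigma_a$, so write $\Delta = \Delta' \sigma_a$; then one must commute $\sigma_a$ left past nothing. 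The actual working statement is that $\delta \Delta$ is right-divisible by $\sigma_a$ for every $a$ and every $\delta \in \Br^+$ — because $\Delta$ alone is, and right-divisibility by $\sigma_a$ of $x\Delta$ follows from right-divisibility of $\Delta$. Hence $\Delta^{d+1} = \beta \delta \Delta = \beta (\delta \Delta) = \beta (\eta \sigma_a)$ for some $\eta$, i.e. $\beta\sigma_a \leq_R \Delta^{d+1}$ once we move: actually $\beta\delta\Delta = \beta\eta\sigma_a$ shows $\beta\eta\sigma_a = \Delta^{d+1}$, not $\beta\sigma_a \mid \Delta^{d+1}$. So one genuinely needs the cancellativity and the lattice property of Garside monoids. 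Given the paper freely uses braid-group facts, I would simply cite the Garside property of $\Br^+$ (e.g. Brieskorn--Saito, Deligne, or Dehornoy) and conclude: every $\beta \in \Br^+$ left-divides some power $\Delta^d$ and right-divides some power $\Delta^{d'}$, so $\beta \leq_R \Delta^{\max(d,d')}$ and $\beta \leq_L \Delta^{\max(d,d')}$, using also that $\Delta^k \leq_R \Delta^{k+1}$ and $\Delta^k \leq_L \Delta^{k+1}$ for all $k$.
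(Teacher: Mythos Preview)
You set up exactly the right induction (peel $\beta=\sigma_a\gamma$ from the left, apply the hypothesis to $\gamma$) and then fail to close it, ultimately retreating to a citation of Garside theory, which is essentially circular here: the statement ``every element of $\Br^+$ divides some $\Delta^d$'' is precisely one of the basic consequences of the Garside structure, so quoting it is not a proof.

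The missing step is the commutation relation $\sigma_a\Delta=\Delta\sigma_{\nu(a)}$, which you even wrote down in your first paragraph but never applied in the inductive step. After you obtain $\Delta^{d'}=\gamma\delta$, multiply both sides on the left by $\sigma_a$: this gives $\sigma_a\Delta^{d'}=\sigma_a\gamma\delta=\beta\delta$, so $\beta\leq_R\sigma_a\Delta^{d'}$. Now iterate the commutation to get $\sigma_a\Delta^{d'}=\Delta^{d'}\sigma_{\nu^{d'}(a)}$, and since $\sigma_b\leq_R\Delta$ for every $b\in J$, conclude $\Delta^{d'}\sigma_{\nu^{d'}(a)}\leq_R\Delta^{d'+1}$. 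That is the paper's whole argument; all of your attempts to insert an extra copy of $\Delta$ on the left or right of $\Delta^{d'}$ and then split it were going in the wrong direction---the point is to move the \emph{single} generator $\sigma_a$ through $\Delta^{d'}$ from the left side to the right side, where it can be absorbed.
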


\begin{proof}

We only prove $\beta\leq_{R}\Delta^{d}$ by induction on $l(\beta)$,
and $\beta\leq_{L}\Delta^{d}$ can be proved similarly. The case $l(\beta)=1$
is trivial. Decompose $\beta=\sigma_{a}\beta'$ for some $\beta'\in\Br^{+}$.
By the induction hypothesis, we have $\Delta^{d'}=\beta'\zeta'$ for
some $d'\in\N$. It follows that $\beta\leq_{R}\sigma_{a}\Delta^{d'}=\Delta^{d'}\sigma_{\nu^{d'}(a)}\leq_{R}\Delta^{d'+1}$.

\end{proof}

Choose $\beta:=\beta_{\ueta}$ associated with $\ueta$. Denote $l=l(\beta)$.
By Lemma \ref{lem:initial_subword}, we can choose a word $\uzeta$
such that $\Delta^{d}=\beta_{\uzeta}$, $\ueta=\uzeta_{[1,l]}$. Choose
any Coxeter word $\uc$ and any height function $\xi$ as in Section
\ref{subsec:Cluster-structures-quantum-affine}. Then we can find
an adapted word $\ugamma$ such that $\Delta^{d}=\beta_{\ugamma}$,
see the knitting algorithm \cite{Keller08Note}. Note that we have
$\rsd(\uzeta)=\seq^{\sigma}\rsd(\ugamma)$ where $\seq^{\sigma}$
is given in Section \ref{subsec:change-string-diagrams}. 

Recall that $\bUpClAlg(\rsd(\ugamma))$ is categorified by a monoidal
category $\cC_{\ugamma}(\xi)$. We have $\kappa:\bUpClAlg(\rsd(\ugamma))\simeq K$,
where $K$ is the deformed Grothendieck ring. See Section \ref{subsec:Cluster-structures-quantum-affine}. 

Then we have $\bClAlg(\rsd(\uzeta))\overset{}{=}\bClAlg(\rsd(\ugamma))\overset{\kappa}{\simeq}K$.
Let $S_{i}$ denote the simple module such that $[S_{i}]=\kappa(x_{i}(\rsd(\uzeta)))$,
and $L_{[j,k]}$ denote the simple module such that $[L_{[j,k]}]=\kappa(W_{[j,k]}(\rsd(\uzeta)))$,
called the interval modules. Denote $L_{k}:=L_{[k,k]}=\kappa(W_{k}(\rsd(\uzeta)))$.
$\forall w\in\N^{[1,dl(\Delta)]}$, define the generalized standard
modules $M(w):=L_{1}^{w_{1}}\otimes\cdots\otimes L_{dl(\Delta)}^{w_{dl(\Delta)}}$.

\begin{Def}

Let $\simpObj_{\beta}$ denote the set of the composition factors
of $M(w)$, $\forall w\in\N^{[1,l]}$. Define $\cC_{\beta}(\xi)$
as the full subcategory of $\cC_{\ugamma}(\xi)$ whose objects have
the composition factors in $\simpObj_{\beta}$.

\end{Def}

Denote $\rsd=\rsd(\uzeta)$, $\bClAlg=\bUpClAlg=\bUpClAlg(\rsd)$,
$\rsd'=\rsd(\ueta)$, $\bClAlg'=\bUpClAlg'=\bUpClAlg(\rsd')$. Let
$\stdMod$ and $\can$ (resp. $\stdMod'$ and $\can'$) denote the
standard basis and the common triangular basis for $\bUpClAlg$ (resp.
$\bUpClAlg'$). Since $\ueta=\uzeta_{[1,l]}$, $\rsd'$ is a good
sub seed of $\rsd$ via the cluster embedding $\iota:=\iota_{\uzeta,\ueta}$,
which sends $j\in[1,l]$ to $j$ (Lemma \ref{lem:calibration-word}).
So we have the inclusion $\bClAlg'\subset\bClAlg$, such that $W_{[j,k]}(\rsd')$
is identified with $W_{[j,k]}(\rsd)$ by Lemma \ref{lem:embed-interval-variables}.
It follows that $\stdMod'=\{\stdMod(w)|w\in\oplus_{j\in[1,l]}\N\beta_{j}\}$.
Then $\can'$ equals $\{\can(w)|w\in\oplus_{j\in[1,l]}\N\beta_{j}\}$
since it is the Kazhdan-Lusztig basis associated to $\stdMod'$ (Theorem
\ref{thm:dBS_PBW}).

\begin{Lem}\label{lem:property-cat-beta}

(1) We have $\simpObj_{\beta}=\{S\in\simpObj|[S]\in\kappa\can'\}$. 

(2) $\cC_{\beta}(\xi)$ is a monoidal category.

\end{Lem}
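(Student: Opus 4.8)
The plan is to prove the two parts of Lemma~\ref{lem:property-cat-beta} essentially in tandem, using the standard-basis description of $\can'$ established in Theorem~\ref{thm:dBS_PBW} together with the positivity properties of the Grothendieck ring $K$ and the stabilization machinery of Section~\ref{subsec:freeze_mutations}.

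For part (1), the inclusion $\simpObj_\beta \subseteq \{S \in \simpObj \mid [S] \in \kappa\can'\}$ should follow from unwinding the definitions: by construction $\simpObj_\beta$ consists of the composition factors of the generalized standard modules $M(w)$ for $w \in \N^{[1,l]}$, and $[M(w)] = \kappa(\stdMod(w))$ is the image of a standard monomial. By Theorem~\ref{thm:dBS_PBW}(4), $\stdMod(w)$ is $(\prec_{\rsd},\mm)$-unitriangular to $\can$ with all intervening degrees $w' <_{\rev} w$, hence (since $w \in \oplus_{j\in[1,l]}\N\beta_j$ and $\deg y_k$ is computed in Lemma~\ref{lem:dominance_lex_order}) all $w'$ also lie in $\oplus_{j\in[1,l]}\N\beta_j$; so $[M(w)]$ is an $\N$-combination --- using positivity of the transition between $\can$ and $\stdMod$ together with positivity of the basis of simples in $K$ (Proposition~\ref{prop:monoidal_positivie}) --- of elements $\kappa\can(w')$ with $w' \in \oplus_{j\in[1,l]}\N\beta_j$, i.e.\ of elements of $\kappa\can'$. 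Each composition factor of $M(w)$ therefore has class in $\kappa\can'$. For the reverse inclusion, given $S$ with $[S] \in \kappa\can'$, write $[S] = \kappa\can(w)$ for some $w \in \oplus_{j\in[1,l]}\N\beta_j$; then $\can(w)$ is $(\prec_{\rsd},\mm)$-unitriangular below $\stdMod(w) = \kappa^{-1}[M(w)]$ (invert the Kazhdan--Lusztig triangularity, which stays within $\can'$ by Theorem~\ref{thm:dBS_PBW}(4)), and the same positivity argument shows $[S]$ is a composition factor class appearing in $[M(w')]$ for some such $w'$; hence $S \in \simpObj_\beta$.

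For part (2), I would show $\cC_\beta(\xi)$ is closed under $\otimes$ (it is visibly closed under biproducts, subquotients, and extensions, being defined by a condition on composition factors). Given two simples $V, W \in \simpObj_\beta$, by part (1) we have $[V], [W] \in \kappa\can'$; since $\bClAlg' \subseteq \bClAlg$ is a subalgebra, $[V]*[W] = \kappa(\can'\text{-element}\cdot\can'\text{-element})$ lies in $\bClAlg'$, which by Theorem~\ref{thm:dBS_PBW}(1) is spanned over $\kk$ by $\{\stdMod(w) \mid w \in \oplus_{j\in[1,l]}\N\beta_j\}$, and by positivity (Proposition~\ref{prop:monoidal_positivie}) it is an $\N[q^{\pm\Hf}]$-combination of classes of simples each of which lies in $\kappa\can'$, hence in $\simpObj_\beta$. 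Thus every composition factor of $V \otimes W$ is in $\simpObj_\beta$, so $V \otimes W \in \cC_\beta(\xi)$; the general case of arbitrary objects follows by decomposing into composition factors exactly as in the proof of Lemma~\ref{lem:abelian_sub}.

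The main obstacle is the bookkeeping in part (1): one must be careful that the triangularity relating $\stdMod$ and $\can$ over the full seed $\rsd=\rsd(\uzeta)$, when restricted to degrees in $\oplus_{j\in[1,l]}\N\beta_j$, does not produce terms with components in the complementary directions $\beta_{l+1},\dots,\beta_{dl(\Delta)}$. This is exactly controlled by the shape of $\deg y_k(\rsd)$ from Lemma~\ref{lem:dominance_lex_order} --- any element of $\oplus_k \N\deg y_k(\rsd)$ subtracted from $w \in \oplus_{j\le l}\N\beta_j$ that keeps all coordinates $\ge 0$ must itself stay within $\oplus_{j\le l}\N\beta_j$, the same argument used at the end of the proof of Lemma~\ref{lem:fundamental_commutator}. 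Once that closure observation is in place, the positivity of $K$'s simple basis does the rest, and part (2) is then a formal consequence of part (1) together with $\bClAlg' \subseteq \bClAlg$ being a subalgebra.
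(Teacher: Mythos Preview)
Your proposal is correct and follows essentially the same route as the paper's proof. The main difference is that you make the argument more laborious than necessary. The ``main obstacle'' you identify --- that decomposing $\stdMod(w)$ for $w\in\oplus_{j\le l}\N\beta_j$ into $\can$ stays inside $\can'$ --- is already dispatched in the paragraph immediately preceding the lemma, which identifies $\can'=\{\can(w)\mid w\in\oplus_{j\le l}\N\beta_j\}$ via Theorem~\ref{thm:dBS_PBW}(4); equivalently, closure under $<_{\rev}$ is automatic (if $w_j=0$ for $j>l$ and $w'<_{\rev}w$ with $w'\in\N^{[1,dl(\Delta)]}$, then $w'_j=0$ for $j>l$), so the detour through Lemma~\ref{lem:dominance_lex_order} is unneeded. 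Likewise, the appeals to positivity (Proposition~\ref{prop:monoidal_positivie}) are superfluous: for the forward inclusion in (1), any composition factor $S$ of $M(w)$ has $\kappa^{-1}[S]$ appearing in the decomposition of $\stdMod(w)\in\bUpClAlg'=\Span_\kk\can'$ into $\can$, hence into $\can'$; for the reverse inclusion, unitriangularity alone (leading coefficient $1$) shows the simple $S$ with $\kappa^{-1}[S]=\can(w)$ is a composition factor of $M(w)$. Part (2) is then the one-line observation that $\Span_\kk\can'=\bUpClAlg'$ is a subalgebra, exactly as you say.
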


\begin{proof}

(1) Any $S\in\simpObj_{\beta}$ is a composition factor of some $M(w)$,
$w\in\oplus_{j=1}^{l}\N\beta_{j}$. Then $\kappa^{-1}[S]$ is some
common triangular basis element $\can(w')$ that appears in the decomposition
of $\stdMod(w)$ into $\can$. Since $w\in\oplus_{i=1}^{l}\N\beta_{j}$,
$\stdMod(w)$ lies in $\Span_{\kk}\can'$. Therefore, $\can(w')\in\can'$. 

Conversely, any $\can(w)\in\can'$ appears in the decomposition of
$M(w)$ into $\can$. So, when we denote $\kappa\can(w)=[S]$, $S$
is a composition factor of $M(w)\in\stdMod'$. Thus $S\in\simpObj_{\beta}$.

(2) Note that $\Span_{\kk}\can'$ is closed under multiplication since
it equals $\bUpClAlg'$. We deduce that $\cC_{\beta}$ is closed under
the tensor product.

\end{proof}

Lemma \ref{lem:property-cat-beta} has the following consequence.

\begin{Thm}\label{thm:ADE-braid-categorification}

When $C$ is of type $ADE$, $(\bUpClAlg(\rsd(\ueta)),\can')$ is
categorified by $\cC_{\beta}$. 

\end{Thm}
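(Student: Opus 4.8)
The plan is to deduce Theorem~\ref{thm:ADE-braid-categorification} by combining Lemma~\ref{lem:property-cat-beta} with the general categorification machinery of Section~\ref{subsec:Categorifications-via-monoidal-sub}. First I would recall the setup: $\cC_{\ugamma}(\xi)$ categorifies $\bClAlg(\rsd(\ugamma)) = \bUpClAlg(\rsd(\uzeta))$ by Theorem~\ref{thm:basis-for-HL} (with $\kappa\can_m = [L(w)]$), and $\rsd' = \rsd(\ueta)$ is obtained from $\rsd = \rsd(\uzeta)$ by a sequence of freezings and deletions of the frozen vertices lying on the lines $L_c$ that correspond to the letters of $\uzeta$ beyond position $l$ — more precisely, since $\ueta = \uzeta_{[1,l]}$, Lemma~\ref{lem:calibration-word} gives that $\rsd(\ueta)$ is a good sub seed of $\rsd(\uzeta)$ via $\iota = \iota_{\uzeta,\ueta}$. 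The composite operation relating $\upClAlg(\rsd)$ to $\upClAlg(\rsd')$ is a freezing $\frz_F$ (freezing the set $F \subset I_{\ufv}(\rsd)$ of unfrozen vertices $> l$ that need to become frozen) followed by deletion of non-essential frozen vertices, exactly as in the proof of Theorem~\ref{thm:bases_dBS}.

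The key steps, in order: (1) Identify $F$ and check that $\frz_F \rsd$ is injective-reachable — this follows from Muller's theorem (\cite[Theorem 1.4.1]{muller2015existence}) since $\rsd$ is injective-reachable, a fact guaranteed because $\rsd(\uzeta)$ arises from a (decorated) double Bott-Samelson cell and $\Sigma$ is a green-to-red sequence (Proposition~\ref{prop:injective_degree}). (2) Apply Theorem~\ref{thm:sub_category_upClAlg}: since $\can$ is the common triangular basis of $\upClAlg(\rsd)$, hence $\tropSet$-pointed (basis elements are $[m]$-pointed), the monoidal subcategory $\cC' \subset \cC_{\ugamma}(\xi)$ consisting of objects whose composition factors $q$-commute with the frozen simples $S_k$, $k \in F$, categorifies $(\upClAlg(\frz_F \rsd), \frz_F \can)$ after localization. (3) Pass from $\frz_F\rsd$ to $\rsd'$ by deleting non-essential frozen vertices; by the remark after Definition~\ref{def:non-essential-frozen}, this only changes the algebra by adjoining/removing polynomial generators $x_j$, and correspondingly $\cC'$ is adjusted to a category still categorifying the (partially compactified or localized) cluster algebra $\upClAlg(\rsd')$. (4) Finally, match $\cC'$ with $\cC_\beta(\xi)$: by Lemma~\ref{lem:property-cat-beta}(1), $\simpObj_\beta = \{S \in \simpObj \mid [S] \in \kappa\can'\}$, and by Corollary~\ref{cor:tri-freezing-equal-restriction}(3) together with Theorem~\ref{thm:sub_cluster_triangular_basis}, $\frz_F\can$ restricted to $\bUpClAlg(\frz_F\rsd)$ agrees with $\can \cap \bUpClAlg(\frz_F\rsd)$, whose image under $\kappa$ is precisely $\{[S] \mid S \in \simpObj_\beta\}$. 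So $\cC' = \cC_\beta(\xi)$ as subcategories of $\cC_{\ugamma}(\xi)$, and the categorification of $(\upClAlg(\rsd'),\can')$ transports to $\cC_\beta(\xi)$ after localization; since $\rsd(\ubi)$ and $\rsd(\ueta)$ give the same based cluster algebra, this is the claim.

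One subtlety I would need to handle carefully is the difference between the localized and partially compactified versions, and between $\upClAlg$ and $\bUpClAlg$: Theorem~\ref{thm:sub_category_upClAlg} gives categorification \emph{after localization}, whereas Theorem~\ref{thm:ADE-braid-categorification} as stated for $\bUpClAlg(\rsd(\ueta))$ presumably intends the genuine (non-localized) category. Here I would invoke the optimized-seed results: by Lemma~\ref{lem:optimized_dBS_last} and Proposition~\ref{prop:optimize_double_sd}, both $\rsd$ and $\rsd'$ can be optimized, so Proposition~\ref{prop:compactified_basis} and Corollary~\ref{cor:subcategorify-combine} apply — the latter shows directly that if $\cC$ categorifies $(\bUpClAlg(\rsd), \overline{\can})$ then $\cC'$ categorifies $(\bUpClAlg(\frz_F\rsd), \overline{\can}')$, which is the non-localized statement I want. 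I would therefore route the argument through Corollary~\ref{cor:subcategorify-combine} rather than Theorem~\ref{thm:sub_category_upClAlg} directly, after first noting that $\cC_{\ugamma}(\xi)$ categorifies the partially compactified $\bUpClAlg(\rsd(\uzeta))$ with the restricted basis $\overline{\can}$ (this uses that $\rsd(\uzeta)$ can be optimized and Theorem~\ref{thm:basis-for-HL}).

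The main obstacle I anticipate is verifying that the category $\cC'$ produced abstractly by the freezing construction of Section~\ref{subsec:Monoidal-subcategories} literally coincides with $\cC_\beta(\xi)$ defined concretely via composition factors of the generalized standard modules $M(w)$, $w \in \N^{[1,l]}$. This requires knowing that $\kappa$ sends standard monomials $\stdMod(w)$ to classes of $M(w)$ (so that the two notions of "standard module" agree), which in turn rests on $\kappa(W_i(\rsd(\uzeta))) = [L_i]$ and the compatibility of normalized products with tensor products up to $q$-shifts — essentially \cite[Corollary 9.1.9]{qin2017triangular} and the structure of $K$ as a based ring. I would also need the fact, implicit in Lemma~\ref{lem:property-cat-beta}(2), that $\Span_\kk \can' = \bUpClAlg(\rsd(\ueta))$ is closed under multiplication, which is immediate, and that $\can'$ is exactly the set of $\can(w)$ with $w \in \oplus_{j \in [1,l]} \N\beta_j$, which follows from Theorem~\ref{thm:dBS_PBW}(4) since the Kazhdan--Lusztig basis of a subalgebra generated by an initial segment of the fundamental variables is the restriction of the ambient one. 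Once these identifications are in place, the theorem follows formally.
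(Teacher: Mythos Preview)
Your route works in outline but is far more elaborate than the paper's. In the paper the theorem is literally a one-line consequence of Lemma~\ref{lem:property-cat-beta}: part~(1) says $\simpObj_{\beta}=\{S\in\simpObj\mid[S]\in\kappa\can'\}$ and part~(2) says $\cC_{\beta}$ is monoidal, so the deformed Grothendieck ring of $\cC_{\beta}$ equals $\Span_{\kk}\{[S]\mid S\in\simpObj_{\beta}\}=\kappa(\Span_{\kk}\can')=\kappa(\bUpClAlg(\rsd'))$. Since $\can'$ is the common triangular basis it contains all cluster monomials, and every simple of $\cC_{\beta}$ corresponds to an element of $\can'$; Definition~\ref{def:categorification-based} is checked directly, with no freezing machinery and no localization issues. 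You already invoke Lemma~\ref{lem:property-cat-beta}(1) in your step~(4), but once you have it the preceding steps~(1)--(3) are superfluous.

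Your longer route via the abstract subcategory $\cC'$ of Section~\ref{subsec:Monoidal-subcategories} is essentially what the paper uses \emph{after} the theorem, to obtain the alternative description of $\simpObj_{\beta}$ (the Proposition immediately following). There you should be aware of a genuine discrepancy you glossed over: the freezing subcategory $\cC'$ (simples $q$-commuting with $S_{k}$, $k\in F$) is strictly larger than $\cC_{\beta}$, because $\cC'$ categorifies $\bUpClAlg(\frz_{F}\rsd)$, which still carries the extra polynomial generators $x_{j}$, $j\in F'=[l+1,l(\uzeta)]$. To land in $\cC_{\beta}$ one needs the additional condition $S_{j}\nmid S$ for $j\in F'$, exactly as in that Proposition. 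Your step~(3) gestures at an ``adjustment'' but step~(4) then asserts $\cC'=\cC_{\beta}(\xi)$ without it; this would need to be made precise for your argument to close.
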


Let us give an alternative description of $\simpObj_{\beta}$. $\forall$
modules $V_{1},V_{2}$, we say $V_{2}|V_{1}$ if there exists a module
$V_{3}$ such that $V_{2}\otimes V_{3}\simeq V_{1}$. Associated with
$\uzeta$, we define $F:=\{j\in[1,l(\uzeta)]|j[1]>l\}$, $F'=[l+1,l(\uzeta)]$.
Then $\rsd(\ueta)$ can be obtained from $\frz_{F}\rsd(\uzeta)$ by
removing the non-essential frozen vertices $j\in F'$.

\begin{Prop}

We have the following description 
\[
\simpObj_{\beta}=\{S\in\simpObj|S\otimes S_{j}\text{ is simple \ensuremath{\forall j\in F}, and \ensuremath{S_{j}\nmid S} \ensuremath{\forall j\in F'}}\}.
\]

\end{Prop}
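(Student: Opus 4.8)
The plan is to read off $\simpObj_{\beta}$ from the factorization of the passage $\rsd(\uzeta)\rightsquigarrow\rsd(\ueta)$ into two steps: first freeze $F$ to get $\sd_{F}:=\frz_{F}\rsd(\uzeta)$, then delete the non-essential frozen vertices $F'$ to reach $\rsd(\ueta)$, as recorded just before the statement. Correspondingly there is a chain of partially compactified algebras $\bUpClAlg(\rsd(\ueta))\subset\bUpClAlg(\sd_{F})\subset\bUpClAlg(\rsd(\uzeta))$, and by Lemma \ref{lem:property-cat-beta}(1) it is enough to decide, for $S\in\simpObj$ (equivalently, for the common triangular basis element $\can_{m}$ of $\bUpClAlg(\rsd(\uzeta))$ with $\kappa\can_{m}\sim[S]$, regarded via $\kappa$ as a pointed element of $\LP(\rsd(\uzeta))$), whether $[S]\in\kappa\can'$. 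The two conditions in the statement will be shown to encode, in order, membership of $[S]$ in $\bUpClAlg(\sd_{F})$ and the nondivisibility of $[S]$ by any $x_{j}$, $j\in F'$.

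First I would handle $F$. Since $\rsd(\uzeta)$ is injective-reachable and can be optimized (Section \ref{subsec:Optimized-seeds}), Corollary \ref{cor:tri-freezing-equal-restriction} shows that $\can\cap\bUpClAlg(\sd_{F})$ is the common triangular basis of $\bUpClAlg(\sd_{F})$ and equals $\{b\in\can\mid\supp b\cap F=\emptyset\}$. On the other side, a one-line computation with $\lambda(f_{j},p^{*}n)=-n_{j}\diag_{j}$ shows that a pointed element $z\in\LP(\rsd(\uzeta))$ $q$-commutes with $x_{j}$ if and only if $j\notin\supp z$. Applying this to $[S]$, using $[S_{j}]=\kappa(x_{j})$, Condition~(C) for $\cC_{\ugamma}(\xi)$ --- which holds in type $ADE$ with respect to the cluster variables by Remarks \ref{rem:triangularity_condition} and \ref{rem:condition_T_to_C} --- and Remark \ref{rem:simple-to-commute}, I would conclude: $S\otimes S_{j}$ is simple for every $j\in F$ $\iff$ $\supp[S]\cap F=\emptyset$ $\iff$ $[S]\in\kappa\bigl(\can\cap\bUpClAlg(\sd_{F})\bigr)$.

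Then I would handle $F'$. Each $j\in F'$ is a non-essential frozen vertex of $\sd_{F}$ and remains so after the others are removed, so the identity ``$\bUpClAlg=\kk[x_{j}]\bUpClAlg$'' recalled in Section \ref{subsec:Bases-for-partial-compactified}, iterated, gives $\bUpClAlg(\sd_{F})=\kk[x_{j}]_{j\in F'}\,\bUpClAlg(\rsd(\ueta))$. Because the $x_{j}$ ($j\in F'$) are frozen variables, the set $\{[x^{n}\cdot b]^{\sd_{F}}\mid n\in\N^{F'},\ b\in\can'\}$ is bar-invariant, pointed, contains the cluster monomials of $\sd_{F}$ and of $\sd_{F}[1]$, and is $(\prec_{\sd_{F}},\mm)$-unitriangular under multiplication by the cluster variables of $\sd_{F}$ (this reduces to the triangularity of $\can'$ for $\rsd(\ueta)$, using that $\prec_{\sd_{F}}$ refines $\prec_{\rsd(\ueta)}$ on $\oplus_{i\notin F'}\Z f_{i}$); by uniqueness of the triangular basis it is the common triangular basis of $\bUpClAlg(\sd_{F})$, hence coincides with $\can\cap\bUpClAlg(\sd_{F})$. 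Thus, for $[S]\in\kappa(\can\cap\bUpClAlg(\sd_{F}))$, writing $[S]=\kappa(x^{n}b)$ up to a $q^{\frac{\Z}{2}}$-multiple with $n\in\N^{F'}$, $b\in\can'$, one has $[S]\in\kappa\can'$ iff $n=0$. Finally, $n\neq0$ holds iff some $x_{j}$, $j\in F'$, divides $\kappa^{-1}[S]$ with cofactor again a common triangular basis element of $\bUpClAlg(\sd_{F})$, i.e.\ again a simple class $[T]$; and $[S_{j}]\cdot[T]\sim[S]$ with $[S]$ simple forces $S_{j}\otimes T\simeq S$ by linear independence of the simple classes. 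Hence $n\neq0$ $\iff$ $S_{j}\mid S$ for some $j\in F'$, and combining the two steps gives the asserted description.

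The step I expect to be the main obstacle is this second one: proving carefully that $\{[x^{n}b]^{\sd_{F}}\}$ really is the common triangular basis of $\bUpClAlg(\sd_{F})$ (the triangularity bookkeeping relating $\prec_{\sd_{F}}$ to $\prec_{\rsd(\ueta)}$ under the non-essential-vertex splitting), together with the divisibility dictionary $S_{j}\mid S\iff x_{j}\mid\kappa^{-1}[S]$ in $\bUpClAlg(\sd_{F})$, whose content is extracting the simple cofactor $T$ and identifying $S_{j}\otimes T\simeq S$ from the Grothendieck-ring identity via linear independence of simple classes.
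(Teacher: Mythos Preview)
Your argument is correct and follows the same two–step skeleton as the paper: first translate ``$S\otimes S_{j}$ simple for all $j\in F$'' into membership of $\kappa^{-1}[S]$ in the common triangular basis of $\bUpClAlg(\frz_{F}\rsd(\uzeta))$, then use the non-essentiality of $F'$ to peel off the $x^{n}$–factor and match the nondivisibility condition. Your handling of Step~1 via Corollary~\ref{cor:tri-freezing-equal-restriction}(4) is exactly right (the sign in $\lambda(f_{j},p^{*}n)$ should be $+n_{j}\diag_{j}$, but this is cosmetic).

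The only substantive difference is in Step~2, precisely where you flag the obstacle. Instead of re-verifying the triangular basis axioms for $\{[x^{n}\cdot b]^{\sd_{F}}\mid n\in\N^{F'},\,b\in\can'\}$, the paper invokes similarity of $\frz_{F}\rsd$ and $\rsd'$ (Proposition~\ref{prop:similar-common-tri-basis}) to write any $\can(w)\in\frz_{F}\can$ as $x^{m}\cdot\can(w')$ with $m\in\oplus_{j\in F'}\Z f_{j}$ and $\can(w')\in\can'$, and then observes that non-essentiality of $F'$ forces $\nu_{j}(\can(w'))=0$ for $j\in F'$, whence $m_{j}=\nu_{j}(\can(w))\geq0$. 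This one-line valuation argument replaces all of your triangularity bookkeeping and bypasses the $\prec_{\sd_{F}}$ versus $\prec_{\rsd(\ueta)}$ comparison entirely. Your route does work (since $F'$ non-essential means the $y$-variables of $\sd_{F}$ and $\rsd(\ueta)$ literally coincide), but the paper's is shorter and sidesteps the issue you identified as the main obstacle.
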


\begin{proof}

Denote the right hand side by $\simpObj'$. Then $\kappa\can'\subset\{[S]|S\in\simpObj'\}$. 

Conversely, if $S(w)\in\simpObj'$, then $\can(w):=\kappa^{-1}[S(w)]$
$q$-commutes with $x_{j}$, $\forall j\in F$, and $\can(w)\neq x_{j}\cdot\can(w')$,
$\forall j\in F'$. The first condition implies that $\can(w)$ is
a common triangular basis element for $\upClAlg(\frz_{F}\rsd)$, see
Theorem \ref{thm:sub_category_upClAlg}. By the similarity between
$\frz_{F}\rsd$ and $\rsd'$, we have $\can(w)=x^{m}\cdot\can(w')$
for some $\can(w')\in\can'$ such that $m\in\oplus_{j\in F'}\Z f_{j}$.
Moreover, since the vertices of $F'$ are non essential in $\frz_{F}\rsd$,
the order of vanishing $\nu_{j}(\can(w'))$ equals $0$, $\forall j\in F'$
(Section \ref{subsec:Optimized-seeds}). Then $m_{j}=\nu_{j}(\can(w))\geq0$,
$\forall j\in F'$, implies $m\geq0$. By the second condition, we
further have $m=0$. So $\can(w)=\can(w')\in\can'$ and thus $S(w)\in\simpObj_{\beta}$.

\end{proof}

\begin{Prop}

$\cC_{\beta}(\xi)$ does not depend on $\uzeta$ where $\beta_{\uzeta}=\Delta^{d}$,
$\beta_{\uzeta_{[1,l]}}=\beta$.

\end{Prop}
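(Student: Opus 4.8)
The plan is to show that the category $\cC_{\beta}(\xi)$, defined in terms of a choice of $\uzeta$ with $\beta_{\uzeta}=\Delta^{d}$ and $\beta_{\uzeta_{[1,l]}}=\beta$, actually depends only on $\beta$ (and on the fixed data $\uc,\xi$). The key observation is that $\simpObj_{\beta}$ was characterized intrinsically in Lemma \ref{lem:property-cat-beta}(1) as $\{S\in\simpObj \mid [S]\in\kappa\can'\}$, where $\can'$ is the common triangular basis of $\bUpClAlg(\rsd(\ueta))$. Since $\bUpClAlg(\rsd(\ueta))$ and its common triangular basis $\can'$ depend only on $\ueta$ (indeed only on $\beta_{\ueta}=\beta$ up to the seed operations of Section \ref{subsec:change-string-diagrams}), it suffices to check that the embedding $\kappa$ restricted to $\can'$ produces the same set of simple isoclasses regardless of the auxiliary choice of $\uzeta$ extending $\ueta$.

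First I would fix $\beta$ and take two words $\uzeta$, $\uzeta'$ with $\beta_{\uzeta}=\Delta^{d}$, $\beta_{\uzeta'}=\Delta^{d'}$, and $\beta_{\uzeta_{[1,l]}}=\beta_{\uzeta'_{[1,l]}}=\beta$; without loss of generality $d\leq d'$, and by appending copies of $\Delta$ (using $\sigma_{a}\Delta=\Delta\sigma_{\nu(a)}$) one reduces to comparing $\uzeta$ with an extension of it, or more simply one compares both to a common longer word. For each choice, pick an adapted word $\ugamma$ (resp. $\ugamma'$) with $\beta_{\ugamma}=\beta_{\uzeta}$ via the knitting algorithm, giving categorifications $\cC_{\ugamma}(\xi)$, $\cC_{\ugamma'}(\xi)$ of $\bClAlg(\rsd(\ugamma))$, $\bClAlg(\rsd(\ugamma'))$. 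The seeds $\rsd(\ugamma)$ and $\rsd(\uzeta)$ are related by the permutation-mutation sequence $\seq^{\sigma}$ from Section \ref{subsec:change-string-diagrams}, and Diagram (\ref{eq:mutation-different-words}) guarantees the resulting isomorphisms of cluster algebras are compatible; moreover the interval variables $W_{[j,k]}$ for $j,k\in[1,l]$ are the same distinguished elements (Lemma \ref{lem:embed-interval-variables}), so the generalized standard modules $M(w)$ for $w\in\N^{[1,l]}$, hence their composition factors, agree under the identifications. The second step is then to show that $\simpObj_{\beta}$ as computed inside $\cC_{\ugamma}(\xi)$ coincides with that computed inside $\cC_{\ugamma'}(\xi)$: both equal $\{S\mid [S]\in\kappa(\can')\}$ with $\can'$ the common triangular basis of $\bUpClAlg(\rsd(\ueta))$, which is canonically attached to $\ueta$ by Theorem \ref{thm:bases_dBS} and is independent of the embedding into the larger algebra since common triangular bases are unique (Section \ref{subsec:freeze_mutations}, and \cite[Proposition 6.4.3]{qin2020dual}).

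Concretely, the argument runs as follows. Using the inclusion $\bClAlg(\rsd(\ueta))\subset\bClAlg(\rsd(\uzeta))$ coming from the good sub seed structure $\iota_{\uzeta,\ueta}$ (Lemma \ref{lem:calibration-word}), together with Theorem \ref{thm:sub_cluster_triangular_basis} and Corollary \ref{cor:tri-freezing-equal-restriction}, the common triangular basis of $\bUpClAlg(\rsd(\ueta))$ is recovered as $\frz_{F}\can\cap\bUpClAlg(\rsd')$ where $F=\{j\in[1,l(\uzeta)]\mid j[1]>l\}$ and $\can$ is the common triangular basis of $\bUpClAlg(\rsd(\uzeta))$, modulo the similarity and removal of non-essential frozen vertices $F'=[l+1,l(\uzeta)]$ (which by the last paragraph before this Proposition only adds polynomial factors in the $x_{j}$, $j\in F'$). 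Since the resulting basis of $\bUpClAlg(\rsd(\ueta))$ does not depend on whether we realized it inside $\rsd(\uzeta)$ or $\rsd(\uzeta')$ — it is the unique common triangular basis of a fixed algebra — the sets $\kappa(\can')$ of Grothendieck classes coincide, hence so do the sets $\simpObj_{\beta}$, hence so do the full subcategories $\cC_{\beta}(\xi)$.

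The main obstacle I anticipate is bookkeeping the compatibility of the two categorifications $\cC_{\ugamma}(\xi)$ and $\cC_{\ugamma'}(\xi)$ of the (potentially different) ambient algebras $\bClAlg(\rsd(\ugamma))$, $\bClAlg(\rsd(\ugamma'))$ with the common sub-object $\bUpClAlg(\rsd(\ueta))$: one must check that the map $\kappa$ sending $W_{[j,k]}$ to $[L_{[j,k]}]$ for $j,k\in[1,l]$ gives literally the same simple modules in both cases, not merely modules with the same Grothendieck class. This follows because the interval variables in $\bClAlg(\rsd(\ueta))$ are genuine cluster variables (up to mutation), hence map to genuine simple modules, and a simple module is determined by its class; but making this precise requires invoking that $\cC_{\ugamma}(\xi)$, $\cC_{\ugamma'}(\xi)$ are both full monoidal subcategories of the fixed ambient category $\cC_{\Z}(\xi)$ of $\qAff$-modules, so that "simple with class $[L_{[j,k]}]$" is an unambiguous notion. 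Once this identification is in place, the rest is the formal uniqueness argument above.
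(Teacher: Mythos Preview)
Your overall strategy matches the paper's: reduce to Lemma~\ref{lem:property-cat-beta}(1) and show that the image $\kappa(\can')$ inside $K$ does not depend on the auxiliary word. But the step where you actually establish this independence has a gap.

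You write that ``the resulting basis of $\bUpClAlg(\rsd(\ueta))$ does not depend on whether we realized it inside $\rsd(\uzeta)$ or $\rsd(\uzeta')$ --- it is the unique common triangular basis of a fixed algebra''. Uniqueness of the common triangular basis is a statement about a basis inside the \emph{abstract} algebra $\bUpClAlg(\rsd(\ueta))$; it says nothing about whether two different embeddings of that algebra into the ambient $\bUpClAlg(\rsd(\uzeta))$ have the same image. A priori, the cluster embedding $\iota_{\uzeta,\ueta}$ and the composite $(\seq_{\ualpha',\ueta}^{\sigma})^{*}\circ\iota_{\ualpha,\ualpha'}$ (where $\ualpha'=\ualpha_{[1,l]}$) could send the fundamental variables $W_{j}(\rsd(\ueta))$ to different elements of the ambient algebra, and then $\kappa$ would produce different simples. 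Your appeal to Lemma~\ref{lem:embed-interval-variables} does not help here, as that lemma describes a \emph{single} embedding, not a comparison of two.

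The paper closes this gap by an explicit commutative diagram. Working with a second word $\ualpha$ (with the \emph{same} $d$, so $\beta_{\ualpha''}=\beta_{\uzeta''}$ for the tails $\ualpha''=\ualpha_{[l+1,\cdot]}$, $\uzeta''=\uzeta_{[l+1,\cdot]}$), one factors the braid-move mutation sequence from $\rsd(\ualpha)$ to $\rsd(\uzeta)$ as a move $\seq_{\ualpha',\uzeta'}^{\sigma}$ on the first $l$ letters composed with an independent move $\seq_{\ualpha'',\uzeta''}^{\sigma}$ on the tail. The commutativity of Diagram~(\ref{eq:mutation-different-words}) then gives a commuting square identifying the images of $\bUpClAlg(\rsd(\ualpha'))$ and $\bUpClAlg(\rsd(\uzeta'))$ inside $\bUpClAlg(\rsd(\uzeta))$. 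This is what pins down that $\kappa(\can')$ is literally the same set of classes, and it is the piece your argument is missing.

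A secondary point: the paper takes the two words to have the \emph{same} $d$, which is what the statement asks. Your reduction of the case $d\neq d'$ by ``appending copies of $\Delta$'' is plausible but would itself need an argument (you would have to compare $\cC_{\ugamma}(\xi)\subset\cC_{\ugamma'}(\xi)$ for nested adapted words and check that $\simpObj_{\beta}$ computed in the smaller one agrees with that computed in the larger), so it is simpler to stay with fixed $d$ as the paper does.
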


The subsequent paper \cite{qin2024infinite} will give an alternative
proof using braid groups.

\begin{proof}

Let $\ualpha$ be another choice for $\uzeta$. Since $\beta_{\ualpha{}_{[1,l]}}=\beta_{\uzeta_{[1,l]}}=\beta$
and $\beta_{\ualpha}=\beta_{\uzeta}=\Delta^{d}$, we have $\beta_{\ualpha{}_{[l+1,l(\uzeta)]}}=\beta_{\uzeta{}_{[l+1,l(\uzeta)]}}$.
Denote $\ualpha':=\ualpha_{[1,l]}$, $\ualpha'':=\ualpha_{[l+1,l(\uzeta)]}$,
$\uzeta'=\uzeta_{[1,l]}=\ueta$, $\uzeta''=\uzeta_{[l+1,l(\uzeta)]}$.
Using the permutation mutation sequences associated to braid moves
and the cluster embeddings in Section \ref{subsec:change-string-diagrams},
we obtain commutative diagrams:
\begin{align*}
\begin{array}{ccccc}
\bUpClAlg(\rsd(\ualpha')) & \subset & \bUpClAlg(\rsd(\ualpha',\uzeta'')) & \overset{}{\overset{(\seq_{\ualpha'',\uzeta''}^{\sigma})^{*}}{\xleftarrow{\sim}}} & \bUpClAlg(\rsd(\ualpha',\ualpha''))\\
\simeqd(\seq_{\ualpha',\uzeta'}^{\sigma})^{*} &  & \simeqd(\seq_{\ualpha',\uzeta'}^{\sigma})^{*} &  & \simeqd(\seq_{\ualpha',\uzeta'}^{\sigma})^{*}(\seq_{\ualpha'',\uzeta''}^{\sigma})^{*}\\
\bUpClAlg(\rsd(\uzeta')) & \subset & \bUpClAlg(\rsd(\uzeta',\uzeta'')) & = & \bUpClAlg(\rsd(\uzeta',\uzeta''))
\end{array}
\end{align*}
Then the common triangular basis $\can'$ for $\bUpClAlg(\rsd(\ualpha'))$
and $\bUpClAlg(\rsd(\uzeta'))$ are identified in $\bUpClAlg(\rsd(\uzeta',\uzeta''))$.
The claim follows from Lemma \ref{lem:property-cat-beta}(1).

\end{proof}

Finally, consider any shuffle $\ubi$ of $\ueta$ and $-\uzeta$.
Choose any Coxeter word $\uc'$. Define $\widetilde{\ubi}=(\uc',\ubi)$,
$\ugamma=(\uzeta\op,\uc',\ueta)$, and $\widetilde{\beta}=\beta_{\ugamma}$.
Then $\rsd(\widetilde{\ubi})=\seq\rsd(\ugamma)$ for some mutation
sequence $\seq$. Note that $\dsd(\ubi)=\frz_{F}\rsd(\widetilde{\ubi})$,
where $F=\{\binom{a}{0}^{\widetilde{\ubi}}|a\in J\}$. By Theorem
\ref{thm:ADE-braid-categorification}, $\bUpClAlg(\rsd(\widetilde{\ubi}))=\bUpClAlg(\rsd(\ugamma))$
is categorified by $\cC_{\widetilde{\beta}}$. Choose simples $S_{k}$
of $\cC_{\widetilde{\beta}}$ such that $[S_{k}]\sim x_{k}(\rsd(\widetilde{\ubi}))$
for $k\in F$. Let $\cC_{\widetilde{\beta}}'$ denote the monoidal
subcategory of $\cC_{\widetilde{\beta}}$ whose objects have the composition
factors in $\{S\in\cC_{\widetilde{\beta}}|S\otimes S_{k}\text{ is simple},\forall k\in F\}$,
see Section \ref{subsec:Monoidal-subcategories}. By Theorem \ref{thm:bases-compactified-dBS},
$\bUpClAlg(\dsd(\ubi))$ has the common triangular basis, denoted
$\can$. Combining Theorem \ref{thm:ADE-braid-categorification} ,
Proposition \ref{prop:optimize_double_sd} and Corollary \ref{cor:subcategorify-combine},
we deduce the following result.

\begin{Thm}\label{thm:ADE-braid-categorification-ddBS}

$(\bUpClAlg(\dsd(\ubi)),\can)$ is categorified by $\cC_{\widetilde{\beta}}'$
when $C$ is of type $ADE$.

\end{Thm}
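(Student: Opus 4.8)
The strategy is to present $\dsd(\ubi)$ as a freezing of $\sd:=\rsd(\widetilde{\ubi})$ and to deduce the claim from Corollary~\ref{cor:subcategorify-combine}. Recall from the discussion preceding the theorem that $\dsd(\ubi)=\frz_F\sd$ with $F=\{\binom{a}{0}^{\widetilde{\ubi}}\mid a\in J\}\subset I_{\ufv}(\sd)$, and that $\bUpClAlg(\sd)=\bUpClAlg(\rsd(\ugamma))$ is categorified by $\cC_{\widetilde{\beta}}$ via Theorem~\ref{thm:ADE-braid-categorification}; under the associated embedding $K(\cC_{\widetilde{\beta}})\hookrightarrow\LP(\sd)$ the categorifying basis is the common triangular basis $\overline{\can}^{\sd}$ of $\bUpClAlg(\sd)$, the chosen simples satisfy $[S_k]\sim x_k(\sd)$ for $k\in F$, and $\overline{[S]}\sim[S]$ for every simple $S$.

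First I would verify the hypotheses of Corollary~\ref{cor:subcategorify-combine} for $\sd$ and $F$. By Theorem~\ref{thm:bases_dBS}, $\upClAlg(\sd)$ has the common triangular basis $\can^{\sd}$. Both $\sd=\rsd(\widetilde{\ubi})$ and $\frz_F\sd=\dsd(\ubi)$ can be optimized: this is Proposition~\ref{prop:optimize_double_sd} for $\dsd(\ubi)$, and the proof of Proposition~\ref{prop:optimize_double_sd}(1) establishes it for $\rsd$ as well (cf.\ the proof of Theorem~\ref{thm:bases-compactified-dBS}). Hence, by Proposition~\ref{prop:compactified_basis}, $\overline{\can}^{\sd}:=\can^{\sd}\cap\bUpClAlg(\sd)$ and $\overline{\can}':=\frz_F\can^{\sd}\cap\bUpClAlg(\frz_F\sd)$ are the common triangular bases of $\bUpClAlg(\sd)$ and of $\bUpClAlg(\dsd(\ubi))$ respectively; by Corollary~\ref{cor:tri-freezing-equal-restriction}(3) the latter coincides with $\can^{\sd}\cap\bUpClAlg(\dsd(\ubi))$, which is the basis $\can$ of $\bUpClAlg(\dsd(\ubi))$ named in Theorem~\ref{thm:bases-compactified-dBS}(1). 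Since $\cC_{\widetilde{\beta}}$ categorifies $(\bUpClAlg(\sd),\overline{\can}^{\sd})$, Corollary~\ref{cor:subcategorify-combine} then yields that the monoidal subcategory $\cT'$ of $\cC_{\widetilde{\beta}}$ attached to $F$ as in Section~\ref{subsec:Monoidal-subcategories} categorifies $(\bUpClAlg(\dsd(\ubi)),\can)$.

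It remains to identify this $\cT'$ with $\cC_{\widetilde{\beta}}'$ as defined above. By construction $\cT'$ is the full subcategory whose objects' composition factors $S$ satisfy that $[S]$ $q$-commutes with $[S_k]$ for all $k\in F$. Since $\overline{\can}^{\sd}$ is the triangular basis with respect to $\sd$, Remark~\ref{rem:triangularity_condition} gives Condition~(T), and hence Remark~\ref{rem:condition_T_to_C} gives Condition~(C), in $\cC_{\widetilde{\beta}}$ with respect to each $S_k$, $k\in F$ (the property descends from the ambient finite dimensional module category of the quantum affine algebra to the tensor- and subquotient-closed subcategory $\cC_{\widetilde{\beta}}$). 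Therefore, by Remark~\ref{rem:simple-to-commute}, for a simple $S$ the conditions ``$[S]$ $q$-commutes with $[S_k]$'', ``$S_k\otimes S$ is simple'', and ``$S\otimes S_k$ is simple'' are all equivalent, so $\cT'=\cC_{\widetilde{\beta}}'$ and the theorem follows.

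The argument is essentially organizational; the only step requiring genuine care is ensuring that the common triangular basis of $\bUpClAlg(\dsd(\ubi))$ is \emph{precisely} the restriction $\frz_F\can^{\sd}\cap\bUpClAlg(\dsd(\ubi))$ rather than merely a basis obtained from $\frz_F\can^{\sd}$, which is why the optimizability of both $\sd$ and $\frz_F\sd$ and the sharp comparison in Corollary~\ref{cor:tri-freezing-equal-restriction} (together with Theorem~\ref{thm:freezing_common_triangular}) are indispensable. No new obstacle appears beyond assembling Theorems~\ref{thm:ADE-braid-categorification} and \ref{thm:sub_cluster_triangular_basis}, Corollary~\ref{cor:subcategorify-combine}, and the structural results of Section~\ref{subsec:Monoidal-subcategories}.
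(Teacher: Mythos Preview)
Your proposal is correct and follows the same approach as the paper: the paper's proof is the single sentence preceding the theorem (``Combining Theorem~\ref{thm:ADE-braid-categorification}, Proposition~\ref{prop:optimize_double_sd} and Corollary~\ref{cor:subcategorify-combine}, we deduce the following result''), and you expand this outline faithfully, in particular filling in the identification of the Section~\ref{subsec:Monoidal-subcategories} subcategory $\cT'$ with $\cC_{\widetilde{\beta}}'$ via Condition~(C) (Remarks~\ref{rem:triangularity_condition} and~\ref{rem:condition_T_to_C}) and the matching of the two descriptions of the common triangular basis of $\bUpClAlg(\dsd(\ubi))$ via Corollary~\ref{cor:tri-freezing-equal-restriction}(3).
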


\begin{proof}[Proof of Theorem \ref{thm:intro-qO-categorification}]

We could obtain the categorification of quantum function algebras
$\qO[G]$ from those of decorated double Bott-Samelson cells $\bUpClAlg(\dsd(\ubi))$.

More precisely, for any reduced word $\ueta$ of $w_{0}$, we have
the anti-isomorphism $\bUpClAlg(\dsd(\ueta,-\ueta))\otimes\Q(q^{\Hf})\simeq\qO[G]\otimes\Q(q^{\Hf})$
by Claim \ref{claim:G_case}. Extend the coefficients ring of the
deformed Grothendieck rings from $\Z[q^{\pm\Hf}]$ to $Q(q^{\Hf})$.
Then, $\bUpClAlg(\dsd(\ueta,-\ueta))\otimes\Q(q^{\Hf})$ is categorified
by the monoidal category $\cC_{\widetilde{\beta}}'$, where $\widetilde{\beta}=\beta_{(\ueta\op,c',\ueta)}$
and $c'$ is any Coxeter word, see Theorem \ref{thm:ADE-braid-categorification-ddBS}.
It follows that $\qO[G]\otimes\Q(q^{\Hf})$ is categorified by the
opposite monoidal category $(\cC_{\widetilde{\beta}}')\op$ (Section
\ref{subsec:Monoidal-categories}).

\end{proof}

\appendix

\section{Principal coefficients\label{subsec:From-principal-coefficients}}

Given any seed $\sd$. Let $I_{\fv}^{\prin}:=\{k'|k\in I_{\ufv}\}$
denote a copy of $I_{\ufv}$. We recall the seed of principal coefficients
$\sd^{\prin}$ \cite{FominZelevinsky07}. Its set of vertices is $I^{\prin}:=I_{\ufv}\sqcup I_{\fv}^{\prin}$
with $I_{\fv}^{\prin}$ being frozen. The symmetrizers are $d_{k}^{\prin}=d_{k'}^{\prin}=d_{k}$
for $k\in I_{\ufv}$. Introduce the matrix entries $b_{ij}^{\prin}:=\begin{cases}
b_{ij} & i,j\in I_{\ufv}\\
\delta_{kj} & j,k\in I_{\ufv},i=k'\\
-\delta_{ik} & i,k\in I_{\ufv},j=k'\\
0 & i,j\in I_{\fv}^{\prin}
\end{cases}.$ Then its $B$-matrix is $\tB^{\prin}:=(b_{ij}^{\prin})_{i\in I^{\prin},j\in I_{\ufv}}$.
Let $\var$ denote the linear map from $\Z^{I^{\prin}}=\oplus_{i\in I^{\prin}}\Z f_{i}$
to $\Z^{I}=\oplus_{i\in I}\Z f_{i}$ such that $\var(f_{i})=\begin{cases}
f_{i} & i\in I_{\ufv}\\
\sum_{j\in I_{\fv}}b_{jk}f_{j} & i=k',k\in I_{\ufv}
\end{cases}$. Then we have $\var(\sum_{j\in I^{\prin}}b_{jk}^{\prin}f_{j})=\sum_{i\in I}b_{ik}f_{i}$,
$\forall k\in I_{\ufv}$.

When $\sd$ is a quantum seed endowed with a compatible skew-symmetric
bilinear form $\lambda$, we define the skew-symmetric bilinear form
$\lambda^{\prin}$ on $\Z^{I^{\prin}}$ by $\lambda^{\prin}(m,m')=\lambda(\var m,\var m')$.
We deduce the following result.

\begin{Lem}

The bilinear form $\lambda(t^{\prin})$ is compatible with $\tB(t^{\prin})$.
Moreover, the quantum seed $t^{\prin}$ is similar to $t$ with the
scalar $\rho=1$.

\end{Lem}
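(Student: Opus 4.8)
The plan is to verify both claims by unwinding the definitions, the only nontrivial input being the identity $\var\bigl(\sum_{j\in I^{\prin}}b^{\prin}_{jk}f_{j}\bigr)=\sum_{i\in I}b_{ik}f_{i}$ recorded immediately before the statement; writing $p^{*,\prin}\colon\yCone\to\Z^{I^{\prin}}$ for $p^{*,\prin}(n)=\tB^{\prin}n$, this says precisely $\var\circ p^{*,\prin}=p^{*}$. I would first note that $\sd^{\prin}$ is a legitimate (quantum) seed in the sense of Section~\ref{subsec:Basics-of-cluster}: $\tB^{\prin}$ is skew-symmetrizable with symmetrizers $d^{\prin}_{i}$ and has full rank, since its $I^{\prin}_{\fv}\times I_{\ufv}$ block is, under the identification $I^{\prin}_{\fv}\simeq I_{\ufv}$, the identity matrix. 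This is immediate from the construction and is standard, going back to \cite{FominZelevinsky07}.

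For compatibility, the key point is that $\lambda^{\prin}(m,m')=\lambda(\var m,\var m')$ is automatically $\Z$-valued and skew-symmetric, because $\lambda$ is and $\var$ is $\Z$-linear; so the whole content is the computation of $\lambda^{\prin}(f_{i},p^{*,\prin}e_{k})$ for $i\in I^{\prin}$, $k\in I_{\ufv}$. Using $\var\circ p^{*,\prin}=p^{*}$ this equals $\lambda(\var f_{i},p^{*}e_{k})$, and I would split into two cases. If $i\in I_{\ufv}$ then $\var f_{i}=f_{i}$, so the value is $\lambda(f_{i},p^{*}e_{k})=\delta_{ik}\diag_{k}$. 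If $i=m'\in I^{\prin}_{\fv}$ for some $m\in I_{\ufv}$, then $\var f_{m'}=\sum_{l\in I_{\fv}}b_{lm}f_{l}$, so the value is $\sum_{l\in I_{\fv}}b_{lm}\lambda(f_{l},p^{*}e_{k})=\sum_{l\in I_{\fv}}b_{lm}\,\delta_{lk}\diag_{k}=0$, since $l\in I_{\fv}$ and $k\in I_{\ufv}$ force $\delta_{lk}=0$; and $\delta_{ik}=\delta_{m'k}=0$ as well. Hence, setting $\diag_{k}(\sd^{\prin}):=\diag_{k}>0$, we get $\lambda^{\prin}(f_{i},p^{*,\prin}e_{k})=\delta_{ik}\diag_{k}(\sd^{\prin})$ for all $i\in I^{\prin}$, $k\in I_{\ufv}$, which is exactly compatibility. (With the opposite sign convention $\lambda(f_{i},p^{*}e_{k})=-\delta_{ik}\diag_{k}$ of Section~\ref{subsec:Similarity-and-correction}, the same computation goes through verbatim with an overall sign.)

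For the similarity statement I would take the permutation $\sigma$ to be the identity of $I^{\prin}_{\ufv}=I_{\ufv}$. Then $b^{\prin}_{ij}=b_{ij}$ and $d^{\prin}_{i}=d_{i}$ for all $i,j\in I_{\ufv}$ directly from the definitions of $\tB^{\prin}$ and the symmetrizers, while the computation above gives $\diag_{k}(\sd^{\prin})=\diag_{k}$ for every $k\in I_{\ufv}$; this is precisely the assertion that $\sd^{\prin}$ is similar to $\sd$ with scalar $\rho=1$. I do not expect any genuine obstacle here: the argument is a one-line identity plus a two-case bookkeeping, the only point needing (minimal) care being the interplay of the unfrozen and frozen blocks of $\tB^{\prin}$, namely that $\var$ is the identity on $\Z^{I_{\ufv}}$ while it carries the new generators $f_{m'}$ into $\Z^{I_{\fv}}$ — which is exactly what makes the frozen rows of $\lambda^{\prin}$ orthogonal to the columns of $p^{*,\prin}$.
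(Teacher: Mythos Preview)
Your proposal is correct and is precisely the deduction the paper has in mind: the lemma is stated without proof, immediately after the identity $\var\circ p^{*,\prin}=p^{*}$, and your two-case computation of $\lambda^{\prin}(f_{i},p^{*,\prin}e_{k})$ together with the direct check of the similarity conditions is exactly what that sentence ``We deduce the following result'' is summarizing.
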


The linear map $\var$ induces a monomial $\kk$-algebra homomorphism
$\var:\LP(\sd^{\prin})\rightarrow\LP(\sd)$, sending $x^{m}(\sd^{\prin})$
to $x^{\var(m)}(\sd)$. It sends an $m$-pointed element in $\LP(\sd^{\prin})$
to a $\var(m)$-pointed similar element in $\LP(\sd)$. It is a (quantum)
variation map in the sense of \cite{kimura2022twist}.

%\bibliographystyle{../amsalphaURL} 
%\bibliography{../referenceEprint} 
%\end{document}

\section{Freezing operators in additive categorifications\label{sec:Freezing-operators-in-additive}}

We refer the reader to \cite[Section 2.3]{Qin10} for necessary notions
of additive categorification of cluster algebras following \cite{Plamondon10b}.

Assume $\kk=\Z$ and $B$ is skew-symmetric. We associate with the
initial seed $\sd_{0}$ the generalized cluster category $\cC$. For
simplicity, we assume $\cC$ is Hom-finite to avoid discussing subcategories
of $\cC$ as in \cite{Plamondon10b}. Let $T=\oplus_{i\in I}T_{i}$
denote the cluster tilting object associated to $\sd_{0}$. Let $[1]$
denote the shift functor of $\cC$. Note that $\Hom(T,T[1])=0$. Denote
$T^{m}=\oplus T_{i}^{m_{i}}$ for $m=(m_{i})_{i\in I}\in\N^{I}$.
The corresponding completed Jacobian algebra $J$ equals $\End_{\cC}(T)\op$.

For any object $V$ of $\cC$, let $\uV$ denote the (left) $J$-module
$\Hom_{\cC}(T,V[1])$. Further assume $(\udim\uV)_{j}=0$ for $j\in I_{\fv}$.
There exists a triangle $V\rightarrow T^{m_{-}}[1]\rightarrow T^{m_{+}}[1]\rightarrow V[1]$.
Define the index $\ind_{T}V=m_{+}-m_{-}$ (it equals the negative
coindex $-\mathrm{coind}V$ associated with $T[-1]$ in \cite{Palu08a}).
Then the cluster character of $V$ is given by
\begin{align*}
\CC(V)= & x^{\ind_{T}V}\cdot(\sum_{n\in\N^{I_{\ufv}}}\chi(\Gr_{n}\uV)\cdot y^{n}),
\end{align*}
where $\chi$ denote the Euler characteristics, and $\Gr_{\un}\uV$
the quiver Grassmannian consisting of the $n$-dimensional submodules
of $\uV$. 

Choose any $F\subset I_{\ufv}$ and denote $T_{F}=\oplus_{k\in F}T_{k}$.
Choose any $d\geq\dim\Hom(V,T_{F}[1])$ and let $V'[1]$ be the cone
of a generic map $\ev$ in $\Hom(V,T_{F}^{\oplus d}[1])$. We then
have a triangle $T_{F}^{\oplus d}\rightarrow V'\rightarrow V\xrightarrow{\ev}T_{F}^{d}[1]$
and view $V'$ as the universal extension of $V$ and $T_{F}^{\oplus d}$.
Denote $\uV'=\Hom(T,V'[1])$.

\begin{Prop}\label{prop:max_submod}

$\uV'$ is the maximal $J$-submodule of $\uV$ such that $(\udim\uV')_{k}=0$
for $k\in F$.

\end{Prop}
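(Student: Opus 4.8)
The plan is to analyze the defining triangle $T_F^{\oplus d}\to V'\to V\xrightarrow{\ev}T_F^{\oplus d}[1]$ by applying the homological functor $\Hom_\cC(T,-)$, and to read off from the resulting long exact sequence both that $\uV'$ is a submodule of $\uV$ with the claimed vanishing and that it is the largest such. First I would apply $\Hom_\cC(T,-[1])$ to the triangle, obtaining the exact sequence
\begin{align*}
\Hom_\cC(T,T_F^{\oplus d})\to\Hom_\cC(T,V)\to\Hom_\cC(T,T_F^{\oplus d}[1])\xrightarrow{\ev_*}\uV'\to\uV\xrightarrow{\ev_{*}}\Hom_\cC(T,T_F^{\oplus d}[2]).
\end{align*}
Since $T$ is cluster tilting and the $T_k$ are among its summands, $\Hom_\cC(T,T_F^{\oplus d}[1])=0$, so the map $\uV'\to\uV$ is injective; thus $\uV'$ is identified with a $J$-submodule of $\uV$. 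Next, for the dimension vector: applying $\Hom_\cC(T_k,-)$ for $k\in F$ to the triangle, and using that the first map $T_F^{\oplus d}\to V'$ was built so that the connecting map $V\xrightarrow{\ev}T_F^{\oplus d}[1]$ is surjective on $\Hom_\cC(-,T_k[1])$ — which holds because $\ev$ is a generic element of $\Hom(V,T_F^{\oplus d}[1])$ with $d\geq\dim\Hom(V,T_F[1])$ — one gets $\Hom_\cC(T_k,V'[1])=0$, i.e. $(\udim\uV')_k=0$ for all $k\in F$.

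For maximality, suppose $\uW\subseteq\uV$ is a $J$-submodule with $(\udim\uW)_k=0$ for $k\in F$. The key step is to lift this back to the cluster category: by the description of $J$-modules via $\Hom_\cC(T,-[1])$ and the standard dictionary in \cite{Plamondon10b}, there is an object $W$ of $\cC$ with $\uW=\Hom_\cC(T,W[1])$ together with a morphism $W\to V$ inducing the inclusion $\uW\hookrightarrow\uV$. The vanishing $(\udim\uW)_k=0$ for $k\in F$ translates into $\Hom_\cC(W,T_F[1])$-related conditions ensuring the composite $W\to V\xrightarrow{\ev}T_F^{\oplus d}[1]$ is zero; hence $W\to V$ factors through $V'\to V$, and applying $\Hom_\cC(T,-[1])$ shows $\uW\subseteq\uV'$ inside $\uV$. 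Therefore $\uV'$ is the maximal such submodule.

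I expect the main obstacle to be the maximality half: one must be careful about which morphisms in $\cC$ genuinely lift a given inclusion of $J$-modules (this is where Hom-finiteness of $\cC$ and the functor $\Hom_\cC(T,-[1])$ being full on the relevant subcategory, as in \cite{Plamondon10b}, are used), and about checking that the vanishing of $(\udim\uW)_k$ is exactly the condition needed to kill the composite $W\to T_F^{\oplus d}[1]$ rather than merely a necessary consequence of it. The genericity of $\ev$ and the choice $d\geq\dim\Hom(V,T_F[1])$ should be invoked precisely here, to guarantee that $\Hom_\cC(-,T_F[1])$ applied to $\ev$ is surjective so that the factorization criterion is an iff. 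The first half (injectivity of $\uV'\to\uV$ and the dimension computation) is a routine long-exact-sequence chase once the triangle is written down.
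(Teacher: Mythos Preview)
Your Part~(1) matches the paper's argument, but tighten the exposition: to obtain $(\udim\uV')_k=0$ the paper applies the \emph{contravariant} functor $\Hom(-,T_F[1])$ to the triangle, uses genericity of $\ev$ (with $d\geq\dim\Hom(V,T_F[1])$) to get surjectivity of $\ev^*\colon\Hom(T_F^{\oplus d}[1],T_F[1])\to\Hom(V,T_F[1])$ and hence $\Hom(V',T_F[1])=0$, and then invokes the 2-Calabi-Yau property of $\cC$ to flip this to $\Hom(T_F,V'[1])=0$. You mix covariant and contravariant functors and never name the 2-CY step; this is easily fixed.

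For maximality your direct factoring argument is genuinely different from the paper's and in fact simpler. The paper lifts the submodule $\uX\subset\uV$ to a triangle $Y\xrightarrow{-r[-1]}X\to V\xrightarrow{u}Y[1]$ with $-r[-1]$ factoring through $\add T$, citing \cite[Lemma~3.1, Proposition~1.1]{Palu08a} (this is the precise reference you want, not \cite{Plamondon10b}). It then factors $\ev=v\circ u$ and applies the octahedral axiom to produce a triangle $X[1]\to V'[1]\to Z\xrightarrow{h}X[2]$ in which $h$ factors through $\add T[2]$; vanishing of $\Hom(T[1],h)$ yields $\uX\hookrightarrow\uV'$. Your route bypasses the octahedron: once Palu's result gives a lift $W\to V$ inducing $\uW\hookrightarrow\uV$, the 2-CY duality $\Hom(W,T_k[1])\cong D\Hom(T_k,W[1])=0$ for $k\in F$ forces $\Hom(W,T_F^{\oplus d}[1])=0$, so the composite $W\to V\xrightarrow{\ev}T_F^{\oplus d}[1]$ vanishes automatically. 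Contrary to your last paragraph, genericity of $\ev$ plays no role here; it is used only in Part~(1). The map $W\to V$ then factors through $V'$, and applying $\Hom(T,-[1])$ gives $\uW\subset\uV'$ inside $\uV$. This is a legitimate and shorter argument; the paper's octahedral approach is not reused later, so nothing is lost by your simplification.
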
 

\begin{proof}

(1) We first show that $\uV'$ is a submodule of $\uV$ satisfying
$(\udim\uV')_{k}=0$ for $k\in F$.

Applying $\Hom(T,(\ )[1])$ to the triangle $T_{F}^{\oplus d}\rightarrow V'\rightarrow V\xrightarrow{\ev}T_{F}^{\oplus d}[1]$,
we obtain an exact sequence $0\rightarrow\Hom(T,V'[1])\rightarrow\Hom(T,V[1])$.
It follows that $\uV'$ is a submodule of $\uV$.

Applying $\Hom(\ ,T_{F}[1])$ to the triangle $T_{F}^{\oplus d}\rightarrow V'\rightarrow V\xrightarrow{\ev}T_{F}^{\oplus d}[1]$,
we obtain an exact sequence $\Hom(T_{F}^{\oplus d}[1],T_{F}[1])\xrightarrow{\ev^{*}}\Hom(V,T_{F}[1])\rightarrow\Hom(V',T_{F}[1])\rightarrow0$.
Since $\ev$ is generic and $d\geq\dim\Hom(V,T_{F}[1])$, $\ev^{*}$
is surjective. It follows that $\Hom(V',T_{F}[1])=0$. Since $\cC$
is $2$-Calabi-Yau \cite{Amiot09}, we obtain $\Hom(T_{F},V'[1])=0$.

(2) Let $\uX$ denote any submodule of $\uV$ satisfying $(\udim\uX)_{k}=0$
for $k\in F$. Then there exists a triangle $Y\xrightarrow{-r[-1]}X\rightarrow V\xrightarrow{u}Y[1]$
such that $\Hom(T,X[1])=\uX$ and $-r[-1]$ factors through $\add T$
(\cite[Lemma 3.1 Proposition 1.1]{Palu08a}). Applying $\Hom(\ ,T_{F}^{\oplus d}[1])$
to it, we obtain $\Hom(Y[1],T_{F}^{\oplus d}[1])\xrightarrow{u^{*}}\Hom(V,T_{F}^{\oplus d}[1])\rightarrow\Hom(X,T_{F}^{\oplus d}[1])=0$.
Then $\ev\in\Hom(V,T_{F}^{\oplus d}[1])$ equals the composition $v\circ u$
for some $v\in\Hom(Y[1],T_{F}^{\oplus d}[1])$.

Applying the octahedral axiom to the triangles $V\xrightarrow{\ev}T_{F}^{\oplus d}[1]\rightarrow V'[1]\rightarrow V[1]$,
$V\xrightarrow{u}Y[1]\xrightarrow{r}X[1]\rightarrow V[1]$, and $Y[1]\xrightarrow{v}T_{F}^{\oplus d}[1]\xrightarrow{}Z\xrightarrow{s}Y[2]$,
we obtain a triangle $X[1]\rightarrow V'[1]\rightarrow Z\xrightarrow{h}X[2]$
such that $h=r[1]\circ s$, see Figure \ref{fig:octahedral}(A). Since
$r[-1]$ factors through $\add T$, $r[1]$ factors through $\add T[2]$.
Thus $\Hom(T[1],h)$ vanishes. Applying $\Hom(T[1],\ )$ to $Z\xrightarrow{h}X[2]\rightarrow V'[2]\rightarrow Z[1]$,
we obtain that $\uX$ is a submodule of $\uV'$.

\end{proof}

\begin{figure}
\caption{Octahedral axiom}
\label{fig:octahedral}

\subfloat[]{

\begin{tikzpicture}
\node (v1) at (-1,2.5) {$V$}; \node (v2) at (2,2.5) {$T_F^{\oplus d}[1]$}; \node (v3) at (0.5,1.5) {$Y[1]$}; \node (v4) at (0.5,4.5) {$V'[1]$}; \node (v5) at (-1,3.5) {$X[1]$}; \node (v6) at (2,3.5) {$Z$}; \draw[->]  (v1) edge node[above]{$\mathrm{ev}$} (v2);
\draw[->]  (v1) edge node[below]{$u$}(v3);
\draw[->]  (v3) edge node[below]{$v$}(v2);
\draw[->]  (v2) edge (v4);
\draw[->,dashed]  (v4) edge (v1); \draw[->,dashed]  (v5) edge (v1); \draw[->]  (v3) edge node[right, near start]{$r$}(v5);
\draw[->]  (v2) edge (v6); \draw[->,dashed]  (v6) edge node[left, near end]{s}(v3); \draw[->]  (v5) edge (v4); \draw[->]  (v4) edge (v6); \draw[->,dashed]  (v6) edge node[below]{$h$} (v5); \end{tikzpicture}}$\quad$\subfloat[]{

\begin{tikzpicture}[scale=2.5]
\node (v1) at (-0.5,3) {$T_F^{\oplus d}$}; \node (v2) at (1.5,2.5) {$T^{m_+'}[1]$}; \node (v3) at (1.5,3) {$T^{m_-'}[1]\oplus T_F^d[1]$}; \node (v4) at (1.5,4) {$V$}; \node (v5) at (0,3.25) {$V'$}; \node (v6) at (0.5,3) {$T^{m_-'}[1]$};
\node (v8) at (2.2004,3) {}; \node (v7) at (2,4.25) {}; \node (v9) at (2,2.25) {}; \node (v10) at (1.5,2.1503) {};
\draw[->]  (v1) edge node[above]{$u$} (v5); \draw[->]  (v5) edge (v4); \draw[->]  (v1) edge node[below]{$0$} (v6); \draw[->]  (v6) edge (v3); \draw[->]  (v5) edge node[above, near end]{$v$} (v6); \draw[->]  (v6) edge (v2); \draw[->]  (v4) edge (v3); \draw[->]  (v3) edge (v2); \draw[->]  (v4) edge (v7); \draw[->]  (v3) edge (v8); \draw[->]  (v2) edge (v9); \draw[->]  (v2) edge (v10); \end{tikzpicture}}
\end{figure}

\begin{Lem}\label{lem:ind-submod}

We have $\ind V=\ind V'-d\sum_{k\in F}f_{k}$.

\end{Lem}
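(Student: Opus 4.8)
The plan is to reduce the statement to a single application of the octahedral axiom, in the configuration drawn in Figure~\ref{fig:octahedral}(B). Write $T_F^{\oplus d}=T^{g}$ with $g=d\sum_{k\in F}f_k\in\N^{I}$, under the identification $K_0(\add T)\simeq\Z^{I}$ used to define the index. Fix a triangle $V'\xrightarrow{v}T^{m_-'}[1]\to T^{m_+'}[1]\to V'[1]$ computing $\ind V'=m_+'-m_-'$, and recall the defining triangle of the universal extension,
\[ T_F^{\oplus d}\xrightarrow{u} V'\longrightarrow V\xrightarrow{\ev} T_F^{\oplus d}[1], \]
so that $V\simeq\mathrm{cone}(u)$, while $T^{m_+'}[1]\simeq\mathrm{cone}(v)$.

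First I would observe that the composite $v\circ u\colon T_F^{\oplus d}\to T^{m_-'}[1]$ is zero: both $T_F^{\oplus d}$ and $T^{m_-'}$ lie in $\add T$, and $\Hom_\cC(T,T[1])=0$ forces $\Hom_\cC(\add T,(\add T)[1])=0$. Hence $\mathrm{cone}(v\circ u)$ is the cone of a zero map and splits,
\[ \mathrm{cone}(v\circ u)\simeq T^{m_-'}[1]\oplus T_F^{\oplus d}[1]=T^{m_-'+g}[1]. \]
Applying the octahedral axiom to the composable pair $T_F^{\oplus d}\xrightarrow{u}V'\xrightarrow{v}T^{m_-'}[1]$, and feeding in $\mathrm{cone}(u)\simeq V$, $\mathrm{cone}(v)\simeq T^{m_+'}[1]$, together with the splitting above, yields an exact triangle
\[ V\longrightarrow T^{m_-'+g}[1]\longrightarrow T^{m_+'}[1]\longrightarrow V[1]. \]
Since $m_-',m_+',g\in\N^{I}$, this is a triangle of exactly the shape defining the index of $V$; as the index is independent of the chosen presentation triangle (see \cite{Palu08a}\cite{Plamondon10b}), we conclude $\ind V=m_+'-(m_-'+g)=\ind V'-g=\ind V'-d\sum_{k\in F}f_k$.

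All steps here are routine. The only points to check are the vanishing of $v\circ u$, which is immediate from the cluster-tilting hypothesis $\Hom(T,T[1])=0$, and the bookkeeping of shifts, which is where the correction term originates: the cone of $0\colon T_F^{\oplus d}\to T^{m_-'}[1]$ is $T^{m_-'}[1]\oplus T_F^{\oplus d}[1]$ rather than $T^{m_-'}[1]\oplus T_F^{\oplus d}$, so it adds $+g=+d\sum_{k\in F}f_k$ to the lower term and hence $-d\sum_{k\in F}f_k$ to $\ind V$. I expect the only mildly delicate part to be the purely formal one of recognizing the output triangle of the octahedral diagram as an admissible index presentation of $V$, which it is by construction.
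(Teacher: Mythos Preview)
Your proof is correct and takes essentially the same approach as the paper's: apply the octahedral axiom to the composable pair $T_F^{\oplus d}\xrightarrow{u}V'\xrightarrow{v}T^{m_-'}[1]$, using $vu=0$ (from $\Hom(T,T[1])=0$) to identify the cone of $vu$ with $T^{m_-'}[1]\oplus T_F^{\oplus d}[1]$, and read off the index from the resulting triangle.
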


\begin{proof}

Recall that we have a triangle $T_{F}^{d}\xrightarrow{u}V'\rightarrow V\rightarrow T_{F}^{d}[1]$.
Choose triangles $V'\xrightarrow{v}T^{m'_{-}}[1]\rightarrow T^{m'_{+}}[1]\rightarrow V'[1]$
and $T_{F}^{d}\xrightarrow{0}T^{m'_{-}}[1]\rightarrow T^{m'_{-}}[1]\oplus T_{F}^{d}[1]\rightarrow T_{F}^{d}[1]$.
Note that $vu\in\Hom(T,T[1])=0$. Using the octahedral axiom, we obtain
a triangle $V\rightarrow T^{m'_{-}}[1]\oplus T_{F}^{d}[1]\rightarrow T^{m'_{+}}[1]\rightarrow V[1]$,
see Figure \ref{fig:octahedral}(B). The desired claim follows.

\end{proof}

Proposition \ref{prop:max_submod} and Lemma \ref{lem:ind-submod}
imply the following.

\begin{Thm}\label{thm:additive-freezing}

The freezing operator $\frz_{F}$ acts on $\CC(V)$ by
\begin{align}
\frz_{F}\CC(V) & =x^{-d\sum_{k\in F}f_{k}}\CC(V')\label{eq:additive-freezing}
\end{align}
for $d\in\N$ large enough. In this case, $\uV'$ is the maximal $J$-submodule
of $\uV$ such that $(\udim\uV')_{k}=0$ for $k\in F$.

\end{Thm}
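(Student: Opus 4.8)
The plan is to identify the freezing operator on the Laurent expansion of $\CC(V)$ with the passage from $V$ to the universal extension $V'$, using the additive data provided by Proposition \ref{prop:max_submod} and Lemma \ref{lem:ind-submod}. Recall that, by definition, $\frz_{F,m}\CC(V)$ keeps only those terms $x^m\cdot y^n$ with $\supp n\cap F=\emptyset$, where $m=\ind_T V$ is the unique $\prec_{\sd_0}$-maximal degree of $\CC(V)$. From the cluster character formula $\CC(V)=x^{\ind_T V}\cdot\sum_n \chi(\Gr_n\uV)\, y^n$, the monomial $x^{\ind_T V}\cdot y^n$ appears with nonzero coefficient exactly when $\Gr_n\uV\neq\emptyset$, i.e. when $\uV$ has a submodule of dimension vector $n$.

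First I would fix $d\geq\dim\Hom(V,T_F[1])$ large enough (in fact any such $d$ works for the identity; "large enough" is needed only to match the normalization convention, since $d$ controls the shift exponent but Proposition \ref{prop:max_submod} already holds for all admissible $d$) and form the universal extension triangle $T_F^{\oplus d}\to V'\to V\xrightarrow{\ev}T_F^{\oplus d}[1]$. By Proposition \ref{prop:max_submod}, $\uV'$ is the maximal $J$-submodule of $\uV$ with $(\udim\uV')_k=0$ for all $k\in F$. The key combinatorial observation is then: a dimension vector $n\in\N^{I_{\ufv}}$ with $\supp n\cap F=\emptyset$ is realized by a submodule of $\uV$ if and only if it is realized by a submodule of $\uV'$ — the "only if" direction is immediate since any such submodule is contained in the maximal one $\uV'$, and the "if" direction is trivial since $\uV'\subseteq\uV$. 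Moreover, for such $n$ the quiver Grassmannians $\Gr_n\uV$ and $\Gr_n\uV'$ actually coincide as varieties: every $n$-dimensional submodule of $\uV$ supported off $F$ lies inside $\uV'$. Hence $\chi(\Gr_n\uV)=\chi(\Gr_n\uV')$ whenever $\supp n\cap F=\emptyset$.

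Next I would assemble the identity. We have
\begin{align*}
\frz_F\CC(V) &= x^{\ind_T V}\cdot\sum_{n:\,\supp n\cap F=\emptyset}\chi(\Gr_n\uV)\, y^n = x^{\ind_T V}\cdot\sum_{n:\,\supp n\cap F=\emptyset}\chi(\Gr_n\uV')\, y^n.
\end{align*}
On the other hand, since $(\udim\uV')_k=0$ for $k\in F$, every submodule of $\uV'$ automatically has dimension vector supported off $F$, so $\sum_{n}\chi(\Gr_n\uV')\,y^n = \sum_{n:\,\supp n\cap F=\emptyset}\chi(\Gr_n\uV')\,y^n$, and this sum equals $x^{-\ind_T V'}\CC(V')$. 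Combining with Lemma \ref{lem:ind-submod}, which gives $\ind_T V = \ind_T V' - d\sum_{k\in F}f_k$, we obtain
\begin{align*}
\frz_F\CC(V) &= x^{\ind_T V}\cdot x^{-\ind_T V'}\CC(V') = x^{-d\sum_{k\in F}f_k}\CC(V'),
\end{align*}
which is (\ref{eq:additive-freezing}). The last sentence of the theorem is just a restatement of Proposition \ref{prop:max_submod}.

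The main obstacle I anticipate is making the quiver-Grassmannian comparison fully rigorous at the level of varieties rather than just Euler characteristics: one must check that the inclusion $\uV'\hookrightarrow\uV$ induces, for each $n$ with $\supp n\cap F=\emptyset$, an \emph{isomorphism} (not merely a bijection on points) $\Gr_n\uV'\xrightarrow{\sim}\Gr_n\uV$, which follows from the maximality in Proposition \ref{prop:max_submod} together with the fact that the condition "$N\subseteq\uV'$" is closed and open on $\Gr_n\uV$ — but this should be handled by noting that $\uV/\uV'$ has dimension vector supported on $F$, so any submodule $N$ of $\uV$ with $(\udim N)_k=0$ for $k\in F$ has zero image in $\uV/\uV'$, hence $N\subseteq\uV'$, and this is a scheme-theoretic statement. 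A secondary point to verify carefully is that $\ind_T V$ really is the unique $\prec_{\sd_0}$-maximal degree of $\CC(V)$, so that $\frz_F$ as defined in Definition \ref{def:freezing_operator} applies with $m=\ind_T V$; this is standard from the structure of the cluster character but should be cited.
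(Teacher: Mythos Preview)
Your proposal is correct and takes essentially the same approach as the paper, which simply states that the theorem follows from Proposition~\ref{prop:max_submod} and Lemma~\ref{lem:ind-submod}. You have filled in precisely the details the paper leaves implicit: the identification of $\Gr_n\uV$ with $\Gr_n\uV'$ for $n$ supported off $F$ (via maximality), and the index shift, combine to give the identity.
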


Let $\cC'$ denote the subcategory of $\cC$ such that it consists
of objects $X$ satisfying $\Hom(T_{F},X[1])=0$. Then $V'\in\cC'$
by Proposition \ref{prop:max_submod}. This category provides additive
categorification for $\upClAlg(\frz_{F}\sd)$.

%%%%%%%%%%%%%%%%%%%%%%%%%%%%                      Bibliography%%%%%%%%%%%%%%%%%%%%%%%%%%

%\bibliographystyle{amsalphaURL} 
%\bibliography{referenceEprint} 
\newcommand{\etalchar}[1]{$^{#1}$}
\def\cprime{$'$}
\providecommand{\bysame}{\leavevmode\hbox to3em{\hrulefill}\thinspace}
\providecommand{\MR}{\relax\ifhmode\unskip\space\fi MR }
% \MRhref is called by the amsart/book/proc definition of \MR.
\providecommand{\MRhref}[2]{%
  \href{http://www.ams.org/mathscinet-getitem?mr=#1}{#2}
}
\providecommand{\href}[2]{#2}

%\bibliographystyle{../amsalphaURL}
%\bibliography{../referenceEprint}

\end{document}